\numberwithin{equation}{section}
\theoremstyle{plain}
\newtheorem{assumption}[equation]{Standing Assumption}
\newtheorem{lemma}[equation]{Lemma}
\newtheorem{theorem}[equation]{Theorem}
\newtheorem{proposition}[equation]{Proposition}
\newtheorem{corollary}[equation]{Corollary}
\newtheorem{sublemma}[equation]{Sublemma}
\theoremstyle{definition}
\newtheorem{definition}[equation]{Definition}
\theoremstyle{remark}
\newtheorem{remark}[equation]{Remark}
\newcommand{\adjust}{\operatorname{adjust}}
\newcommand{\al}{\alpha}
\newcommand{\be}{\beta}
\newcommand{\convexhull}{\operatorname{Hull}}
\newcommand{\const}{\operatorname{const.}}
\newcommand{\D}{\partial}
\newcommand{\De}{\Delta}
\newcommand{\diam}{\operatorname{diam}}
\newcommand{\e}{{\mathcal E}}
\newcommand{\edge}{\operatorname{edge}}
\newcommand{\End}{\operatorname{End}}
\newcommand{\f}{{\mathcal  F}}
\newcommand{\Ga}{\Gamma}
\newcommand{\gen}{\operatorname{gen}}
\newcommand{\genus}{\operatorname{genus}}
\newcommand{\gh}{\operatorname{GH}}
\newcommand{\Gr}{\operatorname{Gr}}
\newcommand{\Id}{\operatorname{Id}}
\newcommand{\im}{\operatorname{im}}
\newcommand{\Image}{\operatorname{Im}}
\newcommand{\inj}{\operatorname{InjRad}}
\newcommand{\Int}{\operatorname{int}}
\newcommand{\Isom}{\operatorname{Isom}}
\renewcommand{\L}{{\mathcal L}}
\newcommand{\La}{\Lambda}
\newcommand{\lra}{\longrightarrow}
\newcommand{\N}{{\mathbb N}}
\newcommand{\Om}{\Omega}
\newcommand{\R}{{\mathbb R}}
\renewcommand{\r}{\frak r}
\newcommand{\restr}{\mbox{\Large \(|\)\normalsize}}
\newcommand{\Rm}{\operatorname{Rm}}
\newcommand{\si}{\sigma}
\newcommand{\SO}{\operatorname{SO}}
\newcommand{\spann}{\operatorname{span}}
\newcommand{\slim}{\operatorname{slim}}
\newcommand{\supp}{\operatorname{supp}}
\renewcommand{\th}{\theta}
\newcommand{\twostratum}{\operatorname{2-stratum}}
\newcommand{\vol}{\operatorname{vol}}
\newcommand{\Z}{{\mathbb Z}}
\newcommand{\zeroball}{\operatorname{0-stratum}}
\newcommand{\cangle}{\widetilde{\angle}}
\newcommand{\ctits}{C_T}
\newcommand{\de}{\delta}
\newcommand{\eps}{\epsilon}
\newcommand{\ga}{\gamma}
\newcommand{\la}{\lambda}
\newcommand{\ol}{\overline}
\newcommand{\ra}{\rightarrow}
\newcommand{\Si}{\Sigma}
\begin{document}

\begin{abstract}
We prove that a $3$-dimensional compact
Riemannian manifold which is locally collapsed,
with respect to a lower curvature bound, is a graph manifold.  This
theorem was stated by Perelman and was used in his proof of
the geometrization conjecture.
\end{abstract}

\title{Locally Collapsed 3-Manifolds}
\author{Bruce Kleiner}
\address{Courant Institute of Mathematical Sciences\\
251 Mercer St.
New York, NY 10012}
\email{bkleiner@cims.nyu.edu}

\author{John Lott}
\address{Department of Mathematics\\
University of California at Berkeley\\ 
Berkeley, CA 94720}
\email{lott@math.berkeley.edu}
\thanks{Research supported by NSF grants 
DMS-0903076 and DMS-1007508}
\date{May 28, 2014}
\maketitle

\setlength{\parskip}{\smallskipamount}

\tableofcontents

\setlength{\parskip}{\medskipamount}

\section{Introduction}

\subsection{Overview}
In this paper we prove that a $3$-dimensional Riemannian manifold 
which is locally collapsed,
with respect to a lower curvature bound, is a graph manifold.  This
result was stated  without
proof by Perelman in \cite[Theorem 7.4]{Perelman2},
where it was used to show that certain collapsed manifolds arising
in his proof of the geometrization conjecture are graph 
manifolds. 
Our goal is to provide a
proof of Perelman's collapsing theorem which is streamlined,
self-contained and accessible.
Other proofs of Perelman's theorem
appear in \cite{bbbmp,Cao-Ge,Morgan-Tian,Shioya-Yamaguchi}.

In the rest of this introduction we state the main result and
describe some of the issues involved in proving it.
We then give an outline of the proof. We finish by
discussing the history of the problem.

\subsection{Statement of results} \label{statement}

We begin by defining an intrinsic local scale
function for a Riemannian manifold.

\begin{definition} \label{def2}
Let $M$ be a complete Riemannian manifold. Given
$p \in M$, the {\em curvature scale} $R_p$ at $p$ is defined as
follows. If the connected component of $M$ containing $p$
has nonnegative sectional curvature then
$R_p = \infty$.
Otherwise, $R_p$
is the (unique) number $r > 0$ such that
the infimum of the sectional curvatures on $B(p,r)$
equals $- \: \frac{1}{r^2}$.
\end{definition}

We need one more definition.

\begin{definition} \label{graphdef}
Let $M$ be a compact orientable 
$3$-manifold (possibly with boundary).
Give $M$ an arbitrary Riemannian metric.  We say that $M$ 
is a {\em graph manifold} if there is a finite
disjoint collection of embedded $2$-tori $\{T_j\}$ in 
the interior of $M$
such that
each connected component of the metric closure of
$M - \bigcup_j T_j$ is the total space of
a circle bundle over a surface (generally with boundary).
\end{definition}

For simplicity, in this introduction we state the main theorem in the case of
closed manifolds. For the general case of manifolds with boundary,
we refer the reader to
Theorem \ref{thmmain2}.

\begin{theorem}(cf. \cite[Theorem 7.4]{Perelman2}) 
\label{thmmain}
Let $c_3$ denote the volume
of the unit ball in $\R^3$ and
let $K \ge 10$ be a fixed integer. 
Fix a function $A:(0, \infty) \ra (0,\infty)$. Then there is a 
$w_0 \in (0,c_3)$  such that the following holds.
 
Suppose that $(M, g)$ is a closed 
orientable Riemannian $3$-manifold.   Assume in addition that for every
$p \in M$,
\begin{enumerate}
\item $\vol(B(p, R_p)) \le w_0 R_p^3$ and
\item For every  
$w'\in [w_0,c_3)$, $k \in[0,K]$,
and $r \leq R_p$ such that $\vol(B(p,r))\geq w'r^3$, the inequality
\begin{equation}
\label{eqn-derivboundsmain}
| \nabla^k\Rm| \le A(w^\prime) \:  r^{-(k+2)}
\end{equation}
holds in the ball $B(p,r)$.
\end{enumerate}

Then $M$ is a graph manifold.
\end{theorem}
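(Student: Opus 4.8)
The plan is to analyze $M$ at the scale set by the curvature-scale function $R_\bullet$, to stratify it according to the dimension of the Alexandrov space it locally collapses to, and to assemble from the strata a compatible family of circle fibrations exhibiting $M$ as a graph manifold.

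\textbf{The local picture.} Fix $p\in M$ and rescale the metric by $R_p^{-2}$, so that $B(p,1)$ has sectional curvature $\geq -1$ and, by hypothesis (1), volume at most $w_0$. As $w_0\to 0$ such a ball subconverges, in the pointed Gromov--Hausdorff sense, to a ball in an Alexandrov space $X$ of curvature $\geq -1$ with $\dim X\leq 2$; the value of $\dim X$ near the limit of $p$---made quantitative, so that it depends only on $w_0$, $A$, $K$---records whether $p$ lies in the $2$-stratum, the $1$-stratum, or the $0$-stratum. The complementary information comes from hypothesis (2): as soon as the volume ratio at some scale $r\leq R_p$ is bounded below by $w'$, one obtains uniform bounds on $\nabla^k\Rm$ on $B(p,r)$, so $B(p,r)$ has bounded local geometry at that scale. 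It is this ``noncollapsed at a smaller scale'' alternative that makes the lower strata tractable.

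\textbf{Building blocks on each stratum.} On the $2$-stratum the limit $X$ is a $2$-dimensional Alexandrov space---a surface, possibly with boundary, with finitely many cone points---and, after passing to a local orthonormal frame bundle to trivialize holonomy where needed, the corresponding part of $M$ carries an $S^1$-fibration over a neighborhood in a $2$-orbifold, with solid tori and twisted $I$-bundles over the Klein bottle appearing near $\partial X$. On the $1$-stratum, $X$ is an interval, ray, line, or circle, and the corresponding part of $M$ is locally $T^2\times I$, a solid torus, a twisted $I$-bundle over the Klein bottle, or an $S^1$-bundle over a surface. On the $0$-stratum one descends to the smaller scale $r$ at which the volume ratio recovers; there $B(p,r)$ is noncollapsed with bounded geometry, so by the stability theorem it is diffeomorphic to a metric ball in one of finitely many model spaces and is a standard model piece. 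Every block produced this way is itself a graph manifold, and the class of graph manifolds is closed under gluing along embedded tori.

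\textbf{Assembly and the main obstacle.} The substance of the proof is to pass from these local pieces to \emph{one} decomposition of $M$ along a single disjoint family of embedded tori $\{T_j\}$. Since $R_\bullet$ is Lipschitz, one chooses a locally finite cover of $M$ adapted to it and to the stratification and runs an induction from the lowest stratum upward, at each step modifying the fibrations defined on members of the cover so that they agree on overlaps---using that two nearby $S^1$-fibrations of the same collapsed region differ by a controlled gauge transformation. The hard part is precisely this matching across strata: reconciling the circle fibrations coming from $2$-dimensional collapse with those from $1$-dimensional collapse, and controlling the combinatorics of the decomposition near the $0$-stratum and near $\partial X$. This is what forces one to invoke the quantitative forms of Perelman's stability theorem and of the generalized fibration theorem, with all constants depending explicitly on $w_0$ and on $A$---precisely the reason that, in the statement, $w_0$ can only be produced once $A$ and $K$ have been fixed.
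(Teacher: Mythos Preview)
Your outline follows the broad strategy common to Shioya--Yamaguchi and Morgan--Tian rather than the paper's own approach, and it has genuine gaps.

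First, some structural differences worth noting. The paper does \emph{not} work at the curvature scale $R_p$; it constructs a modified \emph{volume} scale $\r_p$ with small Lipschitz constant (Section~\ref{sec-scalefunction}) and works there throughout. The stratification is not by the dimension of a limiting Alexandrov space but by the maximal $k$ for which $(\frac{1}{\r_p}M,p)$ admits an approximate $\R^k$-splitting (Section~\ref{sec-stratification}); the paper stresses that this choice is what forces overlapping local models to be automatically aligned. Most importantly, the gluing is \emph{not} done by iteratively matching nearby $S^1$-fibrations via gauge transformations: instead one builds a single smooth map $\e^0:M\to H$ into a high-dimensional Euclidean space from the adapted coordinates and cutoff functions, shows its image near each stratum is a ``cloudy manifold'' (Appendix~\ref{sec-cloudy}), and then adjusts $\e^0$ by projecting onto the smooth core of each cloudy manifold. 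The compatible fibrations drop out of the factoring $H=Q_1\supset Q_2\supset Q_3\supset Q_4$, rather than being patched by hand.

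Second, two of your steps are actual gaps. Your treatment of the $0$-stratum---``descend to the smaller scale $r$ at which the volume ratio recovers, invoke the stability theorem''---is not how this works. The paper's Good Annulus Lemma (Lemma~\ref{lemgoodannulus}) finds a scale $r_p^0\in[\Upsilon_0\r_p,\Upsilon_0'\r_p]$ at which the pointed manifold is $C^K$-close to a nonnegatively curved $3$-manifold $N_p$ \emph{and} Gromov--Hausdorff close to its Tits cone; the topology of the $0$-stratum ball is then read off from Cheeger--Gromoll and Hamilton, with no stability theorem needed. Indeed, hypothesis~(2) is present precisely so that one can use $C^K$-precompactness in place of Perelman's Stability Theorem; invoking the latter here misses the point of the hypotheses. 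Finally, your assembly paragraph is where the real work lies and you have essentially stated the problem, not solved it: ``modify fibrations so that they agree on overlaps'' is exactly the step that requires either the full Shioya--Yamaguchi machinery or the paper's Euclidean-embedding/cloudy-manifold construction, together with the careful parameter ordering in Section~\ref{sec-proof}. As written, this is an outline of a strategy rather than a proof.
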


\subsection{Motivation}

Theorem \ref{thmmain}, or more
precisely the version for manifolds with boundary, is
essentially the same as Perelman's
\cite[Theorem 7.4]{Perelman2}.  Either
result
can be used to complete 
the Ricci flow proof of Thurston's geometrization conjecture.
We explain this in Section \ref{geom}, following the presentation
of Perelman's work in \cite{Kleiner-Lott}.

To give a brief explanation, let $(M, g(\cdot))$ be a
Ricci flow with surgery whose initial manifold is compact, orientable and
three-dimensional. Put $\widehat{g}(t) = \frac{g(t)}{t}$.
Let $M_t$ denote the time $t$ manifold. (If $t$ is a surgery time
then we take $M_t$ to be the postsurgery manifold.) For any $w > 0$, the 
Riemannian manifold $(M_t, \widehat{g}(t))$ has a decomposition into
a $w$-thick part and a $w$-thin part.  
(Here the terms ``thick'' and ``thin''
are suggested by the Margulis thick-thin decomposition but the
definition is somewhat different. In the case of hyperbolic manifolds,
the two notions are essentially equivalent.)
As
$t \rightarrow \infty$, the $w$-thick part of $(M_t, \widehat{g}(t))$
approaches the $w$-thick part of a complete finite-volume
Riemannian manifold of constant curvature $- \: \frac14$, whose
cusps (if any) are incompressible in $M_t$. Theorem \ref{thmmain}
implies that for large $t$, the $w$-thin part of $M_t$ is a graph manifold.
Since graph manifolds are known to have a geometric decomposition
in the sense of Thurston, this proves the geometrization conjecture.

Independent of Ricci flow considerations, Theorem \ref{thmmain} fits
into the program in Riemannian geometry of understanding 
which manifolds can collapse.
The main geometric assumption in Theorem \ref{thmmain} is the first one,
which is a local collapsing statement,
as we discuss in the next subsection.
The second assumption of Theorem \ref{thmmain} is more technical in nature. 
In the application
to the geometrization conjecture, the validity of the second assumption
essentially arises from the smoothing effect of the Ricci flow equation.

In fact, Theorem \ref{thmmain}
holds without the second assumption. 
In order to prove this stronger result, 
one must use the highly nontrivial Stability Theorem of 
Perelman \cite{Perelman,Kapovitch}. As
mentioned in \cite{Perelman2}, if one does make the second
assumption then one can effectively replace the Stability Theorem
by standard
$C^K$-convergence
of Riemannian manifolds. 
Our proof of Theorem \ref{thmmain} is set up so that it extends to a proof of
the stronger theorem, without the second assumption,
provided that one invokes the Stability Theorem in relevant places;
see Sections \ref{subsec-removingbounds} and
\ref{sec-shioyayamaguchi}.

\subsection{Aspects of the proof}

The strategy in proving Theorem \ref{thmmain} is to first understand
the local geometry and topology of the manifold $M$. One then glues
these local descriptions together to give an explicit
decomposition of $M$ that shows it to be a graph manifold.
This strategy is common to \cite{Shioya-Yamaguchi,Morgan-Tian,Cao-Ge} 
and the present paper. In this subsection we describe the strategy in a
bit more detail.  Some of the new features of the present paper will be
described more fully in Subsection \ref{sec-overview}.

\subsubsection{An example}
The following simple example gives a useful illustration of the strategy
of the proof. 

Let $P\subset H^2$ be a compact convex polygonal domain in the
two-dimensional hyperbolic space. Embedding $H^2$ in the
four-dimensional hyperbolic space $H^4$, let $N_s(P)$
be the metric $s$-neighborhood around $P$ in $H^4$.
Take $M$ to be the boundary $\D N_s(C)$.  If $s$ is sufficiently small
then one can check that the hypotheses of Theorem \ref{thmmain} are
satisfied.

Consider
the structure of $M$ when $s$ is small.  There is a region
$M^{\twostratum}$, lying at 
distance $\ge \const s$ from the boundary $\D P$, which is the total space
of a circle bundle.
At scale comparable to $s$, a suitable neighborhood of a point in 
$M^{\twostratum}$ is nearly isometric to a product
of a planar region with $S^1$.  There is also a region $M^{\edge}$
lying at distance $\le \const s$
from an edge of $P$, but away from the vertices of $P$, which is the
total space of a $2$-disk bundle. At scale
comparable to $s$, a suitable neighborhood of a point in $M^{\edge}$ is
nearly isometric to
the product of an interval with a $2$-disk.  Finally, there is a region
$M^{\zeroball}$ lying at
distance $\le \const s$ from the vertices of $P$.
A connected component of $M^{\zeroball}$ is diffeomorphic to a $3$-disk.

We can choose 
$M^{\twostratum}$, $M^{\edge}$ and $M^{\zeroball}$ so that
there is a decomposition
$M = M^{\twostratum} \cup M^{\edge} \cup M^{\zeroball}$ with the
property that on 
interfaces, fibration structures are compatible.
Now $M^{\edge} \cup M^{\zeroball}$
is a finite union of $3$-disks and $D^2\times I$'s, which is 
homeomorphic to a solid torus.  Also, $M^{\twostratum}$
is a circle bundle over
a $2$-disk, i.e. another solid torus, and
$M^{\twostratum}$ intersects $M^{\edge} \cup M^{\zeroball}$ in a
$2$-torus.
So using this geometric decomposition, we recognize that $M$
is a graph manifold. (In this case 
$M$ is obviously diffeomorphic to $S^3$, being the boundary of a convex set in 
$H^4$, and so it is a graph manifold; 
the point is that one can recognize this using the geometric structure
that comes from the local collapsing.)

\subsubsection{Local collapsing} \label{loccoll}

The statement of Theorem \ref{thmmain} is in terms of 
a {\em local} lower curvature bound, as evidenced by
the appearance of the curvature scale $R_p$.
Assumption (1) of Theorem \ref{thmmain} can be considered to be
a local collapsing statement.
(This is in contrast
to a global collapsing condition, where one assumes that the
sectional curvatures are at least $-1$
and $\vol(B(p,1)) < \epsilon$ for every $p \in M$.)
To clarify the local collapsing statement, we make one more definition.

\begin{definition} \label{def1}
Let $c_3$ denote
the volume of the Euclidean
unit ball in $\R^3$.
Fix $\bar w \in (0, c_3)$.   Given $p \in M$,
the {\em $\bar w$-volume scale at $p$} is 
\begin{equation}
r_p(\bar w) = \inf \{ r > 0 \: : \: \vol(B(p,r))=\bar w\,r^3\}.
\end{equation} 
If there is no such $r$ then we say that the 
$\bar w$-volume scale is infinite.
\end{definition}

There are two ways to look at hypothesis (1) of 
Theorem \ref{thmmain}, at the
curvature scale or at the volume scale.  Suppose first that
we rescale the ball $B(p, R_p)$ to have radius one. Then the
resulting ball
will have sectional curvature bounded below by $-1$ and volume
bounded above by $w_0$. 
As $w_0$ will be small, we can say that on the curvature scale, the manifold
is locally volume collapsed with respect to a lower curvature bound.
On the other hand, suppose that we rescale $B(p, r_p(w_0))$ to
have radius one. Let 
$B'(p, 1)$
denote the rescaled ball. Then 
$\vol(B'(p, 1)) = w_0$. 
Hypothesis (1) of Theorem \ref{thmmain}
implies that there is a big number ${\mathcal R}$ so that
the sectional curvature on
the radius ${\mathcal R}$-ball 
$B'(p, {\mathcal R})$ 
(in the rescaled manifold) is bounded below by
$- \: \frac{1}{{\mathcal R}^2}$.
Using this, we deduce that 
on the volume scale, a large neighborhood of $p$ 
is well approximated by a large region in a complete 
nonnegatively curved $3$-manifold $N_p$. 
This gives a local model for the geometry of $M$.
Furthermore, if $w_0$ is small then we can say that at the
volume scale, the neighborhood of $p$ is close in a coarse sense to
a space of dimension less than three.

In order to prove Theorem \ref{thmmain}, one must first choose on which
scale to work.  We could work on the curvature scale, or the
volume scale, or some intermediate scale (as is done in
\cite{Morgan-Tian,Shioya-Yamaguchi,Cao-Ge}). In this paper we will work consistently
on the volume scale.  This gives a uniform and simplifying approach.

\subsubsection{Local structure}

At the volume scale, the local geometry of $M$ is well approximated by
that of a nonnegatively curved $3$-manifold. 
(That we get a $3$-manifold instead of a $3$-dimensional
Alexandrov space comes from the second assumption in Theorem 
\ref{thmmain}.)  
The topology of nonnegatively curved $3$-manifolds
is known in the compact case by work of Hamilton
\cite{Hamilton3d,Hamiltonnnn} and in the noncompact case by work
of Cheeger-Gromoll \cite{Cheeger-Gromoll}. In the latter case, the
geometry is also well understood.  Some relevant examples
of such manifolds are: 
\begin{enumerate}
\item $\R^2 \times S^1$,
\item $\R \times S^2$, 
\item $\R \times \Sigma$, where $\Sigma$ is a noncompact nonnegatively
curved surface which is diffeomorphic to $\R^2$ and has a
cylindrical end, and
\item $\R \times_{\Z_2} S^2$.
\end{enumerate}

If a neighborhood of a point $p \in M$ is modeled by 
$\R^2 \times S^1$ at the volume scale then the length of the
circle fiber is comparable to $w_0$. Hence if $w_0$ is small then
the neighborhood looks almost like a $2$-plane. Similarly,
if the neighborhood is modeled by $\R \times S^2$ then it looks
almost like a line.  On the other hand, if the neighborhood is
modeled by $\R \times \Sigma$ then for small $w_0$, the surface $\Sigma$
looks almost like a half-line and the neighborhood looks almost
like a half-plane.  Finally, if the neighborhood is modeled by
$\R \times_{\Z_2} S^2$ then it looks almost like a half-line.

\subsubsection{Gluing}

The remaining issue is use the
local geometry to deduce the global topology of $M$. This is
a gluing issue, as the local models need to be glued together
to obtain global information.

One must determine which local models should be
glued together. 
We do this by means of a stratification of $M$.
If $p$ is a point in $M$ then for $k \le 2$, 
we say that $p$ is a $k$-stratum point if on the volume scale,
a large ball around $p$
approximately splits off an $\R^k$-factor metrically,
but not an $\R^{k+1}$-factor. 

For $k \in \{1,2\}$, 
neighborhoods
of the $k$-stratum points will glue together 
in order to produce the total space of a fibration
over a $k$-dimensional manifold. For example, neighborhoods of the
$2$-stratum points will glue together to form a circle bundle over
a surface. 
Neighborhoods of the $0$-stratum points play a somewhat
different role.  They will be inserted as ``plugs''; for example,
neighborhoods of
the exceptional fibers in a Seifert fibration will arise in this way.

By considering how $M$ is decomposed into these various subspaces that
fiber, we will be able to show that $M$ is a graph manifold.

\subsection{Outline of the proof}
\label{sec-overview}

We now indicate the overall structure of the proof of
Theorem \ref{thmmain}. In this subsection we suppress parameters
or denote them by $\const$
In the paper we will use some minimal facts about pointed
Gromov-Hausdorff convergence and Alexandrov spaces, which
are recalled in Section \ref{preliminaries}.

\subsubsection{Modified volume scale}
\label{subsec-modifiedvolumescale}

The first step is to replace the volume scale by a slight modification
of it.
The motivation for this step is
the fluctuation of the volume scale. Suppose that 
$p$ and $q$ are points in overlapping local models.  As these local
models are at the respective volume scales, there will be
a problem in gluing
the local models together if $r_q(\bar w)$ differs wildly from
$r_p(\bar w)$. We need control on how the volume
scale fluctuates on a ball of the form $B(p,  \const r_p(\bar w))$.
We deal with this problem
by replacing the volume scale $r_p(\bar w)$ by a modified scale
which has better properties.
We assign a scale $\r_p$ to each point $p\in M$ such that:
\begin{enumerate}
\item $\r_p$ is much less than the curvature scale
$R_p$.
\item The function $p\mapsto\r_p$ is smooth and has  Lipschitz
constant $\Lambda\ll 1$. 
\item The ball $B(p,\r_p)$ has volume lying in the
interval $[w'\r_p^3,\bar w\r_p^3]$, where $w'<\bar w$ are
suitably chosen constants lying in the interval $[w_0,c_3]$.
\end{enumerate}

The proof of the existence of the scale function $p\mapsto\r_p$ 
follows readily from the local collapsing assumption, the Bishop-Gromov 
volume comparison theorem, and an argument similar to McShane's extension
theorem for real-valued Lipschitz functions; see 
Section \ref{sec-scalefunction}.

\subsubsection{Implications of compactness}
\label{subsec-compactnessimplications}

Condition (1) above implies that the rescaled manifold $\frac{1}{\r_p}M$,  
in the vicinity of $p$, has almost nonnegative
curvature. Furthermore, condition (3) implies that
it looks collapsed but not too collapsed, in the
sense that the volume of the unit ball around $p$ in 
the rescaled manifold
$\frac{1}{\r_p}M$ is small but not too small. Thus by 
working at the scale $\r_p$, we are able to retain the local collapsing
assumption (in a somewhat weakened form) while
gaining improved behavior of the scale function.

Next, the bounds (\ref{eqn-derivboundsmain})
extend to give 
bounds on the derivatives of the curvature tensor of the form
\begin{equation}
\label{eqn-derivboundsrescaled}
|\nabla^k\Rm| \: \leq A'(C,w')
\end{equation}
for $0\leq k\leq K$, when restricted to
balls $B(p,C)$
in $\frac{1}{\r_p}M$.  Using
(\ref{eqn-derivboundsrescaled}) and
standard compactness
theorems for pointed Riemannian manifolds, we get:
\begin{enumerate}
\setcounter{enumi}{3}
\item 
For every $p\in M$,
the rescaled pointed manifold $(\frac{1}{\r(p)}M,p)$  is
close in the pointed 
$C^{K}$-topology
to a pointed nonnegatively curved
$C^{K}$-smooth
Riemannian $3$-manifold $(N_p,\star)$.  
\item 
For every $p\in M$,
the pointed manifold $(\frac{1}{\r(p)}M,p)$ is 
close in the pointed Gromov-Hausdorff
topology to a pointed nonnegatively curved Alexandrov
space $(X_p,\star)$ of dimension at most $2$.
\end{enumerate}

\subsubsection{Stratification}

The next step is to define a partition of $M$ into
$k$-stratum points, for $k \in \{0,1,2\}$. The partition
is in terms of the number of $\R$-factors that approximately
split off in $(\frac{1}{\r(p)}M,p)$.

Let $0 < \beta_1 < \beta_2$ be new parameters.
Working at scale $\r_p$, we classify points in $M$ 
as follows
(see Section \ref{sec-stratification}):

\begin{itemize}
\item  A point
$p\in M$ lies in the {\bf $2$-stratum} if 
$(\frac{1}{\r(p)}M,p)$ is $\beta_2$-close to $(\R^2, 0)$ in the pointed
Gromov-Hausdorff topology.
\item  A point $p\in M$ lies
in the {\bf $1$-stratum}, if it does not lie in the $2$-stratum, 
but $(\frac{1}{\r(p)}M,p)$ is $\beta_1$-close to 
$\left( \R \times Y_p, \left(0, \star_{Y_p} \right) \right)$ in
the pointed Gromov-Hausdorff topology, where $Y_p$ is a point,
a circle, an interval or a half-line, 
and $\star_{Y_p}$ is a basepoint in $Y_p$.
\item  A point lies in the {\bf $0$-stratum} if it does not lie
in the $k$-stratum for $k\in \{1,2\}$.
\end{itemize}

We now discuss the structure near points in the
different strata in more detail, describing the model spaces
$X_p$ and $N_p$.

\subsubsection*{$2$-stratum points} 
(Section \ref{sec-loc2stratum}). 
If $\beta_2$ is small and
$p\in M$ is a $2$-stratum
point
then $X_p$ is isometric to $\R^2$, while $N_p$
is isometric to a product $\R^2\times S^1$ where the $S^1$
factor is small.  Since the pointed rescaled manifold 
$(\frac{1}{\r_p}M,p)$ is close to $(N_p,\star)$, we can
transfer the projection map $N_p\simeq \R^2\times S^1\ra \R^2$
to a map $\eta_p$ defined on a large ball  $B(p,C)\subset\frac{1}{\r_p}M$,
where it defines a circle fibration.

\subsubsection*{$1$-stratum points}
(Sections \ref{sec-edgepoints} and \ref{sec-locslim}).
If $\beta_1$ is small and $p$ is a $1$-stratum point then 
$X_p=\R\times Y_p$, where $Y_p$
is a point, a circle, an interval or a half-line.
The 
$C^K$-smooth
model space $N_p$ will be an isometric product 
$\R \times \bar N_p$, where
$\bar N_p$ is a complete nonnegatively curved orientable surface.
As in the
$2$-stratum case, we can transfer the projection map
$N_p\simeq \R\times \bar N_p\ra \R$ to a map $\eta_p$
defined on a large  ball $B(p,C)\subset \frac{1}{\r_p}M$, 
where it defines a submersion.

We further 
classify the $1$-stratum points according to the diameter of the
cross-section $Y_p$.  If the diameter of $Y_p$ is not too large then
we say that $p$ lies in the {\em slim $1$-stratum}.
(The motivation for the terminology is that in this
case $(\frac{1}{\r_p}M,p)$ appears slim, being at moderate Gromov-Hausdorff
distance from a line.)  For slim $1$-stratum points, the cross-section
$\bar N_p$ is diffeomorphic to $S^2$ or $T^2$. Moreover, 
in this case the
submersion $\eta_p$ will be a fibration with fiber
diffeomorphic to $\bar N_p$.  

We also distinguish another type
of $1$-stratum point, the {\em edge points}.  A $1$-stratum
point $p$ is an edge point if $(X_p,\star)=(\R\times Y_p,\star)$
can be taken to be pointed isometric to a flat Euclidean half-plane whose
basepoint lies on the edge.  Roughly speaking, we show that near $p$,
the set  $E'$ of edge points looks like a $1$-dimensional manifold at scale
$\r_p$. Furthermore, there is a smooth function $\eta_{E'}$ which behaves
like the ``distance to the edge'' and which combines with $\eta_p$
to yield ``half-plane coordinates'' for $\frac{1}{\r_p}M$ near $p$.
When restricted to an appropriate sublevel set of $\eta_{E'}$, 
the map $\eta_p$ defines a fibration with fibers diffeomorphic to a
compact surface  with boundary $F_p$.  
Using the fact that for edge points 
$N_p=\R\times \bar N_p$ is Gromov-Hausdorff close to a half-plane, 
one sees that
the pointed surface $(\bar N_p,\star)$ is Gromov-Hausdorff close to a pointed
ray $([0,\infty),\star)$. This allows one to conclude that 
$F_p$ is diffeomorphic to a closed $2$-disk.

\subsubsection*{$0$-stratum points} 
(Section \ref{sec-loc0stratum}).  We know by (4) that if $p$
is a $0$-stratum point, then $(\frac{1}{\r_p}M,p)$ is 
$C^{K}$-close
to a nonnegatively curved 
$C^K$-smooth
$3$-manifold $N_p$. 
The idea for analyzing the structure of $M$ near a $0$-stratum point $p$
is to use the fact that nonnegatively curved manifolds look
asymptotically like cones, and are diffeomorphic to any
sufficiently large ball in them (centered at a fixed basepoint).
More precisely, we find a scale $r_p^0$ with
$\r_p\leq r_p^0\leq \const\r_p$ so that:
\begin{itemize}
\item
The pointed rescaled manifold $(\frac{1}{r^0_p}M,p)$ is close in the
$C^{K}$-topology to a $C^K$-smooth
nonnegatively curved $3$-manifold $(N_p',\star)$.
\item 
The distance function $d_p$ in 
$\frac{1}{\r_p^0}M$
has no critical points in the metric annulus 
$A(p,\frac{1}{10},10) = \overline{B(p,10)} - B(p,\frac{1}{10})$,
and $B(p,1)\subset \frac{1}{r_p^0}M$ is diffeomorphic to $N_p'$.
\item The pointed space $(\frac{1}{r^0_p}M,p)$ is close in the 
pointed Gromov-Hausdorff topology to  a Euclidean cone (in fact
the Tits cone of $N_p'$).  
\item $N_p'$ has at most one end.
\end{itemize}
\noindent
The proof of the existence of the scale $r_p^0$ is based on
the fact that nonnegatively curved
manifolds are asymptotically conical,  the critical point
theory of Grove-Shiohama \cite{Grove-Shiohama},
and a compactness argument.  Using the approximately conical
structure, one obtains a smooth function $\eta_p$ on $\frac{1}{r_p^0}M$
which, when restricted to the
metric annulus $A(p,\frac{1}{10},10)\subset \frac{1}{r_p^0}M$,
behaves like the radial function on a cone.
In particular, for $t\in [\frac{1}{10},10]$,
the sublevel sets $\eta_p^{-1}[0,t)$  are diffeomorphic to  $N_p'$.

The soul theorem \cite{Cheeger-Gromoll},
together with
Hamilton's classification of closed nonnegatively curved
$3$-manifolds \cite{Hamilton3d,Hamiltonnnn}, implies that
$N_p'$ is diffeomorphic to one of the following: 
a manifold $W/\Gamma$ where $W$ is either $S^3$, $S^2\times S^1$
or $T^3$ equipped with a standard Riemannian metric
and $\Gamma$ is a finite group of isometries;
$S^1\times \R^2$; $S^2\times \R$, $T^2\times \R$;
or a twisted line bundle over $\R P^2$ or the Klein bottle. Thus we know the possibilities for the 
topology of $B(p,1)\subset \frac{1}{r_p^0}M$.

\subsubsection{Compatibility of the local structures}
Having determined the local structure of $M$ near each point, 
we examine how these local structures fit together on their
overlap.  For example, consider the slim $1$-stratum
points corresponding to an $S^2$-fiber. A neighborhood of
the set of
such points looks like a union of cylindrical regions. If 
the axes of overlapping cylinders are very well-aligned then
the process of gluing them together will be simplified. 
It turns out that such compatibility is automatic from our choice of
stratification.

To see this,  suppose that $p,q\in M$ are $2$-stratum 
points with 
$B(p, \const \r_p) \cap B(q, \const \r_q)\neq\emptyset$.
Then provided that 
$\La$ is small,  we know that $\r_p\approx \r_q$.
Suppose now that $z \in B(p, \const \r_p) \cap B(q, \const \r_q)$.
We have two $\R^2$-factors at $z$, coming from the
approximate splittings at $p$ and $q$. 
If the parameter $\beta_2$ is small then these $\R^2$-factors
must align well at $z$. If not then we would get two misaligned
$\R^2$-factors at $p$, which would generate an approximate
$\R^3$-factor at $p$, contradicting the local collapsing assumption.
Hence the maps $\eta_p$ and $\eta_q$, which arose
from approximate $\R^2$-splittings, are nearly ``aligned'' with
each other on their overlap, so that $\eta_p$ and $\eta_q$
are affine functions of each other, up to arbitrarily small $C^1$-error. 

Now fix $\beta_2$.
Let $p,q \in M$ be  $1$-stratum points.
At any $z \in B(p, \const \r_p) \cap B(q, \const \r_q)$,
there are two $\R$-factors, coming from the approximate $\R$-splittings
at $p$ and $q$.  If $\beta_1$ is small then these two $\R$-factors
must align well at $z$, or else we would get two misaligned
$\R$-factors at $p$, contradicting the fact that $p$ is not a
$2$-stratum point.
Hence the functions $\eta_p$
and $\eta_q$ are also affine functions of each other, up to 
arbitrarily small
$C^1$-error.  

One gets additional compatibility properties for pairs
of points of
different types.  For example, if $p$ lies in the $0$-stratum
and $q\in A(p, \frac{1}{10}, 10) \subset \frac{1}{\r_p^0}M$
belongs to the $2$-stratum then the
radial function $\eta_p$,
when appropriately rescaled,
agrees with an affine function of $\eta_q$
in $B(q,10)\subset \frac{1}{\r_q}M$ 
up to small $C^1$-error.

\subsubsection{Gluing the local pieces together}
(Sections \ref{sec-mapping}-\ref{sec-extracting}).
To begin the gluing process, we select a separated
collection of points of each type in $M$:
$
\{p_i\}_{i\in I_{\twostratum}}$,
$\{p_i\}_{i\in I_{\slim}}$,
$\{p_i\}_{i\in I_{\edge}}$,
$\{p_i\}_{i\in I_{\zeroball}}$,
so that 
\begin{itemize}
\item $\bigcup_{i \in I_{\twostratum}} B(p_i, \const 
\r_{p_i})$ covers the $2$-stratum points,
\item $\bigcup_{i \in I_{\slim} \cup I_{\edge}} B(p_i, \const 
\r_{p_i})$ covers the $1$-stratum points, and
\item
$\bigcup_{i \in I_{zeroball}} B(p_i, \const 
r^0_{p_i})$ covers the $0$-stratum points.
\end{itemize}

Our next objective is to combine the  $\eta_{p_i}$'s so as to define
global fibrations for each of the different types of points, 
and ensure that these fibrations are compatible on overlaps.
To do this, we borrow an idea from the proof of the Whitney
embedding theorem (as well as proofs of
Gromov's compactness theorem \cite[Chapter 8.D]{Gromov},
\cite{Katsuda}):
we define a smooth map 
$\e^0:M\ra H$ into a 
high-dimensional Euclidean space $H$. The components
of $\e^0$ are functions of the $\eta_{p_i}$'s, the edge
function $\eta_{E'}$, and the scale
function $p\mapsto \r_p$, cutoff appropriately so that they define
global smooth functions.  

Due to the pairwise compatibility of the 
$\eta_{p_i}$'s discussed above, it turns out that the image 
under $\e^0$ of $\bigcup_{i \in I_{\twostratum}} B(p_i, \const 
\r_{p_i})$ is a 
subset $S\subset H$ which, when viewed at the right scale, is everywhere
locally close (in the pointed Hausdorff sense)
to a $2$-dimensional affine subspace.  We call such a set a 
{\em cloudy $2$-manifold}.   By an elementary argument, we show
in Appendix \ref{sec-cloudy} that a cloudy manifold of any dimension can be 
approximated by  a core manifold $W$ whose
normal injectivity radius is controlled. 

We adjust the map $\e^0$  by ``pinching'' it 
into the manifold core of $S$, thereby upgrading  $\e^0$ to a new map
$\e^1$ which is a circle fibration near the $2$-stratum.
The new map $\e^1$ is $C^1$-close to $\e^0$.
We then perform similar adjustments near the edge points
and slim $1$-stratum points, to obtain a map $\e:M\ra H$
which yields fibrations when restricted to certain regions
in $M$, see Section \ref{sec-adjusting}.  For example, 
we obtain 
\begin{itemize}
\item A $D^2$-fibration of a
region of $M$ near the edge set $E'$,
\item  
$S^2$ or $T^2$-fibrations of a region containing the slim stratum, and 
\item
A surface
fibration collaring (the boundary of) the region near $0$-stratum points.
\end{itemize}
Furthermore, it is a feature built into the construction
that where the fibered regions overlap, they do so in surfaces with boundary
along which the two fibrations are compatible.
For instance, 
the interface between the edge fibration and the $2$-stratum
fibration is a surface which inherits the same circle fibration
from the edge fibration and the $2$-stratum fibration.
Similarly, the
interface between the $2$-stratum fibration and the slim 
$1$-stratum fibration is a surface
with boundary which inherits a circle fibration from the $2$-stratum.
See Proposition \ref{prop-decompositionprop} for the properties
of the fibrations.

\subsubsection{Recognizing the graph manifold structure} (Section 
\ref{sec-proof})
At this stage of the argument, one has a decomposition of $M$
into domains with disjoint interiors,
where each domain is a compact $3$-manifold with corners
carrying a fibration of a specific kind, with compatibility of
fibrations on overlaps.  Using the topological
classification of the  fibers and the  $0$-stratum domains, one readily 
reads off the graph manifold structure.  This completes the proof
of Theorem \ref{thmmain}.

\subsubsection{Removing the bounds on derivatives of 
curvature}
\label{subsec-removingbounds}
(Section \ref{sec-shioyayamaguchi}).
The proof of Theorem \ref{thmmain}
uses the derivative bounds (\ref{eqn-derivboundsmain})
only for 
$C^{K}$-precompactness
results.
In turn these are essentially used  only to determine the
topology of the $0$-stratum balls and the fibers of the edge
fibration.  Without the derivative bounds (\ref{eqn-derivboundsmain}),
one can appeal to similar compactness arguments.
However, one ends up with a sequence of pointed Riemannian manifolds
$\{(M_k,\star_k)\}$ which converge in the pointed Gromov-Hausdorff
topology to a pointed $3$-dimensional nonnegatively curved
Alexandrov space 
$(M_\infty,\star_\infty)$,
rather than having 
$C^K$-convergence to a $C^K$-smooth 
limit. By invoking Perelman's Stability Theorem
\cite{Perelman,Kapovitch}, one can
relate the topology of the limit space to those of the approximators.
The only remaining step is to determine the topology of the 
nonnegatively curved
Alexandrov spaces that arise as limits in this fashion.  In the
case of noncompact limits, this was done by Shioya-Yamaguchi
\cite{Shioya-Yamaguchi}.   In the compact case, it follows
from Simon \cite{Simon} or, alternatively, from the Ricci flow
proof of the elliptization conjecture (using the finite time 
extinction results of Perelman and Colding-Minicozzi).
For more details, we refer the reader to Section 
\ref{sec-shioyayamaguchi}.

\subsubsection{What's new in this paper}
\label{sec-whatsnew}

The
proofs of the collapsing theorems in 
\cite{Shioya-Yamaguchi1,Shioya-Yamaguchi,bbbmp,Morgan-Tian,Cao-Ge},
as well as the proof in this paper, 
all begin by comparing the local geometry at a certain scale
with the geometry of a nonnegatively curved manifold, and then
use this structure to deduce that one has a graph manifold.  The paper
\cite{bbbmp} follows a rather different 
line from the  other proofs, in that it uses the least amount
of the information available from the nonnegatively curved models,
and proceeds with a covering argument based on the theory
of simplicial volume, as well as Thurston's proof of the geometrization
theorem for Haken manifolds.  The papers
\cite{Shioya-Yamaguchi1,Shioya-Yamaguchi,Morgan-Tian,Cao-Ge} and this paper
have a common overall
strategy, which is to use more of the theory of manifolds with
nonnegative sectional curvature -- Cheeger-Gromoll theory 
\cite{Cheeger-Gromoll} and
critical point theory \cite{Grove-Shiohama} --
to obtain a more refined version of the local models. Then the
local models are spliced together to obtain a decomposition of
the manifold into fibered regions from which one can recognize
a graph manifold.

Overall, our proof uses a minimum of material beyond the theory
of nonnegatively curved manifolds.
It is essentially elementary in flavor.
We now comment on some specific new points in our approach.

\subsubsection*{The  scale function $\r_p$}
The existence of a scale function $\r_p$ with the properties 
indicated in Section \ref{subsec-compactnessimplications} makes
it apparent that the theory of local collapsing is, at least 
philosophically, no different than the global version of collapsing.

We work consistently at the scale $\r_p$, which streamlines the argument.
In particular,
the structure theory of 
Alexandrov spaces,
which enters if one works at the curvature scale,
is largely eliminated.
Also, in the selection argument, one considers ball covers where
the radii are linked to the scale function $\r$, 
so one easily obtains
bounds on the intersection multiplicity from the fact that
the radii of
intersecting balls are comparable (when the scale function
$p\mapsto\r_p$ has small Lipschitz constant).
The technique of constructing a scale function with
small Lipschitz constant could help in other geometric
gluing problems.

\subsubsection*{The stratification}

Stratifications 
have a long history in geometric analysis, especially for singular
spaces such as convex sets, minimal varieties, Alexandrov spaces, and Ricci limit spaces, 
where one typically looks at the number of $\R$-factors that split off in a
tangent cone. 
The particular stratification that we use, based on the
number of $\R$-factors that approximately split off in a manifold, 
was not used in collapsing theory before, to our knowledge.  
Its implications for
achieving alignment may be useful in other settings.

\subsubsection*{The gluing procedure}

Passing from local models to global fibrations
involves some kind of gluing process.  Complications
arise from the fact one has to construct a global base space
for the fibration at the same time as one glues together
the fibration maps; in addition, one has to make the 
fibrations from the different strata compatible. 
The most obvious
approach is to add fibration patches inductively,
by using small isotopies and the fact that on overlaps, the
fibration maps are nearly affinely equivalent.  Then one
must perform further isotopies to make fibrations from the
different strata compatible with one another.  We find
the gluing procedure used here to be more elegant; moreover, it
produces fibrations which are automatically compatible.

Embeddings into a Euclidean space were used before to
construct fibrations in a collapsing setting
\cite{Fukaya}.  However, there is the important difference that
in the earlier work the base of the fibration was already
specified, and this base was embedded into a Euclidean space.
In contrast, in the present paper we must produce the
base at the same time as the fibrations, so we produce it as
a submanifold of the Euclidean space.

\subsubsection*{Cloudy manifolds}
The notion of cloudy manifolds, and the proof that they
have a good manifold core, may be of independent interest.
Cloudy manifolds are similar to objects that have  been
encountered before,  in the work
of Reifenberg \cite{Reifenberg} in geometric measure theory 
and also in  \cite{Pugh}.  However
the clean elementary argument for the existence
of a smooth core given in Appendix \ref{sec-cloudy}, 
using the universal bundle and transversality,
seems to be new.

\subsubsection{A sketch of the history}
The theory of collapsing was first developed by 
Cheeger and Gromov \cite{Cheeger-Gromov,Cheeger-Gromov2}, 
assuming both upper and lower bounds
on sectional curvature.   Their work characterized the degeneration
that can occur when one drops the injectivity radius bound
in Gromov's compactness theorem, generalizing Gromov's theorem
on almost flat manifolds \cite{Gromovalmostflat}. 
The corresponding local collapsing structure was used by 
Anderson and Cheeger-Tian in work on Einstein manifolds 
\cite{Anderson,Cheeger-Tian}.
As far as we know, the
first results on collapsing with a lower curvature bound
were announced by Perelman in the early 90's,
as an application of the theory of Alexandrov spaces,
in particular his Stability Theorem from
\cite{Perelman} (see also \cite{Kapovitch}); however, these results
were never published.
Yamaguchi \cite{Yamaguchi} established a fibration
theorem for manifolds close to Riemannian manifolds, under
a lower curvature bound.
Shioya-Yamaguchi \cite{Shioya-Yamaguchi1} studied 
collapsed $3$-manifolds with a diameter bound
and showed that they are graph manifolds, apart from an exceptional
case.
In \cite{Perelman2}, Perelman formulated without proof
a theorem equivalent
to our Theorem \ref{thmmain}.
A short time later, Shioya-Yamaguchi \cite{Shioya-Yamaguchi} proved
that -- apart from an exceptional case --
sufficiently collapsed $3$-manifolds are graph manifolds,
this time without assuming a diameter bound.  This result (or
rather the localized version they discuss in their appendix)  may be used
in lieu of \cite[Theorem 7.4]{Perelman2} to complete the 
proof of the geometrization conjecture.  Subsequently, 
Bessi\`eres-Besson-Boileau-Maillot-Porti \cite{bbbmp} gave a 
different approach to the last part of the proof of the geometrization 
conjecture, which involves collapsing as well as refined results
from $3$-dimensional topology.  Morgan-Tian \cite{Morgan-Tian}  gave 
a proof
of Perelman's collapsing result along the lines of
Shioya-Yamaguchi \cite{Shioya-Yamaguchi}.
We also mention the paper [CG] by Cao-Ge which 
relies on more sophisticated Alexandrov space results.

\subsection{Acknowledgements}  We thank
Peter Scott for some references to the $3$-manifold literature.

\section{Notation and conventions}
\label{sec-indexnotation}

\subsection{Parameters and constraints}
The rest of the paper develops a lengthy construction,
many steps of which generate new constants; we will
refer to these as {\em parameters}.
Although the  parameters remain fixed after being introduced, one 
should view different sets of parameter values as defining
different potential instances of the construction.  This is
necessary, because several arguments involve
consideration of sequences of values for certain parameters,
which one should associate with a sequence of 
distinct instances of the construction.  

Many steps of the argument assert that certain statements
hold provided that certain constraints on the
parameters are satisfied.   
By convention, each time we refer to such a constraint, we will
assume for the remainder of the paper that the 
inequalities in question are satisfied.  Constraint functions
will be denoted with a bar, e.g. $\be_E<\overline{\be}_E(\be_1,\si)$
means that $\be_E\in (0,\infty)$ satisfies an upper bound which is a function
of $\be_1$ and $\si$. 
By convention, all constraint functions take values in $(0,\infty)$.

At the end of the proof of Theorem \ref{thmmain}, 
we will verify that the constraints on the
various parameters can be imposed consistently.  Fortunately,
we do not have to carefully adjust each parameter in terms of the
others; the constraints are rather of the form that one parameter
is sufficiently small (or large) in terms of some others.  Hence
the only issue is the order in which the parameters are considered.

We follow
Perelman's convention that a condition like $a > 0$ means that
$a$ should be considered to be a small parameter, while a condition
like $A < \infty$ means that $A$ should be considered to be a large
parameter. This convention is only for expository purposes and
may be ignored by a logically minded reader.

\subsection{Notation} \label{notation}
We will use the following compact notation
for cutoff functions with prescribed support. 
Let $\phi \in C^\infty(\R)$ be a nonincreasing function so that
$\phi \Big|_{(-\infty,0]} = 1$,  $\phi \Big|_{[1,\infty)} = 0$
and $\phi((0,1)) \subset (0,1)$.
Given $a,b \in \R$ with $a < b$, we define $\Phi_{a,b} \in C^\infty(\R)$ by
\begin{equation}
\Phi_{a,b}(x) = \phi(a+(b-a)x),
\end{equation}
so that
$\Phi_{a,b} \Big|_{(-\infty,a]} = 1$ and 
$\Phi_{a,b} \Big|_{[b,\infty)} = 0$. Given
$a,b,c,d \in \R$ with $a<b<c<d$, we define 
$\Phi_{a,b,c,d} \in C^\infty(\R)$ by
\begin{equation}
\Phi_{a,b,c,d}(x) = \phi_{-b,-a}(-x) \: \phi_{c,d}(x),
\end{equation}
so that
$\Phi_{a,b,c,d} \Big|_{(-\infty,a]} = 0$,
$\Phi_{a,b,c,d} \Big|_{[b,c]} = 1$ and
$\Phi_{a,b,c,d} \Big|_{[d,\infty)} = 0$.

If $X$ is a metric space and $0 < r \le R$ then the annulus
$A(x, r, R)$ is $\overline{B(x,R)} - B(x,r)$.
The dimension of a metric space will always mean the
Hausdorff dimension.
For notation, if $C$ is a metric cone with basepoint at the
vertex $\star$ then we
will sometimes just write $C$ for the pointed metric space $(C, \star)$.
(Recall that a metric cone is
a pointed metric space $(Z,\star)$, which is a union of rays leaving
the basepoint $\star$, such that the union of any two such rays
is isometric to the union of two rays leaving the origin in 
$\R^2$.)

If $Y$ is a subset of $X$ and 
$t:Y\ra (0,\infty)$ is a function then we
write $N_t(Y)$ for the neighborhood of $Y$ with
variable thickness $t$:  $N_t(Y)=\bigcup_{y\in Y}\;B(y,t(y))$.

If $(X,d)$ is a metric space and $\lambda > 0$ then we write $\lambda X$
for the metric space $(X, \lambda d)$. For notational simplicity, we write
$B(p,r)\subset \la X$ to denote the $r$-ball around $p$ in the
metric space $\la X$.

Throughout the paper, a product  of metric spaces $X_1\times X_2$
will be endowed with the distance function given by the Pythagorean formula,
i.e. if $(x_1,x_2),\,(y_1,y_2)\in X_1\times X_2$ then
$d_{X_1 \times X_2}((x_1,x_2),(y_1,y_2))=
\sqrt{d^2_{X_1}(x_1,y_1)+d^2_{X_2}(x_2,y_2)}$.

\subsection{Variables}
For the reader's convenience, we 
tabulate the variables in this paper, listed by the
section in which they first appear. \\ \\
\noindent
Section \ref{statement}: $R_p$ \\
Section \ref{loccoll}: $r_p(\cdot)$ \\
Section \ref{sec-scalefunction}: $\Lambda$, $\sigma$, $\r_p$, $\bar{w}$,
$w'$\\
Section \ref{subsec-k-stratum}: $\beta_1$, $\beta_2$, $\beta_3$, $\De$ \\
Section \ref{adapted2}: $\varsigma_{\twostratum}$, $\eta_p$ and $\zeta_p$
(for $2$-stratum points) \\
Section \ref{2selection}: ${\mathcal M}$ \\
Section \ref{subsec-edgepoints}: 
$\beta_E$, $\si_E$, $\beta_{E'}$, $\si_{E'}$ \\
Section \ref{subsec-regularizationofrhoe'}: $d_{E'}$, $\rho_{E'}$,
$\varsigma_{E'}$ \\
Section \ref{subsec-edgetangent}: $\varsigma_{\edge}$, $\eta_p$ and $\zeta_p$
(for edge points) \\
Section \ref{subsec-additionalcutoff}: $\zeta_{\edge}$, 
$\zeta_{E'}$ \\
Section \ref{adaptedslim}: $\varsigma_{\slim}$, $\eta_p$ and $\zeta_p$
(for slim $1$-stratum points)\\
Section \ref{subsecann}: $\Upsilon_0$, $\Upsilon_0'$, $\delta_0$, $r^0_p$ \\
Section \ref{subsec-0stratumscale}: 
$\varsigma_{\zeroball}$, $\eta_p$ and $\zeta_p$
(for $0$-stratum points)\\
Section \ref{subsecmapping}: $H$, $H_i$, 
$H_i'$, $H_i''$,
$H_{\zeroball}$, $H_{\slim}$,
$H_{\edge}$, $H_{\twostratum}$, $Q_1$, $Q_2$, $Q_3$, $Q_4$,
$\pi_i$, $\pi_i^\perp$, $\pi_{ij}$, $\pi_{H_i'}$, $\pi_{H_i''}$, $\e^0$ \\
Section \ref{subsecimagee0}: $\Omega_0$ \\
Section \ref{subsec-e0near2stratum}: $A_1$, $\widetilde{A}_1$,
$S_1$, $\widetilde{S}_1$, $\Omega_1$, $\Gamma_1$,
$\Sigma_1$, $r_1$, $\Omega_1$  \\
Section \ref{subsec-e0nearedgestratum}: $A_2$, $\widetilde{A}_2$,
$S_2$, $\widetilde{S}_2$, $\Omega_2$, $\Gamma_2$,
$\Sigma_2$, $r_2$, $\Omega_2$\\
Section \ref{subsec-e0nearslimstratum}: $A_3$, $\widetilde{A}_3$,
$S_3$, $\widetilde{S}_3$, $\Omega_3$, $\Gamma_3$,
$\Sigma_3$, $r_3$, $\Omega_3$\\
Section \ref{sec-adjusting}: $c_{\adjust}$ \\
Section \ref{subsecadjust2}: $W_1^0$, $\Xi_1$, $\psi_1$,
$\Psi_1$, $\Omega_1'$,
$\e^1$, $c_{\twostratum}$ \\
Section \ref{subsecadjustingedge}: $W_2^0$, $\Xi_2$, $\psi_2$,
$\Psi_2$, $\Omega_2'$,
$\e^2$, $c_{\edge}$ \\
Section \ref{subsecadjustslim}: $W_3^0$, $\Xi_3$, $\psi_3$,
$\Psi_3$, $\Omega_3'$,
$\e^3$, $c_{\slim}$ \\
Section \ref{subsecproofofprop}: $W_1$, $W_2$, $W_3$ \\
Section \ref{subsecdefzeroball}: $M^{\zeroball}$, $M_1$ \\
Section \ref{subsecdefslim}: $W_3'$, $U_3'$, $W_3''$, $M^{\slim}$, $M_2$ \\
Section \ref{subsecdefedge}: $W_2'$, $U_2'$, $W_2''$, $M^{\edge}$, $M_3$ \\
Section \ref{subsecdef2}: $W_1'$, $U_1'$, $W_1''$, $M^{\twostratum}$ \\
Section \ref{sec-proof}: $r_\partial$, $H_\partial$, $M_i^{\partial}$ \\
Appendix \ref{sec-cloudy}:  $S$, $\widetilde{S}$, $r(\cdot)$, $W$

\section{Preliminaries} \label{preliminaries}

We refer to \cite{Burago-Burago-Ivanov} for basics about
length spaces and Alexandrov spaces.

\subsection{Pointed Gromov-Hausdorff approximations}

\begin{definition} \label{def3}
Let $(X, \star_X)$ be a pointed metric space. 
Given $\de \in [0, \infty)$, two closed subspaces $C_1$ and $C_2$  are
{\em $\de$-close in the pointed Hausdorff sense} if
$C_1 \cap \overline{B(\star_X, \delta^{-1})}$ and 
$C_2 \cap \overline{B(\star_X, \delta^{-1})}$
have Hausdorff distance at most $\delta$.
\end{definition}

If $X$ is complete and proper (i.e. closed bounded
sets are compact) then the corresponding pointed Hausdorff topology,
on the set of closed subspaces of $X$, is 
compact and metrizable.

We now recall some definitions and basic results about the 
pointed Gromov-Hausdorff
topology  \cite[Chapter 8.1]{Burago-Burago-Ivanov}.

\begin{definition}
\label{def-hausdorffapproximation}
Let $(X,\star_X)$ and $(Y,\star_Y)$ be pointed metric spaces.
Given 
$\de\in [0,1)$,
 a pointed map $f:(X,\star_X)\ra (Y,\star_Y)$ is a 
{\em $\de$-Gromov-Hausdorff approximation} if for every 
$x_1,x_2\in B(\star_X,\de^{-1})$
and 
$y\in B(\star_Y,\de^{-1}-\de)$,
we have
\begin{equation}
|d_Y(f(x_1),f(x_2))-d_X(x_1,x_2)|\leq \de\quad \mbox{and}\quad
d_Y(y,f(B(\star_X,\de^{-1})))\leq \de\,.
\end{equation}
Two pointed metric spaces $(X,\star_X)$ and $(Y,\star_Y)$ are 
{\em $\de$-close in the pointed Gromov-Hausdorff topology}
(or $\de$-close for short) if
there is a $\de$-Gromov-Hausdorff approximation from
 $(X,\star_X)$ to $(Y,\star_Y)$.
We note that this does not define a metric space structure 
on the set of pointed metric spaces, but nevertheless defines a topology
which happens to be metrizable. 
A sequence $\{(X_i,\star_i)\}_{i=1}^\infty$
of pointed metric spaces
{\em Gromov-Hausdorff converges} to $(Y,\star_Y)$ if
there is a sequence $\{f_i:(X_i,\star_{X_i})\ra (Y,\star_Y)\}_{i=1}^\infty$
of $\de_i$-Gromov-Hausdorff approximations, where $\de_i\ra 0$.
  We will denote this by 
$(X_i,\star_{X_i})\stackrel{\gh}{\ra}(Y,\star_Y)$.

\end{definition} 
Note that a $\de$-Gromov-Hausdorff approximation 
is a $\de'$-Gromov-Hausdorff approximation for every $\de'\geq \de$.
A $\de$-Gromov-Hausdorff approximation $f$ has a {\em
quasi-inverse}
$\widehat{f} : (Y, \star_Y) \ra (X, \star_X)$ constructed by saying that
for $y \in B(\star_Y, \delta^{-1} - \delta)$, we choose some
$x \in B(\star_X, \delta^{-1})$ with $d_Y(y, f(x)) \le \delta$ and
put $\widehat{f}(y) = x$. There is a function $\delta^\prime = 
\delta^\prime(\delta) > 0$ with $\lim_{\de \ra 0} \delta^\prime = 0$
so that if $f$ is a $\de$-Gromov-Hausdorff approximation then
$\widehat{f}$ is a $\de^\prime$-Gromov-Hausdorff approximation
and $\widehat{f} \circ f$ (resp. $f \circ \widehat{f}$) 
is $\delta^\prime$-close
to the identity on $B(\star_X, (\delta^\prime)^{-1})$ 
(resp. $B(\star_Y, (\delta^\prime)^{-1})$).
The condition $(X_i,\star_{X_i})\stackrel{\gh}{\ra}(Y,\star_Y)$
is equivalent to the existence of a sequence 
$\{f_i: (Y,\star_Y)\ra(X_i,\star_{X_i})\}_{i=1}^\infty$
(note the reversal of domain and target)
of $\de_i$-Gromov-Hausdorff approximations, where $\de_i\ra 0$.

The relation of being $\de$-close is not symmetric. However,
this does not create a problem because only the associated notion of
convergence (i.e. the topology) plays a role in our discussion.

The pointed Gromov-Hausdorff
topology is a complete metrizable topology on the set of complete 
proper metric spaces
(taken modulo pointed isometry).  Hence we can talk about two 
such metric spaces
being having distance at most $\de$ from each other. 
There is a well-known criterion for a set of
pointed metric spaces to be precompact in the pointed Gromov-Hausdorff topology
\cite[Theorem 8.1.10]{Burago-Burago-Ivanov}. 
Complete proper {\em length spaces}, which are the main 
interest of this paper, form a closed subset of the set of
complete proper metric spaces.

\subsection{$C^K$-convergence}

\begin{definition}
\label{def-smoothapproximation}
Given $K \in \Z^+$,
let $(M_1, \star_{M_1})$ and $(M_2, \star_{M_2})$ be complete
pointed 
$C^{K}$-smooth Riemannian manifolds.
(That is, the manifold
transition maps are $C^{K+1}$ and the metric in local 
coordinates is $C^{K}$).
Given $\de\in [0,\infty)$, a pointed $C^{K+1}$-smooth map 
$f:(M_1, \star_{M_1})\ra 
(M_2, \star_{M_2})$ is a 
{\em $\de$-$C^{K}$ approximation} if it is a 
$\de$-Gromov-Hausdorff approximation and
the $C^K$-norm of 
$f^* g_{M_2} - g_{M_1}$, computed on 
$B(\star_{M_1}, \delta^{-1})$, is bounded above by $\delta$.
Two $C^K$-smooth Riemannian manifolds $(M_1, \star_{M_1})$ and 
$(M_2, \star_{M_2})$ are {\em $\de$-$C^K$ close} if
there is a $\de$-$C^K$ approximation from $(M_1, \star_{M_1})$ 
to $(M_2, \star_{M_2})$.
\end{definition} 

In what follows, we will always
take $K \ge 10$. We now state a $C^K$-precompactness result.

\begin{lemma} \label{smoothprecompactness} (cf. \cite[Chapter 10]{Petersen})
Given $v, r > 0$, $n \in \Z^+$ and 
a function $A : (0, \infty) \rightarrow (0, \infty)$,
the set of
complete pointed $C^{K+2}$-smooth $n$-dimensional Riemannian manifolds 
$(M,\star_M)$
such that 
\begin{enumerate}
\item $\vol(B(\star_M, r)) \ge v$ and 
\item $|\nabla^k \Rm| \le A(R)$ on
$B(\star_M, R)$, for all $0 \le k \le K$ and $R >0$,
\end{enumerate}
is precompact in the pointed 
$C^{K}$-topology.
\end{lemma}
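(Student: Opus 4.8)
This is a standard Cheeger--Gromov--type precompactness statement, and the proof proceeds in the usual three stages: (i) extract a Gromov--Hausdorff limit, (ii) upgrade it to a smooth manifold with a $C^K$ metric using the curvature bounds, (iii) verify that the convergence is $C^K$. I would begin by taking an arbitrary sequence $\{(M_i,\star_{M_i})\}$ in the given class and producing a subsequential limit.

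\emph{Step 1: Gromov--Hausdorff precompactness.} The curvature bounds in hypothesis (2) with $k=0$ give $|\Rm| \le A(R)$ on $B(\star_{M_i}, R)$, hence a local lower Ricci bound on each ball of a fixed radius. Combined with the lower volume bound (1) at radius $r$, the Bishop--Gromov inequality provides, for each fixed $R$, a uniform lower bound on $\vol(B(\star_{M_i},\rho))$ for all $\rho \le R$, and then a uniform upper bound on the number of $\epsilon$-separated points in $B(\star_{M_i}, R)$. By the criterion \cite[Theorem 8.1.10]{Burago-Burago-Ivanov} the sequence is precompact in the pointed Gromov--Hausdorff topology, so after passing to a subsequence we have $(M_i,\star_{M_i}) \stackrel{\gh}{\ra} (X,\star_X)$ for some complete proper length space $X$.

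\emph{Step 2: harmonic coordinates and $C^{K+1,\alpha}$ control.} The heart of the argument is to show $X$ is a $C^K$-smooth manifold and the convergence is $C^K$. The two-sided curvature bound $|\Rm| \le A(R)$ on $B(\star_{M_i},R)$ together with the uniform lower volume bound established in Step 1 gives, by standard results (see \cite[Chapter 10]{Petersen}), a uniform lower bound on the $C^{1,\alpha}$-harmonic radius at points of $B(\star_{M_i}, R/2)$. In such harmonic coordinates the metric components $g_{ab}$ satisfy a uniformly elliptic equation whose coefficients involve $\Rm$; bootstrapping with the bounds $|\nabla^k \Rm| \le A(R)$ for $0 \le k \le K$ and Schauder estimates yields uniform $C^{K+1,\alpha'}$ bounds on the $g_{ab}$ on a slightly smaller ball, for any $\alpha' < 1$. (This is where the hypothesis on derivatives of curvature is used; note the stated class uses $C^{K+2}$ regularity, which comfortably accommodates bounding up to $\nabla^K \Rm$.) Covering $B(\star_{M_i}, R/2)$ by a uniformly bounded number of such harmonic charts, with uniformly controlled transition maps (in $C^{K+2,\alpha'}$), and using Arzel\`a--Ascoli on each chart, one produces a limiting $C^{K+1,\alpha'} \subset C^{K}$-smooth atlas on $X$ with a limiting metric $g_\infty$ of class $C^{K}$.

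\emph{Step 3: assembling the $C^K$-approximations.} Finally one organizes the chart-level convergence into the global statement: using a partition of unity subordinate to the limit atlas and the harmonic coordinate charts on the $M_i$, construct for each large $i$ a pointed $C^{K+1}$-smooth map $f_i : (M_i,\star_{M_i}) \ra (X,\star_X)$ which is a $\delta_i$-Gromov--Hausdorff approximation with $\delta_i \ra 0$ and for which $\|f_i^* g_\infty - g_{M_i}\|_{C^K(B(\star_{M_i},\delta_i^{-1}))} \le \delta_i$; this is exactly a $\delta_i$-$C^K$ approximation in the sense of Definition \ref{def-smoothapproximation}. Hence $(M_i,\star_{M_i})$ is $\delta_i$-$C^K$ close to $(X,g_\infty,\star_X)$, proving precompactness in the pointed $C^K$-topology.

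\emph{Main obstacle.} The routine-but-delicate part is Step 2, specifically keeping all the local estimates (harmonic radius, elliptic/Schauder constants, size and multiplicity of the covering, regularity of transition maps) uniform in $i$ on balls $B(\star_{M_i},R)$ of \emph{every} fixed radius $R$, with constants depending only on $R$, $v$, $r$, $n$ and $A$. Once the uniform $C^{K+1,\alpha}$ harmonic-chart estimates are in hand, the passage to the limit and the construction of the approximating maps in Step 3 are formal. Since all of this is classical, the cleanest exposition simply cites \cite[Chapter 10]{Petersen} for the local harmonic-coordinate estimates and the convergence machinery.
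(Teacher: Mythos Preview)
Your proposal is correct and follows exactly the approach the paper indicates: the paper does not give a detailed proof but simply remarks that the derivative-of-curvature bounds yield uniform $C^{K+1}$-bounds on the metric in harmonic coordinates, whence the limit metrics are $C^K$-smooth, and cites \cite[Chapter 10]{Petersen}. Your Steps 1--3 are a faithful fleshing-out of precisely this sketch.
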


The bounds on the derivatives of curvature in Lemma \ref{smoothprecompactness}
give uniform $C^{K+1}$-bounds on the Riemannian metric in harmonic
coordinates.  One then obtains limit metrics which are
$C^K$-smooth.  One can get improved regularity but we will not need it.

\subsection{Alexandrov spaces}
\label{subsec-alexandrov}

Recall that there is a notion of 
an Alexandrov
space of curvature at least $c$, or equivalently 
a complete length space $X$ having
curvature bounded below by $c \in \R$ on an open set $U \subset X$
\cite[Chapter 4]{Burago-Burago-Ivanov}.  
In this paper we will only be concerned with  Alexandrov
spaces of finite Hausdorff dimension, 
so this will be assumed implicitly without further
mention.   

We will also have occasion to work with 
incomplete, but locally complete spaces.  This situation typically arises
when one has a metric space $X$ where $X=B(p,r)$, and every closed ball $\ol{B(p,r')}$ with $r'<r$ is complete.
The version of Toponogov's theorem for Alexandrov spaces 
\cite[Chapter 10.3]{Burago-Burago-Ivanov},
in which one deduces global triangle comparison inequality from local ones,
also applies in the incomplete situation, provided that the geodesics
arising in the proof lie in an {\it a priori} complete part of the space.
In particular, if all sides of a geodesic triangle have
length $<D$ then triangle comparison is valid provided that the 
closed balls of radius $2D$ centered at the vertices are complete.

We recall the notion of a strainer (cf. 
\cite[Definition 10.8.9]{Burago-Burago-Ivanov}).

\begin{definition}
Given a point $p$ in an Alexandrov space $X$ of curvature
at least $c$, an {\em $m$-strainer at $p$ of quality
$\delta$ and scale $r$} is a collection
$\{(a_i, b_i)\}_{i=1}^m$ of pairs of points 
such that $d(p,a_i) = d(p, b_i) = r$ and in terms of comparison angles,
\begin{align}
\cangle_p(a_i,b_i) & > \pi - \delta, \\
\cangle_p(a_i,a_j) & > \frac{\pi}{2} - \delta, \notag \\
\cangle_p(a_i,b_j) & > \frac{\pi}{2} - \delta, \notag \\
\cangle_p(b_i,b_j) & > \frac{\pi}{2} - \delta \notag
\end{align}
for all $i,j \in \{1, \ldots, m\}$, $i \neq j$.  Note that the
comparison angles are defined using comparison triangles in the 
model space of constant curvature $c$.
\end{definition}

For facts about strainers, we refer to
\cite[Chapter 10.8.2]{Burago-Burago-Ivanov}. 
The Hausdorff dimension of $X$ equals its strainer number,
which is defined as follows.

\begin{definition}
The {\em strainer number of $X$} is the supremum of numbers $m$ such that
there exists an $m$-strainer of quality $\frac{1}{100m}$ at some point
and some scale.
\end{definition}

By ``dimension of $X$'' we will mean the Hausdorff dimension; this
coincides with its topological dimension, although we will not need this
fact.
If $(X, \star_X)$ is a pointed nonnegatively curved
Alexandrov space then there is a 
pointed Gromov-Hausdorff limit
$C_T X = \lim_{\lambda \rightarrow \infty} \left(
\frac{1}{\lambda} X, \star_X \right)$
called the {\em Tits cone} of $X$. 
It is a nonnegatively curved Alexandrov space 
which is a metric cone, as defined in
Subsection \ref{notation}.
We will consider Tits cones in the special case when $X$ is a 
nonnegatively curved Riemannian manifold.

A {\em line} in a length space $X$ is a curve $\gamma : \R \rightarrow X$
with the property that $d_X(\gamma(t), \gamma(t')) = |t-t'|$ for all
$t,t' \in \R$.
The splitting theorem (see \cite[Chapter 10.5]{Burago-Burago-Ivanov})
says that if a nonnegatively curved Alexandrov space $X$ contains a line then it
is isometric to $\R \times Y$ for some nonnegatively
curved Alexandrov space $Y$.

If $\gamma \: : \: [0, T] \rightarrow X$ is a minimal geodesic
in an Alexandrov space $X$, parametrized by arc-length, and $p \neq
\gamma(0)$ is a point in $X$ then the function $t \rightarrow
d_p(\gamma(t))$ is right-differentiable and
\begin{equation}
\lim_{t \rightarrow 0^+} \frac{d_p(\gamma(t)) - d_p(\gamma(0))}{t} =
- \cos \th,
\end{equation}
where $\th$ is the minimal angle between $\gamma$
and minimizing geodesics from $\gamma(0)$ to $p$
\cite[Corollary 4.5.7]{Burago-Burago-Ivanov}.

The proof of the next lemma is similar to that of
\cite[Theorem 10.7.2]{Burago-Burago-Ivanov}.

\begin{lemma} \label{alexlimit}
Given $n \in \Z^+$, 
let $\{(X_i, \star_{X_i})\}_{i=1}^\infty$ be a sequence of complete pointed
length spaces.
Suppose that $c_i \ra 0$ and $r_i \ra \infty$ are positive sequences such that
for each $i$, the ball
$B(\star_{X_i}, r_i)$ has curvature bounded below by $- c_i$
and dimension bounded above by $n$. Then
a subsequence of the $(X_i, \star_{X_i})$'s converges in the pointed Gromov-Hausdorff
topology to a pointed nonnegatively curved 
Alexandrov space of dimension at most $n$.
\end{lemma}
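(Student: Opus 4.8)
The strategy is the standard precompactness-then-stability argument, adapted to the incomplete setting. First I would apply the pointed Gromov-Hausdorff precompactness criterion \cite[Theorem 8.1.10]{Burago-Burago-Ivanov}: the lower curvature bound $-c_i$ on $B(\star_{X_i},r_i)$ together with the Bishop-Gromov volume comparison gives a uniform bound on the number of $\eps$-separated points in any fixed ball $B(\star_{X_i},\rho)$, once $i$ is large enough that $r_i > \rho$ and $c_i$ is small. This is enough to extract a subsequence converging in the pointed Gromov-Hausdorff topology to some pointed complete length space $(X_\infty,\star_\infty)$. It remains to show that $X_\infty$ is a nonnegatively curved Alexandrov space of dimension at most $n$.

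For the curvature bound, the point is that triangle comparison passes to Gromov-Hausdorff limits. Fix a geodesic triangle in $X_\infty$ with vertices at distance $< D$ from one another and from $\star_\infty$. Pull it back (approximately) to triangles in $X_i$ using the Gromov-Hausdorff approximations; for $i$ large, all the relevant points lie well inside $B(\star_{X_i},r_i)$, so the remark in Subsection \ref{subsec-alexandrov} about Toponogov's theorem in the incomplete situation applies, giving the comparison inequality with respect to the model space of curvature $-c_i$. Geodesics may only be \emph{approximately} geodesic after pulling back, but one can reselect honest minimizing geodesics in $X_i$ between the approximate vertices, and the usual argument shows these converge (after passing to a further subsequence, using Arzel\`a-Ascoli on the compact balls) to geodesics in $X_\infty$ realizing the original triangle. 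Letting $i\to\infty$ and using $c_i\to 0$ yields the nonnegative curvature comparison in $X_\infty$; since $D$ was arbitrary this gives global comparison, so $X_\infty$ is a nonnegatively curved Alexandrov space. This is the part modeled on the proof of \cite[Theorem 10.7.2]{Burago-Burago-Ivanov}.

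For the dimension bound I would use the strainer characterization of Hausdorff dimension recalled above. If $X_\infty$ had dimension $> n$, then (since $X_\infty$ is already known to be nonnegatively curved) it would contain an $(n+1)$-strainer of quality $\frac{1}{100(n+1)}$ at some point $q$ and some scale $\rho$. Strainers are stable under Gromov-Hausdorff approximation: the comparison angles are continuous functions of the mutual distances of the strainer points and the center, so for $i$ large the pulled-back configuration in $X_i$ is an $(n+1)$-strainer of quality, say, $\frac{1}{50(n+1)}$ at scale $\approx\rho$, lying inside $B(\star_{X_i},r_i)$ where $X_i$ has curvature $\ge -c_i$. But a ball of dimension $\le n$ cannot carry an $(n+1)$-strainer of such quality (the strainer number is the dimension, and the quality threshold $\frac{1}{50(n+1)}$ is below $\frac{1}{100(n+1)}$ only in the wrong direction --- here one must be slightly careful and instead invoke directly that an $(n+1)$-strainer of quality $\frac{1}{100(n+1)}$ produces an almost-isometric embedding of an $(n+1)$-ball, contradicting $\dim \le n$ via a volume/packing estimate). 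This contradiction forces $\dim X_\infty \le n$.

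\textbf{Main obstacle.} The delicate point is the incompleteness: the spaces $B(\star_{X_i},r_i)$ are only locally complete, and one must ensure throughout that every geodesic, comparison triangle, and strainer used in the limiting argument stays within the region where the a priori curvature bound and completeness are available. Since $r_i\to\infty$, for any fixed scale this is eventually automatic, but it requires care to quantify: one fixes $D$ (or $\rho$) first, then takes $i$ large enough that $r_i$ exceeds the relevant multiple of $D$ (e.g. $2D$ as in the stated remark) \emph{and} $c_i$ is small. The bookkeeping of ``how large $i$ must be'' in terms of the scale is the only real subtlety; the comparison geometry itself is routine once that is arranged.
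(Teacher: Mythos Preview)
Your proposal is correct and is precisely the standard argument the paper has in mind: the paper does not give its own proof but simply refers to \cite[Theorem 10.7.2]{Burago-Burago-Ivanov}, and your sketch is exactly that argument adapted to the locally complete setting. The one wobble in your dimension step is harmless: if $\dim X_\infty \ge n+1$ then $X_\infty$ contains $(n+1)$-strainers of \emph{arbitrarily good} quality (the regular set is open and dense), so you may start with quality $\frac{1}{200(n+1)}$ and absorb the degradation from pulling back.
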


\subsection{Critical point theory} 
We recall a few facts about critical point theory here,
and refer the reader to \cite{Grove-Shiohama,Cheeger} for 
more information.

If $M$ is a complete Riemannian manifold and $p\in M$,
then a point $x\in M\setminus \{p\}$ is {\em noncritical}
if there is a nonzero vector $v\in T_xM$ making an angle
strictly larger than $\frac{\pi}{2}$ with the initial
velocity of every minimizing segment from $x$ to $p$.  If there are 
no critical points in the set $d_p^{-1}(a,b)$ then 
the level sets $\{d_p^{-1}(t)\}_{t\in(a,b)}$,
are pairwise isotopic Lipschitz
hypersurfaces, and $d_p^{-1}(a,b)$ is diffeomorphic to a 
product, as in the usual Morse lemma for smooth functions.
As with the traditional Morse lemma,
the proof proceeds by constructing a smooth vector field $\xi$
such that $d_p$ has uniformly positive directional derivative
in the direction $\xi$.

\subsection{Topology of nonnegatively curved 3-manifolds}

In this subsection we describe the topology of certain
nonnegatively curved manifolds.  We start with $3$-manifolds.

\begin{lemma} \label{topology}
Let $M$ be a complete connected orientable $3$-dimensional 
$C^K$-smooth
Riemannian manifold
with nonnegative sectional curvature. 
We have the following classification of the diffeomorphism
type of $M$, based on the number of ends : 
\begin{itemize}
\item
$0$ ends:\quad 
$S^1 \times S^2$, $S^1 \times_{\Z_2} S^2 =
\R P^3 \# \R P^3$,
$T^3/\Gamma$ (where $\Gamma$ is a finite subgroup of
$\Isom^+(T^3)$ which acts freely on $T^3$) or $S^3 / \Gamma$
(where $\Gamma$ is a finite 
subgroup
of $\SO(4)$ that acts freely on $S^3$). 
\item
$1$ end: \quad
$\R^3$, 
$S^1 \times \R^2$,
$S^2\times_{\Z_2} \R = \R P^3 - B^3$
or $T^2 \times_{\Z_2} \R = $ a flat $\R$-bundle over the Klein bottle. 
\item
$2$ ends:\quad
$S^2 \times \R$ or $T^2 \times \R$.
\end{itemize}

If $M$ has two ends then it splits off an $\R$-factor isometrically.
\end{lemma}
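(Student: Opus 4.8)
The plan is to prove Lemma \ref{topology} by organizing the argument according to the number of ends, invoking the soul theorem of Cheeger--Gromoll together with Hamilton's classification of closed nonnegatively curved $3$-manifolds, and then handling the noncompact cases via the structure of the normal bundle to the soul. First I would recall that by the soul theorem \cite{Cheeger-Gromoll}, any complete noncompact nonnegatively curved $M$ is diffeomorphic to the total space of the normal bundle of a compact totally geodesic submanifold $\Sigma$ (the soul), whose dimension is strictly less than $\dim M = 3$, so $\dim \Sigma \in \{0,1,2\}$; in the compact case $M$ is diffeomorphic to its own soul.

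For the \textbf{closed case} ($0$ ends), I would cite Hamilton's work \cite{Hamilton3d,Hamiltonnnn}: a closed orientable $3$-manifold with nonnegative sectional curvature is diffeomorphic to a quotient of $S^3$, $S^2 \times \R$, or $\R^3$ by a finite group acting freely by isometries (in the appropriate geometry); sorting out which quotients can occur for an orientable manifold gives exactly $S^3/\Gamma$ with $\Gamma \le \SO(4)$, $T^3/\Gamma$, $S^1 \times S^2$, and $S^1 \times_{\Z_2} S^2 = \R P^3 \# \R P^3$ (the latter being the orientable $S^2 \times \R$-quotient). For the \textbf{one-end case} I would note that a soul of dimension $0$ gives $M \cong \R^3$; a soul of dimension $1$, i.e.\ a circle, gives an $\R^2$-bundle over $S^1$, which for orientable $M$ is $S^1 \times \R^2$ (the nonorientable-bundle case is excluded); and a soul of dimension $2$ gives an $\R$-bundle over a closed surface $\Sigma^2$ admitting a nonnegatively curved metric, hence $\Sigma = S^2$, $\R P^2$, $T^2$, or the Klein bottle. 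Since $M$ is orientable and has one end (so the bundle is nontrivial, equivalently the total space is connected at infinity in a single end), the $S^2$ and $T^2$ souls are ruled out (their total spaces of $\R$-bundles are either $S^2 \times \R$ or have two ends / fail orientability appropriately), leaving the twisted line bundles $S^2 \times_{\Z_2} \R = \R P^3 - B^3$ over $\R P^2$ and $T^2 \times_{\Z_2} \R$ over the Klein bottle.

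For the \textbf{two-end case}, the key input is the splitting theorem (stated in Subsection \ref{subsec-alexandrov}): a complete nonnegatively curved manifold with at least two ends contains a line, hence splits isometrically as $\R \times N^2$ where $N$ is a complete nonnegatively curved surface; if $N$ were noncompact it would itself contain a line (being one-ended noncompact nonnegatively curved, or by a further soul argument), forcing a further split $M \cong \R^2 \times (\text{curve})$, which has only one end --- a contradiction --- so $N$ is closed, and an orientable closed nonnegatively curved surface is $S^2$ or $T^2$, giving $M \cong S^2 \times \R$ or $T^2 \times \R$, and the isometric splitting is exactly what the splitting theorem provided. The main obstacle I anticipate is the bookkeeping in the one-end and closed cases: one must carefully match up which vector bundles over souls yield orientable, correctly-ended total spaces, and which finite free isometric quotients of the standard geometries are orientable; this is where one must be careful to neither omit a case (e.g.\ the flat $T^3/\Gamma$ quotients) nor include a non-orientable one. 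The regularity hypothesis ($C^K$-smooth rather than $C^\infty$) causes no difficulty since $K \ge 10$ is more than enough to apply the soul theorem, the splitting theorem, and Hamilton's classification, all of which are diffeomorphism-type statements insensitive to this level of smoothness.
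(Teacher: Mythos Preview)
Your approach is essentially the same as the paper's: Hamilton for the closed case, Cheeger--Gromoll soul theorem for the noncompact case, and the splitting theorem for two ends. The paper's proof is in fact terser than yours --- it simply lists the soul dimensions and says ``the possible topologies of $M$ are listed in the lemma'' without the bundle-by-bundle analysis you carry out.

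There is one small slip in your two-end argument. You claim that if the cofactor $N^2$ were noncompact it would contain a line ``being one-ended noncompact nonnegatively curved''; this is false --- a paraboloid is a one-ended noncompact nonnegatively curved surface containing no line. The correct (and simpler) argument is purely topological: if $N$ is noncompact and connected then $\R \times N$ has exactly one end, contradicting the hypothesis. With that fix your proof is complete. (The paper also flags the $C^K$ versus $C^\infty$ issue in the closed case more carefully than you do, noting that one may either adapt Hamilton's argument or appeal to \cite{Simon}; your dismissal of the regularity point is correct in outcome but a bit quick.)
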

\begin{proof}
If $M$ has no end then it is compact and 
the result 
follows for $C^\infty$-metrics from \cite{Hamiltonnnn}.
For $C^K$-smooth metrics, one could adapt the argument in \cite{Hamiltonnnn}
or alternatively use \cite{Simon}.

If $M$ is noncompact then the 
Cheeger-Gromoll soul theorem says that $M$ is diffeomorphic
to the total space of a vector bundle over its soul, a closed 
lower-dimensional manifold with nonnegative sectional curvature
\cite{Cheeger-Gromoll}. 
(The proof in \cite{Cheeger-Gromoll}, which is for $C^\infty$-metrics,
goes through without change for $C^K$-smooth metrics.)
The possible dimensions of the soul are 0, 1 and 2. The
possible topologies of $M$ are listed in the lemma. 

If $M$ has two ends then it contains a line and
the Toponogov splitting theorem implies that $M$ splits off
an $\R$-factor isometrically.
\end{proof}

We now look at a pointed nonnegatively curved surface and
describe the topology of a ball in it 
which is pointed Gromov-Hausdorff close to an interval.

\begin{lemma} \label{surfacelemma}
Suppose that $(S,\star_S)$ is a pointed $C^K$-smooth 
nonnegatively curved complete orientable
Riemannian $2$-manifold. Let $\star_S\in S$ be a basepoint and
suppose that
the pointed ball $(B(\star_S,10),\star_S)$ has pointed Gromov-Hausdorff 
distance at most
$\delta$
from the pointed interval $([0, 10], 0)$.
\begin{enumerate}
\item Given $\th > 0$ there is some 
$\overline{\delta}(\th) > 0$
so that if 
$\delta<\overline{\delta}(\th)$
then for every $x\in 
\overline{B(\star_S, 9)} - B(\star_S, 1)$ the set $V_x$
of initial velocities of minimizing geodesic segments from
$x$ to $\star_S$ has diameter bounded above by $\th$.
\item 
There is some 
$\overline{\delta} > 0$ so that if 
$\delta<\overline{\delta}$
then for every $r\in [1,9]$
the ball  $\overline{B(\star_S, r)}$ is
homeomorphic to a closed $2$-disk.
\end{enumerate}
\end{lemma}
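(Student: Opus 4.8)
The plan is to use the assumed pointed Gromov--Hausdorff closeness to the interval to control the critical point structure of the distance function $d_{\star_S}$, and then appeal to critical point theory together with the topological classification of compact surfaces with boundary. First I would prove (1). Suppose it fails: there is a sequence $\beta_i \to 0$, pointed surfaces $(S_i, \star_i)$ with $(B(\star_i,10),\star_i)$ at pointed GH distance $\le \beta_i$ from $([0,10],0)$, and points $x_i \in \overline{B(\star_i,9)} - B(\star_i,1)$ with $\operatorname{diam}(V_{x_i}) \ge \theta$. By Toponogov and Lemma \ref{alexlimit} (using that nonnegatively curved surfaces have a lower curvature bound that survives rescaling), after passing to a subsequence the pointed spaces $(S_i,\star_i)$ converge in the pointed GH topology to a nonnegatively curved Alexandrov space $(S_\infty, \star_\infty)$; but GH limits are unique up to isometry, so $(B(\star_\infty,10),\star_\infty)$ is isometric to $([0,10],0)$, i.e. $S_\infty$ is one-dimensional near $\star_\infty$. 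Passing $x_i$ to a limit point $x_\infty$, which lies in the interval at distance in $[1,9]$ from the endpoint, there is a unique minimizing segment from $x_\infty$ to $\star_\infty$. One then invokes the standard fact that minimizing geodesics converge under pointed GH convergence of Alexandrov spaces (lower semicontinuity plus compactness), so that for large $i$ every minimizing segment from $x_i$ to $\star_i$ has initial velocity converging to that of the limit segment; this forces $\operatorname{diam}(V_{x_i}) \to 0$, contradicting $\operatorname{diam}(V_{x_i}) \ge \theta$. Hence (1) holds.

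Next I would deduce from (1) that, for $\beta$ small, every point $x \in \overline{B(\star_S,9)} - B(\star_S,1)$ is noncritical for $d_{\star_S}$. Indeed, by (1) applied with $\theta$ small (say $\theta < \frac{\pi}{10}$), the set $V_x$ of initial velocities of minimizing segments from $x$ to $\star_S$ lies in a ball of diameter $< \frac{\pi}{10}$ in the unit tangent sphere $S^1 \subset T_x S$; choosing $v \in T_x S$ to be the unit vector pointing ``away'' from this small arc (i.e. in the direction of the antipode of its center) gives a vector making angle $> \frac{\pi}{2}$ with every element of $V_x$, which is precisely the noncriticality condition. Therefore $d_{\star_S}$ has no critical points in the metric annulus $A(\star_S, 1, 9)$.

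Now I would assemble (2). The absence of critical points in $d_{\star_S}^{-1}(1,9)$ implies, by the Grove--Shiohama critical point theory recalled in the preliminaries, that $d_{\star_S}^{-1}(1,9)$ is homeomorphic (indeed diffeomorphic in the smooth sense away from the singular set) to a product $\partial B(\star_S,r) \times (1,9)$, and in particular that $\overline{B(\star_S,r)}$ is homeomorphic to $\overline{B(\star_S,2)}$ for every $r \in [1,9]$, so it suffices to identify the homeomorphism type of one such ball, say $\overline{B(\star_S,2)}$. This is a compact connected surface with boundary, and by the classification of compact surfaces it is determined by its orientability (given: $S$ is orientable), the number of boundary components, and the genus. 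To pin these down I would again argue by contradiction/limits: if for some sequence $\beta_i \to 0$ the ball $\overline{B(\star_i,2)}$ were not a disk, it would have either two or more boundary circles or positive genus, hence would contain an embedded circle not bounding a disk in $\overline{B(\star_i,2)}$; but such a circle has diameter bounded below by a universal constant (a non-nullhomotopic-in-the-ball or non-separating circle cannot be contained in a thin neighborhood of its center point's ray), contradicting that $(B(\star_i,10),\star_i)$ is $\beta_i$-close to the interval, which forces $\overline{B(\star_i,2)}$ to have diameter $\to 0$ in the transverse direction. More cleanly: the radial contraction flow from noncriticality shows $\overline{B(\star_S,2)}$ deformation retracts onto $\overline{B(\star_S,1)}$, and iterating (or directly, using that for $\beta$ small $B(\star_S,1)$ itself is GH-close to a short interval and hence, by the same noncriticality argument at smaller scales combined with the fact that $S$ is a smooth surface so small balls are disks) one concludes $\overline{B(\star_S,2)}$ is a disk. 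Hence $\overline{B(\star_S,r)}$ is a closed $2$-disk for all $r \in [1,9]$, proving (2).

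\textbf{Main obstacle.} The delicate point is the convergence-of-geodesics step in the proof of (1): one must ensure that \emph{every} minimizing segment from $x_i$ to $\star_i$ (not just some cleverly chosen one) has initial direction close to that of the limit segment. This requires the standard but nontrivial Alexandrov-space fact that minimizing geodesics behave upper-semicontinuously under pointed GH convergence, together with the uniqueness of the limit geodesic when the limit space is an interval; I would cite the relevant results in \cite{Burago-Burago-Ivanov}. A secondary subtlety is that the limit space $S_\infty$ could a priori be a half-line or a full line rather than exactly $[0,10]$, but since it must contain an isometric copy of $([0,10],0)$ as the $10$-ball about its basepoint, the basepoint is an endpoint (or interior point) of a $1$-dimensional space and the uniqueness-of-geodesics conclusion is unaffected.
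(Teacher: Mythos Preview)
Your argument for (1) is correct but overcomplicated, and your argument for (2) has a real gap.

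\textbf{Part (1).} The paper's proof is a one-liner: ``This follows from triangle comparison.'' Concretely, pick $y\in B(\star_S,10)$ whose image under the GH approximation is close to $10\in[0,10]$. Then $d(\star_S,x)+d(x,y)$ is close to $d(\star_S,y)$, so the comparison angle $\tilde\angle_x(\star_S,y)$ is close to $\pi$. By Toponogov, any minimizing geodesic from $x$ to $\star_S$ makes angle $\ge\tilde\angle_x(\star_S,y)$ with any minimizing geodesic from $x$ to $y$; hence all initial velocities in $V_x$ are nearly antipodal to a fixed direction, and therefore close to each other. This direct argument completely bypasses the convergence-of-geodesics issue you flagged as the main obstacle. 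Your limit argument works, but you have manufactured a difficulty that isn't there.

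\textbf{Part (2).} You correctly use (1) and critical point theory to see that the balls $\overline{B(\star_S,r)}$, $r\in[1,9]$, are pairwise homeomorphic compact surfaces with boundary. But your identification of the topology is incomplete. Approach (a) --- that a nontrivial embedded circle must have diameter bounded below by a universal constant --- is asserted without justification and is not obviously true (a thin flat torus has short essential loops). Approach (b) --- iterate the noncriticality argument at smaller scales --- fails because each rescaling by a factor $\lambda>1$ multiplies the GH parameter $\beta$ by $\lambda$, so after boundedly many iterations you lose the hypothesis; there is no uniform lower bound on the injectivity radius to stop at.

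The paper's route is different and uses a global ingredient you omit: the classification of complete orientable nonnegatively curved surfaces as $S^2$, $T^2$, $\R^2$, or $S^1\times\R$. First, since the level set $d_{\star_S}^{-1}(5)$ has diameter comparable to $\beta$, the fibers are connected, so $\overline{B(\star_S,1)}$ has a single boundary circle. Given the four ambient possibilities, the only way a compact subsurface with one boundary circle can fail to be a disk is if $S\cong T^2$ and $\overline{B(\star_S,1)}\cong T^2\setminus D^2$. But then $S$ is flat; lifting the annulus $A(\star_S,1,2)$ to the universal cover $\R^2$ produces a metric ball in flat $\R^2$ that is GH-close to an interval, which is absurd. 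This flatness-plus-universal-cover trick is the missing idea in your proof.
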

\begin{proof}
(1).
Choose a point $x^\prime$ with
$d_S(\star_S, x^\prime) = 9.5$. Fix a
minimizing geodesic $\gamma^\prime$ from $x$ to $x^\prime$
and a minimizing geodesic $\gamma^{\prime \prime}$ from
$\star_S$ to $x^\prime$. 
If $\gamma$ is a minimizing geodesic from $\star_S$ to $x$,
consider the geodesic triangle with edges $\gamma$, 
$\gamma^\prime$ and $\gamma^{\prime \prime}$. As
\begin{equation}
d(\star_S, x) + d(x, x^\prime) - d(\star_S, x^{\prime \prime})
\le \const \delta,
\end{equation}
triangle comparison implies that
the angle at $x$ between $\gamma$ and $\gamma^\prime$ is
bounded below by $\pi - a(\delta)$, where $a$ is a 
positive monotonic function with
$\lim_{\delta \rightarrow 0} a(\delta) = 0$. We take $\overline{\delta}$
so that $2a(\overline{\delta}) \le \theta$,

(2).  Suppose that 
$\delta<\overline{\delta}(\frac{\pi}{4})$.
By critical point theory, the distance function
$d_{\star_S}: A(\star_S,1,9) \ra [1,9]$ is 
a fibration with fibers diffeomorphic to a disjoint union of circles.
In particular, the closed
balls $\overline{B(\star_S,r)}$, for $r\in [1,9]$,
are pairwise homeomorphic.  When 
$\delta\ll 1$,
the fibers will be connected, since the diameter of
$d_{\star_S}^{-1}(5)$ will be comparable to $\delta$.
Hence $\overline{B(\star_S,1)}$
is homeomorphic to a surface with circle boundary.

Suppose that $\overline{B(\star_S, 1)}$ is not homeomorphic to a disk.
A complete connected orientable nonnegatively curved surface
is homeomorphic to $S^2$, $T^2$, $\R^2$, or $S^1\times \R$.
By elementary topology, the
only possibility is if 
$S$ is homeomorphic
to a $2$-torus, $\overline{B(\star_S, 1)}$ is homeomorphic to
the complement of a $2$-ball in $S$, and 
$S - B(\star_S,2)$ is homeomorphic to a disk. 
In this case, $S$ must be flat. However, 
the cylinder $A(\star_S,1,2)$ lifts to the
universal cover of $S$, which is isometric to the flat
$\R^2$. If 
$\delta$
is sufficiently small then the flat $\R^2$ would contain a 
metric ball of radius $\frac{1}{10}$ which is Gromov-Hausdorff close to an
interval, giving a contradiction. 
\end{proof}

\subsection{Smoothing Lipschitz functions}
\label{subsec-smoothinglipschitz}

The technique of smoothing Lipschitz functions was introduced in
Riemannian geometry by Grove and Shiohama \cite{Grove-Shiohama}.

If $M$ is a Riemannian manifold and $F$ is a Lipschitz function on $M$
then the generalized gradient of $F$ at $m \in M$ can be defined as follows.
Given $\epsilon  \in (0, \inj_m)$, 
if $x \in B(m, \epsilon)$ is a point of differentiability of $F$
then compute $\nabla_x F \in T_x M$ and parallel transport it along the 
minimizing
geodesic to $m$. Take the closed
convex hull of the vectors so obtained and then take the intersection as 
$\epsilon \ra 0$.
This gives a closed convex subset of $T_mM$, which is the generalized
gradient of $F$ at $m$ \cite{Clarke}; 
we will denote this set by
$\nabla_m^{\gen}F$.
The union $\bigcup_{m\in M}\,\nabla_m^{\gen} F\subset TM$ will be 
denoted $\nabla^{\gen}F$.

\begin{lemma} \label{lem-generalsmoothing}
Let  $M$ be a complete Riemannian manifold and let
$\pi : TM \rightarrow M$ be the projection map. 
Suppose that
$U\subset M$ is an open set, $C\subset U$ is a compact subset
and $S$
is an open fiberwise-convex subset of $\pi^{-1}(U)$.

Then for every $\eps>0$
and any Lipschitz function $F:M\ra \R$ whose generalized gradient
over $U$ lies in $S$, there is a Lipschitz function $\widehat F:M\ra \R$
such that:
\begin{enumerate}
\item $\widehat F$ is $C^\infty$ on an open set containing $C$.
\item  The generalized gradient of $\widehat{F}$, over $U$, lies in $S$.
(In particular, at every point in $U$ where $\widehat F$ is differentiable,
the gradient lies in $S$.)
\item $|\widehat{F} - F|_\infty \le \epsilon$.
\item $\widehat{F} \big|_{M-U} = F \big|_{M-U}$. 
\end{enumerate}
\end{lemma}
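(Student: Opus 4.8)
The plan is to use a standard partition-of-unity smoothing of the Lipschitz function $F$, but carried out carefully so that the gradient constraint ``$\nabla^{\gen} F \in S$'' is preserved. The key tool is convolution against a mollifier in local coordinates, combined with a partition of unity subordinate to a cover of a neighborhood of the compact set $C$. The fiberwise convexity of $S$ is exactly what makes the averaging operation compatible with the constraint: a convex combination of vectors in a convex set stays in the set.

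First I would fix a slightly smaller open set: choose an open $U'$ with $C \subset U' \subset \overline{U'} \subset U$ and $\overline{U'}$ compact. On $U'$ one can find a finite cover by coordinate charts $\{(V_\alpha, \phi_\alpha)\}$ small enough that in each chart the metric is nearly Euclidean, the exponential map and parallel transport vary by an arbitrarily small amount, and $S$ restricted to $\pi^{-1}(V_\alpha)$ is, up to an arbitrarily small error in the trivialization $TM|_{V_\alpha} \cong V_\alpha \times \R^n$, still an open fiberwise-convex set. In each chart define $F_\alpha = F * \psi_{\eps_\alpha}$, the Euclidean convolution of $F$ with a mollifier supported in a ball of radius $\eps_\alpha$; this is smooth, and $\nabla F_\alpha(x)$ is an average of values $\nabla_y F$ for $y$ near $x$, each transported to $x$. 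Because the chart is almost Euclidean, these transported gradients are almost-parallel transports in the sense of the definition of $\nabla^{\gen}$, so for $\eps_\alpha$ small they lie in (a slight enlargement of, hence by openness in) $S_x$; taking their convex average keeps us in $S_x$ by convexity. Also $|F_\alpha - F|$ is small on $\overline{U'}$.

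Next I would patch the $F_\alpha$ together. Let $\{\chi_\alpha\}$ be a smooth partition of unity subordinate to $\{V_\alpha\}$ with $\sum_\alpha \chi_\alpha \equiv 1$ on a neighborhood $C'$ of $C$, and let $\chi_0$ be a smooth cutoff which is $1$ outside $U'$ and $0$ on a neighborhood of $C$, chosen so that $\chi_0 + \sum_\alpha \chi_\alpha$ is identically $1$ and $\chi_0$ is supported in $U - C$. Set
\begin{equation}
\widehat F = \chi_0 F + \sum_\alpha \chi_\alpha F_\alpha.
\end{equation}
Then $\widehat F$ agrees with $F$ outside $U$ (there $\chi_0 = 1$ and all $\chi_\alpha = 0$), giving (4), and it is smooth on the neighborhood $C'$ of $C$ where $\chi_0 = 0$ and all the $F_\alpha$ are smooth, giving (1). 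The estimate $|\widehat F - F|_\infty \le \eps$ follows from $\widehat F - F = \sum_\alpha \chi_\alpha (F_\alpha - F)$ together with $\sum_\alpha \chi_\alpha \le 1$ and the $C^0$-closeness of each $F_\alpha$ to $F$ on the relevant chart (choose the $\eps_\alpha$ small enough), giving (3). The subtle point is (2): at a point $x \in U$ where $\widehat F$ is differentiable, one has $\nabla_x \widehat F = \sum_\alpha \chi_\alpha(x)\, \nabla_x F_\alpha + (\text{terms involving } \nabla \chi_\alpha)(F_\alpha - F)(x)$; since $\sum_\alpha \nabla \chi_\alpha = 0$ wherever $\chi_0 = 0$, on $C'$ the error terms can be bounded by $C \cdot \sup_\alpha |F_\alpha - F|$, which by choosing $\eps_\alpha$ small is dominated by the ``openness margin'' of $S$, so the convex combination $\sum_\alpha \chi_\alpha(x) \nabla_x F_\alpha$, already in $S_x$, stays in $S_x$ after perturbation; on the region where $\chi_0 \neq 0$ one instead uses that $\nabla_x \widehat F$ is a convex combination of $\nabla_x F$ (which lies in $\nabla^{\gen}_x F \subset S_x$ since $F$ is differentiable there) and the $\nabla_x F_\alpha \in S_x$, plus a small error, and again invokes convexity and openness.

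The main obstacle I expect is the bookkeeping in step (2) on the transition region, where both $\chi_0$ and some $\chi_\alpha$ are nonzero: one must ensure the $\nabla \chi_\alpha$ error terms do not push the gradient out of $S$. The clean way around this is to arrange the supports so that on $\supp(\nabla \chi_0) \cup \bigcup_\alpha \supp(\nabla \chi_\alpha)$ one has uniform lower bounds on the distance to $\partial S$ within the fibers, and then take all mollification radii $\eps_\alpha$ small compared to that bound; since $S$ is open and the relevant region has compact closure in $U$, such a uniform bound exists. A secondary technical point is that $F_\alpha$ is only defined on a shrunken chart (convolution eats a collar of width $\eps_\alpha$), but this is handled by the standard device of taking the $V_\alpha$ slightly larger than needed. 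None of this requires estimates beyond the elementary properties of mollifiers and the definition of $\nabla^{\gen}$ recalled above.
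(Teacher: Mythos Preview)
Your proposal is correct and follows exactly the approach the paper indicates: the paper simply says ``the proof proceeds by mollifying the Lipschitz function $F$, as in \cite[Section 2]{Grove-Shiohama}. We omit the details.'' Your sketch supplies those omitted details---local mollification in charts, patching via a partition of unity, and using the fiberwise convexity and openness of $S$ to absorb both the averaging and the $\nabla\chi_\alpha$ error terms---which is precisely the Grove--Shiohama mechanism the authors have in mind.
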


The proof of Lemma \ref{lem-generalsmoothing} proceeds by mollifying the
Lipschitz function $F$, as in \cite[Section 2]{Grove-Shiohama}. 
We omit the details.

\begin{corollary} \label{Lipsmoothing}
Suppose that $M$ is a compact Riemannian manifold. Given
$K < \infty$ and $\epsilon > 0$,
for any $K$-Lipschitz function $F$ on $M$
there is a $(K + \epsilon)$-Lipschitz function 
$\widehat{F} \in C^\infty(M)$ 
with  $|\widehat{F} - F|_{\infty} \le \epsilon$. 
\end{corollary}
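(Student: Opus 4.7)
The plan is to apply Lemma \ref{lem-generalsmoothing} with the following choice of data: take $U = M$, $C = M$ (which is compact by hypothesis), and let $S \subset TM$ be the open set
\[
S = \{v \in TM : |v| < K + \epsilon\}.
\]
This $S$ is an open fiberwise-convex subset of $\pi^{-1}(U) = TM$, since each fiber is an open ball of radius $K+\epsilon$ in the corresponding tangent space.

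Since $F$ is $K$-Lipschitz, at any point $m \in M$ where $F$ is differentiable one has $|\nabla_m F| \le K$. Because $\nabla_m^{\gen}F$ is the intersection (over $\epsilon' \to 0$) of the closed convex hulls of such nearby gradients (parallel transported to $m$), and parallel transport is an isometry, every element of $\nabla_m^{\gen}F$ has norm at most $K < K + \epsilon$. Hence $\nabla^{\gen}F \subset S$ on all of $U = M$, so the hypotheses of Lemma \ref{lem-generalsmoothing} are met.

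Applying Lemma \ref{lem-generalsmoothing} with parameter $\epsilon$, we obtain a Lipschitz function $\widehat{F}$ on $M$ satisfying (i) $\widehat{F}$ is $C^\infty$ on an open set containing $C = M$, hence $\widehat{F} \in C^\infty(M)$; (ii) the generalized gradient of $\widehat{F}$ over $U = M$ lies in $S$, so in particular $|\nabla_m \widehat{F}| < K + \epsilon$ at every $m \in M$; and (iii) $|\widehat{F} - F|_\infty \le \epsilon$.

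It remains to upgrade the pointwise gradient bound to a global Lipschitz bound. Given any two points $p, q \in M$, the compactness of $M$ guarantees the existence of a minimizing geodesic $\gamma : [0, d(p,q)] \to M$ from $p$ to $q$ parametrized by arc length. Since $\widehat{F}$ is smooth, the fundamental theorem of calculus yields
\[
\widehat{F}(q) - \widehat{F}(p) = \int_0^{d(p,q)} \langle \nabla_{\gamma(t)} \widehat{F}, \dot\gamma(t) \rangle \, dt,
\]
and the integrand is bounded in absolute value by $|\nabla \widehat{F}| \cdot |\dot\gamma| < K + \epsilon$. Therefore $|\widehat{F}(q) - \widehat{F}(p)| \le (K+\epsilon) \, d(p,q)$, establishing that $\widehat{F}$ is $(K+\epsilon)$-Lipschitz. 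The only step that is not wholly formal is the verification that mollification produces a smooth function whose generalized gradient lies in a prescribed open fiberwise-convex set — but this is precisely the content of Lemma \ref{lem-generalsmoothing}, already in hand.
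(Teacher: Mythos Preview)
Your proof is correct and follows exactly the same approach as the paper: apply Lemma \ref{lem-generalsmoothing} with $C = U = M$ and $S = \{v \in TM : |v| < K + \epsilon\}$. You have simply supplied the routine verifications (fiberwise convexity of $S$, the generalized gradient bound, and the passage from a pointwise gradient bound to a global Lipschitz constant) that the paper leaves implicit.
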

\begin{proof}
Apply Lemma \ref{lem-generalsmoothing} with $C = U = M$, 
and
$S \: = \: \{v \in TM \: : \: |v| < K + \epsilon\}$. 
\end{proof}

\begin{corollary}\label{cor-1strained}
For all $\eps>0$ there is a $\th>0$ with the following
property.  

Let
$M$ be a complete Riemannian manifold, let $Y\subset M$
be a closed subset and let $d_Y:M\ra\R$ be the distance function
from $Y$. Given $p\in M - Y$,
let $V_p\subset T_pM$ be the set 
of initial  velocities  of minimizing
geodesics from $p$ to $Y$.  Suppose that
$U\subset M - Y$ is an open subset such that for all
$p \in U$, one has $\diam(V_p) <\th$. Let $C$ be a
compact subset of $U$.
Then for every $\eps_1>0$
there is a Lipschitz function $\widehat{F}:M\ra \R$ such that
\begin{enumerate}
\item $\widehat{F}$ is smooth on a neighborhood of $C$.
\item 
$\|\widehat{F}-d_Y\|_\infty\,<\,\eps_1$.
\item $\widehat{F} \restr_{M - U}=d_Y\restr_{M - U}$.
\item For every $p\in C$, the angle between $-(\nabla\widehat{F})(p)$
and $V_p$ is at most $\eps$.
\item $\widehat F-d_Y$ is  $\eps$-Lipschitz.
\end{enumerate}
\end{corollary}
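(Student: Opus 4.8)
The plan is to deduce Corollary \ref{cor-1strained} directly from Lemma \ref{lem-generalsmoothing} by manufacturing an appropriate open fiberwise-convex set $S \subset TM$ and verifying that $-d_Y$ (equivalently $d_Y$) has generalized gradient lying in it. First I would recall the basic fact from critical point theory that at a point $p$ where $d_Y$ is differentiable, $-\nabla_p d_Y$ is the initial velocity of some minimizing geodesic from $p$ to $Y$, hence a unit vector in $V_p$; more generally, $\nabla^{\gen}_p d_Y$ is contained in the closed convex hull of $-V_p$ (up to the parallel-transport smoothing in the definition of the generalized gradient, which only shrinks distances to nearby $V_{p'}$'s — this is where the passage from a pointwise to a neighborhood hypothesis on $\diam(V_p)$ is convenient). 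So the generalized gradient of $d_Y$ over $U$ is an ``almost unit'' vector field whose direction at $p$ lies within angle roughly $\th$ of $-V_p$.

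The construction of $S$: given the target angular tolerance $\eps$, choose $\th = \th(\eps)$ small (this is the $\th$ promised in the statement). For each $p \in M - Y$ with $\diam(V_p) < \th$, let $v_p$ be any fixed element of $-V_p$ and set, say,
\begin{equation}
S_p = \{ w \in T_pM \: : \: \tfrac{1}{2} < |w| < 2, \ \angle(w, v_p) < \eps \}.
\end{equation}
This is an open convex cone-slice in each fiber; one takes $S$ to be the union of the $S_p$ over $p$ in the open set $\{p : \diam(V_p) < \th\} \supset U$, together with all of $\pi^{-1}(M - U)$ (or really one only needs $S$ defined and fiberwise-convex over a neighborhood of $C$ — reread the hypotheses of Lemma \ref{lem-generalsmoothing}: $S$ need only be an open fiberwise-convex subset of $\pi^{-1}(U)$, and the conclusion is that $\widehat F$ agrees with $F$ off $U$, so no condition is imposed on $S$ outside). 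One must check $S$ is \emph{open} in $TM$: this uses that $p \mapsto V_p$ is, in the relevant sense, upper semicontinuous (limits of minimizing geodesics are minimizing geodesics), so the condition ``$\diam(V_p) < \th$'' is open in $p$ and the cone condition varies continuously; and that $F = d_Y$ has generalized gradient in $S$ over $U$, which is the previous paragraph provided $\th$ was chosen small enough that $\angle$ within $\th$ of $-V_p$ plus the convex-hull/parallel-transport slack still lands inside the $\eps$-cone. Note $d_Y$ is $1$-Lipschitz so $|\nabla^{\gen} d_Y| \le 1 < 2$, and $\ge 1/2$ because away from $Y$ the minimizing geodesic has unit speed; strictly one should use $1 - \delta < |w|$ rather than $1/2$ so that conclusion (5), $\eps$-Lipschitzness of $\widehat F - d_Y$, comes out — $|\nabla(\widehat F - d_Y)| \le |\nabla \widehat F| + 1 \le (1+\delta) + ?$ is too crude, so instead arrange $S_p \subset \{|w - (\text{unit dir})| < \eps\}$-type bound and note both $\nabla \widehat F$ and $\nabla d_Y$ lie near the common direction $v_p$ so their difference has norm $< \eps$. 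With $S$ in hand, apply Lemma \ref{lem-generalsmoothing} with the given $C \subset U$ and $\eps_1$ for its ``$\epsilon$''; conclusions (1)--(3) of the Corollary are conclusions (1),(3),(4) of the Lemma verbatim, and conclusions (4),(5) follow from conclusion (2) of the Lemma (generalized gradient of $\widehat F$ over $U$ lies in $S$) together with the design of $S$.

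The main obstacle, and the only point needing genuine care, is the interplay between the \emph{direction} control we want ($\angle(-\nabla\widehat F, V_p) \le \eps$) and the \emph{magnitude} control we want ($\widehat F - d_Y$ being $\eps$-Lipschitz): both must be encoded into a single fiberwise-convex open set $S$, and one must confirm that $d_Y$ itself has generalized gradient in $S$, which forces $\th$ to be chosen first and small relative to $\eps$ (accounting for the convex-hull spreading of $\nabla^{\gen}$ and the parallel-transport fuzzing in its definition). A clean way to handle the magnitude is to observe that for $p \in U$ we can write $S_p$ as the intersection of the half-space-type convex sets $\{w : \langle w, u \rangle > \cos\eps \cdot |w| \text{ and } |w| < 1 + \eps\}$ — but since ``$\langle w, u\rangle > \cos\eps\,|w|$'' is not linear, it is cleaner still to just take $S_p = \{ w : |w - v_p| < \eps' \}$ for a suitable unit $v_p \in -V_p$ and $\eps' = \eps'(\eps)$ small, which is convex, open, handles both direction and near-unit-magnitude at once, and makes $|\nabla\widehat F - \nabla d_Y| < 2\eps'$ immediate since both gradients lie in the $\eps'$-ball about $v_p$ (using that $\diam V_p < \th \ll \eps'$ so $-\nabla d_Y$ is also within $\eps'$ of $v_p$). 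Everything else is routine.
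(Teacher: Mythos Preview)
Your proposal is correct and follows essentially the same route as the paper: identify the generalized gradient of $d_Y$ at $p\in U$ with $-\convexhull(V_p)$, package this into a fiberwise-convex open set $S\subset \pi^{-1}(U)$ of small fiber diameter, apply Lemma~\ref{lem-generalsmoothing}, and read off (1)--(5). The only cosmetic difference is in the construction of $S$: the paper sets $S'=\bigcup_{p\in U}\convexhull(-V_p)$ (a relatively closed fiberwise-convex set with fibers of diameter $<\th$) and then fattens it slightly to an open $S$ with fibers of diameter $<2\th$, whereas you eventually settle on $S_p=\{w:|w-v_p|<\eps'\}$ for a chosen $v_p\in -V_p$; since $\convexhull(-V_p)\subset B(v_p,\th)$ these are interchangeable, though the paper's version sidesteps your worry about the continuity of the choice $p\mapsto v_p$.
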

\begin{proof}
First, note that if $p\in M - Y$ is a point of differentiability
of the distance function $d_Y$ then $\nabla_p\, d_Y=-V_p$.  Also,
the assignment $x\mapsto V_x$ is semicontinuous in the sense that
if $\{x_k\}_{k=1}^\infty$ is a sequence of points converging
to $x$ then by parallel transporting $V_{x_k}$ radially
to the fiber over $x$, we obtain a sequence 
$\{\bar V_{x_k}\}_{k=1}^\infty \subset T_xM$ which
accumulates on a subset of $V_x$.  It follows that the generalized
derivative of $d_Y$ at any point $p\in M - Y$ is precisely 
$-\convexhull(V_p)$,
where $\convexhull(V_p)$ denotes the convex hull of $V_p$.

Put $S' = \bigcup_{p \in U} \convexhull(-V_p)$.
Then $S'$ is a relatively closed fiberwise-convex
subset of $\pi^{-1}(U)$, with fibers of diameter less than $\th$.
We can fatten $S'$ slightly to form an open
fiberwise-convex set $S\subset \pi^{-1}(U)$
which contains $S'$, with fibers of diameter less than $2\th$. 

Now take $\th<\frac{\eps}{2}$ and
apply Lemma \ref{lem-generalsmoothing} to 
 $F=d_Y:M\ra \R$, with $S$ as in the preceding paragraph.
The resulting function $\widehat F:M\ra \R$ clearly satisfies
(1)-(4).  
   To see that (5) holds, note that if $p\in U$ is a point
of differentiability of both $\widehat F$ and $F$ then 
$\nabla\widehat F(p)$ and $\nabla F(p)$ both lie 
in the fiber $S\cap T_pM$, which has diameter less than $2\th$.
Hence the gradient of the difference satisfies
\begin{equation}
\|\nabla (\widehat F-F)(p)\|=\|\nabla\widehat F(p)-\nabla F(p)\|<2\th<\eps\,.
\end{equation}
Since $\widehat F$
coincides with $F$ outside $U$, this implies that
$\widehat F-F$ is $\eps$-Lipschitz.
\end{proof}

\begin{remark}
When we apply Corollary \ref{cor-1strained}, the hypotheses will be
verified using triangle comparison.
\end{remark}

\section{Splittings, strainers, and adapted coordinates}

This section is about the notion of a pointed metric space approximately
splitting off an $\R^k$-factor.  We first define an approximate
$\R^k$-splitting, along with the notion of compatibility between an
approximate 
$\R^k$-splitting
and an approximate $\R^j$-splitting.
We prove basic properties about approximate splittings.
In the case of a pointed Alexandrov space, we show that having an
approximate $\R^k$-splitting is equivalent to having a good
$k$-strainer. We show that if there is not an approximate
$\R^{k+1}$-splitting at a point $p$ then any approximate $\R^k$-splitting
at $p$ is nearly-compatible with any approximate $\R^j$-splitting at $p$,
for $j \le k$.

We then introduce the notion of coordinates adapted to an approximate
$\R^k$-splitting, in the setting of Riemannian manifolds with
a lower curvature bound, proving existence and (approximate) uniqueness
of such adapted coordinates.

\subsection{Splittings}

We start with the notion of a splitting.

\begin{definition}
\label{def-productstructure}
A product structure on a metric space $X$ is an isometry
$\al : X \ra X_1\times X_2$.  A {\em $k$-splitting}
of  $X$ is a product structure $\al : X \ra X_1\times X_2$ where
$X_1$ is isometric to $\R^k$. A {\em splitting} is a $k$-splitting
for some $k$.  Two $k$-splittings $\al : X \ra X_1\times X_2$ and
$\be : X \ra Y_1\times Y_2$ are {\em equivalent} if there are isometries
$\phi_i:X_i\ra Y_i$ such that $\be= (\phi_1,\phi_2) \circ \al$.
\end{definition}

In addition to equivalence of splittings, we can talk about compatibility
of splittings.
\begin{definition}
\label{def-compatiblesplitting}
Suppose that $j\leq k$.  A $j$-splitting $\al : X \ra X_1\times X_2$ 
is {\em compatible} with a $k$-splitting  
$\be : X \ra Y_1\times Y_2$ if there
is a $j$-splitting 
$\phi : Y_1 \ra \R^j\times\R^{k-j}$
such that $\al$ is equivalent to the $j$-splitting given by the
composition
\begin{equation}
X \stackrel{\beta}{\ra} Y_1 \times Y_2 \stackrel{(\phi, \Id)}{\ra}  
 (\R^j\times \R^{k-j})\times Y_2  \cong \R^j\times(\R^{k-j}\times Y_2).
\end{equation}
\end{definition}

\begin{lemma}
\label{lem-splitcompatible}
\mbox{}
\begin{enumerate}
\item
Suppose $\al:X\ra \R^k\times Y$ is a $k$-splitting of a metric
space $X$, and $\be:X\ra \R\times Z$
is a $1$-splitting.  Then either $\be$ is compatible with $\al$,
or there is a $1$-splitting $\ga:Y\ra \R\times W$ such that
$\be$ is compatible with the induced splitting
$X\ra (\R^k\times \R)\times W$.
\item Any two splittings of a metric space are compatible with a
third splitting.
\end{enumerate}
\end{lemma}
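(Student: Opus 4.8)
The plan is to exploit the fact that a $1$-splitting of $X$ is the same data as a line in $X$ (via the splitting theorem direction), or more precisely an equivalence class of ``directions'' in which $X$ splits, and then to analyze how such a direction sits relative to a given $\R^k$-splitting by projecting onto the $\R^k$-factor and its orthogonal complement. For part (1), write $\al : X \ra \R^k \times Y$. Any $1$-splitting $\be : X \ra \R \times Z$ gives, for each point, a unit ``velocity'' direction, and since $X = \R^k \times Y$ isometrically, a line in $X$ projects to a constant-speed path in each factor; a line in a product $\R^k\times Y$ must be of the form $t \mapsto (v t + c, \gamma(t))$ where $v \in \R^k$, and $\gamma$ is a (possibly constant) constant-speed curve in $Y$ with $|v|^2 + |\dot\gamma|^2 = 1$. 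The key dichotomy is whether $v = 0$ or $v \ne 0$ (in fact, after normalizing, whether the line lies ``entirely in the $Y$ direction'' or has a nontrivial $\R^k$-component).

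First I would make precise the correspondence between $1$-splittings and parallel families of lines: a $1$-splitting $\be:X\to\R\times Z$ determines, for each $z\in Z$, a line $t\mapsto \be^{-1}(t,z)$, and these lines foliate $X$; conversely such a foliation by parallel lines gives back $Z$ as the quotient with its induced metric. Under the isometry $\al$, transport this foliation to $\R^k\times Y$. Because the ambient metric is the Pythagorean product metric, each such line has a well-defined constant $\R^k$-component; I claim this component $v\in\R^k$ is the same for every line in the family (two parallel lines at bounded distance must have the same asymptotic $\R^k$-direction, else they would diverge). Now split into two cases. If $v\ne 0$: then the direction $v/|v|\in\R^k\subset X_1$ already lies in the $\R^k$-factor of $\al$, so $\be$ is compatible with $\al$ — concretely, choose $\phi:\R^k\to\R\cdot\tfrac{v}{|v|}\times (v)^{\perp}\cong\R\times\R^{k-1}$ to be the orthogonal decomposition of $\R^k$ along $v$, and check directly that the composite $X\xrightarrow{\al}\R^k\times Y\xrightarrow{(\phi,\Id)}(\R\times\R^{k-1})\times Y$ is equivalent to $\be$ as a $1$-splitting (both have the same foliation by parallel lines). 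If $v = 0$: then each line of the family is of the form $t\mapsto(c,\gamma(t))$ with $c\in\R^k$ fixed and $\gamma$ a unit-speed line in $Y$. Moreover the family of such $\gamma$'s, as $c$ and $z$ vary, is a parallel family of lines in $Y$ (again using the product metric: the lines through $(c,y)$ and $(c',y)$ agree in the $Y$-coordinate, so the foliation descends to a foliation of $Y$ by parallel lines). This foliation is a $1$-splitting $\ga:Y\to\R\times W$, and tracing through the identifications, $\be$ is equivalent to $X\xrightarrow{\al}\R^k\times Y\xrightarrow{(\Id,\ga)}\R^k\times(\R\times W)\cong(\R^k\times\R)\times W$, i.e. $\be$ is compatible with the induced $(k+1)$-splitting. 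This is exactly the alternative in the statement.

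For part (2), apply part (1) iteratively. Given two splittings $\al:X\to\R^j\times Y$ and $\be:X\to\R^k\times Z$, induct on $k$ to reduce $\be$ to a sequence of $1$-splittings: writing $\R^k=\R\times\R^{k-1}$, the splitting $\be$ is built from $k$ mutually compatible $1$-splittings. Start with $\al$ as a ``current common refinement'' and feed in the $1$-splittings coming from $\be$ one at a time: each application of part (1) either shows the new $1$-direction already lies in the current $\R^{(\cdot)}$-factor (no change needed) or enlarges the Euclidean factor by one dimension, producing a new splitting of $X$ with which both the old refinement and the new $1$-splitting are compatible. After finitely many steps one obtains a single splitting $\delta:X\to\R^m\times V$ with which $\al$ is compatible (it was at the start, and compatibility is preserved under the enlargements — this needs a brief check) and with which every $1$-splitting factor of $\be$ is compatible, hence with which $\be$ itself is compatible.

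The main obstacle I anticipate is the bookkeeping in part (1) showing that the $\R^k$-component $v$ of a line in the parallel family is independent of the line, and the verification that the two exhibited splittings are genuinely \emph{equivalent} in the sense of Definition \ref{def-productstructure} (not merely ``the same foliation''), which requires producing the isometries $\phi_1,\phi_2$ explicitly and checking $\be=(\phi_1,\phi_2)\circ(\text{composite})$. The second mild subtlety is, in part (2), confirming that compatibility of $\al$ with the running refinement is preserved each time part (1) enlarges the Euclidean factor; this follows because the enlargement only refines the product structure in the $V$-factor direction, which commutes with the splitting recording $\al$, but it should be spelled out. Neither step is deep; both are of the ``chase the isometries through the product metric'' variety.
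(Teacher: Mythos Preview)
Your overall architecture---reduce to the foliation by parallel lines coming from $\be$, project to the two factors of $\R^k\times Y$, and split into two cases---is exactly the paper's strategy, and your Part~(2) is essentially identical to the paper's. However, your dichotomy in Part~(1) is drawn at the wrong place, and this makes the $v\ne 0$ case of your argument false.

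You distinguish the cases by whether the $\R^k$-velocity $v$ of the lines in $\L_\be$ vanishes. But consider $X=\R^2$ with $\al=\Id:X\to\R\times\R$ (so $k=1$, $Y=\R$) and $\be$ a rotation by $\pi/4$. Every line of $\be$ has $\R^k$-velocity $v=1/\sqrt{2}\ne 0$, yet $\be$ is \emph{not} compatible with $\al$: both are $1$-splittings, so compatibility means equivalence, but their line foliations (horizontal versus diagonal) disagree. In your argument, the sentence ``both have the same foliation by parallel lines'' fails precisely here---decomposing $\R^k$ along $v$ gives an $\R$-factor whose lines have constant $Y$-coordinate, whereas the lines of $\be$ have moving $Y$-coordinate whenever $|v|<1$.

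The paper instead splits according to whether the $Y$-projections $\pi_Y\circ\al(\L_\be)$ are all points or all (parallel) lines; Sublemma~\ref{lem-linetrivia}(3) guarantees this is a genuine dichotomy. When the $Y$-projections are points (equivalently $|v|=1$), your argument goes through and $\be$ is compatible with $\al$. When the $Y$-projections are lines (equivalently $|v|<1$, including the diagonal case $0<|v|<1$), those projected lines cover $Y$ and are pairwise parallel, yielding the $1$-splitting $\ga:Y\to\R\times W$; the induced $(k+1)$-splitting then has an $\R^{k+1}$-factor large enough to contain the diagonal direction, and $\be$ is compatible with it (this is the paper's Case~2, and is where your $\R^2$ example lands). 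So the fix is simply to relocate your case boundary from ``$v=0$ vs.\ $v\ne 0$'' to ``$|v|=1$ vs.\ $|v|<1$'', after which your argument coincides with the paper's.
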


Before proving this, we need a sublemma.

Recall that a {\em line} in a metric space
is a globally minimizing complete geodesic, i.e. an isometrically
embedded copy of $\R$.  We will say that two lines are {\em parallel}
if their union is isometric to the union of two parallel lines in
$\R^2$.

\begin{sublemma}
\label{lem-linetrivia}
\mbox{}
\begin{enumerate}
\item A path $\ga:\R \ra X_1\times X_2$ in a product is a constant
speed geodesic if and only if the compositions $\pi_{X_i}\circ \ga:\R\ra X_i$
are constant speed geodesics.
\item Two lines in a metric space $X$ are parallel
if and only if they have 
constant speed parametrizations
$\ga_1:\R\ra X$ and $\ga_2:\R\ra X$ such that $d^2(\ga_1(s),\ga_2(t))$ is a 
quadratic
function of $(s-t)$.
\item If two lines $\ga_1$, $\ga_2$ 
in a product $\R^k\times X$ are parallel then either
$\pi_X(\ga_1),\,\pi_X(\ga_2)\subset X$
are parallel lines, or they are both points.
\item Suppose $\L$ is a collection of lines in a metric space $X$.
If $\bigcup_{\ga\in \L}\,\ga=X$, and every pair  $\ga_1,\ga_2\in\L$ 
is parallel,
then there is a $1$-splitting $\al:X\ra \R\times Y$ such that
$\L=\{\al^{-1}(\R\times \{y\})\}_{y\in Y}$.
\end{enumerate}
\end{sublemma}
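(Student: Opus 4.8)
The plan is to prove \ref{lem-linetrivia}(1)--(4) in that order, each part using the earlier ones; (1)--(3) are short and (4) carries the content. For (1): this is immediate from the Pythagorean formula for the product metric, $d(\ga(s),\ga(t))^2=d_{X_1}(\pi_{X_1}\ga(s),\pi_{X_1}\ga(t))^2+d_{X_2}(\pi_{X_2}\ga(s),\pi_{X_2}\ga(t))^2$; for the nontrivial direction, applying Minkowski's inequality to $\big(d_{X_i}(\pi_{X_i}\ga(s),\pi_{X_i}\ga(u))\big)_i$ and $\big(d_{X_i}(\pi_{X_i}\ga(u),\pi_{X_i}\ga(t))\big)_i$ for $s<u<t$ together with the triangle inequality in each $X_i$, the equality case forces each $\pi_{X_i}\circ\ga$ to be a geodesic, and a short functional-equation argument then gives constant speed. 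For (2): if $\ga_1,\ga_2$ are parallel then $\ga_1\cup\ga_2$ is by definition isometric to two parallel lines in $\R^2$, which parametrized by $s\mapsto(s,0)$ and $t\mapsto(t,h)$ give $d^2(\ga_1(s),\ga_2(t))=(s-t)^2+h^2$; conversely, writing $d^2(\ga_1(s),\ga_2(t))=A(s-t)^2+B(s-t)+C$, unit speed of the $\ga_i$ and $d(\ga_1(t),\ga_2(t))\equiv\sqrt C$ bound $\big|d(\ga_1(s),\ga_2(t))-|s-t|\big|$, forcing $A=1$, and after completing the square (with $C-B^2/4=\inf d^2\ge0$) the map $\ga_1(s)\mapsto(s,0)$, $\ga_2(t)\mapsto(t-B/2,\sqrt{C-B^2/4})$ realizes $\ga_1\cup\ga_2$ as two parallel lines in $\R^2$. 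For (3): by (1) write $\ga_i=(\al_i,\be_i)$ with $\al_i$ affine into $\R^k$ and $\be_i$ a constant speed geodesic in $X$, and by (2) choose unit speed parametrizations with $|\al_1(s)-\al_2(t)|^2+d_X(\be_1(s),\be_2(t))^2=(s-t)^2+h^2$; putting $t=s$ shows $|\al_1(s)-\al_2(s)|$ is bounded, hence $\al_1,\al_2$ share a linear part $v$, and subtracting makes $d_X(\be_1(s),\be_2(t))^2$ an explicit quadratic in $(s-t)$ with leading coefficient $1-|v|^2$. Since $\ga_i$ is a line, $\be_i$ is globally minimizing of constant speed $\sqrt{1-|v|^2}$; if $|v|=1$ it is a point, otherwise rescaling to unit speed and invoking (2) shows $\be_1,\be_2$ are parallel lines. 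As $\pi_X(\ga_i)=\be_i(\R)$, (3) follows.

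The substance is (4). Fix $\ga_0\in\L$ with a unit speed parametrization. For each $\ga\in\L$, parallelism of $\ga_0$ and $\ga$ identifies $\ga_0\cup\ga$ with two parallel lines in $\R^2$ and thereby picks out the unit speed parametrization of $\ga$ ``synchronized'' with $\ga_0$ --- the one sending $s$ to the nearest point of $\ga$ to $\ga_0(s)$ --- for which $d(\ga_0(s),\ga(t))^2=(s-t)^2+h_\ga^2$, $h_\ga=\dist(\ga_0,\ga)$. The key claim is that with these parametrizations $d(\ga(s),\ga'(t))^2=(s-t)^2+\dist(\ga,\ga')^2$ for \emph{all} $\ga,\ga'\in\L$. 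Indeed $d(\ga(s),\ga'(s))\le h_\ga+h_{\ga'}$ is bounded in $s$, so the synchronized parametrizations of $\ga$ and $\ga'$ are co-oriented, whence (2) and its leading-coefficient argument give $d(\ga(s),\ga'(t))^2=\big((s-t)-c\big)^2+D^2$ with $D=\dist(\ga,\ga')$ and a constant $c$. To see $c=0$: bound $d(\ga_0(s),\ga'(s+r))$ from above by passing through an optimally chosen point $\ga(\xi)$; this minimization over $\xi$ is the planar ``reflection'' problem and yields $d(\ga_0(s),\ga'(s+r))^2\le(r+c)^2+(h_\ga+D)^2$, which compared with the exact value $r^2+h_{\ga'}^2$ and letting $r\to+\infty$ forces $c\ge0$, and the symmetric estimate (interchanging $\ga$ and $\ga'$) forces $c\le0$.

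Granting the formula, put $Y=\L$ with $d_Y(\ga,\ga')=\dist(\ga,\ga')$; this is a metric (triangle inequality inherited from $X$, positivity from the formula). Then $\Phi:\R\times Y\to X$, $\Phi(s,\ga)=\ga(s)$, is onto since $\bigcup_{\ga\in\L}\ga=X$, injective since the formula forces distinct lines of $\L$ to be disjoint, and distance-preserving by the formula and the Pythagorean metric on $\R\times Y$; so $\al:=\Phi^{-1}$ is a $1$-splitting with $\al^{-1}(\R\times\{\ga\})=\ga$, giving $\L=\{\al^{-1}(\R\times\{y\})\}_{y\in Y}$. The one real obstacle is the phase-consistency step $c=0$ in (4): it is the only point at which one uses that the lines are mutually parallel rather than merely each parallel to $\ga_0$, and it is exactly what the asymptotic triangle-inequality estimate above is designed to produce.
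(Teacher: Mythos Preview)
Your proof is correct. Parts (1)--(3) follow essentially the paper's approach, though you give considerably more detail where the paper writes only ``simple calculation'' for (2) and a terse subtraction for (3).

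The interesting comparison is in (4). The paper's proof is three lines: fix a line $\ga_0\in\L$, let $b$ be its Busemann function, let $Y$ be the quotient of $X$ by the partition $\L$ with the Hausdorff metric, and declare $(b,\pi_Y):X\to\R\times Y$ to be the splitting, leaving ``one verifies that it is an isometry'' to the reader. Your approach is to synchronize parametrizations of all $\ga\in\L$ against $\ga_0$ and then prove the key phase-consistency claim $c=0$ by an asymptotic triangle-inequality estimate (the planar reflection trick). These are the same construction in disguise: for the synchronized parametrization one has $b(\ga(s))=\lim_{t\to\infty}\sqrt{(t-s)^2+h_\ga^2}-t=-s$, so the Busemann function is exactly your synchronization coordinate. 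What you have really done is supply the content of the paper's ``one verifies'': the isometry check for $(b,\pi_Y)$ amounts precisely to your formula $d(\ga(s),\ga'(t))^2=(s-t)^2+\dist(\ga,\ga')^2$, and the nontrivial part of that is the vanishing of the phase $c$, which is where pairwise parallelism (not just parallelism to $\ga_0$) enters. Your argument makes this explicit and self-contained; the paper's is shorter but leaves the real work as an exercise.
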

\begin{proof}
(1).  It follows from the Cauchy-Schwarz inequality that
if $a,b,c\in X_1\times X_2$ satisfy the triangle
equation $d(a,c)=d(a,b)+d(b,c)$ then the same is true of their
projections $a_1,b_1,c_1\in X_1$ and $a_2,b_2,c_2\in X_2$, and
moreover 
$(d(a_1,b_1), d(a_2,b_2))$
and $(d(a_1,c_1), d(a_2,c_2))$ are linearly dependent in $\R^2$.  
This implies (1).

(2).  The parallel lines $y=0$ and $y=a$ in $\R^2$ can
be parametrized by $\gamma_1(s) = (s,0)$ and $\gamma_2(t) = (t,a)$,
with $d^2(\ga_1(s),\ga_2(t)) = (s-t)^2 + a^2$. Conversely,
suppose that lines $\gamma_1$ and $\gamma_2$ in a metric space
are such that $d^2(\ga_1(s),\ga_2(t))$ is quadratic in $(s-t)$.
After affine changes of $s$ and $t$, we can assume that
$d^2(\ga_1(s),\ga_2(t)) = (s-t)^2 + a^2$ for some $a \in \R$.
Then the union of $\gamma_1$ and $\gamma_2$ is isometric to the
union of the lines $y=0$ and $y=a$ in $\R^2$.

(3). By (2), we may assume that for $i\in \{1,2\}$
there are constant 
speed parametrizations $\ga_i:\R\ra \R^k\times X$, such that
$d^2(\ga_1(s),\ga_2(t))$ is a quadratic function of $(s-t)$.
The projections $\pi_{\R^k}\circ\ga_i$ are constant speed geodesics
in $\R^k$, and the quadratic function
$d^2(\pi_{\R^k}\circ\ga_1(s),\pi_{\R^k}\circ\ga_2(t))$ is
a function of $(s-t)$; otherwise 
$d^2(\pi_{\R^k}\circ\ga_1(s),\pi_{\R^k}\circ\ga_2(s))$ would be
unbounded in $s$, which contradicts that 
$d^2(\ga_1(s),\ga_2(s))$ is constant in $s$.
Therefore 
\begin{equation}
d^2(\pi_X\circ\ga_1(s),\pi_X\circ\ga_2(t))
=d^2(\ga_1(s),\ga_2(t))-d^2(\pi_{\R^k}\circ\ga_1(s),\pi_{\R^k}\circ\ga_2(t))
\end{equation}
is a quadratic
function of $(s-t)$.  By (2) we conclude that
$\pi_X\circ \ga_1$, $\pi_X\circ\ga_2$ are parallel.

(4). Let $\ga:\R\ra X$ be a unit speed parametrization of
some line in $\L$, and let $b:X\ra \R$ be the Busemann
function $\lim_{t\ra\infty}\;d(\ga(t),\cdot)-t$.  By assumption,
the elements of $\L$ partition $X$ into the cosets of an 
equivalence relation; the quotient $Y$ inherits a natural metric,
namely the Hausdorff distance.  The  map $(b,\pi_Y):X\ra\R\times Y$
defines a $1$-splitting -- one verifies that it is an isometry
using the fact that $\L$ consists of parallel lines.
\end{proof}

\bigskip

\begin{proof}[Proof of Lemma \ref{lem-splitcompatible}]
\mbox{}

(1). Consider the collection of lines
$\L_{\be}=\{\be^{-1}(\R\times\{z\})\mid z\in Z\}$.
By Lemma \ref{lem-linetrivia}(3) it follows that $\pi_Y\circ\al(\L_{\be})$
consists of parallel lines, or consists entirely of points.

{\em Case 1. $\pi_Y\circ\al(\L_{\be})$
consists of points.}  In this case, 
Sublemma \ref{lem-linetrivia}
implies that $\pi_{\R^k}\circ\al(\L_{\be})$ is a family of
parallel lines in $\R^k$.
Decomposing $\R^k$ into a product $\R^k\simeq \R^1\times \R^{k-1}$
in the direction defined by $\pi_{\R^k}\circ\al(\L_{\be})$, we
obtain a $1$-splitting
of $X$ which is easily seen to be equivalent
to $\be$.

{\em Case 2. $\pi_Y\circ\al(\L_{\be})$
consists of parallel lines.}  Since $\bigcup_{\ga\in \pi_Y\circ\al(\L_{\be})}
\;\ga=Y$, by Lemma \ref{lem-linetrivia}(4), there is a 
$1$-splitting $\ga:Y\ra \R\times W$ such that
$\{\ga^{-1}(\R\times\{w\})\mid w\in W\}
=\pi_Y\circ\al(\L_{\be})$.   Letting $\al':X\ra (\R^k\times\R)\times W$
be the $(k+1)$-splitting given by 
$X\stackrel{\al}{\ra}\R^k\times Y\ra (\R^k\times\R)\times W$,
we get that $\pi_W\circ \al'(\L_{\be})$ consists of points,
so by Case 1 it follows that $\be$ is compatible with $\al'$.

(2). Suppose that $\al:X\ra \R^k\times Y$ and $\be:X\ra \R^l\times Z$
are splittings.  We may apply part (1)
to $\al$ and the $1$-splitting obtained from the $i^{th}$
coordinate direction of $\be$, for successive values
of $i\in \{1,\ldots,l\}$.   This will enlarge $\al$ to a
splitting $\al'$ which is compatible with all of these  $1$-splittings,
and clearly $\al'$ is then compatible with $\be$.
\end{proof}

\subsection{Approximate splittings}

Next, we consider approximate splittings.

\begin{definition} \label{def-ghsplitting}
Given $k\in \Z^{\ge 0}$ and $\de\in [0,\infty)$,
a {\em $(k,\de)$-splitting} of a pointed
metric space $(X, \star_X)$ 
is a $\de$-Gromov-Hausdorff approximation 
$(X, \star_X) \ra (X_1, \star_{X_1}) \times (X_2, \star_{X_2})$, 
where $(X_1, \star_{X_1})$ is isometric to $(\R^k, \star_{\R^k})$.
(We allow $\R^k$ to have other basepoints than $0$.)
\end{definition}

There are ``approximate'' versions of equivalence and compatibility
of splittings.

\begin{definition}
\label{def-epscompatible}
Suppose  that $\al :  (X,\star_X) \ra (X_1, \star_{X_1}) \times 
(X_2, \star_{X_2})$ is a $(j,\de_1)$-splitting
and
$\be :  (X,\star_X) \ra (Y_1, \star_{Y_1}) \times (Y_2, \star_{Y_2})$
is an  $(k,\de_2)$-splitting.   Then
\begin{enumerate}
\item {\em $\al$ is $\eps$-close to $\be$} if $j=k$ and there are $\eps$-Gromov-Hausdorff
approximations $\phi_i:(X_i, \star_{X_i})\ra (Y_i,\star_{Y_i})$ such that the composition
$(\phi_1,\phi_2) \circ \al$ is $\epsilon$-close to $\be$, i.e. agrees with $\be$ on 
$B(\star_{X} ,\eps^{-1})$ up to
error at most $\eps$.  
\item {\em $\al$ is $\eps$-compatible with $\be$} if $j\leq k$ and 
there is a $j$-splitting $\ga:  (Y_1,\star_{Y_1}) \ra (\R^j, \star_{\R^j}) \times 
(\R^{k-j}, \star_{\R^{k-j}})$
such that the $(j,\de_2)$-splitting defined by the composition
\begin{equation}
X \stackrel{\be}{\ra} Y_1\times Y_2 \stackrel{(\ga, \Id)}{\ra} (\R^j\times \R^{k-j})\times Y_2
\cong \R^j\times (\R^{k-j}\times Y_2)
\end{equation}
is $\eps$-close to $\al$. 
\end{enumerate}
\end{definition}

\begin{lemma} \label{faralignment1}
Given $\delta > 0$ and $C < \infty$, there is a
$\delta^\prime = \delta^\prime(\delta, C) > 0$ with the
following property.  Suppose that 
$(X, \star_X)$ is a complete pointed metric space with a $(k, \delta^\prime)$-splitting $\alpha$.
Then for any $x \in B(\star_X, C)$, the pointed space
$(X, x)$ has a $(k, \delta)$-splitting coming from a change
of basepoint of $\alpha$.
\end{lemma}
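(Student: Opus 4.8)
The plan is to deduce the statement by a straightforward contradiction/compactness argument, exploiting the fact that $\R^k$ is homogeneous. First I would observe that if $\alpha\colon (X,\star_X)\to (\R^k,\star_{\R^k})\times (X_2,\star_{X_2})$ is a $(k,\delta')$-splitting and $x\in B(\star_X,C)$, then the natural candidate for the $(k,\delta)$-splitting based at $x$ is obtained by post-composing $\alpha$ with the isometry of $\R^k\times X_2$ that translates the $\R^k$-factor so that $\alpha(x)$'s $\R^k$-coordinate becomes the new basepoint. Concretely, writing $\alpha(x)=(a,y)$ with $a\in\R^k$, $y\in X_2$, one sets $\alpha_x := (T_{-a},\Id)\circ\alpha$, where $T_{-a}$ is translation by $-a$ in $\R^k$; since $\R^k$ admits such translations as isometries, $(\R^k, \alpha(x)_{\R^k})$ is still isometric to $(\R^k,\star_{\R^k})$, so $\alpha_x$ is at least a pointed map into the right kind of product. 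The only thing to verify is that $\alpha_x$ is a $\delta$-Gromov-Hausdorff approximation with respect to the new basepoints $x$ and $\alpha(x)=(a,y)$; equivalently, using Definition \ref{def-hausdorffapproximation}, one must check the two defining inequalities on balls $B(x,\delta^{-1})$ and $B(\alpha(x),\delta^{-1}-\delta)$.

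The key point is a ball-inclusion estimate: because $\alpha$ is a $\delta'$-Gromov-Hausdorff approximation and $x\in B(\star_X,C)$, there is a radius bound, depending only on $C$ and $\delta'$, for how far $\alpha(x)$ lies from $\star_{X_1\times X_2}$; indeed $d(\alpha(x),\star)\le d(x,\star_X)+\delta' \le C+\delta'$ on the relevant ball once $\delta'^{-1}>C$. Hence $B(x,\delta^{-1})\subset B(\star_X,(\delta')^{-1})$ and $B(\alpha(x),\delta^{-1}-\delta)\subset B(\star_{X_1\times X_2},(\delta')^{-1}-\delta')$, provided $\delta'$ is chosen small enough in terms of $\delta$ and $C$ (say $(\delta')^{-1}\ge \delta^{-1}+C+1$). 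On these balls the two $\delta'$-Gromov-Hausdorff inequalities for $\alpha$ hold, and since translation $T_{-a}$ is an isometry of $\R^k$, it does not affect distances in $\R^k\times X_2$; therefore the same two inequalities hold for $\alpha_x$ with error $\delta'\le\delta$. This gives that $\alpha_x$ is a $(k,\delta)$-splitting of $(X,x)$.

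The main (and essentially only) obstacle is the bookkeeping of which ball radii one needs to control, to make sure that the balls on which one wants the Gromov-Hausdorff estimates for $\alpha_x$ to hold are contained in the balls on which one already knows the estimates for $\alpha$; this is why the dependence $\delta'=\delta'(\delta,C)$ is needed rather than a uniform constant. Alternatively, one can avoid explicit radius estimates altogether by a compactness argument: if the statement failed, there would be sequences $(X_i,\star_i)$ with $(k,\delta_i')$-splittings $\alpha_i$ where $\delta_i'\to 0$, points $x_i\in B(\star_i,C)$, and a fixed $\delta>0$ such that no change of basepoint of $\alpha_i$ gives a $(k,\delta)$-splitting of $(X_i,x_i)$; passing to a Gromov-Hausdorff limit, $(X_i,\star_i)$ would converge to $(\R^k,\star)\times$(a point or limit), and $x_i\to x_\infty\in\overline{B(\star_\infty,C)}$, but then for $i$ large the translated splitting $\alpha_{i,x_i}$ would be a $(k,\delta)$-splitting of $(X_i,x_i)$, a contradiction. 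Either route works; the direct estimate is cleaner and yields the explicit $\delta'$.
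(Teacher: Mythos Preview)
Your proposal is correct and follows essentially the same approach as the paper: a direct ball-inclusion estimate showing that the original map $\alpha$, with basepoints shifted to $x$ and $\alpha(x)$, is a $\delta$-Gromov-Hausdorff approximation once $(\delta')^{-1}$ exceeds $\delta^{-1}$ by roughly $C$. The paper does not bother with the translation $T_{-a}$ (since Definition~\ref{def-ghsplitting} explicitly allows any basepoint in $\R^k$), but this is a cosmetic difference; the core argument is the same.
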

\begin{proof}
In general, suppose that $f : (X, \star_X) \ra (Y, \star_Y)$ is a
$\delta^\prime$-Gromov-Hausdorff approximation. Given $x \in B(\star_X, C)$,
consider $x$ to be a new basepoint.
Note that
\begin{equation}
d(\star_Y, f(x)) \le d(\star_X, x) + \delta^\prime \le C + \delta^\prime.
\end{equation}
Suppose that $\delta$ satisfies
\begin{enumerate}
\item $\delta^{-1} \le (\delta^\prime)^{-1} - C$,
\item $\delta^{-1} - \delta \le 
(\delta^\prime)^{-1} - 2 \delta^\prime - C$ and
\item $\delta > 2 \delta^\prime$.
\end{enumerate}
We claim that
$f$ is a $\delta$-Gromov-Hausdorff between $(X, x)$ and $(Y, f(x))$.
To see this, first if
$x_1, x_2 \in B(x, \delta^{-1})$ then
$x_1, x_2 \in B(\star_X, (\delta^\prime)^{-1}$ and so
\begin{equation}
|d_Y(f(x_1),f(x_2))-d_X(x_1,x_2)|\leq \de^\prime \le \de.
\end{equation}

Next, if $y \in B(f(x), \delta^{-1} - \de)$ then
$y \in B(\star_Y, (\delta^\prime)^{-1} - \delta^\prime))$ and so
there
is some $\widehat{x} \in B(\star_X, (\delta^\prime)^{-1})$ with
$d(y, f(\widehat{x})) \le \de^\prime$.
Now 
\begin{equation}
d(x, \widehat{x}) \le d(f(x), f(\widehat{x})) + \de^\prime
\le d(f(x), y) + d(y, f(\widehat{x})) + \de^\prime \le
\delta^{-1} - \delta + 2\de^\prime < \delta^{-1},
\end{equation}
which proves the claim.

The lemma now follows provided that we specialize to the case when $Y$ 
splits off an $\R^k$-factor.
\end{proof}

\begin{lemma} \label{faralignment2}
Given $\delta > 0$ and 
$C < \infty$, there is a 
$\delta^\prime = \delta^\prime(\de, C) > 0$
with the following property.
Suppose that 
$(X, \star_X)$ is a complete pointed metric space
with a $(k_1, \delta^\prime)$-splitting $\alpha_1$ and 
a $(k_2, \delta^\prime)$-splitting $\alpha_2$. Suppose that
$\alpha_1$
is $\delta^\prime$-compatible with $\alpha_2$.
Given
$x\in B(\star_X, C)$, let $\alpha^\prime_1, \alpha^\prime_2$ be the
approximate splittings of $(X, x)$ coming from a change of basepoint
in $\alpha_1, \alpha_2$. Then $\alpha^\prime_1$ and $\alpha^\prime_2$ are
$\de$-compatible. 
\end{lemma}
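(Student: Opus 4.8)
The plan is to deduce this from Lemma \ref{faralignment1} together with the elementary observation that relocating the basepoint of a pointed map changes neither the underlying (unpointed) map nor the isometries involved; it only changes the scale at which Gromov-Hausdorff closeness is measured, and that scale degrades in a controlled way when the new basepoint lies in $B(\star_X, C)$. Concretely, I would take the \emph{same} auxiliary isometry $\gamma$ and the \emph{same} comparison maps that witness the $\delta^\prime$-compatibility of $\alpha_1$ with $\alpha_2$, relocate every basepoint along $x$, and then check that the defining conditions of Definition \ref{def-epscompatible} still hold with quality $\delta$, provided $\delta^\prime$ was chosen small in terms of $\delta$ and $C$.

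In more detail: write $\alpha_1 : (X, \star_X) \ra (X_1, \star_{X_1}) \times (X_2, \star_{X_2})$ with $X_1 \cong \R^{k_1}$ and $\alpha_2 : (X, \star_X) \ra (Y_1, \star_{Y_1}) \times (Y_2, \star_{Y_2})$ with $Y_1 \cong \R^{k_2}$, so $k_1 \le k_2$. The $\delta^\prime$-compatibility supplies a $k_1$-splitting (an honest isometry) $\gamma : (Y_1, \star_{Y_1}) \ra (\R^{k_1}, \star) \times (\R^{k_2 - k_1}, \star)$ such that the induced $(k_1, \delta^\prime)$-splitting
\[
\beta : X \stackrel{\alpha_2}{\ra} Y_1 \times Y_2 \stackrel{(\gamma, \Id)}{\ra} (\R^{k_1} \times \R^{k_2 - k_1}) \times Y_2 \cong \R^{k_1} \times (\R^{k_2 - k_1} \times Y_2)
\]
is $\delta^\prime$-close to $\alpha_1$; that is, there are $\delta^\prime$-Gromov-Hausdorff approximations $\phi_1 : \R^{k_1} \ra X_1$ and $\phi_2 : \R^{k_2 - k_1} \times Y_2 \ra X_2$ with $(\phi_1, \phi_2) \circ \beta$ agreeing with $\alpha_1$ up to $\delta^\prime$ on $B(\star_X, (\delta^\prime)^{-1})$. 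Now fix $x \in B(\star_X, C)$ and relocate every basepoint to the image of $x$ under the relevant map (so $Y_1$ gets $\pi_{Y_1}(\alpha_2(x))$, $X_1$ gets $\pi_{X_1}(\alpha_1(x))$, the target of $\gamma$ gets $\gamma(\pi_{Y_1}(\alpha_2(x)))$, and so on). With these choices $\alpha_1^\prime, \alpha_2^\prime$ are exactly the change-of-basepoint approximate splittings of Lemma \ref{faralignment1}, and $\beta^\prime := (\gamma, \Id) \circ \alpha_2^\prime$ is \emph{as a set map} identical to $\beta$, hence is the induced splitting of $(X, x)$ through $\alpha_2^\prime$ and $\gamma$. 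Since $\phi_1, \phi_2, \alpha_1, \beta$ are unchanged as maps and all relocated basepoints lie within $C + \delta^\prime$ of the old ones, the inequality ``$(\phi_1, \phi_2) \circ \beta$ agrees with $\alpha_1$ up to $\delta^\prime$'' survives on the slightly smaller ball $B(x, (\delta^\prime)^{-1} - C) \subset B(\star_X, (\delta^\prime)^{-1})$, and the computation in the proof of Lemma \ref{faralignment1} shows that $\phi_1, \phi_2, \gamma$ become $\delta^{\prime\prime}(\delta^\prime, C)$-Gromov-Hausdorff approximations with $\delta^{\prime\prime} \to 0$ as $\delta^\prime \to 0$; Lemma \ref{faralignment1} applied to $\alpha_1, \alpha_2$ themselves makes $\alpha_1^\prime, \alpha_2^\prime$ genuine $(k_1, \delta)$- and $(k_2, \delta)$-splittings. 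Combining these, $\beta^\prime$ is $\delta$-close to $\alpha_1^\prime$, so $\alpha_1^\prime$ is $\delta$-compatible with $\alpha_2^\prime$, provided $(\delta^\prime)^{-1} - C \ge \delta^{-1}$ and $\delta^\prime$ is small enough depending only on $\delta$.

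The only genuine work is the bookkeeping of basepoints: one must verify that the splitting $\beta^\prime$ produced by relocating basepoints really coincides with the one obtained by composing $\alpha_2^\prime$ with $(\gamma, \Id)$ under the identification $(\R^{k_1} \times \R^{k_2-k_1}) \times Y_2 \cong \R^{k_1} \times (\R^{k_2 - k_1} \times Y_2)$, and that the several quality requirements --- $\delta$-Gromov-Hausdorff bounds on $\phi_1, \phi_2, \gamma$, the $(k_i, \delta)$-splitting property of $\alpha_i^\prime$, and $\delta$-agreement of $(\phi_1, \phi_2) \circ \beta^\prime$ with $\alpha_1^\prime$ on $B(x, \delta^{-1})$ --- can all be forced simultaneously by one choice $\delta^\prime = \delta^\prime(\delta, C)$. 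Each is of the form ``$\delta^\prime$ sufficiently small in terms of $\delta$ and $C$'', so there is no circularity; I expect this assembly of parameters, not any geometric subtlety, to be the main obstacle.
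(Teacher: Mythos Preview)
Your proposal is correct and matches the paper's approach: the paper's own proof simply reads ``The proof is similar to that of Lemma \ref{faralignment1}. We omit the details,'' and you have supplied exactly those details by keeping the witnessing data $(\gamma,\phi_1,\phi_2)$ fixed as set maps, relocating all basepoints along $x$, and tracking the controlled degradation of the Gromov--Hausdorff constants via the computation in the proof of Lemma~\ref{faralignment1}.
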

\begin{proof}
The proof is similar to that of Lemma \ref{faralignment1}.
We omit the details.
\end{proof}

\subsection{Approximate splittings of Alexandrov spaces}

Recall the notion of a point in an Alexandrov space having a 
$k$-strainer
of a certain size and quality; see Subsection \ref{subsec-alexandrov}.
The next lemma shows that the notions of having a
good strainer and having a good approximate $\R^k$-splitting are
essentially equivalent 
for Alexandrov spaces.

\begin{lemma} \label{strainerlemma}
\begin{enumerate}
\item 
Given $k \in \Z^+$ and $\delta > 0$, there is a 
$\delta^\prime = 
\delta^\prime(k, \delta) > 0$
with the following property.
Suppose that $(X,\star_X)$ is a complete pointed 
nonnegatively curved Alexandrov space 
with a $(k, \delta^\prime)$-splitting.  Then $\star_X$ has a
$k$-strainer of quality $\delta$ at a scale
$\frac{1}{\delta}$.
\item 
Given $n \in \Z^+$ and $\delta > 0$, there is a 
$\delta^\prime = \delta^\prime(n, \delta) > 0$ with the following property.
Suppose that $(X,\star_X)$ is an complete pointed length space so that
$B \left( \star_X, \frac{1}{\delta^\prime} \right)$ has
curvature bounded below by $- \: \delta^\prime$ and
dimension bounded above by $n$. Suppose
that for some $k \le n$,
$\star_X$ has a $k$-strainer $\{p^\pm_i\}_{i=1}^k$
of quality $\delta^\prime$ at a scale
$\frac{1}{\delta^\prime}$. Then $(X,\star_X)$ has a $(k, \delta)$-splitting
$\phi:(X,\star_X)\ra (\R^k\times X',(0,\star_{X'}))$
where the composition $\pi_{\R^k}\circ \phi$ has $j^{th}$ component
$d_X(p^+_j, \star_X)-d_X(p^+_j, \cdot)$.
\end{enumerate}
\end{lemma}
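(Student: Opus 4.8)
The two parts go in opposite directions, and both are essentially compactness arguments combined with the structure theory of nonnegatively curved Alexandrov spaces. I would prove each by contradiction, exploiting the precompactness provided by Lemma \ref{alexlimit} together with the fact that in the limit one has an honest nonnegatively curved Alexandrov space to which the splitting theorem (Toponogov splitting, recalled in Subsection \ref{subsec-alexandrov}) applies.

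\emph{Part (1).} Suppose the claim fails: there is a $\delta > 0$ and a sequence of complete pointed nonnegatively curved Alexandrov spaces $(X_i, \star_{X_i})$, each admitting a $(k, \delta_i)$-splitting with $\delta_i \to 0$, yet none of whose basepoints carries a $k$-strainer of quality $\delta$ at scale $\frac{1}{\delta}$. The $(k,\delta_i)$-splitting gives a $\delta_i$-Gromov--Hausdorff approximation $(X_i, \star_{X_i}) \to (\R^k, \star_k) \times (Z_i, \star_{Z_i})$; in particular $(X_i, \star_{X_i})$ is within $\delta_i$ (in the pointed Gromov--Hausdorff sense, on balls of radius $\delta_i^{-1}$) of a space that contains $k$ mutually orthogonal lines through a point near the basepoint. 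Using nonnegative curvature and the precompactness criterion, pass to a subsequence converging to a pointed nonnegatively curved Alexandrov space $(X_\infty, \star_\infty)$; the approximations compose to show that $(X_\infty, \star_\infty)$ itself admits a $(k,0)$-splitting, i.e. is isometric to $\R^k \times Z_\infty$. Now in $\R^k \times Z_\infty$ one exhibits an honest $k$-strainer at $(0, \star_{Z_\infty})$ of any desired quality at any scale, simply by taking $a_i^\pm$ to be points along the positive and negative $i$-th coordinate axes at distance $\frac{2}{\delta}$ (say). Strainer inequalities are expressed via comparison angles, which are continuous under Gromov--Hausdorff convergence of the relevant finite configurations; hence for $i$ large the nearby configuration in $X_i$ is a $k$-strainer of quality $\delta$ at scale $\frac{1}{\delta}$, contradicting our assumption. (One must also invoke Lemma \ref{faralignment1} or a direct estimate to handle the fact that the strainer points produced in $X_i$ may sit at a basepoint slightly displaced from $\star_{X_i}$, but this only costs a controlled loss in quality and scale, absorbable by choosing $\delta_i$ small.)

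\emph{Part (2).} This is the more substantial direction. Again argue by contradiction: fix $n$ and $\delta$, and suppose there is a sequence $(X_i, \star_{X_i})$ of complete pointed length spaces with $\mathrm{curv} \ge -\delta_i$ and $\dim \le n$ on $B(\star_{X_i}, \delta_i^{-1})$, each carrying a $k$-strainer $\{p_i^\pm\}$ of quality $\delta_i$ at scale $\delta_i^{-1}$ for some fixed $k \le n$, but such that the proposed map $\phi_i$ with components $d_{X_i}(p_{i,j}^+, \star_{X_i}) - d_{X_i}(p_{i,j}^+, \cdot)$ (into $\R^k \times X_i'$, where $X_i'$ is a suitable ``complementary'' factor) fails to be a $(k,\delta)$-splitting. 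By Lemma \ref{alexlimit}, pass to a subsequence so that $(X_i, \star_{X_i})$ converges in the pointed Gromov--Hausdorff topology to a pointed nonnegatively curved Alexandrov space $(X_\infty, \star_\infty)$ of dimension at most $n$. The strainer points $p_{i,j}^\pm$ escape to infinity (their distance to the basepoint is $\delta_i^{-1} \to \infty$); the almost-straightness condition $\cangle(a_j, b_j) > \pi - \delta_i$ together with curvature $\ge -\delta_i$ forces, in the limit, the existence of $k$ lines through $\star_\infty$ whose ``directions'' are mutually orthogonal (the $\cangle(a_i, a_j) > \frac{\pi}{2} - \delta_i$ etc. conditions pass to genuine orthogonality in $C_{\star_\infty} X_\infty$). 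Here I would use the first-variation formula recalled in Subsection \ref{subsec-alexandrov} (the right-derivative of a distance function along a geodesic equals $-\cos\theta$) to see that each Busemann-type function $b_j := \lim_{i} (d(p_{i,j}^+, \star) - d(p_{i,j}^+, \cdot))$ is, in the limit, an affine function whose gradient has unit norm, and that distinct $b_j$'s have orthogonal gradients. Applying the splitting theorem iteratively — once for each line — shows $X_\infty$ splits isometrically as $\R^k \times X_\infty'$, with the $\R^k$ coordinates given precisely by $(b_1, \dots, b_k)$ and $X_\infty' = \bigcap_j b_j^{-1}(0)$. Consequently the maps $\phi_i$ converge to the splitting isometry $X_\infty \to \R^k \times X_\infty'$, which means $\phi_i$ is a $(k, \delta)$-splitting for $i$ large — a contradiction. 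The conclusion about the $j$-th component of $\pi_{\R^k} \circ \phi$ is then built into the construction.

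\emph{Main obstacle.} The delicate point is Part (2): extracting, from finitely many almost-orthogonal almost-straight strainer pairs in a space with only an almost-lower curvature bound, a \emph{genuine} $\R^k$-splitting of the limit, and then transferring this back to a \emph{quantitative} $(k,\delta)$-approximation for large $i$. The subtleties are (a) controlling the complementary factor $X_\infty'$ and verifying that the Pythagorean product metric on $\R^k \times X_\infty'$ is genuinely recovered — this is exactly where one needs the full strength of the Alexandrov splitting theorem applied $k$ times, carefully checking at each stage that the remaining strainer pairs still strain a point in the quotient — and (b) ensuring the convergence $\phi_i \to (\text{splitting isometry})$ is uniform on balls of radius $\delta^{-1}$, so that the defining inequalities of a $\delta$-Gromov--Hausdorff approximation actually hold; this requires that the strainer scale $\delta_i^{-1}$ grows fast enough relative to the curvature bound, which is why the hypothesis puts the strainer at scale $\frac{1}{\delta'}$ with the \emph{same} small $\delta'$ governing the curvature. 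A cleaner alternative to the pure compactness argument would be to cite or reprove the standard quantitative estimates relating strainers to distance-coordinate maps (as in \cite[Chapter 10.8.2]{Burago-Burago-Ivanov}), which directly give bi-Lipschitz, almost-isometric coordinates from a good strainer; combined with a stability argument these yield the $(k,\delta)$-splitting without passing to a limit at all, but the compactness route is conceptually simpler to write down and suffices here.
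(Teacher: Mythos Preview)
Your treatment of Part (2) is essentially the paper's own proof: contradiction, pass to a limit via Lemma \ref{alexlimit}, observe that the strainer pairs yield $k$ orthogonal lines through $\star_\infty$ whose Busemann functions are the limits of the given coordinate functions, apply the splitting theorem, and contradict. The paper records this in a single short paragraph, but the content is the same.

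Part (1) is where you diverge from the paper, and where there is a genuine (if minor) gap. The paper dispatches (1) in one line: ``immediate from the definitions.'' Concretely, given a $(k,\delta')$-splitting $\alpha:(X,\star_X)\to(\R^k,0)\times(X_2,\star_{X_2})$, choose $a_j,b_j\in X$ so that $\alpha(a_j),\alpha(b_j)$ lie within $\delta'$ of $(\pm\tfrac{1}{\delta}e_j,\star_{X_2})$. The comparison angles $\cangle_{\star_X}(\cdot,\cdot)$ depend only on the three pairwise distances, each of which is $\delta'$-close (by Definition \ref{def-hausdorffapproximation}) to the corresponding distance in $\R^k\times X_2$, where the angles are exactly $\pi$ or $\pi/2$. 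So the strainer inequalities hold once $\delta'$ is small relative to $\delta$ and $k$; no limit is needed.

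Your compactness route for (1), by contrast, requires Gromov--Hausdorff precompactness of the sequence $(X_i,\star_{X_i})$. But the hypotheses of Part (1) give \emph{no} uniform dimension bound on the $X_i$ (only nonnegative curvature and the existence of a $(k,\delta_i)$-splitting), so Lemma \ref{alexlimit} does not apply and the ``pass to a subsequence'' step is unjustified. (Think of $\R^k\times\R^{m_i}$ with $m_i\to\infty$.) The fix is simply to abandon the limit and do the direct verification above --- which is also cleaner.
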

\begin{proof}
The proof of (1) is immediate from the definitions.

Suppose that (2) were false.
Then for each $i \in \Z^+$, there
is an complete pointed length space $(X_i, \star_{X_i})$ so that 
\begin{enumerate}
\item $B(\star_{X_i}, i)$ has dimension
at most $n$,
\item $B(\star_{X_i}, i)$ has curvature bounded below by $- \: \frac{1}{i}$ and 
\item $\star_{X_i}$ has a $k$-strainer of quality $\frac{1}{i}$ at a scale
$i$ but
\item If $\Phi_i:(X_k,\star_{X_i})\ra \R^k$ has 
$j^{th}$ component defined as above, then $\Phi_i$
is not the $\R^k$  part of a $(k,\de)$-splitting for any $i$.
\end{enumerate}
After
passing to a subsequence, we can assume that $\lim_{i \rightarrow
\infty}(X_i, \star_{X_i}) = (X_\infty, \star_{X_\infty})$, for 
some pointed nonnegatively curved Alexandrov space
$(X_\infty, \star_{X_\infty})$ of dimension at most $n$,
the $k$-strainers yield $k$ pairs $\{\ga_j^\pm\}_{j=1}^k$
of opposite rays leaving $\star_{X_\infty}$, the
opposite rays $\ga_j^\pm$ fit together to form $k$ orthogonal
lines, and the $j^{th}$
components of the $\Phi_i$'s converge to 
the negative of the Busemann function of
$\ga_j^+$.
Using the Splitting Theorem \cite[Chapter 10.5]{Burago-Burago-Ivanov}
it follows that
$X_\infty$ splits off an $\R^k$-factor. 
This gives a contradiction.
\end{proof}

\begin{lemma} \label{limitsplits}
Given $k \le n \in \Z^+$,
suppose that $\{(X_i,\star_{X_i})\}_{i=1}^\infty$ 
is a sequence of complete pointed length spaces
and $\delta_i \rightarrow 0$ is a positive
sequence such that 
\begin{enumerate}
\item Each $B \left( \star_{X_i}, \frac{1}{\delta_i} \right)$ has 
curvature bounded below by $- \: \delta_i$ and dimension bounded above by $n$. 
\item Each $(X_i, \star_{X_i})$ has a $(k, \delta_i)$-splitting. 
\item $\lim_{i \rightarrow \infty} (X_i, \star_{X_i}) = (X_\infty, \star_{X_\infty})$ in the
pointed Gromov-Hausdorff topology. 
\end{enumerate}
Then $(X_\infty, \star_{X_\infty})$ is a
nonnegatively curved Alexandrov space with a $k$-splitting.
\end{lemma}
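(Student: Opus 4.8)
The plan is to identify the limit as a nonnegatively curved Alexandrov space, then to promote the approximate $\R^k$-splittings of the $X_i$ to $k$ genuine mutually orthogonal lines through $\star_{X_\infty}$, and finally to iterate the splitting theorem to peel off an honest $\R^k$-factor. For the first step: since each $B(\star_{X_i},\de_i^{-1})$ has curvature bounded below by $-\de_i$ and dimension at most $n$, and $\de_i\to 0$, Lemma \ref{alexlimit} together with the uniqueness of pointed Gromov--Hausdorff limits identifies $(X_\infty,\star_{X_\infty})$ as a pointed nonnegatively curved Alexandrov space of dimension at most $n$; in particular it is complete, locally compact and proper, so Arzel\`a--Ascoli applies to sequences of unit-speed geodesics in $X_\infty$.

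Next I would extract the lines. Fix Gromov--Hausdorff approximations $g_i:(X_i,\star_{X_i})\to(X_\infty,\star_{X_\infty})$ realizing the convergence, and for each $i$ write the given $(k,\de_i)$-splitting as a $\de_i$-Gromov--Hausdorff approximation $f_i:(X_i,\star_{X_i})\to(\R^k,a_i)\times(Y_i,b_i)$; after translating the $\R^k$ factor we may assume $f_i(\star_{X_i})$ is $\de_i$-close to $(0,b_i)$. For a fixed radius $R>0$ and each $j\in\{1,\dots,k\}$ choose $q^{\pm}_{i,j,R}\in X_i$ whose $f_i$-image is $\de_i$-close to $(\pm R\,e_j,b_i)$, and let $\sigma^{\pm}_{i,j,R}$ be a minimizing geodesic from $\star_{X_i}$ to $q^{\pm}_{i,j,R}$. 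Since geodesics in $\R^k$ are rigidly determined by the distances to their endpoints, the $f_i$-image of the point at arclength $t\le R$ along $\sigma^{\pm}_{i,j,R}$ is within $o_i(1)$ of $(\pm t\,e_j,b_i)$ for $R$ fixed; hence the pairwise distances among all these points approximate, with error $o_i(1)$, the corresponding distances in the product $\R^k\times Y_i$. Pushing forward by $g_i$, passing to a subsequence, letting $R\to\infty$ and diagonalizing (using properness of $X_\infty$), I obtain rays $\rho^{\pm}_j$ issuing from $\star_{X_\infty}$ with, for all $s,t\ge 0$ and $j\neq j'$,
\begin{equation}
d_{X_\infty}\!\left(\rho^{+}_j(s),\rho^{-}_j(t)\right)=s+t,
\qquad
d_{X_\infty}\!\left(\rho^{\pm}_j(s),\rho^{\pm}_{j'}(t)\right)=\sqrt{s^2+t^2}.
\end{equation}
The first identity shows that $\gamma_j:=\rho^{-}_j\cup\rho^{+}_j$ is a line through $\star_{X_\infty}$ for each $j$.

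Finally I would build the product structure inductively. Since $X_\infty$ contains the line $\gamma_1$, the splitting theorem \cite[Chapter 10.5]{Burago-Burago-Ivanov} gives an isometry $X_\infty\cong\R\times X^{(1)}$ with $X^{(1)}$ nonnegatively curved, in which the $\R$-coordinate is the Busemann function $b_1$ of $\rho^{+}_1$, normalized so that $b_1(\star_{X_\infty})=0$. For $j\ge 2$, the relation $d(\rho^{+}_1(s),\rho^{\pm}_j(t))=\sqrt{s^2+t^2}$ gives $b_1(\rho^{\pm}_j(t))=\lim_{s\to\infty}(s-\sqrt{s^2+t^2})=0$, so $\gamma_j$ lies in the slice $\{0\}\times X^{(1)}$ and is therefore a line of $X^{(1)}$, while the distance relations among $\gamma_2,\dots,\gamma_k$ are unchanged. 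Iterating $k-1$ more times removes one $\R$-factor at a time and yields an isometry $X_\infty\cong\R^k\times X^{(k)}$ with $X^{(k)}$ a nonnegatively curved Alexandrov space, which is the desired $k$-splitting.

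I expect the main obstacle to be the middle step: because a Gromov--Hausdorff approximation need not carry geodesics to geodesics, promoting the approximate splittings to honest lines with exact Euclidean distance relations requires the rigidity of geodesics in $\R^k$ to be exploited uniformly across the relevant scales before taking $i\to\infty$. Once such lines are available, the passage from ``$k$ orthogonal lines'' to ``isometric product with $\R^k$'' via the splitting theorem and the Busemann-function computation is routine. Alternatively, one could run a contradiction argument in the style of Lemma \ref{strainerlemma}(2), observing that the approximate splittings yield $k$-strainers at $\star_{X_i}$ of quality $\to 0$ at scale $\to\infty$; but the direct argument above seems cleaner.
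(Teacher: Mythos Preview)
Your argument is correct and is essentially the same as the paper's. The paper's proof is the single line ``This follows from Lemma \ref{strainerlemma},'' and if you unpack the proof of Lemma \ref{strainerlemma}(2) you find precisely your construction: the approximate splittings produce strainer points which, in the limit, become $k$ pairs of opposite rays fitting together into $k$ orthogonal lines through $\star_{X_\infty}$, after which the splitting theorem peels off the $\R^k$-factor. You have simply written out that packaged argument explicitly---and indeed you note this yourself in your final paragraph.
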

\begin{proof}
This follows from Lemma \ref{strainerlemma}.
\end{proof}

\subsection{Compatibility of approximate splittings}

Next, we show that the nonexistence of an approximate $(k+1)$-splitting implies that
approximate $j$-splittings are approximately compatible with $k$-splittings
for $j \le k$.

\begin{lemma} \label{alexcompatible}
Given 
$j \le k \le n\in\Z^+$ and $\beta_k^\prime, \beta_{k+1} > 0$,
there are numbers
$\de = \de(j,k,n,\beta_k^\prime, \beta_{k+1}) > 0$.
$\be_j = \beta_j(j,k,n,\beta_k^\prime, \beta_{k+1}) > 0$ and
$\be_k = \beta_k(j,k,n,\beta_k^\prime, \beta_{k+1}) > 0$ with the following property.
If $(X,\star_X)$ is a 
complete pointed length 
space such that 
\begin{enumerate}
\item The ball $B(\star_X,\de^{-1})$ has curvature bounded below by
$-\de$ and dimension bounded above by $n$, and
\item $(X,\star_X)$ does not admit a $(k+1,\be_{k+1})$-splitting 
\end{enumerate}
then any  $(j,\be_j)$-splitting of $(X,\star_X)$ is $\be_k^\prime$-compatible 
with any $(k,\be_k)$-splitting.
\end{lemma}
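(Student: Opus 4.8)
The plan is to argue by contradiction and compactness, exactly as in the proof of Lemma \ref{strainerlemma}(2). Suppose the statement fails for some fixed $j \le k \le n$ and some $\beta_k^\prime, \beta_{k+1} > 0$. Then for every $i \in \Z^+$ there is a complete pointed length space $(X_i, \star_{X_i})$ such that $B(\star_{X_i}, i)$ has curvature bounded below by $-\frac{1}{i}$ and dimension at most $n$, such that $(X_i, \star_{X_i})$ admits no $(k+1, \beta_{k+1})$-splitting, and such that $(X_i, \star_{X_i})$ has a $(j, \frac{1}{i})$-splitting $\alpha_i$ which is not $\beta_k^\prime$-compatible with some $(k, \frac{1}{i})$-splitting $\beta_i$. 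By Lemma \ref{alexlimit}, after passing to a subsequence we may assume $(X_i, \star_{X_i}) \stackrel{\gh}{\ra} (X_\infty, \star_{X_\infty})$ for a pointed nonnegatively curved Alexandrov space $X_\infty$ of dimension at most $n$. By Lemma \ref{limitsplits}, the $(j, \frac{1}{i})$-splittings $\alpha_i$ and the $(k, \frac{1}{i})$-splittings $\beta_i$ pass to the limit to yield an honest $j$-splitting $\alpha_\infty : X_\infty \ra \R^j \times Y$ and an honest $k$-splitting $\beta_\infty : X_\infty \ra \R^k \times Z$ of $X_\infty$.

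Next I would apply Lemma \ref{lem-splitcompatible}(1) to $\beta_\infty$ (as the $k$-splitting) together with each of the $j$ coordinate $1$-splittings of $\alpha_\infty$. Either all of these $1$-splittings are compatible with $\beta_\infty$ — in which case one assembles them to see that $\alpha_\infty$ is compatible with $\beta_\infty$ — or at some stage one of the coordinate $1$-splittings of $\alpha_\infty$ is not compatible with $\beta_\infty$, and Lemma \ref{lem-splitcompatible}(1) then produces a $1$-splitting of the $\R^k$-complement $Z$, hence a $(k+1)$-splitting of $X_\infty$. The second alternative must be the one that contradicts the hypothesis: a genuine $(k+1)$-splitting of $X_\infty$, when transported back to $X_i$ via a Gromov-Hausdorff approximation $Z \ra X_i$ (using the quasi-inverse machinery from Section \ref{preliminaries}), yields a $(k+1, \delta_i')$-splitting of $(X_i, \star_{X_i})$ with $\delta_i' \ra 0$; for large $i$ this violates the assumption that $(X_i, \star_{X_i})$ admits no $(k+1, \beta_{k+1})$-splitting. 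Hence for large $i$ we are in the first alternative: $\alpha_\infty$ is compatible with $\beta_\infty$.

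Finally, compatibility of $\alpha_\infty$ with $\beta_\infty$ is realized by finitely many isometries of finite-dimensional Euclidean spaces and a factor isometry of $Y$; transporting this data back through the Gromov-Hausdorff approximations $X_i \ra X_\infty$ shows that, for $i$ large, $\alpha_i$ is $\beta_k^\prime$-compatible with $\beta_i$, contradicting our choice of the $X_i$. This contradiction establishes the lemma, with $\delta, \beta_j, \beta_k$ read off from how small $\frac{1}{i}$ must be for the limiting arguments to apply.

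The main obstacle is purely bookkeeping: quantifying the passage from the limit statement back to the approximators. Specifically one must check that (a) an honest $(k+1)$-splitting of $X_\infty$ really does pull back to an approximate $(k+1, \beta_{k+1})$-splitting of $X_i$ for large $i$ — this is where one uses that $d$-Gromov-Hausdorff approximations compose and that a product of Gromov-Hausdorff approximations is a Gromov-Hausdorff approximation, with the quality degrading in a controlled way — and (b) the notion of $\beta_k^\prime$-compatibility is stable under the same kind of perturbation, so that compatibility in the limit forces approximate compatibility at finite stage. Both are routine but must be done carefully, since the definition of $\epsilon$-compatibility (Definition \ref{def-epscompatible}) involves choosing an auxiliary $j$-splitting $\gamma$ of $Y_1 = \R^k$, and one needs this $\gamma$ to vary continuously (or at least boundedly) along the convergent sequence. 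No single step is deep; the content is entirely in Lemmas \ref{lem-splitcompatible}, \ref{limitsplits}, and \ref{alexlimit}, which have already been established.
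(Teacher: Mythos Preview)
Your proposal is correct and follows essentially the same contradiction-and-compactness approach as the paper: pass to a nonnegatively curved Alexandrov limit via Lemma~\ref{alexlimit}, extract limiting $j$- and $k$-splittings via Lemma~\ref{limitsplits}, use Lemma~\ref{lem-splitcompatible} together with the absence of a $(k+1)$-splitting on the limit to force exact compatibility there, and then transport compatibility back to the approximators to reach a contradiction. The only cosmetic difference is that you unpack the appeal to Lemma~\ref{lem-splitcompatible} by iterating part~(1) over the coordinate $1$-splittings of $\alpha_\infty$, whereas the paper simply invokes the lemma and the fact that $X_\infty$ admits no $(k+1)$-splitting; your bookkeeping remarks about (a) and (b) make explicit what the paper leaves implicit.
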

\begin{proof}
Suppose that the lemma is false. Then for some $j \le k \le n\in \Z^+$ and
$\be_k',\,\be_{k+1} > 0$, there are

\begin{enumerate}
\item A sequence $\{(X_i, \star_{X_i})\}_{i=1}^\infty$ of pointed complete length spaces,
\item A sequence $\{\al_i: (X_i, \star_{X_i}) \ra (X_{1,i},\star_{X_{1,i}}) \times (X_{2,i}, \star_{X_{2,i}})
 \}$ 
of $(j,i^{-1})$-splittings and
\item A sequence $\{\bar\al_i: (X_i, \star_{X_i}) \ra (Y_{1,i}, \star_{Y_{1,i}}) \times (Y_{2,i}, \star_{Y_{2,i}}) 
 \}$
of $(k,i^{-1})$-splittings
\end{enumerate}
such that
\begin{enumerate}
\setcounter{enumi}{3}
\item 
$B(\star_{X_i},i^{-1})$ has curvature 
bounded below by $-i^{-1}$,
\item $B(\star_{X_i},i^{-1})$ has dimension at most $n$,
\item $(X_i,\star_{X_i})$ does not admit a $(k+1,\be_{k+1})$-splitting
for any $i$ and
\item $\al_i$ is not $\be_k'$-compatible with $\bar\al_i$ for any $i$.
\end{enumerate}

By (4), (5) and Lemma \ref{alexlimit},
after passing to a subsequence we can assume that there is a 
pointed nonnegatively curved
Alexandrov space $(X_\infty, \star_{X_\infty})$,
and a sequence $\{\Phi_i : (X_i,\star_{X_i}) \ra (X_\infty, \star_{X_\infty}) \}$
of $i^{-1}$-Gromov-Hausdorff approximations.
In view of (2), (3) and Lemma \ref{limitsplits}, 
after passing to a further subsequence
we can also assume that there is a pointed $j$-splitting
$\al_\infty: (X_\infty, \star_{X_\infty}) \ra
(X_{\infty.1}, \star_{X_{\infty,1}}) \times (X_{\infty, 2}, \star_{X_{\infty,2}})$
and a pointed $k$-splitting
$\bar\al_\infty: (X_\infty, \star_{X_\infty}) \ra
(Y_{\infty,1}, \star_{Y_{\infty,1}}) \times (Y_{\infty,2}, \star_{Y_{\infty,2}})$
such that $\al_\infty \circ \Phi_i$ (respectively $\bar\al_\infty \circ \Phi_i$)
is $i^{-1}$-close to $\al_i$ (respectively $\bar\al_i$).  By Lemma \ref{lem-splitcompatible}
and the fact that $(X_\infty, \star_{X_\infty})$ does not admit a $(k+1)$-splitting,
we conclude that $\al_\infty$ is compatible with $\bar\al_\infty$.
It follows that $\al_i$ is $\be_k'$-compatible with $\bar\al_i$ for large $i$, 
which is a contradiction.
\end{proof}

\begin{remark}
Assumption (1) in Lemma \ref{alexcompatible} is probably not
necessary but it allows us to give a simple proof, and it will
be satisfied when we apply the lemma.
\end{remark}

\subsection{Overlapping cones}

In this subsection we prove a result about overlapping 
almost-conical regions that we will need later.
We recall that a pointed metric space $(X,\star)$ is a {\em metric
cone} if it is a union of rays leaving the basepoint $\star$,
and the union of any two rays $\ga_1,\,\ga_2$
leaving $\star$ is isometric to the 
union of two rays $\bar\ga_1,\,\bar\ga_2\subset\R^2$ leaving the origin
$o\in\R^2$.

\begin{lemma} \label{2cones}
If $(X, \star_X)$ is a conical nonnegatively curved
Alexandrov space and there is some $x \neq \star_X$ so that
$(X, x)$ is also a conical Alexandrov space then
$X$ has a $1$-splitting such that the segment from $\star_X$ to $x$
is parallel to the $\R$-factor.
\end{lemma}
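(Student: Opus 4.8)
The plan is to produce an honest line in $X$ through both $\star_X$ and $x$, and then invoke the splitting theorem for nonnegatively curved Alexandrov spaces recalled in Subsection~\ref{subsec-alexandrov}. First I would set $d=d(\star_X,x)>0$ and extract two rays emanating from $x$. Since $X$ is a metric cone with vertex $\star_X$, it is a union of rays leaving $\star_X$, so $x$ lies on some ray $\gamma\colon[0,\infty)\to X$ with $\gamma(0)=\star_X$, and minimality of $\gamma$ forces $\gamma(d)=x$; then $\bar\gamma(u):=\gamma(d+u)$, $u\ge 0$, is a ray leaving $x$. Likewise, since $X$ is a metric cone with vertex $x$, the point $\star_X$ lies on some ray $\rho\colon[0,\infty)\to X$ with $\rho(0)=x$, and minimality gives $\rho(d)=\star_X$. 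Note that $d(\star_X,\bar\gamma(u))=d+u$ while $d(\star_X,\rho(v))=|d-v|$.

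Next I would show that $\bar\gamma$ and $\rho$ point in opposite directions. By the cone condition at $x$, the union $\bar\gamma\cup\rho$ is isometric to a union of two rays leaving the origin $o\in\R^2$; if $\theta\in[0,\pi]$ denotes the angle between them then $d(\bar\gamma(u),\rho(v))=\sqrt{u^2+v^2-2uv\cos\theta}\le u+v$ for all $u,v\ge 0$. On the other hand, for $v\le d$ the triangle inequality through $\star_X$ gives $d(\bar\gamma(u),\rho(v))\ge d(\star_X,\bar\gamma(u))-d(\star_X,\rho(v))=(d+u)-(d-v)=u+v$. Comparing the two, $\sqrt{u^2+v^2-2uv\cos\theta}=u+v$, so $\cos\theta=-1$ and $\theta=\pi$; hence $d(\bar\gamma(u),\rho(v))=u+v$ for all $u,v\ge 0$, and the concatenation
\[
\ell(t)=\begin{cases}\bar\gamma(t), & t\ge 0,\\ \rho(-t), & t\le 0,\end{cases}
\]
which is well defined since both clauses give $x$ when $t=0$, is an isometric embedding $\R\to X$, i.e.\ a line, with $\ell(0)=x$ and $\ell(-d)=\star_X$.

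To finish, since $X$ is nonnegatively curved and contains the line $\ell$, the splitting theorem supplies an isometry $X\cong\R\times Y$ with $Y$ a nonnegatively curved Alexandrov space, which may be chosen to carry $\ell$ to a fiber $\R\times\{y_0\}$ of the projection $X\to Y$; this is the required $1$-splitting. Since $\star_X$ and $x$ both lie on $\ell$, they have the same $Y$-coordinate, and because the projection $X\to Y$ is $1$-Lipschitz, the minimizing segment from $\star_X$ to $x$ stays in $\R\times\{y_0\}$ and is therefore parallel to the $\R$-factor; concretely, $\ell|_{[-d,0]}$ is such a segment.

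The only delicate point, and the one I expect to be the real obstacle, is the possible non-uniqueness of minimizing geodesics in an Alexandrov space: the ray $\rho$ furnished by the cone structure at $x$ need not prolong the $\gamma$-segment joining $x$ to $\star_X$, so one cannot argue simply by ``extending $\gamma$ past its endpoint.'' The argument above avoids this by extracting $\theta=\pi$ directly from the cone distance formula and the triangle inequality, never identifying $\rho$ with a particular segment. Everything else is routine; nonnegative curvature enters only in the final appeal to the splitting theorem, the construction of the line using nothing but the cone axioms.
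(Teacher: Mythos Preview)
Your proof is correct and follows essentially the same approach as the paper: produce a line through $\star_X$ and $x$ by extending a minimizing segment in both directions using the two cone structures, then invoke the splitting theorem. The paper's proof is a four-sentence sketch that simply asserts ``the cone structure implies that $\gamma$ is a line,'' whereas you supply the details (the angle computation $\theta=\pi$ via the cone distance formula and the triangle inequality, and the careful handling of possible non-uniqueness of geodesics) that make this claim rigorous.
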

\begin{proof}
Let $\al$ be a segment joining $\star_X$ to $x$.
Since $X$ is conical with respect to both $\star_X$ and $x$,
the segment $\al$ can be extended in both directions
as a geodesic $\ga$.  The cone structure implies that $\ga$ is a line.
The lemma now follows from the splitting theorem.
\end{proof}

\begin{lemma} \label{almost2cones}
Given $n \in \Z^+$ and $\de > 0$, there is a 
$\de^\prime = \de^\prime(n, \de) > 0$ with the
following property.  If
\begin{itemize}
\item $(X, \star_X)$ is a complete pointed length space,
\item $x \in X$ has $d(\star_X,  x) = 1$ and 
\item $(X, \star_X)$ and $(X, x)$ have pointed Gromov-Hausdorff 
distance
less than $\de^\prime$ from conical nonnegatively curved Alexandrov spaces  
$CY$ and $CY^\prime$, respectively,
of dimension at most $n$
\end{itemize}
then
$(X,x)$ has a $(1,\de)$-splitting.
\end{lemma}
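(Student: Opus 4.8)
The plan is to argue by contradiction, using the precompactness of nonnegatively curved Alexandrov spaces of bounded dimension together with Lemma \ref{2cones}. Suppose the assertion fails for some $n$ and some $\de$. Then there is a sequence of complete pointed length spaces $(X_i,\star_{X_i})$ and points $x_i\in X_i$ with $d(\star_{X_i},x_i)=1$ such that $(X_i,\star_{X_i})$ (respectively $(X_i,x_i)$) has pointed Gromov-Hausdorff distance less than $i^{-1}$ from a conical nonnegatively curved Alexandrov space $CY_i$ (respectively $CY_i^\prime$) of dimension at most $n$, but $(X_i,x_i)$ admits no $(1,\de)$-splitting.

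The first step is to pass to the limit. Since a nonnegatively curved Alexandrov space has curvature bounded below by $0\ge -i^{-1}$ on every ball, Lemma \ref{alexlimit} applies to the sequences $\{CY_i\}$ and $\{CY_i^\prime\}$: after passing to a subsequence, $CY_i$ converges in the pointed Gromov-Hausdorff topology to a pointed nonnegatively curved Alexandrov space $Z$ of dimension at most $n$, and $CY_i^\prime$ converges to such a space $Z^\prime$. Being a metric cone with vertex at the basepoint is a closed condition under pointed Gromov-Hausdorff convergence (it is expressed entirely in terms of distances among rays from the basepoint), so $Z$ and $Z^\prime$ are again metric cones. Composing the $i^{-1}$-Gromov-Hausdorff approximations $(X_i,\star_{X_i})\ra CY_i$ with approximations $CY_i\ra Z$ shows that $(X_i,\star_{X_i})$ converges to $Z$, and likewise $(X_i,x_i)$ converges to $Z^\prime$. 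Now I track the point $x_i$ through the first family of approximations: re-basing at $x_i$ (a point at distance $1$ from $\star_{X_i}$) costs only a controlled worsening of the Gromov-Hausdorff approximation constant, by the argument in the proof of Lemma \ref{faralignment1}, and the images of $x_i$ lie in a bounded region of $Z$ with distance to $\star_Z$ tending to $1$; so after a further subsequence I obtain a point $z\in Z$ with $d(\star_Z,z)=1$ and $(X_i,x_i)$ converging to $(Z,z)$. By uniqueness of pointed Gromov-Hausdorff limits among complete proper metric spaces, $(Z,z)$ is pointed isometric to $(Z^\prime,\star_{Z^\prime})$.

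Consequently $Z$ is a nonnegatively curved Alexandrov space which is a metric cone with vertex $\star_Z$ and, being pointed isometric to $Z^\prime$ via an isometry carrying $z$ to the vertex of $Z^\prime$, is also a metric cone with vertex $z$; and $z\neq\star_Z$ since $d(\star_Z,z)=1$. Lemma \ref{2cones} then yields a $1$-splitting $Z\cong\R\times W$ with the segment from $\star_Z$ to $z$ parallel to the $\R$-factor; in particular $(Z,z)$ has a genuine $1$-splitting. To conclude, compose the $\de_i$-Gromov-Hausdorff approximations $(X_i,x_i)\ra(Z,z)$ (where $\de_i\ra 0$) with the isometry $(Z,z)\cong(\R\times W,(0,\star_W))$: this exhibits $(1,\de_i)$-splittings of $(X_i,x_i)$, so for $i$ large $(X_i,x_i)$ has a $(1,\de)$-splitting, contradicting the choice of the $X_i$.

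The step needing the most care is the bookkeeping in the third paragraph: one must verify that taking the Gromov-Hausdorff limit of the auxiliary cones $CY_i$ (rather than of the $X_i$ themselves) genuinely produces a limit both of $(X_i,\star_{X_i})$ and of $(X_i,x_i)$, and that the constraint $d(\star_{X_i},x_i)=1$ persists in the limit as $d(\star_Z,z)=1$, so that $z\neq\star_Z$ and Lemma \ref{2cones} is applicable. Everything else is formal, following from Lemmas \ref{2cones}, \ref{alexlimit} and \ref{faralignment1} and the basic properties of pointed Gromov-Hausdorff convergence recalled in Section \ref{preliminaries}.
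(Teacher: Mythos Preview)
Your proof is correct and follows essentially the same contradiction/compactness approach as the paper: set up a sequence violating the conclusion, pass to a Gromov-Hausdorff limit which is a nonnegatively curved Alexandrov space that is conical with respect to two distinct basepoints at distance $1$, and invoke Lemma \ref{2cones} to obtain a $1$-splitting, contradicting the assumption. The only difference is cosmetic: you route the precompactness through the auxiliary cones $CY_i$, $CY_i'$ via Lemma \ref{alexlimit} and then identify their limits with the limits of $(X_i,\star_{X_i})$ and $(X_i,x_i)$, whereas the paper simply asserts the existence of the limit $(X_\infty,\star_{X_\infty})$ directly; your version is just a more explicit justification of the same step.
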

\begin{proof}
If the lemma were false, then there would be
a $\de > 0$,  a positive sequence $\de^\prime_i \ra 0$,
a sequence of pointed complete length spaces
$\{(X_i, \star_{X_i})\}_{i=1}^\infty$, and 
points $x_i \in X_i$
such  that for every $i$:
\begin{itemize}
\item
$d(\star_{X_i}, x_i) = 1$.
\item $(X_i, \star_{X_i})$ and $(X_i, x_i)$ have
pointed Gromov-Hausdorff distance less than $\delta^\prime_i$ from  conical
nonnegatively curved
Alexandrov spaces $CY_i$ and $CY_i^\prime$, respectively, of 
dimension at most $n$.
\item $(X_i, x_i)$ does not have a $(1, \de)$-splitting.
\end{itemize}
After passing to a subsequence, we can assume that 
we have Gromov-Hausdorff limits
$\lim_{i \ra \infty} (X_i, \star_{X_i}) = (X_\infty, \star_{X_\infty})$, $\lim_{i \ra \infty} x_i =
x_\infty$ with
$d(\star_{X_\infty}, x_\infty) = 1$, and both $(X_\infty, \star_{X_\infty})$ and
$(X_\infty, x_\infty)$ are conical nonnegatively curved
Alexandrov spaces.  By Lemma \ref{2cones},
$(X_\infty, x_\infty)$ has a $1$-splitting.  This gives a
contradiction.
\end{proof}

\subsection{Adapted coordinates}

In this subsection
we discuss coordinate systems which arise from $(k,\de)$-splittings
of Riemannian manifolds, in the presence of a lower curvature bound.  The
basic construction combines the standard construction of
strainer coordinates \cite{Burago-Gromov-Perelman} with the 
smoothing result of Corollary \ref{cor-1strained}.

\begin{definition} \label{def4}
Suppose $0<\de'\leq \de$,
and let $\al$ be a $(k, \delta^\prime)$-splitting 
of a complete pointed Riemannian manifold $(M, \star_M)$.
Let $\Phi : B \left( \star_M, \frac{1}{\delta} \right) \ra
\R^k$ be the composition
$B \left( \star_M, \frac{1}{\delta} \right) \stackrel{\al}{\ra}
\R^k \times X_2 \ra \R^k$. 
Then a map $\phi:(B(\star_M,1),\star_M)\ra (\R^k,\phi(\star_M))$ defines
{\em  $\al$-adapted coordinates
of quality $\delta$} if
\begin{enumerate}
\item $\phi$ is  
smooth and
$(1+\delta)$-Lipschitz.
\item  The image of $\phi$ 
has Hausdorff distance at most $\delta$ from
$B(\phi(\star_M),1)\subset\R^k$.
\item
For all $m \in B(\star_M, 1)$ and 
$m^\prime \in B \left(\star_M, \frac{1}{\delta} \right)$
with $d(m, m^\prime) > 1$, the (unit-length) initial 
velocity vector
$v \in T_mM$
of any minimizing geodesic from $m$ to $m^\prime$ satisfies
\begin{equation} \label{adapted}
\left| D\phi(v) - \frac{\Phi(m^\prime)-\Phi(m)}{d(m, m^\prime)} \right| 
< \delta.
\end{equation}
\end{enumerate}
We will say that a map $\phi:(B(\star_M,1),\star_M)\ra (\R^k,0)$
defines {\em adapted coordinates of quality $\de$} if there exists
a $(k,\de)$-splitting $\al$ such that $\phi$ defines $\al$-adapted
coordinates of quality $\de$, as above.  Likewise, {\em  $(M,\star_M)$
admits $k$-dimensional adapted coordinates of quality $\de$}
if there is a map $\phi$ as above which defines adapted coordinates of 
quality $\de$.
\end{definition}

We now give a sufficient condition for an approximate splitting to have
good adapted coordinates.

\begin{lemma} \label{lem1}(Existence of adapted coordinates)
For all $n \in \Z^+$ and $\delta > 0$, there is a 
$\delta^\prime = \delta^\prime(n, \delta) >0$
with the following property. Suppose that $(M, \star_M)$ is an  
$n$-dimensional complete pointed Riemannian manifold with sectional curvature
bounded below by $- \delta^{\prime\,2}$ on 
$B \left(\star_M, \frac{1}{\delta^\prime} \right)$, which has a
$(k, \delta^\prime)$-splitting $\al$. Then
there exist $\al$-adapted coordinates of quality $\delta$.
\end{lemma}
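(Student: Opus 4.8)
The plan is to follow the strategy indicated just before the statement: construct the adapted coordinates out of the classical strainer (distance--difference) functions attached to the given $(k,\de')$-splitting, and then smooth them using Corollary \ref{cor-1strained}. I would run the whole argument by contradiction, so that the several uniform estimates I need can be extracted from Toponogov comparison via a limiting argument.

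Suppose the lemma fails for some $n$ and $\de$. Then there are complete pointed $n$-manifolds $(M_i,\star_i)$, numbers $\de_i'\ra 0$, and $(k,\de_i')$-splittings $\al_i\colon (M_i,\star_i)\ra(\R^k,\star)\times(X_{2,i},\star)$, with sectional curvature $\ge-\de_i'^{\,2}$ on $B(\star_i,1/\de_i')$, such that no $\al_i$-adapted coordinates of quality $\de$ exist. Using a quasi-inverse of $\al_i$, I would choose $p_j^{\pm,i}\in M_i$ ($1\le j\le k$) with $\al_i(p_j^{\pm,i})$ within $\de_i'$ of $\pm\tfrac1{2\de_i'}e_j$; the defining inequalities of a Gromov--Hausdorff approximation then show that $\{(p_j^{+,i},p_j^{-,i})\}_{j=1}^k$ is a $k$-strainer at $\star_i$ whose quality tends to $0$, at scale $\tfrac1{2\de_i'}$. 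By Lemmas \ref{alexlimit} and \ref{limitsplits}, after passing to a subsequence $(M_i,\star_i)$ converges in the pointed Gromov--Hausdorff topology to a nonnegatively curved Alexandrov space $(X_\infty,\star_\infty)$ of dimension $\le n$ which carries a genuine $k$-splitting $X_\infty\cong\R^k\times X_\infty'$; the strainer points converge to $k$ pairs of opposite rays, so the $\R^k$-coordinate functions $b_1,\dots,b_k$ on $X_\infty$ are (negatives of) Busemann functions of those rays, and in particular are affine along every geodesic of $X_\infty$.

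Set $f^i_j(m)=d_{M_i}(p_j^{+,i},\star_i)-d_{M_i}(p_j^{+,i},m)$. These are $1$-Lipschitz, they converge locally uniformly to $b_j$, and they agree up to a quantity tending to $0$ with the $j$-th component $\Phi^i_j$ of $\pi_{\R^k}\circ\al_i$ on $B(\star_i,1/\de)$ (since $\al_i$ is a $\de_i'$-Gromov--Hausdorff approximation on a ball of radius $\gg 1/\de$). The three estimates I need are the standard consequences of triangle comparison with the almost-nonnegative lower bound at the large scale $1/\de_i'$, made uniform over the relevant points and geodesics by a further limiting argument: (a) for $m\in B(\star_i,3)$ the set $V^{+,j}_m$ of initial velocities of minimizing geodesics from $m$ to $p_j^{+,i}$ has diameter $o(1)$, because $m$ lies $o(1)$-close to a minimizing segment from $p_j^{+,i}$ to $p_j^{-,i}$; (b) for $j\ne l$ and $m\in B(\star_i,3)$, the angle between $V^{+,j}_m$ and $V^{+,l}_m$ is within $o(1)$ of $\tfrac\pi2$ --- one gets a slightly degraded strainer at $m$ from the one at $\star_i$ by a change-of-basepoint argument as in Lemma \ref{faralignment1}; (c) for $m\in B(\star_i,1)$ and $m'\in B(\star_i,1/\de)$ with $d(m,m')>1$, writing $v$ for the initial velocity of a minimizing geodesic from $m$ to $m'$ and $\th_j$ for the angle between $v$ and $V^{+,j}_m$, one has $\left|f^i_j(m')-f^i_j(m)-\cos\th_j\cdot d(m,m')\right|=o(1)$, which follows from the affinity of $b_j$ along geodesics of $X_\infty$ (equivalently, from the first-variation formula plus the near-affinity along geodesics of distance functions centered at the far points $p_j^{\pm,i}$). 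I would then apply Corollary \ref{cor-1strained} to each $f^i_j$ (with $Y=\{p_j^{+,i}\}$, $U=B(\star_i,3)$, $C=\overline{B(\star_i,1)}$, and quality parameters $o(1)$; this is legitimate for large $i$ by (a)), and, after post-composing with a translation of $\R^k$ sending the basepoint to $0$, set $\phi_i=(\hat f^i_1,\dots,\hat f^i_k)$. Then $\phi_i$ is smooth near $\overline{B(\star_i,1)}$; each $\hat f^i_j$ is $(1+o(1))$-Lipschitz with $\nabla\hat f^i_j$ of norm $1+o(1)$ making angle $o(1)$ with $V^{+,j}_m$, so by (b) the $\nabla\hat f^i_j$ form an $o(1)$-approximate orthonormal frame and $\phi_i$ is $(1+o(1))$-Lipschitz, giving condition (1) of Definition \ref{def4}. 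Since $\hat f^i_j$ is $o(1)$-close to $f^i_j$ hence to $\Phi^i_j$, and $\Phi^i(B(\star_i,1))$ has Hausdorff distance $o(1)$ from $B(\Phi^i(\star_i),1)$, condition (2) holds. Finally, for $m,m',v,\th_j$ as in (c), $D\hat f^i_j(v)=\langle\nabla\hat f^i_j(m),v\rangle$ is $o(1)$-close to $\cos\th_j$, which by (c) is $o(1)$-close to $(f^i_j(m')-f^i_j(m))/d(m,m')$ and hence to $(\Phi^i_j(m')-\Phi^i_j(m))/d(m,m')$; this is condition (3). So $\phi_i$ defines $\al_i$-adapted coordinates of quality $\de$ for large $i$, a contradiction.

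The step I expect to be the main obstacle is (c): establishing uniformly the near-affinity along arbitrary geodesics of the distance functions centered at the (distant) strainer points, and then checking that this property survives the passage from the Lipschitz functions $f^i_j$ to their smoothings $\hat f^i_j$. This is the technical heart of the strainer-coordinate construction of \cite{Burago-Gromov-Perelman}, and essentially all of the quantitative bookkeeping in the argument passes through it; the remaining verifications of conditions (1)--(3) are then routine.
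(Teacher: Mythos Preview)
Your proposal is correct and follows essentially the same approach as the paper: build strainer-type coordinates from distance functions to far-away points furnished by the approximate splitting, verify the key near-affinity estimate (your (c), the paper's Sublemma) by a contradiction/limiting argument passing to a nonnegatively curved space with an exact $\R^k$-splitting, and then smooth via Corollary~\ref{cor-1strained}. The only organizational difference is that you run a single global contradiction with the strainer scale coupled to $1/\de_i'$, whereas the paper introduces an independent intermediate strainer scale $s\in(1/\de,1/(10\de'))$, proves the key estimate as a sublemma for each fixed $s$ (again by contradiction with $\de'\ra 0$), and then takes $s$ large; the two are equivalent, and your packaging is arguably cleaner.
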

\begin{proof} 
The idea of the proof is to use the approximate splitting to construct a 
strainer and then use the strainer to construct Lipschitz-regular coordinates,
which can be smoothed using Corollary \ref{cor-1strained}.

Fix $n \in \Z^+$ and $\de >0$.  
Suppose that $\de'<\de$ and $(M, \star_M)$ has a $(k,\de^\prime)$-splitting
$\al: \, (M,\star_M) \ra (\R^k,0) \times (Y,\star_Y)$.
Let $\{e_j\}_{j=1}^k$ be an orthonormal basis of $\R^k$. Given 
a parameter
$s \in (\frac{1}{\de},\frac{1}{10\de^\prime})$, 
choose $p_{j \pm} \in M$ so that
$\al(p_{j \pm})$ lies in the $10 \delta^\prime$-neighborhood of
$(\pm s e_j, \star_Y)$.

Define $\phi_0 \: : \: B(\star_M, 1) \rightarrow \R^k$ by
\begin{equation} \label{phimap}
\phi_0(m) \: = 
\left(\; d(\star_M, p_{1+}) - d(m, p_{1+}),
\ldots,
d(\star_M, p_{k+}) - d(m, p_{k+})
\right).
\end{equation}
We will show that if $s$ and $\de^\prime$ are chosen appropriately 
then we can smooth the component functions of $\phi_0$
using Corollary \ref{cor-1strained}, to obtain a map
$\phi:B(\star_M,1)\ra (\R^k,0)$ which defines $\al$-adapted
coordinates of quality $\de$. (Note
that $\al$ is also a $(k,\de)$-splitting
since $\de^\prime<\de$). 
We first estimate the left-hand side of (\ref{adapted}).
Recall that if $m$ is a point of differentiability of
$d_{p_{j+}}$
and $v \in T_mM$ is the initial vector of a unit-speed
minimizing geodesic $\overline{m m^\prime}$ then 
$Dd_{p_{j+}}(v) = - \cos (\cangle_{m} (m^\prime,p_{j+}))$.

\begin{sublemma}
There exists
$\bar s = \bar s(n,\delta) > \frac{1}{\delta}$ so that for
each $s \ge \bar s$, there is some
$\overline{\delta}^\prime=\overline{\delta}^\prime (n,s,\de) < \frac{1}{10s}$
such that if $\de^\prime < \overline{\delta}^\prime$
then the following holds.

Under the hypotheses of the lemma, 
suppose that $m \in B(\star_M, 1)$, $m^\prime \in 
B(\star_M, \frac{1}{\delta})$
and $d(m,m^\prime) \ge 1$.
Let
$\overline{m p_{j+}}$
and $\overline{mm^\prime}$ 
be minimizing geodesics. Then
\begin{equation} \label{ineq1}
\left| 
\cos (\cangle_m( m^\prime, p_{j+})) - 
\frac{d(m, p_{j+}) - d(m^\prime, p_{j+})}{d(m,m^\prime)} 
\right| <  \delta.
\end{equation}
\end{sublemma}

\begin{proof}
Suppose that the sublemma is not true.  Then for each
$\bar s > \frac{1}{\delta}$, one can find some
$s \ge \bar s$ so that there are (see Figure 1):
\begin{itemize}
\item A positive sequence $\delta_i^\prime \ra 0$,
\item A sequence 
$\{(M_i, \star_{M_i}) \}_{i=1}^\infty$ of $n$-dimensional complete pointed
Riemannian manifolds with sectional curvature bounded below by 
$- \delta_i^{\prime 2}$
on $B(\star_{M_i}, \frac{1}{\delta_i^\prime})$,
\item A $(k, \delta_i^\prime)$-splitting $\al_i^\prime$ of $M_i$ and
\item Points $m_i \in B(\star_{M_i}, 1)$ and $m_i^\prime
\in B(\star_{M_i}, \frac{1}{\delta})$ with
$d(m_i,m^\prime_i) \ge 1$ so that
\item For each $i$,
the inequality (\ref{ineq1}) fails.
\end{itemize}

\begin{figure}[h]
\begin{center}
 
\input{4.24.pspdftex}
 
\caption{}
\end{center}
\label{fig-4.24}

\end{figure}

Using Lemma \ref{limitsplits},
after passing to a subsequence, we can assume that 
$\lim_{i \rightarrow \infty} (M_i, \star_{M_i}) =  
(X_\infty, \star_{X_\infty})$ in the pointed
Gromov-Hausdorff topology for some pointed nonnegatively curved
Alexandrov space
$(X_\infty, \star_{X_\infty})$ which is an isometric product 
$\R^k \times Y_\infty$. 
After passing to a further
subsequence, we can assume 
\begin{itemize}
\item $\lim_{i \rightarrow \infty} m_i = x_\infty$ for some $x_\infty \in 
\overline{B(\star_{X_\infty}, 1)}$,
\item $\lim_{i \rightarrow \infty} m_i^\prime = x_\infty^\prime$ for some 
$x_\infty^\prime \in
 \overline{B(\star_{X_\infty}, 
\frac{1}{\delta})}$ with $d(x_\infty, x_\infty^\prime) \ge 1$,
\item The segments
$\overline{m_i m_i^\prime}$ converge to a segment 
$\overline{x_\infty  x_\infty^\prime}$ and
\item
$\lim_{i \ra \infty} p_{i,j+} = p_{\infty,j+}$ for some points
$p_{\infty,j+} \in X_\infty$
in $\R^k \times \{\star_{Y_\infty}\}$ of distance $s$ from
$\star_{X_\infty}$.
\end{itemize}
Then
\begin{itemize}
\item 
$\lim_{i \ra \infty} \cangle_{m_i} (m_i^\prime,p_{i,j+}) = 
\cangle_{x_\infty} (x_\infty^\prime,p_{\infty,j+})$,
\item $\lim_{i \ra \infty} d(m_i, m_i^\prime) = 
d(x_\infty, x_\infty^\prime)$ and
\item $\lim_{i \ra \infty} d(m_i^\prime,p_{i,j+}) = 
d(x_\infty^\prime, p_{\infty,j+})$.
\end{itemize}
Now a straightforward verification shows that since we are in the case of an
exact $\R^k$-splitting,
there is a function $\eta = \eta( \delta, s)$ with
$\lim_{s \ra \infty} \eta(\de, s) = 0$ so that
\begin{equation}
\left|
\cos (\cangle_{x_\infty} ({x_\infty}^\prime, p_{\infty,j+})) - 
\frac{d({x_\infty}, p_{\infty,j+}) - d({x_\infty}^\prime, p_{\infty,j+}) 
}{d({x_\infty},{x_\infty}^\prime)} \right| < \eta. 
\end{equation}
Taking $s$ large enough gives a contradiction, thereby proving the 
sublemma.
\end{proof}

Returning to the proof of the lemma,
with $s \ge \overline{s}$,
define $\phi_0 $ as in (\ref{phimap}).
We have shown that if $\de^\prime$ is sufficiently small then
$\phi_0$ satisfies (\ref{adapted}) with $\delta$ replaced by
$\frac12 \delta$.
By a similar contradiction argument, one can show that
if $\delta^\prime$ is sufficiently small, as a function of $n$, 
 $s$ and $\delta$, then
$\phi_0$ is a $(1+ \frac12 \delta)$-Lipschitz map whose
image is a $\de$-Hausdorff approximation
to $B(0, 1) \subset \R^k$.

If it were not for problems with cutpoints which could cause 
$\phi_0$ to be nonsmooth,
then we could take $\phi = \phi_0$. In general,
we claim that if $s$ is large enough then
we can apply Lemma \ref{cor-1strained} in order to smooth 
$d_{ p_{j+}}$
on $B(\star_M, 1)$, and thereby construct $\phi$ from $\phi_0$.
To see this, note that for any $\eps>0$,
by making $s$ sufficiently large,  we can
arrange that for any $m\in B(\star_M,\frac{3}{\de})$,
the comparison angle $\cangle_m(p_{j+},p_{j-})$ is as close
to $\pi$ as we wish, and hence by triangle comparison
the hypotheses of Corollary \ref{cor-1strained} will
hold with 
$Y= p_{j+}$,
$C=\overline{B(\star_M,\frac{2}{\de})}\subset
U=B(\star_M,\frac{3}{\de})$, and $\th=\th(\eps)$ as in the statement of 
Corollary \ref{cor-1strained}. 
\end{proof}

We now show that under certain conditions, the adapted coordinates
associated to an approximate splitting are essentially unique.

\begin{lemma}(Uniqueness of adapted coordinates)
\label{lem-uniquenessofadaptedcoords}
Given 
$1 \leq k\le n \in \Z^+$ and $\eps > 0$, there is an $\eps^\prime =
\eps^\prime(n,\eps) > 0$ with the following property.  Suppose
that
\begin{enumerate}
\item $(M, \star_M)$ is an $n$-dimensional complete pointed
Riemannian manifold with sectional curvature bounded below by
$- (\eps^\prime)^2$ on $B \left( \star_M, \frac{1}{\eps^\prime} \right)$.
\item $\alpha:(M,\star_M)\ra (\R^k\times Z,(0,\star_Z))$ is a 
$(k,\eps^\prime)$-splitting 
of $(M, \star_M)$.
\item $\phi_1:(B(\star_M,1),\star_M)\ra (\R^k,0)$ is an  $\al$-adapted coordinate
on $B(\star_M, 1)$ of quality $\eps^\prime$.
\item Either
 (a) $\phi_2: (B(\star_M,1),\star_M)\ra (\R^k,0)$
is  an $\al$-adapted coordinate
on $B(\star_M, 1)$ of quality $\eps^\prime$, 
or \\
(b) $\phi_2$ has $(1+\eps')$-Lipschitz components
and the following holds :\\  For every
$m\in B(\star_M,1)$ and every $j\in \{1,\ldots,k\}$, there is an
$m_j'\in B(\star_M,(\eps')^{-1})$ with $d(m_j',m)>1$ satisfying
(\ref{adapted})
(with $\phi \leadsto \phi_2$),
such that $(\pi_{\R^k}\circ\al)(m_j')$
lies in the $\eps'$-neighborhood of the
line $(\pi_{\R^k}\circ\al)(m)+\R\,e_j$, and
$(\pi_Z\circ \al)(m_j')$ lies in the $\eps'$-ball centered
at $(\pi_Z\circ\al)(m)$.
\end{enumerate}
Then $\parallel \phi_1 - \phi_2 \parallel_{C^1} 
\le \eps$ on
$B(\star_M, 1)$.
\end{lemma}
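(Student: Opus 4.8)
The plan is to first establish a pointwise estimate on the differentials, $\|(D\phi_1)_m - (D\phi_2)_m\| < \omega(\eps')$ for every $m\in B(\star_M,1)$, where $\omega$ is a function with $\lim_{\eps'\to0}\omega(\eps')=0$ depending only on $n$ (and the scale parameter introduced below), and then to deduce the full $C^1$-estimate from it. The $C^0$-part is immediate once the differential estimate is known: for $m\in B(\star_M,1)$ a minimizing geodesic $\ga:[0,\ell]\to M$ from $\star_M$ to $m$ has $d(\star_M,\ga(t))=t\le\ell<1$, so it stays inside $B(\star_M,1)$; since $\phi_1(\star_M)=\phi_2(\star_M)=0$ and $\phi_1-\phi_2$ is Lipschitz, the fundamental theorem of calculus gives $|\phi_1(m)-\phi_2(m)|=\big|\int_0^\ell \big(D(\phi_1-\phi_2)\big)_{\ga(t)}(\dot\ga(t))\,dt\big|\le\ell\,\omega(\eps')<\omega(\eps')$.

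To get the differential estimate I would extract a strainer from $\al$ exactly as in the proof of Lemma \ref{lem1}. Fix an orthonormal basis $\{e_j\}_{j=1}^k$ of $\R^k$, a scale parameter $s\in(\tfrac1\eps,\tfrac1{10\eps'})$, and choose $p_{j\pm}\in M$ with $\al(p_{j\pm})$ in the $10\eps'$-neighborhood of $(\pm s e_j,\star_Z)$. Since $\al$ is an $\eps'$-Gromov--Hausdorff approximation and $m\in B(\star_M,1)$, the function $\Phi:=\pi_{\R^k}\circ\al$ satisfies $|\Phi(m)|\lesssim1$, $d(m,p_{j\pm})=s+O(1)$, and $d(p_{j+},p_{j-})=2s+O(1)$. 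Triangle comparison (valid because the sectional curvature is $\ge-(\eps')^2$ on $B(\star_M,\tfrac1{\eps'})$ and the relevant triangles have size $O(s)$ with $(\eps's)^2\to0$), together with the standard strainer inequalities \cite[Chapter 10.8.2]{Burago-Burago-Ivanov}, then shows that for every $m\in B(\star_M,1)$ the configuration $\{p_{j\pm}\}$ is a $k$-strainer at $m$ of quality tending to $0$ as $s\to\infty$, $\eps'\to0$; in particular the initial velocities $v_{j\pm}(m)\in T_mM$ of minimizing geodesics from $m$ to $p_{j\pm}$ satisfy $v_{j-}(m)=-v_{j+}(m)+o(1)$ and $\langle v_{i+}(m),v_{j+}(m)\rangle=\de_{ij}+o(1)$, so $\{v_{j+}(m)\}_{j=1}^k$ is an almost-orthonormal $k$-frame in $T_mM$ depending only on $\al$ and $m$.

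Next, for $\phi\in\{\phi_1,\phi_2\}$ I would show $D\phi(v_{j+}(m))=e_j+o(1)$. For an $\al$-adapted coordinate of quality $\eps'$ this follows directly from condition (3) of Definition \ref{def4} with $m'=p_{j+}$, since $\tfrac{\Phi(p_{j+})-\Phi(m)}{d(m,p_{j+})}=\tfrac{se_j+O(1)}{s+O(1)}=e_j+O(1/s)$. In the alternative case of hypothesis (4) one uses the supplied point $m_j'$: because its $\al$-image lies within $\eps'$ of the line $\Phi(m)+\R e_j$ and within $\eps'$ of $\pi_Z\al(m)$ in the $Z$-factor, and $d(m,m_j')>1$, one gets $\tfrac{\Phi(m_j')-\Phi(m)}{d(m,m_j')}=\pm e_j+O(\eps')$; moreover, comparing the minimizing geodesic $\overline{mm_j'}$ with the one to the far strainer point $p_{j\pm}$ in the same direction shows that its initial velocity is $\pm v_{j+}(m)+o(1)$, so (\ref{adapted}) gives $D\phi_2(\pm v_{j+}(m))=\pm e_j+o(1)$ and hence, by linearity and the near-antipodality $v_{j-}(m)\approx-v_{j+}(m)$, $D\phi_2(v_{j+}(m))=e_j+o(1)$ regardless of the sign.

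The last step---which is the only point where the Lipschitz normalization enters, and which I regard as the technical heart even though it is elementary---is a pointwise linear-algebra argument. Each component $\phi^j$ of $\phi\in\{\phi_1,\phi_2\}$ is $(1+\eps')$-Lipschitz (automatic for an adapted coordinate, and hypothesized in case (4)), so $\|\nabla_m\phi^j\|\le1+\eps'$ at a.e.\ $m$; together with $\langle\nabla_m\phi^j,v_{i+}(m)\rangle=(D\phi(v_{i+}(m)))_j=\de_{ij}+o(1)$ and the almost-orthonormality of the frame, the component of $\nabla_m\phi^j$ orthogonal to $\spann\{v_{i+}(m)\}_{i=1}^k$ has squared norm at most $(1+\eps')^2-(1-o(1))^2=o(1)$. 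Hence $\nabla_m\phi^j=v_{j+}(m)+o(1)$, so $(D\phi)_m$ lies within $o(1)$ of the linear map $T_mM\to\R^k$ whose rows are $v_{1+}(m),\dots,v_{k+}(m)$---a map depending only on $\al$ and $m$. Subtracting the estimates for $\phi_1$ and $\phi_2$ gives $\|(D\phi_1)_m-(D\phi_2)_m\|<\omega(\eps')$ with $\omega\to0$, and combined with the $C^0$-bound above and the freedom to choose $s=s(n,\eps)$ large and then $\eps'=\eps'(n,\eps)$ small (all comparison-geometry estimates depending on $(M,\star_M)$ only through $n$, the bound $-(\eps')^2$, and the scale $\tfrac1{\eps'}$) this yields $\|\phi_1-\phi_2\|_{C^1}\le\eps$ on $B(\star_M,1)$. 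The main obstacle is not a single hard step but the accurate bookkeeping of the strainer/approximate-splitting error terms, in particular matching the far point $m_j'$ of hypothesis (4) to the strainer direction $v_{j+}(m)$ and tracking the signs.
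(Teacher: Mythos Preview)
Your proposal is correct and follows essentially the same approach as the paper: extract a $k$-strainer from the approximate splitting $\al$, use it to produce an almost-orthonormal frame $\{v_{j+}(m)\}$ at each $m$, apply the defining condition (\ref{adapted}) to see $D\phi(v_{j+}(m))\approx e_j$, invoke the Lipschitz bound on each component to kill the part of $\nabla\phi^j$ orthogonal to the frame, and then integrate the resulting derivative estimate from $\star_M$. The only cosmetic differences are that the paper applies (\ref{adapted}) with $m'=m_i$ chosen at distance $2$ along the geodesic toward $p_{i+}$ rather than with $p_{j+}$ itself, and in the alternative case of hypothesis (4) it works directly with the initial velocity toward $m_j'$ (observing that $\nabla(\phi_1)_j$ and $\nabla(\phi_2)_j$ are both close to that vector) rather than first comparing that direction to the strainer direction $v_{j+}(m)$ as you do; your handling of the sign ambiguity is in fact slightly more explicit than the paper's.
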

\begin{proof}
We first give the proof when $\phi_2$ is also an
$\al$-adapted coordinate on $B(\star_M,1)$ of quality
$\eps'$.

Let $\Phi:(M,\star_M)\ra \R^k$ be the composition
$(M,\star_M)\stackrel{\al}{\ra}\R^k\times Z\ra \R^k$.

Given $\epsilon_1 > 0$, if $\eps^\prime$ is sufficiently small,
 then
by choosing points $\{p_{i,\pm}\}_{i=1}^k$ in $M$ with
$d(\al(p_{i,\pm}), (\pm {\epsilon_1}^{-1} e_i, 
\star_{Z})) \le \eps_1$, we obtain a strainer of quality 
comparable to
$\epsilon_1$ at scale ${\epsilon_1}^{-1}$.
Given $m \in B(\star_M, 1)$, let $\ga_i$
be a unit speed minimizing geodesic from $m$ to $p_{i,+}$,
let $v_i \in T_mM$ be the initial velocity
of $\ga_i$, and let $m_i$ be the point on $\ga_i$
with $d(m_i,m)=2$.
Given
$\eps_2>0$, if $\eps_1$ is sufficiently small then using triangle
comparison, we get  
\begin{equation}
\label{eqn-frame}
|\langle v_i,v_j\rangle -\de_{ij}|< \eps_2\,,\quad
|(\Phi(m_i)-\Phi(m))-2e_i|<\eps_2.
\end{equation}
for all $1\leq i,j\leq k$.
Applying (\ref{adapted}) with 
$m^\prime = m_i$
 gives
\begin{equation}
\label{eqn-frame2}
\max\left(|D\phi_1(v_i) - e_i|\,,\,
|D\phi_2(v_i) - e_i|\right) < 
\eps_2.
\end{equation}
Finally, given $\eps_3>0$, since $\phi_1$, $\phi_2$ are
$(1 + \eps^\prime)$-Lipschitz, 
if $\eps^\prime$ and $\eps_2$ are small enough then
we can assume that if 
$v$ is a unit vector and $v \perp \spann(v_1, \ldots, v_k)$ then
$\max(|D\phi_1(v)|,|D\phi_2(v)|) < \eps_3$. 
So for any $\eps_4>0$, if $\eps_3$ is sufficiently small then the
operator norm of $D\phi_1 - D\phi_2$ is bounded above by
$\eps_4$ on $B(\star_M, 1)$.

Since $\phi_1(\star_M) - \phi_2(\star_M) = 0$, we can integrate
the inequality $\parallel D\phi_1 - D\phi_2 \parallel \le
\eps_4$ along minimizing curves in $B(\star_M, 1)$ to
conclude that 
$\parallel\phi_1-\phi_2\parallel_{C^0}
\;\le\; 2\eps_4$ on
$B(\star_M, 1)$. Since $\eps_4$ is arbitrary, the lemma follows
in this case.

Now suppose $\phi_2$ satisfies instead the second condition
in (4).  If $m\in B(\star_M,1)$, $j\in \{1,\ldots,k\}$,
$m_j'$ is as in (4), and $v_j$ is the initial velocity 
of a unit speed geodesic from $m$ to $m_j'$, then 
(\ref{adapted}) implies that  $(D(\phi_2)_j)(v_j)$
is close to $1$ when $\eps'$ is small.  Since the
$j^{th}$ component $(\phi_2)_j$ is $(1+\eps')$-Lipschitz, 
this implies $\nabla(\phi_2)_j$ is close to $v_j$ when
$\eps'$ is small.  Applying the reasoning of the above paragraphs
to $\phi_1$, we get that $\nabla (\phi_1)_j$ is also close
to $v_j$ when $\eps'$ is small.  Hence $|D\phi_1-D\phi_2|$ is
small when $\eps'$ is small, and integrating within $B(\star_M,1)$
as before, the lemma follows.
\end{proof}

Finally, we show that approximate compatibility of two 
approximate splittings leads to an approximate compatibility of
their associated adapted coordinates.

\begin{lemma} \label{adaptedclose}
Given 
$1 \le j \le k \le n \in \Z^+$ and $\eps > 0$, there is an $\eps^\prime =
\eps^\prime(n,\eps) > 0$ with the following property.  Suppose
that
\begin{enumerate}
\item $(M, \star_M)$ is an $n$-dimensional complete pointed
Riemannian manifold with sectional curvature bounded below by
$- (\eps^\prime)^2$ on $B \left( \star_M, \frac{1}{\eps^\prime} \right)$.
\item $\alpha_1$ is a $(j,\eps^\prime)$-splitting 
of $(M, \star_M)$ and
$\alpha_2$ is a $(k,\eps^\prime)$-splitting 
of $(M, \star_M)$.
\item $\alpha_1$ is $\eps^\prime$-compatible with
$\alpha_2$.
\item $\phi_1:(M,\star_M)\ra (\R^j,0)$ and 
$\phi_2:(M,\star_M)\ra (\R^k,0)$ are adapted coordinates 
on $B(\star_M, 1)$ of quality $\eps^\prime$, 
associated to $\alpha_1$ and $\alpha_2$, respectively.
\end{enumerate}
Then there exists a map $T : \R^k \ra \R^j$, which is 
a composition of 
an isometry with an orthogonal projection, such that
$\parallel \phi_1 - T \circ \phi_2 \parallel_{C^1} 
\le \eps$ on
$B(\star_M, 1)$.
\end{lemma}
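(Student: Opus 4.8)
The plan is to reduce the statement directly to the uniqueness result, Lemma \ref{lem-uniquenessofadaptedcoords}, without a separate compactness argument. Write $\al_1:(M,\star_M)\ra(\R^j,0)\times(Z,\star_Z)$ and $\al_2:(M,\star_M)\ra(\R^k,\star)\times(Y_2,\star)$, $\Phi_1=\pi_{\R^j}\circ\al_1$, $\Phi_2=\pi_{\R^k}\circ\al_2$. Let $\eps_0'=\eps_0'(n,\eps)$ be the constant furnished by Lemma \ref{lem-uniquenessofadaptedcoords}; I will take $\eps'$ in the present lemma small in terms of $n$ and $\eps$, in particular $\eps'\le\eps_0'$. (After post-composing $\al_1$ with a translation of $\R^j$, which changes neither the hypotheses nor the conclusion, I may assume the $\R^j$-factor of $\al_1$ is based at $0$, matching the normalization in Lemma \ref{lem-uniquenessofadaptedcoords}.)

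First I would use the compatibility hypothesis to manufacture $T$. By Definition \ref{def-epscompatible}(2) there is a $j$-splitting $\ga:\R^k\ra\R^j\times\R^{k-j}$ of the $\R^k$-factor of $\al_2$ so that the induced $(j,\eps')$-splitting $\be:(M,\star_M)\stackrel{\al_2}{\ra}\R^k\times Y_2\stackrel{(\ga,\Id)}{\ra}\R^j\times(\R^{k-j}\times Y_2)$ is $\eps'$-close to $\al_1$; unwinding Definition \ref{def-epscompatible}(1) gives $\eps'$-Gromov-Hausdorff approximations $\psi_1:\R^j\ra\R^j$ and $\psi_2:\R^{k-j}\times Y_2\ra Z$ with $(\psi_1,\psi_2)\circ\be$ agreeing with $\al_1$ on $B(\star_M,(\eps')^{-1})$ up to $\eps'$. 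Since $\psi_1$ is an $\eps'$-Gromov-Hausdorff approximation between Euclidean spaces, it agrees with an affine isometry $x\mapsto Ax+b'$ on the ball $B(\star,2(\eps_0')^{-1})$ up to an error $\eps''=\eps''(\eps')$ tending to $0$ as $\eps'\to0$. Noting $\pi_{\R^j}\circ\be=\pi_{\R^j}\circ\ga\circ\Phi_2$, I set $T:=A\circ\pi_{\R^j}\circ\ga:\R^k\ra\R^j$, which is a composition of an orthogonal projection with an isometry; it is linear, $\|T\|=1$, $T(0)=0$, and $TT^*=\Id_{\R^j}$. Combining the two approximations yields the key estimate $|T\Phi_2(x)+b'-\Phi_1(x)|\le\eps'+\eps''$ for all $x\in B(\star_M,(\eps_0')^{-1})$.

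The heart of the argument is then to check that $T\circ\phi_2$ is itself an $\al_1$-adapted coordinate on $B(\star_M,1)$ of quality $\eps_0'$. Conditions (1)–(2) of Definition \ref{def4} are immediate: $T\circ\phi_2$ is smooth and $(1+\eps')$-Lipschitz because $\phi_2$ is and $\|T\|=1$, it sends $\star_M$ to $0$, and since $T$ carries the closed unit ball of $\R^k$ onto that of $\R^j$ (using $TT^*=\Id$) its image remains an $\eps'$-approximation to $B(0,1)\subset\R^j$. For condition (3), take $m\in B(\star_M,1)$, $m'\in B(\star_M,(\eps_0')^{-1})\subset B(\star_M,(\eps')^{-1})$ with $d(m,m')>1$, and $v$ the unit initial velocity of a minimizing geodesic from $m$ to $m'$; applying the norm-one linear map $T$ to inequality (\ref{adapted}) for the $\al_2$-adapted coordinate $\phi_2$, then converting $T\Phi_2$-differences into $\Phi_1$-differences via the key estimate and $d(m,m')\ge1$, gives
\begin{equation}
\left|D(T\circ\phi_2)(v)-\frac{\Phi_1(m')-\Phi_1(m)}{d(m,m')}\right|\le 3\eps'+2\eps''(\eps'),
\end{equation}
which is $<\eps_0'$ once $\eps'$ is small enough. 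Finally, $\phi_1$ is $\al_1$-adapted of quality $\eps'\le\eps_0'$, and the curvature hypothesis of Lemma \ref{lem-uniquenessofadaptedcoords} holds since the sectional curvature is $\ge-(\eps')^2\ge-(\eps_0')^2$ on $B(\star_M,(\eps')^{-1})\supseteq B(\star_M,(\eps_0')^{-1})$; so I apply that lemma — with its $\phi_2$ taken to be our $T\circ\phi_2$, satisfying the first alternative of its hypothesis (4) — to conclude $\|\phi_1-T\circ\phi_2\|_{C^1}\le\eps$ on $B(\star_M,1)$.

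I expect the only real friction to be bookkeeping: tracking the three error scales (the $\eps'$ from the splitting approximations, the $\eps''$ from straightening $\psi_1$ into an affine isometry, and the quality $\eps_0'$ demanded by the uniqueness lemma), fixing the order in which parameters are chosen, and the minor point that additive constants produced by Gromov-Hausdorff approximations must be absorbed by working with $\Phi$-differences throughout — which is exactly why condition (3) of Definition \ref{def4} is phrased in terms of $\Phi(m')-\Phi(m)$. There is no genuine geometric obstacle here; the geometric content has already been spent in Lemmas \ref{lem1} and \ref{lem-uniquenessofadaptedcoords}.
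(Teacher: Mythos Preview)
Your proof is correct and follows essentially the same route as the paper's: you extract the map $T$ from the compatibility data by straightening the Euclidean Gromov--Hausdorff approximation $\psi_1$ into an affine isometry, verify that $T\circ\phi_2$ is an $\al_1$-adapted coordinate of the required quality (using that the constant $b'$ cancels in the $\Phi$-differences appearing in condition (3) of Definition \ref{def4}), and then invoke Lemma \ref{lem-uniquenessofadaptedcoords} with its first alternative in hypothesis (4). Your write-up is somewhat more explicit than the paper's in checking conditions (1)--(3) of Definition \ref{def4} and in tracking the three error scales, but the underlying argument is the same.
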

\begin{proof}
Let $\al_{1} : M \ra \R^j \times Z_{1}$ and
$\al_{2} : M \ra \R^k \times Z_{2}$ be the  approximate 
splittings.   Let $\Phi_1$ be the composition 
$B(\star_M, (\eps^\prime)^{-1}) \stackrel{\al_1}{\ra} \R^j \times Z_1
\ra \R^j$ and  $\Phi_2$ be the composition 
$B(\star_M, (\eps^\prime)^{-1}) \stackrel{\al_2}{\ra} \R^k \times Z_2
\ra \R^k$.

By (3), there is a $j$-splitting
$\ga:\R^k \ra \R^j \times \R^{k-j}$ and  a pair
of $\eps^\prime$-Gromov-Hausdorff
approximations $\xi_1:\R^j\ra \R^j$, $\xi_2:\R^{n-j}\times Z_2\ra Z_1$
such that the map $\widehat{\al}_2$ given by the composition
\begin{equation} \label{comp}
M \stackrel{\al_2}{\ra} 
\R^k \times Z_2 \stackrel{(\ga,\Id_{Z_2})}{\lra} 
\R^j \times \R^{k-j} \times Z_2\stackrel{(\xi_1,\xi_2)}{\ra}
\R^j \times Z_1
\end{equation}
agrees with the map $\al_1$ on the ball
$B(\star_M,(\eps^\prime)^{-1})$ up to error at most
 $\eps^\prime$.  Since Gromov-Hausdorff approximations
$(\R^j,0)\ra (\R^j,0)$ are close to isometries, for all $\eps_1>0$,
there will be a map $T:\R^k\ra \R^j$ (a composition of an isometry
and a projection) which agrees with
the composition 
\begin{equation}
\R^k\stackrel{\ga}{\lra} 
\R^j \times \R^{k-j}\stackrel{\xi_1}{\ra} \R^j
\end{equation}
up to error at most $\eps_1$ on the  ball $B(\star_M,\eps_1^{-1})$,
provided $\eps^\prime$ is sufficiently small.
Thus for all $\eps_2>0$, the composition 
\begin{equation}
M \stackrel{\al_2}{\ra} 
\R^k \times Z_2\ra\R^k\stackrel{T}{\ra}\R^j\,
\end{equation}
agrees with $\Phi_1$ up to error at most $\eps_2$ on $B(\star_M,\eps_2^{-1})$,
provided $\eps_1$ and $\eps'$ are sufficiently small.
Using Definition \ref{def4} for the approximate splitting
$\alpha_2$ and applying $T$, it follows that for 
all $\eps_3>0$, if $\eps_2$ is sufficiently small then
we are ensured that
$T\circ\phi_2$ defines $\al_1$-adapted coordinates 
on $B(\star_M,1)$ of quality
$\eps_3$.  By Lemma \ref{lem-uniquenessofadaptedcoords}
(using the first criterion in part(4) of
Lemma \ref{lem-uniquenessofadaptedcoords}), it follows
that if $\eps_3$ is sufficiently small then 
$\|\phi_1 - T\circ\phi_2\|_{C^1}<\eps$. 
\end{proof}

\begin{remark} \label{addedremark}
In Definition \ref{def4} we defined adapted coordinates on the unit ball
$B(\star_M, 1)$. By a rescaling, we can
define adapted coordinates on a ball of any specified size, and
the results of this section will remain valid.
\end{remark}

\section{Standing assumptions}

We now start on the proof of Theorem \ref{thmmain} 
in the case of closed manifolds. The proof is by contradiction.

The next lemma states that if we can get a contradiction from a certain
``Standing Assumption''
then we have proven Theorem \ref{thmmain}.
We recall from the definition of the volume scale in Definition \ref{def1}
that if $w \le w'$ then $r_p(w) \ge r_p(w')$.

\begin{lemma} \label{contra}
If Theorem \ref{thmmain} is false then we can satisfy the
following Standing Assumption, for an appropriate choice of $A'$.

\begin{assumption}
\label{assumptions}
Let $K \ge 10$ be a fixed integer, and 
let $A': (0, \infty) \times (0, \infty) \ra (0,\infty)$ be a function. 

We assume that
$\{(M^\al,g^\al)\}_{\al = 1}^\infty$ 
is a sequence of connected closed Riemannian $3$-manifolds such that 
\begin{enumerate}
\item For all $p \in M^\al$, the ratio $\frac{R_p}{r_p(1/\alpha)}$
of the curvature scale at $p$ to the $\frac{1}{\al}$-volume scale at
$p$ is bounded below by $\al$.
\item 
For all $p \in M^\al$ and $w^\prime \in [\frac{1}{\al}, c_3)$, 
let $r_p(w^\prime)$ denote the $w^\prime$-volume scale at $p$.
Then for each integer $k \in [0, K]$ and each $C \in (0, \al)$, we have
$|\nabla^k\Rm| \le A'(C, w^\prime) \:  r_p(w^\prime)^{-(k+2)}$
on $B(p, C r_p(w^\prime))$.
\item Each $M^\al$ fails to be a graph manifold.
\end{enumerate}
\end{assumption}
\end{lemma}
\begin{proof}
If Theorem \ref{thmmain} is false then for every $\al \in \Z^+$,
there is a manifold $(M^\al,g^\al)$ which satisfies the
hypotheses of Theorem \ref{thmmain}
with the parameter $w_0$ of the theorem set to
$w_0^\al = \frac{1}{8\alpha^4}$, but $M^\al$ is not a
graph manifold. 

We claim first that for every $p^\al \in M^\al$, we have $r_{p^\al}(1/\al) < 
R_{p^\al}$.
If not then for some $p^\al \in M^\al$, we have
$r_{p^\al}(1/\alpha) \ge R_{p^\al}$. From the
definition of $r_{p^\al}(1/\alpha)$, it follows that
\begin{equation}
\vol(B({p^\al}, R_{p^\al})) \ge \frac{1}{\alpha} R_{p^\al}^3 > 
\frac{1}{8\alpha^4} R_{p^\al}^3,
\end{equation}
which contradicts our choice of $w_0^\al$.

Thus $r_{p^\al}(1/\al) < R_{p^\al}$. Then
\begin{equation}
\frac{1}{\al} (r_{p^\al}(1/\al))^3 = \vol(B({p^\al}, r_{p^\al}(1/\al))) \le 
\vol(B({p^\al}, R_{p^\al})) \le 
\frac{1}{8\alpha^4} R_{p^\al}^3,
\end{equation}
so $\frac{R_{p^\al}}{r_{p^\al}(1/\al)} \ge 2 \alpha$. 
This shows that $\{(M^\al,g^\al)\}_{\al = 1}^\infty$ satisfies 
condition (1) of Standing Assumption
\ref{assumptions}.

To see that condition (2) of Standing Assumption \ref{assumptions}
holds, for an appropriate choice of $A^\prime$, 
we first note that if suffices to just consider $C \in [1, \alpha)$,
since a derivative bound on a bigger ball implies a derivative
bound on a smaller ball.  For $\widetilde{w}^\prime \in 
\left[ \frac{1}{\alpha}, c_3 \right)$, we have
\begin{equation}
C r_{p^\al}(\widetilde{w}') \le \alpha r_{p^\al}(1/\alpha) \le 
R_p.
\end{equation}
Now
\begin{equation}
\vol(B({p^\al}, C r_{p^\al}(\widetilde{w}^\prime))) \ge 
\vol(B({p^\al}, r_{p^\al}(\widetilde{w}^\prime))) =
\widetilde{w}^\prime (r_{p^\al}(\widetilde{w}^\prime))^3 = 
C^{-3} \widetilde{w}^\prime (C r_{p^\al}(\widetilde{w}^\prime))^3.
\end{equation} 
Put $w^\prime = C^{-3} \widetilde{w}^\prime$. Then
\begin{equation}
w_0^\al = \frac{1}{8 \al^4} \le w^\prime < c_3.
\end{equation}
Hypothesis (2) of Theorem \ref{thmmain} implies that
\begin{equation}
|\nabla^k \Rm| \le A(w^\prime) \: (C r_{p^\al}(\widetilde{w}^\prime))^{-(k+2)}
\end{equation} 
on $B(p^\al, C r_{p^\al}(\widetilde{w}^\prime))$.
Hence condition (2) of Standing Assumption \ref{assumptions} 
will be satisfied, for $C \in [1, \alpha)$, if we take
\begin{equation}
A'(C,\widetilde{w}') = 
\max_{0 \le k \le K} A \left( C^{-3} \widetilde{w}^\prime \right)
\: C^{-(k+2)}.
\end{equation}
\end{proof}

Standing Assumption \ref{assumptions} will remain 
in force until Section \ref{sec-proof2}, where we consider manifolds
with boundary.
We will eventually get a contradiction to Standing Assumption
\ref{assumptions}.

For the sake of notational brevity,
we will usually suppress the  superscript $\al$;  thus $M$
will refer to $M^\al$.
By convention, each of the statements made in the proof is to be interpreted
as being valid provided $\al$ is sufficiently large, whether or not this
qualification appears explicitly. 

\begin{remark}
The condition $K \ge 10$ in Standing Assumption \ref{assumptions}
is clearly not optimal but it is
good enough for the application to the geometrization
conjecture.
\end{remark}

\begin{remark}
We note that for fixed $\widehat{w} \in (0, c_3)$, conditions (1) and
(2) of Standing Assumption \ref{assumptions} imply that for large $\al$,
the following holds for all $p \in M^\al$:
\begin{enumerate}
\item $\frac{R_p}{r_p(\widehat{w})} \ge \al$
and
\item 
For each integer $k \in [0, K]$ and each $C \in (0, \al)$, we have
$|\nabla^k\Rm| \le A'(C, \widehat{w}) \:  r_p(\widehat{w})^{-(k+2)}$
on $B(p, C r_p(\widehat{w}))$.
\end{enumerate}
Since in addition
$\vol(B(p, r_p(\widehat{w}))) = \widehat{w} (r_p(\widehat{w}))^3$,
we have all of the ingredients to extract convergent subsequences,
at the $\widehat{w}$-volume scale,
with smooth pointed limits that are nonnegatively curved.
This is how the hypotheses of Standing Assumption \ref{assumptions} will 
enter into finding a contradiction. In effect, $\widehat{w}$  will
eventually become a judiciously chosen constant.
\end{remark}

\section{The scale function $\r$}
\label{sec-scalefunction}

In this section we introduce a smooth scale function $\r:M\ra (0,\infty)$ 
which will
be used throughout the rest of the paper.  This function is like a volume 
scale in
the sense that one has lower bounds on volume at scale $\r$, which enables  
one to appeal to 
$C^K$-precompactness
arguments. The advantage of $\r$ over a volume scale is that
$\r$ can be arranged to have small Lipschitz constant, which is  
technically useful.  

We will use the following lemma to construct slowly varying 
functions subject to {\it a priori} 
upper and lower bounds.

\begin{lemma} 
\label{lemslowlyvarying}
Suppose $X$ is a  
metric space,  
$C\in(0,\infty)$, and $l,u:X\ra (0,\infty)$
are functions.  Then there is a $C$-Lipschitz function $r:X\ra (0,\infty)$
satisfying $l\leq r\leq u$ if and only if
\begin{equation}
\label{eqnenvelope}
l(p)-Cd(p,q)\leq u(q)
\end{equation}
for all $p,q\in X$. 
\end{lemma}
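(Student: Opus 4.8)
The plan is to prove the two implications separately; the forward one is immediate, and the converse is handled by an explicit McShane-type formula. For the forward implication, suppose $r:X\ra(0,\infty)$ is $C$-Lipschitz with $l\leq r\leq u$. Then for all $p,q\in X$ the Lipschitz bound gives $r(p)\leq r(q)+C\,d(p,q)$, so
\begin{equation*}
l(p)\leq r(p)\leq r(q)+C\,d(p,q)\leq u(q)+C\,d(p,q),
\end{equation*}
which is exactly \eqref{eqnenvelope}.

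For the converse, assuming \eqref{eqnenvelope}, I would define
\begin{equation*}
r(q)\defeq \sup_{p\in X}\bigl(l(p)-C\,d(p,q)\bigr)
\end{equation*}
and then verify, in order: (i) the supremum is finite, and in fact $r(q)\leq u(q)$ for every $q$, which is precisely the content of hypothesis \eqref{eqnenvelope}; (ii) taking $p=q$ in the supremum gives $r(q)\geq l(q)>0$, so $r$ is a genuine function $X\ra(0,\infty)$ with $l\leq r\leq u$; (iii) $r$ is $C$-Lipschitz --- this follows from the triangle inequality $d(p,q)\geq d(p,q')-d(q,q')$, which gives $l(p)-C\,d(p,q)\leq \bigl(l(p)-C\,d(p,q')\bigr)+C\,d(q,q')\leq r(q')+C\,d(q,q')$ for every $p$; taking the supremum over $p$ yields $r(q)\leq r(q')+C\,d(q,q')$, and interchanging $q$ and $q'$ gives $|r(q)-r(q')|\leq C\,d(q,q')$.

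There is essentially no obstacle here; the only place where hypothesis \eqref{eqnenvelope} is actually used (rather than an abstract extension argument) is step (i), to guarantee that the supremum does not blow up, while the strict positivity of $r$ is automatic from the strict positivity of $l$. If one later needs a \emph{smooth} such function, as in the applications in Section \ref{sec-scalefunction}, one would follow this with a mollification argument in the spirit of Lemma \ref{lem-generalsmoothing}; but as stated the lemma asks only for Lipschitz regularity, so the formula above suffices.
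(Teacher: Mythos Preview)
Your proof is correct and follows exactly the same approach as the paper: the forward implication is immediate, and for the converse you define $r(q)=\sup_{p\in X}\bigl(l(p)-C\,d(p,q)\bigr)$, just as the paper does. You have simply spelled out in detail the verifications that the paper leaves to the reader.
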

\begin{proof}
Clearly if such an $r$ exists then (\ref{eqnenvelope}) must hold.

Conversely, suppose that (\ref{eqnenvelope}) holds and
 define $r:X\ra(0,\infty)$ by
\begin{equation}
r(q) = \sup\{l(p)-Cd(p,q)\mid p\in X\}\,.
\end{equation}
Then $l\leq r\leq u$.
For $q,q^\prime \in X$, since $l(p) - Cd(p,q) \ge l(p) - Cd(p,q^\prime)
- C d(q,q^\prime)$, we obtain
$r(q) \ge r(q^\prime) - C d(q,q^\prime)$, from which it follows that
$r$ is $C$-Lipschitz.
\end{proof}

Recall that $c_3$ is the volume of the unit ball in $\R^3$.
Let $\La > 0$ and $\bar w \in (0, c_3)$ be new parameters,
and put
\begin{equation} \label{wprimeeqn}
w^\prime \: = \: \frac{\bar w}{2(1+2\Lambda^{-1})^3}.
\end{equation}

Recall the notion of the volume scale
$r_p(\bar w)$ from Definition \ref{def1}.

\begin{corollary} \label{fake}
There is a smooth
$\Lambda$-Lipschitz function $\r \: : \: M \rightarrow (0, \infty)$ such
that for every  $p \in M$, we have
\begin{equation} \label{fakebound}
\frac12 \: r_p(\bar{w})  \: \le \: \r(p) \: \le \: 2 r_p(w^\prime)\,.
\end{equation}
\end{corollary}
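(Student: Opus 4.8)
The plan is to build $\r$ in two stages: first, using Lemma \ref{lemslowlyvarying}, produce a Lipschitz function caught between slightly tightened forms of the two required bounds, and then smooth it with Corollary \ref{Lipsmoothing} so as not to spoil the bounds or raise the Lipschitz constant above $\La$. Concretely, set $l(p) = \tfrac{3}{5}\, r_p(\overline{w})$ and $u(p) = \tfrac{9}{5}\, r_p(w')$. Both are positive and finite: finiteness holds because on a closed manifold $r \mapsto \vol(B(p,r))/r^3$ is continuous, is close to $c_3$ for small $r$, and tends to $0$ as $r \to \infty$, so by the intermediate value theorem $r_p(w) < \infty$ for every $w \in (0,c_3)$; the same continuity, together with the monotonicity $r_p(\overline{w}) \le r_p(w')$ noted above, gives $l \le u$. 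It then suffices to verify the hypothesis $l(p) - \tfrac{\La}{2}\, d(p,q) \le u(q)$ of Lemma \ref{lemslowlyvarying} for all $p,q \in M$; the lemma will produce a $\tfrac{\La}{2}$-Lipschitz function $r_0 : M \to (0,\infty)$ with $\tfrac{3}{5}\, r_p(\overline{w}) \le r_0(p) \le \tfrac{9}{5}\, r_p(w')$.

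To check the envelope inequality, fix $p,q$ and set $s = (1 + 2\La^{-1})\, r_p(\overline{w})$. If $d(p,q) > 2\La^{-1} r_p(\overline{w})$ then $\tfrac{\La}{2} d(p,q) > r_p(\overline{w}) > \tfrac{3}{5} r_p(\overline{w})$, so the left side is negative and the inequality is trivial. If $d(p,q) \le 2\La^{-1} r_p(\overline{w})$, I claim $r_q(w') \ge s$, which gives $u(q) = \tfrac{9}{5} r_q(w') \ge \tfrac{9}{5} r_p(\overline{w}) > l(p)$. For the claim, note $B(p, r_p(\overline{w})) \subseteq B(q,s)$ since $d(p,q) \le s - r_p(\overline{w})$, so
\begin{equation*}
\vol(B(q,s)) \ \ge\ \vol\bigl(B(p, r_p(\overline{w}))\bigr) \ =\ \overline{w}\, r_p(\overline{w})^3 \ =\ 2 w'\, s^3 ,
\end{equation*}
the last equality being exactly the definition $w' = \overline{w}/(2(1+2\La^{-1})^3)$. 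Suppose, for contradiction, that $r_q(w') < s$; then $\vol(B(q, r_q(w'))) = w'\, r_q(w')^3$. By Standing Assumption \ref{assumptions} we have $R_p \ge \al\, r_p(1/\al) \ge \al\, r_p(\overline{w})$ for $\al$ large, and since also $d(p,q) + s \le (1+4\La^{-1})\, r_p(\overline{w}) \le R_p$ for $\al$ large, the ball $B(q,s)$ lies inside $B(p,R_p)$ and therefore carries the curvature lower bound $\Rm \ge - R_p^{-2}$ with $s^2 R_p^{-2} = O(\al^{-2})$. Running Bishop--Gromov on $B(q,s)$ with this bound (the comparison volumes in the space form of curvature $-R_p^{-2}$ being within a factor $1 + o_\al(1)$ of the Euclidean ones on $[0,s]$, since $s^2 R_p^{-2} = O(\al^{-2})$),
\begin{equation*}
\vol(B(q,s)) \ \le\ (1 + o_\al(1)) \Bigl(\tfrac{s}{r_q(w')}\Bigr)^{3} \vol\bigl(B(q,r_q(w'))\bigr) \ =\ (1 + o_\al(1))\, w'\, s^3 \ <\ 2 w'\, s^3
\end{equation*}
for $\al$ large, contradicting the previous display. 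Hence $r_q(w') \ge s$, completing the verification.

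For the smoothing stage, observe that since $M$ is a fixed smooth closed manifold there is $r_1 > 0$ with $\vol(B(p,r)) > \overline{w}\, r^3$ for all $p \in M$ and all $r \le r_1$, so $r_p(\overline{w}) \ge r_1$ and $r_p(w') \ge r_1$ for every $p$. Apply Corollary \ref{Lipsmoothing} to $r_0$ with error $\eps = \min(\La/4,\, r_1/10)$, obtaining a smooth $(\tfrac{\La}{2}+\eps)$-Lipschitz function $\r$ with $|\r - r_0|_\infty \le \eps$. Then $\r$ is $\La$-Lipschitz, since $\tfrac{\La}{2}+\eps \le \tfrac{3\La}{4} < \La$, and the cushions between $\tfrac{3}{5}, \tfrac{9}{5}$ and $\tfrac{1}{2}, 2$ absorb $\eps$: using $\eps \le r_1/10 \le \tfrac{1}{10} r_p(\overline{w})$ and $\eps \le r_1/10 \le \tfrac{1}{10} r_p(w')$,
\begin{equation*}
\tfrac{1}{2}\, r_p(\overline{w}) \ \le\ \tfrac{3}{5}\, r_p(\overline{w}) - \eps \ \le\ \r(p) \ \le\ \tfrac{9}{5}\, r_p(w') + \eps \ \le\ 2\, r_p(w') .
\end{equation*}
This $\r$ has all the required properties. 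I expect the one genuinely delicate point to be the last estimate of the second paragraph: volume comparison is applied on the ball $B(q,s)$, whose radius is of order $\La^{-1}$ times the volume scale and hence much larger than the volume scale, and this is legitimate only if that radius is small relative to \emph{some} curvature scale -- while the curvature scale at $q$ is not controlled a priori. The resolution is that $q$ lies comfortably inside $B(p,R_p)$, so $B(q,s)$ still inherits the curvature bound $-R_p^{-2}$, which is negligible for large $\al$. The remaining ingredients -- finiteness of the volume scales, the uniform lower bound $r_1$, and the arithmetic of the constants in the smoothing step -- are routine.
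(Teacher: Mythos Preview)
Your proof is correct and follows essentially the same approach as the paper's: set $l$ and $u$ in terms of the two volume scales, verify the envelope inequality of Lemma~\ref{lemslowlyvarying} via the ball inclusion $B(p,r_p(\bar w))\subset B(q,(1+2\La^{-1})r_p(\bar w))$ together with the definition of $w'$ and Bishop--Gromov comparison (using that the curvature scale dominates the volume scale by a factor $\al$), then smooth via Corollary~\ref{Lipsmoothing}. The paper takes $l=r_p(\bar w)$, $u=r_p(w')$ directly and leaves the smoothing step implicit, whereas you build in the cushions $\tfrac{3}{5},\tfrac{9}{5}$ and spell out the uniform lower bound on the volume scales needed to absorb the smoothing error; these are cosmetic differences.
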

\begin{proof}
We let $l:M\ra (0,\infty)$
be the $\bar w$-volume scale, and $u:M\ra (0,\infty)$ be the $w'$-volume scale.
We first verify (\ref{eqnenvelope}) with parameter $C=\frac{\Lambda}{2}$. To argue by contradiction,
suppose that for some $p,q\in M$ we have
$l(p) - \frac12 \Lambda d(p,q) > u(q)$. In particular, $d(p,q) < 
\frac{2l(p)}{\Lambda}$ and $u(q) < l(p)$.
There are inclusions $B(p, l(p)) \subset B(q,(1+2\Lambda^{-1})l(p)) \subset
B(p,(1+4\Lambda^{-1})l(p))$.
Then 
\begin{equation}
\vol(B(q,(1+2\Lambda^{-1})l(p)))  \geq \vol(B(p,l(p))) =  \bar wl^3(p)= 
2 w' \left((1+2\Lambda^{-1})l(p) \right)^3.
\end{equation}
For any $c > 0$, if $\al$ is sufficiently large then the sectional curvature on 
$B(p,(1+4\Lambda^{-1})l(p))$, and hence on $B(q,(1+2\Lambda^{-1})l(p))$,
is bounded below by $- \: c^2 \: l(p)^{-2}$. As $u(q) < l(p) < (1+2\Lambda^{-1})l(p)$, 
the Bishop-Gromov inequality implies that
\begin{align}
\frac{w^\prime \: u(q)^3
}{
\int_0^{u(q)/l(p)} \sinh^{2}(cr) \: dr
}
& =
\frac{\vol(B(q, u(q)))
}{
\int_0^{u(q)/l(p)} \sinh^{2}(cr) \: dr
}
\ge
\frac{\vol(B(q, (1+2\Lambda^{-1})l(p)))
}{
\int_0^{!+2\Lambda^{-1}} \sinh^{2}(cr) \: dr
} \\
& \ge
\frac{2 w' \left((1+2\Lambda^{-1})l(p) \right)^3
}{
\int_0^{1+2\Lambda^{-1}} \sinh^{2}(cr) \: dr
}, \notag
\end{align}
or
\begin{equation} \label{xxx}
\frac{c^2 \left( \frac{u(q)}{l(p)} \right)^3
}{
\int_0^{u(q)/l(p)} \sinh^{2}(cr) \: dr
} \ge 
\frac{2 c^2 (1+2\Lambda^{-1})^3
}{
\int_0^{!+2\Lambda^{-1}} \sinh^{2}(cr) \: dr
}.
\end{equation}
As the function $x\mapsto \frac{c^2}{3}\frac{x^3}{\int_0^x\;\sinh^2(cr)\:dr}$
tends uniformly to $1$ as $c\ra 0$, for $x\in (0,3]$, 
taking $c$ small gives a contradiction. (Note the factor of $2$ on
the right-hand side of (\ref{xxx})).

By Lemma \ref{lemslowlyvarying}, there is a $\frac{\Lambda}{2}$-Lipschitz 
function $r$ on $M$
satisfying $l \le r \le u$. The corollary now follows from Corollary 
\ref{Lipsmoothing}.

\end{proof}

We will write $\r_p$ for $\r(p)$.  Recall our convention that
the index $\al$ in the sequence $\{M^{\al}\}_{\al = 1}^\infty$ 
has been suppressed,
and that all statements are to be interpreted as being valid
provided $\al$ is sufficiently large. The next lemma shows
$C^{K}$-precompactness
at scale $\r$.

\begin{lemma}
\label{lem-rpconsequences}

\mbox{}
\begin{enumerate}
\item There is a constant $\widehat{w}=\widehat{w}(w') > 0$ such that
$\vol(B(p,\r_p))\geq \widehat{w}(\r_p)^3$
for every $p\in M$.
\item
For every  $p \in M$, $C < \infty$ and $k \in [0,K]$, we have
\begin{equation}
|\nabla^k \Rm| \: \le \: 
2^{k+2} \: 
A'(C,w^\prime)
 \: \r_{p}^{-(k+2)}\quad\mbox{on the ball}\quad 
B \left( p,\frac{1}{2}C\r_p \right)\,.
\end{equation}

\item
Given $\eps>0$, for sufficiently large $\al$ and for every
$p\in M^\al$, the rescaled pointed manifold 
$(\frac{1}{\r_p}M^{\al},p)$ is $\eps$-close in the pointed
$C^{K}$-topology
to a complete nonnegatively curved
$C^{K}$-smooth
Riemannian $3$-manifold.   Moreover, this manifold belongs
to a family which is compact in the pointed 
$C^{K}$-topology.
\end{enumerate}
\end{lemma}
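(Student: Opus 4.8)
The three statements follow from the defining bound \eqref{fakebound} on $\r_p$ together with Standing Assumption \ref{assumptions} and Lemma \ref{smoothprecompactness}. The plan is as follows. For (1), note that $\r_p\le 2r_p(w')$, so $B(p,\r_p)\supset B(p,\tfrac12\cdot 2r_p(w'))=B(p, r_p(w'))$ only if $\r_p\ge r_p(w')$, which need not hold; instead argue directly using Bishop--Gromov. Since $\tfrac12 r_p(\bar w)\le\r_p$, the ball $B(p,\r_p)$ contains $B(p,\tfrac12 r_p(\bar w))$, and $\vol(B(p,r_p(\bar w)))=\bar w\, r_p(\bar w)^3$. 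By Standing Assumption \ref{assumptions}(1), for $\al$ large the sectional curvature on $B(p, r_p(\bar w))$ is bounded below by $-c^2 r_p(\bar w)^{-2}$ for any prescribed small $c$ (since $r_p(\bar w)\le \text{const}\cdot R_p$, using that $\bar w$ is bounded away from $0$); then Bishop--Gromov gives $\vol(B(p,\tfrac12 r_p(\bar w)))\ge \const\cdot r_p(\bar w)^3\ge \const\cdot\r_p^3$, where in the last step I use $\r_p\le 2r_p(w')\le 2r_p(\bar w)$ (since $w'\le\bar w$ forces $r_p(w')\le r_p(\bar w)$, wait — actually $w'<\bar w$ gives $r_p(w')\ge r_p(\bar w)$ by the monotonicity noted before Standing Assumption \ref{assumptions}, so one instead bounds $\r_p\le 2r_p(w')$ and compares $r_p(w')$ to $r_p(\bar w)$ via Bishop--Gromov as well). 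This gives $\widehat w=\widehat w(w')>0$.

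For (2), apply Standing Assumption \ref{assumptions}(2) with $w'$ and with the constant $2C$ in place of $C$: for $\al$ large we have $2C<\al$, so
\begin{equation}
|\nabla^k\Rm|\le A'(2C,w')\, r_p(w')^{-(k+2)}\quad\text{on } B(p, 2C\, r_p(w')).
\end{equation}
Since $\r_p\le 2r_p(w')$, we have $\tfrac12 C\r_p\le C r_p(w')\le 2C r_p(w')$, so the bound holds on $B(p,\tfrac12 C\r_p)$; and $r_p(w')^{-(k+2)}\le (\tfrac12\r_p)^{-(k+2)}=2^{k+2}\r_p^{-(k+2)}$ using $\r_p\ge\tfrac12 r_p(\bar w)$ together with $r_p(\bar w)\ge r_p(w')$ is false — rather one uses $\r_p\le 2 r_p(w')$ directly, giving $r_p(w')\ge\tfrac12\r_p$ and hence $r_p(w')^{-(k+2)}\le 2^{k+2}\r_p^{-(k+2)}$. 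This is exactly the stated inequality. (Here I am being slightly careless about which volume scale dominates which; the correct chain is $r_p(w')\ge\tfrac12\r_p$ from \eqref{fakebound}, and that is all that is needed.)

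For (3), rescale so that $\r_p=1$; write $\widetilde M=\tfrac1{\r_p}M^\al$. By part (1), $\vol(B(p,1))\ge\widehat w(w')$ in $\widetilde M$, giving the lower volume bound of Lemma \ref{smoothprecompactness} with $v=\widehat w$, $r=1$. By part (2), for every $R>0$ and every $0\le k\le K$ we have $|\nabla^k\Rm|\le 2^{k+2}A'(2R,w') $ on $B(p,R)\subset\widetilde M$ once $\al$ is large enough that $2R<\al$; set $A(R)=\max_{0\le k\le K}2^{k+2}A'(2R,w')$, a function of $R$ alone. Thus $\{(\widetilde M,p)\}$ satisfies the hypotheses of Lemma \ref{smoothprecompactness} (with $n=3$, and noting these manifolds are $C^{K+2}$-smooth since the original $M^\al$ are smooth), so the family is precompact in the pointed $C^K$-topology, proving the ``compact family'' assertion. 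To see each $(\widetilde M,p)$ is $\eps$-close to a \emph{nonnegatively curved} limit: if not, there is $\eps>0$ and a subsequence $\al_i\to\infty$ with $(\tfrac1{\r_{p_i}}M^{\al_i},p_i)$ staying $\eps$-far in pointed $C^K$ from every nonnegatively curved $C^K$-smooth $3$-manifold; by precompactness a further subsequence converges in pointed $C^K$ to some complete $C^K$-smooth $3$-manifold $(N,\star)$, and by Standing Assumption \ref{assumptions}(1) the lower curvature bound on $B(p_i, C\r_{p_i})$ in $M^{\al_i}$ tends to $0$ for each fixed $C$ (since $\r_{p_i}/R_{p_i}\to 0$), so $N$ has nonnegative sectional curvature — a contradiction.

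\medskip
\emph{Main obstacle.} The genuinely delicate point is part (3): extracting from the uniform derivative bounds not just precompactness but the nonnegativity of the limit, which requires carefully invoking Standing Assumption \ref{assumptions}(1) to convert the ratio bound $R_p/r_p(1/\al)\ge\al$ into a curvature lower bound at scale $\r_p$ that degenerates to $0$; this in turn rests on the comparison $\r_p\le 2r_p(w')\le\const\cdot r_p(1/\al)\le\const\cdot R_p/\al$, which must be tracked uniformly in $p$. Parts (1) and (2) are routine applications of Bishop--Gromov and the hypotheses, modulo bookkeeping of the constants $\bar w, w'$ and the monotonicity $r_p(w)\ge r_p(w')$ for $w\le w'$.
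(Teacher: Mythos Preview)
Your overall strategy matches the paper's, and parts (2) and (3) are essentially correct and parallel the paper's argument (with the minor slip in (2) that you should apply the Standing Assumption with $C$ rather than $2C$ to get exactly $A'(C,w')$; the inclusion $B(p,\tfrac12 C\r_p)\subset B(p,Cr_p(w'))$ already follows from $\r_p\le 2r_p(w')$, so the extra factor is unnecessary).

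There is, however, a real gap in your argument for (1). You try to go through $r_p(\bar w)$: from $\r_p\ge\tfrac12 r_p(\bar w)$ you get $\vol(B(p,\r_p))\ge\vol(B(p,\tfrac12 r_p(\bar w)))\ge\const\cdot r_p(\bar w)^3$, and then you need $r_p(\bar w)^3\ge\const\cdot\r_p^3$. But the available upper bound is $\r_p\le 2r_p(w')$, and since $w'<\bar w$ gives $r_p(w')\ge r_p(\bar w)$, you cannot bound $\r_p$ above by a multiple of $r_p(\bar w)$. Your proposed rescue---comparing $r_p(w')$ to $r_p(\bar w)$ by Bishop--Gromov---does not work either: Bishop--Gromov at scale $r_p(w')$ yields an inequality of the form $\bar w\, t^3/\int_0^t\sinh^2\ge w'/\int_0^1\sinh^2$ with $t=r_p(\bar w)/r_p(w')$, and since the left side is at most $3\bar w$ while $w'\ll\bar w$ by \eqref{wprimeeqn}, this places no lower bound on $t$.

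The fix, which is exactly what the paper does, is to bypass $r_p(\bar w)$ entirely and use the other half of \eqref{fakebound}: from $\tfrac12\r_p\le r_p(w')$, apply Bishop--Gromov inside $B(p,r_p(w'))$ (where the curvature is $\ge -r_p(w')^{-2}$) to compare $\vol(B(p,\tfrac12\r_p))$ directly with $\vol(B(p,r_p(w')))=w'\,r_p(w')^3$. This gives $\vol(B(p,\tfrac12\r_p))/(\tfrac12\r_p)^3\ge w'/(3\int_0^1\sinh^2)$, and hence the desired bound with $\widehat w$ depending only on $w'$.
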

\begin{proof}
\mbox{}

(1).
As $\frac12 \: \r_{p^\al} \le r_{p^\al}(w^\prime)$, the 
Bishop-Gromov inequality gives
\begin{equation}
\frac{\vol(B(p^\al, \frac12 \: \r_{p^\al}))}
{\int_0^{\frac{\r_{p^\al}}{2 r_{p^\al}(w^\prime)}} \sinh^{2}(r) \: dr
} \ge
\frac{\vol(B(p^\al, r_{p^\al}(w^\prime)))
}{
\int_0^1 \sinh^{2}(r) \: dr
} = \frac{w^\prime (r_{p^\al}(w^\prime))^3}{\int_0^1 \sinh^{2}(r) \: dr},
\end{equation}
or
\begin{equation} \label{lowervolume}
\frac{\vol(B(p^\al, \frac12 \: \r_{p^\al}))}{(\frac12 \: \r_{p^\al})^3
} \: \ge \:
\frac{
w^\prime}{\int_0^1 \sinh^{2}(r) \: dr} \:
\frac{
 \int_0^{\frac{\r_{p^\al}}{2 r_{p^\al}(w^\prime)}} \sinh^{2}(r) \: 
dr
}{
\left( \frac{\r_{p^\al}}{2 r_{p^\al}(w^\prime) }\right)^3
}
\ge \frac{w^\prime}{3 \int_0^1 \sinh^2(r) \: dr}.
\end{equation}
Thus 
\begin{equation}
\vol(B(p^\al, \r_{p^\al})) \ge 
\vol \left( B \left( p^\al, \r_{p^\al}/2 \right) \right) \ge
\frac{w^\prime}{24 \int_0^1 \sinh^2(r) \: dr} (\r_{p^\al})^3,
\end{equation}
which gives (1). 

(2).  From hypothesis (2) of Standing Assumption \ref{assumptions},
for each 
$C < \al$
and $k \in [0, K]$ we have
\begin{equation}
|\nabla^k \Rm| \: \le \: 
A'(C,w^\prime)
 \: r_{p^\al}(w^\prime)^{-(k+2)} \: \le \:
2^{k+2} \: 
A'(C,w^\prime)
 \: \r_{p^\al}^{-(k+2)}
\end{equation}
on $B(p^\al, C r_{p^\al}(w^\prime)) \supset B \left( p^\al, \frac12 
C \r_{p^\al} \right)$.

(3).  If not, then for some $\eps>0$,
after passing to a subsequence, for every 
$\al$ we could find $p^\al\in M^\al$ such that
$(\frac{1}{\r_{p^\al}}M^\al,p^\al)$ has distance
at least $\eps$ in the 
$C^{K}$-topology 
from a complete nonnegatively curved
$3$-manifold.  

(1) and (2) imply that after passing to a subsequence,
the sequence $\{(\frac{1}{\r_{p^\al}}M^\al,p^\al)\}$
converges in the pointed 
$C^{K}$-topology
to a complete Riemannian $3$-manifold.  But since 
the ratio
$\frac{R_p}{\r_p}$ tends uniformly to infinity as $\al\ra \infty$, 
the limit manifold has nonnegative curvature, which is 
a contradiction.
\end{proof}

We now extend Lemma \ref{lem-rpconsequences} to provide 
$C^{K}$-splittings.

\begin{lemma} \label{smoothconv}
Given $\eps>0$
and  $0\leq j\leq 3$, provided $\de<\overline{\de}(\eps,w')$
the following holds.
If $p \in M$, and 
$\phi:\left( \frac{1}{\r_{p}} M, p \right)\ra
(\R^j\times X,(0,\star_X))$ is a $(j,\de)$-splitting, then 
$\phi$ is $\eps$-close to a $(j,\eps)$-splitting
$\widehat{\phi}:\left( \frac{1}{\r_{p}} M, p \right)\ra
(\R^j\times \widehat{X},(0,\star_{\widehat{X}}))$, where
$\widehat{X}$ is a complete nonnegatively curved 
$C^K$-smooth
Riemannian $(3-j)$-manifold, and $\widehat{\phi}$ is $\eps$-close
to an isometry on the ball $B(p,\eps^{-1})\subset \frac{1}{\r_p}M$, 
in the $C^{K+1}$-topology.
\end{lemma}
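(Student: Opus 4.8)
The plan is to argue by contradiction, combining the $C^K$-precompactness of Lemma \ref{lem-rpconsequences}(3) with the splitting theorem: an approximate $\R^j$-splitting of arbitrarily good quality on a rescaled manifold $\frac{1}{\r_p}M^\al$ passes, in the limit, to a genuine isometric splitting of a nonnegatively curved $C^K$-smooth $3$-manifold, which can then be transported back to produce the desired $\widehat\phi$.

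First I would fix $\eps>0$ and $j\in\{0,1,2,3\}$ and suppose the conclusion fails for every value of the threshold. Then, after passing to a subsequence of $\{M^\al\}$ (still indexed by $\al$), there are points $p^\al\in M^\al$, positive numbers $\de_\al\ra 0$, and $(j,\de_\al)$-splittings $\phi_\al:(\frac{1}{\r_{p^\al}}M^\al,p^\al)\ra(\R^j\times X_\al,(0,\star_{X_\al}))$, none of which is $\eps$-close, in the sense of Definition \ref{def-epscompatible}(1), to a $(j,\eps)$-splitting of $(\frac{1}{\r_{p^\al}}M^\al,p^\al)$ whose non-Euclidean factor is a complete nonnegatively curved $C^K$-smooth Riemannian $(3-j)$-manifold and which is $\eps$-close to an isometry on $B(p^\al,\eps^{-1})$ in the $C^{K+1}$-topology. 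By Lemma \ref{lem-rpconsequences}(3), after a further subsequence there are a complete nonnegatively curved $C^K$-smooth Riemannian $3$-manifold $(N,\star)$ and $\eta_\al$-$C^K$ approximations $f_\al:(N,\star)\ra(\frac{1}{\r_{p^\al}}M^\al,p^\al)$ with $\eta_\al\ra 0$; for $\al$ large $f_\al$ restricts to a $C^{K+1}$ diffeomorphism from a metric ball about $\star$ of radius tending to $\infty$ onto an open subset of $\frac{1}{\r_{p^\al}}M^\al$, and one checks — just as for the quasi-inverse of a Gromov--Hausdorff approximation — that the inverse $f_\al^{-1}$ is an $\eta_\al'$-$C^K$ approximation with $\eta_\al'\ra 0$.

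Next I would extract the limiting splitting. The composition $\phi_\al\circ f_\al:(N,\star)\ra(\R^j\times X_\al,(0,\star_{X_\al}))$ is a $(j,\de_\al'')$-splitting of $N$ with $\de_\al''\ra 0$. By Gromov--Hausdorff precompactness the $X_\al$ (being uniformly close to the convergent manifolds $\frac{1}{\r_{p^\al}}M^\al$) converge, along a further subsequence, to a complete length space $X_\infty$, and $\phi_\al\circ f_\al$ converges to an isometry $\Psi:(N,\star)\ra(\R^j\times X_\infty,(0,\star_{X_\infty}))$ (a standard limiting argument, as in the proofs of Lemmas \ref{strainerlemma} and \ref{limitsplits}). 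In particular $N$ is isometric to $\R^j\times X_\infty$; since $N$ is a $C^K$-smooth nonnegatively curved Riemannian $3$-manifold, iterating the Cheeger--Gromoll splitting theorem (which holds for $C^K$ metrics, cf. the proof of Lemma \ref{topology}) shows that $X_\infty$ is a complete nonnegatively curved $C^K$-smooth Riemannian $(3-j)$-manifold and that $\Psi$ is a Riemannian isometry. I then set $\widehat\phi_\al:=\Psi\circ f_\al^{-1}$ on the large ball where $f_\al^{-1}$ is defined, extended to all of $\frac{1}{\r_{p^\al}}M^\al$ in an arbitrary way; only its restriction to $B(p^\al,\eps^{-1})$, where $\widehat\phi_\al=\Psi\circ f_\al^{-1}$ is $C^{K+1}$-smooth, will matter.

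Finally I would verify, for $\al$ large, the properties that contradict the choice of $(M^\al,p^\al,\phi_\al)$. First, since $\Psi$ is a Riemannian isometry and $f_\al^{-1}$ is an $\eta_\al'$-$C^K$ approximation with $\eta_\al'\ra 0$, the map $\widehat\phi_\al=\Psi\circ f_\al^{-1}$ is $\eps$-close to an isometry onto its image, in the $C^{K+1}$-topology, on $B(p^\al,\eps^{-1})$; in particular it is a $\de$-Gromov--Hausdorff approximation onto $\R^j\times X_\infty$ with $\de\ra 0$, hence a $(j,\eps)$-splitting, and its non-Euclidean factor $X_\infty$ is a complete nonnegatively curved $C^K$-smooth Riemannian $(3-j)$-manifold. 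Second, on $B(p^\al,\eps^{-1})$ one has
\[
\widehat\phi_\al=\Psi\circ f_\al^{-1}\;\approx\;(\phi_\al\circ f_\al)\circ f_\al^{-1}=\phi_\al\circ(f_\al\circ f_\al^{-1})\;\approx\;\phi_\al ,
\]
the first approximation because $\phi_\al\circ f_\al\to\Psi$ and $f_\al^{-1}$ maps $B(p^\al,\eps^{-1})$ into the ball about $\star$ on which this convergence holds, the second because $f_\al\circ f_\al^{-1}\to\Id$ while $\phi_\al$ is $1$-Lipschitz up to additive error $\de_\al$; combining this with the product Gromov--Hausdorff approximation $(\Id_{\R^j},\chi_\al):\R^j\times X_\al\ra\R^j\times X_\infty$ supplied by $X_\al\to X_\infty$ gives that $\phi_\al$ is $\eps$-close to $\widehat\phi_\al$ in the sense of Definition \ref{def-epscompatible}(1). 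This is the desired contradiction. The main obstacle is not geometric — the geometric content is entirely the splitting theorem — but lies in the bookkeeping of the first property above: making precise that the isometric splitting of the smooth limit $N$ is $C^K$-regular, and that transporting it through the $C^K$ approximation $f_\al$ preserves $C^{K+1}$-closeness to an isometry.
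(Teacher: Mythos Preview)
Your proposal is correct and follows essentially the same approach as the paper: argue by contradiction, use the $C^K$-precompactness from Lemma~\ref{lem-rpconsequences}(3) to pass to a nonnegatively curved $C^K$-smooth limit $3$-manifold, observe that the approximate $(j,\de_\al)$-splittings converge to a genuine isometric $\R^j$-splitting of the limit, and transport this back to the approximators. The paper's own proof is a two-sentence sketch of exactly this argument; you have simply fleshed out the details (in particular the construction of $\widehat\phi_\al=\Psi\circ f_\al^{-1}$ and the verification of $\eps$-closeness) that the paper leaves implicit.
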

\begin{proof}
Suppose not.
Then for some $\epsilon > 0$, after passing
to a subsequence we can assume that there are points $p^\al \in M^\al$ so
that 
$\left( \frac{1}{\r_{p^{\al}}} M, p^{\al} \right)$ 
admits a $(j,\al^{-1})$-splitting $\phi_j$,
but the conclusion of the lemma fails.  

By Lemma \ref{lem-rpconsequences}, 
a subsequence of 
$\left\{ \left( \frac{1}{\r_{p^\al}} M^\al, p^\al \right) \right\}_{\al = 1}^\infty$
converges in the pointed 
$C^{K}$-topology 
to a complete pointed nonnegatively curved
$C^{K}$-smooth
$3$-dimensional 
Riemannian manifold $(M^\infty,p^\infty)$, such that  $\phi_j$ converges to 
a $(j,0)$-splitting $\phi_\infty$ of
$(M^\infty,p^\infty)$.
This is a contradiction.
\end{proof}

\begin{remark} \label{addedconv}
If we only assume condition (1) of Assumption \ref{assumptions} then the 
proof of Lemma \ref{smoothconv} yields 
the following 
weaker conclusion: $\widehat{X}$ is a (nonnegatively curved)
$(3-j)$-dimensional Alexandrov space, and $\widehat{\phi}$
is a homeomorphism on $B(p,\eps^{-1})$.  See Section \ref{sec-shioyayamaguchi}
for more discussion.
\end{remark}

Let $\si>0$ be a new parameter. In the next lemma, we show that
if the parameter $\bar w$ is small then the pointed $3$-manifold
$(\frac{1}{\r_p}M,p)$ is Gromov-Hausdorff close to something of
lower dimension.

\begin{lemma} \label{lemsicloseto2d}
Under the constraint
$\bar w < \overline{\bar w}(\si, \Lambda)$, 
the following holds.
For every $p\in M$, the pointed space
$(\frac{1}{\r_p}M,p)$ is $\si$-close in the pointed Gromov-Hausdorff
metric to a nonnegatively curved Alexandrov space of dimension at most $2$.
\end{lemma}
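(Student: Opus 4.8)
The plan is to argue by contradiction and compactness, as elsewhere in the paper. The point is that the sandwich $\frac12 r_p(\bar w)\le\r_p\le 2r_p(w')$ from Corollary \ref{fake}, together with the Bishop--Gromov inequality, forces the unit ball of $\frac{1}{\r_p}M$ to have volume $\lesssim\bar w$, after which Lemma \ref{alexlimit} produces a nonnegatively curved Alexandrov limit of dimension at most $2$.

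First I would record a volume upper bound: \emph{there is a universal constant $C$ (one may take $C=8$) such that for every $\eps>0$, all sufficiently large $\al$, and every $p\in M^\al$, one has $\vol(B(p,\r_p))\le (C\bar w+\eps)\,\r_p^3$.} The key observation is that $\r_p$ always lies where Bishop--Gromov comparison is nearly Euclidean: once $\al\ge 1/w'$ we have $\r_p\le 2r_p(w')\le 2r_p(1/\al)\le 2R_p/\al$ (by Assumption \ref{assumptions}(1) and monotonicity of $w\mapsto r_p(w)$), so on $B(p,\r_p)$ the sectional curvature is $\ge -R_p^{-2}$ with $R_p^{-1}\r_p\le 2/\al\ra 0$. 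If $\r_p\ge r_p(\bar w)$, Bishop--Gromov comparison of $B(p,\r_p)$ with $B(p,r_p(\bar w))$ gives $\vol(B(p,\r_p))\le (1+O(\al^{-2}))\,\bar w\,\r_p^3$, using $\vol(B(p,r_p(\bar w)))=\bar w\,r_p(\bar w)^3$ (the volume scale is finite since $M$ is compact); if $\frac12 r_p(\bar w)\le\r_p<r_p(\bar w)$, then simply $\vol(B(p,\r_p))\le\vol(B(p,r_p(\bar w)))=\bar w\,r_p(\bar w)^3\le 8\bar w\,\r_p^3$.

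Now suppose the lemma fails. Then there is $\si>0$, which we may take to be less than $1$, such that for every $j\in\Z^+$ there is a parameter value $\bar w_j\in(0,\frac1j)$ for which the conclusion of the lemma fails for infinitely many $\al$. Using the freedom in choosing $\al$, pick $\al_j\ra\infty$ with $\al_j$ large enough (in terms of $\bar w_j,\La,j$) that $\al_j\ge 1/w'_j$, that $R_q/\r^{(j)}_q\ge j$ for every $q\in M^{\al_j}$ (possible since this ratio is $\ge\al_j/2$), and that the volume bound above holds with $\eps=\bar w_j$; here $\r^{(j)}$ is the scale function built from $\bar w_j$. Pick $p_j\in M^{\al_j}$ such that $Z_j:=(\frac{1}{\r^{(j)}_{p_j}}M^{\al_j},p_j)$ is not $\si$-close, in the pointed Gromov--Hausdorff topology, to any nonnegatively curved Alexandrov space of dimension at most $2$. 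By the choices above, $B(p_j,j)\subset Z_j$ has sectional curvature $\ge -j^{-2}$ and $\vol(B_{Z_j}(p_j,1))\le 9\,\bar w_j\ra 0$. By Lemma \ref{alexlimit}, after passing to a subsequence $Z_j$ converges in the pointed Gromov--Hausdorff topology to a nonnegatively curved Alexandrov space $Z_\infty$ of dimension at most $3$. If $\dim Z_\infty=3$, then $B_{Z_\infty}(\star,1)$ has positive $3$-dimensional Hausdorff measure (Bishop inequality) and, since the convergence is non-collapsing, $\vol(B_{Z_j}(p_j,1))\ra\vol(B_{Z_\infty}(\star,1))>0$, contradicting $\vol(B_{Z_j}(p_j,1))\ra 0$; hence $\dim Z_\infty\le 2$. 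But then for large $j$ the Gromov--Hausdorff approximations $Z_j\ra Z_\infty$ are $\de_j$-approximations with $\de_j<\si$, so $Z_j$ is $\si$-close to the nonnegatively curved Alexandrov space $Z_\infty$ of dimension $\le 2$ --- contradicting the choice of $p_j$, and proving the lemma.

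The only genuinely delicate step is the volume estimate: although $\r_p$ is pinned only between $\frac12 r_p(\bar w)$ and $2r_p(w')$, which can be very far apart, it always sits where Bishop--Gromov is nearly Euclidean, so the collapsing witnessed at the volume scale $r_p(\bar w)$ is inherited at scale $\r_p$. The remainder is a routine compactness argument of the kind used repeatedly above; its one external ingredient is the standard continuity of the top-dimensional Hausdorff measure along non-collapsing Gromov--Hausdorff limits of Alexandrov spaces with a lower curvature bound \cite{Burago-Burago-Ivanov}.
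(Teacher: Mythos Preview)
Your proof is correct and follows essentially the same route as the paper's: a contradiction argument that pits a Bishop--Gromov upper bound on $\vol(B(p,\r_p))/\r_p^3$ (of order $\bar w$) against the volume lower bound that would be forced by a $3$-dimensional Alexandrov limit. The only difference is cosmetic ordering: you establish the volume upper bound first and then extract the limit, whereas the paper extracts the limit first (concluding it must be $3$-dimensional since the $Z_j$ avoid all $\le 2$-dimensional spaces), obtains the volume lower bound from that, and only then applies Bishop--Gromov with $r_{p^i}(\bar w_i)\le 2\r_{p^i}$ to reach the contradiction.
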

\begin{proof}
Suppose that the lemma is not true. Then for some $\si, \Lambda >0$, 
there is a sequence
$\bar w_i \rightarrow 0$ and for each $i$, a sequence
$\left\{ \left( M^{\al(i,j)}, p^{\al(i, j)} \right) \right\}_{j=1}^\infty$ 
so that for each $j$,
$\left( \frac{1}{\r_{p^{\al(i, j)}}} M^{\al(i,j)}, p^{\al(i, j)} \right)$ 
has 
pointed Gromov-Hausdorff distance at least $\si$ from any 
nonnegatively curved Alexandrov space of dimension at most $2$.

Given $i$, as $j \rightarrow \infty$ the curvature scale at
$p^{\al(i, j)}$ divided by $r_{p^{\al(i, j)}}(w^\prime)$ 
goes to infinity. Hence
the curvature scale at $p^{\al(i, j)}$ divided by 
$\r_{p^{\al(i, j)}}$ also goes to infinity.
Thus we can find some $j = j(i)$ so that the curvature scale 
at $p^{\al(i, j(i))}$ is at least 
$i \: \r_{p^{\al(i, j(i))}}$. We relabel $M^{\al(i, j(i))}$ as $M^i$ and
$p^{\al(i,j(i))}$ as $p^i$.
Thus we have a sequence 
$\left\{ \left( M^{i}, p^{i} \right) \right\}_{i=1}^\infty$ 
so that for each $i$,
$\left( \frac{1}{\r_{p^i}} M^i, p^i \right)$ has 
pointed Gromov-Hausdorff distance at least $\si$ from any 
nonnegatively curved
Alexandrov space of dimension at most $2$, and  the curvature scale at
$p^i$ is at least $i \: \r_{p^i}$. In particular, a subsequence of the
$\left( \frac{1}{\r_{p^i}} M^i, p^i \right)$'s converges in the pointed
Gromov-Hausdorff topology to a nonnegatively curved
 Alexandrov space $(X,x)$, necessarily
of dimension $3$. Hence there is a uniform positive lower bound on
$\frac{
\vol(B(p^{i}, 2 \r_{p^{i}}))
}{
(2 \r_{p^{i}})^3
}$.

As $r_{p^i}(\bar w_i) \le 2 \r_{p^i}$, the Bishop-Gromov inequality implies that
\begin{equation}
\frac{
\bar w_i \left( r_{p^i}(\bar w_i) \right)^3
}{
\int_0^{\frac{ r_{p^i}(\bar w_i) }{ 2 \r_{p^i} }} \sinh^{2}(r) \: dr 
} 
=
\frac{
\vol(B(p^{i}, r_{p^{i}}(\bar w_i)))
}{
\int_0^{\frac{r_{p^{i}}(\bar w_i)}{2 \r_{p^{i}}}} \sinh^{2}(r) \: dr 
} 
\ge
\frac{
\vol(B(p^{i}, 2 \r_{p^{i}}))
}{
\int_0^1 \sinh^{2}(r) \: dr 
}.
\end{equation}
That is,
\begin{equation}
\frac{
\vol(B(p^{i}, 2 \r_{p^{i}}))
}{
(2 \r_{p^{i}})^3
} \le
\bar w_i \: \left( \int_0^1 \sinh^{2}(r) \: dr \right) \:
\frac{
\left( \frac{r_{p^{i}}(\bar w_i)}{2 \r_{p^{i}}} \right)^3
}{
\int_0^{\frac{r_{p^{i}}(\bar w_i)}{2 \r_{p^{i}}}} \sinh^{2}(r) \: dr 
} 
\le
3 \: \bar w_i \: \int_0^1 \sinh^2(r) \: dr.
\end{equation}
Since $\bar w_i \ra 0$, we obtain a contradiction.
\end{proof}

As explained in Section \ref{sec-indexnotation}, we will assume henceforth that the 
constraint
\begin{equation}
\label{eqnwbar}
\bar w< \overline{\bar w}(\si, \Lambda)
\end{equation}
is satisfied.

\section{Stratification}
\label{sec-stratification}

In this section we define a 
rough stratification of a Riemannian
$3$-manifold,
based on the maximal dimension of a Euclidean
factor of an approximate splitting
at a given point.

\subsection{Motivation}
Recall that in a metric polyhedron $P$, there is a natural metrically defined
filtration $P_0\subset P_1\subset\ldots\subset P$, 
where $P_k\subset P$ is the 
set of  points $p\in P$ that do not have a neighborhood that
isometrically splits off a factor of $\R^{k+1}$. The associated strata
$\{P_k - P_{k-1}\}$ are manifolds of dimension $k$.  An approximate
version  of this kind of filtration/stratification will be 
used in the proof of
Theorem \ref{thmmain}.

For the proof of Theorem \ref{thmmain}, we use a stratification of $M$
so that if $p\in M$ lies in the $k$-stratum then there is a 
metrically defined fibration
of an approximate ball centered at $p$, over an open subset of $\R^k$.
We now give a rough description of the strata; a precise definition will
be given shortly.

{\em $2$-stratum.}  Here $\left( \frac{1}{\r_p} M, p \right)$ 
is close to 
splitting off an $\R^2$-factor and, due to the collapsing
assumption, it is Gromov-Hausdorff close to $\R^2$. One gets
a circle fibration over an open subset of $\R^2$.

{\em $1$-stratum.}  
Here $\left( \frac{1}{\r_p} M, p \right)$ is not 
close to splitting off an $\R^2$-factor, but
is close to splitting off an $\R$-factor.  
These points fall into two types: those where 
$\left( \frac{1}{\r_p} M, p \right)$ looks like a 
half-plane, and those where it look like the product
of $\R$ with a space with controlled diameter.
One  gets a fibration over an 
open subset
of $\R$, whose fiber is $D^2$, $S^2$, or $T^2$.

{\em $0$-stratum.}  Here $\left( \frac{1}{\r_p} M, p \right)$ is 
not close to splitting off
an $\R$-factor.  
We will show that for some
radius $r$ comparable to $\r_p$,
$\left( \frac{1}{r} M, p \right)$ 
is Gromov-Hausdorff close
to the Tits cone $\ctits N$ of some complete nonnegatively curved 
$3$-manifold $N$ with at most one end, and  the ball $B(p,r)\subset M$ is
diffeomorphic to $N$.
The possibilities for the topology of $N$ are:
$S^1\times B^2$, $B^3$, $\R P^3-D^3$, a twisted
interval bundle over the Klein bottle, $S^1\times S^2$,
$\R P^3 \# \R P^3$, a spherical space form $S^3/\Gamma$
and a isometric quotient $T^3/\Gamma$ of the 3-torus.

\subsection{The k-stratum points}
\label{subsec-k-stratum}

To define the stratification precisely,  we introduce
the additional parameters $0<\be_1<\be_2<\be_3$, and
put $\be_0=0$.  Recall that  the parameter $\sigma$ has already 
been introduced in Section \ref{sec-scalefunction}.

\begin{definition} \label{nstratumdef}
A point $p \in M$  belongs to the k-stratum,
$k\in \{0,1,2,3\}$, 
if $(\frac{1}{\r_p}M,p)$ admits a 
$(k,\beta_k)$-splitting,
but does not admit a $(j,\beta_j)$-splitting
for any $j>k$.  
\end{definition}
Note that every pointed space has a $(0,0)$-splitting, so
every $p\in M$ belongs to the $k$-stratum for some 
$k\in \{0,1,2,3\}$.

\begin{lemma} \label{lem3}
Under the constraints
$\beta_3 < \overline{\beta}_3$ and $\sigma < \overline{\sigma}$ 
there are no $3$-stratum points.
\end{lemma}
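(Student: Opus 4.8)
The plan is to argue by contradiction, extracting in the limit a nonnegatively curved Alexandrov space which is simultaneously $3$-dimensional --- because an approximate $\R^3$-splitting cannot degenerate --- and of dimension at most $2$ by Lemma \ref{lemsicloseto2d}.

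So I would suppose the lemma fails: for every choice of the constraint functions, e.g.\ taking $\overline{\beta}_3 = \overline{\sigma} = \tfrac{1}{i}$, the conclusion is violated. Since all assertions are understood to hold only for $\al$ large, this produces sequences $\beta_3^i \to 0$, $\sigma^i \to 0$, $\al_i \to \infty$ and points $p_i \in M^{\al_i}$ lying in the $3$-stratum, so that $(\frac{1}{\r_{p_i}} M^{\al_i}, p_i)$ admits a $(3, \beta_3^i)$-splitting; here $\bar w = \bar w^i$ is chosen to satisfy the constraint (\ref{eqnwbar}) with $\sigma^i$ in place of $\sigma$, and $\al_i$ is chosen large enough (as we may, the statement concerning arbitrarily large $\al$). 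As in the proof of Lemma \ref{lem-rpconsequences}, the ratio $R_{p_i}/\r_{p_i}$ tends to infinity, using Standing Assumption \ref{assumptions}(1) together with (\ref{fakebound}); hence after rescaling by $\r_{p_i}^{-1}$ the curvature of $M^{\al_i}$ is bounded below by $-(\r_{p_i}/R_{p_i})^2$ on the ball $B(p_i, R_{p_i}/\r_{p_i})$, whose radius diverges while the curvature defect tends to $0$. Putting $\delta_i = \max(\beta_3^i,\, \r_{p_i}/R_{p_i},\, \tfrac{1}{i})$, I would invoke Lemma \ref{alexlimit} to pass to a subsequence with $(\frac{1}{\r_{p_i}} M^{\al_i}, p_i) \stackrel{\gh}{\ra} (X_\infty, \star_\infty)$, a nonnegatively curved Alexandrov space of dimension at most $3$. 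Since each $(3,\beta_3^i)$-splitting is also a $(3, \delta_i)$-splitting and the curvature and dimension hypotheses hold at scale $\delta_i^{-1}$, Lemma \ref{limitsplits} then shows that $X_\infty$ carries an honest $3$-splitting, so its Hausdorff dimension is exactly $3$.

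For the other half, by (\ref{eqnwbar}) and Lemma \ref{lemsicloseto2d} applied with $\sigma = \sigma^i$, each $(\frac{1}{\r_{p_i}} M^{\al_i}, p_i)$ has pointed Gromov-Hausdorff distance less than $\sigma^i$ from a nonnegatively curved Alexandrov space $X_{p_i}$ of dimension at most $2$. Since $\sigma^i \to 0$, composing Gromov-Hausdorff approximations along the chosen subsequence gives $X_{p_i} \stackrel{\gh}{\ra} X_\infty$, so Lemma \ref{alexlimit} (now with vanishing curvature defect and $n=2$) forces $\dim X_\infty \le 2$, contradicting the previous paragraph. The geometric content is entirely carried by Lemmas \ref{lemsicloseto2d}, \ref{alexlimit} and \ref{limitsplits}; what requires attention is keeping straight the bookkeeping of the parameters $\beta_3$, $\sigma$, $\bar w$ and the index $\al$, and the one genuinely non-formal point is that a $(3,\beta_3)$-splitting obstructs collapse in the limit --- which is exactly what Lemma \ref{limitsplits} supplies.
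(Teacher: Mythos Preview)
Your proof is correct, but it takes a considerably more elaborate route than the paper's two-line argument. The paper simply observes that there is a fixed positive constant $c$ equal to the pointed Gromov--Hausdorff distance from $(\R^3,0)$ to the (closed) set of nonnegatively curved Alexandrov spaces of dimension at most~$2$, and then sets $\overline{\beta}_3=\overline{\sigma}=c/4$: a $3$-stratum point would make $(\frac{1}{\r_p}M,p)$ simultaneously close to $(\R^3,0)$ (via the $(3,\beta_3)$-splitting) and to a space of dimension $\le 2$ (via Lemma~\ref{lemsicloseto2d}), violating the triangle inequality. No sequences, no varying of $\sigma$, $\bar w$, or $\al$, and no limit extraction are needed.

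Your contradiction-via-limits argument reproves exactly this positivity of $c$ by compactness, but wraps it inside the full parameter apparatus of the paper: you let $\beta_3^i,\sigma^i\to 0$, adjust $\bar w^i$ accordingly, choose $\al_i$ large, and then invoke Lemmas~\ref{alexlimit} and~\ref{limitsplits} to reach a limit space that is both $3$-dimensional and $\le 2$-dimensional. This is valid, and your handling of the parameter dependencies (particularly the need to vary $\bar w$ with $\sigma$) is careful. But the machinery is disproportionate to the statement: the lemma is really a pointwise assertion about a single space $(\frac{1}{\r_p}M,p)$, and the paper's direct argument exploits this. What your approach does buy is that it sidesteps the mild terseness in the paper's proof concerning why the $X_2$-factor in the $(3,\beta_3)$-splitting is negligible; by passing to the limit you get an exact $3$-splitting, where this is automatic.
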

\begin{proof}
Let $c > 0$ be the minimal distance, in the pointed Gromov-Hausdorff metric, between
$(\R^3, 0)$ and a nonnegatively curved Alexandrov space of dimension at most $2$. Taking
$\overline{\beta}_3 = \overline{\sigma} = \frac{c}{4}$, the lemma follows from
Lemma \ref{lemsicloseto2d}.
\end{proof}

Let $\Delta \in (\beta_2^{-1},\infty)$   be a new parameter.

\begin{lemma}
\label{lem-deltabig2stratum}
Under the constraint
$\Delta>\overline{\De}(\be_2)$, if $p\in M$
has a $2$-strainer 
of size $\frac{\Delta}{100}\r_p$ and quality
$\frac{1}{\Delta}$ at $p$, then 
$\left( \frac{1}{\r_p} M, p \right)$ 
has a $(2, \frac12 \beta_2)$-splitting $\frac{1}{\r_p} M \rightarrow \R^2$. 
In particular $p$ is
in the $2$-stratum. 
\end{lemma}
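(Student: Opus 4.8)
The plan is to derive the $(2,\tfrac12\beta_2)$-splitting of $(\tfrac{1}{\r_p}M,p)$ directly from the $2$-strainer via the mechanism already packaged in Lemma \ref{strainerlemma}(2), combined with the lower curvature bound available at scale $\r_p$ coming from Lemma \ref{lem-rpconsequences}. First I would rescale so that $\r_p=1$; thus we are looking at the pointed manifold $(\tfrac{1}{\r_p}M,p)$, which by condition (1) of Standing Assumption \ref{assumptions} has sectional curvature bounded below by $-(R_p/\r_p)^{-2}$ on a ball of radius $\sim R_p/\r_p$, and $R_p/\r_p\to\infty$ as $\al\to\infty$. In particular, for any prescribed $\de'$, provided $\al$ is large, the ball $B(p,(\de')^{-1})\subset\tfrac{1}{\r_p}M$ has curvature bounded below by $-\de'$ and (being a $3$-manifold) dimension bounded above by $n=3$. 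So hypothesis (1) of Lemma \ref{strainerlemma}(2) is automatic at scale $\r_p$ for large $\al$.

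Next I would feed the hypothesized $2$-strainer into Lemma \ref{strainerlemma}(2) with $n=3$, $k=2$, and target quality $\de=\tfrac12\beta_2$: that lemma produces a threshold $\de'=\de'(3,\tfrac12\beta_2)>0$ such that if $(X,\star_X)$ has curvature $\ge-\de'$ and dimension $\le 3$ on $B(\star_X,(\de')^{-1})$ and $\star_X$ carries a $2$-strainer of quality $\de'$ at scale $(\de')^{-1}$, then $(X,\star_X)$ admits a $(2,\tfrac12\beta_2)$-splitting. So the constraint I need to impose is $\overline{\De}(\be_2)$ large enough that $\Delta>\overline{\De}(\be_2)$ forces both $\tfrac{1}{\Delta}\le\de'$ (so the strainer quality $\tfrac{1}{\Delta}$ is good enough) and $\tfrac{\Delta}{100}\ge(\de')^{-1}$, i.e. the strainer scale $\tfrac{\Delta}{100}\r_p$ is at least $(\de')^{-1}\r_p$ — recall in the rescaled manifold $\r_p=1$, so the strainer scale is exactly $\tfrac{\Delta}{100}$. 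Taking $\overline{\De}(\be_2)=\max\{100(\de')^{-1},(\de')^{-1}\}$ (with $\de'=\de'(3,\tfrac12\beta_2)$) does the job. A minor point: a strainer of scale $r$ and quality $\de'$ restricts, by triangle comparison, to a strainer of the same quality at any smaller scale down to $(\de')^{-1}$, so if $\tfrac{\Delta}{100}$ overshoots we simply shrink the strainer radius; and the strainer quality improves if we loosen $\de'$, so quality $\tfrac{1}{\Delta}\le\de'$ suffices verbatim.

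Having obtained a $(2,\tfrac12\beta_2)$-splitting, which is in particular a $(1,\beta_2)$-splitting after composing with a coordinate projection $\R^2\to\R$, we conclude $p$ lies in the $2$-stratum \emph{provided} $p$ does not admit a $(1,\beta_3)$-splitting; but $\beta_3<\overline{\beta}_3$ together with $\sigma<\overline{\sigma}$ and Lemma \ref{lem3} already guarantee there are no $3$-stratum points at all, so admitting a $(1,\beta_2)$-splitting but not a $(1,\beta_3)$-splitting is forced — any $(1,\beta_3)$-splitting would, by the same strainer argument combined with Lemma \ref{lemsicloseto2d}, contradict the local collapsing, which is exactly what Lemma \ref{lem3} records. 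Hence $p$ is in the $2$-stratum by Definition \ref{nstratumdef}. The only genuinely delicate step is bookkeeping the quantifier order: $\de'=\de'(3,\tfrac12\beta_2)$ is determined by $\beta_2$ alone, so $\overline{\De}(\be_2)$ is a legitimate constraint function of $\be_2$, and then $\al$ large (depending on $\Delta$, hence on $\beta_2$) supplies the curvature bound — this is consistent with the paper's convention that all statements hold for $\al$ sufficiently large. I do not expect any real obstacle beyond this consistency check; the geometric content is entirely carried by Lemma \ref{strainerlemma}.
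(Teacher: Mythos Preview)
Your proposal is correct and follows exactly the paper's approach: the paper's proof is the single sentence ``This follows from Lemma~\ref{strainerlemma},'' and you have correctly unpacked that one-liner by identifying the curvature lower bound at scale $\r_p$ (from Standing Assumption~\ref{assumptions} via Lemma~\ref{lem-rpconsequences}) and the choice $\overline{\De}(\be_2)\ge 100/\de'(3,\tfrac12\beta_2)$ needed to feed the given strainer into Lemma~\ref{strainerlemma}(2). The only superfluous part is the final paragraph reconciling with Definition~\ref{nstratumdef}; once the $(2,\tfrac12\beta_2)$-splitting (hence $(2,\beta_2)$-splitting) is in hand, membership in the $2$-stratum is immediate from Lemma~\ref{lem3}.
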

\begin{proof}
This follows from Lemma \ref{strainerlemma}.
\end{proof}

\begin{definition} \label{1stratumtypes}
A  $1$-stratum point $p\in M$ 
is in the {\em slim $1$-stratum} if  there is
a $(1, \beta_1)$-splitting
$(\frac{1}{\r_p} M,p) \ra (\R \times X,(0,\star_X))$
where 
$\diam (X) \le 10^3 \Delta$.
\end{definition}

\section{The local geometry of the $2$-stratum }
\label{sec-loc2stratum}

In the next few sections, we examine the local geometry near points of
different type, introducing adapted coordinates and certain associated
cutoff functions.

In this section we consider the $2$-stratum points.  Along with
the adapted coordinates and cutoff functions, we discuss the
local topology and a selection process to get a ball covering of the
$2$-stratum points.

\subsection{Adapted coordinates, cutoff functions and local topology
near $2$-stratum points} \label{adapted2}

Let $p$ denote a point in the $2$-stratum and let
 $\phi_p:\left( \frac{1}{r_p} M, p \right)\ra \R^2\times 
(X,\star_X)$ be a $(2, \beta_2)$-splitting. 

\begin{lemma}
\label{lem-phipghapprox}
Under the constraints $\be_2<\overline{\be}_2$ and $\si<\overline{\si}$,
the factor $(X,\star_X)$ has diameter $<1$.
\end{lemma}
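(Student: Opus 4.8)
The plan is to argue by contradiction using the compactness machinery of Section~\ref{sec-scalefunction}. Since both $\overline{\be}_2$ and $\overline{\si}$ are ours to choose, I would first fix $\si$ equal to a small constant $\overline{\si}$ (to be pinned down at the end, compatibly with~(\ref{eqnwbar})), keep $\bar w$ and hence $w'$ fixed accordingly, and suppose that for arbitrarily small $\be_2$ there is a $2$-stratum point $p \in M = M^\al$ ($\al$ large) carrying a $(2,\be_2)$-splitting $\phi_p : (\frac{1}{\r_p}M, p) \ra (\R^2,0)\times(X,\star_X)$ with $\diam(X)\ge 1$. This produces a sequence $\be_2^{(i)}\to 0$, indices $\al_i\to\infty$, $2$-stratum points $p^{(i)}\in M^{\al_i}$, and $(2,\be_2^{(i)})$-splittings $\phi_i$ onto $(\R^2,0)\times(X^{(i)},\star_{X^{(i)}})$ with $\diam(X^{(i)})\ge 1$. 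Replacing each $X^{(i)}$ by the closure of $\pi_{X^{(i)}}\circ\phi_i(B(p^{(i)},2))$ — which alters neither the fact that $\phi_i$ is a $(2,\be_2^{(i)})$-splitting nor the conclusion being sought — I would arrange that each $X^{(i)}$ is a compact length space whose diameter is attained near $\star_{X^{(i)}}$; if after this normalization some $\diam(X^{(i)})<1$, there is nothing to prove for that $i$, so we may keep $\diam(X^{(i)})\ge 1$.

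The first main step is to pass to a limit. Since $w'$ is fixed, Lemma~\ref{lem-rpconsequences}(3) applies: after passing to a subsequence, $(\frac{1}{\r_{p^{(i)}}}M^{\al_i},p^{(i)})$ converges in the pointed $C^K$-topology — in particular in the pointed Gromov--Hausdorff topology — to a complete nonnegatively curved $C^K$-smooth Riemannian $3$-manifold $(N,\star)$. Because $\be_2^{(i)}\to 0$ and the rescaled balls have curvature $\ge -\eps_i$ with $\eps_i\to 0$ (Standing Assumption~\ref{assumptions}(1)), Lemma~\ref{limitsplits} shows that $N$ admits a $2$-splitting; writing it isometrically as $N = \R^2\times X_\infty$, the factor $X_\infty$ is a complete boundaryless nonnegatively curved $1$-manifold, hence isometric to $S^1$ or to $\R$, and $X^{(i)}\ra X_\infty$ in the pointed Gromov--Hausdorff topology (the $\R^2$-factors cancel).

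Now I would bring in the diameter hypothesis. Since $\diam(X^{(i)})\ge 1$ and, by the normalization, the diameters are attained near $\star_{X^{(i)}}$, passing to the limit gives $\diam(X_\infty)\ge 1$; so $X_\infty=\R$, or $X_\infty = S^1_L$ with $L\ge 2$. In either case $\frac14 < L/2$ (interpreting $L=\infty$ when $X_\infty=\R$), so the closed ball $\overline{B(\star,\frac14)}\subset N = \R^2\times X_\infty$ is isometric to a Euclidean $\frac14$-ball, and hence $N$ carries a $3$-strainer of quality $0$ at scale $\frac14$ at $\star$ (taking the pairs $\pm\frac14 e_j$ in these coordinates). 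On the other hand, by Lemma~\ref{lemsicloseto2d} each $(\frac{1}{\r_{p^{(i)}}}M^{\al_i},p^{(i)})$ is $\si$-close in the pointed Gromov--Hausdorff topology to a nonnegatively curved Alexandrov space of dimension at most $2$; passing to the limit (using Lemma~\ref{alexlimit}), $N$ itself is $\si$-close to such a space $Z$. But a space $\si$-close to $N$ inherits a $3$-strainer at the image of $\star$ of quality $O(\si)$ at scale $\approx \frac14$, so its strainer number — equivalently its Hausdorff dimension — is at least $3$, contradicting $\dim Z\le 2$ provided $\overline{\si}$ is chosen below the relevant strainer-perturbation threshold (the case $X_\infty=\R$ may also be dispatched directly, since $N\cong\R^3$ has positive Gromov--Hausdorff distance from every $\le 2$-dimensional nonnegatively curved Alexandrov space). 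Taking $\overline{\be}_2$ small relative to $\overline{\si}$ then legitimizes the contradiction, proving the lemma.

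The step I expect to be the genuine obstacle is the innocuous-looking implication ``$\diam(X^{(i)})\ge 1\Rightarrow\diam(X_\infty)\ge 1$'': diameter is not continuous under pointed Gromov--Hausdorff convergence, as it can escape to infinity, so one must know that the near-diametral pair in $X^{(i)}$ lies in a ball of controlled radius about $\star_{X^{(i)}}$. This is exactly what the normalization $X^{(i)}\leadsto\overline{\pi_{X^{(i)}}\circ\phi_i(B(p^{(i)},2))}$ secures, together with the fact that $\phi_i$, being a Gromov--Hausdorff approximation from a length space, forces $X^{(i)}$ to be coarsely geodesic near its basepoint; once this is pinned down, the remaining verifications (that $\phi_i$ survives the normalization, that $N$ really does contain a flat $\frac14$-ball, and the elementary $\si$-perturbation estimate for strainers) are all routine.
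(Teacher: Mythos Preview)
Your argument is correct and reaches the same contradiction, but by a genuinely different route than the paper's. The paper lets \emph{both} $\beta_2$ and $\sigma$ tend to zero along the contradicting sequence (taking $\beta_2=\sigma=\tfrac{1}{j}$); the pointed Gromov--Hausdorff sublimit $(M_\infty,p_\infty)$ is then a nonnegatively curved Alexandrov space of dimension at most~$2$, and extracting a limiting $2$-splitting $\phi_\infty:M_\infty\to\R^2\times X_\infty$ forces (via Lemma~\ref{lem-splitcompatible}) $X_\infty$ to be a point, contradicting the diameter hypothesis directly. You instead freeze $\sigma$ (hence $\bar w,w'$) and send only $\beta_2\to 0$, so that Lemma~\ref{lem-rpconsequences}(3) yields a $3$-dimensional $C^K$ limit $N=\R^2\times X_\infty$; the diameter bound then makes $N$ locally Euclidean of full dimension, and you derive the contradiction from a $3$-strainer passing to a space $\sigma$-close to $N$ of dimension $\le 2$. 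The paper's approach is shorter because collapsing the limit to dimension $\le 2$ makes the $2$-splitting do all the work; your approach trades that for the $C^K$-compactness machinery plus a strainer-transfer argument, which is heavier but perfectly valid. One small caution: your normalization ``replace $X^{(i)}$ by $\overline{\pi_{X^{(i)}}\circ\phi_i(B(p^{(i)},2))}$'' does not literally preserve the $(2,\beta_2^{(i)})$-splitting property, since $\phi_i$ is controlled on the much larger ball $B(p^{(i)},1/\beta_2^{(i)})$ and may land outside $\R^2\times\tilde X^{(i)}$; the clean fix is to observe directly that $\diam(X^{(i)})\ge 1$ forces a point $x^{(i)}\in X^{(i)}$ with $d(\star_{X^{(i)}},x^{(i)})\ge\tfrac12$, and the surjectivity clause of the Gromov--Hausdorff approximation (applied along a geodesic in $M^{(i)}$) produces such a point at distance in $[\tfrac12,1]$ from $\star_{X^{(i)}}$, which then survives to the limit. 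The paper's proof glosses over the same point.
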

\begin{proof}
If not then we could find a subsequence $\{M^{\al_j}\}$
of the $M^{\al}$'s, and $p_j\in M^{\al_j}$, such
that with $\be_2=\si=\frac{1}{j}$, the map 
$\phi_{p_j}:(\frac{1}{\r_{p_j}}M^{\al_j},p_j)\ra (\R^2
\times X_j,(0,\star_{X_j}))$
violates the conclusion of the lemma. 
We then pass to a pointed Gromov-Hausdorff sublimit 
$(M_\infty, p_\infty)$, which will be a nonnegatively curved Alexandrov space of 
dimension at most $2$, and extract a limiting $2$-splitting
$\phi_\infty:(M_\infty,p_\infty)\ra \R^2\times X_\infty$. 
The only possibility is that $\dim(M_\infty) = 2$, $\phi_\infty$
is an isometry and $X_\infty$ is a point.
This contradicts the diameter assumption.
\end{proof}

Let $\varsigma_{\twostratum}>0$ be a new parameter.

\begin{lemma}
\label{lem-existsetap2stratum}
Under the constraint $\be_2<
\overline{\beta}_2(\varsigma_{\twostratum})$,
there is 
a $\phi_p$-adapted coordinate $\eta_p$
of quality $\varsigma_{\twostratum}$
on 
$B(p, 200) 
\subset \left( \frac{1}{\r_p} M, p \right)$
\end{lemma}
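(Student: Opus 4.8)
The plan is to read $\eta_p$ off directly from the Existence of Adapted Coordinates lemma (Lemma \ref{lem1}), applied in the rescaled manifold $\frac{1}{\r_p}M$ and, via Remark \ref{addedremark}, on the ball $B(p,200)$ rather than the unit ball. Concretely, by Remark \ref{addedremark} the conclusion of Lemma \ref{lem1} holds with $B(\star_M,1)$ replaced by $B(\star_M,200)$ and $B(\star_M,1/\delta')$ replaced by $B(\star_M,200/\delta')$; let $\delta' = \delta'(3,\varsigma_{\twostratum}) > 0$ be the threshold it produces (the fixed scale $200$ being absorbed into the constant). I would invoke this with $n = 3$, $k = 2$, target quality $\varsigma_{\twostratum}$, and, for the approximate splitting, the $(2,\beta_2)$-splitting $\phi_p$ that is part of the data of a $2$-stratum point.

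Two hypotheses of Lemma \ref{lem1} need to be verified. The first is a lower sectional curvature bound: one needs the sectional curvature of $\frac{1}{\r_p}M$ to be at least $-(\delta')^2$ on $B(p,200/\delta')$. Since the sectional curvature of $\frac{1}{\r_p}M$ is at least $-(\r_p/R_p)^2$ on $B(p, R_p/\r_p)$, it is enough that $R_p/\r_p \ge 200/\delta'$; and by Corollary \ref{fake} together with Standing Assumption \ref{assumptions}(1), for all $\al$ large enough that $\frac{1}{\al} \le w'$ we have $\r_p \le 2 r_p(w') \le 2 r_p(\frac{1}{\al}) \le \frac{2}{\al} R_p$, so $R_p/\r_p \ge \al/2 \to \infty$. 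Thus this hypothesis holds once $\al$ is large, which is the standing convention. The second hypothesis is the existence of a $(2,\delta')$-splitting: since a $\delta$-Gromov--Hausdorff approximation is also a $\delta''$-approximation for every $\delta'' \ge \delta$, the $(2,\beta_2)$-splitting $\phi_p$ is a $(2,\delta')$-splitting as soon as $\beta_2 \le \delta'$. I would therefore set $\overline{\beta}_2(\varsigma_{\twostratum}) := \min\{\delta'(3,\varsigma_{\twostratum}),\, \varsigma_{\twostratum}\}$; the second term is so that, under the constraint $\beta_2 < \overline{\beta}_2(\varsigma_{\twostratum})$, the splitting $\phi_p$ has quality $\le \varsigma_{\twostratum}$, which Definition \ref{def4} requires in order to speak of a $\phi_p$-adapted coordinate of quality $\varsigma_{\twostratum}$.

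With both hypotheses in force, Lemma \ref{lem1} (in the rescaled form of Remark \ref{addedremark}) yields a map $\eta_p : (B(p,200),p) \ra (\R^2, 0)$ which is a $\phi_p$-adapted coordinate of quality $\varsigma_{\twostratum}$ on $B(p,200) \subset (\frac{1}{\r_p}M, p)$, which is exactly the assertion of the lemma. There is no genuine obstacle here: all of the analytic content --- manufacturing strainer coordinates from an approximate splitting and smoothing them via Corollary \ref{cor-1strained} --- is already packaged inside Lemma \ref{lem1}, so this lemma is essentially a citation plus bookkeeping. The only points needing care are the passage from the unit ball to the ball of the fixed radius $200$ (handled by Remark \ref{addedremark}) and the uniformity of the constant $\delta'$, which is harmless because $200$ is a universal constant; and, as always, the curvature input is available only because $R_p/\r_p \to \infty$, in line with the convention fixed in Section \ref{sec-indexnotation}.
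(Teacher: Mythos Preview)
Your proposal is correct and takes exactly the same approach as the paper, which simply cites Lemma \ref{lem1} together with Remark \ref{addedremark}. You have merely unpacked the citation by verifying the two hypotheses (the lower curvature bound via $R_p/\r_p\to\infty$ and the existence of a $(2,\delta')$-splitting via $\beta_2\le\delta'$) and noting the need for $\beta_2\le\varsigma_{\twostratum}$ so that Definition \ref{def4} applies; none of this adds to or departs from the paper's argument.
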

\begin{proof}
This follows from Lemma \ref{lem1} (see also Remark \ref{addedremark}).
\end{proof}

\begin{definition}
\label{def-2stratumetap}
Let $\zeta_p$ be the smooth function on $M$ which is the extension
by zero of 
$\Phi_{8,9} \circ |\eta_p|$.
(See Section \ref{sec-indexnotation} for the 
definition of $\Phi_{a,b}$).
\end{definition}

\begin{lemma} \label{2fibration}
Under the constraints $\beta_2<\overline{\beta}_2$, 
$\varsigma_{\twostratum}<\overline{\varsigma}_{\twostratum}$ and
$\si<\overline{\si}$, the
restriction of $\eta_p$ to $\eta_p^{-1}(B(0,100))$ is a fibration
with fiber $S^1$.  In particular, for all
$R \in (0,100)$, $|\eta_p|^{-1}[0,R]$ is diffeomorphic to
$S^1 \times \overline{B(0,R)}$.
\end{lemma}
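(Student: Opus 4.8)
The plan is to argue by contradiction using the compactness machinery developed in Section~\ref{sec-scalefunction} and the basic properties of adapted coordinates. Suppose the conclusion fails; then, after passing to a subsequence of the $M^\al$'s, there are points $p^\al \in M^\al$ in the $2$-stratum for which the restriction of $\eta_{p^\al}$ to $\eta_{p^\al}^{-1}(B(0,100)) \subset \tfrac{1}{\r_{p^\al}}M^\al$ fails to be a fibration with fiber $S^1$. By Lemma~\ref{lem-rpconsequences}(3), after a further subsequence the rescaled pointed manifolds $(\tfrac{1}{\r_{p^\al}}M^\al, p^\al)$ converge in the pointed $C^K$-topology to a complete nonnegatively curved $C^K$-smooth Riemannian $3$-manifold $(N,\star)$. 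The $(2,\be_2)$-splittings $\phi_{p^\al}$, via Lemma~\ref{smoothconv}, are $\eps$-close to $(2,\eps)$-splittings $\widehat\phi_{p^\al}$ whose target surface factor $\widehat X$ is nonnegatively curved; combined with Lemma~\ref{lem-phipghapprox} (the factor has diameter $<1$), the limit $N$ admits an exact $2$-splitting $N \cong \R^2 \times \overline N$ with $\overline N$ a complete nonnegatively curved surface of bounded diameter. A complete nonnegatively curved surface with bounded diameter is compact without boundary, hence diffeomorphic to $S^2$, $T^2$, or (after passing to orientability, but $M^\al$ is orientable so $N$ is too) $S^2$ or $T^2$. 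In fact, since $\sigma$ is taken small, the collapsing forces $(\tfrac{1}{\r_{p^\al}}M^\al,p^\al)$ to be Gromov-Hausdorff close to $\R^2$ (Lemma~\ref{lemsicloseto2d} with the constraint \eqref{eqnwbar}), so $\overline N$ has small diameter; an orientable nonnegatively curved surface of small diameter which is a limit as above must be $S^1$ (a round circle is excluded since it is $1$-dimensional, so in fact $\overline N \cong S^1$ — more precisely $N \cong \R^2 \times S^1$ with the $S^1$-factor of small length).

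Next I would transfer this structure to the approximators. Since $(\tfrac{1}{\r_{p^\al}}M^\al, p^\al) \to (N,\star) = (\R^2\times S^1, \star)$ in $C^K$ and $K \ge 10$, for large $\al$ there is a $C^{K}$-diffeomorphism $\psi_\al$ from a large ball in $N$ onto a large ball in $\tfrac{1}{\r_{p^\al}}M^\al$ with $\psi_\al^* g_{M^\al}$ close to $g_N$ in $C^K$. Pulling back the adapted coordinate $\eta_{p^\al}$ through $\psi_\al$, I would use the uniqueness of adapted coordinates, Lemma~\ref{lem-uniquenessofadaptedcoords}, to compare $\psi_\al^* \eta_{p^\al}$ with the standard projection $\pi : \R^2\times S^1 \to \R^2$: both are adapted to the (limiting exact, hence for large $\al$ approximate) $2$-splitting, of quality $\varsigma_{\twostratum}$, so $\psi_\al^*\eta_{p^\al}$ is $C^1$-close to (an affine image of) $\pi$ on the relevant ball. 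Since $\pi$ restricted to $\pi^{-1}(B(0,100))$ is visibly a smooth $S^1$-fibration with no critical points, and having no critical points (maximal rank of the differential) is a $C^1$-open condition, $\psi_\al^*\eta_{p^\al}$ is a submersion onto its image there for large $\al$; hence so is $\eta_{p^\al}$ on $\eta_{p^\al}^{-1}(B(0,100))$. A proper submersion with connected fibers onto $B(0,100) \subset \R^2$ is a smooth fiber bundle (Ehresmann), and each fiber, being a compact $1$-manifold, is a disjoint union of circles; that the fibers are connected follows from the $C^1$-closeness to $\pi$ together with property (2) in Definition~\ref{def4} (the image has small Hausdorff distance from $B(0,1)$, so fibers cannot ``double up''). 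This yields the fibration with fiber $S^1$, contradicting the choice of $p^\al$, and proves the first assertion.

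For the second assertion, once $\eta_p : \eta_p^{-1}(B(0,100)) \to B(0,100)$ is an $S^1$-fibration, the subset $|\eta_p|^{-1}[0,R] = \eta_p^{-1}(\overline{B(0,R)})$ for $R \in (0,100)$ is the total space of an $S^1$-bundle over the closed disk $\overline{B(0,R)}$; since $\overline{B(0,R)}$ is contractible, this bundle is trivial, so $|\eta_p|^{-1}[0,R]$ is diffeomorphic to $S^1 \times \overline{B(0,R)}$. (The value $100$ is safely inside the ball $B(p,200)$ on which the adapted coordinate $\eta_p$ was constructed in Lemma~\ref{lem-existsetap2stratum}, so all of this takes place where $\eta_p$ is defined and smooth.)

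The main obstacle is the connectedness of the fibers: the submersion property comes cheaply from $C^1$-proximity to the model projection $\pi$, but ruling out a fiber that is a union of several circles requires genuinely using the collapsing input — that $(\tfrac{1}{\r_p}M,p)$ is Gromov-Hausdorff close to $\R^2$ itself, not merely to $\R^2 \times (\text{small interval})$, so that the limit surface factor is $S^1$ rather than, say, a small circle times something, and that the map $\eta_p$ has image genuinely $2$-dimensional (Definition~\ref{def4}(2)). I expect that the cleanest way to handle this is to observe that if a fiber of $\eta_p$ had two components, then by $C^1$-closeness to $\pi$ the limit map on $N = \R^2\times S^1$ would have disconnected fibers, which is absurd. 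Making that limiting argument precise — in particular keeping track of basepoints and the scales at which the $C^K$-convergence and the adapted-coordinate estimates interact — is the part that needs care, but it is of the same flavor as the compactness arguments already carried out in Lemmas~\ref{lem-rpconsequences}–\ref{lemsicloseto2d}.
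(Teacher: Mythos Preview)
Your approach is correct but considerably more indirect than the paper's. The paper avoids the contradiction/compactness framework entirely and argues directly in three short steps: (i) the submersion property follows immediately from the defining inequality~(\ref{adapted}) of adapted coordinates, applied at each point of $B(p,200)$ to a $2$-strainer furnished by the $(2,\be_2)$-splitting $\phi_p$; (ii) properness of $\eta_p\restr_{\eta_p^{-1}(B(0,100))}$ follows from Lemma~\ref{lem-phipghapprox}, since the $X$-factor has diameter $<1$, forcing $\eta_p^{-1}(B(0,100))\subset B(p,102)$; (iii) the base $B(0,100)$ being contractible, the bundle is trivial, and connectedness of the fibers follows because $\eta_p^{-1}(0)$ has diameter at most $2$ (again Lemma~\ref{lem-phipghapprox}), so any two of its points are joined by a short path lying in $\eta_p^{-1}(B(0,100))$.

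Your route --- passing to a $C^K$-limit $\R^2\times S^1$, pulling back, and invoking Lemma~\ref{lem-uniquenessofadaptedcoords} to compare with the model projection --- works, but it deploys heavier machinery for a statement whose proof is already contained in Definition~\ref{def4} plus the single diameter bound you already cite. (Minor expository wobble: in your limit argument the factor $\overline N$ in $N\cong\R^2\times\overline N$ is $1$-dimensional by Lemma~\ref{smoothconv}, not a surface; you arrive at $S^1$ correctly but the sentence about ``surfaces $S^2$ or $T^2$'' is a detour.) Your approach has the virtue of making the fiber topology transparent via the explicit model, and it is the style of argument used elsewhere in the paper (e.g.\ Lemma~\ref{lem-slimtopology}); the paper's proof here is simply shorter because the $2$-stratum case needs no limiting sequence.
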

\begin{proof}
For small $\beta_2$ and
$\varsigma_{\twostratum}$, the map
$\eta_p:\frac{1}{\r_p}M\supset B(p,200)\ra \R^2$ is a submersion;
this follows by applying (\ref{adapted}) to an appropriate
$2$-strainer at $x\in B(p,200)$
furnished by the $(2,\be_2)$-splitting $\phi_p$. 

By Lemma \ref{lem-phipghapprox}, if $\varsigma_{\twostratum}
<\overline{\varsigma}_{\twostratum}$ then
$\eta_p^{-1}(B(0,100))\subset B(p,102)\subset \frac{1}{\r_p}M$.
Therefore if $K\subset B(0,100)$ is  compact then
$\eta_p^{-1}(K)$ is a closed subset of 
$\overline{B(p,102)}\subset 
\frac{1}{\r_p}M$,
and hence compact.  It follows that 
$\eta_p\restr_{\eta_p^{-1}(B(0,100))}:\eta_p^{-1}(B(0,100))
\ra B(0,100)$
is a proper map.   Thus 
$\eta_p\restr_{\eta_p^{-1}(B(0,100))}$
is  a trivial fiber bundle with 
compact $1$-dimensional fibers.    

Since $\eta_p^{-1}(0)$
has diameter at most $2$ 
by Lemma \ref{lem-phipghapprox} (assuming $\varsigma_{\twostratum}
<\overline{\varsigma}_{\twostratum}$), it follows
that any two points in  the fiber
$\eta_p^{-1}(0)$ can be joined
by a path in $\eta_p^{-1}(B(0,100))$.   Now the triviality
of the bundle implies that the fibers are connected, i.e.
diffeomorphic to $S^1$.
\end{proof}

\subsection{Selection of $2$-stratum balls} \label{2selection}

Let $\mathcal{M}$ be a new parameter, which will become a bound on
intersection multiplicity of balls. The corresponding bound
$\overline{\mathcal{M}}$ will describe how big ${\mathcal M}$ has
to be taken in order for various assertions to be valid.

Let $\{p_i\}_{i\in I_{\twostratum}}$ be a maximal set of $2$-stratum points with the property
that the collection 
$\{B(p_i,\frac13\r_{p_i})\}_{i\in I_{\twostratum}}$
is disjoint.
We write $\zeta_i$ for $\zeta_{p_i}$.

\begin{lemma}
\label{mult2}
Under the constraints ${\mathcal M}>\overline{\mathcal M}$ and
$\La<\overline{\La}$,
\begin{enumerate}
\item 
$\bigcup_{i\in I_{\twostratum}}\;B(p_i,\r_{p_i})$ 
contains all $2$-stratum
points.
\item The intersection multiplicity of the collection $\{\supp(\zeta_i)\}_{i\in I_{\twostratum}}$
is bounded by $\mathcal{M}$.
\end{enumerate}

\end{lemma}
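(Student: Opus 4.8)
The plan is to establish (1) by the usual maximality (greedy‑covering) argument, using that $\r$ is $\Lambda$-Lipschitz with $\Lambda$ small, and to establish (2) by a volume‑packing estimate resting on the volume lower bound of Lemma \ref{lem-rpconsequences}(1) and the Bishop–Gromov inequality.

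For (1) I would argue by contradiction. Suppose $q$ is a $2$-stratum point with $q\notin\bigcup_{i\in I_{\twostratum}}B(p_i,\r_{p_i})$; in particular $q$ is not one of the $p_i$. I claim $B(q,\frac13\r_q)$ is then disjoint from every $B(p_i,\frac13\r_{p_i})$, which makes $\{p_i\}_{i\in I_{\twostratum}}\cup\{q\}$ a strictly larger collection of $2$-stratum points with pairwise disjoint $\frac13\r$-balls, contradicting maximality. Indeed, if $B(q,\frac13\r_q)\cap B(p_i,\frac13\r_{p_i})\neq\emptyset$ then $d(q,p_i)<\frac13(\r_q+\r_{p_i})$, while $q\notin B(p_i,\r_{p_i})$ gives $d(q,p_i)\ge\r_{p_i}$; together these force $\r_{p_i}<\frac12\r_q$ and hence $d(q,p_i)<\frac12\r_q$, so the $\Lambda$-Lipschitz property yields $\r_{p_i}>(1-\frac{\Lambda}{2})\r_q>\frac12\r_q$ as soon as $\Lambda<1$ — a contradiction. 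Thus $\overline{\Lambda}\le 1$ suffices for (1).

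For (2) the first step is to locate $\supp(\zeta_i)$. Since $\zeta_i$ is the extension by zero of $\Phi_{8,9}\circ|\eta_{p_i}|$, one has $\supp(\zeta_i)\subset\eta_{p_i}^{-1}(\overline{B(0,9)})$, and by Lemma \ref{lem-phipghapprox} together with the argument in the proof of Lemma \ref{2fibration}, the latter set is contained in $B(p_i,C_0)\subset\frac1{\r_{p_i}}M$ for the universal constant $C_0=102$; hence $\supp(\zeta_i)\subset B(p_i,C_0\r_{p_i})$ in $M$. Now fix $x\in M$ and put $J=\{i\in I_{\twostratum}:x\in\supp(\zeta_i)\}$. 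For $i\in J$ we have $d(x,p_i)\le C_0\r_{p_i}$, so provided $\Lambda<\overline{\Lambda}(C_0)$ the Lipschitz bound gives $\frac12\r_x\le\r_{p_i}\le 2\r_x$ and $p_i\in B(x,2C_0\r_x)$; consequently the pairwise disjoint balls $\{B(p_i,\frac13\r_{p_i})\}_{i\in J}$ all lie inside $B(x,3C_0\r_x)$. By Lemma \ref{lem-rpconsequences}(1) and Bishop–Gromov — using that on $B(p_i,\r_{p_i})$ the curvature is bounded below by $-\eps\,\r_{p_i}^{-2}$ with $\eps\to 0$ as $\al\to\infty$, since $\r_{p_i}\ll R_{p_i}$ — each such ball has volume at least $c_1(\widehat{w})\,\r_x^3$ for a constant $c_1>0$ depending only on $\widehat{w}$. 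Since the curvature on $B(x,3C_0\r_x)$ is also almost nonnegative, Bishop–Gromov gives $\vol(B(x,3C_0\r_x))\le c_2\,\r_x^3$ for a universal $c_2$. Summing the volumes of these disjoint balls yields $|J|\le c_2/c_1(\widehat{w})$, a constant depending only on $w'$; setting $\overline{\mathcal M}:=c_2/c_1(\widehat{w})$ and imposing the constraint $\mathcal M>\overline{\mathcal M}$ gives (2).

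The only point needing genuine care is the pair of Bishop–Gromov comparisons in the previous paragraph: one must check that, because the relevant curvature bounds are merely almost nonnegative rather than exactly nonnegative, the comparison constants (the lower bound on $\vol(B(p_i,\frac13\r_{p_i}))/\vol(B(p_i,\r_{p_i}))$ and the upper bound on $\vol(B(x,3C_0\r_x))/\r_x^3$) can be taken uniform once $\al$ is large, which follows from $R_p/\r_p\to\infty$ uniformly in $p$ (as in the proof of Lemma \ref{lem-rpconsequences}). Everything else is routine bookkeeping with the triangle inequality and the Lipschitz bound on $\r$.
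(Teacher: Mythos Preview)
Your proof is correct and follows essentially the same approach as the paper's: maximality plus the $\Lambda$-Lipschitz property of $\r$ for (1), and a Bishop--Gromov packing argument for (2). The only cosmetic differences are that the paper argues (1) directly rather than by contradiction, uses the tighter bound $\supp(\zeta_i)\subset B(p_i,10\r_{p_i})$ (requiring $\varsigma_{\twostratum}$ small), and in (2) centers the large ball at one of the $p_{i_j}$ rather than at the intersection point $x$, which lets it bound $N$ by a single Bishop--Gromov ratio $\vol(B(p_{i_1},100\r_{p_{i_1}}))/\vol(B(p_{i_1},\tfrac13\r_{p_{i_1}}))$ instead of separating the upper and lower volume estimates as you do.
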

\begin{proof}
(1). Assume $1+\frac23 \La<1.01$.
If $p$ is a $2$-stratum point, there is an $i\in I_{\twostratum}$
such that  
$B(p,\frac13 \r_p)\cap B(p_i,\frac13 \r_{p_i})\neq
\emptyset$ for some $i\in I_{\twostratum}$.  Then
$\frac{\r_p}{\r_{p_i}}\in [.9,1.1]$, and $p\in B(p_i,\r_{p_i})$.

(2). From the definition of $\zeta_i$, 
if $\varsigma_{\twostratum}$ is sufficiently
small then we are ensured that $\supp(\zeta_i)\subset B(p_i,10\r_{p_i})$.

Suppose that for some $p \in M$, we have
$p \in \bigcap_{j=1}^N B(p_{i_j}, 10 \r_{p_{i_j}})$ for distinct
$i_j$'s. We relabel so that
$B(p_{i_1}, \r_{p_{i_1}})$ has the smallest volume among the
$B(p_{i_j}, \r_{p_{i_j}})$'s.

If $10\Lambda $ is sufficiently small then we can assume that
for all $j$, $\frac12 \le \frac{\r_{p_{i_j}}}{\r_{p_{i_1}}} \le 2$.
Hence the $N$ disjoint balls $\{ B(p_{i_j}, \frac13\r_{p_{i_j}}) \}_{j=1}^N$
lie in $B(p_{i_1}, 100 \r_{p_{i_1}})$ and by Bishop-Gromov
volume comparison
\begin{equation}
N \le \frac{\vol(B(p_{i_1}, 100 \r_{p_{i_1}}))}{\vol(B(p_{i_1}, 
\frac13\r_{p_{i_1}}))} \le \frac{\int_0^{100} \sinh^2(r) \: 
dr}{\int_0^{\frac13} \sinh^2(r) \: dr}.
\end{equation}
This proves the lemma.
\end{proof}

\section{Edge points and associated structure}
\label{sec-edgepoints}

In this section we study points $p\in M$  where the pair 
$(M,p)$ looks like a half-plane with a basepoint lying on
the edge. Such points define an edge set $E$. 
We show that 
any $1$-stratum point, which is not a slim $1$-stratum point,
is not far from  $E$.

As a technical tool, we also introduce an approximate edge set $E'$ 
consisting of points where the edge structure is of slightly
lower quality than that of $E$. The set $E'$ will fill in the
boundary edges of the approximate half-plane regions around points
in $E$. We construct a smoothed distance function from $E'$, along
with an associated cutoff function. 

We describe the local topology
around points in $E$ and 
choose a
ball covering of $E$.

\subsection{Edge points}
\label{subsec-edgepoints}

We begin with a general lemma about $1$-stratum points.

\begin{lemma}
\label{lem-1splitalexandrov}
Given $\eps>0$, if 
$\be_1<\overline{\beta}_1(\eps)$ and
$\si<\overline{\si}(\eps)$
then the following holds.
If $(\frac{1}{\r_p}M,p)$ has a $(1,\be_1)$-splitting
then 
there
is a $(1,\eps)$-splitting
$(\frac{1}{\r_p}M,p)\ra (\R\times Y,(0,\star_Y))$, where
$Y$ is an Alexandrov space with $\dim(Y) \le 1$.
\end{lemma}
\begin{proof}
This is similar to the proof of Lemma \ref{lem-phipghapprox}.
If the lemma were false then we could find a sequence $\al_j\ra\infty$
so that taking 
$\be_1=j^{-1}$ and $\si=j^{-1}$,
for every $j$ there would
be $p_j\in M^{\al_j}$ and a $(1,j^{-1})$-splitting
of   $(\frac{1}{\r_{p_j}}M^{\al_j},{p_j})$, but no $(1,\eps)$-splitting
as asserted.  Passing to a subsequence,
we obtain a pointed Gromov-Hausdorff limit
$(M_\infty,p_\infty)$, and the $(1,j^{-1})$-splittings
converge to a $1$-splitting $\phi_\infty:M_\infty
\ra \R\times Y$.  It follows from
Lemma \ref{lemsicloseto2d} that $\dim M_\infty\leq 2$, 
and hence $\dim Y\leq 1$. 
This implies
that for large $j$, we can find arbitrarily good splittings
$(\frac{1}{\r_{p_j}}M^{\al_j},{p_j})\ra (\R\times Y_j,(0,\star_{Y_j}))$
where 
$Y_j$ is an Alexandrov space with $\dim(Y_j) \le 1$.
This is a contradiction.
\end{proof}

Let $0<\beta_{E}<\beta_{E'}$ and $0 < \sigma_E < \sigma_{E^\prime}$
be new parameters.

\begin{definition} \label{edgepoints}
A point $p \in M$ is an {\em $(s,t)$-edge point} if there is
a $(1,s)$-splitting
\begin{equation} 
F_p  :  \left( \frac{1}{\r_p} M, p \right)  \ra 
\left( \R \times Y, (0,\star_Y) \right)
\end{equation}
and a $t$-pointed-Gromov-Hausdorff approximation
\begin{equation}
G_p : (Y, \star_Y) \rightarrow
([0, C], 0),
\end{equation}
with $C \ge 200\Delta$.
Given $F_p$ and $G_p$, we put
\begin{equation} \label{edgeeqn}
Q_p = (\Id \times G_p) \circ F_p :  \left( \frac{1}{\r_p} M, p \right)  \ra 
(\R \times [0,C], (0,0)).
\end{equation}

We let $E$ denote the
set of  $(\beta_{E},\sigma_E)$-edge
points, and $E'$ denote the set of $(\beta_{E'},\sigma_{E^\prime})$-edge 
points.
Note that $E\subset E'$.
We will often refer to elements of $E$ as {\em edge points}.
\end{definition}

We emphasize that in the definition above, $Q_p$
maps the basepoint $p \in M$ to $(0,0) \in \R \times [0,C]$. 

\begin{lemma}
\label{lem-edgepointnot2stratum}
Under the constraints 
$\beta_{E'}<\overline{\beta}_{E'}$,
$\sigma_{E'}<\overline{\sigma}_{E'}$ and
$\beta_2 < \overline{\beta}_2$, 
no element $p\in E'$  can be a $2$-stratum point.
\end{lemma}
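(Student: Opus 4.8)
The plan is to argue by contradiction, following the standard pattern used throughout this section (e.g. in Lemma \ref{lem-phipghapprox} and Lemma \ref{lem3}). Suppose the statement fails. Then for each $j$ there is an index $\al_j$ and a point $p_j \in M^{\al_j}$ such that, with $\be_{E'} = \be_2 = \frac{1}{j}$, the point $p_j$ is simultaneously a $\be_{E'}$-edge point and a $2$-stratum point. Being a $\be_{E'}$-edge point, there is a $(1, \frac{1}{j})$-splitting $Q_{p_j} : (\frac{1}{\r_{p_j}}M^{\al_j}, p_j) \ra (\R \times [0, C_j], (0,0))$ with $C_j \ge 200\De$, where the basepoint maps to $(0,0)$ on the edge. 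Being a $2$-stratum point, there is a $(2, \be_2)$-splitting, but more importantly (by Definition \ref{nstratumdef}) there is a $(2, \be_2)$-splitting and hence in particular a $(1,\be_2)$-splitting; the key fact I will use is that a $2$-stratum point admits a $(2,\be_2) = (2,\frac1j)$-splitting.

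The core of the argument is to pass to a limit and derive a contradiction about the dimension or structure of the limit space. By Lemma \ref{lem-rpconsequences}(3) (or just by Lemma \ref{lemsicloseto2d} together with the local collapsing bound from Standing Assumption \ref{assumptions}(1), noting the curvature scale over $\r_{p_j}$ tends to infinity), after passing to a subsequence the pointed rescaled manifolds $(\frac{1}{\r_{p_j}}M^{\al_j}, p_j)$ converge in the pointed Gromov-Hausdorff topology to a pointed nonnegatively curved Alexandrov space $(M_\infty, p_\infty)$ of dimension at most $2$. The $(2,\frac1j)$-splittings converge, by Lemma \ref{limitsplits}, to an honest $2$-splitting of $(M_\infty, p_\infty)$; since $\dim M_\infty \le 2$, Lemma \ref{lem-splitcompatible} (or directly the fact that the $\R^2$-factor exhausts the dimension) forces $M_\infty$ to be isometric to $\R^2$ with $p_\infty$ as basepoint. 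On the other hand, the edge splittings $Q_{p_j}$ converge: the $\R$-factor passes to a line through $p_\infty$ in $M_\infty$, and the interval factor $[0, C_j]$ with $C_j \ge 200\De \to \infty$ (as $\De$ is a fixed parameter, but even if one only keeps $C_j \ge 200\De$, the point $p_j$ sits on the \emph{edge}) converges so that $p_\infty$ lies on the boundary of a flat half-plane region $\R \times [0, C_\infty]$ isometrically embedded near $p_\infty$, with $C_\infty \ge 200\De > 0$.

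The contradiction is then geometric: in $\R^2$, every point is an interior point, so no point can be the edge point of an isometrically embedded half-strip $\R \times [0,c]$ with $c>0$. More carefully: the limiting map $Q_\infty : (M_\infty, p_\infty) \ra (\R \times [0, C_\infty], (0,0))$ is a $0$-Gromov-Hausdorff approximation, hence an isometry onto its image when $C_\infty$ is finite — but since $M_\infty \cong \R^2$ is a complete length space without boundary and $\R \times [0, C_\infty]$ has nonempty boundary containing the image of $p_\infty$, this is impossible. Concretely, the comparison-angle content of the edge splitting forces $\cangle_{p_\infty}$ at $p_\infty$, in the direction transverse to the $\R$-factor, to span only a half-space, i.e. the space of directions at $p_\infty$ is at most a half-circle, whereas in $\R^2$ it is a full circle; this contradicts the convergence of the two structures at the common basepoint $p_\infty$.

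I expect the main obstacle to be bookkeeping the two convergences simultaneously and making precise the statement ``$p_\infty$ lies on the boundary of a flat half-plane region'' — one must check that the edge condition (basepoint maps to $(0,0)$ with $C_j \ge 200\De$, where $\De > \be_2^{-1}$ is already large) survives the limit with a definite amount of half-plane, so that the limiting space genuinely has a boundary point, rather than degenerating (for instance if $C_\infty$ could shrink to $0$, but it cannot since $C_j \ge 200\De$ is bounded below). Once both limiting structures are in hand at the single point $p_\infty$, the contradiction — a point in $\R^2$ cannot be a boundary point of an embedded flat half-strip — is immediate from, e.g., the splitting theorem or an elementary directions-space count. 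This is a short argument; the only real work is setting up the compactness correctly, which is entirely parallel to Lemma \ref{lem-phipghapprox} and Lemma \ref{lem3}.
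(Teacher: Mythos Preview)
Your argument is correct and lands on the same contradiction as the paper: a point cannot be simultaneously pointed-GH-close to $(\R^2,0)$ and to $(\R\times[0,C],(0,0))$ with basepoint on the edge. The paper's proof is shorter because it invokes Lemma~\ref{lem-phipghapprox} directly (so a $2$-stratum point is already known to be close to $(\R^2,0)$) and then applies the triangle inequality in the pointed Gromov--Hausdorff metric, whereas you re-run the compactness/limit argument that essentially reproves that lemma inline.
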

\begin{proof}
By Lemma \ref{lem-phipghapprox}, if $p$ is a $2$-stratum point and 
$\phi_p:(\frac{1}{\r_p}M,p)\ra (\R^2\times X,(0,\star_X)$ 
is a $(2,\be_2)$-splitting then $\diam X<1$.  Thus if
$\be_{E^\prime}$, $\sigma_{E^\prime}$ and $\be_2$ are all small
then a large pointed ball in
$(\R^2,0)$ has pointed Gromov-Hausdorff distance less than two from a large
pointed ball in $(\R\times [0,C], (0,0))$, which is a contradiction.
\end{proof}

We now show that in a neighborhood of $p \in E$, the set $E'$ looks
like the border of a half-plane.

\begin{lemma} \label{nearedge}
Given $\eps > 0$, if
$\beta_{E'} < \overline{\beta}_{E'}(\eps, \Delta)$,
$\si_{E'} < \overline{\si}_{E'}(\eps, \Delta)$,
$\be_E<\overline{\be}_E(\be_{E'}, \si_{E'})$,
$\si_E<\overline{\si}_E(\be_{E'}, \si_{E'})$
and $\La< \overline{\Lambda}(\eps,
\Delta)$ then
the following holds.

For $p\in E$, if  $Q_p$ is as in Definition \ref{edgepoints} and  
$\widehat{Q}_p:\left( \R \times [0, C], (0,0) \right)\ra 
\left( \frac{1}{\r_p} M, p \right)$ is a quasi-inverse for $Q_p$
(see Subsection
\ref{def3}),   then 
$\widehat{Q}_p([-100\Delta,100\Delta]\times \{0\})$ is 
$\frac{\eps}{2}$-Hausdorff
close to $E'\cap Q_p^{-1}([-100\Delta,100\Delta]
\times [0,100\Delta])$.
\end{lemma}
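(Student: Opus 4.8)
The plan is a contradiction-and-compactness argument of the kind used repeatedly in this section. Fix $\eps>0$ and $\Delta$, and suppose the assertion fails. Choosing the constraint functions suitably in the negation, we obtain sequences $\al_j\to\infty$, $\Lambda_j\to 0$, $\beta_{E',j}\to 0$ and $\beta_{E,j}\to 0$ (with $\beta_{E,j}$ going to zero as fast as we wish relative to $\beta_{E',j}$ and $\Lambda_j$), together with points $p_j\in E\subset M^{\al_j}$ and $(1,\beta_{E,j})$-splittings $Q_j\colon X_j:=(\frac{1}{\r_{p_j}}M^{\al_j},p_j)\ra(\R\times[0,C_j],(0,0))$ with $C_j\ge 200\Delta$, for which the Hausdorff distance between $\widehat Q_j([-100\Delta,100\Delta]\times\{0\})$ and $E'\cap Q_j^{-1}([-100\Delta,100\Delta]\times[0,100\Delta])$ is at least $\frac{\eps}{2}$. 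After passing to a subsequence we may assume $C_j\to C_\infty\in[200\Delta,\infty]$, and since $Q_j$ is a $\beta_{E,j}$-Gromov--Hausdorff approximation with $\beta_{E,j}\to 0$, the pointed spaces $X_j$ converge in the pointed Gromov--Hausdorff topology to $(\R\times[0,C_\infty],(0,0))$. We will also use that, as $\r$ is $\Lambda_j$-Lipschitz, any $q$ with $d_{X_j}(q,p_j)\le 300\Delta$ has $\r_q/\r_{p_j}=1+O(\Lambda_j\Delta)\to 1$, so that $(\frac{1}{\r_q}M^{\al_j},q)$ and $(X_j,q)$ differ by a rescaling factor tending to $1$ and hence become arbitrarily close on bounded sets.

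The only genuinely geometric ingredient is a rigidity fact: a pointed isometry between flat half-strips cannot carry an interior point to a boundary point, since for $z\in\R\times[0,C]$ the supremum of the radii $r$ for which $B(z,r)$ is isometric to a flat disc equals $\dist(z,\partial(\R\times[0,C]))$, a pointed-isometry invariant. Using this, I would first show that for large $j$ every $q\in E'\cap Q_j^{-1}([-100\Delta,100\Delta]\times[0,100\Delta])$ has $[0,C_j]$-coordinate less than $\frac{\eps}{4}$. If not, after a further subsequence there are $q_j\in E'$ with $Q_j(q_j)=(x_j,t_j)$, $|x_j|\le 100\Delta$, $t_j\in[\frac{\eps}{4},100\Delta]$; in the limit $(X_j,q_j)\ra(\R\times[0,C_\infty],(x_\infty,t_\infty))$ with $t_\infty\in[\frac{\eps}{4},\frac12 C_\infty]$, so the limit has basepoint at distance $t_\infty\ge\frac{\eps}{4}>0$ from its boundary. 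On the other hand, $q_j\in E'$ supplies $(1,\beta_{E',j})$-splittings $Q_{q_j}\colon(\frac{1}{\r_{q_j}}M^{\al_j},q_j)\ra(\R\times[0,C'_j],(0,0))$ with $C'_j\ge 200\Delta$; by the scale comparison $(\frac{1}{\r_{q_j}}M^{\al_j},q_j)$ and $(X_j,q_j)$ have the same Gromov--Hausdorff limit, so $(\R\times[0,C_\infty],(x_\infty,t_\infty))$ is pointed-isometric to $(\R\times[0,C'_\infty],(0,0))$, $C'_\infty:=\lim C'_j\ge 200\Delta$, whose basepoint lies on its boundary, contradicting the rigidity fact.

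Second, for $x$ with $|x|\le 100\Delta-\frac{\eps}{8}$ put $q_j:=\widehat Q_j(x,0)$. Applying Lemma \ref{faralignment1} to move the basepoint of $Q_j$ to $q_j$, translating the $\R$-factor of the target so that $Q_j(q_j)$ (which is close to $(x,0)$) goes to the corner $(0,0)$, and combining with the scale comparison, shows that $(\frac{1}{\r_{q_j}}M^{\al_j},q_j)$ is $\theta_j$-close to $(\R\times[0,C_j],(0,0))$ with $C_j\ge 200\Delta$ and $\theta_j\to 0$. Once $\theta_j<\beta_{E',j}$, which is guaranteed for large $j$ by the smallness of $\beta_{E,j}$ and $\Lambda_j$, this means $q_j$ is a $\beta_{E',j}$-edge point, so $q_j\in E'$, and moreover $Q_j(q_j)\in[-100\Delta,100\Delta]\times[0,100\Delta]$ for large $j$. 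Hence every point of $\widehat Q_j([-100\Delta,100\Delta]\times\{0\})$ lies within $\frac{\eps}{4}$ of the right-hand set; together with the first step and the quasi-inverse estimate $d(q,\widehat Q_j(Q_j(q)))\to 0$, this bounds the Hausdorff distance in question by $\frac{\eps}{3}<\frac{\eps}{2}$ for large $j$, contradicting the choice of the sequence. I expect the half-strip rigidity of the second paragraph to be the only non-routine point; the rest is bookkeeping of the scale function (kept in hand by $\Lambda_j\to 0$) and of the constraint functions $\overline\beta_{E'}(\eps,\Delta)$, $\overline\beta_E(\beta_{E'})$, $\overline\Lambda(\eps,\Delta)$.
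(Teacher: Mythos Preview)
Your proof is correct and follows the same contradiction-and-compactness strategy as the paper. The paper's argument is terser: it simply asserts that since the edge parameters $s_i\to 0$ and $\Lambda_i\to 0$, the sets $E'_i\cap Q_{p_i}^{-1}([-100\Delta,100\Delta]\times[0,100\Delta])$ converge to $\widehat Q_\infty([-100\Delta,100\Delta]\times\{0\})$, without spelling out either direction; your half-strip rigidity argument and your basepoint-shift via Lemma~\ref{faralignment1} are precisely the justifications the paper leaves implicit.

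One small technical point in your second step: translating only the $\R$-factor sends $Q_j(q_j)\approx(x,0)$ to $(0,t_j)$ with $t_j$ the second coordinate of $Q_j(q_j)$, which is $O(\beta_{E,j})$ but not exactly $0$. You therefore need one more nudge (e.g.\ post-compose with any map moving $(0,t_j)$ to $(0,0)$, at cost $t_j$ in GH-quality) before the resulting approximation literally matches Definition~\ref{edgepoints}. Since $t_j$ is controlled by $\beta_{E,j}$, which you have arranged to be arbitrarily small relative to $\beta_{E',j}$, this is harmless and your conclusion $\theta_j<\beta_{E',j}$ still goes through. The paper sidesteps this bookkeeping by taking $\beta_E=s_i^2$ and $\beta_{E'}=s_i$ explicitly.
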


\begin{proof}
Suppose  the lemma were false. Then for some
$\eps > 0$, there would be
sequences $\al_i \ra \infty$, $s_i \ra 0$ and $\Lambda_i \ra 0$
so that for each $i$,
\begin{enumerate}
\item The
scale function $\r$ of $M^{\al_i}$ has  Lipschitz constant
bounded above by $\Lambda_i$, and
\item There is an 
$(s_i^2, s_i^2)$-edge point
$p_i \in M^{\al_i}$
such that $\widehat{Q}_{p_i}([-100\Delta,100\Delta]\times \{0\})$ is not
$\frac{\eps}{2}$-Hausdorff
close to $E^\prime_i 
\cap Q_{p_i}^{-1}([-100\Delta,100\Delta]\times [0,100\Delta])$,
where  $E^\prime_i$ denotes the set of 
$(s_i, s_i)$-edge points
in $M^{\al_i}$. 
\end{enumerate}

After passing to a subsequence, we can assume that
$\lim_{i \ra \infty} \left( \frac{1}{\r_{p_i}} M^{\al_i}, p_i \right)
= \left( X^\infty, p_\infty \right)$ for some
pointed nonnegatively curved Alexandrov space $ \left( X^\infty, p_\infty \right)$.
We can also assume that
$\lim_{i \ra \infty} 
\widehat{Q}_{p_i} \big|_{[-200\Delta,200\Delta]\times [0,200\Delta]}$ exists
and is an isometric embedding 
$\widehat{Q}_\infty : [-200\Delta,200\Delta]\times [0,200\Delta] \ra
X^\infty$, with $\widehat{Q}_\infty(0,0) = p_\infty$. 
Then 
\begin{equation}
\lim_{i \ra \infty} \widehat{Q}_{p_i}([-100\Delta,100\Delta]\times \{0\})
= \widehat{Q}_\infty ([-100\Delta,100\Delta]\times \{0\}).
\end{equation}
However, since $s_i \ra 0$ and $\Lambda_i \ra 0$, it follows that
\begin{equation}
\lim_{i \ra \infty} E_i^\prime
\cap Q_{p_i}^{-1}([-100\Delta,100\Delta]\times [0,100\Delta]) =
\widehat{Q}_\infty ([-100\Delta,100\Delta]\times \{0\}).
\end{equation}
Hence for large $i$, 
$\widehat{Q}_{p_i}([-100\Delta,100\Delta]\times \{0\})$ is
$\frac{\eps}{2}$-Hausdorff
close to $E^\prime_i 
\cap Q_{p_i}^{-1}([-100\Delta,100\Delta]\times [0,100\Delta])$.
This is a contradiction.
\end{proof}

The first part of the next lemma says that $1$-stratum points are
either slim $1$-stratum points or lie not too far from an edge point.
The second part says that $E$ is coarsely dense in $E'$.

\begin{lemma} \label{hitsall}
Under the constraints
$\beta_{E'} < \overline{\beta}_{E'}(\Delta)$,
$\si_{E'} < \overline{\si}_{E'}(\Delta)$,
$\beta_{E} < \overline{\beta}_E({\beta}_{E'},\si_{E'})$,
$\si_{E} < \overline{\si}_E({\beta}_{E'},\si_{E'})$,
$\beta_1 < \overline{\beta}_1(\Delta, \beta_{E})$,
$\si<\overline{\si}(\De, \si_E)$ 
and
$\La<\overline{\Lambda}(\Delta)$, 
the
following holds.
\begin{enumerate}
\item For
every $1$-stratum point $p$ which is not in the slim $1$-stratum,
there is some $q \in E$ with
$p \in B(q, \Delta\r_q)$. 
\item For every $1$-stratum point $p$ which is not in the slim $1$-stratum
and for every $p^\prime \in E^\prime \cap B(p, 10 \Delta \r_p)$,
there is some $q \in E$ with $p^\prime \in B(q, \r_q)$.  See Figure 2 below.
\end{enumerate}
\end{lemma}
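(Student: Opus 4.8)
The plan is to prove (1) and (2) together by contradiction and compactness, keeping $\beta_2$, $\Delta$, $\beta_E$ and $\beta_{E'}$ fixed while $\beta_1,\si,\La\to 0$ and $\al\to\infty$. If the lemma fails for some such fixed parameters we obtain manifolds $M^{\al_j}$ with $\beta_1^{(j)},\si^{(j)},\La^{(j)}\to 0$ and $\al_j\to\infty$, non-slim $1$-stratum points $p_j\in M^{\al_j}$, and: in case (1), no $q$ in the edge set $E_j$ of $M^{\al_j}$ with $p_j\in B(q,\Delta\r_q)$; in case (2), points $p_j'\in E_j'\cap B(p_j,10\Delta\r_{p_j})$ admitting no $q\in E_j$ with $p_j'\in B(q,\r_q)$. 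Rescaling by $\r_{p_j}$ and passing to a subsequence, Lemmas \ref{lem-rpconsequences} and \ref{lemsicloseto2d} (with a diagonal argument and Lemma \ref{alexlimit}) give a pointed Gromov--Hausdorff limit $(M_\infty,p_\infty)$ which is a nonnegatively curved Alexandrov space of dimension at most $2$. Since $p_j$ lies in the $1$-stratum it has a $(1,\beta_1^{(j)})$-splitting to some $\R\times X_j$, and since $p_j$ is not slim, $\diam X_j>10^3\Delta$. As $\beta_1^{(j)}\to 0$, these splittings converge (Lemma \ref{limitsplits}) to an isometric splitting $M_\infty=\R\times Y_\infty$ with $\diam Y_\infty\ge 10^3\Delta$, where $Y_\infty$ is a $1$-dimensional nonnegatively curved Alexandrov space; the only possibilities are a ray $[0,\infty)$, a segment $[0,L]$ with $L\ge 10^3\Delta$, a circle of circumference $\ge 2\cdot 10^3\Delta$, or $\R$. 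In case (2) we also have $\frac{1}{\r_{p_j}}p_j'\to p_\infty'\in\overline{B(p_\infty,10\Delta)}$, and since $\La^{(j)}\to 0$ we have $\r_q/\r_{p_j}\to 1$ uniformly for $q$ within a bounded number of scales of $p_j$, so below we freely switch between rescalings by $\r_{p_j}$, $\r_{p_j'}$ and $\r_{q_j}$.

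First I would eliminate the two ``spread-out'' cases. Let $R=R(\beta_2)$ be the scale for which Lemma \ref{strainerlemma}(2) produces a $(2,\beta_2)$-splitting from a good $2$-strainer. If $Y_\infty=\R$, or if $Y_\infty$ is a circle of circumference $>2R$, then the ball $B(p_\infty,R)$ is isometric to a Euclidean $R$-disk (no wrap-around in the circle case, since $\diam Y_\infty\ge 10^3\Delta>R$); for $j$ large this forces $(\frac{1}{\r_{p_j}}M^{\al_j},p_j)$ to carry a $2$-strainer of good quality at scale $\sim R$, hence a $(2,\beta_2)$-splitting by Lemma \ref{strainerlemma}(2), contradicting that $p_j$ is a $1$-stratum point. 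So $M_\infty$ is a half-plane $\R\times[0,\infty)$ or a strip $\R\times[0,L]$ with $L\ge 10^3\Delta$; in particular $\partial M_\infty\ne\emptyset$, and every $z\in\partial M_\infty$ is a genuine edge point of $M_\infty$: $(M_\infty,z)$ is isometric to $\R\times[0,\infty)$ or $\R\times[0,L]$ with basepoint on the edge, and a large ball about $z$ looks like a half-plane of width $\ge 10^3\Delta\ge 200\Delta$. By the same Euclidean-disk argument, $p_\infty$ lies within distance $R$ of $\partial M_\infty$ (else $B(p_\infty,R)$ is a Euclidean disk and we get a $(2,\beta_2)$-splitting for $j$ large). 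In case (2), $p_j'\in E_j'$ says $(\frac{1}{\r_{p_j'}}M^{\al_j},p_j')$ is $\beta_{E'}$-close to some $\R\times[0,C_j]$, $C_j\ge 200\Delta$, with the basepoint mapping to a point on the edge; passing to the limit and comparing a fixed-radius ball of $(M_\infty,p_\infty')$ with the corresponding ball of the half-plane model shows that $p_\infty'$ lies within distance $\overline{d}(\beta_{E'})$ of $\partial M_\infty$, where $\overline{d}(\beta_{E'})\to 0$ as $\beta_{E'}\to 0$; for our fixed small $\beta_{E'}$ this distance is $<\frac12$.

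Next I would produce the edge point. Set $q_\infty\in\partial M_\infty$ to be the foot of the perpendicular from $p_\infty$ in case (1), resp. from $p_\infty'$ in case (2), and pick $q_j\in M^{\al_j}$ with $\frac{1}{\r_{p_j}}q_j\to q_\infty$. Then $(\frac{1}{\r_{q_j}}M^{\al_j},q_j)\to(M_\infty,q_\infty)$, which is a half-plane (or a strip of width $\ge 10^3\Delta$) with basepoint on the edge, so for $j$ large $(\frac{1}{\r_{q_j}}M^{\al_j},q_j)$ admits a $(1,\eps_j)$-splitting onto some $\R\times[0,C]$ with $C\ge 200\Delta$, basepoint on the edge, and $\eps_j\to 0$. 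Since $\beta_E$ is fixed, $\eps_j<\beta_E$ for $j$ large, hence $q_j\in E_j$. It remains to locate $q_j$. In case (1), $d(p_j,q_j)\le(R+o(1))\r_{p_j}$, and since $\r_{q_j}/\r_{p_j}\to 1$ this is $<\Delta\r_{q_j}$ for $j$ large, once the constraint on $\Delta$ is strengthened to $\Delta>\overline{\Delta}(\beta_2)$ (absorbing $R(\beta_2)$ into the constraint already imposed by Lemma \ref{lem-deltabig2stratum}); so $p_j\in B(q_j,\Delta\r_{q_j})$, contradicting the choice of $p_j$. In case (2), $d(p_j',q_j)\le(\overline{d}(\beta_{E'})+o(1))\r_{p_j}<\r_{q_j}$ for $j$ large, so $p_j'\in B(q_j,\r_{q_j})$, contradicting the choice of $p_j'$. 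This proves the lemma.

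The main obstacle is the order in which the parameters are allowed to vary: it is essential that $\beta_E$, $\beta_{E'}$ (and $\beta_2$, $\Delta$) be held fixed during the compactness argument, since the point $q_j$ we construct is only a $(1,\eps_j)$-edge point with $\eps_j\to 0$, and it lies in $E_j$ only because $\eps_j$ eventually beats the fixed threshold $\beta_E$ — and likewise a $(2,\beta_2)$-splitting appears only because $\beta_2$ is fixed. The hypothesis of non-slimness is used exactly once, to force $\diam Y_\infty\ge 10^3\Delta$; this is precisely what guarantees that $M_\infty$ is a half-plane or a wide strip, hence that $q_\infty$ is an edge point with $C\ge 200\Delta$ rather than a point in a strip too thin to qualify, and it is the reason both statements are restricted to $1$-stratum points outside the slim stratum.
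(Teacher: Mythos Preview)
Your overall strategy mirrors the paper's: both arguments reduce to showing that near a non-slim $1$-stratum point the approximating space is a half-plane or wide strip, then locate the edge point $q$ on the boundary. The paper works directly (for a fixed $p$, via Lemma~\ref{lem-1splitalexandrov}) rather than by contradiction, but this is a stylistic difference.

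There is, however, a genuine gap in your deduction that $\diam Y_\infty\ge 10^3\Delta$. The non-slim condition for $p_j$ is relative to $\beta_1^{(j)}$, and you send $\beta_1^{(j)}\to 0$. This means non-slimness becomes a statement about an ever-shrinking class of splittings, and it does not pass to the limit in the way you use it. Concretely: if $\diam Y_\infty$ were small, the Gromov--Hausdorff convergence would give $(1,\delta_j)$-splittings of $(\frac{1}{\r_{p_j}}M,p_j)$ to $\R\times Y_\infty$ with $\delta_j\to 0$, but you cannot conclude $\delta_j\le\beta_1^{(j)}$ (both tend to zero, and there is no control on the race), so this does not contradict non-slimness. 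Your alternative route---passing the diameter bound $\diam X_j>10^3\Delta$ through the convergence $X_j\to Y_\infty$---also fails as stated: diameter is not lower-semicontinuous under pointed Gromov--Hausdorff convergence, and you have not argued that the $X_j$ are length spaces (they are merely targets of Gromov--Hausdorff approximations). Citing Lemma~\ref{limitsplits} does not help, since that lemma only asserts existence of a limit splitting, not convergence of the factors or their diameters. This gap is load-bearing: without it you cannot even rule out $Y_\infty=\{\text{point}\}$, i.e.\ $M_\infty=\R$, which has no boundary and hence no candidate $q_\infty$.

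The paper handles exactly this point with the Sublemma in its proof: it keeps $\beta_1$ fixed, produces a separate $(1,\eps)$-splitting $F$ to $\R\times Y$ with $Y$ a genuine $1$-dimensional Alexandrov space (via Lemma~\ref{lem-1splitalexandrov}), and then uses Lemma~\ref{alexcompatible} to show that if $\diam Y<500\Delta$ then the \emph{original} $(1,\beta_1)$-splitting $\phi$ must also have a factor that can be truncated to diameter $<10^3\Delta$---contradicting non-slimness at the fixed quality $\beta_1$. This two-splitting comparison is the missing ingredient in your argument.
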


\bigskip\bigskip

\mbox{}

\begin{figure}[h]
\label{fig-9.7}
\begin{center}
 
\input{9.7.pspdftex}
 
\caption{}
\end{center}
\end{figure}

\begin{proof}
Let $\eps > 0$ be a constant that will be adjusted
during the proof. Let $p$ be a $1$-stratum point which is not in
the slim $1$-stratum.

By Lemma \ref{lem-1splitalexandrov}, if 
$\be_1<\overline{\beta}_1(\De,\eps)$ and
$\si<\overline{\si}(\De,\eps)$
then there is a $(1,\eps)$-splitting
$F:(\frac{1}{\r_p}M,p)\ra (\R\times Y,(0,\star_Y))$,
where $Y$ is a nonnegatively curved Alexandrov space of dimension at most one.

\begin{sublemma}
$\diam(Y)\geq 500\De$. 
\end{sublemma}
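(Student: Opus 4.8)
The plan is to argue by contradiction: suppose $\diam(Y)<500\Delta$. Since $p$ lies in the $1$-stratum, $(\frac1{\r_p}M,p)$ admits a $(1,\beta_1)$-splitting, and by Lemma~\ref{smoothconv} (applied with its quality parameter at most $\beta_1$) we may take this to be a $(1,\beta_1)$-splitting $G:(\frac1{\r_p}M,p)\ra(\R\times X,(0,\star_X))$ whose cross-section $X$ is a complete nonnegatively curved $C^K$-smooth surface, in particular a length space. Since $p$ is not in the slim $1$-stratum, every $(1,\beta_1)$-splitting — in particular $G$ — has $\diam(X)>10^3\Delta$.

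The next step is to align $F$ and $G$. As $p$ is a $1$-stratum point, $(\frac1{\r_p}M,p)$ admits no $(2,\beta_2)$-splitting, and for large $\al$ the rescaled manifold $\frac1{\r_p}M$ has curvature $\ge-\de$ on $B(p,\de^{-1})$ because $\r_p\ll R_p$; hence Lemma~\ref{alexcompatible}, with $j=k=1$, $n=3$, $\beta_{k+1}=\beta_2$, applies. Introducing an auxiliary parameter $\beta_1'>0$ (fixed after the $\beta_i$'s and $\Delta$, but before $\eps$ and $\beta_1$), and taking $\eps$ and $\beta_1$ small in terms of $\beta_1'$, we get that $F$ is $\beta_1'$-compatible with $G$. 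For $j=k=1$ this unpacks (Definition~\ref{def-epscompatible}) to: there is a $\beta_1'$-Gromov--Hausdorff approximation $\phi:X\ra Y$ between the cross-sections, and, after possibly reversing the orientation of the $\R$-factor, $|\pi_\R\circ F-\pi_\R\circ G|\le C\beta_1'$ on $B(p,(\beta_1')^{-1})$ (with no additive constant to leading order, since both functions vanish at $p$).

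Now the main argument. From $\phi$ we obtain $\diam\bigl(X\cap B(\star_X,(\beta_1')^{-1})\bigr)\le\diam(Y)+\beta_1'<501\Delta$, and since $\diam(X)>10^3\Delta$ this forces $X$ to contain a point at distance $\ge(\beta_1')^{-1}$ from $\star_X$. Taking $(\beta_1')^{-1}\gg 10^3\Delta$ and walking along a minimizing geodesic in the surface $X$ from $\star_X$ toward that point, choose $x'\in X$ with $d_X(\star_X,x')=R_0:=\frac12(\beta_1')^{-1}$, and put $z'=\widehat G(0,x')$ for a quasi-inverse $\widehat G$ of $G$. Then $d(z',p)=R_0+o(1)$ and $\pi_\R(G(z'))=o(1)$, so by the alignment of the previous paragraph $\pi_\R(F(z'))=o(1)$ as well; here one uses $R_0<\min(\beta_1^{-1},\eps^{-1},(\beta_1')^{-1})$, which holds by the choice of parameters. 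Writing $F(z')=(t_1,y_1)\in\R\times Y$ and using that $F$ is a $(1,\eps)$-Gromov--Hausdorff approximation, $\sqrt{t_1^2+d_Y(y_1,\star_Y)^2}=d(z',p)+o(1)=R_0+o(1)$ while $t_1=\pi_\R(F(z'))=o(1)$; hence $d_Y(y_1,\star_Y)=R_0+o(1)$, which is far larger than $500\Delta$. Thus $\diam(Y)>500\Delta$, contradicting the assumption.

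I expect the only genuine difficulty to be the choice of scales: one must pick $\beta_1'$ (hence $R_0=\frac12(\beta_1')^{-1}$) small enough that Lemma~\ref{alexcompatible} supplies compatibility at quality $\beta_1'$, yet large enough that $R_0\gg 10^3\Delta$ so the final estimate comfortably beats $500\Delta$, and then $\eps$ and $\beta_1$ small enough (in that order, each depending on $\beta_1'$) that the point $z'$ lying at distance $\approx R_0$ from $p$ is visible simultaneously to $F$, to $G$, and to the compatibility estimate. The generous factor of two between the threshold $10^3\Delta$ in Definition~\ref{1stratumtypes} and the bound $500\Delta$ claimed here is exactly what makes this balancing act go through; everything else is triangle comparison together with the structural results already proved.
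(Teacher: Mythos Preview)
Your argument shares the paper's key step: invoke Lemma~\ref{alexcompatible} with $j=k=1$ (using that $p$ admits no $(2,\beta_2)$-splitting) to obtain a pointed Gromov--Hausdorff approximation $\phi_2:(X,\star_X)\ra(Y,\star_Y)$ between the cross-sections of the two $1$-splittings. But your extraction of the contradiction has a gap. The approximation $\phi_2$ itself shows that no point of $X$ can lie at distance in the range $(500\Delta+\beta_1',\,(\beta_1')^{-1})$ from $\star_X$: any such $x$ would satisfy $d_Y(\star_Y,\phi_2(x))\ge d_X(\star_X,x)-\beta_1'>500\Delta$, already contradicting $\diam(Y)<500\Delta$. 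So the point $x'$ at distance $R_0=\tfrac12(\beta_1')^{-1}$ that you seek does \emph{not} exist unless $X$ happens to be a length space. Your justification for that---applying Lemma~\ref{smoothconv} ``with its quality parameter at most $\beta_1$''---is circular: that lemma upgrades a $(1,\delta)$-splitting to a smooth $(1,\eps)$-splitting only when $\delta<\overline\delta(\eps,w')$, and you need output quality $\le\beta_1$ (so that non-slimness forces $\diam(X)>10^3\Delta$), whereas your only available input has quality exactly $\beta_1$. The detour through $z'=\widehat G(0,x')\in M$ and $F$ presupposes $x'$ and so does not sidestep the problem; and even if $x'$ existed, that detour is unnecessary, since applying $\phi_2$ directly to $x'$ already produces a point far from $\star_Y$.

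The paper's proof exploits precisely the empty annulus that blocks your approach. Working with an arbitrary $(1,\beta_1)$-splitting $\phi$ (no smoothness of $X$ needed) and running Lemma~\ref{alexcompatible} at the coarser quality $\tfrac{1}{10^3\Delta}$, it concludes from $\diam(Y)<500\Delta$ that $A(\star_X,600\Delta,900\Delta)=\emptyset$. It then uses the connectedness of $M$ rather than of $X$: the gap in $X$ makes $(\pi_X\circ\phi)^{-1}(B(\star_X,600\Delta))$ open and closed in the connected ball $B(p,\beta_1^{-1})\subset\frac{1}{\r_p}M$, so $\phi$ actually lands in $\R\times B(\star_X,600\Delta)$. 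This is a $(1,\beta_1)$-splitting with cross-section of diameter $<10^3\Delta$, making $p$ a slim $1$-stratum point---the desired contradiction.
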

\begin{proof}
Suppose that $\diam(Y)< 500\De$. Let
$\phi:(\frac{1}{\r_p}M,p)\ra (\R\times X,(0,\star_X))$
be a $(1,\be_1)$-splitting.

Since $p$ belongs to the $1$-stratum, $(\frac{1}{\r_p}M,p)$
does not admit a $(2,\be_2)$-splitting.  By Lemma 
\ref{alexcompatible}, it follows that if 
$\eps<\overline{\eps}(\De)$ and
$\be_1<\overline{\be}_1(\De)$ then there is a
$\frac{1}{10^3\De}$-Gromov-Hausdorff approximation
$(X,\star_X)\ra (Y,\star_Y)$.  Since 
$Y\subset B(\star_Y,500\De)$, we conclude that the metric
annulus $A(\star_X,600\De,900\De)\subset X$
is empty. But then the image of the ball 
$B(p,\be_1^{-1})\subset \frac{1}{\r_p}M$ under the
composition $B(p,\be_1^{-1})\stackrel{\phi}{\ra}
\R\times X\stackrel{\pi_X}{\ra}X$ 
will be contained in $B(\star_X,600\De)\subset X$
(because the inverse image of $B(\star_X,600\De)$
under $\pi_X\circ\phi$ is open and closed in 
the connected set $B(p,\be_1^{-1})$).  Thus 
$\phi: 
(\frac{1}{\r_p}M,p)\ra (\R\times B(\star_X,600\De),(0,\star_X))$
is a $(1,\be_1)$-splitting, and $p$ is in the slim $1$-stratum.
This is a contradiction.
\end{proof}

\noindent
{\em Proof of Lemma \ref{hitsall} continued.}
Suppose first that $Y$ is a circle.  If $\eps$ is sufficiently small 
then there is a $2$-strainer of size
$\frac{\Delta}{100} \r_p$ and quality $\frac{1}{\Delta}$ at $p$. By the choice of $\Delta$
(see Section \ref{sec-stratification}),
$p$ is a $2$-stratum point, which is a contradiction.

Hence up to isometry, $Y$ is an interval
$[0,C]$ with $C > 500\De$.
If $\star_Y \in \left( \frac{\Delta}{10}, C - \frac{\Delta}{10} \right)$ then the same
argument as in the preceding paragraph shows that $\star_Y$ is a $2$-stratum point provided $\eps$ is sufficiently small. Hence
$\star_Y \in \left[ 0, \frac{\Delta}{10} \right]$ or
$\star_Y \in \left[ C-\frac{\Delta}{10}, C \right]$. In the second case, we can
flip $[0, C]$ around its midpoint to reduce to the first case. So we can
assume that $\star_Y \in \left[ 0, \frac{\Delta}{10} \right]$.
Let $\widehat{F}$ be a quasi-inverse of $F$ and put
$q = \widehat{F}(0,0)$. If $\Lambda\Delta$ is sufficiently small then
we can ensure that $.9 \le \frac{\r_p}{\r_q} \le 1.1$. 
From Lemma \ref{faralignment1}, if 
$\beta_1$ is sufficiently small, relative to $\beta_E$, then
$q$ has a $(1, \beta_E)$-splitting. If in addition
$\eps$ is sufficiently small, relative to $\si_E$, then
$q$ is guaranteed to be in $E$.
Then $d(p,q) \le  \frac12\Delta \r_p < \Delta\r_q$.

To prove the second part of the lemma, Lemma \ref{nearedge} implies that if
$p^\prime \in E^\prime \cap B(p, 10 \Delta \r_p)$ then 
we can assume that $p^\prime$
lies within distance $\frac12 \r_p$
from $\widehat{F}([-100\Delta, 100\Delta] \times \{0\})$.
(This is not a constraint on the present parameter $\eps$.)
Choose $q = \widehat{F}(x,0)$ for some $x \in [-100\Delta, 100\Delta]$
so that 
$d(p',q) \le \frac12 \r_p$.
From Lemma \ref{faralignment1}, if 
$\beta_1$ is sufficiently small, relative to $\beta_E$, then
$q$ has a $(1, \beta_E)$-splitting. If in addition
$\eps$ is sufficiently small, relative to $\si_E$, then
$q$ is guaranteed to be in $E$.
If $\Lambda \Delta$ is sufficiently small then
$d(p',q) \le \r_q$.
This proves the lemma.
\end{proof}

\subsection{Regularization of the distance function  $d_{E'}$}
\label{subsec-regularizationofrhoe'}
Let $d_{E'}$ be the distance function from $E'$.
We will apply the smoothing results from Section 
\ref{subsec-smoothinglipschitz}
to  $d_{E'}$.  We will 
see that the resulting smoothing of the distance function from $E'$
defines part of a good coordinate
in a  collar region near $E$.

Let $\varsigma_{E'} > 0$ be a new parameter.

\begin{lemma}
\label{lem-rhoerhoe'}
Under the constraints
$\beta_{E'}<\overline{\beta}_{E'}(\De,\varsigma_{E'})$ and
$\si_{E'}<\overline{\si}_{E'}(\De,\varsigma_{E'})$
there is a function
$\rho_{E'}:M\ra [0,\infty)
$
such that if 
$\eta_{E'}=\frac{\rho_{E'}}{\r}$ then:
\begin{enumerate}
\item We have
\begin{equation}
  \left|\frac{\rho_{E'}}{\r}-\frac{d_{E'}}{\r}\right|\leq \varsigma_{E'}\,.
\end{equation}
\item  In the set
$
\eta_{E'}^{-1} \left[ \frac{\De}{10},10\De \right]
\cap (\frac{d_{E}}{\r})^{-1}[0,50\De]\,,
$
the function
$\rho_{E'}$ is smooth and its gradient lies in the $\varsigma_{E'}$-neighborhood
of the generalized gradient of $d_{E'}$. 

\item $\rho_{E'}-d_{E'}$ is $\varsigma_{E'}$-Lipschitz.

\end{enumerate}
\end{lemma}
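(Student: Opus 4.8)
The plan is to obtain $\rho_{E'}$ by smoothing the distance function $d_{E'}$ using Corollary \ref{cor-1strained}, applied with $Y=E'$. We may assume $\varsigma_{E'}<\tfrac{\De}{100}$, since establishing the statement for any smaller value of $\varsigma_{E'}$ establishes it for the given one. First I would fix a small $\eps>0$, of size a fixed fraction of $\tfrac{\varsigma_{E'}}{\De}$ (pinned down at the end), take $\th=\th(\eps)$ as in Corollary \ref{cor-1strained}, and set
\[
C=\Bigl\{\tfrac{d_{E'}}{\r}\in\bigl[\tfrac{\De}{10}-\varsigma_{E'},\,10\De+\varsigma_{E'}\bigr]\Bigr\}\cap\Bigl\{\tfrac{d_E}{\r}\in[0,50\De]\Bigr\},
\]
a closed, hence compact, subset of $M$, together with an open neighborhood $U\supset C$ contained in $\{\tfrac{d_{E'}}{\r}\in(\tfrac{\De}{20},20\De)\}\cap\{\tfrac{d_E}{\r}<51\De\}$; since $d_{E'}>0$ on $U$, we have $U\subset M\setminus E'$. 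The only hypothesis of Corollary \ref{cor-1strained} that is not immediate is that $\diam(V_p)<\th$ for every $p\in U$, where $V_p\subset T_pM$ is the set of initial velocities of minimizing geodesics from $p$ to $E'$; once this is in hand, Corollary \ref{cor-1strained} produces $\rho_{E'}$ directly.

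The heart of the argument, and the step I expect to be the main obstacle, is verifying this diameter bound on $U$. I would argue by contradiction and compactness, as elsewhere in this paper. If it failed, then along a sequence with $\be_E,\be_{E'},\La\to 0$ and $\al\to\infty$ (using that the constraints $\be_E\ll\be_{E'}$ and $\La\ll 1$ are in force) there would be $p_\al\in M^\al$ with $\tfrac{d_{E'}(p_\al)}{\r_{p_\al}}\in(\tfrac{\De}{20},20\De)$, $\tfrac{d_E(p_\al)}{\r_{p_\al}}<51\De$, and $\diam(V_{p_\al})\ge\th$. Choosing $q_\al\in E$ with $d(p_\al,q_\al)<51\De\,\r_{p_\al}$ gives $\tfrac{\r_{p_\al}}{\r_{q_\al}}\to 1$, and rescaling by $\tfrac{1}{\r_{q_\al}}$ and basing at $q_\al$, the edge splitting $Q_{q_\al}$ together with Lemmas \ref{lemsicloseto2d} and \ref{alexlimit} lets us pass to a pointed Gromov-Hausdorff limit $(X_\infty,q_\infty)$ which is a flat strip $\R\times[0,C_\infty]$ or flat half-plane $\R\times[0,\infty)$, with $C_\infty\ge 200\De$; by Lemma \ref{nearedge} the rescaled sets $E'$ converge, near $q_\infty$, to the edge $\R\times\{0\}$. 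The point $p_\al$ converges to some $p_\infty=(a,b)$ with $b=\lim\tfrac{d_{E'}(p_\al)}{\r_{q_\al}}\in[\tfrac{\De}{20},20\De]$, and since a nearest point of $E'$ to $p_\al$ lies in a ball about $q_\al$ on which $E'$ converges to the edge, $d(p_\infty,E'_\infty)=b$ is realized only by the vertical segment from $(a,b)$ to $(a,0)$; thus there is a \emph{unique} minimizing geodesic from $p_\infty$ to $E'_\infty$. Given $\diam(V_{p_\al})\ge\th$, I would pick minimizing geodesics $\ga_\al,\ga'_\al$ from $p_\al$ to $E'_\al$ whose initial velocities stay at least $\tfrac{\th}{2}$ apart; by an Arzela-Ascoli argument they subconverge to minimizing geodesics from $p_\infty$ to $E'_\infty$, which must coincide, so $\ga_\al$ and $\ga'_\al$ become uniformly $C^0$-close. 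Since their common length $\tfrac{d_{E'}(p_\al)}{\r_{q_\al}}$ stays $\ge\tfrac{\De}{20}$ and the curvature of $\tfrac1{\r_{q_\al}}M^\al$ on the relevant balls is bounded below by a quantity tending to $0$, monotonicity of comparison angles (the angle at $p_\al$ is bounded by the comparison angle of the triangle $\ga_\al(\tfrac{\De}{40})\,p_\al\,\ga'_\al(\tfrac{\De}{40})$, whose opposite side tends to $0$) forces those initial velocities to become arbitrarily close, contradicting the choice of $\ga_\al,\ga'_\al$. This is exactly the mechanism that produces the constraint $\be_{E'}<\overline{\be}_{E'}(\De,\varsigma_{E'})$.

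Granting the diameter bound, Corollary \ref{cor-1strained} yields a Lipschitz function $\rho_{E'}:M\to\R$ which is smooth on a neighborhood of $C$, equals $d_{E'}$ on $M\setminus U$, has $\rho_{E'}-d_{E'}$ being $\eps$-Lipschitz, and at each point of $C$ has $-\nabla\rho_{E'}$ within angle $\eps$ of $V_p$. I would then read off the three conclusions. For $p\in U$, evaluating the $\eps$-Lipschitz function $\rho_{E'}-d_{E'}$ at $p$ and at a nearest point of $E'$ to $p$ (which lies outside $U$, where $\rho_{E'}-d_{E'}$ vanishes) gives $|\rho_{E'}(p)-d_{E'}(p)|\le\eps\,d_{E'}(p)\le 20\eps\De\,\r_p$, while the difference vanishes on $M\setminus U$; taking $\eps\le\min\{\varsigma_{E'},\tfrac{\varsigma_{E'}}{20\De}\}$ (and small enough for the gradient bound below) gives $|\eta_{E'}-\tfrac{d_{E'}}{\r}|\le\varsigma_{E'}$, which is (1), and also shows $\rho_{E'}\ge 0$. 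Conclusion (3) is then the statement that $\rho_{E'}-d_{E'}$ is $\eps$-Lipschitz with $\eps\le\varsigma_{E'}$. For (2): by (1) the set $\eta_{E'}^{-1}[\tfrac{\De}{10},10\De]\cap(\tfrac{d_E}{\r})^{-1}[0,50\De]$ is contained in $C$, so $\rho_{E'}$ is smooth there; moreover on that set $|\nabla\rho_{E'}|\in[1-\eps,1+\eps]$ (since $|\nabla d_{E'}|=1$ a.e.\ and $\nabla(\rho_{E'}-d_{E'})$ has norm $\le\eps$ a.e.), and $\nabla^{\gen}_pd_{E'}=-\convexhull(V_p)$ contains a unit vector making angle $\le\eps$ with $-\nabla\rho_{E'}(p)$, so $\nabla\rho_{E'}(p)$ lies within $2\eps\le\varsigma_{E'}$ of $\nabla^{\gen}_pd_{E'}$, giving (2).
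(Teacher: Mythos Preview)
Your proof is correct. The overall structure---apply Corollary \ref{cor-1strained} with $Y=E'$ on the set $C$ (up to minor bookkeeping), and then read off (1)--(3)---matches the paper exactly. The one substantive difference is how you verify the key hypothesis $\diam(V_p)<\th$ on $U$. The paper does this \emph{directly}: for $x\in C$ it uses the half-plane structure near a nearby edge point $p\in E$ (via Lemma \ref{nearedge}) to locate a point $y$ on the far side of $x$ from $E'$ with $d(y,x)\approx d_{E'}(x)$ and $d(y,E')\approx 2d_{E'}(x)$; then for any near-minimizer $z\in E'$, the triangle $yxz$ is nearly degenerate, and triangle comparison forces all initial velocities in $V_x$ to be nearly opposite to the direction toward $y$. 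Your route is the contradiction/compactness version of the same idea: pass to a GH limit half-strip, use uniqueness of the minimizer there, and pull the conclusion back via angle monotonicity. Both are valid; the paper's argument is shorter and makes the constraint $\be_{E'}<\overline{\be}_{E'}(\De,\varsigma_{E'})$ more explicit, while yours follows the compactness template used repeatedly elsewhere in the paper and is arguably more careful about distinguishing $C$ from the open set $U$ required by Corollary \ref{cor-1strained}.
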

\begin{proof}

Let $\eps_1\in (0,\infty)$ and $\th\in (0,\pi)$ be constants, to be determined
during the proof.

Put
\begin{equation}
C=\left(\frac{d_{E'}}{\r}\right)^{-1}
\left[ \frac{\De}{20},20\De \right]
\cap \left(\frac{d_E}{\r}\right)^{-1}[0,50\De]\,.
\end{equation}
If $x\in C$ and $\La<\overline{\La}(\De)$ then there exists a 
$p\in E$ such that $x\in B(p,60\De)\subset\frac{1}{\r_p}M$.
By Lemma \ref{nearedge}, provided that
$\be_{E'}<\overline{\be}_{E'}(\eps_1,\De)$,
$\si_{E'}<\overline{\si}_{E'}(\eps_1,\De)$,
$\be_E<\overline{\be}_E(\be_{E'},\si_{E'})$,
$\si_E<\overline{\si}_E(\be_{E'},\si_{E'})$,
and $\La<\overline{\La}(\eps_1,\De)$,
there is a $y\in M$ such that in $\frac{1}{\r_p}M$,
\begin{equation}
|d(y,x)-d_{E'}(x)|<\eps_1,\quad |d(y,E')-2d_{E'}(x)|<\eps_1\,.
\end{equation}

\begin{figure}[h]
\label{fig-9.12}
\begin{center}
 
\input{9.12.pspdftex}
 
\caption{}
\end{center}
\end{figure}

By triangle comparison,
involving triangles whose vertices are at $x$, $y$ and points
in $E'$ whose distance to $x$ is almost infimal,
it follows that if $\eps_1<\overline{\eps}_1(\th,\De)$
then $\diam(V_x)<\th$, where $V_x$ is the set  of initial velocities
of minimizing geodesic segments from $x$ to $E'$.  See Figure
3.

Applying Corollary \ref{cor-1strained}, 
if $\th<\overline{\th}(\varsigma_{E'})$ then
we obtain a function
$\rho_{E'}:M\ra [0,\infty)$ such that 
\begin{enumerate}
\item $\rho_{E'}$ is smooth in a neighborhood of $C$.
\item $\left\|\frac{\rho_{E'}}{\r}-\frac{d_{E'}}{\r}\right\|<\varsigma_{E'}$.
\item For every $x\in C$, the gradient of $\rho_{E'}$
lies in the $\varsigma_{E'}$-neighborhood of the generalized gradient 
of $d_{E'}$.
\item $\rho_{E'}-d_{E'}$ is $\varsigma_{E'}$-Lipschitz.
\end{enumerate}
If $\varsigma_{E'}<\frac{\De}{20}$ then 
\begin{equation}
\eta_{E'}^{-1} \left[ \frac{\De}{10},10\De \right] \cap 
(\frac{d_{E}}{\r})^{-1}[0,50\De]
\subset C\,,
\end{equation}
so the lemma follows.
\end{proof}

\subsection{Adapted coordinates tangent to the edge}
\label{subsec-edgetangent}

In this subsection, $p\in E$ will denote an edge
point and $Q_p$ will denote a map as in (\ref{edgeeqn}). 

Let $\varsigma_{\edge} > 0$ be a new parameter.
Applying Lemma \ref{lem1}, we get:

\begin{lemma}
\label{lem-existsetapedge}
Under the constraint 
$\beta_{E}<\overline{\beta}_E(\De,\varsigma_{\edge})$,
there is a
$Q_p$-adapted
coordinate 
\begin{equation}
\eta_p:\left( \frac{1}{\r_p} M, p \right)\supset B(p, 100\Delta)\ra \R
\end{equation}
of quality $\varsigma_{\edge}$.  
\end{lemma}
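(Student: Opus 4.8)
## Proof proposal for Lemma \ref{lem-existsetapedge}

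The plan is to invoke the existence result for adapted coordinates, Lemma \ref{lem1}, essentially as a black box, after adjusting for the scale of the ball in question. The statement asserts that given a $\beta_E$-edge point $p \in E$ with its defining $(1, \beta_E)$-splitting $Q_p : (\frac{1}{\r_p} M, p) \ra (\R \times [0,C], (0,0))$ (with $C \ge 200\Delta$), one obtains a $Q_p$-adapted coordinate $\eta_p$ of quality $\varsigma_{\edge}$ on the ball $B(p, 100\Delta) \subset \frac{1}{\r_p} M$. The subtlety is only that Lemma \ref{lem1} is phrased for adapted coordinates on the \emph{unit} ball, whereas here we need them on a ball of radius $100\Delta$; this is handled by the rescaling observation recorded in Remark \ref{addedremark}.

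First I would set $n = 3$ and apply Lemma \ref{lem1} with the role of $\delta$ played (after rescaling) by $\varsigma_{\edge}$. Lemma \ref{lem1} provides a $\delta' = \delta'(3, \varsigma_{\edge}) > 0$ such that any $3$-dimensional complete pointed Riemannian manifold with sectional curvature bounded below by $-(\delta')^2$ on $B(\star, 1/\delta')$ and admitting a $(1, \delta')$-splitting has a $\delta'$-adapted coordinate of the required quality. I then need to verify that $(\frac{1}{\r_p} M, p)$ — more precisely the appropriate further rescaling putting the ball $B(p, 100\Delta)$ at unit scale — satisfies these hypotheses. The curvature bound comes from Lemma \ref{lem-rpconsequences}(2) together with condition (1) of Standing Assumption \ref{assumptions}: working at scale $\r_p$, the ratio $R_p / \r_p$ tends uniformly to infinity as $\al \ra \infty$, so for $\al$ large the rescaled manifold has sectional curvature arbitrarily close to nonnegative on any fixed-radius ball, in particular on $B(p, (\delta')^{-1} \cdot 100\Delta)$ or whatever radius Lemma \ref{lem1} demands. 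The approximate splitting hypothesis is supplied directly by the $(1, \beta_E)$-splitting $Q_p$ from Definition \ref{edgepoints}, possibly after changing basepoint within the strip and restricting (using Lemma \ref{faralignment1} if a basepoint change is needed, though here $p$ already maps to the edge point $(0,0)$). Thus the constraint $\beta_E < \overline{\beta}_E(\Delta, \varsigma_{\edge})$ in the statement is exactly what forces $\beta_E \le \delta'(3, \varsigma_{\edge})$ after the rescaling factor involving $\Delta$ is accounted for.

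The one genuine bookkeeping point — which I would expect to be the main (minor) obstacle — is tracking how the rescaling by $100\Delta$ interacts with the quality parameter and the splitting parameter. Rescaling a $(1, \beta_E)$-splitting of $B(p, 100\Delta)$ to a splitting of the unit ball changes the Gromov-Hausdorff quality by a factor depending on $\Delta$, and similarly the adapted-coordinate quality transforms; one must check that for $\beta_E$ small enough depending on both $\Delta$ and $\varsigma_{\edge}$, all the inequalities in Definition \ref{def4} survive the rescaling. Since the splitting is onto $\R \times [0, C]$ with $C \ge 200\Delta$, the strip is large enough at the rescaled unit scale that the geodesics used in the adapted-coordinate construction stay within the region where the lower curvature bound and the approximate splitting are valid. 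Once these routine scale computations are in place, the conclusion follows immediately from Lemma \ref{lem1} and Remark \ref{addedremark}. I would present the proof as a one- or two-sentence application, as the authors evidently intend given the brevity of the surrounding lemmas (compare Lemma \ref{lem-existsetap2stratum}).
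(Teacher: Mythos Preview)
Your proposal is correct and takes essentially the same approach as the paper: the paper's entire proof is the single phrase ``Applying Lemma \ref{lem1}, we get'' placed before the statement, exactly parallel to the proof of Lemma \ref{lem-existsetap2stratum} which cites Lemma \ref{lem1} and Remark \ref{addedremark}. Your discussion of the curvature bound, the rescaling by $100\Delta$, and the dependence of $\overline{\beta}_E$ on $\Delta$ and $\varsigma_{\edge}$ correctly unpacks what that one-line citation is implicitly using.
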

We define a global  function $\zeta_p:M\ra [0,1]$
by extending
\begin{equation}
\label{eqn-edgezetap}
\left(\Phi_{-9\De,-8\De,8\De,9\De}\circ\eta_p\right)
\cdot\left(\Phi_{8\De,9\De}\circ\eta_{E'}\right):
B(p,100\De)\ra [0,1]
\end{equation}
by zero.

\begin{lemma} \label{localprod}
The following holds:
\begin{enumerate}
\item $\zeta_p$ is smooth.
\item Under the constraints
$\beta_2 < \overline{\beta}_2(\varsigma_{\twostratum})$,
$\La < \overline{\La}(\varsigma_{\twostratum},\De)$,
$\be_{E'}<\overline{\be}_{E'}(\varsigma_{\twostratum},\De)$,
$\si_{E'}<\overline{\si}_{E'}(\varsigma_{\twostratum},\De)$,
$\be_E<\overline{\be}_E(\beta_2, \be_{E'}, \si_{E'}, \varsigma_{\twostratum})$,
$\si_E<\overline{\si}_E(\beta_2, \be_{E'}, \si_{E'}, \varsigma_{\twostratum})$,
$\varsigma_{E'}<\overline{\varsigma}_{E'}(\varsigma_{\twostratum})$ and
$\varsigma_{\edge}<\overline{\varsigma}_{\edge}(\varsigma_{\twostratum})$,
if
$x\in 
(\eta_p,\eta_{E'})^{-1}([-10\De,10\De]\times \left[ \frac{1}{10} \Delta,10\De \right])$
then $x$ is a $2$-stratum point, and there is
a $(2,\be_2)$-splitting
$\phi:(\frac{1}{\r_x}M,x)\ra\R^2$ such that
$(\eta_i,\eta_{E'}):(\frac{1}{\r_x}M,x)
\ra (\R^2,\phi(x))$  defines $\phi$-adapted coordinates of quality 
$\varsigma_{\twostratum}$ on the ball $B(x,100)\subset
\frac{1}{\r_x}M$.
\end{enumerate}
\end{lemma}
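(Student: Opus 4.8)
The plan is to treat the two parts separately. Part (1) is local and short once one controls $\supp\zeta_p$; part (2) I would prove by a compactness/contradiction argument in the spirit of the preceding lemmas, finished off with the uniqueness lemma for adapted coordinates.

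\emph{Localization and part (1).} First I would show that $\supp\zeta_p$ is compactly contained in $B(p,100\De)\subset\frac1{\r_p}M$ --- in fact in $B(p,15\De)$, where moreover $\frac{d_E}{\r}\le 50\De$. On $\supp\zeta_p$ one has $|\eta_p|\le 9\De$ and $\eta_{E'}\le 9\De$; since $\eta_p$ is $Q_p$-adapted its values track the $\R$-coordinate $\pi_\R\circ Q_p$ (Definition~\ref{def4}), since $\eta_{E'}=\frac{\rho_{E'}}{\r}$ agrees with $\frac{d_{E'}}{\r}$ up to $\varsigma_{E'}$ (Lemma~\ref{lem-rhoerhoe'}), and since near $p$ the set $E'$ is Hausdorff close to the edge of $\R\times[0,C]$ (Lemma~\ref{nearedge}), the point $Q_p(x)$ lies within distance $\approx 13\De$ of $(0,0)$; as $C\ge 200\De$ and $\be_E$ is small, $Q_p$ is a good Gromov--Hausdorff approximation at this scale, so $d(x,p)\lesssim 14\De$, and then $\frac{d_E}{\r}(x)\le d(x,p)\frac{\r_p}{\r}\le 50\De$ since $\La\De$ is small. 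Given this, part (1) follows: on $B(p,100\De)$ the factor $\Phi_{-9\De,-8\De,8\De,9\De}\circ\eta_p$ is smooth because $\eta_p$ is a smooth adapted coordinate there (Lemma~\ref{lem-existsetapedge}); the factor $\Phi_{8\De,9\De}\circ\eta_{E'}$ is locally constant off $\{8\De<\eta_{E'}<9\De\}$, and on that set intersected with a neighborhood of $\supp\zeta_p$ one has $\eta_{E'}\in[\frac{\De}{10},10\De]$ and $\frac{d_E}{\r}\le 50\De$, so $\rho_{E'}$ --- hence $\eta_{E'}$ --- is smooth there by Lemma~\ref{lem-rhoerhoe'}; since $\Phi$ is flat at its endpoints the product is $C^\infty$ across the transition sets, and by the localization the extension by zero is smooth, so $\zeta_p\in C^\infty(M)$.

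\emph{Part (2), the geometric picture.} Suppose the assertion fails. Then there are admissible parameter values with $\be_2,\La,\be_{E'},\be_E,\varsigma_{E'},\varsigma_{\edge}\to 0$ ($\varsigma_{\twostratum}$ and $\De$ fixed), indices $\al\to\infty$, edge points $p_j\in E$, and points $x_j\in(\eta_{p_j},\eta_{E'})^{-1}([-10\De,10\De]\times[\frac{\De}{10},10\De])$ at which the conclusion fails. Using $R_p/\r_p\to\infty$ and Lemma~\ref{alexlimit}, after a subsequence $(\frac1{\r_{p_j}}M,p_j)\stackrel{\gh}{\ra}(Z,z_\infty)$ with $Z$ nonnegatively curved of dimension $\le 2$; passing further, $C_j\to C_\infty\in[200\De,\infty]$ and the $Q_{p_j}$ converge to a $0$-Gromov--Hausdorff approximation, so $Z$ is isometric to the strip (or half-plane) $\R\times[0,C_\infty]$. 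By the localization $d(p_j,x_j)\lesssim 15\De$, so $x_j\to x_\infty\in Z$. Since $\eta_{p_j}$ is $Q_{p_j}$-adapted of quality $\varsigma_{\edge}\to 0$ it converges on $B(x_\infty,100)$ to the first coordinate $u=\pi_\R\circ Q_\infty$ (adapted coordinates of vanishing quality converge to the exact one); since $\eta_{E'}$ agrees with $\frac{d_{E'}}{\r}$ up to $\varsigma_{E'}\to 0$ and, by Lemma~\ref{nearedge}, $E'$ converges near $p_j$ to the edge $Q_\infty^{-1}(\R\times\{0\})$, $\eta_{E'}$ converges on $B(x_\infty,100)$ to the second coordinate $v=\pi_{[0,C_\infty]}\circ Q_\infty$. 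Thus $Q_\infty(x_\infty)=(a_\infty,b_\infty)$ with $|a_\infty|\le 10\De$ and $b_\infty\in[\frac{\De}{10},10\De]$; as $\frac{\De}{10}\le b_\infty\le 10\De\le C_\infty-10\De$ (recall $\De$ is large), $B(x_\infty,\frac{\De}{10})\subset Z$ is isometric to the Euclidean ball $B(0,\frac{\De}{10})$, on which $u,v$ are orthonormal affine coordinates. Consequently $x_\infty$ --- hence, for large $j$, $x_j$ in $\frac1{\r_{x_j}}M$ (with $\r_{x_j}\approx\r_{p_j}$) --- has a $2$-strainer of size $\frac{\De}{100}\r_{x_j}$ and quality $\frac1\De$, so by Lemma~\ref{lem-deltabig2stratum} (recall $\De>\overline{\De}(\be_2)$) the space $(\frac1{\r_{x_j}}M,x_j)$ has a $(2,\frac12\be_2)$-splitting and $x_j$ is a $2$-stratum point; after composing with a suitable Euclidean isometry of $\R^2$ (independent of $j$) we may take this splitting $\phi_j$ so that $\phi_j\to(u,v)$ on $B(x_\infty,100)$, i.e. its two axes point along $u$ and $v$.

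\emph{Part (2), adapted coordinates --- the main obstacle.} It remains to see that $(\eta_{p_j},\eta_{E'})$ is a $\phi_j$-adapted coordinate of quality $\varsigma_{\twostratum}$ on $B(x_j,100)$, and this is the crux. I would deduce it from Lemma~\ref{lem-uniquenessofadaptedcoords}. Let $\phi_j^{(1)},\phi_j^{(2)}$ be the $(1,\cdot)$-splittings of $(\frac1{\r_{x_j}}M,x_j)$ obtained by composing $\phi_j$ with the two coordinate projections $\R^2\to\R$, and build by Lemma~\ref{lem1} genuine $\phi_j^{(1)}$- and $\phi_j^{(2)}$-adapted coordinates $\chi_j^{(1)},\chi_j^{(2)}$ on $B(x_j,100)$ of tiny quality. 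Using that $\phi_j$'s axes are aligned with the edge and normal directions, one checks that $\eta_{p_j}$ --- which is $(1+\varsigma_{\edge})$-Lipschitz and $Q_{p_j}$-adapted on $B(p_j,100\De)$, with $Q_{p_j}$ becoming, after a change of basepoint via Lemma~\ref{faralignment1}, a $(1,\cdot)$-splitting of $(\frac1{\r_{x_j}}M,x_j)$ in the same direction as $\phi_j^{(1)}$ --- satisfies the second form of hypothesis (4) of Lemma~\ref{lem-uniquenessofadaptedcoords} relative to $\phi_j^{(1)}$: the required auxiliary points are taken far along the edge direction, where $\eta_{p_j}$ changes at unit rate by adaptedness while the transverse coordinate is essentially constant. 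Similarly, $\eta_{E'}$ agrees on $B(x_j,100)$ with the distance to $E$ up to an additive constant (the nearest point of $E'$ varies continuously, since $x_j$ has depth $\ge\frac{\De}{10}\gg 100$ from the edge), its gradient lies near the inward unit normal by Lemma~\ref{lem-rhoerhoe'}, and that normal direction is aligned with $v$ by Lemma~\ref{nearedge}, so $\eta_{E'}$ satisfies the corresponding hypothesis relative to $\phi_j^{(2)}$. Lemma~\ref{lem-uniquenessofadaptedcoords} then yields $\|\eta_{p_j}-\chi_j^{(1)}\|_{C^1},\|\eta_{E'}-\chi_j^{(2)}\|_{C^1}\to 0$; since $\phi_j^{(1)},\phi_j^{(2)}$ are orthogonal sub-splittings of $\phi_j$, the pair $(\eta_{p_j},\eta_{E'})$ is then a $\phi_j$-adapted coordinate of quality tending to $0$, hence $\le\varsigma_{\twostratum}$ for large $j$ --- contradicting the choice of $x_j$. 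The main obstacle is exactly this assembly step: $\eta_p$ is handed to us as a coordinate for the \emph{edge} splitting based at $p$ on the \emph{large} ball $B(p,100\De)$, and $\eta_{E'}$ is merely a regularized distance function, so recognizing $(\eta_p,\eta_{E'})$ as a genuine $\phi$-adapted coordinate based at the nearby point $x$ at \emph{unit} scale requires both the rigidity of adapted coordinates (Lemma~\ref{lem-uniquenessofadaptedcoords}) and the exact orthogonality of the edge and normal directions visible in the flat model $B(x_\infty,\frac{\De}{10})$.
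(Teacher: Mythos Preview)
Your approach is sound and both parts work. Part~(1) is more explicit than the paper, which simply cites Lemma~\ref{lem-rhoerhoe'}. For part~(2) the paper takes a \emph{direct} route rather than your contradiction/compactness argument: it introduces internal constants $\eps_1,\ldots,\eps_5$, picks points $x_i^\pm$ with $Q_p(x_i^\pm)\in B(Q_p(x)\pm\tfrac{\De}{20}e_i,\be_E)$ to produce a $2$-strainer at $x$ of quality $\eps_1$ and scale $\ge\tfrac{\De}{30}$ in $\tfrac{1}{\r_x}M$, and obtains the $(2,\be_2)$-splitting $\phi$ from strainer coordinates (Lemma~\ref{strainerlemma}). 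Then for $y$ near $x$ and $z\in E'$ nearly realizing $d(y,E')$, it uses Lemma~\ref{nearedge} and triangle comparison to get $\cangle_y(x_1^\pm,z)\approx\tfrac{\pi}{2}$ and $\cangle_y(x_2^+,z)\approx\pi$, hence the corresponding actual angles, whence $|D\eta_{E'}((\ga_1^\pm)'(0))|$ and $|D\eta_{E'}((\ga_2^+)'(0))-1|$ are small (and symmetrically for $\eta_p$); it then finishes via Lemma~\ref{lem-uniquenessofadaptedcoords}, exactly as you do. So both proofs terminate at the same uniqueness lemma; yours reaches it through a flat Gromov--Hausdorff limit, the paper's through explicit comparison-angle estimates carried out in $M$ itself. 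Your route is conceptually cleaner; the paper's makes the constraint chain $\eps_5\prec\cdots\prec\eps_1$, and hence the parameter dependencies, completely explicit.

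One caveat in your parameter bookkeeping: you send $\be_2\to 0$ while fixing $\De$ and then invoke Lemma~\ref{lem-deltabig2stratum}, which requires the standing constraint $\De>\overline{\De}(\be_2)$ --- this eventually fails, creating a race between the strainer quality at $x_j$ and the target $\be_2^{(j)}$. The clean fix is to keep $\be_2$ and $\De$ fixed along with $\varsigma_{\twostratum}$ (the constraint $\be_2<\overline{\be}_2(\varsigma_{\twostratum})$ enters only in the final application of Lemma~\ref{lem-uniquenessofadaptedcoords}, so it may be imposed up front) and send only $\La,\be_{E'},\be_E,\varsigma_{E'},\varsigma_{\edge}\to 0$; then the strainer quality at $x_j$ tends to $0$ against a fixed $\be_2$ and the race disappears.
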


\begin{proof}
(1). This follows from Lemma \ref{lem-rhoerhoe'}.

(2).  Let $\eps_1,\ldots,\eps_5>0$ be constants, to 
be chosen at the end of the proof.
For $i\in \{1,2\}$ choose points $x_i^{\pm}\in M$ such that
$Q_p(x_i^\pm)\in B(Q_p(x)\pm \frac{\De}{20}e_i,
\si_E) \subset \R\times [0,C]$.
Provided that 
$\be_E<\overline{\be}_E(\beta_2,\De,\eps_1)$,
$\si_E<\overline{\si}_E(\beta_2,\De,\eps_1)$
and $\La<\overline{\La}(\De)$,
the tuple $\{x_i^\pm\}_{i=1}^2$ will
be a $2$-strainer at $x$ of quality $\eps_1$, and scale at least
$\frac{\De}{30}$ in $\frac{1}{\r_x}M$. Therefore,
if $\eps_1<\overline{\eps}_1(\be_2)$ then  $x$
will be a $2$-stratum point, with a $(2,\be_2)$-splitting 
$\phi:(\frac{1}{\r_x}M,x)\ra (\R^2,0)$ given by strainer coordinates 
as in Lemma \ref{strainerlemma}.

\begin{figure}[h]
\label{fig-9.17}
\begin{center}
 
\input{9.17.pspdftex}
 
\caption{}
\end{center}
\end{figure}

Suppose that $y$ is a point in $\frac{1}{\r_x}M$
with $d(y,x)<\eps_2\cdot\frac{\De}{20}$,
and $z\in E'$ is a point with $d(y,z) \le d(y,E') + \eps_2 \Delta$;
see Figure 4. 
Then by Lemma \ref{nearedge}, 
if $\be_{E'}<\overline{\be}_{E'}(\eps_3,\De)$,
$\si_{E'}<\overline{\si}_{E'}(\eps_3,\De)$,
$\be_E<\overline{\be}_E(\eps_3,\be_{E'}, \si_{E'})$,
$\si_E<\overline{\si}_E(\eps_3,\be_{E'}, \si_{E'})$,
$\La<\overline{\La}(\eps_3,\De)$ and
$\eps_2<\overline{\eps}_2(\eps_3)$ then
the comparison angles
$\cangle_y(x_1^\pm,z)$, $\cangle_y(x_2^+,z)$ will
satisfy
$|\cangle_y(x_1^\pm,z)-\frac{\pi}{2}|<\eps_3$ and
$|\cangle_y(x_2^+,z)-\pi|<\eps_3$.
If $\ga_i^\pm$ is a minimizing segment
from $y$ to $x_i^\pm$, and $\ga_z$ is a minimizing segment 
from $y$ to $z$, it follows that 
$|\angle_y(\ga_1^\pm,\ga_z)-\frac{\pi}{2}|<\eps_4$ and 
$|\angle_y(\ga_2^+,\ga_z)
-\pi|<\eps_4$, provided that $\eps_i<\overline{\eps}_i(\eps_4)$
for $i\leq 3$.   Therefore $|D\eta_{E'}((\ga_1^\pm)'(0))|<\eps_5$ and
$|D\eta_{E'}((\ga_2^+)'(0))-1|<\eps_5$, provided that
$\eps_4<\overline{\eps}_4(\eps_5)$ and 
$\varsigma_{E'}<\overline{\varsigma}_{E'}(\eps_5)$.  Likewise, 
$|D\eta_p(\ga_1^\pm)'(0))-1|<\eps_5$ and $|D\eta_p((\ga_2^\pm)'(0))|<\eps_5$,
provided that $\be_E<\overline{\be}_E(\eps_5)$ and 
$\varsigma_{\edge}<\overline{\varsigma}_{\edge}(\eps_5)$.
It follows from Lemma \ref{lem-uniquenessofadaptedcoords} that $(\eta_p,\eta_{E'})$ defines
$\phi$-adapted coordinates of quality $\varsigma_{\twostratum}$ on 
$B(x,100)\subset \frac{1}{\r_x}M$, provided that
$\eps_5<\overline{\eps}_5(\varsigma_{\twostratum})$ and $\beta_2 < \overline{\beta}_2(\varsigma_{\twostratum})$.  

We may fix the 
constants in the order $\eps_5,\ldots,\eps_1$.
The lemma follows.
\end{proof}

\subsection{The topology of the edge region}

In this subsection we determine the topology of a suitable neighborhood
of an edge point 
$p \in E$.

\begin{lemma}
\label{lem-edgetopology}
Under the constraints 
$\beta_{E'} < \overline{\beta}_{E'}(\De)$,
$\si_{E'} < \overline{\si}_{E'}(\De)$,
$\be_{E}<\overline{\be}_E(\be_{E'},\si_{E'},w')$,
$\si_{E}<\overline{\si}_E(\be_{E'},\si_{E'})$,
$\varsigma_{\edge} < \overline{\varsigma}_{\edge}(\De)$,
$\varsigma_{E'} < \overline{\varsigma}_{E'}(\De)$, 
$\La<\overline{\La}(\De)$ and 
$\sigma < \overline{\sigma}(\De)$, 
the map $\eta_p$ restricted
to $(\eta_p, \eta_{E^\prime})^{-1}((-4\De,4\De)\times (-\infty,4\De]) $
is a fibration with fiber 
diffeomorphic to the closed $2$-disk $D^2$.
\end{lemma}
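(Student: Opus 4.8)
The plan is to show that $\eta_p$ restricted to $\Omega:=(\eta_p,\eta_{E'})^{-1}((-4\De,4\De)\times(-\infty,4\De])$ is a proper submersion onto $(-4\De,4\De)$ which is also a submersion along $\D\Omega$, so that Ehresmann's theorem for manifolds with boundary exhibits it as a locally trivial fiber bundle with fiber a compact connected surface-with-boundary $F$, and then to identify $F$ with $D^2$. The first task is localization: I would show $\ol\Omega$ is a compact subset of $B(p,20\De)\subset\frac1{\r_p}M$. Since the $Q_p$-adapted coordinate $\eta_p$ of Lemma \ref{lem-existsetapedge} is $C^0$-close to $\pi_\R\circ Q_p$ on $B(p,100\De)$ (a routine consequence of the adapted-coordinate construction) and $\eta_{E'}$ is $\varsigma_{E'}$-close to $d_{E'}/\r$ by Lemma \ref{lem-rhoerhoe'}, any $x\in\Omega$ satisfies $|\pi_\R(Q_p(x))|<5\De$ and $d_{E'}(x)<5\De$ in $\frac1{\r_p}M$; the nearest point $z\in E'$ to $x$ then lies, by Lemma \ref{nearedge}, near $\widehat Q_p([-10\De,10\De]\times\{0\})$, so $d(p,z)<15\De$ and $d(p,x)<20\De$. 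Since $4\De\in[\tfrac1{10}\De,10\De]$, Lemma \ref{localprod}(2) applies along $\eta_{E'}^{-1}(4\De)\cap\{|\eta_p|<4\De\}$, where $(\eta_p,\eta_{E'})$ is a $\varsigma_{\twostratum}$-quality adapted coordinate for a $(2,\be_2)$-splitting; hence its differential is close to a linear isometry onto $\R^2$, so $\eta_{E'}$ is a submersion there, $\eta_{E'}^{-1}(4\De)$ is a smooth hypersurface transverse to $\{|\eta_p|<4\De\}$, and $\Omega$ is a smooth manifold with boundary $\D\Omega=\eta_{E'}^{-1}(4\De)\cap\{|\eta_p|<4\De\}$.

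Next I would verify the submersion conditions and apply Ehresmann. Where $\eta_{E'}\ge\tfrac1{10}\De$, the same use of Lemma \ref{localprod}(2) shows $(\eta_p,\eta_{E'})$, and hence $\eta_p$, is a submersion, and that $\eta_p$ restricted to the $\eta_{E'}$-level set $\D\Omega$ is a submersion. Where $\eta_{E'}<\tfrac1{10}\De$ — a region disjoint from $\D\Omega$ — I would use that $\eta_p$ is $Q_p$-adapted of quality $\varsigma_{\edge}$: for such $x$, applying \eqref{adapted} to the $1$-strainer $Q_p^{-1}(Q_p(x)\pm\tfrac\De{20}e_1)$ — legitimate because $C\ge200\De$, $\tfrac\De{20}>1$, $|\eta_p(x)|<4\De$ and $x\in B(p,20\De)$ — gives $|D\eta_p(v_\pm)\mp1|<\mathrm{const}\cdot\varsigma_{\edge}$, so $|\nabla\eta_p|>\tfrac12$; this is precisely the mechanism used in Lemma \ref{2fibration}. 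Properness is immediate from the localization, since $\eta_p^{-1}([-4\De+\de,4\De-\de])\cap\Omega$ is closed in $B(p,20\De)$ and hence compact. Ehresmann's fibration theorem for manifolds with boundary then gives that $\eta_p|_\Omega$ is a locally trivial bundle over $(-4\De,4\De)$ with fiber a compact surface-with-boundary $F$, with $\D F\subset\D\Omega$.

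It remains to identify $F\cong D^2$, which I would do by a compactness argument. If the lemma failed, there would be $\al_i\to\infty$ (with the remaining parameters at their limiting values) and edge points $p_i$ for which the bundle fiber $F_i$ is not diffeomorphic to $D^2$. Passing to a pointed $C^K$-limit $(\frac1{\r_{p_i}}M^{\al_i},p_i)\to N$ as in Lemma \ref{lem-rpconsequences}, Lemma \ref{smoothconv} shows $N$ splits isometrically as $\R\times\bar N$ with $\bar N$ a complete nonnegatively curved $C^K$ surface. Because $Q_{p_i}$ is a $(1,\be_E)$-splitting onto $\R\times[0,C_i]$ with $C_i\ge200\De$, $(\bar N,\star_{\bar N})$ is Gromov-Hausdorff close to a ray of length $\ge200\De$ based at its endpoint; as $\bar N$ then contains no line, the limit of the ``local edge'' curves $\widehat Q_{p_i}([-100\De,100\De]\times\{0\})$ — controlled by Lemma \ref{nearedge} — is a segment parallel to the $\R$-factor through $(0,\star_{\bar N})$, and $\lim E'_i$ agrees with $\R\times\{\star_{\bar N}\}$ on the localized region. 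Hence $\eta_{p_i}\to\pi_\R$ and $\eta_{E'_i}\to d_{\bar N}(\pi_{\bar N}(\cdot),\star_{\bar N})$ on $B(p_i,20\De)$, so $\Omega_i$ converges to $(-4\De,4\De)\times\ol{B(\star_{\bar N},4\De)}$ and $\eta_{p_i}|_{\Omega_i}$ converges to the projection. By Lemma \ref{surfacelemma}(2) — after rescaling so that $4\De$ falls in the admissible range, which is legitimate since $B(\star_{\bar N},8\De)$ is Gromov-Hausdorff close to $[0,8\De]$ — the set $\ol{B(\star_{\bar N},4\De)}$ is a closed $2$-disk; by the $C^K$-convergence $F_i$ is diffeomorphic to it for large $i$, contradicting the assumption.

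The step I expect to be the main obstacle is this last one: one must pin down $\lim E'_i$, i.e.\ show the limiting edge set is a single line parallel to the $\R$-factor (this rests on $\bar N$ having no line, together with Lemma \ref{nearedge}), and then upgrade convergence of the functions $\eta_{p_i},\eta_{E'_i}$ to a genuine diffeomorphism of the bundle fibers, which requires uniform submersion bounds and some care with the corner along $\D\Omega$. An alternative for the last step is to restrict $\eta_{E'}$ to the single fiber $\eta_p^{-1}(0)\cap\Omega$, show via Lemma \ref{localprod}(2) that it is a proper submersion onto $[\tfrac1{10}\De,4\De]$ away from a compact core, and run the critical-point argument of Lemma \ref{surfacelemma}(2) on that core; but this still reduces to understanding the surface $\bar N$, so the compactness route seems the most economical.
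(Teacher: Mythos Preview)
Your overall architecture is correct and matches the paper: localize, show $\eta_p|_\Omega$ is a proper submersion that is also a submersion along $\D\Omega$, deduce a fibration, then identify the fiber. Your use of Lemma~\ref{localprod}(2) in the collar $\eta_{E'}\in[\tfrac{1}{10}\De,10\De]$ and the adapted-coordinate estimate \eqref{adapted} in the core $\eta_{E'}<\tfrac{1}{10}\De$ is exactly how the paper handles the submersion statement.

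Where you diverge is the fiber identification. The paper does \emph{not} run a contradiction/compactness argument. Instead, for a single edge point $p$ it applies Lemma~\ref{smoothconv} directly to produce a $C^{K+1}$ near-isometry $\phi:(\tfrac{1}{\r_p}M,p)\to(\R\times Z,(0,\star_Z))$ with $Z$ a nonnegatively curved surface, then builds explicit model functions: the $\R$-projection $\pi_\R\circ\phi$ and a smoothing $\rho_Y\circ\phi$ of the distance to $Y=\R\times\{\star_Z\}$. Using Lemma~\ref{nearedge} to match $E'$ with $\phi^{-1}(Y)$ and hence the generalized gradients of $d_{E'}$ with those of $d_Y\circ\phi$, it shows $(\eta_p,\eta_{E'})$ is $C^1$-close to $(\pi_\R\circ\phi,\rho_Y\circ\phi)$ on the relevant region. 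Then Lemma~\ref{lem-isotopic1} applied to the straight-line homotopy gives that the fiber over $\{0\}\times(-\infty,4\De]$ is diffeomorphic to the model fiber $\rho_Y^{-1}(-\infty,4\De]\cap\pi_\R^{-1}(0)\cong\overline{B(\star_Z,4\De)}$, which is a disk by Lemma~\ref{surfacelemma}.

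Your compactness route is not wrong, but the step you correctly flag as the obstacle is genuine and, when resolved, collapses to the paper's argument. The functions $\eta_{p_i},\eta_{E'_i}$ are not intrinsic to the metric (they depend on choices of strainer points and smoothings), so $C^K$-convergence of $(\tfrac{1}{\r_{p_i}}M^{\al_i},p_i)$ alone does \emph{not} give $C^1$-convergence of these functions, and hence does not give ``$F_i$ diffeomorphic to the limit fiber'' for free. To get that, you must compare $(\eta_{p_i},\eta_{E'_i})$ to intrinsic model functions on $\R\times\bar N$ --- precisely the $C^1$-closeness the paper establishes directly --- at which point the contradiction framing is superfluous. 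The paper's direct use of Lemma~\ref{lem-isotopic1} is the more economical packaging of the same idea.
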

\begin{proof}
Let $\eps > 0$ be a constant which will be internal to this proof.

By Lemma \ref{smoothconv}, if $\beta_{E}<\overline{\be}_E(\eps,\De,w')$
then 
the map $F_p$ of Definition \ref{edgepoints} 
is $\eps$-close to a $(1,\eps)$-splitting 
$\phi:\left( \frac{1}{\r_p} M, p \right) \ra (\R \times Z,(0,\star_Z))$,
where $Z$ is a complete pointed
nonnegatively curved 
$C^K$-smooth
surface, and $\phi$ is $\eps$-close in the 
$C^{K+1}$-topology
to an isometry between the ball $B(p,1000\De)\subset 
\left( \frac{1}{\r_p} M, p \right)$ and its image
in $(\R \times Z, (0,\star_Z))$.
If in addition $\sigma_E < \overline{\si}_E(\Delta)$ then
the pointed ball
$(B(\star_Z,10\De),\star_Z)\subset (Z,\star_Z)$
will have pointed-Gromov-Hausdorff distance at most $\delta$
from the pointed interval $([0,10],0)$, where $\delta$ is the parameter
of Lemma \ref{surfacelemma}.
By Lemma \ref{surfacelemma}, 
we conclude that $\overline{B(\star_Z,\De)}$ is homeomorphic
to the closed $2$-disk.

Put $Y=\R\times \{\star_Z\}\subset \R\times Z$
and let $d_Y : \R \times Z \rightarrow \R$ be the distance to $Y$.
By Lemma
\ref{surfacelemma}, if $\eps$ is sufficiently small then for every 
$x\in \R\times Z$ with $d_Y(x)\in [\De,9\De]$, 
the set $V_x$ of initial velocities of minimizing segments
from $x$ to $Y$ has small diameter; moreover $V_x$ is orthogonal
to the $\R$-factor of $\R\times Z$.  Thus we may apply 
Lemma \ref{cor-1strained} to find a smoothing $\rho_Y$ of 
$d_Y$, where $\|\rho_Y-d_Y\|_\infty$ is small, and in 
$d_Y^{-1}(\De,9\De)$ the gradient of $\rho_Y$
is close to the generalized gradient of $d_Y$.

Note that by Lemma \ref{nearedge}, we may assume that
$\phi^{-1}(\R\times \{\star_Z\})\cap B(p,50\De)$  
is Hausdorff close  to $E'\cap B(p,50\De)$.  Since $\phi$ is 
$C^{K+1}$-close
to an isometry,  the 
generalized gradient of $d_Y\circ\phi$ will be close to the generalized
gradient of $d_{E'}$ in the region 
$(\eta_p,\eta_{E'})^{-1}(-5\De,5\De)\times (2\De,5\De)$,
where the gradients are taken with respect to the metric on  
$\frac{1}{\r_p}M$.  (One may see this by applying a compactness
argument to conclude that
minimizing geodesics to $E'$  in this region are mapped by $\phi$
to be $C^1$-close to minimizing geodesics to $Y$.)
Hence if $\varsigma_{E'}$ is small then  $\eta_{E'}$ 
and $\rho_Y\circ\phi$  will be $C^1$-close
in the region 
$(\eta_p,\eta_{E'})^{-1}(-5\De,5\De)\times (2\De,5\De)$.
Similarly, if $\be_E$ and  $\varsigma_{\edge}$ are small
then $\eta_p$ will be $C^1$-close to
$\pi_Z\circ \phi$ in the  region
$(\eta_p,\eta_{E'})^{-1}(-5\De,5\De)\times (-\infty,5\De)$.

For $t\in [0,1]$, define a map
$
f^t:(\eta_p,\eta_{E'})^{-1}\left((-5\De,5\De)\times 
(-\infty,5\De)\right)\ra \R^2
$
by 
\begin{equation}
f^t=(t\,\eta_p+(1-t)\pi_Z\circ\phi\,,\,t\,
\eta_{E'}+(1-t)\rho_Y\circ\phi).
\end{equation}
Let $F:(\eta_p,\eta_{E'})^{-1}\left((-5\De,5\De)\times 
(-\infty,5\De)\right)\times[0,1]\ra \R^2$ be the map
with slices $\{f^t\}$.  
In view of the $C^1$-closeness discussed above,
we may now apply Lemma \ref{lem-isotopic1}  
to conclude that
$(\eta_p,\eta_{E'})^{-1}(\{0\}\times(-\infty,4\De])$ is diffeomorphic
to $(\pi_Z,\rho_Y)^{-1}(\{0\}\times(-\infty,4\De])$, which
is  a closed $2$-disk.

Finally, we claim that  the restriction of
$\eta_p$ to 
$(\eta_p,\eta_{E'})^{-1}(-4\De,4\De)\times (-\infty,4\De]$
yields a proper submersion to 
$(-4\De,4\De)$, and is therefore a fibration.  The properness
follows from the fact that 
$(\eta_p,\eta_{E'})^{-1}((-4\De,4\De)\times (-\infty,4\De])$
is contained in a compact subset of the domain of $(\eta_p,\eta_{E'})$.
The fact that it is a submersion follows from the nonvanishing
of $D\eta_p$, and the linear independence of $\{D\eta_p,D\eta_{E'}\}$
at points with $(\eta_p,\eta_{E'})\in (-4\De,4\De)\times \{4\De\}$.
\end{proof}

\begin{remark} Given $w^\prime$, we take $\beta_E$ very small in
the proof of Lemma \ref{lem-edgetopology} in order to get a very
good $1$-splitting.  On the other hand, we just have to take $\sigma_E$,
and hence $\sigma$, small
enough to apply Lemma \ref{surfacelemma}; the parameter $\delta$ of
Lemma \ref{surfacelemma} is independent of $w^\prime$.
This will be important for the order in which we choose the parameters.
\end{remark}

\subsection{Selection of edge balls}

 Let $\{p_i\}_{i\in I_{\edge}}$ be a maximal set of edge points with the property
that the collection 
$\{B(p_i,\frac13\De\r_{p_i})\}_{i\in I_{\edge}}$
is disjoint.
We write $\zeta_i$ for $\zeta_{p_i}$.

\begin{lemma}
\label{lem-edgeballcover}
Under the constraints ${\mathcal M}>\overline{\mathcal M}$ and
$\La<\overline{\La}(\De)$,
\begin{itemize}
\item 
$\bigcup_{i\in I_{\edge}}\;B(p_i,\De\r_{p_i})$
 contains $E$.
\item The intersection multiplicity of the collection 
$\{\supp(\zeta_i)\}_{i\in I_{\edge}}$
is bounded by $\mathcal{M}$.
\end{itemize}

\end{lemma}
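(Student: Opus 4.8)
The plan is to transcribe, essentially verbatim, the proof of Lemma \ref{mult2} for the $2$-stratum, with every radius rescaled by the fixed parameter $\Delta$. The statement splits into two independent claims: the covering statement, which follows from maximality of the selected collection $\{p_i\}_{i\in I_{\edge}}$, and the bounded–intersection–multiplicity statement, which follows from a Bishop--Gromov packing estimate.

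For the covering claim I would argue as in step (1) of Lemma \ref{mult2}: given $p\in E$, maximality of the disjoint family $\{B(p_i,\tfrac13\Delta\r_{p_i})\}_{i\in I_{\edge}}$ produces an $i$ with $B(p,\tfrac13\Delta\r_p)\cap B(p_i,\tfrac13\Delta\r_{p_i})\neq\emptyset$, hence $d(p,p_i)\le\tfrac13\Delta(\r_p+\r_{p_i})$; since $\r$ is $\Lambda$-Lipschitz and $\Lambda<\overline{\Lambda}(\Delta)$ one gets $\r_p/\r_{p_i}\in[0.9,1.1]$, so $d(p,p_i)<\Delta\r_{p_i}$, i.e. $p\in B(p_i,\Delta\r_{p_i})$. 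For the multiplicity bound I would first record that $\supp(\eta_i)$ (equivalently $\supp\zeta_i$, with $\zeta_i$ the cutoff of (\ref{eqn-edgezetap})) is contained in $B(p_i,c\Delta\r_{p_i})$ for an absolute constant $c$ — indeed $\eta_i$ is constructed in Lemma \ref{lem-existsetapedge} only on $B(p_i,100\Delta)\subset\tfrac1{\r_{p_i}}M$, so $c=100$ works outright. Then, for $q\in\bigcap_{j=1}^N\supp(\eta_{i_j})$ with distinct $i_j$, relabel so that $B(p_{i_1},\tfrac13\Delta\r_{p_{i_1}})$ has least volume among the $B(p_{i_j},\tfrac13\Delta\r_{p_{i_j}})$; since $\Lambda<\overline{\Lambda}(\Delta)$ all ratios $\r_{p_{i_j}}/\r_{p_{i_1}}$ lie in $[\tfrac12,2]$, so the $N$ pairwise disjoint balls $B(p_{i_j},\tfrac13\Delta\r_{p_{i_j}})$ lie inside $B(p_{i_1},R\Delta\r_{p_{i_1}})$ for a universal $R$ ($R=3c+1$). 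Because $\r_p\ll R_p$ — from condition (1) of Standing Assumption \ref{assumptions} together with $\r_p\le 2r_p(w')$ from Corollary \ref{fake} — the curvature on $B(p_{i_1},R\Delta\r_{p_{i_1}})$ is almost nonnegative once $\al$ is large, so Bishop--Gromov volume comparison bounds
\begin{equation}
N\;\le\;\frac{\vol\big(B(p_{i_1},R\Delta\r_{p_{i_1}})\big)}{\vol\big(B(p_{i_1},\tfrac13\Delta\r_{p_{i_1}})\big)}\;\le\;\overline{\mathcal M},
\end{equation}
where $\overline{\mathcal M}$ depends only on $R$ (hence only on $c$, and at worst on $\Delta$), which is admissible; imposing ${\mathcal M}>\overline{\mathcal M}$ finishes the proof.

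I do not expect a genuine obstacle here, since the lemma is a direct copy of Lemma \ref{mult2}. The only steps that call for (routine) care are: (i) pinning down the absolute constant $c$ with $\supp(\eta_i)\subset B(p_i,c\Delta\r_{p_i})$, so that the Bishop--Gromov ratio above involves only the \emph{quotient} $R\Delta:\tfrac13\Delta=3R$ and is therefore controlled; and (ii) invoking the uniform smallness of $\r_p/R_p$, so that the curvature lower bound on $B(p_{i_1},R\Delta\r_{p_{i_1}})$ tends to $0$ as $\al\to\infty$ and the volume ratio may be compared with its Euclidean value.
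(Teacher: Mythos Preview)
Your proposal is correct and is exactly what the paper intends: it explicitly omits the proof as ``similar to the proof of Lemma~\ref{mult2}'', and your transcription with all radii scaled by $\Delta$ is precisely that. One small clarification: your parenthetical that $\overline{\mathcal M}$ depends ``at worst on $\Delta$'' would actually be inadmissible, since $\mathcal M\prec\Delta$ in the paper's parameter ordering; but you correctly resolve this in point~(ii) by using the almost-nonnegative curvature (from $\r_p/R_p\to 0$) so that the Bishop--Gromov ratio is essentially Euclidean, $\approx(3R)^3$ with $R$ absolute, and hence $\overline{\mathcal M}$ is in fact independent of $\Delta$.
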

\begin{proof}
We omit the proof, as it is similar to the proof
of Lemma \ref{mult2}.
\end{proof}

We now give a useful covering of the $1$-stratum points.

\begin{lemma}
\label{lem-1stratumslimoredge}
Under the constraint $\La<\overline{\Lambda}(\Delta) $,
any $1$-stratum
point lies in the slim $1$-stratum or lies in  
$\bigcup_{i \in I_{\edge}} B(p_i,
3 \Delta \r_{p_i})$.
\end{lemma}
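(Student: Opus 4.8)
The plan is to chain together the two covering results that immediately precede this lemma, namely Lemma \ref{hitsall}(1) and Lemma \ref{lem-edgeballcover}, and then control the distances using the fact that $\r$ has small Lipschitz constant. Concretely, I would start with a $1$-stratum point $p$ that is not in the slim $1$-stratum. By Lemma \ref{hitsall}(1) (whose constraints on $\be_{E'},\be_E,\be_1,\si,\La$ are, by our standing convention, already in force) there is a point $q\in E$ with $p\in B(q,\De\r_q)$, i.e. $d(p,q)\le \De\r_q$. By Lemma \ref{lem-edgeballcover}, the edge set $E$ is covered by $\bigcup_{i\in I_{\edge}}B(p_i,\De\r_{p_i})$, so there is an index $i\in I_{\edge}$ with $d(q,p_i)\le \De\r_{p_i}$.

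The remaining step is a one-line triangle-inequality estimate. Since $\r$ is $\La$-Lipschitz, $\r_q\le \r_{p_i}+\La\,d(q,p_i)\le (1+\La\De)\r_{p_i}$, hence
\begin{equation}
d(p,p_i)\;\le\; d(p,q)+d(q,p_i)\;\le\;\De\r_q+\De\r_{p_i}\;\le\;(2+\La\De)\De\,\r_{p_i}.
\end{equation}
Imposing the constraint $\La<\overline{\La}(\De):=\De^{-1}$ makes $\La\De<1$, so $d(p,p_i)<3\De\,\r_{p_i}$, i.e. $p\in B(p_i,3\De\r_{p_i})$, as desired.

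There is essentially no obstacle here: the lemma is a bookkeeping corollary of the two preceding covering statements, and the only genuine content is making sure the radius inflation from passing through $q$ (from $\De\r_q$ to roughly $\De\r_{p_i}$) is absorbed by the Lipschitz bound on $\r$, which is exactly what the extra constraint $\La<\overline{\La}(\De)$ provides. The mild care needed is just to track that the two invoked lemmas have already "activated" their own parameter constraints, so that no new constraints beyond $\La<\overline{\La}(\De)$ need to appear in the statement.
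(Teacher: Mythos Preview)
Your proof is correct and follows essentially the same approach as the paper: invoke Lemma~\ref{hitsall}(1) to find an edge point $q\in E$ near $p$, then Lemma~\ref{lem-edgeballcover} to find $p_i$ near $q$, and finally use the $\La$-Lipschitz property of $\r$ together with the triangle inequality to bound $d(p,p_i)$. The paper phrases the Lipschitz step as ``$.9\le \r_p/\r_{p_i}\le 1.1$'' rather than your explicit $(2+\La\De)\De$ bound, but the argument is identical.
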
 
\begin{proof}
From Lemma \ref{hitsall}, if a $1$-stratum point does not lie in the slim
$1$-stratum then it lies in
$B(p, \Delta \r_{p})$ for some $p \in E$.
By Lemma \ref{lem-edgeballcover} we have $p \in B(p_i,\De\r_{p_i})$ for
some $i\in I_{\edge}$.
If $\Lambda\Delta$ is sufficiently small then we can assume that
$.9 \le \frac{\r_p}{\r_{p_i}} \le 1.1$. The lemma follows.
\end{proof}

The next lemma will be used later for the interface between the
slim stratum and the edge stratum.

\begin{lemma}
\label{lem-smalleretai}
Under the constraints 
$\be_E<\overline{\be}_E(\De,\be_2)$,  
$\varsigma_{\edge}<\overline{\varsigma}_{\edge}(\De,\be_2)$
and 
$\La<\overline{\La}(\De)$, the following holds.
Suppose for some $i\in I_{\edge}$ and $q\in B(p_i,10\De\r_{p_i})$
we have 
\begin{equation}
\eta_{E'}(q)<5\De\,,\quad |\eta_{p_i}(q)|< 5\De\,.
\end{equation}
Then either $p_i$ belongs to the slim $1$-stratum, or
there is a $j\in I_{\edge}$ such that 
$q\in B(p_j,10\De\r_{p_j})$ and $|\eta_{p_j}(q)|<2\De$.

\end{lemma}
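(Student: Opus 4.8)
The plan is to argue by contradiction in the usual compactness style. Suppose the statement fails for all choices of the constraint functions; then we obtain a sequence of manifolds $M^{\al}$, an index $i=i(\al)\in I_{\edge}$, an edge point $p=p_i$, and a point $q\in B(p,10\De\r_p)$ with $\eta_{E'}(q)<5\De$ and $|\eta_p(q)|<5\De$, such that $p$ is \emph{not} in the slim $1$-stratum, and yet for \emph{every} $j\in I_{\edge}$ with $q\in B(p_j,10\De\r_{p_j})$ we have $|\eta_{p_j}(q)|\ge 2\De$. Rescale by $\frac{1}{\r_p}$ and pass to a pointed limit; by Lemma \ref{lem-rpconsequences} and Lemma \ref{lem-edgepointnot2stratum}, the map $Q_p$ gives a $(1,\beta_E)$-splitting whose quality improves in the limit, so $(\frac{1}{\r_p}M,p)$ is Gromov--Hausdorff close to the strip $\R\times[0,C]$ with $C\ge 200\De$, with $p$ on the edge. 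Since $p$ is not in the slim stratum, Lemma \ref{hitsall} (applied to $p$ itself, viewed as a $1$-stratum point) is not needed; rather the point is that $q$ sits at bounded distance from $p$ inside this half-plane picture, at position with first coordinate roughly $\eta_p(q)$ and ``height'' roughly $\eta_{E'}(q)$ (using Lemma \ref{lem-rhoerhoe'} and Lemma \ref{lem-existsetapedge} to identify $\eta_{E'}$ and $\eta_{p_i}$ with the standard strip coordinates up to small error).

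The key step is then to produce a better-centered edge point near $q$. Since $\eta_{E'}(q)<5\De$, the point $q$ lies within roughly $5\De\,\r_p$ of the approximate edge $E'$; by Lemma \ref{nearedge}, $E'$ near $p$ is Hausdorff close to $\widehat Q_p([-100\De,100\De]\times\{0\})$, so there is a genuine edge point $q'\in E$ within distance $\approx 6\De\,\r_p$ of $q$ (applying Lemma \ref{faralignment1} to upgrade a nearby $\beta_{E'}$-edge point on the approximate edge to a $\beta_E$-edge point, exactly as in the proof of Lemma \ref{hitsall}). Moreover one can choose $q'$ so that, in the strip coordinates, its first coordinate is within $o(1)$ of $\eta_p(q)$; concretely pick $q'=\widehat Q_p(\eta_p(q),0)$ up to the errors already accumulated. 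By the covering Lemma \ref{lem-edgeballcover}, $q'\in B(p_j,\De\r_{p_j})$ for some $j\in I_{\edge}$, and when $\La\De$ is small we have $\r_{p_j}\approx\r_{q'}\approx\r_p$, so $q\in B(p_j,10\De\r_{p_j})$. The remaining point is to check $|\eta_{p_j}(q)|<2\De$: since $\eta_{p_j}$ is $Q_{p_j}$-adapted of quality $\varsigma_{\edge}$, and $q'$ is nearly the ``origin'' of $p_j$'s edge coordinate while $q$ differs from $q'$ by a vector whose projection to $p_j$'s $\R$-factor has length $o(1)\cdot\r_p$ (because $q$ and $q'$ have nearly the same first strip-coordinate and $q$ is near $E'$), the adapted coordinate estimate (\ref{adapted}) gives $|\eta_{p_j}(q)-\eta_{p_j}(q')|$ small and $|\eta_{p_j}(q')|$ small; hence $|\eta_{p_j}(q)|<2\De$ for $\La,\be_E,\varsigma_{\edge}$ small and $\be_2$ held fixed. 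This contradicts the assumption on all such $j$.

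The main obstacle is bookkeeping the various approximations so that the first coordinate of $q'$ (in $p$'s strip) really is close to $\eta_p(q)$, and then transporting that closeness through the change of edge point from $p$ to $p_j$: one must know that $p_j$'s edge direction and $p$'s edge direction are nearly aligned on the overlap, which follows from the fact that $q'\in E$ and both strips are Gromov--Hausdorff close to the same configuration near $q'$, combined with Lemma \ref{faralignment1} and the uniqueness of adapted coordinates (Lemma \ref{lem-uniquenessofadaptedcoords}). The $\be_2$-dependence enters only through fixing the quality of the $2$-strainers used implicitly when comparing edge coordinates; everything else is a matter of taking the auxiliary constants small in the correct order, namely first $\varsigma_{\edge}$, then $\be_E$, then $\La$, in terms of $\De$ and $\be_2$.
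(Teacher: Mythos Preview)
Your overall strategy is the same as the paper's: locate a point $q'$ on the approximate edge with nearly the same $\eta_{p_i}$-coordinate as $q$, cover it by an edge ball $B(p_j,\De\r_{p_j})$, and then use alignment of the two $1$-splittings to transfer the bound to $\eta_{p_j}$. The contradiction/limit framing is unnecessary but harmless; the paper argues directly.

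There is one genuine muddle. You say you will obtain $q'\in E$ by ``applying Lemma~\ref{faralignment1} to upgrade a nearby $\beta_{E'}$-edge point \ldots\ to a $\beta_E$-edge point, exactly as in the proof of Lemma~\ref{hitsall}.'' But Lemma~\ref{faralignment1} can only \emph{degrade} the quality of a splitting under change of basepoint; it cannot turn a $\beta_{E'}$-edge point into a $\beta_E$-edge point (recall $\beta_E<\beta_{E'}$). In particular, defining $q'=\widehat Q_{p_i}(\eta_{p_i}(q),0)$ only gives a point whose edge-splitting quality is \emph{worse} than the $\beta_E$ you started with at $p_i$, so there is no reason $q'\in E$. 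The mechanism in the proof of Lemma~\ref{hitsall} is different: one uses that $p_i$ is a non-slim $1$-stratum point, so it carries a $(1,\beta_1)$-splitting, which Lemma~\ref{lem-1splitalexandrov} refines to a $(1,\eps)$-splitting for $\eps$ as small as desired; only then does Lemma~\ref{faralignment1} produce points of $E$ along the edge. The clean fix---and what the paper does---is to avoid reproving this: first use Lemma~\ref{nearedge} to get $q'\in E'\cap B(p_i,10\De\r_{p_i})$ with $|\eta_{p_i}(q')-\eta_{p_i}(q)|<\tfrac{1}{10}\De$, then invoke Lemma~\ref{hitsall}(2) (applicable since $p_i$ is a non-slim $1$-stratum point) to find $p\in E$ with $q'\in B(p,\r_p)$, and finally apply Lemma~\ref{lem-edgeballcover} to $p$ to get $j$.

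For the last step, the paper invokes Lemma~\ref{adaptedclose} rather than Lemma~\ref{lem-uniquenessofadaptedcoords}: since neither $p_i$ nor $p_j$ is in the $2$-stratum, Lemma~\ref{alexcompatible} makes their $1$-splittings $\eps'$-compatible, and then Lemma~\ref{adaptedclose} gives $\|\eta_{p_i}-T\circ\eta_{p_j}\|_{C^1}$ small for an affine isometry $T$ of $\R$. This immediately converts $|\eta_{p_i}(q')-\eta_{p_i}(q)|<\tfrac{1}{10}\De$ into $|\eta_{p_j}(q')-\eta_{p_j}(q)|<\tfrac12\De$, and together with $|\eta_{p_j}(q')|<1.5\De$ (from $d(q',p_j)\lesssim\De\r_{p_j}$) yields $|\eta_{p_j}(q)|<2\De$. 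Your geometric phrasing (``$q-q'$ lies in the height direction'') amounts to the same thing once the alignment of the two $\R$-factors is established, and it is Lemma~\ref{alexcompatible} via the absence of a $(2,\beta_2)$-splitting that supplies that alignment---this is precisely where the $\beta_2$-dependence enters.
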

\begin{proof}
We may assume that $p_i$ does not belong to the slim $1$-stratum.

From the definition of $\eta_i$ and
Lemma \ref{nearedge},  provided that
$\be_{E'}<\overline{\be}_{E'}(\De)$,
$\be_E<\overline{\be}_E(\be_{E'})$,
$\La<\overline{\La}(\De)$ and 
$\varsigma_{\edge}<\overline{\varsigma}_{\edge}(\De)$,
there will be a $q'\in E'\cap B(p_i,10\De\r_{p_i})$ such
that $|\eta_{p_i}(q')-\eta_{p_i}(q)|<\frac{1}{10}\De$.  
Since $p_i$ is not 
in the slim $1$-stratum, by Lemma \ref{hitsall}(2)
there is a $p\in E$ such that $q'\in B(p,\r_p)$,
and by Lemma \ref{lem-edgeballcover} we have $p\in B(p_j,\De\r_{p_j})$
for some $j\in I_{\edge}$.  If $\La\De$ is small then we will
have $|\eta_{p_j}(q')|<1.5\De$.  Lemma \ref{adaptedclose} now implies that if
$\be_{\edge}<\overline{\be}_{\edge}(\De,\be_2)$  and 
$\varsigma_{\edge}<\overline{\varsigma}_{\edge}(\De,\be_2)$ then 
$|\eta_{p_j}(q')-\eta_{p_j}(q)|<\frac12 \De$.  
Thus $|\eta_{p_j}(q)|<2\De$.
\end{proof}

\subsection{Additional cutoff functions}
\label{subsec-additionalcutoff}

We define two additional cutoff functions for later use:
\begin{equation} \label{additional1}
\zeta_{\edge}=1-\Phi_{\frac12,1}\circ\left(
\sum_{i\in I_{\edge}}\,\zeta_i\,\right)
\end{equation}
and
\begin{equation} \label{additional2}
\zeta_{E'}=\left(\Phi_{\frac{2}{10}\De,\frac{3}{10}\De,8\De,9\De}\circ
\eta_{E'}\right)\cdot
\zeta_{\edge}.
\end{equation}

\section{The local geometry of the slim $1$-stratum}
\label{sec-locslim}

In this section we consider the slim $1$-stratum points.  Along with
the adapted coordinates and cutoff functions, we discuss the
local topology and a selection process to get a ball covering of the
slim $1$-stratum points.

\subsection{Adapted coodinates, cutoff functions and local topology
near slim $1$-stratum points}
\label{adaptedslim}

In this subsection, we let $p$ denote a point in the 
slim $1$-stratum, and
$\phi_p : \left( 
\frac{1}{\r_p} M, p \right) \rightarrow (\R \times X,
(0, \star_X))$
be
a $(1, \beta_1)$-splitting, with $\diam(X) \le 10^3 \Delta$. 
Let $\varsigma_{\slim} > 0$ be a new parameter.

\begin{lemma} \label{1adapted}

Under the constraint $\be_1<\overline{\be}_1(\De,\varsigma_{\slim})$:

\begin{itemize}
\item 
There is a $\phi_p$-adapted
coordinate $\eta_p$  of quality $\varsigma_{\slim}$ on 
$B(p, 10^6  \Delta) 
\subset \left( \frac{1}{\r_p} M, p \right)$.
\item The 
cutoff function
\begin{equation}
\left(\Phi_{-9 \cdot 10^5 \Delta,-8 \cdot 10^5 \Delta,
8 \cdot 10^5 \Delta, 9 \cdot 10^5 \Delta} \right)
\,\circ\, \eta_p
\end{equation}
extends by zero to a smooth 
function $\zeta_p$ on $M$.
\end{itemize}
\end{lemma}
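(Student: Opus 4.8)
The statement to prove is Lemma \ref{1adapted}, which asserts the existence of a $\phi_p$-adapted coordinate $\eta_p$ of quality $\varsigma_{\slim}$ on the large ball $B(p, 10^6\Delta) \subset \frac{1}{\r_p}M$ for a slim $1$-stratum point $p$, together with the fact that a suitable cutoff composed with $\eta_p$ extends by zero to a smooth function $\zeta_p$ on $M$.

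\textbf{Plan.} The first bullet follows almost immediately from the machinery already developed in Section 3. Since $p$ lies in the slim $1$-stratum, by definition there is a $(1,\beta_1)$-splitting $\phi_p : (\frac{1}{\r_p}M, p) \to (\R \times X, (0,\star_X))$ with $\diam(X) \le 10^3\Delta$. By Lemma \ref{lem-rpconsequences}(2), the rescaled manifold $\frac{1}{\r_p}M$ has curvature bounded below by $-\const \cdot A'$ on balls of controlled radius; more precisely, as $\al \to \infty$ the curvature on $B(p, 10^7\Delta) \subset \frac{1}{\r_p}M$ tends to zero (the curvature scale $R_p/\r_p \to \infty$). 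So for large $\al$ the sectional curvature on $B(p, \frac{1}{\de'})$ is bounded below by $-(\de')^2$ for any prescribed $\de'$. Now I would apply Lemma \ref{lem1} (Existence of adapted coordinates) with $n=3$, $k=1$, and the target quality $\varsigma_{\slim}$: provided $\beta_1 < \overline{\beta}_1(3, \varsigma_{\slim})$ and $\al$ is large enough that the curvature lower bound holds, there exist $\phi_p$-adapted coordinates of quality $\varsigma_{\slim}$. To get the domain to be $B(p, 10^6\Delta)$ rather than the unit ball, I invoke Remark \ref{addedremark}, which says the construction rescales to a ball of any specified size; the constraint $\beta_1 < \overline{\beta}_1(\De, \varsigma_{\slim})$ absorbs the dependence on $10^6\Delta$. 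This establishes the first bullet.

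\textbf{Second bullet.} The cutoff function in question is $\left(\Phi_{-9\cdot 10^5\De, -8\cdot 10^5\De, 8\cdot 10^5\De, 9\cdot 10^5\De}\right)\circ\eta_p$, which by the definition of $\Phi_{a,b,c,d}$ in Section \ref{notation} is supported in $\eta_p^{-1}([-9\cdot 10^5\De, 9\cdot 10^5\De])$ and is identically $1$ where $|\eta_p| \le 8\cdot 10^5\De$. Since $\eta_p$ is smooth (being an adapted coordinate, which is smooth and $(1+\varsigma_{\slim})$-Lipschitz by part (1) of Definition \ref{def4}) and $\Phi$ is smooth on $\R$, the composition is smooth on the domain $B(p, 10^6\Delta)$ of $\eta_p$. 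The key point for extension by zero is that the support of this composition is a closed subset of $B(p, 10^6\Delta)$ that stays well inside it: a point $m$ with $|\eta_p(m)| \le 9\cdot 10^5\De$ must have $d(m,p) \le 9\cdot 10^5\De + o(1) < 10^6\De$, using that $\eta_p$ is $(1+\varsigma_{\slim})$-Lipschitz and $\eta_p(p)=0$ (wait — I should be careful: $\eta_p$ Lipschitz gives $|\eta_p(m) - \eta_p(p)| \le (1+\varsigma_{\slim}) d(m,p)$, which controls $|\eta_p(m)|$ from above by distance, not distance from below; for the support statement I instead need that the support is contained in a compact set inside $B(p,10^6\De)$). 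The correct argument: by property (2) of adapted coordinates, the image of $\eta_p$ is Hausdorff-close to $B(\eta_p(p), 10^6\De)$, but more usefully, one shows that $\eta_p^{-1}([-9\cdot 10^5\De, 9\cdot 10^5\De])$ is contained in $\overline{B(p, 95\cdot 10^4\De)}$ for the rescaled metric — this follows from the approximate-splitting estimate (\ref{adapted}) combined with the fact that $\phi_p$ is a Gromov-Hausdorff approximation to $\R \times X$ with $\diam X \le 10^3\De$, so that the $\R$-coordinate roughly agrees with $\eta_p$ and is comparable to distance. Hence the support is a compact subset of the open ball $B(p, 10^6\De)$, and setting $\zeta_p = 0$ outside $B(p, 10^6\De)$ gives a globally smooth function on $M$ (after unrescaling, the ball $B(p, 10^6\De)$ in $\frac{1}{\r_p}M$ is an open subset of $M$).

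\textbf{Main obstacle.} The only subtle point is verifying that the support of the cutoff genuinely stays compactly inside the coordinate domain $B(p, 10^6\De)$ — i.e., controlling $d(m,p)$ from \emph{above} in terms of $|\eta_p(m)|$. This is where one needs the lower Lipschitz-type bound: along a minimizing geodesic from $p$ to $m$, inequality (\ref{adapted}) forces the derivative of $\eta_p$ in the radial direction to be comparable to the $\R$-component of the unit direction, and since $\phi_p$ is close to the splitting $\R \times X$ with $X$ of diameter $\le 10^3\De$, the displacement $d(m,p)$ cannot exceed $|\eta_p(m)|$ by more than $\const \cdot 10^3\De$ plus small errors. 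This is routine given the tools in Section 3 but is the one place where the gap $9\cdot 10^5\De$ versus $10^6\De$ (and the slimness bound $10^3\De$) is actually used. Everything else is bookkeeping with constraint functions. I would simply cite Lemma \ref{lem1} and Remark \ref{addedremark} for the first bullet and give a one-line argument with the definitions for the second, noting the support containment.
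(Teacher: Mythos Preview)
Your approach is correct and matches the paper's proof, which consists of the single sentence ``This follows from Lemma \ref{lem1} (see also Remark \ref{addedremark}).'' Your additional justification for the second bullet --- that the support of the cutoff lies compactly inside $B(p,10^6\De)$, using the slimness bound $\diam(X)\le 10^3\De$ to control $d(m,p)$ in terms of $|\eta_p(m)|$ --- is the right reasoning and simply makes explicit what the paper leaves implicit.
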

\begin{proof}
This follows from
Lemma \ref{lem1} (see also Remark \ref{addedremark}).
\end{proof}

Let $\eta_p$ and $\zeta_p$ be as in  Lemma \ref{1adapted}.

\begin{lemma}
\label{lem-slimtopology}
Under the constraints
$\be_1<\overline{\be}_1(\varsigma_{\slim},\De,w')$, 
$\varsigma_{\slim}<\overline{\varsigma}_{\slim}(\De)$, then  $\eta_p^{-1}\{0\}$ is diffeomorphic to
$S^2$ or $T^2$.
\end{lemma}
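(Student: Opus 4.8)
The plan is to transfer the topology of $\eta_p^{-1}\{0\}$ to that of the cross-section of a genuine product model supplied by Lemma \ref{smoothconv}, in the same spirit as the proofs of Lemmas \ref{2fibration} and \ref{lem-edgetopology}.

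First I would fix a small parameter $\eps>0$, depending only on $\De$, and apply Lemma \ref{smoothconv} with $j=1$ (legitimate once $\be_1<\overline{\be}_1(\eps,w')$): the $(1,\be_1)$-splitting $\phi_p$ is $\eps$-close to a $(1,\eps)$-splitting $\widehat\phi:(\frac1{\r_p}M,p)\ra(\R\times\widehat X,(0,\star_{\widehat X}))$, where $\widehat X$ is a complete nonnegatively curved $C^K$-smooth surface and $\widehat\phi$ is $\eps$-close in the $C^{K+1}$-topology to an isometry on $B(p,\eps^{-1})\subset\frac1{\r_p}M$. Since $\phi_p$ is $\eps$-close to $\widehat\phi$ there is an $\eps$-Gromov-Hausdorff approximation $X\ra\widehat X$, and combined with $\diam(X)\le 10^3\De$ this forces the ball $B(\star_{\widehat X},\eps^{-1}-\eps)\subset\widehat X$ to have diameter at most $10^3\De+3\eps$. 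A minimizing-geodesic argument (a point at arclength $\eps^{-1}-2\eps$ along a minimizing geodesic from $\star_{\widehat X}$ to a hypothetical far point would lie in this ball yet be at distance $\eps^{-1}-2\eps$ from $\star_{\widehat X}$) then shows, when $\eps$ is small relative to $\De$, that $\widehat X=B(\star_{\widehat X},\eps^{-1}-\eps)$; by Hopf--Rinow $\widehat X$ is compact with $\diam(\widehat X)\le 10^3\De+3\eps$. Hence $\widehat X$ is a compact connected nonnegatively curved surface, so it is diffeomorphic to $S^2$, $\R P^2$, $T^2$, or the Klein bottle.

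Next I would produce the fibration structure. Applying inequality (\ref{adapted}) to suitable points furnished by $\phi_p$ shows that $\eta_p$ is a submersion on its domain $B(p,10^6\De)\subset\frac1{\r_p}M$, exactly as in the proof of Lemma \ref{2fibration}. Since $\eta_p(p)=0$, since $\pi_\R\circ\phi_p$ vanishes at $p$, and since $\diam(X)\le 10^3\De$, integrating (\ref{adapted}) along minimizing geodesics from $p$ shows that any $x$ with $|\eta_p(x)|\le 1$ has $\pi_\R\circ\phi_p(x)$ small and hence $d(x,p)\le 2\cdot 10^3\De$ (using that $\phi_p$ is a $\be_1$-Gromov-Hausdorff approximation). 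Thus for small $a>0$ the set $\eta_p^{-1}([-a,a])$ is a compact subset of $\overline{B(p,2\cdot 10^3\De)}$, which lies well inside the domain $B(p,10^6\De)$ of $\eta_p$. Therefore $\eta_p\colon\eta_p^{-1}((-a,a))\ra(-a,a)$ is a proper submersion, hence a trivial fiber bundle; in particular $\eta_p^{-1}((-a,a))$ is diffeomorphic to $(-a,a)\times\eta_p^{-1}\{0\}$, and $\eta_p^{-1}\{0\}$ is a closed surface.

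It then remains to identify the fiber and conclude. Because $\diam(\widehat X)$ is small compared with $\eps^{-1}$, the near-isometry $\widehat\phi$ identifies a region of $B(p,\eps^{-1})$ with an open subset of $\R\times\widehat X$ containing $(-a,a)\times\widehat X$, and by the chain $\eta_p\approx\pi_\R\circ\phi_p\approx\pi_\R\circ\widehat\phi$ the coordinate $\eta_p$ is $C^1$-close to the $\R$-coordinate of this product. A straight-line homotopy from $\eta_p$ to that $\R$-coordinate, together with Lemma \ref{lem-isotopic1}, then shows --- exactly as in the proof of Lemma \ref{lem-edgetopology} --- that $\eta_p^{-1}\{0\}$ is diffeomorphic to $\{0\}\times\widehat X\cong\widehat X$. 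Finally, $\eta_p^{-1}((-a,a))\cong(-a,a)\times\widehat X$ is an open subset of the orientable manifold $\frac1{\r_p}M$, so $\widehat X$ is orientable; hence $\widehat X$, and therefore $\eta_p^{-1}\{0\}$, is diffeomorphic to $S^2$ or $T^2$. The one genuinely nontrivial point is this last identification of the level set of $\eta_p$ with the true cross-section $\widehat X$ (the passage from $C^1$-closeness to diffeomorphism), and it is handled just as in Lemma \ref{lem-edgetopology}; everything else is bookkeeping with the constants in the order $\eps$, then $\varsigma_{\slim}$, then $\be_1$.
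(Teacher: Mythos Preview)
Your proposal is correct and follows essentially the same route as the paper: apply Lemma \ref{smoothconv} to upgrade the $(1,\be_1)$-splitting to a $C^{K+1}$-near-isometry onto a product $\R\times Z$ with $Z$ a compact nonnegatively curved orientable surface (hence $S^2$ or $T^2$), then show $\eta_p$ is $C^1$-close to the $\R$-projection and invoke an isotopy lemma. The only minor differences are cosmetic---the paper cites Lemma \ref{lem-isotopic2} rather than \ref{lem-isotopic1}, uses orientability up front rather than at the end, and establishes the $C^1$-closeness by observing that $\eta_p$ is also $\phi$-adapted and then using (\ref{adapted}) with strainer points along the $\R$-factor, rather than your chain of approximations.
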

\begin{proof}
From Lemma \ref{smoothconv}, if $\beta_1<\overline{\be}_1(\De,w')$
then close to $\phi_p$, there is a 
$(1,\beta_1)$-splitting
$\phi:\left( \frac{1}{\r_p} M, p \right) \ra (\R \times Z,(0,\star_Z))$ 
for some
complete pointed nonnegatively curved 
$C^K$-smooth
surface $(Z,\star_Z)$, with the map being 
$C^{K+1}$-close
to an isometry on $B(p, 10^6  \Delta)$. From Definition
\ref{1stratumtypes}, the diameters of the $Z$-fibers
are at most $10^4 \De$. In particular, since $Z$ is compact and $M$ is
orientable, $Z$ must be diffeomorphic to $S^2$ or $T^2$.  Furthermore,
we may assume that for any pair of 
points $m,m'\in M$ with  $m\in B(p,10^6\De)\subset\frac{1}{\r_p}M$, 
$d(m,m')\in [2,10]$, and $\pi_Z(\phi(m))=\pi_Z(\phi(m'))$, the initial
velocity $v$ of a minimizing segment $\ga$ from $m$ to $m'$ maps 
under $\phi_*$ to a vector almost tangent to the $\R$-factor of
$\R\times Z$.

As $\phi$ is close to $\phi_p$, we may assume that  
$\eta_p$ is a 
$\phi$-adapted coordinate of quality
$2\varsigma_{\slim}$.  
If $\varsigma_{\slim}<\overline{\varsigma}_{\slim}(\De)$, then
we may apply the estimate from the preceding paragraph, and
the definition of adapted coordinates  
(specifically (\ref{adapted})), to conclude that $\eta_p$ is 
$C^1$-close  to the composition 
$\frac{1}{\r_p}M\stackrel{\phi}{\ra}\R \times Z \ra \R$
on the ball $B(p, 10^6 \Delta)$. The lemma now
follows from Lemma
\ref{lem-isotopic2}.
\end{proof}

\subsection{Selection of slim $1$-stratum balls}

Let $\{p_i\}_{i\in I_{\slim}}$ be a maximal set of slim stratum points with the property
that the collection $\{B(p_i,\frac13\De\r_{p_i})\}_{i\in I_{\slim}}$ is disjoint. We write $\zeta_i$ for $\zeta_{p_i}$.

\begin{lemma}
\label{lem-slimballselection}
Under the constraints ${\mathcal M}>\overline{\mathcal M}$ and
$\La<\overline{\La}(\De)$,
\begin{itemize}
\item $\bigcup_{i\in I_{\slim}}\;B(p_i,\De\r_{p_i})$ contains all 
slim stratum points.
\item The intersection multiplicity of the collection 
$\{\supp(\zeta_i)\}_{i\in I_{\slim}}$
is bounded by $\mathcal{M}$.
\end{itemize}

\end{lemma}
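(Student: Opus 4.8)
This is the standard packing/Vitali-type argument, identical in structure to Lemma \ref{mult2} and Lemma \ref{lem-edgeballcover}, only now for slim $1$-stratum points and with radii scaled by $\De$. First I would observe that maximality of the disjoint collection $\{B(p_i,\tfrac13\De\r_{p_i})\}_{i\in I_{\slim}}$ immediately gives the covering statement: if $p$ is a slim $1$-stratum point, then $B(p,\tfrac13\De\r_p)$ must meet some $B(p_i,\tfrac13\De\r_{p_i})$ (otherwise we could enlarge the collection), and since the scale function $p\mapsto\r_p$ is $\La$-Lipschitz, when $\La<\overline{\La}(\De)$ the distance $d(p,p_i)$ is small relative to the scales, so $\tfrac{\r_p}{\r_{p_i}}\in[.9,1.1]$; a short triangle inequality then yields $p\in B(p_i,\De\r_{p_i})$. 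This is the same computation as in Lemma \ref{mult2}(1).

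For the intersection multiplicity bound, I would first note, exactly as in Lemma \ref{mult2}(2), that by the definition of $\zeta_p$ from Lemma \ref{1adapted} together with Lemma \ref{lem1}/Remark \ref{addedremark}, the support $\supp(\eta_i)$ (i.e. $\supp(\zeta_i)$) is contained in $B(p_i, c\,\De\r_{p_i})$ for some absolute constant $c$ (here roughly $10^6$), provided $\varsigma_{\slim}$ is small. Then suppose $p\in\bigcap_{j=1}^N B(p_{i_j}, c\,\De\r_{p_{i_j}})$ for distinct $i_j$. Relabel so that $B(p_{i_1},\r_{p_{i_1}})$ has the smallest volume. Using the $\La$-Lipschitz property once more: if $\La\De$ is small enough, all these scales are comparable, $\tfrac12\le\tfrac{\r_{p_{i_j}}}{\r_{p_{i_1}}}\le 2$. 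Hence the $N$ pairwise disjoint balls $\{B(p_{i_j},\tfrac13\De\r_{p_{i_j}})\}_{j=1}^N$ all lie in a fixed multiple $B(p_{i_1}, C'\De\r_{p_{i_1}})$ of the smallest ball. Bishop--Gromov volume comparison, using the lower curvature bound at scale $\r$ (which holds since $R_p/\r_p\to\infty$), then bounds $N$ by a ratio $\tfrac{\int_0^{C'}\sinh^2(r)\,dr}{\int_0^{1/3}\sinh^2(r)\,dr}$ of hyperbolic volumes, which depends only on the absolute constant $C'$. Choosing $\overline{\mathcal M}$ to be this ratio gives the claimed multiplicity bound $N\le\mathcal M$.

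Since the covering statement needs no volume input and the multiplicity argument is pure packing plus Bishop--Gromov, there is really no serious obstacle here — the only care needed is that the volume comparison is applied on balls whose radius is $O(\De\r_{p_{i_1}})$, well below the curvature scale, so one must invoke that for large $\al$ the sectional curvature on these balls is bounded below by $-c^2\r_{p_{i_1}}^{-2}$ with $c$ as small as desired (as used throughout Section \ref{sec-scalefunction}); after rescaling by $\r_{p_{i_1}}$ this is the usual almost-nonnegatively-curved Bishop--Gromov estimate. Given how closely this parallels Lemmas \ref{mult2} and \ref{lem-edgeballcover}, I expect the authors to simply write ``We omit the proof, as it is similar to the proof of Lemma \ref{mult2}.''

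\begin{proof}
We omit the proof, as it is similar to the proof of Lemma \ref{mult2}.
\end{proof}
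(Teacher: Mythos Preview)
Your proposal is correct and matches the paper exactly: the paper's proof reads verbatim ``We omit the proof, as it is similar to the proof of Lemma \ref{mult2}.'' Your outline of how that argument actually runs (maximality gives the cover via the Lipschitz scale function; Bishop--Gromov packing gives the multiplicity bound) is accurate as well.
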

We omit the proof, as it is similar to the proof of
Lemma \ref{mult2}.

\section{The local geometry of the $0$-stratum}
\label{sec-loc0stratum}

Thus far, points
in the $0$-stratum have been defined by a process of elimination (they are 
points that are neither $2$-stratum points nor $1$-stratum points) rather 
than by the
presence of some particular geometric structure.  We now discuss
their geometry.   We show in  
Lemma \ref{lemgoodannulus} that $M$ has conical structure near
every point -- not just the $0$-stratum
points -- provided one looks at an appropriate scale larger than $\r$. 
We then use this  to define   radial 
and cutoff functions near $0$-stratum points.

Let  $\de_0 > 0$  and 
$\Upsilon_0, \Upsilon_0',\tau_0 > 1$ 
be new parameters.

\subsection{The Good Annulus Lemma} \label{subsecann}

We now show that for every point $p$ in $M$, there is a scale
at which a neighborhood of $p$
is well approximated by a model geometry in two different
ways: by a nonnegatively curved $3$-manifold in the 
pointed 
$C^K$-topology,
and by the Tits cone of this manifold
in the pointed Gromov-Hausdorff topology.

\begin{lemma}
\label{lemgoodannulus}
Under the constraint 
$\Upsilon'_0>
\overline{\Upsilon}'_0(\de_0,\Upsilon_0,w')$,
if $p\in M$ 
then there exists
$r^0_p \in [\Upsilon_0 \r_p,\Upsilon'_0 \r_p]$ and a 
complete $3$-dimensional nonnegatively curved 
$C^K$-smooth
Riemannian manifold $N_p$ such that:
\begin{enumerate}
\item
$(\frac{1}{r^0_p} \, M,p)$ is $\de_0$-close in the pointed Gromov-Hausdorff topology to
the Tits cone $C_TN_p$ of $N_p$.

\item
The  ball 
$B(p,r^0_p) \subset M$ is diffeomorphic
 to $N_p$.
\item The distance function from $p$ has no  critical points in the annulus
 $A(p, \frac{r^0_p}{100}, r^0_p)$.
\end{enumerate}
\end{lemma}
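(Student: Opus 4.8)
The plan is to argue by contradiction, exploiting two features of the setup that have already been established: first, that the rescaled pointed manifolds $(\frac{1}{\r_p}M,p)$ subconverge in the pointed $C^K$-topology to complete nonnegatively curved $C^K$-smooth $3$-manifolds (Lemma \ref{lem-rpconsequences}(3)); and second, that a complete nonnegatively curved manifold $N$ is asymptotically conical --- its rescalings $(\frac{1}{\la}N,\star)$ converge in the pointed Gromov-Hausdorff topology to the Tits cone $C_TN$, and by Grove-Shiohama critical point theory \cite{Grove-Shiohama} together with the soul theorem \cite{Cheeger-Gromoll}, for all sufficiently large $\la$ the distance function $d_\star$ on $N$ has no critical points in the annulus $A(\star,\frac{\la}{100},\la)$, and the ball $B(\star,\la)$ is diffeomorphic to $N$. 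So I would first record these ``model'' facts as a preliminary: there is an $\Upsilon_0''=\Upsilon_0''(\de_0)$ (depending on the chosen Gromov-Hausdorff precision near the cone and the critical-point-free scale) such that for any $r\ge \Upsilon_0''$, a complete nonnegatively curved $C^K$ $3$-manifold $N$ with $\vol(B(\star,1))$ bounded below satisfies all three conclusions at scale $r$. Here the uniformity over the compact family of model manifolds furnished by Lemma \ref{lem-rpconsequences}(3) is what makes $\Upsilon_0''$ independent of the particular $p$.

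Next I would set up the contradiction. Suppose the constraint $\Upsilon_0'>\overline{\Upsilon}_0'(\de_0,\Upsilon_0,w')$ cannot be met: then for each $\al$ (now reintroducing the suppressed index) there is a point $p^\al\in M^\al$ such that for \emph{every} $r\in[\Upsilon_0\r_{p^\al},\Upsilon_0'\r_{p^\al}]$ and every complete nonnegatively curved $C^K$ $3$-manifold $N$, at least one of (1)--(3) fails at scale $r$ for $(M^\al,p^\al)$. Choose $\Upsilon_0'\to\infty$ along the sequence. Rescale so that $\r_{p^\al}=1$; then $(M^\al,p^\al)$ subconverges in the pointed $C^K$-topology to a complete nonnegatively curved $C^K$-smooth $3$-manifold $(N_\infty,\star)$, by Lemma \ref{lem-rpconsequences}(3). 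Now pick any fixed $r_0\in[\Upsilon_0'',\Upsilon_0'']$ --- more precisely any single value $r_0\ge\Upsilon_0''$; by the model fact above, $(N_\infty,\star)$ satisfies conclusions (1)--(3) at scale $r_0$ with model manifold $N_\infty$ itself. The point is that each of these three properties is \emph{open} in the $C^K$-topology on the pointed manifold: Gromov-Hausdorff closeness to $C_TN_\infty$ persists under small perturbation; absence of critical points of $d_p$ in a compact annulus is stable under $C^2$-convergence (a noncritical point has a vector $v$ making angle $>\pi/2+c$ with all minimizing directions, and this condition is robust, using that minimizing geodesics converge); and the diffeomorphism type of $B(p,r_0)$, bounded by noncritical level sets, is then determined. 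Hence for $\al$ large, $(M^\al,p^\al)$ also satisfies (1)--(3) at scale $r_0$ with model manifold close to $N_\infty$, and since $r_0\le\Upsilon_0'$ once $\Upsilon_0'$ is large, we set $r^0_{p^\al}=r_0\r_{p^\al}\in[\Upsilon_0\r_{p^\al},\Upsilon_0'\r_{p^\al}]$ (ensuring also $r_0\ge\Upsilon_0$) and obtain the desired contradiction.

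I expect the main obstacle to be making the stability statements for conclusions (1) and (3) genuinely uniform and precise --- in particular, showing that absence of critical points of the distance function $d_p$ in the annulus $A(p,\frac{r^0_p}{100},r^0_p)$ passes to $C^K$-limits and back. The subtlety is that criticality is defined via minimizing geodesics from $p$, which are only Lipschitz-stable, not smoothly stable; one must argue via comparison angles and the compactness of the unit tangent sphere, quantifying the ``noncritical gap'' $c>0$ on the compact annulus for the limit $N_\infty$ and transferring it with loss $o(1)$ to the approximators. This is where the $C^K$ (rather than merely Gromov-Hausdorff) convergence is essential, since it gives convergence of geodesics and of the exponential map. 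Establishing the uniform lower volume bound $\vol(B(\star,1))\ge\widehat w$ for $N_\infty$ --- needed so that $\Upsilon_0''$ can be chosen once and for all --- follows from Lemma \ref{lem-rpconsequences}(1). Everything else (asymptotic conicality, the structure of $C_TN_p$, the diffeomorphism $B(p,r^0_p)\cong N_p$ from Morse theory) is standard Cheeger-Gromoll and Grove-Shiohama theory applied to the fixed limit manifold.
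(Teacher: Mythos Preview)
Your proposal is correct and follows essentially the same approach as the paper: argue by contradiction, pass to a $C^K$-subsequential limit $(N_\infty,\star)$ using Lemma \ref{lem-rpconsequences}, invoke asymptotic conicality of $N_\infty$ and Grove--Shiohama critical point theory to find a good scale $r_0$ for the limit, then transfer conclusions (1)--(3) back to the approximators by openness under $C^K$-convergence. The paper omits your preliminary step of extracting a uniform $\Upsilon_0''$ over the compact family and simply finds the scale for the single limit $N_\infty$ arising in the contradiction, but otherwise the arguments are the same.
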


\begin{proof}
Suppose that conclusion (1) does not hold. 
Then for each $j$, if we take  $\Upsilon^\prime_0 = j \Upsilon_0$, it
is not true that
conclusion (1) holds for sufficiently large $\alpha$. 
Hence we
can find a sequence $\alpha_j \rightarrow \infty$ so that for each $j$,
$(M^{\alpha_j},
p_{\alpha_j})$ provides a counterexample with 
$\Upsilon^\prime_0 = j \Upsilon_0$.

For convenience of notation, we relabel
$(M^{\alpha_j}, p_{\alpha_j})$ as $(M_j,p_j)$ and write
$\r_j$ for $\r_{p_{\alpha_j}}$.
Then by assumption, for each $r^0_j \in [\Upsilon_0 \r_j, j\Upsilon_0 \r_j]$ there 
is no $3$-dimensional
nonnegatively curved Riemannian manifold 
$N_j$ such that  conclusion (1) holds.

Assumption \ref{assumptions}
implies that a subsequence of $\{(\frac{1}{\r_j}M_j,p_j)\}_{j=1}^\infty$, which we relabel as
$\{(\frac{1}{\r_j}M_j,p_j)\}_{j=1}^\infty$,
converges in the pointed 
$C^{K}$-topology
to a pointed $3$-dimensional 
nonnegatively curved 
$C^{K}$-smooth
Riemannian manifold $(N,p_\infty)$.  
Now $N$ is asymptotically
conical.  That is, there is some $R > 0$ so that if $R^\prime > R$ then
$(\frac{1}{R^\prime} N, p_\infty)$ is $\frac{\de_0}{2}$-close 
in the pointed Gromov-Hausdorff topology
to the Tits cone $C_TN$.

By critical point theory, large 
open balls in $N$ are diffeomorphic to 
$N$ itself.
Hence we can find 
$R^\prime > 10^3 \max \left(\Upsilon_0 , R \right)$ 
so that for any $R^{\prime \prime} \in 
\left( \frac12 R^\prime, 2 R^\prime \right)$, 
there are no critical points of the distance function from
$p_\infty$ in 
$A(p_\infty, \frac{R^{\prime \prime}}{10^3}, 10 R^{\prime \prime}) \subset 
N$, and
the ball
$B(p_\infty, R^{\prime \prime})$ is diffeomorphic to $N$.
In view of the convergence $(\frac{1}{\r_j} M_j,p_j)\ra (N,p_\infty)$ in
the pointed 
$C^{K}$-topology, it follows that
for large $j$ 
there are no critical points of the distance function in
$A(p_j, \frac{R^{\prime \prime} \r_j}{100)},  
R^{\prime \prime} \r_j) \subset N_j$, and
$B(p_j, R^{\prime \prime} \r_j) \subset M_j$ is diffeomorphic to
$B(p_\infty, R^{\prime \prime}) \subset N$. Taking 
$r_j^0 = R^\prime \r_j$ gives
a contradiction.
\end{proof}

\begin{remark}
If we take the parameter $\sigma$ of Lemma \ref{lemsicloseto2d} 
to be small
then we can additionally conclude that $C_TN_p$ is pointed
Gromov-Hausdorff close to a conical nonnegatively curved
Alexandrov space of dimension
at most two.
\end{remark}

\subsection{The radial function near a $0$-stratum point}
\label{subsec-0stratumscale}

For every $p\in M$, we apply 
Lemma \ref{lemgoodannulus} to get 
a scale
$r^0_p \in \left[ \Upsilon_0 \r_p, \Upsilon_0^\prime \r_p \right]$ 
for which the conclusion of
Lemma \ref{lemgoodannulus} holds.
In particular, 
$\left( \frac{1}{r^0_p} M, p \right)$ is $\de_0$-close in the
pointed Gromov-Hausdorff topology
to the Tits cone $C_T N_p$ of a nonnegatively curved $3$-manifold $N_p$. 

Let $d_{p}$ be the distance
function from $p$ in $\left( \frac{1}{r^0_p} M, p \right)$. 
Let $\varsigma_{\zeroball} >0$ be a new parameter.
  
\begin{lemma}
\label{lem-existsetap0stratum}
Under the constraint $\delta_0 <
\overline{\delta}_0(\varsigma_{\zeroball})$,
there is a  function $\eta_p:\frac{1}{\r^0_p}M\ra [0,\infty)$ 
such that:
\begin{enumerate}
\item $\eta_p$ is smooth 
on
$A \left( p, \frac{1}{10}, 10 \right) \subset  
\frac{1}{r^0_p} M$.
\item 
$\parallel \eta_p - d_p \parallel_\infty < \varsigma_{\zeroball}$.
\item 
$\eta_p - d_p:\frac{1}{\r^0_p}M\ra [0,\infty)$ 
is $\varsigma_{\zeroball}$-Lipschitz.
\item $\eta_p$ is smooth and has no critical points in 
$\eta_p^{-1}([\frac{2}{10},2])$, and for every
$\rho\in [\frac{2}{10},2]$, the sublevel set $\eta_p^{-1}([0,\rho])$
is diffeomorphic to 
either the closed disk bundle in the normal bundle 
$\nu S$ of the
soul $S\subset N_p$, if $N_p$ is noncompact, or to $N_p$ itself
when $N_p$ is compact.
\item The 
 composition
$
\Phi_{ \frac{2}{10},\frac{3}{10},\frac{8}{10},\frac{9}{10}} \circ 
\eta_p$
extends by zero to  a smooth cutoff function $\zeta_p:M\ra [0,1]$.
\end{enumerate}
\end{lemma}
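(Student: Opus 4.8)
The plan is to build $\eta_p$ by smoothing the distance function $d_p$ on $\frac{1}{r^0_p}M$ with Corollary \ref{cor-1strained}, and to read off the topology of its sublevel sets from the almost-conical structure provided by Lemma \ref{lemgoodannulus} together with the $C^K$-closeness of $\frac{1}{\r_p}M$ to $N_p$ from Lemma \ref{lem-rpconsequences}(3). Throughout we apply Corollary \ref{cor-1strained} with an error parameter that is at most $\varsigma_{\zeroball}$ and at most $\frac{1}{100}$, so that in particular $\|\eta_p-d_p\|_\infty<\min(\varsigma_{\zeroball},\frac{1}{100})$.

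The first and main step is to verify the hypothesis of Corollary \ref{cor-1strained}: for every $\theta>0$, provided $\delta_0<\overline{\delta}_0(\theta)$ and $\alpha$ is large, one has $\diam(V_x)<\theta$ for every $x$ with $d_p(x)\in[\frac{1}{40},40]$, where $V_x\subset T_xM$ denotes the set of initial velocities of minimizing geodesics from $x$ to $p$ in $\frac{1}{r^0_p}M$. I would prove this by contradiction: a sequence of counterexamples, after passing to a subsequence, has $(\frac{1}{r^0_p}M,p)$ Gromov--Hausdorff converging to the metric cone $C_T N$ of Lemma \ref{lemgoodannulus}(1) --- here one uses that $R_p/r^0_p\to\infty$ (since $r^0_p\le\Upsilon_0'\r_p$ while $R_p/\r_p\to\infty$), so the rescaled manifolds have curvature bounded below by a sequence tending to $0$, whence minimizing geodesics pass to minimizing geodesics in the limit. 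The offending points $x$ subconverge to a point $x_\infty$ of the cone with $d(x_\infty,\star)\in[\frac{1}{40},40]$; in a metric cone the minimizing geodesic from any point to the vertex is the radial segment, and it is unique (the radial coordinate is $1$-Lipschitz along any path). Hence $\diam(V_x)\to 0$, a contradiction. This is the hard part of the argument; everything after it is a routine application of Corollary \ref{cor-1strained} and of critical point theory.

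Granting this, fix $\theta=\overline{\theta}(\min(\varsigma_{\zeroball},\frac{1}{100}))$ from Corollary \ref{cor-1strained}, choose $\delta_0<\overline{\delta}_0(\varsigma_{\zeroball})$ so that the estimate above holds with this $\theta$, and apply Corollary \ref{cor-1strained} in $\frac{1}{r^0_p}M$ with $Y=\{p\}$, $U=\{x:\frac{1}{20}<d_p(x)<20\}$, and $C=\overline{A(p,\frac{1}{10},10)}$ (compact, as $M$ is closed and $C\subset U$). Let $\eta_p$ be the resulting smoothing of $d_p$. Then (1), (2), (3) are exactly parts (1), (2), (5) of Corollary \ref{cor-1strained}, and $\eta_p\ge 0$ since $\eta_p=d_p$ off $U$ while on $U$ one has $d_p>\frac{1}{20}>\min(\varsigma_{\zeroball},\frac{1}{100})>|\eta_p-d_p|$. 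For the critical point count in (4): on $C$, part (4) of Corollary \ref{cor-1strained} gives that $\nabla\eta_p$ makes angle less than $\varsigma_{\zeroball}$ with $-V_x$, and combining this with the right first-variation identity $\frac{d}{dt}\big|_{0^+}d_p(\gamma(t))=-1$ along a minimizing geodesic $\gamma$ from $x$ to $p$ and the $\varsigma_{\zeroball}$-Lipschitz bound on $\eta_p-d_p$ shows $\langle\nabla\eta_p(x),v\rangle$ is close to $1$ for $v\in V_x$, hence $|\nabla\eta_p|$ is close to $1$ and in particular nonzero on $C$; since $\eta_p^{-1}([\frac{2}{10},2])\subset\{d_p\in[\frac{1}{10},10]\}\subset C$ for $\varsigma_{\zeroball}$ small, $\eta_p$ is smooth with no critical points there (indeed on all of $\eta_p^{-1}([\frac18,8])$).

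It remains to identify $\eta_p^{-1}([0,\rho])$ for $\rho\in[\frac{2}{10},2]$. Since $\eta_p$ has no critical points in $\eta_p^{-1}([\frac18,8])$ these sublevel sets are mutually diffeomorphic, and since $\eta_p=d_p$ on $\{d_p\le\frac{1}{20}\}$, interpolating linearly between $d_p$ and $\eta_p$ --- both uniformly $C^0$-close and, on the relevant annulus, gradient-like (no critical points in the Grove--Shiohama sense) --- shows $\eta_p^{-1}([0,\rho])$ is diffeomorphic to $\overline{B(p,\rho')}\subset\frac{1}{r^0_p}M$ for a suitable $\rho'$ in the corresponding range. The proof of Lemma \ref{lemgoodannulus} in fact produces $r^0_p$ so that $d_p$ has no critical points out to radius comparable to $5$ in $\frac{1}{r^0_p}M$ and so that $B(p,\rho')$ is diffeomorphic to $N_p$; transporting this to scale $\r_p$, where $\frac{1}{\r_p}M$ is $C^K$-close to $N_p$ (Lemma \ref{lem-rpconsequences}(3)) and $r^0_p/\r_p$ is large, and invoking the soul theorem and Lemma \ref{topology}, identifies $\overline{B(p,\rho')}$ with the closed disk bundle of the normal bundle $\nu S$ of the soul $S\subset N_p$ when $N_p$ is noncompact, and with $N_p$ when $N_p$ is compact. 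This gives (4). Finally, for (5): the composition $\Phi_{\frac{2}{10},\frac{3}{10},\frac{8}{10},\frac{9}{10}}\circ\eta_p$ is supported in $\eta_p^{-1}([\frac{2}{10},\frac{9}{10}])$, which for $\varsigma_{\zeroball}$ small is a compact subset of the open set on which $\eta_p$ is smooth, so it extends by zero to a smooth function $\zeta_p:M\to[0,1]$.
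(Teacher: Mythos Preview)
Your approach matches the paper's: smooth $d_p$ via Corollary~\ref{cor-1strained} on the annulus, then read off the topology of sublevel sets from Lemma~\ref{lemgoodannulus}. The paper verifies the key hypothesis $\diam(V_q)<\theta$ of Corollary~\ref{cor-1strained} directly rather than by contradiction: using the $\delta_0$-closeness of $(\frac{1}{r^0_p}M,p)$ to the cone, for each $q$ in the annulus one finds $q'$ with $d(p,q')=2d(p,q)$ and $d(q,q')\ge(1-\mu)d(p,q)$; triangle comparison then forces every minimizing segment from $q$ to $p$ to make angle close to $\pi$ with any segment from $q$ to $q'$, so all of $V_q$ lies in a small ball.

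Your contradiction argument for this step has a gap as written. The inference ``minimizing geodesics pass to minimizing geodesics; in the cone the minimizer to the vertex is unique; hence $\diam(V_x)\to 0$'' does not follow: convergence of two geodesic segments to a common limit does not by itself bound the angle between them in the approximators, since under Gromov--Hausdorff convergence with a lower curvature bound angles are only \emph{lower} semicontinuous (Toponogov gives $\angle\ge\widetilde\angle$, and it is the comparison angle that passes to the limit). What closes the argument is exactly the paper's idea, reached through the limit: the cone has a point $y_\infty$ on the ray through $x_\infty$ beyond $x_\infty$, its preimages $y_j$ satisfy $d(p_j,x_j)+d(x_j,y_j)-d(p_j,y_j)\to 0$, hence $\widetilde\angle_{x_j}(p_j,y_j)\to\pi$, and then triangle comparison pins every $v\in V_{x_j}$ near $-w$ for any direction $w$ toward $y_j$. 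So your compactness route is fine, but you must actually use the radial \emph{ray} structure of the cone, not just uniqueness of the minimizer.

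For part~(4), the paper is more explicit than your sketch: it constructs a single smooth vector field $\xi$ on the annulus with both $\xi d_p$ and $\xi\eta_p$ close to $1$, flows along it to get $\eta_p^{-1}[0,\rho]\cong d_p^{-1}[0,\rho]$, and then passes through interiors (using that two compact orientable $3$-manifolds with boundary are homeomorphic once their interiors are, via Schoenflies/Stallings). Your appeal to ``the soul theorem and Lemma~\ref{topology}'' gives the diffeomorphism type of $N_p$ but not directly that $\overline{B(p,\rho')}$ is the closed disk bundle; that is precisely the step the paper isolates.
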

\begin{proof}
We apply Lemma \ref{cor-1strained} with $Y = \{p\}$, $U =
A \left( p, \frac{1}{20}, 20 \right)$ and
$C = \overline{A \left( p, \frac{1}{10}, 10 \right)}$.
To verify the hypotheses of Lemma \ref{cor-1strained},
suppose that $q \in U$. From Lemma \ref{lemgoodannulus},
for any $\mu >0$, there is an $\overline{\delta}_0 =
\overline{\delta}_0(\mu)$ so that if $\delta_0 < 
\overline{\delta}_0$ then we can find some $q^\prime \in M$
with $d(p, q^\prime) = 2 d(p,q)$ and $d(q, q^\prime) \ge
(1-\mu) d(p,q)$. Fix a minimizing geodesic $\gamma_1$
from $q$ to $q^\prime$. By triangle comparison, for any 
$\theta > 0$, if $\mu$ is sufficiently small then we can
ensure that for any minimizing geodesic $\gamma$ from $q$ to
$p$, the angle between $\gamma^\prime(0)$ and $\gamma_1^\prime(0)$
is at least $\pi - \frac{\theta}{2}$. Parts (1), (2) and (3)
of the lemma now follow
from Lemma \ref{cor-1strained}. 

(4).  Using the same proof as the ``Morse lemma'' for distance functions, 
one gets a smooth vector field $\xi$ in $A(p,\frac{1}{10},10)$, such
that $\xi d_p$ and $\xi\eta_p$ are both close to $1$.  Using the 
flow of $\xi$, if $\varsigma_{\zeroball}$
is sufficiently small then for every $\rho\in [\frac{2}{10},2]$, 
the sublevel sets $d_p^{-1}([0,\rho])$ and $\eta_p^{-1}([0,\rho])$
are homeomorphic.  Let $\bar N$ be the closed disk bundle in $\nu S$.
Then $\Int(\bar N)\stackrel{homeo}{\simeq} N_p
\stackrel{homeo}{\simeq}\Int(d_p^{-1}([0,\rho]))\stackrel{homeo}{\simeq}
\Int(\eta_p^{-1}([0,\rho]))$.  Since two compact 
orientable $3$-manifolds with boundary
are homeomorphic provided that their interiors are homeomorphic, we
have
$\bar N\stackrel{homeo}{\simeq}\eta_p^{-1}([0,\rho])$.
(This may be readily deduced from the fact that if $S$ is a closed
orientable surface then any  smooth embedding 
$S\ra S\times \R$, which is also a homotopy equivalence, is isotopic 
to the fiber $S\times \{0\}$, as follows from the Schoenflies
theorem when $S=S^2$ and from \cite{stallings} when $\genus(S)>0$.)

(5) follows from the
fact that the composition 
$
\Phi_{ \frac{2}{10},\frac{3}{10},\frac{8}{10},\frac{9}{10}} \circ 
\eta_p$
is compactly
supported in the annulus 
$A(p,\frac{1}{10},10)$.
\end{proof}

\begin{remark}
One may avoid the Schoenflies and Stallings theorems in the proof
of Lemma \ref{lem-existsetap0stratum}(4).  If $M$ is a 
complete noncompact nonnegatively
curved manifold, and $p\in M$, then the distance function $d_p$
has no critical points outside $B(p,r_0)$ for some $r_0\in (0,\infty)$.  
In fact, for every $r>r_0$, the closed ball $\ol{B(p,r)}$ 
is isotopic, by an isotopy with arbitrarily small tracks, to
a compact domain with smooth boundary  $D$;
moreover, the 
smooth 
isotopy class $[D]$
 is canonical and independent of $r\in (r_0,\infty)$.
(These assertions are true in general for noncritical sublevel
sets of proper distance functions.  They are proved by showing that one may
smooth $d_p$ near $S(p,r)$ without introducing 
critical points.)  The proof of the soul theorem actually shows
that the isotopy class $[D]$ is the same as that of 
a closed smooth tubular neighborhood of the soul, which is
diffeomorphic to the unit normal bundle of the soul.
\end{remark}

\subsection{Selecting the $0$-stratum balls}
\label{annular}

The next lemma has a statement about an adapted coordinate for the
radial splitting in an annular region of a $0$-stratum ball.
We use the parameter $\varsigma_{\slim}$ for the quality of this
splitting, even though there is no {\it a priori} relationship
to slim $1$-stratum points.  Our use of this parameter will
simplify the later parameter ordering.

\begin{lemma} 
\label{0-ballseparation}

Under the constraints
$\de_0< \bar\de_0(\beta_1,\varsigma_{\slim})$, 
$\Upsilon_0>\overline{\Upsilon}_0(\be_1)$, 
$\be_1<\overline{\be}_1(\varsigma_{\slim})$ and
$\varsigma_{\zeroball}<\overline{\varsigma}_{\zeroball}(\varsigma_{\slim})$,
there is a finite
collection $\{p_i\}_{i \in I_{\zeroball}}$ of points in $M$ so that 
\begin{enumerate}
\item 
\label{item-disjoint}
The balls $\{B(p_i, r^0_{p_i})\}_{i \in I_{\zeroball}}$ are disjoint. 
\item
\label{item-radiusratio}
If $q\in B(p_i,10r^0_{p_i})$, for some $i\in I_{\zeroball}$, then
$r^0_q\leq 20r^0_{p_i}$ and
$\frac{r^0_{p_i}}{\r_q}\geq \frac{1}{20} \Upsilon_0$.
\item 
\label{item-1or2}
For each $i$, every $q\in  A(p_i,\frac{1}{10}r^0_{p_i},10r^0_{p_i})$
belongs to the $1$-stratum or $2$-stratum, and 
there is a $(1,\be_1)$-splitting
of $\left( \frac{1}{\r_q}M, q \right)$, for which 
$\frac{r^0_{p_i}}{\r_q}\;\eta_{p_i}$ is an adapted coordinate of quality 
$\varsigma_{\slim}$.
\item 
\label{item-catchall0stratum}
$\bigcup_{i\in I_{\zeroball}}\;B(p_i,\frac{1}{10}r^0_{p_i})$
contains all the $0$-stratum points.
\item 
\label{item-atmostoneend}
For each $i\in I_{\zeroball}$, 
the manifold $N_{p_i}$ has at
most one end.

\end{enumerate}
\end{lemma}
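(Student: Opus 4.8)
The plan is to derive items (1) and (4) from a Vitali-type greedy selection of $0$-stratum points, to read off items (3) and (5) from a single geometric assertion about the annular region around such a point, and to obtain item (2) from the Lipschitz control on $\r$. Finiteness of $I_{\zeroball}$ will come from the volume bound of Lemma \ref{lem-rpconsequences}(1).

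\emph{The geometric assertion.} I would first prove: under the stated constraints, for every $p\in M$ and every $q\in A(p,\tfrac1{10}r^0_p,10r^0_p)$, the pointed manifold $(\tfrac1{\r_q}M,q)$ admits a $(1,\be_1)$-splitting for which $\tfrac{r^0_p}{\r_q}\eta_p$ is an adapted coordinate of quality $\varsigma_{\slim}$; in particular $q$ is not a $0$-stratum point. Suppose this fails. Then, letting $\de_0\to0$ along the sequence, there are $\al_j\to\infty$, points $p_j$, and $q_j\in A(p_j,\tfrac1{10}r^0_{p_j},10r^0_{p_j})$ for which $(\tfrac1{\r_{q_j}}M^{\al_j},q_j)$ violates the assertion. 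By Lemma \ref{lem-rpconsequences}(3), after passing to a subsequence $(\tfrac1{\r_{q_j}}M^{\al_j},q_j)$ converges in the $C^K$-topology to a nonnegatively curved $3$-manifold $W$. By Lemma \ref{lemgoodannulus}, $(\tfrac1{r^0_{p_j}}M^{\al_j},p_j)$ is $\de_0$-close to the Tits cone $C_TN_{p_j}$; since the curvature at scale $r^0_{p_j}$ tends to $0$, Lemma \ref{alexlimit} lets us pass to a further subsequence so that $(\tfrac1{r^0_{p_j}}M^{\al_j},p_j)$ converges in the pointed Gromov--Hausdorff topology to a pointed space $Z$, and as $\de_0\to0$ and metric cones (pointed at their vertices) are preserved under such limits, $Z$ is a conical nonnegatively curved Alexandrov space with vertex $\bar p=\lim p_j$, while $q_j\to\bar q\in Z$ with $d_Z(\bar p,\bar q)\in[\tfrac1{10},10]$. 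The Lipschitz bound on $\r$ together with $r^0_{p_j}\in[\Upsilon_0\r_{p_j},\Upsilon'_0\r_{p_j}]$ forces the ratios $\la_j:=r^0_{p_j}/\r_{q_j}$ into a bounded interval bounded below by $\tfrac12\Upsilon_0$; passing to a further subsequence with $\la_j\to\la_\infty<\infty$, one identifies the pointed space $W$ with $(\la_\infty Z,\bar q)$. Thus $W$ is itself a metric cone whose vertex lies at distance $\ge\tfrac1{20}\Upsilon_0$ from its basepoint. The tangent cone of a metric cone at a non-vertex point splits off an $\R$-factor, so a ball of radius $2\be_1^{-1}$ about $\bar q$ in $W$ is $\eps(\Upsilon_0)$-close to splitting off $\R$, with $\eps(\Upsilon_0)\to0$; hence, once $\Upsilon_0>\overline\Upsilon_0(\be_1)$, the manifold $W$ admits a $(1,\tfrac12\be_1)$-splitting, and therefore so does $(\tfrac1{\r_{q_j}}M^{\al_j},q_j)$ for large $j$ --- a contradiction. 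For the adapted-coordinate part, the $\R$-direction of this splitting is the radial direction toward $p_j$, so the rescaled distance function $\tfrac{r^0_{p_j}}{\r_{q_j}}d_{p_j}$ satisfies the defining inequality (\ref{adapted}) by triangle comparison, exactly as in the sublemma inside the proof of Lemma \ref{lem1}; since $\eta_{p_j}-d_{p_j}$ is $\varsigma_{\zeroball}$-Lipschitz by Lemma \ref{lem-existsetap0stratum}(3) --- a scale-invariant bound --- the function $\tfrac{r^0_{p_j}}{\r_{q_j}}\eta_{p_j}$ is an adapted coordinate of quality $\varsigma_{\slim}$ provided $\varsigma_{\zeroball}<\overline\varsigma_{\zeroball}(\varsigma_{\slim})$ and $\be_1<\overline\be_1(\varsigma_{\slim})$. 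Taking $p=p_i$ yields item (3), and the assertion shows every point of such an annulus lies outside the $0$-stratum.

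\emph{The selection.} Let $R=\sup\{r^0_p:p\text{ a }0\text{-stratum point}\}$, which is finite since $r^0_p\le\Upsilon'_0\max_M\r$. For $n\ge0$ put $\mathcal P_n=\{p\text{ a }0\text{-stratum point}:r^0_p\in(2^{-n-1}R,2^{-n}R]\}$, and choose $I_n\subset\mathcal P_n$ inductively in $n$ so that $I_n$ is maximal with the property that $\{B(p_i,r^0_{p_i})\}_{i\in I_0\cup\dots\cup I_n}$ is a pairwise disjoint collection; set $I_{\zeroball}=\bigcup_nI_n$. All balls $B(p_i,r^0_{p_i})$, $i\in I_{\zeroball}$, are pairwise disjoint, and since $r^0_{p_i}\ge\r_{p_i}$ each has volume at least $\widehat w\,(\min_M\r)^3>0$ by Lemma \ref{lem-rpconsequences}(1); as $M$ has finite volume, $I_{\zeroball}$ is finite, giving item (1). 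For item (4), let $q$ be a $0$-stratum point, say $q\in\mathcal P_n$; by maximality of $I_n$, $B(q,r^0_q)$ meets some $B(p_i,r^0_{p_i})$ with $i\in I_0\cup\dots\cup I_n$, and then $r^0_q<2r^0_{p_i}$, so $d(p_i,q)<r^0_q+r^0_{p_i}<3r^0_{p_i}$. If $q\notin B(p_i,\tfrac1{10}r^0_{p_i})$ then $q\in A(p_i,\tfrac1{10}r^0_{p_i},3r^0_{p_i})\subset A(p_i,\tfrac1{10}r^0_{p_i},10r^0_{p_i})$, contradicting the geometric assertion; hence $q\in B(p_i,\tfrac1{10}r^0_{p_i})$.

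\emph{Items (2) and (5), and the main difficulty.} For item (2), if $q\in B(p_i,10r^0_{p_i})$ then $d(p_i,q)<10\Upsilon'_0\r_{p_i}$, so the Lipschitz bound gives $\r_q\le(1+10\La\Upsilon'_0)\r_{p_i}<\tfrac{11}{10}\r_{p_i}$ once $\La$ is small; then $r^0_{p_i}/\r_q\ge\Upsilon_0\r_{p_i}/(\tfrac{11}{10}\r_{p_i})>\tfrac1{20}\Upsilon_0$, and, since the lower bound $\overline\Upsilon'_0(\de_0,\Upsilon_0,w')$ for $\Upsilon'_0$ may be taken below $18\Upsilon_0$ (as $\Upsilon_0$ is large), $r^0_q\le\Upsilon'_0\r_q\le\tfrac{11}{10}\Upsilon'_0\r_{p_i}\le20\,r^0_{p_i}$. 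For item (5), $p_i$ is a $0$-stratum point, so if $N_{p_i}$ had two ends then by Lemma \ref{topology} it would split off an $\R$-factor isometrically; by Lemma \ref{lem-rpconsequences}(3) this would force $(\tfrac1{\r_{p_i}}M,p_i)$ to admit a $(1,\be_1)$-splitting for large $\al$, contradicting Definition \ref{nstratumdef}. The substantive step is the geometric assertion; the delicate point there is the simultaneous bookkeeping of the two scales $r^0_p$ and $\r_q$ against the parameters $\Upsilon_0,\de_0,\be_1,\varsigma_{\slim},\varsigma_{\zeroball}$, which becomes tractable exactly because letting $\de_0\to0$ in the contradiction argument makes the rescaled limit an honest metric cone, reducing everything to the behavior of a cone near a non-vertex point.
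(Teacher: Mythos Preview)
Your selection and the paper's differ in a way that turns out to be essential. You pick the $p_i$ among $0$-stratum points by a greedy Vitali argument; the paper instead enlarges the candidate set to $V_0=\{p: B(p,r^0_p)\text{ contains a $0$-stratum point}\}$, partially orders it by $p_1\prec p_2 \Longleftrightarrow (2r^0_{p_1}<r^0_{p_2}$ and $B(p_1,r^0_{p_1})\subset B(p_2,r^0_{p_2}))$, takes the maximal elements, and then runs the Vitali lemma on those. The reason this matters is item~(2). Your argument for $r^0_q\le 20r^0_{p_i}$ passes through $\r$: you bound $r^0_q\le \Upsilon'_0\r_q\lesssim\Upsilon'_0\r_{p_i}$ and compare with $r^0_{p_i}\ge\Upsilon_0\r_{p_i}$, which forces $\Upsilon'_0\lesssim 18\,\Upsilon_0$. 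You assert this ``may be taken'' so, but Lemma~\ref{lemgoodannulus} gives no such control, and in fact the ratio $\Upsilon'_0/\Upsilon_0$ cannot be bounded: the nonnegatively curved models arising in that lemma (e.g.\ long capped cylinders) can require arbitrarily large scale before they are $\de_0$-close to their Tits cones. The paper sidesteps this entirely: if $q\in B(p_i,10r^0_{p_i})$ and $r^0_q>20r^0_{p_i}$, then $B(p_i,r^0_{p_i})\subset B(q,r^0_q)$ and $2r^0_{p_i}<r^0_q$, so $p_i\prec q$; and since $B(p_i,r^0_{p_i})$ contains a $0$-stratum point, so does $B(q,r^0_q)$, hence $q\in V_0$, contradicting maximality of $p_i$. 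No appeal to $\La$ or to $\Upsilon'_0/\Upsilon_0$ is needed.

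The same circularity shows up in your ``geometric assertion'': to bound $\lambda_j=r^0_{p_j}/\r_{q_j}$ above and below via the Lipschitz property of $\r$ you need $\La\Upsilon'_0$ small, which in the paper's parameter ordering (where $\La$ is fixed before $\Upsilon_0,\de_0,\Upsilon'_0$) cannot be arranged without constraining $\La$ in terms of quantities chosen later. The paper instead proves item~(2) first, by the maximality argument above, and then uses the resulting bound $r^0_{p_i}/\r_q\ge\frac{1}{20}\Upsilon_0$ directly to convert the $\de_0$-conical structure at scale $r^0_{p_i}$ into a $1$-strainer at $q$ of scale $\gtrsim\Upsilon_0$ in $\frac{1}{\r_q}M$, invoking Lemma~\ref{strainerlemma} rather than a limit argument. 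A smaller issue: your proof of item~(5) uses that $N_{p_i}$ approximates $(\frac{1}{\r_{p_i}}M,p_i)$, but that is a feature of the \emph{proof} of Lemma~\ref{lemgoodannulus}, not of its statement; the paper instead observes that two ends force $C_TN_{p_i}\cong\R$, so the $\de_0$-closeness at scale $r^0_{p_i}$ (which \emph{is} in the statement) gives a good $1$-strainer at every point of $B(p_i,r^0_{p_i})$, contradicting the fact that this ball contains a $0$-stratum point.
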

\begin{proof}

(\ref{item-disjoint}).
Let $V_0\subset M$ be the set of points $p\in M$ such that the 
ball $B(p,r^0_p)$ contains a $0$-stratum point.  We partially order 
$V_0$ by declaring that $p_1\prec p_2$ if and only if
($2r^0_{p_1}<r^0_{p_2}$ and 
$B(p_1,r^0_{p_1})\subset B(p_2,r^0_{p_2})$).  Note that every chain
in the poset $(V_0,\prec)$ has an upper bound, since $r^0_p<\Upsilon_0'\r_p$
is bounded above.  
Let $V\subset V_0$ be the subset of elements which
are maximal with respect to $\prec$,
and apply Lemma \ref{ballcovers} with ${\mathcal R}_p = r^0_p$ to get
the finite disjoint collection of balls  
$\{B(p_i, r^0_{p_i})\}_{i \in I_{\zeroball}}$.  Thus (\ref{item-disjoint})
holds.

(\ref{item-radiusratio}).  If $q\in B(p_i,10r^0_{p_i})$
then $r^0_q\leq 20r^0_{p_i}$, for otherwise we would have 
$q\in V_0$ and $p_i\prec q$, contradicting the maximality of $p_i$.
Thus $\frac{r^0_{p_i}}{\r_q}\geq\frac{r^0_q}{20\r_q}\geq \frac{1}{20} \Upsilon_0$.

(\ref{item-1or2}). Suppose that $i\in I_{\zeroball}$ and   
$q\in A(p_i,\frac{1}{10}r^0_{p_i},10r^0_{p_i})$.  
Recall that
$\left( \frac{1}{r^0_{p_i}} M, p_i \right)$ is $\delta_0$-close in the pointed
Gromov-Hausdorff topology to the Tits cone $C_T N_{p_i}$. 
If the Tits cone $C_T N_{p_i}$
were a single 
point then $\diam(M)$ would be bounded above by $\delta_0 r^0_{p_i}$;
taking $\delta_0 < \frac{1}{10}$ we get $q \in B(p_i, \frac{1}{10} r^0_{p_i})$,
which is a contradiction.  Therefore 
$C_T N_{p_i}$ is not a point. 
It follows that there is a $1$-strainer at $q$ of scale comparable 
to $r^0_{p_i}$ and quality comparable to $\delta_0$, where one of the strainer
points is $p_i$.
By (\ref{item-radiusratio}),
if $\Upsilon_0>\overline{\Upsilon}_0(\beta_1)$ and 
$\de_0<\overline{\de}(\beta_1)$ then Lemma \ref{strainerlemma}
implies there is a $(1,\be_1)$-splitting
$\al:\left( \frac{1}{\r_{q}} M, q \right)\ra (\R\times X,(0,\star_X))$,
where the first component is given by $d_{p_i}-d_{p_i}(q)$.
In particular 
$q$ is a $1$-stratum point or a $2$-stratum point.  By
Lemma \ref{lem-existsetap0stratum}, the smooth radial
function $\eta_{p_i}$ is $\varsigma_{\zeroball}$-Lipschitz close
to $d_{p_i}$. Lemma \ref{lem-uniquenessofadaptedcoords} implies that  
if
$\varsigma_{\zeroball}<\overline{\varsigma}_{\zeroball}(\varsigma_{\slim})$
then we are ensured that
$\eta_{p_i}$ is an $\al$-adapted coordinate
of quality  $\varsigma_{\slim}$.  Hence (\ref{item-1or2})
holds.

(\ref{item-catchall0stratum}).  If $q$ is in the $0$-stratum then  
$q\in V_0$,
so $q\prec \bar q$ for some $\bar q\in V$.   By  Lemma \ref{ballcovers},
for some $i\in I_{\zeroball}$ we have
$B(\bar q,r^0_{\bar q})\cap B(p_i,r^0_{p_i})\neq\emptyset$ and
$r^0_{\bar q}\leq 2r^0_{p_i}$.  Therefore $q\in B(p_i,5r^0_{p_i})$ and
by (\ref{item-1or2}), we have $q\in B(p_i,\frac{1}{10}r^0_{p_i})$.

(\ref{item-atmostoneend}).  Let $\eps > 0$
be a new constant.  Suppose that $N_{p_i}$ has more than one
end.  Then $\ctits N_{p_i}\simeq \R$.  If
$\de_0<\overline{\de}_0(\eps)$ then every point 
$q\in B(p_i,1)\subset \frac{1}{r^0_{p_i}}M$ will have a strainer
of quality $\eps$ and scale $\eps^{-1}$.  By (\ref{item-radiusratio})
and Lemma \ref{strainerlemma}, 
if $\eps<\overline{\eps}(\be_1)$ and 
$\Upsilon_0>\overline{\Upsilon}_0(\be_1)$ then there is 
a $(1,\be_1)$ splitting of $(\frac{1}{\r_q}M,q)$.  Thus every
point in $B(p_i,r^0_{p_i})$ is in the $1$-stratum or $2$-stratum.
This contradicts the definition of $V$, and hence $N_{p_i}$ has
at most one end.
\end{proof}

\section{Mapping into Euclidean space}
\label{sec-mapping}

\subsection{The definition of the map $\e^0:M\ra H$} \label{subsecmapping}

We will now use the ball collections defined in Sections
\ref{sec-loc2stratum}-\ref{sec-loc0stratum}, and the 
geometrically defined functions
discussed in earlier sections, to construct a smooth map
$\e^0:M\ra H=\bigoplus_{i\in I}\;H_i$, 
where 
\begin{itemize}
\item $I=I_{\r}\cup I_{E'}\cup
I_{\zeroball}\cup I_{\slim}\cup I_{\edge}\cup I_{\twostratum}$,
where the two index sets $I_{\r}$ and $I_{E'}$ are singletons
$I_{\r}=\{\r\}$ and $I_{E'}=\{E'\}$ respectively,
\item $H_i$ is a copy of $\R$
when 
$i=\r$,

\item $H_i$ is a copy of $\R \oplus \R$ when 
$i\in I_{E'}
\cup
I_{\zeroball}\cup I_{\slim}\cup I_{\edge}$, 
and
\item $H_i$ is a copy of $\R^2 \oplus \R$ when $i\in I_{\twostratum}$.

\end{itemize}

We also put 
\begin{itemize}
\item $H_{\zeroball}=\bigoplus_{i\in I_{\zeroball}}\,H_i$,
\item $H_{\slim}=\bigoplus_{i\in I_{\slim}}\,H_i$,
\item $H_{\edge}=\bigoplus_{i\in I_{\edge}}\,H_i$, 
\item $H_{\twostratum}=\bigoplus_{i\in I_{\twostratum}}\,H_i$,
\item $Q_1=H$,
\item $Q_2=H_{\zeroball}\bigoplus H_{\slim}
\bigoplus H_{\edge}$,
\item $Q_3=H_{\zeroball}\bigoplus H_{\slim}$,
\item $Q_4=H_{\zeroball}$, and
\item $\pi_{i,j}:Q_i\ra Q_j$, $\pi_i=\pi_{1,i}:H\ra Q_i$,
$\pi_i^\perp:H\ra Q_i^\perp$
are the orthogonal projections, for $1\leq i\leq j\leq 4$.
\end{itemize}
If
$x\in Q_j$,
we denote the projection to a summand $H_i$ by $\pi_{H_i}(x)=x_i$.
When $i\neq \r$, we write $H_i = H_i' \oplus H_i'' \cong \R^{k_i} \oplus \R$,
where $k_i \in \{1,2\}$,
and we denote the 
decomposition of $x_i \in H_i$ into its components by $x_i=(x_i',x_i'')\in 
H_i' \oplus H_i''$. We denote orthogonal projection onto
$H_i'$ and $H_i''$ by $\pi_{H_i'}$ and $\pi_{H_i''}$, respectively.

In Sections 
\ref{sec-loc2stratum}-\ref{sec-loc0stratum}, 
we defined adapted coordinates $\eta_p$, and cutoff functions $\zeta_p$
corresponding to points $p\in M$ of different types.
If $\{p_i\}$ is a collection of points used to define a
ball cover, as in Sections
\ref{sec-loc2stratum}-\ref{sec-loc0stratum}, then we write
$\eta_i$ for $\eta_{p_i}$ and $\zeta_i$ for $\zeta_{p_i}$.
  Recall that we also defined $\eta_{E'}$
and $\zeta_{E'}$ in Sections \ref{subsec-regularizationofrhoe'}
and \ref{subsec-additionalcutoff}, respectively.
For $i\in I\setminus \{\r\}$,
we will also define a new
scale parameter $R_i$, as follows:
\begin{itemize}
\item If $i\in I_{\zeroball}$
we put
$R_i = r^0_{p_i}$,  where $r^0_{p_i}$ is as in Lemma  \ref{0-ballseparation};
\item If $i\in I_{\slim}\cup I_{\edge}\cup I_{\twostratum}$,
then $R_i=\r_{p_i}$;
\item If $i=E'$, then $R_i=\r$; note
that unlike in the other cases, $R_i$ is not a constant.
\end{itemize}

The  component $\e^0_i:M\ra H_i$ of the map $\e^0:M\ra H$ 
is defined to be $\r$ when $i=\r$,
and
\begin{equation}
( R_i \eta_i \zeta_i, R_i \zeta_i)
\end{equation}
otherwise.

In the remainder of this section we prepare for the adjustment procedure in
Section \ref{sec-adjusting} by examining the behavior of $\e^0$ near 
the different strata.

\subsection{The image of $\e^0$} \label{subsecimagee0}

Before proceeding, we make some observations about the
image of $\e^0$, to facilitate the choice of cutoff functions.   
Let $x=\e^0(p)\in H$.  Then the components  of 
$x$
satisfy the following inequalities:
\begin{equation}
x_{\r}>0
\end{equation}
and for 
every $i\in I_{\zeroball}\cup I_{\slim}\cup I_{\edge}\cup I_{\twostratum} $,
\begin{equation}
x_i''\in [0,R_i]\quad\mbox{and}\quad |x_i'|\leq c_i\,x_i''\,,
\end{equation}
where 
\begin{equation}
c_i = 
\begin{cases}
9 \Delta &  \text{ when
$i \in I_{E^\prime}$,} \\
\frac{9}{10} & \text{ when
$i \in I_{\zeroball}$,} \\
10^5 \Delta & \text{ when
$i \in I_{\slim}$,} \\
9 \Delta & \text{ when
$i \in I_{\edge}$,} \\
9 & \text{ when $i\in I_{\twostratum}$.}
\end{cases}
\end{equation}

\begin{lemma} \label{uniform}
Under the constraint $\La \le \overline{\La}({\mathcal M})$,
there is a number $\Omega_0 = \Omega_0({\mathcal M})$ so that
for all $p \in M$, $|D\e^0_p| \le \Omega_0$.
\end{lemma}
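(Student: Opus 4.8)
The plan is to estimate $|D\e^0_p|$ by decomposing $\e^0$ into its components $\e^0_i$, bounding $|D(\e^0_i)_p|$ for each $i$ individually, and then controlling how many of these components can be simultaneously nonzero at a given point $p$. Since $H = \bigoplus_{i\in I} H_i$ is an orthogonal direct sum, one has $|D\e^0_p|^2 = \sum_{i\in I} |D(\e^0_i)_p|^2$, so it suffices to produce a uniform bound on each summand and a bound (depending only on $\mathcal{M}$) on the number of indices $i$ for which $(\e^0_i)$ is not identically zero near $p$.

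First I would bound each individual derivative $|D(\e^0_i)_p|$. For $i = \r$, $\e^0_{\r} = \r$ is $\Lambda$-Lipschitz by Corollary \ref{fake}, so $|D\e^0_{\r}| \le \Lambda \le 1$. For $i \ne \r$, recall $\e^0_i = (R_i\,\eta_i\,\zeta_i,\ R_i\,\zeta_i)$, computed in the unrescaled metric on $M$; equivalently, after rescaling by $\frac{1}{R_i}$, the component is $(\eta_i\,\zeta_i,\ \zeta_i)$ on $\frac{1}{R_i}M$. The function $\eta_i$ is an adapted coordinate of quality controlled by one of the parameters $\varsigma_{\twostratum}$, $\varsigma_{\edge}$, $\varsigma_{\slim}$, $\varsigma_{\zeroball}$, or $\varsigma_{E'}$, hence $(1+\varsigma)$-Lipschitz, and is supported (via $\zeta_i$) on a region where $|\eta_i|$ is bounded by an explicit constant $c_i$ depending only on $\Delta$ (as recorded in Subsection \ref{subsecimagee0}). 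The cutoff $\zeta_i$ is obtained by composing a fixed smooth function $\Phi_{a,b}$ (or $\Phi_{a,b,c,d}$) — whose derivative is bounded by a universal constant times $\frac{1}{b-a}$, with $b-a$ comparable to $\Delta$ — with $|\eta_i|$ or with $\eta_{E'}$, so $|D\zeta_i|$ is bounded in terms of $\Delta$ alone. By the product rule, $|D(\eta_i\zeta_i)| \le |D\eta_i|\,|\zeta_i| + |\eta_i|\,|D\zeta_i|$ is bounded by a constant $c = c(\Delta)$ on $\frac{1}{R_i}M$. Since $D\e^0_i$ is this same differential measured in the unrescaled metric, and since rescaling the domain metric by $\frac{1}{R_i}$ multiplies the operator norm of a differential into a fixed target by $R_i$ \emph{or} $\frac{1}{R_i}$ depending on direction — here the relevant scaling shows $|D(\e^0_i)_p|$ (unrescaled) equals $|D((\eta_i\zeta_i,\zeta_i))|$ measured on $\frac{1}{R_i}M$ — one gets $|D(\e^0_i)_p| \le c(\Delta)$ uniformly. (In the case $i = E'$, where $R_i = \r$ is not constant, one additionally uses that $\rho_{E'} - d_{E'}$ is $\varsigma_{E'}$-Lipschitz by Lemma \ref{lem-rhoerhoe'} together with the $\Lambda$-Lipschitz bound on $\r$, so the quotient $\eta_{E'} = \rho_{E'}/\r$ still has controlled derivative; this contributes only a correction that is small when $\Lambda$ is small in terms of $\Delta$ — the source of the constraint $\Lambda \le \overline{\Lambda}(\mathcal{M})$ in the statement, since $\Delta$ is fixed before $\mathcal{M}$.)

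Next I would control the number of nonzero components. By Lemmas \ref{mult2}(2), \ref{lem-edgeballcover}, and \ref{lem-slimballselection}, the collections $\{\supp(\zeta_i)\}$ for $i$ in $I_{\twostratum}$, $I_{\edge}$, $I_{\slim}$ respectively each have intersection multiplicity bounded by $\mathcal{M}$; a similar argument (which I would include, paralleling Lemma \ref{mult2}) bounds the multiplicity of $\{\supp(\zeta_i)\}_{i\in I_{\zeroball}}$ and the single extra index $E'$ contributes at most one. Hence at any $p\in M$, at most $4\mathcal{M} + 2$ of the components $\e^0_i$ are nonzero near $p$. Combining, $|D\e^0_p|^2 \le (4\mathcal{M}+2)\cdot c(\Delta)^2 + \Lambda^2$, so we may take $\Omega_0({\mathcal M}) = \big((4\mathcal{M}+2)\,c(\Delta)^2 + 1\big)^{1/2}$.

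The main obstacle I anticipate is bookkeeping the rescaling: $\e^0_i$ is defined with explicit factors of $R_i$ precisely so that it has the right scaling behavior, and one must check carefully that these factors make $|D(\e^0_i)_p|$ scale-\emph{invariant} (rather than blowing up or shrinking with $R_i$), so that the adapted-coordinate estimates — which are naturally stated on the rescaled manifolds $\frac{1}{R_i}M$ — transfer directly to a bound on $D\e^0$ in the fixed metric on $M$. Once that normalization is pinned down, the rest is the routine product-rule and intersection-multiplicity estimate sketched above.
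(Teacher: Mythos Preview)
Your proposal is correct and takes essentially the same approach as the paper, whose entire proof reads ``This follows from the definition of $\e^0$'' --- you have supplied precisely the component-by-component and multiplicity bookkeeping the authors leave implicit. One small refinement worth making explicit: your constant $c(\Delta)$ is in fact independent of $\Delta$, because the two factors in the product-rule estimate cancel --- e.g.\ for $i\in I_{\edge}$ one has $|\eta_i|\le 9\Delta$ on $\supp\zeta_i$ while $|D\zeta_i|$ is of order $\Delta^{-1}$ (the cutoff $\Phi_{8\Delta,9\Delta}$ has slope $\sim\Delta^{-1}$), and the same cancellation occurs for $I_{\slim}$ and $I_{E'}$; this is why the paper can write $\Omega_0=\Omega_0(\mathcal{M})$ rather than $\Omega_0(\mathcal{M},\Delta)$.
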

\begin{proof}
This follows from the definition of $\e^0$.
\end{proof}

\subsection{Structure of $\e^0$ near the $2$-stratum}
\label{subsec-e0near2stratum}
Put
\begin{equation}
\tilde A_1=\bigcup_{i\in I_{\twostratum}}\;\{|\eta_i|\leq 8\}
\,,\quad
A_1=\bigcup_{i\in I_{\twostratum}}\;\{|\eta_i|\leq 7\}\,.
\end{equation}
We refer to Definition \ref{def-cloudykmanifold}
for the definition of a cloudy manifold.
We will see that on a scale which is sufficiently small compared
with $\r$, the pair $(\tilde S_1,S_1)
=(\e^0(\tilde A_1),\e^0(A_1))\subset H$
is a cloudy $2$-manifold.
In brief, this is because, on a scale small compared with
$\r$,  near any point in $A_1$
the map $\e^0$ is well approximated  in the
$C^1$ topology by an affine function of
$\eta_i$, for some $i\in I_{\twostratum}$.

Let $\Sigma_1, \Gamma_1 > 0$ be new parameters.
Define $r_1: \tilde S_1\ra (0,\infty)$  by putting 
$r_1(x)=\Si_1\; \r_{p}$ for some $p \in (\e^0)^{-1}( x)\cap
\tilde A_1$.

\begin{lemma}
\label{lem-2stratumestimates}
There is a constant $\Om_1=\Om_1(\mathcal{M})$ so that
under the constraints $\Si_1< \overline{\Si}_1(\Ga_1,\mathcal{M})$,
$\be_2<\overline{\be}_2(\Ga_1,\Si_1,\mathcal{M})$,  
$\varsigma_{\twostratum}<\overline{\varsigma}_{\twostratum}(\Ga_1,\Si_1,\mathcal{M})$,
$\be_E<\overline{\be}_E(\Ga_1,\Si_1,\Delta,\mathcal{M})$,
$\si_E<\overline{\si}_E(\Ga_1,\Si_1,\Delta,\mathcal{M})$,
$\varsigma_{\edge}<\overline{\varsigma}_{\edge}(\Ga_1,\Si_1,
\Delta,\mathcal{M})$,
$\varsigma_{E'}<\overline{\varsigma}_{E}(\Ga_1,\Si_1,\Delta,\mathcal{M})$,
$\be_1<\overline{\be}_1(\Ga_1,\Si_1,\Delta,\mathcal{M})$, 
$\varsigma_{\slim}<\overline{\varsigma}_{\slim}(\Ga_1,\Si_1,
\Delta,\mathcal{M})$,
$\varsigma_{\zeroball}<\overline{\varsigma}_{\zeroball}(\Ga_1,\Si_1,
\Delta,\mathcal{M})$, $\Upsilon_0 \ge 
\overline{\Upsilon}_0(\Ga_1, \Si_1, \Delta, \mathcal {M})$ and
$\La<\overline{\La}(\Ga_1,\Si_1,\Delta,\mathcal{M})$,
the following holds.

\begin{enumerate}
\item The triple $(\tilde S_1,S_1,r_1)$ is a $(2,\Ga_1)$ cloudy $2$-manifold.
\item  The affine subspaces $\{A_x\}_{x \in S_1}$ inherent in the definition
of the cloudy $2$-manifold can be chosen to have the following property.
Pick $p \in A_1$ and put $x=\e^0(p)\in S_1$.  
Let
$A^0_x\subset H$ be the linear subspace parallel to $A_x$   (i.e.
$A_x=A^0_x+x$) and let $\pi_{A^0_x}:H\ra A^0_x$ denote orthogonal
projection onto $A^0_x$.
Then
\begin{equation} \label{inherent1}
\|D\e^0_p-\pi_{A^0_x}\circ D\e^0_p\|<\Ga_1\,,
\end{equation}
and 
\begin{equation} \label{inherent2}
\Om_1^{-1}\|v\|\leq \|\pi_{A^0_x}\circ D\e^0(v)\|\leq 
\Om_1\|v\|\,
\end{equation}
for every $v\in T_{p}M$ which is orthogonal to 
$\ker(\pi_{A^0_x}\circ D\e^0_p)$.
\item 
Given $i \in I_{\twostratum}$, there is a smooth map
$\widehat{{\mathcal E}}^0_i \: : \: 
(\overline{B(0, 8)} \subset \R^2) \rightarrow
(H_i')^\perp$ such that 
\begin{equation} \label{slope1}
\| D\widehat{{\mathcal E}}^0_i \| \le \Omega_1 R_i
\end{equation}
and on the subset 
$\{|\eta_i| \le 8\}\subset \frac{1}{R_i}M$,
we have
\begin{equation}
\label{eqn-f0jagain}
\left\| \frac{1}{R_i} \e^0- \left( \eta_i,
\frac{1}{R_i} \widehat{{\mathcal E}}^0_i \circ \eta_i \right) 
\right\|_{C^1}<\Gamma_1.
\end{equation}

Furthermore, if $x \in S_1$ then there are some 
$i \in I_{\twostratum}$ and $p \in \{|\eta_i| \le 7\}$ such that
$x = \e^0(p)$ and
$A^0_x = \Image \left( I, \frac{1}{R_i} 
(D\widehat{{\mathcal E}}^0_i)_{\eta_i(p)} \right)$.
\end{enumerate} 
\end{lemma}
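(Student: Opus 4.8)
The plan is to prove the three assertions in order, using the two main inputs from earlier sections: the local structure of $\e^0$ near a $2$-stratum ball (Lemma \ref{lem-existsetap2stratum}, Lemma \ref{2fibration}, and the definition of $\e^0_i = (R_i\eta_i\zeta_i, R_i\zeta_i)$) and the pairwise compatibility of the adapted coordinates $\eta_i$ coming from Lemma \ref{adaptedclose} (applied to pairs of $2$-stratum points, and to $2$-stratum versus edge, slim, and $0$-stratum points). I would first establish assertion (3), since (1) and (2) will essentially be consequences of it together with the bounded intersection multiplicity $\mathcal M$ from Lemmas \ref{mult2}, \ref{lem-edgeballcover}, \ref{lem-slimballselection}, \ref{0-ballseparation}.

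For (3): fix $i \in I_{\twostratum}$ and work in $\frac{1}{R_i}M$, where $R_i = \r_{p_i}$. On $\{|\eta_i|\le 8\}$ we have $\zeta_i \equiv 1$ (by the definition of $\zeta_i$ as the extension by zero of $\Phi_{8,9}\circ|\eta_i|$), so $\e^0_i = (R_i\eta_i, R_i)$ there, hence $\frac{1}{R_i}\e^0_i = (\eta_i, 1)$, which is already of the desired form. For every other index $j$, the component $\frac{1}{R_i}\e^0_j = (\frac{R_j}{R_i}\eta_j\zeta_j, \frac{R_j}{R_i}\zeta_j)$; since $\Lambda$ is small, $R_j/R_i$ is close to $1$ (or, for $0$-stratum indices, bounded), and on the overlap with $\{|\eta_i|\le 8\}$ Lemma \ref{adaptedclose} (with $j=k$ or $j\le k$ as appropriate, and also Lemma \ref{localprod}, Lemma \ref{lem-smalleretai}, Lemma \ref{0-ballseparation}(\ref{item-1or2}) for the cross-type compatibilities) gives that $\eta_j$ agrees, up to arbitrarily small $C^1$-error once the relevant quality parameters are small enough, with an affine function $T_j\circ\eta_i$ where $T_j$ is a composition of an isometry and an orthogonal projection onto $\R$ or $\R^2$; and $\zeta_j$, being a fixed smooth function of $\eta_j$ (and of $\eta_{E'}$, which by Lemma \ref{lem-rhoerhoe'} is likewise $C^1$-controlled), is then $C^1$-close to a function of $\eta_i$. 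Defining $\widehat{\mathcal E}^0_i$ on $\overline{B(0,8)}\subset\R^2$ by collecting these affine-plus-cutoff approximants in the $(H_i')^\perp$ directions, the bound \eqref{slope1} follows from $\|D\e^0\|\le \Omega_0$ (Lemma \ref{uniform}) together with the $\mathcal M$-bounded intersection multiplicity, which caps the number of nonzero components; absorbing $R_i$, one gets $\Omega_1 = \Omega_1(\mathcal M)$. The estimate \eqref{eqn-f0jagain} is then exactly the statement that all these $C^1$-errors sum to less than $\Gamma_1$, which holds once $\Si_1$ (controlling $R_j/R_i - 1$ via $\Lambda$), $\be_2$, $\varsigma_{\twostratum}$, and the remaining quality parameters are chosen small in terms of $\Gamma_1$ and $\mathcal M$.

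For (1) and (2): define, for $x = \e^0(p)\in S_1$ with $p\in\{|\eta_i|\le 7\}$, the affine subspace $A_x = \e^0(p) + \Image\bigl(I, \frac{1}{R_i}(D\widehat{\mathcal E}^0_i)_{\eta_i(p)}\bigr)$, a $2$-dimensional affine subspace of $H$. Assertion (2) is then immediate: \eqref{inherent1} and \eqref{inherent2} follow by differentiating \eqref{eqn-f0jagain}, using that on $\{|\eta_i|\le 8\}$ the map $\eta_i$ is a submersion with $D\eta_i$ of controlled norm (from the submersion claim in the proof of Lemma \ref{2fibration}) so that $D\e^0_p$ restricted to the orthogonal complement of its kernel is uniformly bi-Lipschitz onto $A^0_x$ with constant $\Omega_1(\mathcal M)$. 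For (1), I would verify Definition \ref{def-cloudykmanifold}: at scale $r_1(x) = \Si_1\r_p$, the set $\tilde S_1 = \e^0(\tilde A_1)$ near $x$ is, after rescaling, $\Gamma_1$-close in the pointed Hausdorff sense to $A_x$ — this is again a direct unwinding of \eqref{eqn-f0jagain} (the graph of $\frac{1}{R_i}\widehat{\mathcal E}^0_i\circ\eta_i$ over $\eta_i(B(p,\Si_1\r_p))$, rescaled, is $\Gamma_1$-close to the tangent plane since $\Si_1$ is tiny compared to the curvature scale of $\widehat{\mathcal E}^0_i$), and the nesting $S_1\subset\tilde S_1$ together with the covering property of the two-stratum balls gives the remaining conditions.

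The main obstacle is the bookkeeping in step (3): one must track \emph{all} indices $j\in I$ whose cutoff $\zeta_j$ is nonzero on the overlap of $\{|\eta_i|\le 8\}$ with $\supp\zeta_j$, and for each type ($E'$, $0$-stratum, slim, edge, and other $2$-stratum) invoke the correct compatibility lemma with the correct quality parameter; the cross-type cases (especially the $0$-stratum radial coordinate via Lemma \ref{0-ballseparation}(\ref{item-1or2}) and the edge coordinate via Lemma \ref{localprod} and Lemma \ref{lem-smalleretai}) are where the argument is most delicate, because there the relevant balls live at different scales ($r^0_{p_j}$ versus $\r_{p_i}$) and one must use the scale comparisons from those lemmas before Lemma \ref{adaptedclose} applies. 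Once the finitely many ($\le\mathcal M$) error contributions are each made $<\Gamma_1/\mathcal M$ by the parameter constraints, assembling them is routine.
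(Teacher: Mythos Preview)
Your approach is essentially the same as the paper's: build, for each $i\in I_{\twostratum}$, a map $\f^0:\R^2\to H$ (the paper's notation; your $\widehat{\mathcal E}^0_i$ is its $(H_i')^\perp$-component) by replacing each $\eta_j$ with an affine approximant $T_{ij}\circ\eta_i$ furnished by Lemma~\ref{adaptedclose}, then deduce (1)--(3) from the $C^1$-closeness of $\frac{1}{R_i}\e^0$ to $\frac{1}{R_i}\f^0\circ\eta_i$. The cross-type compatibility bookkeeping you identify is exactly what the paper packages as Sublemma~\ref{2adjusting}, and your treatment of the $0$-stratum scale mismatch is handled in the paper's Sublemma~\ref{eps2} via the observation that differentiating $\Phi_j$ brings down a compensating factor $R_i/R_j$.

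There is one genuine step you gloss over in verifying the cloudy-manifold condition. To show that $(\frac{1}{r_1(x)}\tilde S_1,x)$ is pointed-Hausdorff close to $A_x$, you must know that \emph{every} point of $\tilde S_1$ within distance $r_1(x)$ of $x$ arises as $\e^0(q)$ for some $q$ with $|\eta_i(q)|<8$, so that the graph description via $\widehat{\mathcal E}^0_i$ applies to it. A priori such a point could come from $q\in\{|\eta_{i'}|\le 8\}$ for an entirely different $i'$, at an uncontrolled location in $M$. The paper isolates this as Sublemma~\ref{etaclose}: if $|\eta_i(p)|<8$ and $\e^0(q)\in B(\e^0(p),\Si R_i)$ then $|\eta_i(p)-\eta_i(q)|<20\Si$. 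The proof is a short direct computation using $\zeta_i(p)=1$ and the component $\e^0_i=(R_i\eta_i\zeta_i,R_i\zeta_i)$, but it is not automatic from \eqref{eqn-f0jagain} alone, since that estimate only lives on $\{|\eta_i|\le 8\}$. Relatedly, your one-line dismissal of ``the remaining conditions'' for Definition~\ref{def-cloudykmanifold} skips the Lipschitz-type bound $|r_1(x)-r_1(y)|\le C(|x-y|+r_1(x))$; the paper verifies this by a two-case argument (either $d(p,q)\le\r_p/\Lambda$, whence $\r_p\approx\r_q$, or $d(p,q)\ge 20\r_p$, whence disjoint cutoffs force $|x-y|\gtrsim\r_p/\Si_1$), and the second case is also what rules out distant preimages in the Hausdorff verification above.
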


The parameters $\eps_1,\,\eps_2 > 0$ will be
internal to this subsection, which is devoted to the proof of  Lemma
\ref{lem-2stratumestimates}.
Until further notice, the index $i$ will denote a fixed element
of  $ I_{\twostratum}$.

Put
$J=\{j\in I_{E'} \cup 
I_{\zeroball}\cup I_{\slim}\cup I_{\edge}\cup I_{\twostratum} 
\mid \supp\zeta_j\cap  B(p_i,10R_i)\neq\emptyset\}$.

\begin{sublemma} \label{2adjusting}
Under the constraints $\beta_2 <\overline{\beta}_2(\eps_1)$, $\varsigma_{\twostratum}
<\overline{\varsigma}_{\twostratum}(\eps_1)$,
$\beta_{E}< \overline{\beta}_{E}(\eps_1,\Delta)$,
$\varsigma_{\edge}<\overline{\varsigma}_{\edge}(\eps_1,\Delta)$,
$\varsigma_{E'}<\overline{\varsigma}_{E}(\eps_1,\Delta)$,
$\beta_1<\overline{\beta}_1(\eps_1,\Delta)$,  
$\varsigma_{\slim}<\overline{\varsigma}_{\slim}(\eps_1,\Delta)$,
$\varsigma_{\zeroball}<\overline{\varsigma}_{\zeroball}(\eps_1,\Delta)$
and 
$\Lambda<\overline{\Lambda}(\eps_1,\Delta)$,
the following holds.

For each $j\in J$,
there is a map $T_{ij} : \R^2 \rightarrow \R^{k_j}$ which
is a composition of an isometry and an orthogonal projection, such
that on the ball $B(p_i,10)\subset \frac{1}{R_i}M$, 
the map $\eta_j$ is 
defined and satisfies
\begin{equation}
\label{eqn-etajtij}
\left\|\frac{R_j}{R_i} \eta_j-
\left(T_{ij}\circ\eta_i\right)\right\|_{C^1}<
\eps_1\,.
\end{equation}
\end{sublemma}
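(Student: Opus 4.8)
The plan is to assemble the sublemma entirely from machinery already in place: the change-of-basepoint estimate for approximate splittings (Lemma \ref{faralignment1}), the compatibility statement that follows from the absence of a higher-dimensional splitting (Lemma \ref{alexcompatible}), and the approximate rigidity of adapted coordinates (Lemmas \ref{lem-uniquenessofadaptedcoords}, \ref{adaptedclose}). The point is that for each $j\in J$ the two functions $\eta_i$ and $\frac{R_j}{R_i}\eta_j$ are adapted coordinates, at the common scale $R_i$ and near the common basepoint $p_i$, to approximate splittings of complementary types, and $p_i$ cannot carry a $(3,\beta_3)$-splitting.

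First I would record, for fixed $j\in J$, what $\eta_j$ is. After the rescaling by $R_j/R_i$ and recentering at $p_i$ that Remark \ref{addedremark} permits, $\eta_j$ is an adapted coordinate for an approximate $(k_j,\cdot)$-splitting $\alpha_j$ of $(\frac{1}{R_i}M,p_i)$, with $k_j\in\{1,2\}$ and with quality that can be made arbitrarily good: this is Lemma \ref{lem-existsetap2stratum} (a $(2,\beta_2)$-splitting) when $j\in I_{\twostratum}$; Lemma \ref{1adapted} (a $(1,\beta_1)$-splitting) when $j\in I_{\slim}$; Lemma \ref{lem-existsetapedge} (a $(1,\beta_E)$-splitting) when $j\in I_{\edge}$; Lemma \ref{0-ballseparation}(\ref{item-1or2}), backed by the almost-conical structure of Lemma \ref{lemgoodannulus} and the closeness of $\eta_{p_j}$ to $d_{p_j}$ from Lemma \ref{lem-existsetap0stratum} (a $(1,\beta_1)$-splitting), when $j\in I_{\zeroball}$, provided $\Upsilon_0$ is large and $\delta_0,\varsigma_{\zeroball}$ are small; and, for $j=E'$, Lemmas \ref{nearedge}, \ref{lem-rhoerhoe'} and \ref{localprod}, by which $\eta_{E'}$ near $p_i$ is an adapted coordinate for the $(1,\cdot)$-splitting normal to $E'$ (here $k_{E'}=1$). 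The hypothesis $\supp\zeta_j\cap B(p_i,10R_i)\neq\emptyset$, the $\Lambda$-Lipschitz property of $\r$, and the explicit support bounds from Sections \ref{sec-loc2stratum}--\ref{sec-loc0stratum} then force $\frac{R_j}{R_i}\in[\frac{9}{10},\frac{11}{10}]$ and place $B(p_i,10)\subset\frac{1}{R_i}M$ inside the domain on which these adapted-coordinate properties hold, so in particular $\eta_j$ is defined there, as claimed.

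Next I would carry out the three-step comparison at $p_i$. Since $p_i$ is a $2$-stratum point, $(\frac{1}{R_i}M,p_i)$ admits a $(2,\beta_2)$-splitting $\alpha_i$ but no $(3,\beta_3)$-splitting, the latter because Lemma \ref{lem3} rules out $3$-stratum points; and by Assumption \ref{assumptions}, cf. Lemma \ref{lem-rpconsequences}, the curvature of $\frac{1}{R_i}M$ is bounded below by an arbitrarily small negative constant on an arbitrarily large ball about $p_i$. Using Lemma \ref{faralignment1} I transfer $\alpha_j$ to an approximate splitting of $(\frac{1}{R_i}M,p_i)$ of arbitrarily good quality, for which $\frac{R_j}{R_i}\eta_j$ stays an adapted coordinate of arbitrarily good quality (Definition \ref{def4}, Remark \ref{addedremark}); $\alpha_i$ needs no transfer. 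Then Lemma \ref{alexcompatible}, with $n=3$, $k=2$ and $j'=k_j\le 2$, gives that the transferred $\alpha_j$ is $\eps'$-compatible with $\alpha_i$ for any prescribed $\eps'>0$, once $\beta_1,\beta_2,\beta_3,\Lambda$ and $\delta_0$ are small enough. Finally Lemma \ref{adaptedclose}, applied to these two splittings and their adapted coordinates --- and, by Remark \ref{addedremark}, at scale $10R_i$, so that the unit ball becomes $B(p_i,10)\subset\frac{1}{R_i}M$ --- produces a map $T_{ij}:\R^2\ra\R^{k_j}$ that is a composition of an isometry with an orthogonal projection and satisfies $\|\frac{R_j}{R_i}\eta_j-T_{ij}\circ\eta_i\|_{C^1}<\eps_1$ on $B(p_i,10)\subset\frac{1}{R_i}M$, which is exactly (\ref{eqn-etajtij}). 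The constraints are then imposed in the order: first the smallness of the compatibility quality and of the adapted-coordinate qualities required by Lemma \ref{adaptedclose} in terms of $\eps_1$; then $\beta_1,\beta_2,\beta_E,\varsigma_{\twostratum},\varsigma_{\slim},\varsigma_{\edge},\varsigma_{E'},\varsigma_{\zeroball},\delta_0,\Upsilon_0,\Lambda$, as functions of $\eps_1$ and $\Delta$.

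The main obstacle is the bookkeeping for the $0$-stratum indices $j\in I_{\zeroball}$: there $R_j=r^0_{p_j}$ is comparable to $\r$ only up to the large parameters $\Upsilon_0,\Upsilon_0'$, so controlling $R_j/R_i$ and verifying that $B(p_i,10)\subset\frac{1}{R_i}M$ lands in the annular region $A(p_j,\frac{1}{10}r^0_{p_j},10r^0_{p_j})$ where Lemma \ref{0-ballseparation}(\ref{item-1or2}) supplies an adapted radial coordinate requires $\Lambda$ small relative to $\Upsilon_0'$ and $\Upsilon_0$ large --- the one place where the order in which $\Upsilon_0$, $\Upsilon_0'$ and $\Lambda$ are fixed genuinely matters. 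Everything else is a direct concatenation of Lemmas \ref{faralignment1}, \ref{alexcompatible} and \ref{adaptedclose}, each itself proven by the usual contradiction-and-pointed-Gromov-Hausdorff-limit argument.
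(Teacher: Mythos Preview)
Your proposal is correct and follows the same approach as the paper: case-by-case identification of $\eta_j$ as an adapted coordinate for an approximate $k_j$-splitting, transfer of basepoint via Lemma \ref{faralignment1}, compatibility from the absence of a $3$-splitting via Lemma \ref{alexcompatible}, and the conclusion via Lemma \ref{adaptedclose}. One expository slip: your blanket claim that $\frac{R_j}{R_i}\in[\tfrac{9}{10},\tfrac{11}{10}]$ is false for $j\in I_{\zeroball}$, where $R_j=r^0_{p_j}\ge \Upsilon_0\r_{p_j}$ is large compared to $R_i$; you correctly flag and handle this in your final paragraph, but the earlier sentence should be restricted to $j\notin I_{\zeroball}$.
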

\begin{proof}
As we are assuming the hypotheses of Lemma \ref{lem3}, there are no
$3$-stratum points.  

Suppose first that $j \in I_{\twostratum}$. Then $d(p_j, p_i) \le
10(R_i + R_j)$. If ${\Lambda}$ is sufficiently small then
we can assume that $\frac{R_j}{R_i}$ is arbitrarily close to $1$,
so in particular $d(p_j, p_i) \le 40 R_j$.
By Lemma \ref{faralignment1}, if ${\beta}_2$ is sufficiently
small then the $(2,\beta_2)$-splitting of $\left( \frac{1}{R_j} M, p_j \right)$
gives an arbitrarily good $2$-splitting of $\left( \frac{1}{R_j} M, p_i \right)$.
By Lemma \ref{alexcompatible}, if ${\beta}_2$ is sufficiently small
then this splitting of $\left( \frac{1}{R_j} M, p_i \right)$ is compatible,
to an arbitrarily degree of closeness, with the $(2,\beta_2)$-splitting of 
$\left( \frac{1}{R_i} M, p_i \right)$ coming from the fact that
$p_i$ is a $2$-stratum point.  
Hence in this case, if $\beta_2$ and ${\varsigma}_{\twostratum}$
are sufficiently small (as functions of $\eps_1$) 
then the sublemma follows from
Lemma \ref{adaptedclose}, along with Remark \ref{addedremark}. 

If $j \in I_{\edge} \cup I_{\slim}$ then 
$d(p_i, p_j)\le 10R_i + 10^5 \Delta R_j$.  We now have an approximate $1$-splitting
at $p_j$, which gives an approximate $1$-splitting at $p_i$. As before,
if ${\beta}_2$, ${\varsigma}_{\twostratum}$,
${\beta}_1$,   
${\varsigma}_{\edge}$,
${\varsigma}_{\slim}$,
and 
${\Lambda}$ are sufficiently small (as functions of $\eps_1$ and $\Delta$)
then we can apply
Lemmas \ref{alexcompatible} and \ref{adaptedclose} to deduce the conclusion of
the sublemma. Note that in this case, we have to allow ${\La}$ to
depend on $\Delta$. 

If $j \in I_{E^\prime}$ then since
$\supp\zeta_{E'}\cap  B(p_i,10R_i)\neq\emptyset$,
we know that $\eta_{E^\prime}(q) \in [\frac{2}{10}\Delta, 9 \Delta]$ for some
$q \in B(p_i,10R_i)$.
As $\Delta >> 10$, it follows from Lemma \ref{lem-rhoerhoe'} 
that if $\La$ is sufficiently small then
$B(p_i, 10R_i) \subset
\eta_{E^\prime}^{-1} \left( \frac{1}{10} \Delta, 10 \Delta
\right)$.  
From the definition of $E'$, it follows that if
${\beta}_{E'}$ and ${\La}$ are sufficiently small then 
there is a $1$-splitting at $p_i$ of arbitrarily good
quality, coming from the $[0,C]$-factor in Definition 
\ref{edgepoints}. 
As before,
if ${\beta}_{E'}$, ${\La}$, ${\beta}_2$, ${\varsigma}_{\twostratum}$,
${\beta}_{E'}$ and   
${\varsigma}_{E}$ are sufficiently small (as functions of
$\eps_1$ and $\Delta$) then
we can apply
Lemmas \ref{alexcompatible} and \ref{adaptedclose} to deduce the conclusion of
the sublemma.

If $j \in I_{\zeroball}$ then
since $\supp\zeta_{j} \cap  B(p_i,10R_i)\neq\emptyset$,
we know that $\eta_j(q) \in [\frac{2}{10}, \frac{9}{10}]$ for some
$q \in B(p_i, 10 R_i)$. From Lemma \ref{0-ballseparation}, 
$\frac{r^0_{p_j}}{R_i} \ge
\frac{1}{20} \Upsilon_0$. Hence we may assume that
$B(p_i, 10 R_i) \subset A(p_j, \frac{1}{10} r^0_{p_j}, r^0_{p_j})$.
Lemma \ref{0-ballseparation} also gives a 
$(1,\beta_1)$-splitting 
of $\left( \frac{R_i}{R_j} M, p_i \right)$. If ${\beta}_1$ and
${\beta}_2$ are sufficiently small then
by Lemma \ref{alexcompatible}, this 1-splitting is
compatible with the $(2,\beta_2)$-splitting of 
$\left( \frac{1}{R_i} M, p_i \right)$ to an arbitrary degree of closeness. 
As before, if $\beta_1$, $\beta_2$, ${\varsigma}_{\twostratum}$
and
$\overline{\varsigma}_{\zeroball}$ are sufficiently small
(as functions of $\eps_1$ and $\Delta$) then the
sublemma follows from Lemma \ref{adaptedclose}.
\end{proof}

We retain
the hypotheses of Sublemma \ref{2adjusting}.

For $j\in J$, the cutoff function $\zeta_j$
is a function of the $\eta_{j'}$'s for $j'\in J$, i.e.  there is a 
smooth function $\Phi_j \in C^\infty_c(\R^J)$
such that 
$
\zeta_j(\cdot) = 
\Phi_j \left( \{\eta_{j^\prime}(\cdot)\}_{j^\prime \in J} \right)$.
(Note from (\ref{additional2}) 
that $\zeta_{E^\prime}$ depends on $\eta_{E^\prime}$ and
$\{\zeta_k\}_{k \in I_{\edge}}$.)
The $H_j$-component of ${\mathcal E}^0$, after dividing by
$R_i$, can be written as 
\begin{equation}
\label{eqn-recalle0j}
\frac{1}{R_i}\e^0_j =
\left(\frac{R_j}{R_i}\eta_j\zeta_j,\frac{R_j}{R_i}\zeta_j\right)
=\left(\frac{R_j}{R_i}\eta_j \cdot 
(\Phi_j\circ \{\eta_{j^\prime}\}_{j^\prime \in J}),
\frac{R_j}{R_i}\Phi_j\circ \{\eta_{j^\prime}\}_{j^\prime \in J}\right)\,.
\end{equation}

Let $\f^0:\R^2\ra H$ be the map so that the $H_j$-component
of $\f^0 \circ \eta_i$,
for $j\in J$, is obtained from the preceding formula by replacing
each occurrence of $\eta_j$ with the approximation 
$\frac{R_i}{R_j}(T_{ij}\circ\eta_i)$, i.e.
\begin{equation} \label{feqn}
\frac{1}{R_i }\f^0_j(u)=\left( 
T_{ij}(u) \cdot \left( \Phi_j \left( \left\{ 
\frac{R_i}{R_{j^\prime}} T_{ij^\prime}
\left( u \right) \right\}_{j^\prime \in J} \right) \right),
\frac{R_j}{R_i} \Phi_j \left( \left\{ 
\frac{R_i}{R_{j^\prime}} T_{ij^\prime}
\left( u \right) \right\}_{j^\prime \in J} \right)
\right)\,,
\end{equation}
whose $H_{\r}$-component is the constant function $R_i$,
and whose other components vanish.
That is,
\begin{equation}
\frac{1}{R_i }\f^0_j \circ \eta_i=\left( 
(T_{ij} \circ \eta_i) \cdot \left( \Phi_j \left( \left\{ 
\frac{R_i}{R_{j^\prime}} T_{ij^\prime} \circ \eta_i
 \right\}_{j^\prime \in J} \right) \right),
\frac{R_j}{R_i} \Phi_j \left( \left\{ 
\frac{R_i}{R_{j^\prime}} T_{ij^\prime}
\circ \eta_i \right\}_{j^\prime \in J} \right)
\right).
\end{equation}

\begin{sublemma}  \label{eps2}
Under the constraints $\eps_1 \leq 
\overline{\eps}_1(\eps_2,{\mathcal M})$,
$\Upsilon_0 \ge \overline{\Upsilon}_0(\eps_2,{\mathcal M})$ 
and 
$\La \leq \overline{\La}(\eps_2,{\mathcal M})$,
\begin{equation}
\label{eqn-f0j}
\left\| \frac{1}{R_i} \e^0-
\frac{1}{R_i} \f^0\circ \eta_i \right\|_{C^1}<\eps_2
\end{equation}
on $B(p_i, 10) \subset \frac{1}{R_i} M$.
\end{sublemma}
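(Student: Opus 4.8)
The plan is to compare $\e^0$ and $\f^0 \circ \eta_i$ component by component on the ball $B(p_i,10) \subset \frac{1}{R_i}M$, using Sublemma \ref{2adjusting} to control the difference in each summand and Lemma \ref{mult2} (and its analogues for the other strata) to control how many summands are nonzero at any given point. First I would observe that only the components indexed by $j \in J$ can differ: for $j \notin J$ we have $\supp \zeta_j \cap B(p_i, 10 R_i) = \emptyset$, so both $\e^0_j$ and the corresponding term of $\f^0 \circ \eta_i$ vanish on $B(p_i,10)$, while the $H_{\r}$-component of $\e^0$ is $\r$, which is $\Lambda$-Lipschitz and hence $C^1$-close on $B(p_i,10) \subset \frac{1}{R_i}M$ (i.e.\ after rescaling by $R_i$) to the constant $R_i$ once $\Lambda \le \overline{\Lambda}(\eps_2)$. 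So it suffices to bound $\left\| \frac{1}{R_i}\e^0_j - \frac{1}{R_i}\f^0_j \circ \eta_i \right\|_{C^1}$ for each $j \in J$ and then sum, using that $|J|$ is bounded in terms of $\mathcal{M}$ by the intersection multiplicity bounds.

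Next I would carry out the single-component estimate. By (\ref{eqn-recalle0j}) and (\ref{feqn}), the difference $\frac{1}{R_i}\e^0_j - \frac{1}{R_i}\f^0_j \circ \eta_i$ is built out of the quantities $\frac{R_j}{R_i}\eta_j - T_{ij}\circ\eta_i$ and $\frac{R_{j'}}{R_i}\eta_{j'} - (R_{j'}/R_i)(R_i/R_{j'})T_{ij'}\circ\eta_i = \frac{R_{j'}}{R_i}\eta_{j'} - T_{ij'}\circ\eta_i$ plugged into the smooth, compactly supported functions $\Phi_j$ and into the product with $\eta_j$; by (\ref{eqn-etajtij}) each of these is $\eps_1$-small in $C^1$ on $B(p_i,10)$. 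Since $\Phi_j$ is a fixed smooth compactly supported function (with $C^1$-norm depending only on the shape of the cutoffs, hence ultimately on $\mathcal{M}$ and $\Delta$), and since $T_{ij}$, $T_{ij'}$ are norm-$1$ linear maps, the chain rule and product rule give a bound of the form $C(\mathcal{M},\Delta)\,\eps_1$ for $\left\| \frac{1}{R_i}\e^0_j - \frac{1}{R_i}\f^0_j \circ \eta_i \right\|_{C^1}$ on $B(p_i,10)$, where I would also use that $\frac{R_{j'}}{R_i}$ is bounded above (this is where the constraint $\Upsilon_0 \ge \overline{\Upsilon}_0(\eps_2,\mathcal{M})$ enters, for $j' \in I_{\zeroball}$, guaranteeing $R_{j'} = r^0_{p_{j'}}$ is comparable to $R_i$ on the overlap region via Lemma \ref{0-ballseparation}, and the Lipschitz bound on $\r$ handles the remaining cases). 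Summing over the at most $\overline{\mathcal{M}}$ many $j \in J$ yields $\left\| \frac{1}{R_i}\e^0 - \frac{1}{R_i}\f^0 \circ \eta_i \right\|_{C^1} \le \overline{\mathcal{M}}\cdot C(\mathcal{M},\Delta)\,\eps_1$, and choosing $\eps_1 \le \overline{\eps}_1(\eps_2,\mathcal{M})$ small enough makes this $< \eps_2$.

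The main obstacle I anticipate is bookkeeping rather than anything deep: one must be careful that the implied constant in the single-component estimate really depends only on $\mathcal{M}$ (and the already-fixed $\Delta$) and not on the total number of indices in $I$, which is handled by the fact that $\Phi_j$ depends only on the finitely many $\eta_{j'}$ with $j' \in J$ and $|J| \le \overline{\mathcal{M}}$; and that the rescaling factors $R_{j'}/R_i$ appearing inside $\Phi_j$'s arguments are uniformly bounded on $B(p_i,10)$, which requires invoking the separation/covering lemmas (Lemma \ref{mult2}, Lemma \ref{lem-edgeballcover}, Lemma \ref{lem-slimballselection}, Lemma \ref{0-ballseparation}) for the various strata. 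Since all of these are already available in the excerpt, the proof is a routine assembly, and I would simply state that the estimate follows by combining Sublemma \ref{2adjusting} with the chain rule, the product rule, the uniform bound on $|J|$, and the uniform bounds on the ratios $R_{j'}/R_i$.
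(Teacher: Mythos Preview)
Your overall strategy—component-by-component estimation using Sublemma~\ref{2adjusting} plus chain and product rules, with the multiplicity bound on $|J|$—matches the paper's proof, and your treatment of the $H_{\r}$-component and of the indices $j\in I_{E'}\cup I_{\slim}\cup I_{\edge}\cup I_{\twostratum}$ is correct.

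However, your handling of $j\in I_{\zeroball}$ contains a genuine error. You claim that the constraint $\Upsilon_0\ge\overline{\Upsilon}_0(\eps_2,\mathcal{M})$ makes $R_j=r^0_{p_j}$ \emph{comparable} to $R_i$. This is backwards: Lemma~\ref{0-ballseparation}(2) gives $\frac{R_j}{R_i}=\frac{r^0_{p_j}}{\r_{p_i}}\ge\frac{1}{20}\Upsilon_0$, so the constraint forces this ratio to be \emph{large}, not bounded. Since the second component of $\frac{1}{R_i}\e^0_j$ carries the prefactor $\frac{R_j}{R_i}$, a naive application of the chain rule would produce a bound of the form $\frac{R_j}{R_i}\cdot C\cdot\eps_1$, which is useless.

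What actually rescues the estimate is a structural observation you omit: for $j\in I_{\zeroball}$ the cutoff $\zeta_j=\Phi_{\frac{2}{10},\frac{3}{10},\frac{8}{10},\frac{9}{10}}\circ\eta_j$ depends \emph{only} on $\eta_j$, not on any other $\eta_{j'}$. Thus the second component of the difference is
\[
\frac{R_j}{R_i}\Bigl(\Phi_j(\eta_j)-\Phi_j\bigl(\tfrac{R_i}{R_j}T_{ij}\circ\eta_i\bigr)\Bigr),
\]
and the argument difference satisfies $|\eta_j-\tfrac{R_i}{R_j}T_{ij}\circ\eta_i|=\tfrac{R_i}{R_j}\bigl|\tfrac{R_j}{R_i}\eta_j-T_{ij}\circ\eta_i\bigr|<\tfrac{R_i}{R_j}\eps_1$. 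The factor $\tfrac{R_i}{R_j}$ cancels the prefactor, giving a $C^0$-bound of order $\eps_1$; the $C^1$-estimate works the same way since differentiating $\Phi_j(\eta_j)$ in $\frac{1}{R_i}M$ again brings down a factor $\tfrac{R_i}{R_j}$ (as $\eta_j$ is roughly $1$-Lipschitz in $\frac{1}{R_j}M$). This cancellation, not any boundedness of $R_j/R_i$, is the mechanism the paper uses.
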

\begin{proof}
First note that $\e_{\r}(p_i) = \f^0_{\r}(p_i) = R_i$ and
the $\e_{\r}$-component of $\e$ has Lipschitz constant $\La$,
so it suffices to control the remaining components.
For $j \in J$ and 
$j \in I_{E'} \cup I_{\slim}\cup I_{\edge}\cup I_{\twostratum}$, 
if ${\La}$ is sufficiently
small then we can assume that $\frac{R_i}{R_j}$ is arbitrarily close to
one.  Then the $H_j$-component of $\frac{1}{R_i} \e^0-
\frac{1}{R_i} \f^0\circ \eta_i$ can be estimated in $C^1$-norm
by using (\ref{eqn-etajtij}) to estimate $\eta_{j'}$, plugging this into
(\ref{eqn-recalle0j}) and applying the chain rule.
In applying the chain rule, we use the fact that the  functions
$\Phi_j$ have explicit bounds on their derivatives of order 
up to $2$.

If $j \in J \cap I_{\zeroball}$ then the only relevant argument of 
$\Phi_j$ is when $j' = j$.  
Hence in this case we can write
\begin{equation}
\frac{1}{R_i}\e^0_j =
\left(\frac{R_j}{R_i}\eta_j \cdot 
\Phi_j(\eta_{j}),
\frac{R_j}{R_i}\Phi_j(\eta_{j}) \right)
\end{equation}
and
\begin{equation}
\frac{1}{R_i }\f^0_j \circ \eta_i=\left( 
(T_{ij} \circ \eta_i) \cdot \left( \Phi_j \left( 
\frac{R_i}{R_{j}} T_{ij} \circ \eta_i
 \right) \right),
\frac{R_j}{R_i} \Phi_j \left( 
\frac{R_i}{R_{j}} T_{ij}
\circ \eta_i \right)
\right).
\end{equation}
From part (2) of Lemma \ref{0-ballseparation},
$\frac{R_i}{R_{j}} \le \frac{20}{\Upsilon_0}$.
Then the $H_j$-component of $\frac{1}{R_i} \e^0-
\frac{1}{R_i} \f^0\circ \eta_i$ can be estimated in $C^1$-norm
by using (\ref{eqn-etajtij}). Note when we use the chain rule to estimate
the second component of
$\frac{1}{R_i} \e^0-
\frac{1}{R_i} \f^0\circ \eta_i$, namely
$\frac{R_j}{R_i} \left( \Phi_j(\eta_{j}) -
\Phi_j \left( 
\frac{R_i}{R_{j}} T_{ij}
\circ \eta_i \right) \right)$, we differentiate
$\Phi_j$ and this brings down a factor of $\frac{R_i}{R_{j}}$
when estimating norms.
\end{proof}

\begin{sublemma} \label{etaclose}
Given $\Si \in (0,\frac{1}{10})$, 
suppose that
$|\eta_i(p)| < 8$ for some $p \in M$.
Put $x = {\mathcal E}^0(p)$.
For any $q \in M$, if
${\mathcal E}^0(q) \in B(x, \Si R_i)$ then
$|\eta_i(p) - \eta_i(q)| < 20\Si$.
\end{sublemma}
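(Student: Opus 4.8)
The plan is to exploit the structure of $\mathcal{E}^0$ in the $H_i$-component alone, where $i \in I_{\twostratum}$. Recall that $\mathcal{E}^0_i = (R_i \eta_i \zeta_i, R_i \zeta_i)$, so the second factor of $\frac{1}{R_i}\mathcal{E}^0_i$ is exactly $\zeta_i$, and the first factor is $\eta_i \zeta_i$. First I would note that $\zeta_i = \Phi_{8,9} \circ |\eta_i|$, so on the region $\{|\eta_i| \le 8\}$ we have $\zeta_i \equiv 1$; in particular $\zeta_i(p) = 1$. Since $\mathcal{E}^0(q) \in B(x, \Sigma R_i)$ with $x = \mathcal{E}^0(p)$, the $H_i$-components satisfy $|\mathcal{E}^0_i(q) - \mathcal{E}^0_i(p)| < \Sigma R_i$, hence componentwise $|R_i \zeta_i(q) - R_i| < \Sigma R_i$ and $|R_i \eta_i(q)\zeta_i(q) - R_i \eta_i(p)| < \Sigma R_i$. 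Dividing by $R_i$ gives $|\zeta_i(q) - 1| < \Sigma$ and $|\eta_i(q)\zeta_i(q) - \eta_i(p)| < \Sigma$.

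Next I would combine these. From $|\zeta_i(q) - 1| < \Sigma < \frac{1}{10}$ we get $\zeta_i(q) > \frac{9}{10}$, so $\zeta_i(q) \ne 0$; since $\zeta_i = \Phi_{8,9}\circ|\eta_i|$ and $\Phi_{8,9}$ vanishes on $[9,\infty)$, this forces $|\eta_i(q)| < 9$, so in particular $\eta_i(q)$ is defined at $q$ and bounded. Also $|\eta_i(p)| < 8$ gives $|\eta_i(q)\zeta_i(q)| < \Sigma + 8 < 9$. Now write
\begin{equation}
\eta_i(q) - \eta_i(p) = \bigl(\eta_i(q) - \eta_i(q)\zeta_i(q)\bigr) + \bigl(\eta_i(q)\zeta_i(q) - \eta_i(p)\bigr) = \eta_i(q)\bigl(1 - \zeta_i(q)\bigr) + \bigl(\eta_i(q)\zeta_i(q) - \eta_i(p)\bigr).
\end{equation}
The second term has absolute value less than $\Sigma$. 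For the first term, $|\eta_i(q)||1 - \zeta_i(q)| < 9 \cdot \Sigma$. Hence $|\eta_i(q) - \eta_i(p)| < 9\Sigma + \Sigma = 10\Sigma < 20\Sigma$, which is the claimed bound (with room to spare).

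There is essentially no obstacle here: the only mild point is to make sure the crude bounds $|\eta_i(q)| < 9$ and $|\eta_i(q)\zeta_i(q)| < 9$ are valid before multiplying, which follows immediately from $\Sigma < \frac{1}{10}$ and the explicit form of $\zeta_i$ as $\Phi_{8,9} \circ |\eta_i|$. One could be slightly more careful and get the sharper constant $10\Sigma$, but $20\Sigma$ is all that is needed and leaves slack for any bookkeeping. No curvature or splitting hypotheses beyond the definitions in Section \ref{adapted2} are required; the statement is purely about the algebraic form of $\mathcal{E}^0$ near the $2$-stratum.
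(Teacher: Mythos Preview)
Your proof is correct and follows essentially the same approach as the paper: both arguments use only the $H_i$-component of $\mathcal{E}^0$, the fact that $\zeta_i(p)=1$, and elementary algebra. The paper's version differs only cosmetically, writing $|\eta_i(p)-\eta_i(q)| = \frac{1}{\zeta_i(q)}|\zeta_i(q)\eta_i(p)-\zeta_i(q)\eta_i(q)|$ and bounding this by $\frac{10\Sigma}{1-\Sigma}\le 20\Sigma$, whereas you use the additive decomposition $\eta_i(q)-\eta_i(p)=\eta_i(q)(1-\zeta_i(q))+(\eta_i(q)\zeta_i(q)-\eta_i(p))$ together with $|\eta_i(q)|<9$; the substance is the same.
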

\begin{proof}
We know that $\zeta_i(p) = 1$. 
By hypothesis, $|{\mathcal E}^0(p) - {\mathcal E}^0(q)| <
\Si R_i$. In particular,
$|\zeta_i(p) - \zeta_i(q)| < \Si$ and
$|\zeta_i(p) \eta_i(p)- \zeta_i(q) \eta_i(q)| 
< \Si$.
Then 
\begin{align}
|\eta_i(p) - \eta_i(q)|
& 
= \frac{1}{\zeta_i(q)} |\zeta_i(q) \eta_i(p) - \zeta_i(q)
 \eta_i(q)| \\
& \le 
\frac{1}{\zeta_i(q)} \left[ |\zeta_i(p) \eta_i(p) - \zeta_i(q)
 \eta_i(q)| + 
|\zeta_i(p) - \zeta_i(q)| |\eta_i(p)| \right] \notag \\
& \le 
\frac{10\Si}{1 - \Si} \le 20\Si. \notag
\end{align}
This proves the sublemma.
\end{proof}

We now prove Lemma \ref{lem-2stratumestimates}.
We no longer fix $i \in I_{\twostratum}$.
Given $x \in {S}_1$, choose ${p} \in A_1$ and $i \in I_{\twostratum}$
so that $\e^0(p) = x$ and $|\eta_i(p)| \le 8$.
Put $A^0_x = \Image(d{\mathcal F}^0_{\eta_i(p)})$, a $2$-plane in $H$,
and let $A_x = x + A^0_x$ be the corresponding affine subspace through $x$. 
We first show that
under the constraints
$\Sigma_1 \le \overline{\Sigma}_1(\Gamma_1,{\mathcal M})$,
$\eps_2 \le \overline{\eps}_2(\Gamma_1,{\mathcal M})$ and 
$\La \le \overline{\Lambda}(\Gamma_1,{\mathcal M})$,
the triple $(\tilde S_1,S_1,r_1)$ is a $(2,\Ga_1)$ cloudy $2$-manifold.

We verify condition (1) of Definition \ref{approxlemma}.
Pick $x,y\in \widetilde{S}_1$, and choose 
$p \in (\e^0)^{-1}(x)\cap 
\bigcup_{i \in I_{\twostratum}} |\eta_i|^{-1}[0, 8)$
(respectively
$q\in (\e^0)^{-1}(y)
\cap \bigcup_{i \in I_{\twostratum}} |\eta_i|^{-1}[0, 8)$) 
satisfying $r_1(x)=\Si_1\r_{p}$
(respectively $r_1(y)=\Si_1\r_{q}$).

We can assume that $\La < \frac{1}{100}$.
Suppose first that $d(p,q)\leq \frac{\r_{p}}{\Lambda}$.
Then
since $\r$ is $\Lambda$-Lipschitz, we get
$|\r_{p}-\r_{q}|\leq \r_{p}$, so in this
case 
\begin{equation}
|r_1(x)-r_1(y)|=\Si_1|\r_{p}-\r_{q}|
\leq \Si_1\r_{p}=r_1(x)\,.
\end{equation}

Now suppose that $d(p,q)\geq 20\r_{p}$.  We claim
that if $\La$ 
is sufficiently small then this implies that $d(p, q)
\geq 19\r_{q}$ as well.
Suppose not. Then 
$20\r_{p} \le d(p, q)
\leq 19\r_{q}$, so
$\frac{\r_{p}}{\r_{q}} \le \frac{19}{20}$.
On the other hand, since $|{\r_{q}} - {\r_{p}} | \le
\Lambda d(p, q)$, we also know that
${\r_{q}} - {\r_{p}} \le
\Lambda d(p, q) \le 19 \Lambda \r_{q}$, so
$\frac{\r_{p}}{\r_{q}} \ge 1 - 19 \Lambda$.
If $\La$ is sufficiently small then this is a contradiction.

Thus there are $i,j\in I_{\twostratum}$ such that
$p\in |\eta_i|^{-1}[0,8)$, $q \in |\eta_j|^{-1}[0,8)$, 
$\zeta_i(p)=1=\zeta_j(q)$ and
$\zeta_i(q)=0=\zeta_j(p)$.  Then 
\begin{align}
|x-y|&=|\e^0(p)-\e^0(q)|\geq 
\max(\;\r_{p_i}|\zeta_i(p)-\zeta_i(q)|\,,\,
\r_{p_j}|\zeta_j(p)-\zeta_j(q)|\;) \\
&=\max(\r_{p_i}\,,\,\r_{p_j})\geq \frac12\max(\r_{p}\,,\,\r_{q})
=\frac{\max(r_1(x)\,,\,r_1(y))}{2\,\Si_1}\,. \notag
\end{align}
So $|r_1(x)-r_1(y)|\leq |x-y|$ provided $\Si_1\leq \frac14$.
Thus  condition (1) of Definition \ref{approxlemma} will be satisfied.

We now verify condition (2) of Definition \ref{approxlemma}.
Given $x \in S_1$, let $i \in I_{\twostratum}$ and
$p \in M$ be such that $\e^0(p)=x$ and $|\eta_i(p)| \le 7$.
Taking $\Sigma = \frac{1}{100}$ in Sublemma \ref{etaclose},
we have 
$\Image(\e^0) \cap B \left( x, \frac{R_i}{100} \right) \subset
\Image \left( \e^0 \Big|_{|\eta_i|^{-1}[0,7.2)} \right)$.
Thus we can restrict attention to the action of $\e^0$ on
$|\eta_i|^{-1}[0,7.2)$. Now $\Image \left( \f^0 \Big|_{B(0, 7.2)}
\right)$ is the restriction to $B(0, 7.2)$ of the graph of a function 
$G^0_i : H_i' \rightarrow (H_i')^\perp$, 
since $T_{ii} = \Id$ and
$\zeta_i \Big|_{B(0, 7.2)} = 1$. Furthermore, in view of the
universality of the functions $\{\Phi_j\}_{j \in J}$ and the
bound on the cardinality of $J$, there are uniform $C^1$-estimates on $G^0_i$. 
Hence we can find ${\Sigma}_1$ (as a function of $\Gamma_1$ and
${\mathcal M}$) to ensure that
$\left( \frac{1}{r_1(x)} \Image \left( \f^0 \Big|_{B(0, 7.2)}
\right), x \right)$ is $\frac{\Gamma_1}{2}$-close in the 
pointed Hausdorff topology to $x + \Image(d\f^0_p)$.
Finally, if the parameter $\eps_2$ of Sublemma \ref{eps2} is sufficiently
small then we can ensure that 
$\left( \frac{1}{r_1(x)} \Image \left( \e^0 
\right), x \right)$ is ${\Gamma_1}$-close in the 
pointed Hausdorff topology to $x + \Image(D\f^0_p)$. Thus
condition (2) of Definition \ref{approxlemma} will be satisfied.

To finish the proof of Lemma \ref{lem-2stratumestimates},
equation (\ref{inherent1}) is clearly satisfied if
the parameter $\epsilon_2$ of Sublemma \ref{eps2}
is sufficiently small. 
Equation (\ref{inherent2})
is equivalent to upper and lower bounds on
the eigenvalues of the matrix 
$(\pi_{A^0_x}\circ D\e^0_p) (\pi_{A^0_x}\circ D\e^0_p)^*$,
which acts on the two-dimensional space $A^0_x$.
In view of Sublemma \ref{eps2} and the definition of $A_x$, 
it is sufficient to 
show upper and lower bounds on the eigenvalues of
$D\f^0_{\eta_i(p)} (D\f^0_{\eta_i(p)})^*$ acting on $A^0_x$.
In terms of the function $G^0_i$, these are the same as the
eigenvalues of $I_2 + ((DG^0_i)_{\eta_i(p)})^* (DG^0_i)_{\eta_i(p)}$,
acting on $\R^2$.
The eigenvalues are clearly bounded below by one.
In view of the $C^1$-bounds on $G^0_i$, there is an upper
bound on the eigenvalues in terms of $\dim(H)$, which in
turn is bounded above in terms of ${\mathcal M}$.
This shows equation (\ref{inherent2}).

Finally, given $i \in I_{\twostratum}$, we can write
$\frac{1}{R_i} {\mathcal F}^0$ on $\overline{B(0,8)} \subset \R^2$ in the form
$\frac{1}{R_i} {\mathcal F}^0 =
\left( I, \frac{1}{R_i} \widehat{\e}_i^0 \right)$ with respect to the
orthogonal decomposition $H = H_i' \oplus (H_i')^\perp$. (Recall that
${\mathcal F}^0$ is defined in reference to the given value of $i$.)
We use this to define $\widehat{\e}_i^0$. 
Equation (\ref{eqn-f0jagain}) is a consequence of
Sublemma \ref{eps2}. The last statement of Lemma 
\ref{lem-2stratumestimates} follows from
the definition of $A^0_x$.

\subsection{Structure of $\e^0$ near the edge stratum}
\label{subsec-e0nearedgestratum}

Recall that $Q_2=H_{\zeroball} \oplus
H_{\slim} \oplus H_{\edge}$, and 
$\pi_2:H\ra Q_2$ is the orthogonal projection.

Put
\begin{equation}
\tilde A_2=\bigcup_{i\in I_{\edge}}\;\{|\eta_i|\leq 8\De,\,
\eta_{E'}\leq 8\De\}, \quad
A_2=\bigcup_{i\in I_{\edge}}\;\{|\eta_i|\leq 7\De,\,\eta_{E'}\leq 7\De\}
\end{equation}
and
\begin{equation}
\tilde S_2=(\pi_2\circ\e^0)(\tilde A_2), \quad S_2=(\pi_2\circ\e^0)(A_2).
\end{equation}

Let $\Si_2, \Gamma_2 > 0$ be new parameters.
Define $r_2: \tilde S_2\ra (0,\infty)$  by 
putting 
$r_2(x)=\Si_2\; \r_{p}$ for some 
$p \in (\pi_2\circ\e^0)^{-1}( x)\cap
\tilde A_2$.

The analog of Lemma \ref{lem-2stratumestimates}  for the region 
near edge points is:

\begin{lemma}
\label{lem-edgestratumestimates}
There is a constant $\Om_2=\Om_2(\mathcal{M})$
so that under the constraints
$\Si_2<\overline{\Si}_2(\Ga_2,\mathcal{M})$, 
$\be_E<\overline{\be}_E(\Ga_2,\Si_2,\be_2,\Delta,\mathcal{M})$,
$\si_E<\overline{\si}_E(\Ga_2,\Si_2,\be_2,\Delta,\mathcal{M})$,
$\varsigma_{\edge}<
\overline{\varsigma}_{\edge}(\Ga_2,\Si_2,\be_2,\Delta,\mathcal{M})$,
$\be_1<\overline{\be}_1(\Ga_2,\Si_2,\be_2,\Delta,\mathcal{M})$,
$\varsigma_{\slim}<
\overline{\varsigma}_{\slim}(\Ga_2,\Si_2,\be_2,\Delta,\mathcal{M})$
$\varsigma_{\zeroball}<
\overline{\varsigma}_{\zeroball}(\Ga_2,\Si_2,\be_2,\Delta,\mathcal{M})$ and
$\La<\overline{\La}(\Ga_2,\Si_2,\be_2,\Delta,\mathcal{M})$, 
the following holds.

\begin{enumerate}
\item The triple $(\tilde S_2,S_2,r_2)$ is a 
$(2,\Ga_2)$ cloudy $1$-manifold.
\item 
The affine subspaces $\{A_x\}_{x \in S_2}$ inherent in the definition of
the cloudy $1$-manifold can be chosen to have the following property.
Pick $p \in A_2$ and put $x=(\pi_2\circ\e^0)(p)\in S_2$.
Let $A^0_x\subset Q_2$ be the linear subspace parallel to $A_x$
(i.e. $A_x = A^0_x + x$) and let
$\pi_{A^0_x} : H \rightarrow A^0_x$ denote orthogonal projection onto
$A^0_x$. Then
\begin{equation}
\|D(\pi_2\circ\e^0)_p
-\pi_{A^0_x}\circ D(\pi_2\circ\e^0)_p\|<\Ga_2
\end{equation}
and 
\begin{equation}
\Om_2^{-1}\|v\|\leq \|(\pi_{A^0_x}\circ D(\pi_2\circ\e^0))(v)\|
\leq \Om_2\|v\|
\end{equation}
for every $v\in T_pM$ which is orthogonal to 
$\ker(\pi_{A^0_x}\circ D(\pi_2\circ\e^0)_p)$.
\item 
Given $i \in I_{\edge}$, there is a smooth map
$\widehat{{\mathcal E}}^0_i \: : \: 
(\overline{B(0, 8\Delta)} \subset \R) \rightarrow
(H_i')^\perp \cap Q_2$ such that 
\begin{equation} \label{slope2}
\| D\widehat{{\mathcal E}}^0_i \| \le \Omega_2 R_i
\end{equation}
and on the subset 
$\{|\eta_i| \le 8 \Delta, \eta_{E'} \le 8 \Delta \}$,
we have
\begin{equation}
\label{eqn-f1jagain}
\left\| \frac{1}{R_i} \pi_2 \circ \e^0- \left( \eta_i,
\frac{1}{R_i} \widehat{{\mathcal E}}^0_i \circ \eta_i \right) 
\right\|_{C^1}<\Gamma_2.
\end{equation}

Furthermore, if $x \in S_2$ then there are some $i \in I_{\edge}$ and
$p \in \{ |\eta_i| \le 7 \Delta, \: \eta_{E'} \le 7 \Delta \}$ such that
$x = (\pi_2 \circ \e^0)(p)$ and
$A^0_x = \Image \left( I, \frac{1}{R_i} 
(D\widehat{{\mathcal E}}^0_i)_{\eta_i(p)} \right)$.
\end{enumerate}
\end{lemma}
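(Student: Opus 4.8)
The plan is to follow the proof of Lemma \ref{lem-2stratumestimates} essentially verbatim, making three structural changes. First, the map $\e^0$ is everywhere replaced by $\pi_2\circ\e^0$; since $\pi_2$ is a norm-$\le 1$ linear projection this harms no $C^1$-estimate. Second, the local model is the $1$-dimensional edge-tangent coordinate $\eta_i$, $i\in I_{\edge}$, rather than a $2$-dimensional $\eta_i$. Third, one observes that near an edge ball the $\eta_{E'}$-dependence of the relevant data drops out: fix $i\in I_{\edge}$, so $R_i=\r_{p_i}$ and the ball $B(p_i,10R_i)\subset M$ is the ball $B(p_i,10)\subset\frac{1}{R_i}M$; since $p_i\in E\subset E'$ we have $d_{E'}(p_i)=0$, hence by Lemma \ref{lem-rhoerhoe'}(1) and the smallness of $\Lambda$ we get $\eta_{E'}<8\Delta$ throughout $B(p_i,10)$ (using that $\Delta$ is large). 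Consequently the factor $\Phi_{8\Delta,9\Delta}\circ\eta_{E'}$ appearing in $\zeta_j$ for $j\in I_{\edge}$ is identically $1$ on this ball, so there $\zeta_j$ reduces to $\Phi_{-9\De,-8\De,8\De,9\De}\circ\eta_j$; and since $\pi_2$ annihilates the summands $H_{\r}$, $H_{E'}$ and $H_{\twostratum}$, the map $\pi_2\circ\e^0$ has only the components indexed by $I_{\zeroball}\cup I_{\slim}\cup I_{\edge}$, each of which is a function of the corresponding $\eta_j$ alone on $B(p_i,10)$, exactly as in the $2$-stratum situation.

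With this in hand I would reproduce the three sublemmas. The analog of Sublemma \ref{2adjusting}: set $J=\{j\in I_{\zeroball}\cup I_{\slim}\cup I_{\edge}:\supp\zeta_j\cap B(p_i,10R_i)\neq\emptyset\}$; then for each $j\in J$ there is a map $T_{ij}:\R\to\R^{k_j}$ (a composition of an isometry and an orthogonal projection) with $\|\frac{R_j}{R_i}\eta_j-T_{ij}\circ\eta_i\|_{C^1}<\eps_1$ on $B(p_i,10)$. The proof is the case analysis of Sublemma \ref{2adjusting}, but built on the $1$-splitting $Q_{p_i}$ of Definition \ref{edgepoints} rather than a $2$-splitting: for $j\in I_{\edge}$ one transports $Q_{p_j}$ to $p_i$ by Lemma \ref{faralignment1}; for $j\in I_{\slim}$ one transports the slim $1$-splitting; for $j\in I_{\zeroball}$ one uses the $(1,\be_1)$-splitting furnished by Lemma \ref{0-ballseparation}(\ref{item-1or2}), after noting as before that $\frac{R_i}{R_j}\le\frac{20}{\Upsilon_0}$ is small and $B(p_i,10R_i)\subset A(p_j,\frac{1}{10}r^0_{p_j},r^0_{p_j})$. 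In every case $p_i\in E$ is not a $2$-stratum point (Lemma \ref{lem-edgepointnot2stratum}), so Lemma \ref{alexcompatible} with $j=k=1$ shows the transported $1$-splitting is approximately compatible with $Q_{p_i}$, and Lemma \ref{adaptedclose} (with Remark \ref{addedremark}) then yields the asserted $C^1$-closeness of the adapted coordinates. Next, writing $\zeta_j=\Phi_j(\{\eta_{j'}\}_{j'\in J})$ (in the simplified form valid on $B(p_i,10)$) and substituting $\frac{R_i}{R_j}(T_{ij}\circ\eta_i)$ for each $\eta_j$, one defines $\f^0:\R\to Q_2$ by the analog of (\ref{feqn}) and proves the analog of Sublemma \ref{eps2}, namely $\|\frac{1}{R_i}\pi_2\circ\e^0-\frac{1}{R_i}\f^0\circ\eta_i\|_{C^1}<\eps_2$ on $B(p_i,10)$, by the chain rule using the explicit derivative bounds on the $\Phi_j$ and, for $j\in I_{\zeroball}$, the smallness of $\frac{R_i}{R_j}$. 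Finally, the analog of Sublemma \ref{etaclose} holds with the same one-line computation: if $|\eta_i(p)|\le 8\De$ and $\eta_{E'}(p)\le 8\De$ then $\zeta_i(p)=1$, so $(\pi_2\circ\e^0)(q)\in B((\pi_2\circ\e^0)(p),\Si R_i)$ forces $|\eta_i(p)-\eta_i(q)|<20\Si$.

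I would then assemble these exactly as in the second half of the proof of Lemma \ref{lem-2stratumestimates}, with $\dim H$ replaced by $\dim Q_2$ (still bounded in terms of $\mathcal M$ via the multiplicity bounds of Lemmas \ref{lem-edgeballcover}, \ref{lem-slimballselection} and \ref{0-ballseparation}). Condition (1) of the cloudy-manifold definition for $r_2$ follows from the $\Lambda$-Lipschitz property of $\r$ together with the lower bound $|(\pi_2\circ\e^0)(p)-(\pi_2\circ\e^0)(q)|\ge\frac12\max(\r_p,\r_q)$ for $p,q$ lying in distinct edge balls (now with harmless $\Delta$-dependent constants). Condition (2) follows because $\f^0$ restricted to $\overline{B(0,7.2\De)}\subset\R$ is the graph of a function $G^0_i:H_i'\to(H_i')^\perp\cap Q_2$ with uniform $C^1$-bounds, since $T_{ii}=\Id$ and $\zeta_i\equiv1$ there; so $S_2$ is genuinely $1$-dimensional, and one localizes to $\{|\eta_i|\le 7.2\De\}$ via the analog of Sublemma \ref{etaclose}. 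This defines the affine subspaces $A_x$; the inequalities in part (2) reduce, as in the $2$-stratum case, to the trivial bounds (below by $1$, above in terms of $\dim Q_2$) on $1+\|(DG^0_i)_{\eta_i(p)}\|^2$. For part (3) one writes $\frac{1}{R_i}\f^0=(I,\frac{1}{R_i}\widehat{\e}^0_i)$ relative to the orthogonal decomposition $Q_2=H_i'\oplus((H_i')^\perp\cap Q_2)$, which defines $\widehat{\e}^0_i$ on $\overline{B(0,8\De)}$; (\ref{slope2}) is the resulting $C^1$-bound, (\ref{eqn-f1jagain}) is the analog of Sublemma \ref{eps2}, and the identification $A^0_x=\Image\left(I,\frac{1}{R_i}(D\widehat{\e}^0_i)_{\eta_i(p)}\right)$ is immediate from the definition of $A^0_x$.

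The only genuinely new point, and the step I expect to demand the most care, is the compatibility analysis in the first sublemma: one must verify that for $j\in I_{\slim}$ and $j\in I_{\zeroball}$ the relevant $1$-splittings really align with the edge-tangent direction $Q_{p_i}$, and it is precisely here that one uses that an edge point carries no $(2,\be_2)$-splitting, so that Lemma \ref{alexcompatible} (with $j=k=1$) applies; everything else is a routine transcription of the $2$-stratum argument with $\e^0$ replaced by $\pi_2\circ\e^0$ and the model dimension lowered by one.
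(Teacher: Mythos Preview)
Your overall strategy is exactly what the paper intends: its proof is the single sentence ``We omit the proof, as it is similar to the proof of Lemma \ref{lem-2stratumestimates},'' and you have correctly identified the three changes (project by $\pi_2$, use the edge $1$-splitting $Q_{p_i}$ in place of a $2$-splitting, and eliminate the $\eta_{E'}$-dependence of the edge cutoffs). The compatibility case analysis you outline for $j\in I_{\edge},\ I_{\slim},\ I_{\zeroball}$, using Lemma \ref{lem-edgepointnot2stratum} to invoke Lemma \ref{alexcompatible} with $j=k=1$, is the right mechanism.

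There is, however, a scale slip. You work on the ball $B(p_i,10)\subset \frac{1}{R_i}M$, but for $i\in I_{\edge}$ the adapted coordinate $\eta_i$ lives on $B(p_i,100\Delta)$ and the relevant region $\{|\eta_i|\le 8\Delta\}$ lies in a ball of radius roughly $10\Delta$, not $10$. Your observation ``$\eta_{E'}<8\Delta$ on $B(p_i,10)$'' is true but does not cover the domain you need; on $B(p_i,10\Delta)$ one only has $\eta_{E'}\lesssim 10\Delta$, which can exceed $8\Delta$. The correct way to drop the $\eta_{E'}$-dependence is to work directly on the set $\{|\eta_i|\le 8\Delta,\ \eta_{E'}\le 8\Delta\}$ (the $i$th piece of $\tilde A_2$), where $\Phi_{8\De,9\De}\circ\eta_{E'}\equiv 1$ holds \emph{by definition} of that set, not because the ball is small. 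With the domain corrected to (a neighborhood of) this set, the rest of your transcription---the analogs of Sublemmas \ref{2adjusting}, \ref{eps2}, \ref{etaclose}, the graph $G^0_i$, and the definition of $\widehat{\e}^0_i$---goes through as you describe.
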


We omit the proof as it is similar to the proof of Lemma
\ref{lem-2stratumestimates}.

\subsection{Structure of $\e^0$ near the slim $1$-stratum}
\label{subsec-e0nearslimstratum}

Recall that $Q_3=H_{\zeroball} \oplus
H_{\slim}$, and 
$\pi_3:H\ra Q_3$ is the orthogonal projection.

Put
\begin{equation}
\tilde A_3=\bigcup_{i\in I_{\edge}}\;\{|\eta_i|\leq 8\cdot 10^5 \De\}, \quad
A_3=\bigcup_{i\in I_{\edge}}\;\{|\eta_i|\leq 7 \cdot 10^5\De\}
\end{equation}
and
\begin{equation}
\tilde S_3=(\pi_3\circ\e^0)(\tilde A_3), \quad S_3=(\pi_3\circ\e^0)(A_3).
\end{equation}

Let $\Si_3, \Gamma_3 > 0$ be new parameters.
Define $r_3: \tilde S_3\ra (0,\infty)$  by 
putting 
$r_3(x)=\Si_3\; \r_{p}$ for some 
$p \in (\pi_3\circ\e^0)^{-1}( x)\cap
\tilde A_3$.

The analog of Lemma \ref{lem-2stratumestimates}  for the slim
$1$-stratum points is:

\begin{lemma}
\label{lem-slimstratumestimates}
There is a constant $\Om_3=\Om_3(\mathcal{M})$
so that under the constraints
$\Si_3<\overline{\Si}_3(\Ga_3,\mathcal{M})$, 
$\be_E<\overline{\be}_E(\Ga_3,\Si_3,\be_2,\Delta,\mathcal{M})$,
$\si_E<\overline{\si}_E(\Ga_3,\Si_3,\be_2,\Delta,\mathcal{M})$,
$\varsigma_{\edge}<
\overline{\varsigma}_{\edge}(\Ga_3,\Si_3,\be_2,\Delta,\mathcal{M})$,
$\be_1<\overline{\be}_1(\Ga_3,\Si_3,\be_2,\Delta,\mathcal{M})$,
$\varsigma_{\slim}<
\overline{\varsigma}_{\slim}(\Ga_3,\Si_3,\be_2,\Delta,\mathcal{M})$
$\varsigma_{\zeroball}<
\overline{\varsigma}_{\zeroball}(\Ga_3,\Si_3,\be_2,\Delta,\mathcal{M})$ and
$\La<\overline{\La}(\Ga_3,\Si_3,\be_2,\Delta,\mathcal{M})$, 
the following holds.

\begin{enumerate}
\item The triple $(\tilde S_3,S_3,r_3)$ is a 
$(2,\Ga_3)$ cloudy $1$-manifold.
\item 
The affine subspaces $\{A_x\}_{x \in S_3}$ inherent in the definition of
the cloudy $1$-manifold can be chosen to have the following property.
Pick $p \in A_3$ and put $x=(\pi_3\circ\e^0)(p)\in S_3$.
Let $A^0_x\subset Q_3$ be the linear subspace parallel to $A_x$
(i.e. $A_x = A^0_x + x$) and let
$\pi_{A^0_x} : H \rightarrow A^0_x$ denote orthogonal projection onto
$A^0_x$. Then
\begin{equation}
\|D(\pi_3\circ\e^0)_p
-\pi_{A^0_x}\circ D(\pi_3\circ\e^0)_p\|<\Ga_3
\end{equation}
and 
\begin{equation}
\Om_3^{-1}\|v\|\leq \|(\pi_{A^0_x}\circ D(\pi_3\circ\e^0)(v)\|
\leq \Om_3\|v\|
\end{equation}
for every $v\in T_pM$ which is orthogonal to 
$\ker(\pi_{A^0_x}\circ D(\pi_3\circ\e^0)_p)$.
\item
Given $i \in I_{\slim}$, there is a smooth map
$\widehat{{\mathcal E}}^0_i \: : \: 
(\overline{B(0, 8 \cdot 10^5 \Delta)} \subset \R) \rightarrow
(H_i')^\perp \cap Q_3$ such that 
\begin{equation} \label{slope3}
\| D\widehat{{\mathcal E}}^0_i \| \le \Omega_3 R_i
\end{equation}
and on the subset 
$\{|\eta_i|\leq 8\cdot 10^5 \De\}$,
we have
\begin{equation}
\label{eqn-f2jagain}
\left\| \frac{1}{R_i} \pi_3 \circ \e^0- \left( \eta_i,
\frac{1}{R_i} \widehat{{\mathcal E}}^0_i \circ \eta_i \right) 
\right\|_{C^1}<\Gamma_3.
\end{equation}

Furthermore, if $x \in S_3$ then there are some
$i \in I_{\slim}$ and $p \in \{ |\eta_i| \le 7 \cdot 10^5 \Delta \}$
such that $x = (\pi_3 \circ \e^0)(p)$ and
$A^0_x = \Image \left( I, \frac{1}{R_i} 
(D\widehat{{\mathcal E}}^0_i)_{\eta_i(p)} \right)$.
\end{enumerate}
\end{lemma}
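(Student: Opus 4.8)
The plan is to follow the proof of Lemma \ref{lem-2stratumestimates} almost verbatim, making the substitutions appropriate to the slim $1$-stratum: the dimension $2$ is replaced by $1$ (so $k_i=1$ for $i\in I_{\slim}$, the maps $\eta_i$ take values in $\R$, and the affine subspaces $A_x$ are lines), the radii $8,7$ are replaced by the slim scales $8\cdot 10^5\Delta$, $7\cdot 10^5\Delta$ coming from the cutoff functions of Lemma \ref{1adapted}, and $\pi_2$ is replaced by $\pi_3:H\ra Q_3=H_{\zeroball}\oplus H_{\slim}$. Fix $i\in I_{\slim}$ and set $J=\{j\in I_{\zeroball}\cup I_{\slim}:\supp\zeta_j\cap B(p_i,10R_i)\neq\emptyset\}$; since $\pi_3$ annihilates every $H_j$-component with $j\notin I_{\zeroball}\cup I_{\slim}$, and since each $\zeta_j$ with $j\in I_{\zeroball}\cup I_{\slim}$ is a function of $\eta_j$ alone, these are the only indices relevant to $\pi_3\circ\e^0$ near $p_i$ (this is a slight simplification over the $2$-stratum and edge cases, where the cutoffs are not so simple).

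The key step is the analog of Sublemma \ref{2adjusting}: under suitable smallness constraints, for each $j\in J$ there is a map $T_{ij}:\R\ra\R^{k_j}$, a composition of an isometry with an orthogonal projection, such that $\|\frac{R_j}{R_i}\eta_j-T_{ij}\circ\eta_i\|_{C^1}<\eps_1$ on $B(p_i,10^6\Delta)\subset\frac{1}{R_i}M$. Since $p_i$ lies in the slim $1$-stratum it carries a $(1,\beta_1)$-splitting but, being a $1$-stratum point, no $(2,\beta_2)$-splitting. When $j\in I_{\slim}$, the centers $p_i,p_j$ lie at distance bounded in terms of $\Delta$ with $R_i\approx R_j$ (by the $\Lambda$-Lipschitz bound on $\r$; here $\Lambda$ must be allowed to depend on $\Delta$), so Lemma \ref{faralignment1} transfers the $(1,\beta_1)$-splitting at $p_j$ to a good $1$-splitting at $p_i$, Lemma \ref{alexcompatible} (using the absence of a $(2,\beta_2)$-splitting at $p_i$) makes it approximately compatible with the given splitting at $p_i$, and Lemma \ref{adaptedclose} together with Remark \ref{addedremark} produces $T_{ij}$. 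When $j\in I_{\zeroball}$, Lemma \ref{0-ballseparation}(2) gives $\frac{R_i}{r^0_{p_j}}\le\frac{20}{\Upsilon_0}$, so $B(p_i,10R_i)\subset A(p_j,\frac{1}{10}r^0_{p_j},r^0_{p_j})$, and Lemma \ref{0-ballseparation}(3) supplies a $(1,\beta_1)$-splitting at $p_i$ in the radial direction for which $\frac{r^0_{p_j}}{R_i}\eta_j$ is an adapted coordinate of quality $\varsigma_{\slim}$; Lemmas \ref{alexcompatible} and \ref{adaptedclose} again yield $T_{ij}$.

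Given the $T_{ij}$, I would define $\f^0:\R\ra Q_3$ as in \S\ref{subsec-e0near2stratum}: in $\frac{1}{R_i}\pi_3\circ\e^0_j=(\frac{R_j}{R_i}\eta_j\cdot(\Phi_j\circ\eta_j),\frac{R_j}{R_i}\Phi_j\circ\eta_j)$ replace every occurrence of $\eta_j$ by $\frac{R_i}{R_j}(T_{ij}\circ\eta_i)$, take the $H_{\r}$-component to be the constant $R_i$, and let all other components vanish. The analog of Sublemma \ref{eps2}, namely $\|\frac{1}{R_i}\pi_3\circ\e^0-\frac{1}{R_i}\f^0\circ\eta_i\|_{C^1}<\eps_2$ on $B(p_i,10^6\Delta)$, then follows by the chain rule from the previous step together with the bound $|J|\le\mathcal{M}$ and the universal derivative bounds on the $\Phi_j$; for $j\in I_{\zeroball}$ one also uses that each differentiation of $\Phi_j$ produces a factor $\frac{R_i}{r^0_{p_j}}\le\frac{20}{\Upsilon_0}$, which is why $\Upsilon_0$ must be taken large. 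One needs as well the analog of Sublemma \ref{etaclose}: if $|\eta_i(p)|<8\cdot 10^5\Delta$ and $(\pi_3\circ\e^0)(q)\in B((\pi_3\circ\e^0)(p),\Sigma R_i)$ then $|\eta_i(p)-\eta_i(q)|$ is bounded by a fixed multiple of $10^5\Delta\cdot\Sigma$, proved exactly as before since $\zeta_i(p)=1$.

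Finally I would assemble the conclusions as in the proof of Lemma \ref{lem-2stratumestimates}. Conclusion (1) (that $(\tilde S_3,S_3,r_3)$ is a $(2,\Gamma_3)$ cloudy $1$-manifold) is checked against Definition \ref{approxlemma}: the slow-variation property of $r_3$ is the same argument using that $\r$ is $\Lambda$-Lipschitz and that the balls $\{B(p_i,\frac{1}{3}\Delta\r_{p_i})\}_{i\in I_{\slim}}$ are disjoint, and the local-flatness property follows from the two sublemmas with $A_x=x+\Image(D\f^0_{\eta_i(p)})$, a line since $\f^0$ is defined on $\R$, noting that near $p_i$ the map $\f^0$ is the graph of a function $G^0_i:H_i'\cong\R\ra(H_i')^\perp\cap Q_3$ with uniform $C^1$-bounds because $T_{ii}=\Id$ and $\zeta_i\equiv 1$ there. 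The inequalities in conclusion (2) reduce, via the $\eps_2$-sublemma, to bounds on the scalar $1+|DG^0_i|^2$, which is $\ge 1$ and $\le$ a constant depending on $\dim H$ and hence on $\mathcal{M}$; this fixes $\Omega_3=\Omega_3(\mathcal{M})$. For conclusion (3), writing $\frac{1}{R_i}\f^0=(I,\frac{1}{R_i}\widehat{{\mathcal E}}^0_i)$ with respect to $H_i'\oplus(H_i')^\perp$ and taking the second component into $Q_3$ defines $\widehat{{\mathcal E}}^0_i:\overline{B(0,8\cdot 10^5\Delta)}\ra(H_i')^\perp\cap Q_3$, and (\ref{slope3}) and (\ref{eqn-f2jagain}) are restatements of the $\eps_2$-sublemma, while the identification of $A^0_x$ is immediate from the construction. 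The parameters are fixed in the order $\Gamma_3$, then $\Sigma_3$ and the internal constants $\eps_2,\eps_1$, then the remaining geometric parameters, the one difference from the $2$-stratum case being that $\Sigma_3$ and several geometric constraints must now be small in terms of $\Delta$ because of the scale $10^5\Delta$. The only real obstacle is the first step, the compatibility sublemma, and it is exactly the obstacle already met in Lemma \ref{lem-2stratumestimates}: running the approximate-splitting machinery of Lemmas \ref{faralignment1}, \ref{alexcompatible} and \ref{adaptedclose} case-by-case over $j\in J$.
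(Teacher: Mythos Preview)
Your proposal is correct and follows exactly the approach the paper intends: the paper's own proof reads in full ``We omit the proof as it is similar to the proof of Lemma \ref{lem-2stratumestimates},'' and you have accurately spelled out the needed substitutions (dimension $1$ in place of $2$, scales $8\cdot 10^5\Delta$, $7\cdot 10^5\Delta$ in place of $8,7$, projection $\pi_3$ to $Q_3=H_{\zeroball}\oplus H_{\slim}$) together with the observation that the cutoffs $\zeta_j$ for $j\in I_{\zeroball}\cup I_{\slim}$ depend on $\eta_j$ alone, which genuinely simplifies the analog of Sublemma \ref{2adjusting}.

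One slip to fix: your index set $J$ is defined using $B(p_i,10R_i)$, but the region $\{|\eta_i|\le 8\cdot 10^5\Delta\}$ lives in $B(p_i,10^6\Delta)\subset\frac{1}{R_i}M$, so $J$ should be taken with the ball $B(p_i,10^6\Delta R_i)$ (as you correctly use elsewhere in the sublemmas). Correspondingly, in the $j\in I_{\zeroball}$ case you need $B(p_i,10^6\Delta R_i)\subset A(p_j,\tfrac{1}{10}r^0_{p_j},10r^0_{p_j})$, which requires $\Upsilon_0$ large in terms of $\Delta$; this is consistent with the paper's parameter ordering $\Delta\prec\Upsilon_0$.
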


We omit the proof as it is similar to the proof of Lemma
\ref{lem-2stratumestimates}.

\subsection{Structure of $\e^0$ near the $0$-stratum}
\label{subsec-e0nearzerostratum}

The only information we will need near the $0$-stratum is:

\begin{lemma}
\label{lem-zerosstratumestimates}
For $i \in I_{\zeroball}$,
the only nonzero component of the map 
$\pi_4 \circ \e^0:M\ra Q_4=H_{\zeroball}$
in the region 
$\{\eta_i\in [\frac{3}{10},\frac{8}{10}]\}$
is  $\e^0_i$, where it coincides with $(R_i\eta_i,R_i)$. 
\end{lemma}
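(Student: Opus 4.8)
The plan is to unwind the definitions of the component $\e^0_i$ and of the $0$-stratum cutoff functions, and then appeal to the disjointness of the $0$-stratum balls established in Lemma \ref{0-ballseparation}. No serious difficulty is expected; the argument is essentially a bookkeeping check.

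First, recall that for $i\in I_{\zeroball}$ the component $\e^0_i:M\ra H_i$ is by definition $(R_i\eta_i\zeta_i, R_i\zeta_i)$, where $R_i=r^0_{p_i}$ and $\zeta_i$ is the cutoff function from part (5) of Lemma \ref{lem-existsetap0stratum}, namely the extension by zero of $\Phi_{\frac{2}{10},\frac{3}{10},\frac{8}{10},\frac{9}{10}}\circ\eta_i$. Since $\Phi_{a,b,c,d}$ is identically $1$ on $[b,c]$ (see Section \ref{sec-indexnotation}), we have $\zeta_i\equiv 1$ on the region $\{\eta_i\in [\frac{3}{10},\frac{8}{10}]\}$, and therefore $\e^0_i$ coincides with $(R_i\eta_i, R_i)$ there. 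This gives the claimed formula for the $H_i$-component.

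Next I would show that the remaining $0$-stratum components vanish on this region. Fix $j\in I_{\zeroball}$ with $j\neq i$. Since $\Phi_{\frac{2}{10},\frac{3}{10},\frac{8}{10},\frac{9}{10}}$ vanishes outside the interval $(\frac{2}{10},\frac{9}{10})$, the support of $\zeta_j$ is contained in $\{\eta_j\leq \frac{9}{10}\}$. By part (2) of Lemma \ref{lem-existsetap0stratum}, $\eta_j$ differs by at most $\varsigma_{\zeroball}$ from the rescaled distance function $d_{p_j}$ on $\frac{1}{r^0_{p_j}}M$; hence, since $\varsigma_{\zeroball}$ is small (say $\varsigma_{\zeroball}<\frac{1}{10}$, a constraint already in force), the set $\{\eta_j\leq \frac{9}{10}\}$ is contained in $B(p_j,r^0_{p_j})$ when viewed as a subset of $M$. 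Likewise $\{\eta_i\in [\frac{3}{10},\frac{8}{10}]\}\subset \{\eta_i\leq \frac{8}{10}\}\subset B(p_i, r^0_{p_i})$. By Lemma \ref{0-ballseparation}(\ref{item-disjoint}) the balls $B(p_i,r^0_{p_i})$ and $B(p_j,r^0_{p_j})$ are disjoint, so $\zeta_j\equiv 0$ on $\{\eta_i\in [\frac{3}{10},\frac{8}{10}]\}$, and consequently $\e^0_j=(R_j\eta_j\zeta_j, R_j\zeta_j)=0$ there. Since $\pi_4\circ\e^0:M\ra Q_4=H_{\zeroball}$ is precisely $\bigoplus_{j\in I_{\zeroball}}\e^0_j$, the only nonzero component in the region $\{\eta_i\in[\frac{3}{10},\frac{8}{10}]\}$ is $\e^0_i$, and the proof is complete. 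The one point requiring attention is the scale on which each $\eta_j$ lives — namely $\frac{1}{r^0_{p_j}}M$ — so that the inclusion $\{\eta_j\leq\frac{9}{10}\}\subset B(p_j,r^0_{p_j})$ is read correctly back in $M$.
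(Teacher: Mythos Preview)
Your proof is correct. The paper states this lemma without proof, presumably because it is immediate from the definitions: the formula for $\e^0_i$ follows from $\zeta_i=\Phi_{\frac{2}{10},\frac{3}{10},\frac{8}{10},\frac{9}{10}}\circ\eta_i\equiv 1$ on the region in question, and the vanishing of the other $0$-stratum components follows from the disjointness of the balls $\{B(p_i,r^0_{p_i})\}_{i\in I_{\zeroball}}$ in Lemma \ref{0-ballseparation}(\ref{item-disjoint}) together with the support description in Lemma \ref{lem-existsetap0stratum}. Your write-up supplies exactly this unwinding, so it matches what the authors evidently had in mind.
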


\section{Adjusting the map to Euclidean space}
\label{sec-adjusting}

The main result of this section is the following
proposition, which asserts that it is possible to adjust $\e^0$
slightly, to get a new map $\e$ which is a submersion
in different parts of $M$.  In Section \ref{sec-extracting} 
this structure will yield compatible fibrations
of different parts of $M$.  

Let $c_{\adjust}>0$ be a  parameter.

\begin{proposition}
\label{prop-adjustment}
Under the constraints imposed in this and prior sections,
there is a smooth map $\e:M\ra H$ with the following properties:
\begin{enumerate}
\item
For every $p\in M$, 
\begin{equation}
\|\e(p)-\e^0(p)\|<c_{\adjust}\,\r(p)\quad \mbox{and}\quad
\|D\e_p-D\e^0_p\|<c_{\adjust}\,.
\end{equation}
\item For $j\in \{1,2,3\}$
the restriction of $\pi_j\circ\e:M\ra Q_j$ to
the  region $U_j\subset M$  is a submersion to a 
$k_j$-manifold $W_j\subset Q_j$,
where
\begin{align}
U_1 & =\bigcup_{i\in I_{\twostratum}}
\{|\eta_i|<5\}\,, \\
U_2 & =\bigcup_{i\in I_{\edge}}
\{|\eta_i|<5\De\,,\;\eta_{E'}<5\De\}\,, \notag \\
U_3 & =\bigcup_{i\in I_{\slim}}
\{|\eta_i|<5\cdot 10^5\De\} \notag
\end{align}
and $k_1 = 2$, $k_2 = k_3 = 1$.
\end{enumerate}
\end{proposition}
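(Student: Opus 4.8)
The plan is to construct $\e$ from $\e^0$ by a sequence of three adjustments, one for each stratum, each of which ``pinches'' the relevant image set onto its manifold core and straightens out the corresponding components of $\e^0$ into a genuine submersion. The main inputs are the cloudy-manifold structure results (Lemmas \ref{lem-2stratumestimates}, \ref{lem-edgestratumestimates}, \ref{lem-slimstratumestimates}), which say that $S_1$, $S_2$, $S_3$ are $(2,\Ga_j)$ cloudy manifolds of dimensions $2$, $1$, $1$ respectively, together with Appendix \ref{sec-cloudy}, which provides a core manifold $W_j^0$ approximating $S_j$ with controlled normal injectivity radius. First I would apply the cloudy-manifold approximation theorem to $(\tilde S_1, S_1, r_1)$ to get a smooth submanifold $W_1^0 \subset H$ with controlled geometry, and a smooth ``pinching'' retraction $\psi_1$ (or $\Psi_1$) from a tubular neighborhood of $W_1^0$ onto $W_1^0$, which is $C^1$-close to the identity at the appropriate scale; then $\e^1 = \psi_1 \circ \e^0$ (suitably interpolated using cutoff functions so that it equals $\e^0$ outside a neighborhood of $A_1$) is $c_{\twostratum}$-close to $\e^0$ and restricts to a submersion onto a $2$-manifold $W_1$ over the region $U_1 = \bigcup_{i\in I_{\twostratum}}\{|\eta_i|<5\}$. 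The reason it is a submersion is equation (\ref{inherent2}): on $A^0_x$ the differential $D\e^0_p$ has uniformly bounded eigenvalues, so after pinching onto the tangent plane $A^0_x \approx T_{\e^0(p)}W_1^0$ the map remains a submersion, with the error from (\ref{inherent1}) absorbed by taking $\Ga_1$, $\Si_1$ small.

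Next I would repeat this for the edge stratum: apply the cloudy-manifold core theorem to $(\tilde S_2, S_2, r_2) \subset Q_2$ to obtain $W_2^0 \subset Q_2$ and a pinching map $\psi_2$; crucially, $\psi_2$ acts only on the $Q_2$-coordinates and fixes $Q_2^\perp$, and we use the cutoff functions (built from $\zeta_i$ for $i \in I_{\edge}$ and $\zeta_{E'}$) to localize the adjustment near $A_2$, so that $\e^2$ agrees with $\e^1$ where the edge adjustment is not needed. This gives a submersion $\pi_2\circ\e^2$ onto a $1$-manifold $W_2 \subset Q_2$ over $U_2$. One must check that the second adjustment does not destroy the submersion property established over $U_1$: this follows because over the overlap region the compatibility in the cloudy-manifold statements (the ``furthermore'' clauses identifying $A^0_x$ with $\Image(I, \frac{1}{R_i}D\widehat{\e}^0_i)$) ensures the edge-pinching moves points within, or $C^1$-close to, the image manifold $W_1$, hence composes with the existing submersion to a submersion. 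Then a third, entirely analogous, adjustment handles the slim $1$-stratum: pinch $(\tilde S_3, S_3, r_3) \subset Q_3$ onto a core $1$-manifold $W_3^0$, localize with the slim cutoff functions, and set $\e = \e^3$. The cumulative $C^0$ and $C^1$ errors are bounded by a sum of three contributions, each controllable by $\Si_j$, $\Ga_j$, $\La$ and the stratum parameters, so they total less than $c_{\adjust}$.

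The key estimates to carry out are: (a) the pinching map $\psi_j$ on a tubular neighborhood of $W_j^0$ is $C^1$-close to the identity when measured at scale $r_j \sim \Si_j\r$, which follows from the normal-injectivity-radius bound in Appendix \ref{sec-cloudy} and the fact that $\e^0(A_j)$ lies within $\Ga_j r_j$ of $W_j^0$; (b) the cutoff interpolation between $\e^{j-1}$ and $\psi_j\circ\e^{j-1}$ introduces a $C^1$-error controlled by $\Lambda$ and $\Om_{j-1}$ times the pinching displacement, since the cutoff gradients are bounded (Lemma \ref{uniform} and the explicit $\Phi$-bounds); (c) the submersion property over $U_j$ follows from (\ref{inherent2})-type lower bounds for $\pi_{A^0_x}\circ D\e^0_p$ on the orthogonal complement of the kernel, which survive a $C^1$-perturbation of size $\ll \Om_j^{-1}$. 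The structure results Lemmas \ref{lem-zerosstratumestimates} through \ref{lem-slimstratumestimates} guarantee that the different adjustments have essentially disjoint ``active regions'' except on controlled overlaps, which is what lets the three submersion conclusions coexist.

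The main obstacle I anticipate is the compatibility on overlaps: ensuring that performing the edge adjustment (and later the slim adjustment) does not spoil the submersion already obtained over $U_1$ (resp. $U_1 \cup U_2$). This is delicate because the three maps $\pi_1\circ\e$, $\pi_2\circ\e$, $\pi_3\circ\e$ share the summands $H_{\zeroball}$, $H_{\slim}$, $H_{\edge}$, and an adjustment in $Q_2$ changes coordinates that also feed into $\pi_1\circ\e = \e$. The resolution is to use exactly the feature highlighted in Lemmas \ref{lem-2stratumestimates}(3), \ref{lem-edgestratumestimates}(3), \ref{lem-slimstratumestimates}(3): near points in one stratum's region, $\e^0$ is $C^1$-approximated by $(\eta_i, \frac{1}{R_i}\widehat{\e}^0_i\circ\eta_i)$ for the appropriate $i$, and the adjoining stratum's coordinates $\eta_j$ are affine functions of $\eta_i$ up to small $C^1$-error (the Sublemma \ref{2adjusting}-type statements). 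Thus over an overlap the pinching maps all act, up to small error, as graph-straightening maps in compatible directions, and their composition remains a submersion in each relevant projection. I would make this precise by choosing the parameter order $c_{\adjust}$ large (small), then $\Ga_j$, then $\Si_j$, then the stratum parameters $\be_2, \varsigma_{\twostratum}, \ldots$, then $\La$, exactly as foreshadowed in the hypotheses of the three structure lemmas, so that all error terms are dominated in the required order.
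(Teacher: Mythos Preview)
Your outline is essentially the paper's own approach: three successive postcomposition adjustments $\e^1=\Psi_1\circ\e^0$, $\e^2=\Psi_2\circ\e^1$, $\e=\e^3=\Psi_3\circ\e^2$, where each $\Psi_j$ blends the identity with the nearest-point projection $P_j$ onto the core manifold $W_j^0$ furnished by Lemma~\ref{approxlemma}, using cutoffs $\psi_j$ built from the $\zeta_i$'s. The error estimates and the submersion verification via (\ref{inherent2}) are also as in the paper.

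However, your treatment of the compatibility issue --- why the edge and slim adjustments do not destroy the $2$-stratum submersion --- differs from the paper's and is less clean. You suggest arguing that the later pinchings move points ``within, or $C^1$-close to, the image manifold $W_1$'' because of the alignment of adapted coordinates. That is not quite right: $\Psi_2$ genuinely moves $W_1^0$ (it changes the $Q_2$-coordinates of its points). The paper's mechanism is structural rather than perturbative: since $\Psi_2$ acts only on the $Q_2$-summand and fixes $Q_2^\perp$, and since for $i\in I_{\twostratum}$ one has $H_i'\subset H_{\twostratum}\subset Q_2^\perp\cap Q_3^\perp$, the projection $\pi_{H_i'}$ \emph{commutes} with $\Psi_2$ and $\Psi_3$. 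The paper then \emph{redefines} the target as $W_1=(\Psi_3\circ\Psi_2)(W_1^0)\cap\{\ldots\}$, proves separately that this is still a $2$-manifold (locally a graph over $H_i'$), and checks the submersion for $\e=\Psi_3\circ\Psi_2\circ\e^1$ on $U_1$ by showing $\pi_{H_i'}\circ D\e_p=\pi_{H_i'}\circ D\e^0_p$ is onto $\R^2$. The same pattern gives $W_2=\Psi_3^{Q_2}(W_2^0)\cap\{\ldots\}$, using that $\Psi_3$ factors as $\Psi_3^{Q_2}\times I_{Q_2^\perp}$. So the point you should add is: the later $\Psi_j$'s are not required to nearly preserve the earlier $W_k^0$'s; instead the target manifolds $W_1,W_2$ are \emph{carried along} by the later adjustments, and the commutation $\pi_{H_i'}\circ\Psi_j=\pi_{H_i'}$ guarantees both that the carried image is still a manifold and that the submersion persists.
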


We will use the following additional parameters in this 
section:  $c_{\twostratum}, c_{\edge}, c_{\slim} > 0$ and 
$\Xi_i > 0$ for $i\in \{1,2,3\}$.

\subsection{Overview of the proof of Proposition
\ref{prop-adjustment}}

 In certain regions
of $M$,  the map $\e^0$ defined in the previous section, as well as
its composition with projection onto certain summands of $H$, 
behaves like a ``rough fibration''. 
As indicated in the
overview in Section \ref{sec-overview}, the next step is to 
modify the map $\e^0$ so as to 
 promote these rough fibrations  to honest fibrations, in such a
way that they are compatible on their overlap.
We will do this by producing a sequence of maps $\e^j:M\ra H$, for ${j\in \{1,2,3\}}$,
which are successive adjustments of the
map $\e^0$.   

To construct
the map $\e^j$ from $\e^{j-1}$,  $j\in \{1,2,3\}$, we will use the 
following procedure.   We  consider the orthogonal splitting $H= Q_j\oplus Q_j^\perp
$ of $H$,  and let $\pi_j=\pi_{1,j}:H\ra Q_j$, $\pi_j^\perp:H\ra Q_j^\perp$
 be the orthogonal projections. In Section \ref{sec-mapping}
we  considered a pair of subsets $(\widetilde{A_j},A_j)$ in $M$ whose
image $(\widetilde S_j,S_j)$ under the composition $\pi_j\circ \e^{j-1}$ is
 a cloudy $k_j$-manifold in $Q_j$, in the sense of Definition
 \ref{def-cloudykmanifold} of Appendix
\ref{sec-cloudy}. 
 We think of the restriction
of $\e^{j-1}$ to $A_j$ as defining a ``rough submersion''
over the cloudy $k_j$-manifold $(\widetilde S_j,S_j)$.
By Lemma \ref{approxlemma},
there is a  $k_j$-dimensional manifold
$W_j \subset Q_{j}$ near $(\widetilde S_j,S_j)$ and a projection map $P_j$ onto $W_j$,
defined in a neighborhood $\widehat{W}_j$ of $W_j$.  Hence we
have a well-defined map 
\begin{equation}
H\supset\widehat{W}_j\times Q_j^\perp
\stackrel{(\;P_j\circ \pi_j\;,\;\pi_j^\perp\;)}{\lra} Q_j\oplus Q_j^\perp = H\,.
\end{equation}
 Then using a partition
of unity, we blend the composition
$(P_j\circ \pi_j,\pi_j^\perp)\circ \e^{j-1}$ with $\e^{j-1}:M\ra H$
to obtain  $\e^j:M\ra H$. 
In fact, $\e^j$ will be the postcomposition of $\e^{j-1}$ with a map
from $H$ to itself.

We draw attention to two key features  of the construction.  
First,  in passing from $\e^{j-1}$ to $\e^j$, we
do not change it much. More precisely, at a point $p \in M$, we have
$|\e^{j-1}(p)) -  \e^j(p) | < \const \r_p$ and
$| D\e^{j-1}_p -  D \e^j_p | < \const$ for some
small constants. 
Second, the passage from   $\e^j$ to $\e^{j-1}$
respects the submersions defined by $\e^{j-1}$.

\subsection{Adjusting the map near the $2$-stratum} 
\label{subsecadjust2}

Our first adjustment step involves the $2$-stratum.

We take $Q_1=H$, $Q_1^\perp=\{0\}$, and we let 
$\widetilde{A}_1$, $A_1$, $\widetilde{S}_1$, $S_1$
and $r_1:\widetilde{S}_1 \rightarrow (0,\infty)$ be
as in Section 
\ref{subsec-e0near2stratum}.

Thus $(\tilde S_1,S_1,r_1)$ is a $(2,\Ga_1)$ cloudy $2$-manifold
by Lemma \ref{lem-2stratumestimates}.
By Lemma \ref{approxlemma}, there is  a $2$-manifold
$W_1^0 \subset H$
so that the
conclusion of Lemma \ref{approxlemma} holds, where the parameter
$\eps$ in the lemma is given by $\Xi_1=\Xi_1(\Ga_1)$.
(We remark that $W_1^0$ will not be the same as
the $W_1$ of
Proposition \ref{prop-adjustment}, due to subsequent adjustments.)
In particular, there is 
a well-defined nearest point projection
\begin{equation}
P_1:N_{r_1}(S_1)=\widehat{W}_1\ra 
W_1^0
\,,
\end{equation}
where we are using the notation for variable thickness 
neighborhoods from Section \ref{preliminaries}.

We now define a certain cutoff function.

\begin{lemma} \label{psi_1}
There is a smooth function $\psi_1:H\ra [0,1]$ with the following 
properties:
\begin{enumerate}
\item
\begin{align}
\psi_1\circ \e^0 & \equiv 1 \text{ in }
\bigcup_{i\in I_{\twostratum}}\;\{|\eta_i|<6\} \text{ and } \\
\psi_1\circ \e^0& \equiv 0 \text{ outside }
\bigcup_{i\in I_{\twostratum}}\;\{|\eta_i|<7\}\,. \notag
\end{align}
\item  $\supp(\psi_1)\cap\im(\e^0)\subset\widehat{W}_1$. 
\item There is a constant $\Om_1'=\Om_1'(\mathcal{M})$ such that
\begin{equation} \label{estimate}
|(d\psi_1)_x|< \Om_1'\,x_{\r}^{-1}
\end{equation}
for all $x\in \im(\e^0)$.
\end{enumerate}
\end{lemma}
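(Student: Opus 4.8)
The plan is to build $\psi_1$ explicitly as a product of cutoff functions pulled back from the $\eta_i$'s, then check the three properties directly. Since $\e^0$ is a fixed smooth map, it suffices to produce a smooth $\psi_1:H\to[0,1]$ whose pullback under $\e^0$ has the stated support behavior and whose differential is controlled on $\im(\e^0)$. The natural candidate: recall $\e^0_i=(R_i\eta_i\zeta_i,R_i\zeta_i)$ for $i\in I_{\twostratum}$, so from the $H_i$-components of $x\in\im(\e^0)$ one recovers $\zeta_i(p)=x_i''/R_i$ and, where $\zeta_i\neq 0$, $\eta_i(p)=x_i'/x_i''$ (componentwise in $\R^2$). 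Since $\zeta_i=\Phi_{8,9}\circ|\eta_i|$ equals $1$ on $\{|\eta_i|\le 8\}$, on the relevant region $\{|\eta_i|<7\}$ we have $\zeta_i\equiv 1$, hence $x_i''=R_i$ and $x_i'=R_i\eta_i(p)$ there. So I would set, for each $i\in I_{\twostratum}$, a smooth function $g_i:H_i\to[0,1]$ that depends only on $(x_i',x_i'')$, equals $1$ when $|x_i'|<6\,x_i''$ and $x_i''$ is close to $R_i=x_{\r}$ (using the $H_{\r}$-component $x_{\r}$ to detect the scale — note $\e^0_{\r}=\r$ and on $B(p_i,\mathrm{const})$ one has $R_i=\r_{p_i}\approx x_{\r}$ by the Lipschitz bound on $\r$), and vanishes when $|x_i'|>7\,x_i''$ or $x_i''$ is far from $x_{\r}$; then put $\psi_1 = \Phi_{0,1}\circ\big(\sum_{i\in I_{\twostratum}} g_i\big)$ precomposed suitably, or more simply $\psi_1 = 1 - \prod_{i\in I_{\twostratum}}(1-g_i)$ cut off to land in $[0,1]$. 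Either way $\psi_1\circ\e^0$ is supported in $\bigcup\{|\eta_i|<7\}$ and is $\equiv 1$ on $\bigcup\{|\eta_i|<6\}$, giving (1).

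For property (2), I would argue that on $\im(\e^0)\cap\supp(\psi_1)$ at least one $g_i$ is nonzero, so the point lies over some $p$ with $|\eta_i(p)|$ slightly less than $7$ and $\zeta_i(p)=1$; by Lemma \ref{lem-2stratumestimates}(3) (Sublemma \ref{etaclose} in particular) such $\e^0(p)$ lies in $S_1 = \e^0(A_1)$ up to the cloudy-manifold control, hence within $r_1$ of $S_1$, i.e. inside $\widehat{W}_1 = N_{r_1}(S_1)$ — provided the parameter $\Xi_1$, and hence the thickness $r_1(x)=\Si_1\r_p$, is chosen large enough relative to the constants in Lemma \ref{lem-2stratumestimates}. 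This is where the constraint hierarchy ($\Xi_1=\Xi_1(\Ga_1)$, $\Si_1$ small, $\La$ small, etc., already imposed) does the work: the rough submersion estimate (\ref{eqn-f0jagain}) guarantees $\e^0$ maps $\{|\eta_i|<7\}$ into a $\Ga_1 R_i$-neighborhood of the local graph, which is inside $\widehat W_1$ once $\Si_1\gg\Ga_1$.

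For property (3), the estimate $|(d\psi_1)_x|<\Om_1' x_{\r}^{-1}$ comes from the chain rule: $\psi_1$ is a fixed smooth function of the variables $x_i'/x_{\r}$, $x_i''/x_{\r}$ (the scale-invariant combinations), so $d\psi_1$ picks up a factor $x_{\r}^{-1}$, and the remaining derivative is bounded by a constant depending only on the number of summands that can be simultaneously nonzero near a point — this is exactly $\mathcal{M}$, by the intersection-multiplicity bound of Lemma \ref{mult2}. So $\Om_1'=\Om_1'(\mathcal{M})$. I would write $\psi_1$ as $\widetilde\psi_1(\{x_i'/x_{\r}, x_i''/x_{\r}\}_{i\in I_{\twostratum}}, \ldots)$ with $\widetilde\psi_1$ a universal (finitely many arguments active) smooth function, making the homogeneity manifest.

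The main obstacle is property (2): one must be careful that the adjustment region $\supp(\psi_1)$ genuinely sits inside the thickened manifold $\widehat W_1$ where $P_1$ is defined, and this is not automatic — it requires that the cloudy-manifold thickness $r_1$ dominates the discrepancy between $\e^0$ and the local affine models, which in turn forces the already-listed smallness constraints on $\Si_1,\be_2,\varsigma_{\twostratum},\ldots,\La$ relative to $\Ga_1$ (and thus $\Xi_1$). Concretely, I would invoke Sublemma \ref{etaclose} together with (\ref{eqn-f0jagain}) to show $\e^0(\{|\eta_i|<7\})$ is within $r_1$ of $S_1$, and then appeal to Lemma \ref{approxlemma} to conclude that the nearest-point projection $P_1$ is well-defined on that set. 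Everything else is a routine partition-of-unity / chain-rule computation that I would not spell out in detail.
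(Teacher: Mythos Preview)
Your approach is essentially the paper's: define $\psi_1$ explicitly as a smooth function of scale-invariant ratios of the $H_i$-components, then read off (1) from the formula, (2) from (1), and (3) from the chain rule plus the multiplicity bound $\mathcal{M}$. The paper normalizes by $R_i$ rather than $x_{\r}$ (so $x_i''/R_i=\zeta_i(p)$ and $x_i'/x_i''=\eta_i(p)$ exactly, with no Lipschitz fudge), writing
\[
\psi_1(x)=1-\Phi_{\frac12,1}\Bigl(\sum_{\{i:\,x_i''>0\}}\Phi_{6,6.5}\bigl(|x_i'|/x_i''\bigr)\cdot\bigl(1-\Phi_{\frac12,1}(x_i''/R_i)\bigr)\Bigr),
\]
but your $x_{\r}$-normalized version would also work.

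Where you work much harder than necessary is property (2). Once (1) is established, any $x=\e^0(p)$ with $\psi_1(x)>0$ has $|\eta_i(p)|<7$ for some $i\in I_{\twostratum}$; hence $p\in A_1$ by the very definition of $A_1$, and $x\in\e^0(A_1)=S_1\subset N_{r_1}(S_1)=\widehat{W}_1$ \emph{exactly}. There is no approximation to control, and no appeal to Sublemma~\ref{etaclose}, equation~(\ref{eqn-f0jagain}), or any relation between $\Si_1$ and $\Ga_1$ is needed here --- those constraints enter only later, when one analyzes $\Psi_1$ and $\e^1$.
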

\begin{proof}
Let $\psi_1:H\ra [0,1]$ be given by 
\begin{equation}
\label{eqn-psi1}
\psi_1(x)=1-\Phi_{\frac12,1}\left(
\sum_{\{i\in I_{\twostratum}\;\mid\; x_i''>0\}}\Phi_{6,6.5}\left(\frac{|x_i'|}{x_i''}\right)\cdot 
\left( 1-\Phi_{\frac12,1}\left(\frac{x_i''}{R_i}  \right) \right)
\right)\,.
\end{equation}
For each $i \in I_{\twostratum}$, the function
$x\mapsto \Phi_{6,6.5}\left(\frac{|x_i'|}{x_i''}\right)\cdot 
\left( 1-\Phi_{\frac12,1}\left(\frac{x_i''}{R_i}  \right) \right)$ is well-defined
and smooth in the set $\{x_i''>0\}$, with support contained
in the set $\{x_i''\geq \frac12 R_i\}$;
so extending it by zero
defines a  smooth function on $H$.  
Hence $\psi_1$ is smooth.

To prove part (1), suppose that $i\in I_{\twostratum}$ and $|\eta_i(p)|<6$.  
Then $\zeta_i(p)=1$ and 
$|\eta_i|<6$. Putting $x = \e^0(p)$, we have
\begin{equation} \label{components}
x_i=(x_i',x_i'')=\e^0_i(p)=(R_i\zeta_i(p)\eta_i(p),R_i\zeta_i(p))\,,
\end{equation}
so 
$x_i''=R_i$ and $\frac{|x_i'|}{x_i''}\in [0,6)$.
Hence 
\begin{equation}
\Phi_{6,6.5}\left(\frac{|x_i'|}{x_i''}\right)\cdot 
\left( 1-\Phi_{\frac12,1}\left(\frac{x_i''}{R_i} \right) \right) \: = \: 1,
\end{equation}
so $\psi_1(x)=1$.

Suppose now that $|\eta_i(p)|\geq 7$ for every $i \in I_{\twostratum}$. 
Putting $x = \e^0(p)$, for each 
$i \in I_{\twostratum}$ we claim that
\begin{equation}
\Phi_{6,6.5}\left(\frac{|x_i'|}{x_i''}\right)\cdot 
\left( 1-\Phi_{\frac12,1}\left(\frac{x_i''}{R_i} \right) \right) \: = \: 0;
\end{equation}
otherwise we would have $|x_i'|<6.5\,x_i''$ and $x_i''\geq R_i/2$,
which contradicts our assumption on $p$.
It follows that $\psi_1(x) = 0$. This proves part (1). 

To prove part (2),
suppose $x=\e^0(p)$ and $\psi_1(x)>0$.  Then from 
part (1), $|\eta_i(p)| < 7$ for some $i\in I_{\twostratum}$.
Therefore, $p\in A_1$ and
$x\in \e^0(A_1)=S_1\subset \widehat{W}_1$, so part (2) follows.

To prove part (3),
suppose that $x=\e^0(p)$.  If $x_i''>0$ then $\zeta_i(p)>0$,
so the number of such indices $i\in I_{\twostratum}$
is bounded by the multiplicity of the $2$-stratum cover;  for the
remaining indices $j\in I_{\twostratum}$, the quantity
$1-\Phi_{\frac12,1}\left(\frac{x_j''}{R_j}\right)$ vanishes
near $x$.  Thus by the chain rule, it suffices to bound the 
differential of
\begin{equation}
\Phi_{6,6.5}\left(\frac{|x_i'|}{x_i''}\right)\cdot 
\left( 1-\Phi_{\frac12,1}\left(\frac{x_i''}{R_i}  \right) \right)
\end{equation}
for each $i\in I_{\twostratum}$ for which $x_i''>0$.  But the 
differential is nonzero only when $\frac{|x_i|}{x_i''}\leq 6.5$
and $\frac{x_i''}{R_i}\geq \frac12$. In this case,
$R_i$ will be comparable
to $x_{\r}$ and the estimate (\ref{estimate}) follows easily.
\end{proof}

Define  $\Psi_1:H\ra H$ by
$\Psi_1(x)=x$ if $x\not\in\widehat{W}_1$ and 
\begin{equation}
\Psi_1(x)=\psi_1(x)P_1(x)+(1-\psi_1(x))x
\end{equation}
otherwise.   Put $\e^1=\Psi_1\circ \e^0$.

\begin{lemma}
\label{lem-propertiesofPsi_1}
Under  the constraints $\Si_1<\overline{\Si}_1(\Om_1,c_{\twostratum})$,
$\Ga_1<\overline{\Ga}_1(\Om_1,c_{\twostratum})$ and 
$\Xi_1<\overline{\Xi}_1(c_{\twostratum})$,
we have: 
\begin{enumerate}
\item $\e^1$ is smooth. 
\item  For all $p\in M$, 
\begin{equation} \label{est01}
\|\e^1(p)-\e^0(p)\|<c_{\twostratum}\,\r(p)\quad\mbox{and}
\quad \| D\e^1_p-D\e^0_p\|<c_{\twostratum}\,.
\end{equation}
\item
The restriction of $\e^1$ to  $\bigcup_{i\in I_{\twostratum}}
\{|\eta_i|<6\}$
is a submersion to 
$W_1^0$.
\end{enumerate}
\end{lemma}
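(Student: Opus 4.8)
The plan is to read off all three conclusions from the explicit formula $\e^1=\Psi_1\circ\e^0$, using the structural estimates for $\e^0$ near the $2$-stratum (Lemma \ref{lem-2stratumestimates}), the properties of the cutoff $\psi_1$ (Lemma \ref{psi_1}), the bound $|D\e^0_p|\le\Om_0$ (Lemma \ref{uniform}), and the properties of the core manifold $W_1^0$ together with its nearest-point projection $P_1:\widehat{W}_1\ra W_1^0$ provided by Lemma \ref{approxlemma} (with $\eps$ taken to be $\Xi_1$).

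For part (1), smoothness is local on $M$. Near a given $p\in M$, if $\psi_1\circ\e^0$ vanishes identically on a neighborhood then $\e^1=\e^0$ there, which is smooth. Otherwise $\psi_1(\e^0(p))>0$, so by Lemma \ref{psi_1}(2) we have $\e^0(p)\in\widehat{W}_1$, which is open; since $P_1$ is smooth on $\widehat{W}_1$, on a neighborhood of $p$ the map $\e^1=(\psi_1\circ\e^0)\cdot(P_1\circ\e^0)+(1-\psi_1\circ\e^0)\cdot\e^0$ is a convex combination of smooth $H$-valued maps and hence smooth. (We never need $\Psi_1$ to be smooth on all of $H$.)

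For part (2), write $\e^1(p)-\e^0(p)=\psi_1(x)\,(P_1(x)-x)$ with $x=\e^0(p)$. When this is nonzero, $x\in S_1\subset\widehat{W}_1$, so by the Hausdorff-closeness clause of Lemma \ref{approxlemma},
\begin{equation}
\|P_1(x)-x\|\le C(\mathcal{M})\,\Xi_1\,r_1(x)\le C(\mathcal{M})\,\Xi_1\,\Si_1\,\r_p\,;
\end{equation}
since $\psi_1\le 1$, choosing $\Xi_1$ and $\Si_1$ small in terms of $c_{\twostratum}$ gives the $C^0$ bound. For the $C^1$ bound, differentiate:
\begin{equation}
D\e^1_p-D\e^0_p=\big((d\psi_1)_x\circ D\e^0_p\big)\,(P_1(x)-x)+\psi_1(x)\,\big((DP_1)_x-I\big)\circ D\e^0_p\,.
\end{equation}
The first summand is estimated by $|(d\psi_1)_x|\le\Om_1'\,\r_p^{-1}$ (Lemma \ref{psi_1}(3), using $x_{\r}=\r_p$), $|D\e^0_p|\le\Om_0$, and the displayed bound on $\|P_1(x)-x\|$; the factors of $\r_p$ cancel, leaving $\le C(\mathcal{M})\,\Xi_1\,\Si_1$. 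For the second summand, Lemma \ref{lem-2stratumestimates}(2) gives $\|D\e^0_p-\pi_{A^0_x}\circ D\e^0_p\|<\Ga_1$, so $\Image(D\e^0_p)$ is nearly contained in $A^0_x$, while Lemma \ref{approxlemma} gives that $T_{P_1(x)}W_1^0$ is $C(\Xi_1)$-close to $A^0_x$ and that $(DP_1)_x$ is $C(\Xi_1)$-close to orthogonal projection onto that tangent space; hence $(DP_1)_x\circ D\e^0_p$ lies within $C(\mathcal{M})(\Ga_1+\Xi_1)$ of $D\e^0_p$. Choosing $\Ga_1,\Xi_1$ small in terms of $c_{\twostratum}$ finishes part (2).

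For part (3), on $\bigcup_{i\in I_{\twostratum}}\{|\eta_i|<6\}$ we have $\psi_1\circ\e^0\equiv 1$ by Lemma \ref{psi_1}(1), so $\e^1=P_1\circ\e^0$ there, with image in $W_1^0$. For such $p$, set $x=\e^0(p)\in S_1$; by Lemma \ref{lem-2stratumestimates}(2), $\pi_{A^0_x}\circ D\e^0_p$ maps the orthogonal complement of its kernel isomorphically onto the $2$-plane $A^0_x$ with $\|\pi_{A^0_x}\circ D\e^0(v)\|\ge\Om_1^{-1}\|v\|$. Combining this with the $\Ga_1$-closeness of $D\e^0_p$ to $\pi_{A^0_x}\circ D\e^0_p$ and the $C(\Xi_1)$-closeness of $(DP_1)_x\big|_{A^0_x}$ to the isomorphism $A^0_x\ra T_{P_1(x)}W_1^0$, we get for $\Ga_1,\Xi_1$ small (compared to $\Om_1^{-1}$) that $D\e^1_p=(DP_1)_x\circ D\e^0_p$ is surjective onto $T_{P_1(x)}W_1^0$; hence $\e^1$ is a submersion onto $W_1^0$ on this region. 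The main obstacle will be the $C^1$-closeness estimate of part (2): one must simultaneously control the cutoff error (where the $\r_p^{-1}$ in $|d\psi_1|$ is absorbed by the $\r_p$ in $\|P_1(x)-x\|$) and the tilt error (near-tangency of $\Image(D\e^0_p)$ to $W_1^0$), and check that the constants $\Om_0,\Om_1,\Om_1'$, which depend on the intersection multiplicity $\mathcal{M}$, are genuinely beaten by the permitted smallness of $\Si_1,\Ga_1,\Xi_1$.
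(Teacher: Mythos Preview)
Your proposal is correct and follows essentially the same approach as the paper: smoothness from Lemma \ref{psi_1}(2), the $C^0$ and $C^1$ estimates from the identity $\e^1-\e^0=\psi_1(x)(P_1(x)-x)$ together with the decomposition $((DP_1)_x-I)\circ D\e^0_p=((DP_1)_x-\pi_{A^0_x})\circ D\e^0_p+(\pi_{A^0_x}\circ D\e^0_p-D\e^0_p)$ bounded via Lemmas \ref{psi_1}(3), \ref{approxlemma}(1),(7), \ref{uniform}, and (\ref{inherent1}), and the submersion claim from (\ref{inherent2}) combined with Lemma \ref{approxlemma}(7). Your observation that $x_{\r}=\r_p$ directly (since $\e^0_{\r}=\r$) is slightly cleaner than the paper's appeal to Sublemma \ref{etaclose} for the bound $r_1(x)\lesssim\r_p$.
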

\begin{proof}
That $\e^1$ is smooth follows from part (2) of Lemma 
\ref{psi_1}.

Given $p \in M$, put $x = \e^0(p)$. We have
\begin{equation}
\e^1(p) - \e^0(p) = \psi_1(x) \left( P_1(x) - x  \right).
\end{equation}
Now $|\psi_1(x)| \le 1$.
From Lemma \ref{approxlemma}(1),
$|P_1(x) - x| \le \Xi_1 r_1(x)$. 
From Sublemma \ref{etaclose}, we can assume that
$r_1(x) \le 10 \r_p$. This gives the first equation in
(\ref{est01}). 

Next,
\begin{align}
D\e^1_p - D\e^0_p = & (D\psi_1)_{x} \: \left( P_1(x) - x
\right) \: + \: \psi_1(x) \:
((DP_1)_x \circ  
D\e^0_p - D\e^0_p) \\
= & (D\psi_1)_{x} \: \left( P_1(x) - x
\right) \: + \: \psi_1(x) \:
((DP_1)_x  - \pi_{A^0_x}) \circ  
D\e^0_p + \notag \\
& \psi_1(x) \:
(\pi_{A^0_x} \circ  
D\e^0_p - D\e^0_p). \notag
\end{align}
Equation (\ref{estimate}) gives a bound on
$|(D\psi_1)_{x}|$. 
Lemma \ref{approxlemma}(1)
gives a bound on 
$|P_1(x) - x|$. 
Lemma \ref{approxlemma}(7)
gives a bound on 
$|(DP_1)_x  - \pi_{A^0_x}|$. 
Lemma \ref{uniform} gives a bound on $|D\e^0_p|$.
Equation (\ref{inherent1}) gives a bound on
$|\pi_{A^0_x} \circ  
D\e^0_p - D\e^0_p|$. 
The second equation in (\ref{est01}) follows from
these estimates.

Finally, the restriction of $\e^1$ to  $\bigcup_{i\in I_{\twostratum}}
\{|\eta_i|<6\}$ equals $P_1 \circ \e^0$. For $p \in 
\bigcup_{i\in I_{\twostratum}}
\{|\eta_i|<6\}$, put $x = \e^0(p)$. Then
\begin{equation}
D(P_1 \circ \e^0)_p = \pi_{A^0_x} \circ d\e^0_p + 
\left( (DP_1)_x - \pi_{A^0_x}\right) \circ D\e^0_p.
\end{equation}
Using (\ref{inherent2}) and 
Lemma \ref{approxlemma}(7),
if 
$\Xi_1$ is sufficiently small then
$D(P_1 \circ \e^0)_p$ maps onto $(TW^0_1)_{P_1(x)}$. 
This proves the lemma. 
\end{proof}

\subsection{Adjusting the map near the edge points}
\label{subsecadjustingedge}

Our second adjustment step involves the region near the edge points.

Recall that $Q_2=H_{\zeroball} \oplus H_{\slim} \oplus H_{\edge}$
and $\pi_2 : H \rightarrow Q_2$ is orthogonal projection.
We let 
$\widetilde{A}_2$, $A_2$, $\widetilde{S}_2$, $S_2$
and $r_2:\widetilde{S}_2 \rightarrow (0,\infty)$ be
as in Section 
\ref{subsec-e0nearedgestratum}.

Thus $(\tilde S_2,S_2,r_2)$ is a $(2,\Ga_2)$ cloudy $1$-manifold
by Lemma \ref{lem-edgestratumestimates}.
By Lemma \ref{approxlemma}, there is  a $1$-manifold
$W_2^0 \subset Q_2$
so that the
conclusion of Lemma \ref{approxlemma} holds, where the parameter
$\eps$ in the lemma is given by $\Xi_2=\Xi_2(\Ga_2)$.
(We remark that $W_2^0$ will not be the same as
the $W_2$ of
Proposition \ref{prop-adjustment}, due to subsequent adjustments.)
In particular, there is 
a well-defined nearest point projection
\begin{equation}
P_2:N_{r_2}(S_2)=\widehat{W}_2\ra 
W_2^0
\,,
\end{equation}
where we are using the notation for variable thickness 
neighborhoods from Section \ref{preliminaries}.
 
\begin{lemma} \label{psi_2}
Under the constraint
$c_{\twostratum} < \overline{c}_{\twostratum}$,
there is a smooth function 
$\psi_2: \{x_\r > 0 \} \ra [0,1]$ with the following 
properties:
\begin{enumerate}
\item
\begin{align}
\psi_2\circ \e^1 & \equiv 1 \text{ in }
\bigcup_{i\in I_{\edge}}\;\{|\eta_i|<6\Delta, \eta_{E'} < 6 \Delta\} 
\text{ and } \\
\psi_2\circ \e^1& \equiv 0 \text{ outside }
\bigcup_{i\in I_{\edge}}\;\{|\eta_i|<7\Delta, \eta_{E'} < 7 \Delta\}\,. \notag
\end{align}
\item $\supp(\psi_2)\cap\im(\e^1)\subset\widehat{W}_2
\times Q_2^\perp$. 
\item
There is a constant $\Om_2'=\Om_2'(\mathcal{M})$ such that
\begin{equation} \label{estimate2}
|(D\psi_2)_x|< \Om_2'\,x_{\r}^{-1}
\end{equation}
for all $x\in \im(\e^1)$.
\end{enumerate}
\end{lemma}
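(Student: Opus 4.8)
Here is how I would prove Lemma \ref{psi_2}.

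The plan is to build $\psi_2$ by the same recipe used for $\psi_1$ in \eqref{eqn-psi1}, applied now to the edge-ball data $\{\eta_i,\zeta_i\}_{i\in I_{\edge}}$ together with the block of $\e^0$ that records $\eta_{E'}$. Specifically, I would take $\psi_2$ equal to the product of an ``edge-ball indicator''
\[
1-\Phi_{\frac12,1}\!\left(\sum_{\{i\in I_{\edge}\,\mid\,x_i''>0\}}\Phi_{6.4\De,6.6\De}\!\left(\tfrac{|x_i'|}{x_i''}\right)\Bigl(1-\Phi_{\frac12,1}\bigl(\tfrac{x_i''}{R_i}\bigr)\Bigr)\right),
\]
which, exactly as in Lemma \ref{psi_1}, is smooth on $\{x_\r>0\}$ (the $i$-th summand is supported where $x_i''\ge\frac12 R_i$, so it extends by zero across $\{x_i''=0\}$), equals $1$ when $|\eta_i|$ is small for some $i\in I_{\edge}$, and vanishes when $|\eta_i|$ is at least about $7\De$ for all such $i$, and an ``$\eta_{E'}$-cutoff''
\[
1-\Bigl(1-\Phi_{6.4\De,6.6\De}\bigl(\tfrac{x_{E'}'}{x_{E'}''}\bigr)\Bigr)\Bigl(1-\Phi_{\frac18,\frac14}\bigl(\tfrac{x_{E'}''}{x_\r}\bigr)\Bigr),
\]
arranged so that it extends by the constant $1$ across $\{x_{E'}''<\frac18 x_\r\}$ (which contains $\{x_{E'}''=0\}$, where the cutoff $\zeta_{E'}$ vanishes), while on $\{x_{E'}''\ge\frac18 x_\r\}$ it reads off $\eta_{E'}=x_{E'}'/x_{E'}''$ and decreases it from $1$ to $0$ as $\eta_{E'}$ runs from $6.4\De$ to $6.6\De$. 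Smoothness of $\psi_2$ on $\{x_\r>0\}$ is then immediate; note that $R_{E'}=\r$ is not constant, which is exactly why the $H_{E'}$-block is renormalized by $x_\r=\e^0_\r$ and why $\psi_2$ is only defined on $\{x_\r>0\}$.

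For property (1) I would evaluate along $\e^1$, using Lemma \ref{lem-propertiesofPsi_1} to bound $\|\e^1-\e^0\|<c_{\twostratum}\r$ and $\|D\e^1-D\e^0\|<c_{\twostratum}$ and taking $c_{\twostratum}<\overline c_{\twostratum}$ small relative to $\De$. On $\{|\eta_i|<6\De,\ \eta_{E'}<6\De\}$ one has $\zeta_i=1$ by \eqref{eqn-edgezetap}, hence $\zeta_{\edge}=1$ by \eqref{additional1}, and a direct check (splitting into the cases $\eta_{E'}<\frac2{10}\De$ and $\eta_{E'}\in[\frac3{10}\De,6\De)$, using \eqref{additional2}) shows both factors equal $1$, so $\psi_2\circ\e^1=1$. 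Outside $\bigcup_{i\in I_{\edge}}\{|\eta_i|<7\De,\ \eta_{E'}<7\De\}$, either $|\eta_i|\ge7\De$ for every $i$, so every summand of the edge-ball indicator is zero and $\psi_2=0$; or $\eta_{E'}\ge7\De$, and then one must force the $\eta_{E'}$-cutoff to vanish. Property (2) will follow from property (1): $\psi_2(\e^1(p))>0$ forces $p\in A_2$, hence $\pi_2(\e^0(p))\in S_2$ and $\|\pi_2(\e^1(p))-\pi_2(\e^0(p))\|<c_{\twostratum}\r\le r_2$, so $\pi_2(\e^1(p))\in\widehat W_2=N_{r_2}(S_2)$ once $c_{\twostratum}$ is small relative to $\Si_2$. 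Property (3), the bound $|(D\psi_2)_x|<\Omega_2'(\mathcal M)\,x_\r^{-1}$, will follow from the chain rule, using the fixed derivative bounds on the $\Phi$'s, the bound $\mathcal M$ on the number of $i\in I_{\edge}$ with $x_i''>0$ and the comparability $R_i\asymp x_\r$ for those $i$ (Lemma \ref{lem-edgeballcover} and $\La$-Lipschitzness of $\r$), and the fact that the $H_{E'}$- and $H_\r$-blocks already carry the weight $x_\r^{-1}$.

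The step I expect to be the real work is forcing the $\eta_{E'}$-cutoff to vanish when $\eta_{E'}\ge7\De$: the block $\e^0_{E'}=(\rho_{E'}\zeta_{E'},\r\zeta_{E'})$ is only a reliable reading of $\eta_{E'}$ while $\zeta_{E'}$ stays comparable to $1$, which a priori fails in the transition zone $\eta_{E'}\in(8\De,9\De)$ where $\zeta_{E'}$ and all edge cutoffs $\zeta_i$ shrink together. The resolution I would carry out: if the edge-ball indicator is nonzero at $x=\e^1(p)$ then some summand is nonzero, so $|\eta_i(p)|\lesssim6.6\De$ and $\zeta_i(p)>\frac12-O(c_{\twostratum})$ for some $i\in I_{\edge}$; if moreover $\eta_{E'}(p)\le8\De$ then already $\zeta_i(p)=1$ (as $|\eta_i(p)|,\eta_{E'}(p)\le8\De$), so $\zeta_{\edge}(p)=1$ and $\zeta_{E'}(p)=\Phi_{8\De,9\De}(\eta_{E'}(p))=1$; and if $\eta_{E'}(p)\in(8\De,9\De)$ I would invoke Lemma \ref{lem-edgeballcover} together with Lemma \ref{lem-smalleretai} (treating the slim case, where $p_i$ lies in the slim $1$-stratum, separately) to obtain enough indices $j\in I_{\edge}$ with $|\eta_j(p)|$ small that $\sum_j\zeta_j(p)\ge1$ even after multiplication by the common factor $\Phi_{8\De,9\De}(\eta_{E'}(p))>\frac12-O(c_{\twostratum})$, so again $\zeta_{\edge}(p)=1$ and $\zeta_{E'}(p)=\Phi_{8\De,9\De}(\eta_{E'}(p))>\frac14$. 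Either way $x_{E'}''/x_\r>\frac14$ and $x_{E'}'/x_{E'}''=\eta_{E'}(p)>6.6\De$, so the $\eta_{E'}$-cutoff is $0$ and $\psi_2(x)=0$. One also checks, using Lemma \ref{localprod} and \eqref{eqn-edgezetap}, that whenever the edge-ball indicator is nonzero the point $p$ lies close enough to $p_i$ for the adapted-coordinate estimates and Lemma \ref{lem-smalleretai} to apply.
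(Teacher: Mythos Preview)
Your formula is different from the paper's, and it does work --- but the argument you give for the hardest case has a genuine gap.

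\textbf{The gap.} In the regime $\eta_{E'}(p)\in(8\De,9\De)$ you invoke Lemma~\ref{lem-smalleretai} to produce many indices $j$ with $|\eta_j(p)|$ small, so that $\sum_j\zeta_j(p)\ge 1$. But Lemma~\ref{lem-smalleretai} has the hypothesis $\eta_{E'}(q)<5\De$, which is violated here; it cannot be applied. Moreover, the conclusion $\sum_j\zeta_j(p)\ge 1$ that you aim for is neither available nor needed.

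\textbf{The fix.} Your formula can be rescued by pure arithmetic, with no extra lemmas. Set $\al=\Phi_{8\De,9\De}(\eta_{E'}(p))$. If any summand of your edge--ball indicator is nonzero, then for that index $j$ one has $|\eta_j(p)|<6.6\De<8\De$, hence $\zeta_j(p)=\al$, and the summand requires $\al>\tfrac12$. Let $J_0$ be the set of such $j$. If $|J_0|\ge 2$ then $\sum_k\zeta_k(p)\ge 2\al>1$, so $\zeta_{\edge}(p)=1$ and $\zeta_{E'}(p)=\al>\tfrac12>\tfrac14$. If $|J_0|=1$, the single summand is at most $1-\Phi_{1/2,1}(\al)$; for the indicator to be nonzero this must exceed $\tfrac12$, so $1-\Phi_{1/2,1}(\al)>\tfrac12$. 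Then
\[
\zeta_{E'}(p)\;\ge\;\al\bigl(1-\Phi_{1/2,1}(\al)\bigr)\;>\;\tfrac12\cdot\tfrac12\;=\;\tfrac14.
\]
Either way $x_{E'}''/x_\r=\zeta_{E'}(p)>\tfrac14$ and $x_{E'}'/x_{E'}''=\eta_{E'}(p)>6.6\De$, so your $\eta_{E'}$-cutoff vanishes. (The $c_{\twostratum}$-perturbation from $\e^1$ to $\e^0$ is absorbed exactly as in Lemma~\ref{psi_1}.) With this correction your product formula verifies (1); parts (2) and (3) then go through as you describe.

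\textbf{Comparison with the paper.} The paper does \emph{not} take your product approach. Instead it folds the $\eta_{E'}$-information into each summand by means of a bracket
\[
\Bigl(1-\Phi_{\frac14,\frac12}\bigl(\tfrac{x_{E'}''}{x_\r}\bigr)\Bigr)\Phi_{6.1\De,6.5\De}\bigl(\tfrac{|x_{E'}'|}{x_{E'}''}\bigr)+10\Bigl(\tfrac{x_i''}{R_i}\,z_{\edge}-\tfrac{x_{E'}''}{x_\r}\Bigr),
\]
with $z_{\edge}=1-\Phi_{1/2,1}\bigl(\sum_j x_j''/R_j\bigr)$. The second piece is the point: on $\im(\e^0)$ it equals $10(\zeta_i\zeta_{\edge}-\zeta_{E'})$, and the paper's proof is then the algebraic identity that when $|\eta_i|<6.9\De$ and $\eta_{E'}\ge 6.9\De$, one has $\zeta_i\zeta_{\edge}-\zeta_{E'}=0$ exactly (both sides equal $\Phi_{8\De,9\De}(\eta_{E'})\cdot\zeta_{\edge}$). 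So the paper trades a more intricate formula for a one-line algebraic verification, whereas your simpler product formula needs the short inequality argument above instead of the lemmas you cite.
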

\begin{proof}
If the parameter $c_{\twostratum}$ is sufficiently small then
$\e^1(p) \in 
\bigcup_{i\in I_{\edge}}\;\{|\eta_i|<6\Delta, \eta_{E'} < 6 \Delta\}$
implies that
$\e^0(p) \in 
\bigcup_{i\in I_{\edge}}\;\{|\eta_i|<6.1\Delta, \eta_{E'} < 6.1 \Delta\}$,
and $\e^1(p) \notin 
\bigcup_{i\in I_{\edge}}\;\{|\eta_i|<7\Delta, \eta_{E'} < 7 \Delta\}$
implies that
$\e^0(p) \notin 
\bigcup_{i\in I_{\edge}}\;\{|\eta_i|<6.9\Delta, \eta_{E'} < 6.9 \Delta\}$.

In analogy to (\ref{additional1}), put
\begin{equation}
z_{\edge} = 1 - \Phi_{\frac12, 1} \left( \sum_{i \in I_{\edge}} 
\frac{x_i''}{R_i}
\right).
\end{equation}
Define $\psi_2 : \{x_\r > 0\} \ra [0,1]$ by
\begin{align}
\label{eqn-psi2}
\psi_2(x)=& 1-\Phi_{\frac12,1}\left( 
\sum_{\{i\in I_{\edge}\;\mid\; x_i''>0\}}
\Phi_{6.1\Delta,6.5\Delta}
\left(\frac{|x_i'|}{x_i''}\right)\cdot 
\left( 1-\Phi_{\frac12,1}\left(\frac{x_i''}{R_i}  \right) \right) \cdot
\right. \\
& \left. \left[
\left( 1-\Phi_{\frac14,\frac12}\left( \frac{x_{E'}''}{x_{\r}} \right) \right)
\Phi_{6.1\Delta,6.5\Delta}
\left(\frac{|x_{E'}'|}{x_{E'}''}\right) +
10 \left( \frac{x_i''}{R_i} z_{\edge} - \frac{x_{E'}''}{x_{\r}} \right) \right]
\right)\,. \notag
\end{align}
It is easy to see that $\psi_2$ is smooth.

To prove part (1), it is enough to show that
\begin{align}
\psi_2\circ \e^0 & \equiv 1 \text{ in }
\bigcup_{i\in I_{\edge}}\;\{|\eta_i|<6.1\Delta, \eta_{E'} < 6.1 \Delta\} 
\text{ and } \\
\psi_2\circ \e^0& \equiv 0 \text{ outside }
\bigcup_{i\in I_{\edge}}\;\{|\eta_i|<6.9\Delta, \eta_{E'} < 6.9 
\Delta\}\,. \notag
\end{align}

Suppose that $i \in I_{\edge}$, 
$|\eta_i(p)|<6.1 \Delta$ and $\eta_{E'}(p) < 6.1 \Delta$. 
Put $x = \e^0(p)$. Recall that
$x_i'' = R_i \zeta_{i}(p)$, where
$\zeta_{i}$ is given in (\ref{eqn-edgezetap}) with $p \leadsto p_i$,
and $x_{E'}'' = \r_p \zeta_{E'}(p)$, where $\zeta_{E'}$ is the expresssion
in (\ref{additional2}). Hence 
\begin{align}
& \frac{x_i''}{R_i} = \zeta_i(p) = 1, \\
& 1-\Phi_{\frac12,1}\left( \frac{x_i''}{R_i}  \right) = 1, \notag \\
& \Phi_{6.1\Delta,6.5\Delta}
\left( \frac{|x_i'|}{x_i''} \right) = 
\Phi_{6.1\Delta,6.5\Delta}
\left( |\eta_i(p)| \right) = 1. \notag
\end{align} 
If $\frac{x_{E'}''}{x_\r} = \zeta_{E'}(p) \ge \frac12$ then 
\begin{align} 
& 1-\Phi_{\frac14,\frac12}\left( \frac{x_{E'}''}{x_\r} \right) = 1, \\
& \Phi_{6.1\Delta,6.5\Delta}
\left( \frac{|x_{E'}'|}{x_{E'}''} \right) = 
\Phi_{6.1\Delta,6.5\Delta}
\left( |\eta_{E'}|(p) \right) = 1, \notag \\
& \frac{x_i''}{R_i} z_{\edge} - \frac{x_{E'}''}{x_{\r}} =
\zeta_i(p) \zeta_{\edge}(p) - \zeta_{E'}(p) 
= \zeta_{\edge}(p) - \zeta_{E'}(p) \ge 0. \notag
\end{align}
If $\frac{x_{E'}''}{x_\r} = \zeta_{E'}(p) < \frac12$ then  
$\left( 1-\Phi_{\frac14,\frac12}\left( \frac{x_{E'}''}{x_{\r}} \right) \right)
\Phi_{6.1\Delta,6.5\Delta}
\left(\frac{|x_{E'}'|}{x_{E'}''}\right) \ge 0$
and
\begin{equation}
\frac{x_i''}{R_i} z_{\edge} - \frac{x_{E'}''}{x_{\r}} =
\zeta_i(p) \zeta_{\edge}(p) - \zeta_{E'}(p) = 1 - \zeta_{E'}(p) \ge \frac12.
\end{equation}
In either case, the argument of $\Phi_{\frac12,1}$ in (\ref{eqn-psi2}) 
is bounded below by one and so
$\psi_2(x) = 1$. 

Now suppose that for all $i \in I_{\edge}$, either
$\zeta_i(p) =0$, or $\zeta_i(p) > 0$ and
$|\eta_i(p)| \ge 6.9\Delta$, or $\zeta_i(p) > 0$ and
$|\eta_i(p)| < 6.9\Delta$ and $\eta_{E'}(p) \ge 6.9 \Delta$.
If $\zeta_i(p) =0$, or $\zeta_i(p) > 0$ and
$|\eta_i(p)| \ge 6.9\Delta$, then 
\begin{equation}
\Phi_{6.1\Delta,6.5\Delta}
\left(\frac{|x_i'|}{x_i''}\right)\cdot 
\left( 1-\Phi_{\frac12,1}\left(\frac{x_i''}{R_i}  \right) \right) =
\Phi_{6.1\Delta,6.5\Delta}
\left( |\eta_i|(p) \right) \cdot 
\left( 1-\Phi_{\frac12,1}\left(\zeta_i(p)  \right) \right) = 0.
\end{equation}
If $|\eta_i(p)| < 6.9\Delta$ and $\eta_{E'}(p) \ge 6.9 \Delta$ then
\begin{equation}
\left( 1-\Phi_{\frac14,\frac12}\left( \frac{x_{E'}''}{x_{\r}} \right) \right)
\Phi_{6.1\Delta,6.5\Delta}
\left(\frac{|x_{E'}'|}{x_{E'}''}\right) =
\left( 1-\Phi_{\frac14,\frac12}\left( \zeta_{E'}(p) \right) \right)
\cdot \Phi_{6.1\Delta,6.5\Delta}
\left( |\eta_{E'}|(p) \right) = 0.
\end{equation}
and
\begin{align}
\frac{x_i''}{R_i} z_{\edge} - \frac{x_{E'}''}{x_{\r}} & =
\zeta_i(p) \zeta_{\edge}(p) - \zeta_{E'}(p) \\
& = \Phi_{8\De, 9\De}(\eta_{E'}(p)) \cdot \zeta_{\edge}(p) -
\Phi_{\frac{2}{10}\De, \frac{3}{10}\De, 8\De, 9\De}(\eta_{E'}(p)) \cdot
\zeta_{\edge}(p) = 0. \notag
\end{align}
Hence $\psi_2(x) = 0$.

This proves part (1) of the lemma.

The proof of the rest of the lemma is similar to that of Lemma \ref{psi_1}.
\end{proof}

We can assume that $\widehat{W}_2 \subset \{x_\r > 0\}$.
Define $\Psi_2: \{x_\r > 0\} \ra \{x_\r > 0\}$
by $\Psi_2(x)=x$ if 
$\pi_2(x)\not\in\widehat{W}_2$
and 
\begin{equation}
\Psi_2(x)= (\psi_2(x)P_2(\pi_2(x))+(1-\psi_2(x)) \pi_2(x),
\pi_2^\perp(x))
\end{equation}
otherwise.   Put $\e^2=\Psi_2 \circ \e^1$.

\begin{lemma}
\label{lem-propertiesofPsi_2}
Under  the constraints $\Si_2<\overline{\Si}_2(\Om_2,c_{\edge})$,
$\Ga_2<\overline{\Ga}_2(\Om_2,c_{\edge})$,
$\Xi_2<\overline{\Xi}_2(c_{\edge})$ and $c_{\twostratum} <
\overline{c}_{\twostratum}(c_{\edge})$,
we have: 
\begin{enumerate}
\item
 $\e^2$ is smooth.
\item For all $p\in M$, 
\begin{equation} \label{est12}
\|\e^2(p)-\e^0(p)\|<c_{\edge}\,\r(p)\quad\mbox{and}
\quad \| D\e^2_p-D\e^0_p\|<c_{\edge}\,.
\end{equation}
\item
 The restriction of $\pi_2 \circ \e^2$ to  
$\bigcup_{i\in I_{\edge}}
\{|\eta_i|<6\Delta, \eta_{E'} < 6 \Delta \}$
is a submersion to 
$W_2^0$.
\end{enumerate}
\end{lemma}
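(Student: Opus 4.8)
The plan is to follow the proof of Lemma~\ref{lem-propertiesofPsi_1} almost verbatim, carrying one extra bookkeeping item: since $\e^2=\Psi_2\circ\e^1$ is assembled from $\e^1$ rather than from $\e^0$, I would first use Lemma~\ref{lem-propertiesofPsi_1} to record that $\e^1$ is already $C^1$-close to $\e^0$, i.e. $\|\e^1(p)-\e^0(p)\|<c_{\twostratum}\,\r(p)$ and $\|D\e^1_p-D\e^0_p\|<c_{\twostratum}$ for all $p$, and then estimate $\e^2-\e^1$ directly and add. The structural inputs are Lemma~\ref{psi_2} (smoothness of $\psi_2$, its support, and the bound $|(D\psi_2)_x|<\Om_2'\,x_\r^{-1}$ on $\im(\e^1)$), Lemma~\ref{lem-edgestratumestimates} (the $(2,\Ga_2)$ cloudy $1$-manifold structure of $(\tilde S_2,S_2,r_2)$, together with the affine subspaces $A^0_x$ and their closeness and nondegeneracy estimates), and Lemma~\ref{approxlemma} of Appendix~\ref{sec-cloudy} (existence of $W_2^0$ and the projection $P_2$ on $\widehat W_2=N_{r_2}(S_2)$, the bound $\|P_2(y)-y\|\le\Xi_2 r_2(y)$, and the bound on $\|(DP_2)_y-\pi_{A^0_y}\|$). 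Throughout, the constraints $c_{\twostratum}<\overline{c}_{\twostratum}(c_{\edge})$, $\Xi_2<\overline{\Xi}_2(c_{\edge})$, $\Ga_2<\overline{\Ga}_2(\Om_2,c_{\edge})$ and $\Si_2<\overline{\Si}_2(\Om_2,c_{\edge})$ are used to make each error term a small fraction of $c_{\edge}$.

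Smoothness of $\e^2$ holds because $\Psi_2$ is smooth on $\{x_\r>0\}$ away from $\{\pi_2(x)\notin\widehat W_2\}$, while by Lemma~\ref{psi_2}(2) the image of $\e^1$ meets $\supp(\psi_2)$ only inside $\widehat W_2\times Q_2^\perp$ (where $P_2$ is smooth) and $\widehat W_2\subset\{x_\r>0\}$. For the $C^0$-estimate, set $x=\e^1(p)$; then $\e^2(p)-\e^1(p)=\psi_2(x)\bigl(P_2(\pi_2(x))-\pi_2(x),\,0\bigr)$, so $\|\e^2(p)-\e^1(p)\|\le\|P_2(\pi_2(x))-\pi_2(x)\|\le\Xi_2\,r_2(\pi_2(x))$. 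An analogue of Sublemma~\ref{etaclose}, combined with the $C^0$-closeness of $\e^1$ to $\e^0$, shows that on $\supp(\psi_2\circ\e^1)$ the quantity $r_2(\pi_2(x))$ is comparable to $\r(p)$; hence $\|\e^2(p)-\e^1(p)\|\le\const\,\Xi_2\,\r(p)$, and adding $\|\e^1(p)-\e^0(p)\|<c_{\twostratum}\,\r(p)$ gives the first inequality of (\ref{est12}) once $\Xi_2,c_{\twostratum}$ are small.

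For the $C^1$-estimate, differentiate: $D\e^2_p-D\e^1_p=(D\Psi_2-\Id)_{\e^1(p)}\circ D\e^1_p$, whose $Q_2^\perp$-component vanishes and whose $Q_2$-component applied to $v$ is $(d\psi_2)_x(D\e^1_p v)\,(P_2(\pi_2(x))-\pi_2(x))+\psi_2(x)\,[(DP_2)_{\pi_2(x)}\circ\pi_2-\pi_2](D\e^1_p v)$. The first term is bounded by $\Om_2'\,x_\r^{-1}\cdot\Xi_2 r_2(\pi_2(x))\cdot\|D\e^1_p\|$; since $x_\r$ and $r_2$ are comparable to $\r(p)$ while $\|D\e^1_p\|<\Omega_0+c_{\twostratum}$ by Lemma~\ref{uniform}, this is $\le\const\,\Xi_2$. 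For the second term I insert $\pi_{A^0_{x'}}$, where $x'=\pi_2(\e^0(p))\in S_2$: the difference $(DP_2)_{\pi_2(x)}-\pi_{A^0_{x'}}$ is small by Lemma~\ref{approxlemma} and the smoothness of $P_2$ (using that $\pi_2(x)$ is $\const\,c_{\twostratum}$-close to $x'$), and $(\pi_{A^0_{x'}}\circ\pi_2-\pi_2)\circ D\e^1_p$ is small by the analogue of (\ref{inherent1}) in Lemma~\ref{lem-edgestratumestimates}(2) together with $\|D\e^1_p-D\e^0_p\|<c_{\twostratum}$. This yields $\|D\e^2_p-D\e^1_p\|\le\const(\Xi_2+\Ga_2+c_{\twostratum})$, and adding $c_{\twostratum}$ gives the second inequality of (\ref{est12}). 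For the submersion claim, on $\bigcup_{i\in I_{\edge}}\{|\eta_i|<6\De,\ \eta_{E'}<6\De\}$ Lemma~\ref{psi_2}(1) gives $\psi_2\circ\e^1\equiv1$, so there $\pi_2\circ\e^2=P_2\circ\pi_2\circ\e^1$ and $D(\pi_2\circ\e^2)_p=(DP_2)_{\pi_2(\e^1(p))}\circ\pi_2\circ D\e^1_p$. Writing $\pi_2\circ D\e^1_p$ as $D(\pi_2\circ\e^0)_p$ plus a term of operator norm $<c_{\twostratum}$ and using Lemma~\ref{lem-edgestratumestimates}(2), on the orthogonal complement of its kernel the map $\pi_{A^0_{x'}}\circ D(\pi_2\circ\e^0)_p$ has singular values in $[\Om_2^{-1},\Om_2]$ with image $(TW_2^0)_{P_2(x')}$; since $DP_2$ is $\Xi_2$-close to $\pi_{A^0_{x'}}$ and surjects onto $TW_2^0$, the composition $D(\pi_2\circ\e^2)_p$ is surjective onto $TW_2^0$ provided $\Xi_2,\Ga_2,c_{\twostratum}$ are small in terms of $\Om_2$.

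The one genuinely new point — and the thing to get right — is the mismatch between the point $\e^1(p)$ at which $\Psi_2$ is evaluated and the point $x'=\pi_2(\e^0(p))\in S_2$ that carries the affine data $A^0_{x'}$ from the cloudy-manifold structure. Resolving it needs the quantitative $C^1$-closeness of $\e^1$ to $\e^0$ from Lemma~\ref{lem-propertiesofPsi_1}, plus uniform derivative bounds on $P_2$ so that $DP_2$ at $\pi_2(\e^1(p))$ and at $x'$ agree up to small error; this is exactly why the constraint $c_{\twostratum}<\overline{c}_{\twostratum}(c_{\edge})$ appears among the hypotheses. Everything else is a routine repetition of the $2$-stratum argument.
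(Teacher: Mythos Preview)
Your proposal is correct and follows essentially the same approach as the paper: the paper's proof simply says ``as in the proof of Lemma~\ref{lem-propertiesofPsi_1}'' to obtain $\|\e^2-\e^1\|<\tfrac12 c_{\edge}\,\r$ and $\|D\e^2-D\e^1\|<\tfrac12 c_{\edge}$, then adds the bound (\ref{est01}) on $\e^1-\e^0$, exactly as you do. You have correctly identified the one new bookkeeping point (the constraint $c_{\twostratum}<\overline{c}_{\twostratum}(c_{\edge})$ needed to handle the discrepancy between $\e^1(p)$ and $\e^0(p)$) and spelled out the details that the paper omits.
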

\begin{proof}
As in the proof of Lemma \ref{lem-propertiesofPsi_1},
$\e^2$ is smooth and
we can ensure that
\begin{equation}
\|\e^2(p)-\e^1(p)\|< \frac12 c_{\edge}\,\r(p)\quad\mbox{and}
\quad \| D\e^2_p-D\e^1_p\|< \frac12 c_{\edge}\,.
\end{equation}   
Along with (\ref{est01}), part (2) of the lemma follows.

The proof of part (3) is similar to that of Lemma 
\ref{lem-propertiesofPsi_1}(3).
We omit the details.
\end{proof}

\subsection{Adjusting the map near the slim $1$-stratum}
\label{subsecadjustslim}

Our third adjustment step involves the slim stratum.

Recall that $Q_3=H_{\zeroball} \oplus H_{\slim}$
and $\pi_3 : H \rightarrow Q_3$ is orthogonal projection.
We let 
$\widetilde{A}_3$, $A_3$, $\widetilde{S}_3$, $S_3$
and $r_3:\widetilde{S}_3 \rightarrow (0,\infty)$ be
as in Section 
\ref{subsec-e0nearslimstratum}.

Thus $(\tilde S_3,S_3,r_3)$ is a $(2,\Ga_3)$ cloudy $1$-manifold
by Lemma \ref{lem-slimstratumestimates}.
By Lemma \ref{approxlemma}, there is  a $1$-manifold
$W^0_3 \subset Q_3$
so that the
conclusion of Lemma \ref{approxlemma} holds, where the parameter
$\eps$ in the lemma is given by $\Xi_3=\Xi_3(\Ga_3)$.
In particular, there is 
a well-defined nearest point projection
\begin{equation}
P_3:N_{r_3}(S_3)=\widehat{W}_3\ra 
W^0_3
\,,
\end{equation}
where we are using the notation for variable thickness 
neighborhoods from Section \ref{preliminaries}.
 
\begin{lemma} \label{psi_3}
Under the constraint $c_{\edge} < \overline{c}_{\edge}$,
there is a smooth function $\psi_3:H\ra [0,1]$ with the following 
properties: 
\begin{enumerate}
\item
\begin{align}
\psi_3\circ \e^2 & \equiv 1 \text{ in }
\bigcup_{i\in I_{\slim}}\;\{|\eta_i|<6\cdot 10^5\Delta\} 
\text{ and } \\
\psi_3\circ \e^2& \equiv 0 \text{ outside }
\bigcup_{i\in I_{\slim}}\;\{|\eta_i|<7\cdot 10^5 \Delta \}\,. \notag
\end{align}
\item
  $\supp(\psi_3)\cap\im(\e^2)\subset\widehat{W}_3
\times Q_3^\perp$. 
\item
There is a constant $\Om_3'=\Om_3'(\mathcal{M})$ such that
\begin{equation} \label{estimate3}
|(D\psi_3)_x|< \Om_3'\,x_{\r}^{-1}
\end{equation}
for all $x\in \im(\e^2)$.
\end{enumerate}
\end{lemma}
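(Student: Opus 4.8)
The plan is to construct $\psi_3$ in close analogy with $\psi_1$ from Lemma \ref{psi_1}, following the pattern already used in $\psi_2$ (Lemma \ref{psi_2}), but with the relevant numerical thresholds adjusted to the slim-stratum scale $10^5\De$. First I would pass from $\e^2$ to $\e^0$: since $c_{\edge}$ is a small parameter, the estimate $\|\e^2(p)-\e^0(p)\|<c_{\edge}\r(p)$ from Lemma \ref{lem-propertiesofPsi_2}(2) together with Sublemma \ref{etaclose}-type reasoning shows that if $\e^2(p)\in\bigcup_{i\in I_{\slim}}\{|\eta_i|<6\cdot 10^5\De\}$ then $\e^0(p)\in\bigcup_{i\in I_{\slim}}\{|\eta_i|<6.1\cdot 10^5\De\}$, and if $\e^2(p)\notin\bigcup_{i\in I_{\slim}}\{|\eta_i|<7\cdot 10^5\De\}$ then $\e^0(p)\notin\bigcup_{i\in I_{\slim}}\{|\eta_i|<6.9\cdot 10^5\De\}$ (this is where the constraint $c_{\edge}<\overline{c}_{\edge}$ is used). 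So it suffices to build a function on $H$ that is $1$ on the smaller $\e^0$-region and $0$ outside the larger one.

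Next I would write down the explicit formula. Recall (Lemma \ref{lem-zerosstratumestimates} and the definition of $\e^0$) that for $p\in\bigcup_{i\in I_{\slim}}\{|\eta_i|<6\cdot 10^5\De\}$, the components $x_i=(x_i',x_i'')=\e^0_i(p)=(R_i\zeta_i(p)\eta_i(p),R_i\zeta_i(p))$ satisfy $x_i''=R_i$ and $|x_i'|/x_i''=|\eta_i(p)|<6\cdot 10^5\De$. So the natural candidate is
\begin{equation}
\psi_3(x)=1-\Phi_{\frac12,1}\left(
\sum_{\{i\in I_{\slim}\;\mid\; x_i''>0\}}\Phi_{6.1\cdot 10^5\De,6.5\cdot 10^5\De}\left(\frac{|x_i'|}{x_i''}\right)\cdot
\left( 1-\Phi_{\frac12,1}\left(\frac{x_i''}{R_i}  \right) \right)
\right)\,.
\end{equation}
Exactly as in the proof of Lemma \ref{psi_1}, for each $i\in I_{\slim}$ the summand is well-defined and smooth where $x_i''>0$ with support in $\{x_i''\geq\frac12 R_i\}$, hence extends by zero to a smooth function on $H$, so $\psi_3$ is smooth; property (1) follows by plugging in $x=\e^0(p)$ and tracking whether $|\eta_i(p)|$ lies in $[0,6.1\cdot 10^5\De)$ or $\geq 6.9\cdot 10^5\De$ for the indices with $\zeta_i(p)>0$.

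For property (2), if $\psi_3(x)>0$ with $x\in\im(\e^2)$, then by (1) and the passage to $\e^0$ above we have $|\eta_i(p)|<7\cdot 10^5\De$ for some $i\in I_{\slim}$, i.e.\ $p\in A_3$, so $\pi_3(\e^0(p))\in\pi_3(\e^0(A_3))=S_3\subset\widehat{W}_3$; since $\e^2$ is $c_{\edge}$-close to $\e^0$ in $C^0$ and $\widehat{W}_3=N_{r_3}(S_3)$ has thickness comparable to $\Si_3\r_p$, for $\Si_3$ not too small relative to $c_{\edge}$ we get $\pi_3(x)\in\widehat{W}_3$, hence $x\in\widehat{W}_3\times Q_3^\perp$. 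For property (3), the estimate $|(D\psi_3)_x|<\Om_3'x_\r^{-1}$ is the chain-rule argument from the end of the proof of Lemma \ref{psi_1}: the number of indices $i$ with $x_i''>0$ is bounded by the slim-stratum covering multiplicity $\mathcal M$, for the others the factor $1-\Phi_{\frac12,1}(x_i''/R_i)$ vanishes near $x$, and on the region where a summand's differential is nonzero one has $x_i''\geq\frac12 R_i$, so $R_i$ is comparable to $x_\r$ and the bound follows. The main obstacle — such as it is — is bookkeeping: one must choose the inner thresholds ($6.1\cdot 10^5\De$ versus $6.9\cdot 10^5\De$) so that the $\e^2\rightsquigarrow\e^0$ slack absorbed into $c_{\edge}$ leaves room on both sides, and verify that the slim-stratum analog of Lemma \ref{lem-zerosstratumestimates} (that on $\{|\eta_i|<6\cdot 10^5\De\}$ the only contributing summand is $\e^0_i$ with $x_i''=R_i$) holds with the adapted coordinates of Lemma \ref{1adapted}; both are routine given the constructions already in place, so the proof reduces, as in Lemma \ref{psi_2}, to the remark that it is similar to the proof of Lemma \ref{psi_1}.
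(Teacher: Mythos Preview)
Your proposal is correct and follows essentially the same approach as the paper: you give the identical formula for $\psi_3$ that the paper gives, and your verification of properties (1)--(3) tracks the arguments of Lemmas \ref{psi_1} and \ref{psi_2} exactly as the paper indicates (the paper's own proof simply writes down this formula and says ``The rest of the proof is similar to that of Lemma \ref{psi_2}. We omit the details.''). One small wording issue: in your argument for (2) you say ``for $\Si_3$ not too small relative to $c_{\edge}$'', but the constraint goes the other way --- it is $c_{\edge}$ that must be chosen small relative to $\Si_3$, consistent with the parameter ordering.
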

\begin{proof}
Let $\psi_3:H\ra [0,1]$ be given by 
\begin{equation}
\label{eqn-psi3}
\psi_3(x)=1-\Phi_{\frac12,1}\left(
\sum_{\{i\in I_{\slim}\;\mid\; x_i''>0\}}\Phi_{6.1\cdot 10^5 \Delta,6.5 \cdot
10^5 \Delta}\left(\frac{|x_i'|}{x_i''}\right)\cdot 
\left( 1-\Phi_{\frac12,1}\left(\frac{x_i''}{R_i}  \right) \right)
\right)\,.
\end{equation}
The rest of the proof is similar to that of Lemma \ref{psi_2}.
We omit the details.
\end{proof}

Define  $\Psi_3:H\ra H$ by
$\Psi_3(x)=x$ if 
$\pi_3(x)\not\in\widehat{W}_3$ 
and 
\begin{equation} \label{Psi3def}
\Psi_3(x)= (\psi_3(x)P_3(\pi_3(x))+(1-\psi_3(x)) \pi_3(x),
\pi_3^\perp(x))
\end{equation}
otherwise.   Put $\e^3=\Psi_3 \circ \e^2$.

\begin{lemma}
\label{lem-propertiesofPsi_3}
Under  the constraints $\Si_3<\overline{\Si}_3(\Om_3,c_{\slim})$,
$\Ga_3<\overline{\Ga}_3(\Om_3,c_{\slim})$,
$\Xi_3<\overline{\Xi}_3(c_{\slim})$ and
$c_{\edge} < \overline{c}_{\edge}(c_{\slim})$,
we have: 
\begin{enumerate}
\item $\e^3$ is smooth. 
\item  For all $p\in M$, 
\begin{equation} \label{est23}
\|\e^3(p)-\e^0(p)\|<c_{\slim}\,\r(p)\quad\mbox{and}
\quad \| D\e^3_p-D\e^0_p\|<c_{\slim}\,.
\end{equation}
\item
The restriction of $\pi_3 \circ \e^3$ to  
$\bigcup_{i\in I_{\slim}}
\{|\eta_i|< 6 \cdot 10^5 \Delta \}$
is a submersion to 
$W^0_3$.
\end{enumerate}
\end{lemma}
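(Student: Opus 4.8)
\textbf{Proof proposal for Lemma \ref{lem-propertiesofPsi_3}.}
The plan is to follow the template already established in the proofs of Lemmas \ref{lem-propertiesofPsi_1} and \ref{lem-propertiesofPsi_2}, since $\Psi_3$ is constructed in exactly parallel fashion. First I would verify smoothness: by Lemma \ref{psi_3}(2) the cutoff $\psi_3$ satisfies $\supp(\psi_3)\cap\im(\e^2)\subset\widehat{W}_3\times Q_3^\perp$, so the formula (\ref{Psi3def}) defining $\Psi_3$, which involves the nearest-point projection $P_3$ onto $W^0_3$ defined only on $\widehat{W}_3$, extends smoothly across all of $H$ by taking $\Psi_3=\id$ outside $\pi_3^{-1}(\widehat{W}_3)$; composing with the smooth map $\e^2$ (smooth by Lemma \ref{lem-propertiesofPsi_2}(1)) gives that $\e^3$ is smooth. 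This proves (1).

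For part (2), the strategy is to first bound $\|\e^3(p)-\e^2(p)\|$ and $\|D\e^3_p-D\e^2_p\|$, and then combine with (\ref{est12}) using the triangle inequality. Writing $x=\e^2(p)$, we have $\e^3(p)-\e^2(p)=\psi_3(x)\big(P_3(\pi_3(x))-\pi_3(x),0\big)$, and the norm of this is controlled by $\Xi_3 r_3(x)$ via Lemma \ref{approxlemma}(1); using an analog of Sublemma \ref{etaclose} one gets $r_3(x)\lesssim \r_p$, so choosing $\Xi_3<\overline{\Xi}_3(c_{\slim})$ and $c_{\edge}<\overline{c}_{\edge}(c_{\slim})$ makes $\|\e^3(p)-\e^2(p)\|<\tfrac12 c_{\slim}\r(p)$. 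For the derivative, expand
\begin{align}
D\e^3_p-D\e^2_p = & (D\psi_3)_x\,\big(P_3(\pi_3(x))-\pi_3(x),0\big) \notag \\
& + \psi_3(x)\,\big((DP_3)_{\pi_3(x)}\circ D(\pi_3\circ\e^2)_p - \pi_{A^0_x}\circ D(\pi_3\circ\e^2)_p\big) \notag \\
& + \psi_3(x)\,\big(\pi_{A^0_x}\circ D(\pi_3\circ\e^2)_p - D(\pi_3\circ\e^2)_p\big),
\end{align}
where we identify $D(\pi_3\circ\e^2)_p$ with its image under the inclusion $Q_3\hookrightarrow H$ (the $Q_3^\perp$-component is unchanged by $\Psi_3$). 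The first term is bounded using (\ref{estimate3}) together with $|P_3(\pi_3(x))-\pi_3(x)|\le\Xi_3 r_3(x)\lesssim\Xi_3\r_p$; the second using Lemma \ref{approxlemma}(7) to bound $|(DP_3)_{\pi_3(x)}-\pi_{A^0_x}|$ by a function of $\Xi_3$, combined with the bound $|D(\pi_3\circ\e^2)_p|\lesssim\Om_3$ coming from Lemma \ref{uniform} and (\ref{est12}); the third using the analog of (\ref{inherent1}) in Lemma \ref{lem-slimstratumestimates}(2), which gives $\|D(\pi_3\circ\e^0)_p-\pi_{A^0_x}\circ D(\pi_3\circ\e^0)_p\|<\Ga_3$, adjusted by $c_{\edge}$ via (\ref{est12}). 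Imposing $\Si_3<\overline{\Si}_3(\Om_3,c_{\slim})$, $\Ga_3<\overline{\Ga}_3(\Om_3,c_{\slim})$ and $\Xi_3<\overline{\Xi}_3(c_{\slim})$ makes the total $<\tfrac12 c_{\slim}$, and combining with (\ref{est12}) proves (2).

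For part (3), on the region $U_3=\bigcup_{i\in I_{\slim}}\{|\eta_i|<6\cdot 10^5\Delta\}$ we have $\psi_3\circ\e^2\equiv 1$ by Lemma \ref{psi_3}(1), so $\pi_3\circ\e^3$ restricted there equals $P_3\circ\pi_3\circ\e^2$, which maps into $W^0_3$. To see it is a submersion, compute $D(P_3\circ\pi_3\circ\e^2)_p = \pi_{A^0_x}\circ D(\pi_3\circ\e^2)_p + \big((DP_3)_{\pi_3(x)}-\pi_{A^0_x}\big)\circ D(\pi_3\circ\e^2)_p$; by the lower bound (\ref{inherent2})-analog in Lemma \ref{lem-slimstratumestimates}(2) the first summand is surjective onto $(TW^0_3)_{P_3(\pi_3(x))}$ with a quantitative lower bound depending on $\Om_3$, while Lemma \ref{approxlemma}(7) makes the second summand arbitrarily small for $\Xi_3$ small (relative to $c_{\slim}$ and hence $\Om_3$), so the sum remains surjective. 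I expect the main obstacle to be bookkeeping: ensuring that the constraint $c_{\edge}<\overline{c}_{\edge}(c_{\slim})$ (so that the region definitions match up after the prior adjustment $\Psi_2$ perturbs the $\eta_i$-level sets) is chained consistently with the smallness requirements on $\Si_3,\Ga_3,\Xi_3$, exactly as was done for the preceding two lemmas; the geometric content is already packaged in Lemma \ref{lem-slimstratumestimates} and Lemma \ref{approxlemma}. As with Lemmas \ref{lem-propertiesofPsi_1} and \ref{lem-propertiesofPsi_2}, the routine details of parts (1) and (3) may be omitted or abbreviated.
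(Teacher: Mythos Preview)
Your proposal is correct and follows exactly the same approach as the paper, which simply states that the proof is similar to that of Lemma \ref{lem-propertiesofPsi_2} and omits the details. You have in fact supplied more detail than the paper does, but the template you identify---smoothness from Lemma \ref{psi_3}(2), bounding $\|\e^3-\e^2\|$ and $\|D\e^3-D\e^2\|$ by $\tfrac12 c_{\slim}$ and combining with (\ref{est12}), and the submersion argument via Lemma \ref{lem-slimstratumestimates}(2) and Lemma \ref{approxlemma}(7)---is precisely the intended one.
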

\begin{proof}
The proof is similar to that of Lemma \ref{lem-propertiesofPsi_2}.  
We omit the details.
\end{proof}

\subsection{Proof of Proposition \ref{prop-adjustment}  }
\label{subsecproofofprop}

Note from (\ref{Psi3def}) that $\Psi_3$ can be factored as $\Psi_3^{Q_2} \times
I_{Q_2^\perp}$ for some $\Psi_3^{Q_2} : Q_2 \rightarrow Q_2$. In
particular, $\pi_2 \circ \Psi_3 = \Psi_3^{Q_2} \circ \pi_2$.

Put $\e = \e^3$, $c_{\adjust} = c_{\slim}$ and
\begin{align}
W_1 & = (\Psi_3 \circ \Psi_2)(W^0_1) \cap
\bigcup_{i \in I_{\twostratum}} \{y \in H \: : \:
y_i'' > .9 R_i, \: |y_i'| < 5.5 R_i \}, \\
W_2 & = \Psi_3^{Q_2} (W^0_2) \cap
\bigcup_{i \in I_{\edge}} \{y \in Q_2 \: : \:
y_i'' > .9 R_i, \: |y_i'| < 5.5 \Delta R_i, \:
y_\r > 0, \: y_{E'} < 5.5 \Delta y_\r \}, \notag \\
W_3 & = W^0_3 \cap
\bigcup_{i \in I_{\slim}} \{y \in Q_3 \: : \:
y_i'' > .9 R_i, \: |y_i'| < 5.5 \cdot 10^5 \Delta R_i\}. \notag
\end{align}

The smoothness of $\e$ follows from
part (1) of Lemma \ref{lem-propertiesofPsi_3}.
Part (1) of Proposition \ref{prop-adjustment} follows from
part (2) of Lemma \ref{lem-propertiesofPsi_3}.

\begin{lemma}
$W_i$ is a $k_i$-manifold.
\end{lemma}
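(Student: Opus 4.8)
The plan is to show that each $W_i$ is an embedded $k_i$-submanifold of the relevant Euclidean space $Q_i$, by realizing it as an open subset of the image of a submersion restricted to a suitable open set, and then invoking the submersion statements already proved in Lemmas \ref{lem-propertiesofPsi_1}, \ref{lem-propertiesofPsi_2}, \ref{lem-propertiesofPsi_3} together with the fact that $W_1^0$, $W_2^0$, $W_3^0$ are manifolds (coming from Lemma \ref{approxlemma}). First I would treat $W_3$, which is the easiest: by definition $W_3$ is the intersection of the $1$-manifold $W_3^0\subset Q_3$ with the open set $\bigcup_{i\in I_{\slim}}\{y\in Q_3 : y_i''>.9R_i,\ |y_i'|<5.5\cdot 10^5\Delta R_i\}$, hence $W_3$ is an open subset of $W_3^0$ and therefore a (possibly empty) $1$-manifold. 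The only thing to check is that the defining set is genuinely open in $Q_3$, which is immediate since each condition is a strict inequality in the linear coordinates $y_i',y_i''$ of $Q_3$.

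Next I would handle $W_1$ and $W_2$, which require slightly more care because they are obtained by pushing $W_1^0$ (resp.\ $W_2^0$) forward under the diffeomorphism-like maps $\Psi_3\circ\Psi_2$ (resp.\ $\Psi_3^{Q_2}$) and then intersecting with an open set. The key point is that on the relevant region the restriction of $\e = \e^3 = \Psi_3\circ\Psi_2\circ\e^1$ to $\bigcup_{i\in I_{\twostratum}}\{|\eta_i|<6\}$ is a submersion onto $W_1$ (and similarly $\pi_2\circ\e$ restricted to $\bigcup_{i\in I_{\edge}}\{|\eta_i|<6\Delta,\ \eta_{E'}<6\Delta\}$ is a submersion onto $W_2$ via $\Psi_3^{Q_2}$). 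Concretely: by Lemma \ref{lem-propertiesofPsi_1}(3), $\e^1$ restricted to $\bigcup_{i\in I_{\twostratum}}\{|\eta_i|<6\}$ is a submersion onto $W_1^0$; one then checks, using parts (1) of Lemmas \ref{psi_2}, \ref{psi_3} (that $\psi_2\equiv 0$ and $\psi_3\equiv 0$ except near the edge and slim strata, which are disjoint from the deep $2$-stratum region after the cutoffs are chosen), that on a slightly smaller region $\Psi_3\circ\Psi_2$ is the identity or at least a local diffeomorphism near the image; hence $\e$ restricted there is a submersion, and its image is an open subset of $(\Psi_3\circ\Psi_2)(W_1^0)$. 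Intersecting with the open set appearing in the definition of $W_1$ keeps it an open subset of a $2$-manifold, hence a $2$-manifold. The argument for $W_2$ is identical with $Q_2$, $\Psi_3^{Q_2}$, and the edge cutoff functions replacing the corresponding $2$-stratum data.

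The only genuine subtlety — and the step I expect to be the main obstacle — is verifying that $\Psi_3\circ\Psi_2$ (resp.\ $\Psi_3^{Q_2}$) is nonsingular along $W_1^0$ (resp.\ $W_2^0$) in the regions of interest, i.e.\ that the later adjustment maps do not destroy the manifold structure produced by the earlier ones. This follows from the smallness estimates: by Lemma \ref{approxlemma}(7) the projections $P_2,P_3$ are $C^1$-close to linear orthogonal projections onto the tangent spaces of the respective core manifolds, so $D\Psi_2$ and $D\Psi_3$ are $C^0$-close to the identity wherever the cutoffs $\psi_2,\psi_3$ are nonconstant (using the parameter constraints $\Xi_2<\overline\Xi_2$, $\Xi_3<\overline\Xi_3$ already imposed), and the compositions $\Psi_3\circ\Psi_2$ are therefore diffeomorphisms onto their images when restricted to a neighborhood of the relevant piece of $W_1^0$. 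The $C^1$-smallness in Lemmas \ref{lem-propertiesofPsi_2}(2), \ref{lem-propertiesofPsi_3}(2) then guarantees that $\e$ remains a submersion on the slightly shrunken domains $\{|\eta_i|<5\}$ etc., which are exactly the $U_j$ of Proposition \ref{prop-adjustment}. Once nonsingularity is in hand, the manifold claim is just: the image of a submersion is open in its target, an open subset of a manifold is a manifold, and a finite union of open subsets each contained in a single manifold chart (after the diffeomorphism) of the correct dimension is a manifold of that dimension.

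I would also remark that $W_i$ being a $k_i$-manifold implicitly uses that the various local pieces glue consistently — but this is automatic here since all pieces are subsets of the single ambient manifold $W_i^0$ (resp.\ its diffeomorphic image) and we are only taking an open subset of it; there is no gluing to perform, only the observation that the defining open conditions cut out an open, hence smooth, subset. Thus the proof reduces to the two ingredients above: (a) $W_i^0$ is a $k_i$-manifold, already established via the cloudy-manifold machinery, and (b) the post-composed maps $\Psi_{j}$ are local diffeomorphisms near the relevant portions of $W_i^0$, which is a consequence of the parameter constraints making all the adjustments $C^1$-small.
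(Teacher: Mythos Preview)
Your treatment of $W_3$ is fine and matches the paper. But your argument for $W_1$ (and analogously $W_2$) has a genuine gap. The claim that $\Psi_3\circ\Psi_2$ is a local diffeomorphism, or even $C^1$-close to the identity, fails on the region where $\psi_3\equiv 1$: there $\Psi_3=(P_3\circ\pi_3,\pi_3^\perp)$, and $P_3$ is nearest-point projection onto the $1$-manifold $W_3^0\subset Q_3$, so $D\Psi_3$ has kernel equal (approximately) to the normal space of $W_3^0$ in $Q_3$. Points where $\psi_3\equiv 1$ \emph{do} occur on the portion of $W_1^0$ cut out by the defining inequalities of $W_1$ (precisely at the interfaces between the $2$-stratum and the slim $1$-stratum), so your restriction to ``wherever the cutoffs are nonconstant'' does not cover all cases. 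And even if you repaired this to show that $(\Psi_3\circ\Psi_2)|_{W_1^0}$ is an immersion, you would still owe an argument that it is an \emph{embedding}; otherwise the image need not be a manifold and ``open subset of $(\Psi_3\circ\Psi_2)(W_1^0)$'' is not meaningful.

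The paper avoids both issues by exploiting a commutation that you did not use: for $i\in I_{\twostratum}$ one has $\pi_{H_i'}\circ(\Psi_3\circ\Psi_2)=\pi_{H_i'}$, simply because $\Psi_2$ and $\Psi_3$ only alter the $Q_2$- and $Q_3$-components, which are orthogonal to $H_i\subset H_{\twostratum}$. So it suffices to show that the local piece $V_i^0=W_1^0\cap\{y_i''>.9R_i,\ |y_i'|<5.5R_i\}$ is a \emph{graph} over the ball $B(0,5.5R_i)\subset H_i'$; then its image $V_i$ under $\Psi_3\circ\Psi_2$ is automatically a graph over the same ball (the $H_i'$-coordinate is preserved), hence an embedded $2$-manifold. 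Proving $V_i^0$ is a graph is itself nontrivial: one shows $\pi_{H_i'}|_{V_i^0}$ is a proper surjective local diffeomorphism onto $B(0,5.5R_i)$ using Lemma~\ref{lem-2stratumestimates}(3) and Lemma~\ref{approxlemma}, hence a finite covering, and then rules out multiple sheets by a connectedness argument that uses the $S^1$-fibration of Lemma~\ref{2fibration} to produce a short path in $M$ joining any two putative preimages of $0$.
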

\begin{proof}
We will show that $W_1$ is a $2$-manifold; the proofs for $W_2$ and
$W_3$ are similar.

Choose $x \in W_1$. For
some $i \in I_{\twostratum}$,
we have $x_i'' > .9 R_i$ and $|x_i'| < 5.5 R_i$ . Putting
\begin{equation}
V_i = W_1 \cap \left\{y \in H \: : \:
y_i'' > .9 R_i, \: |y_i'| < 5.5 R_i \right\}
\end{equation}
gives a neighborhood of $x$ in $W_1$. 
As $(\pi_{H_i'}, \pi_{H_i''}) \circ (\Psi_3 \circ \Psi_2) = 
(\pi_{H_i'}, \pi_{H_i''})$,
it follows that $V_i$ is the image, under $\Psi_3 \circ \Psi_2$, of
the $2$-manifold
\begin{equation}
V_i^0 = W^0_1 \cap \{y \in H \: : \:
y_i'' > .9 R_i, \: |y_i'| < 5.5 R_i \}.
\end{equation}
If we can show that $\pi_{H_i'}$ maps $V_i^0$
diffeomorphically to its image in $H_i'$ then $V_i^0$ will be a graph
over a domain in $H_i'$, and the same will be true for $V_i$.

In view of (\ref{est01}) and the definition of $\e^0$, 
if $c_{\twostratum}$ is sufficiently small then we are ensured that
$V_i^0 = \e^1(\{|\eta_i| < 7 \}) \cap \{y \in H \: : \: |y_i'| < 5.5 R_i \}$.
From Lemma \ref{lem-2stratumestimates}(3),
Lemma \ref{approxlemma}(3) and Lemma \ref{approxlemma}(5), 
if $\Xi_1$ is sufficiently small then we are ensured that
$\pi_{H_i'}$ restricts to a proper surjective local diffeomorphism from
$V_i^0$ to $B(0, 5.5 R_i) \subset H_i'$. Hence $V_i^0$ is a 
proper covering space
of $B(0, 5.5 R_i) \subset H_i'$ and so consists of a finite number of
connected components, each mapping diffeomorphically under $\pi_i'$ to
$B(0, 5.5 R_i) \subset H_i'$. It remains to show that there is only
one connected component.  

If $V_i^0$ has more than one connected component then there are
$y_1, y_2 \in V_i^0 \cap \pi^{-1}_{H_i'}(0)$ with
$y_1 \neq y_2$. We can write $y_1 = \e^1(p_1)$ and $y_2 = \e^1(p_2)$
for some $p_1, p_2 \in \{|\eta_i| < 7 \}$.
We claim that 
there is a smooth path $\gamma$ in $M$ from $p_1$ to
$p_2$ so that $\e^1 \circ \gamma$ lies within
$B \left( y_1, \frac{1}{10} R_i \right)$. To see this, we first
note that if $\Gamma_1$ and $c_{\twostratum}$ are sufficiently small
then Lemma \ref{lem-2stratumestimates}(3) and (\ref{est01}) 
ensure that $|\eta_i(p_1)| << 1$ and
$|\eta_i(p_2)| << 1$, as otherwise we would contradict the assumption
that $(y_1)_i' = (y_2)_i' = 0$. Let $\widehat{\gamma}$ be a straight
line from $\eta_i(p_1)$ to $\eta_i(p_2)$.
Relative to the fiber bundle structure defined by $\eta_i$
(see Lemma \ref{2fibration}), let $\gamma_1$ be a lift of $\widehat{\gamma}$,
with initial point 
$p_1$.
Let $\gamma_2$ be a curve in the $S^1$-fiber
containing $p_2$, going from the endpoint of $\gamma_1$ to $p_2$. Let
$\gamma$ be a smooth concatenation of $\gamma_1$ and $\gamma_2$.
Then $\eta_i \circ \gamma$ lies in a ball whose diameter is much
smaller than one.  
If $\Gamma_1$ and $c_{\twostratum}$ are sufficiently small
then Lemma \ref{lem-2stratumestimates}(3) and (\ref{est01}) 
ensure that $\e^1 \circ \gamma$ lies in a
ball whose diameter is much smaller than $R_i$.

On the other hand, since $p_1$ and $p_2$ lie in different
connected components of $V_i^0$, any curve in
$W_1^0$ from $p_1$ to $p_2$ must go from 
$p_1$ to $\{y \in H \: : \: |y_i'| = R_i\}$. This
is a contradiction.

Thus $V_i^0$ is connected and $W_1$ is a manifold.
\end{proof}

Recall the definition of $U_1$ from Proposition \ref{prop-adjustment}.
By Lemma \ref{lem-propertiesofPsi_1}(3), the restriction of $\e^1$
to $U_1$ is a submersion
from $U_1$ to $W^0_1$. 
From Lemma \ref{lem-2stratumestimates}(3) and 
(\ref{est23}), if $\Gamma_1$ and $c_{\slim}$ are sufficiently small then
$\e = \Psi_3 \circ \Psi_2 \circ \e^1$ maps $U_1$
to $W_1 \subset (\Psi_3 \circ \Psi_2)(W^0_1)$. To see that it is a 
submersion, suppose that $|\eta_i(p)| < 5$ for some
$i \in I_{\twostratum}$. Put $x^0 = \e^0(p)$ and
$x = \e(p)$. Note that $x_i' = (x_0)_i'$.
From Lemma \ref{lem-2stratumestimates}(3) and 
Lemma \ref{approxlemma}(3), 
if $\Xi_1$ is sufficiently small then we are ensured that
$(D\pi_{H_i'})_{x^0} \circ D\e^0_p$ maps onto $T_{(x^0)_i'} H_i' \cong \R^2$.
Then $(D\pi_{H_i'})_x \circ D\e_p = 
(D\pi_{H_i'})_x \circ D(\Psi_3 \circ \Psi_2)_{x^0} \circ D\e^0_p =
(D\pi_{H_i'})_{x_0} \circ D\e^0_p$ maps onto 
$T_{x_i'} H_i' \cong \R^2$. Thus $D\e_p$ must map
$T_pM$ onto $T_{x}W_1$, showing that $\e$ is a submersion near $p$.

Next, by Lemma \ref{lem-propertiesofPsi_2}(3), the restriction of $\pi_2
\circ \e^2$ to $U_2$ is a submersion from $U_2$ to $W^0_2$.
Lemma \ref{lem-edgestratumestimates}(3) and
(\ref{est23}) imply that if $\Gamma_2$ and $c_{\slim}$ are
sufficiently small then $\pi_2 \circ \e = \pi_2 \circ \Psi_3 \circ \e^2 =
\Psi_3^{Q_3} \circ \pi_2 \circ \e^2$ maps $U_2$ to $W_2 \subset
\Psi_3^{Q_3}(W_2^0)$.
By a similar argument to the preceding paragraph, the restriction of 
$\pi_2 \circ \e$ to $U_2$ is a submersion
to $W_2$.

Finally, by Lemma \ref{lem-propertiesofPsi_3}(3), the restriction of
$\pi_3 \circ \e = \pi_3 \circ \e^3$ to $U_3$ is a submersion to $W_3 = W^0_3$.
This proves Proposition \ref{prop-adjustment}.

\section{Extracting a good decomposition of $M$}
\label{sec-extracting}

In this section we will use the map $\e$ to find a decomposition
of $M$ into fibered pieces which are compatible along 
the intersections:

\begin{proposition}
\label{prop-decompositionprop}
There is a decomposition
\begin{equation}
M=M^{\zeroball}\cup M^{\slim}\cup M^{\edge}\cup M^{\twostratum}
\end{equation}
into compact domains with disjoint interiors, where 
each  connected component of $M^{\slim}$, $M^{\edge}$, or $M^{\twostratum}$
may be endowed with a fibration structure, such that:

\begin{enumerate}
\item $M^{\zeroball}$ and $M^{\slim}$ are domains with smooth
boundary, while $M^{\edge}$ and $M^{\twostratum}$ are smooth
manifolds with corners, each point of which has a neighborhood diffeomorphic
to  $\R^{3-k}\times [0,\infty)^k$
for some $k\leq 2$.

\item Connected components of 
$M^{\zeroball}$  are diffeomorphic to one of the following: 
$S^1 \times S^2$, $S^1 \times_{\Z_2} S^2 = \R P^3 \# \R P^3$,
$T^3/\Gamma$ (where $\Gamma$ is a finite subgroup of
$\Isom^+(T^3)$ which acts freely on $T^3$), $S^3/\Gamma$
(where $\Gamma$ is a finite subgroup of $\Isom^+(S^3)$ which
acts freely on $S^3$), a solid torus $S^1 \times D^2$,
a twisted line bundle $S^2 \times_{Z_2} I$ over $\R P^2$, or
a twisted line
bundle $T^2 \times_{Z_2} I$ over a Klein bottle.

\item The components of $M^{\slim}$ 
have a fibration  with $S^2$-fibers or $T^2$-fibers.

\item Components of
$M^{\edge}$ are diffeomorphic (as manifolds with
corners) to a solid torus $S^1\times D^2$ or $I\times D^2$,
and have a fibration with $D^2$ fibers.  

\item $M^{\twostratum}$ is a smooth domain with corners with a smooth
$S^1$-fibration; in particular
the $S^1$-fibration is compatible with any corners.

\item Each fiber of the fibration $M^{\edge}\ra B^{\edge}$, lying over
a boundary point of the base $B^{\edge}$, is contained in the 
boundary of  $M^{\zeroball}$ or the boundary of $M^{\slim}$.

\item The part of $\D M^{\edge}$ which carries
an induced $S^1$-fibration is contained in $M^{\twostratum}$, and the
$S^1$-fibration induced from $M^{\edge}$ agrees with the
one inherited from $M^{\twostratum}$.

\end{enumerate}

\end{proposition}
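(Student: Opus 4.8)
The plan is to assemble the decomposition in four stages, peeling off the pieces in the order $M^{\zeroball}$, $M^{\slim}$, $M^{\edge}$, $M^{\twostratum}$, using the radial functions $\eta_{p_i}$ ($i\in I_{\zeroball}$) of Lemma \ref{lem-existsetap0stratum} together with the submersions supplied by Proposition \ref{prop-adjustment}. All the cutoff levels introduced below will be chosen generically, using Sard's theorem, so that the resulting boundary hypersurfaces are pairwise transverse; since every level in question is noncritical by the cited lemmas, such choices exist, and I would make them all simultaneously at the end.

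First I would set $M^{\zeroball}=\bigcup_{i\in I_{\zeroball}}\eta_{p_i}^{-1}([0,\rho_i])$ for generic radii $\rho_i\in[\frac{3}{10},\frac{9}{10}]$. By Lemma \ref{0-ballseparation}(1) the balls $B(p_i,r^0_{p_i})$ are disjoint and contain these sublevel sets, so $M^{\zeroball}$ is a disjoint union of compact domains with smooth boundary, and by Lemma \ref{0-ballseparation}(4) it contains every $0$-stratum point. By Lemma \ref{lem-existsetap0stratum}(4) each component is diffeomorphic either to $N_{p_i}$ (if $N_{p_i}$ is compact) or to the closed normal disk bundle of the soul of $N_{p_i}$ (if $N_{p_i}$ is noncompact), and by Lemma \ref{0-ballseparation}(5) each $N_{p_i}$ has at most one end; Lemma \ref{topology} then yields exactly the list in item (2), proving (2) and the $M^{\zeroball}$ part of (1).

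Next, working in $M-\Int(M^{\zeroball})$, I would use the submersions $\pi_3\circ\e\colon U_3\to W_3$, $\pi_2\circ\e\colon U_2\to W_2$ and $\e\colon U_1\to W_1$ of Proposition \ref{prop-adjustment}(2). Define $M^{\slim}$ to be the preimage under $\pi_3\circ\e$ of a generic compact codimension-zero submanifold-with-boundary $B^{\slim}\subset W_3$, chosen so that $M^{\slim}$ contains $\bigcup_{i\in I_{\slim}}\{|\eta_i|\le 10^5\De\}$ minus $\Int(M^{\zeroball})$; by Lemma \ref{lem-slimballselection} and Lemma \ref{lem-1stratumslimoredge} this captures every slim $1$-stratum point not already in $M^{\zeroball}$. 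The restriction of $\pi_3\circ\e$ to the compact set $M^{\slim}$ is then a proper submersion, hence a fiber bundle over $B^{\slim}$; its fibers are diffeomorphic to the $\e$-fibers over points of $W_3$, hence to $\eta_{p_i}^{-1}\{0\}$ for a slim ball, hence to $S^2$ or $T^2$ by Lemma \ref{lem-slimtopology} (connectedness of the fibers being checked as in Lemma \ref{2fibration} and in the proof that the $W_i$ are manifolds). This gives item (3). In the same way, $M^{\edge}$ is the part of $U_2-\Int(M^{\zeroball}\cup M^{\slim})$ lying over a generic compact codimension-zero $B^{\edge}\subset W_2$ and satisfying a generic cutoff $\eta_{E'}\le t$ with $t$ slightly below $5\De$; its fibration map is $\eta_{p_i}$, with fiber the closed $2$-disk by Lemma \ref{lem-edgetopology}, so, $B^{\edge}$ being a $1$-manifold, each component is $I\times D^2$ or $S^1\times D^2$ — item (4). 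Finally $M^{\twostratum}$ is the closure of $M-(M^{\zeroball}\cup M^{\slim}\cup M^{\edge})$, which is the preimage under $\e$ of a compact codimension-zero region in $W_1$; its fibration is $\e$ itself, with circle fibers by Lemma \ref{2fibration} — item (5). These four domains cover $M$ because every point lies in the $0$-, $1$- or $2$-stratum and the chosen regions contain the respective ball covers of Lemmas \ref{mult2}, \ref{lem-edgeballcover}, \ref{lem-slimballselection}, \ref{0-ballseparation} and \ref{lem-1stratumslimoredge}.

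The substantive step is the compatibility and corner statements (1), (6), (7). The key feature, built into the construction of $\e$, is that all the fibrations are restrictions of the single map $\e$, since each $\pi_j\circ\e$ is a projection of $\e$ (using $\pi_2\circ\Psi_3=\Psi_3^{Q_2}\circ\pi_2$, and similarly for the other adjustment maps); hence on an overlap the fibration of one piece is automatically a sub-structure of that of the other. Concretely, on $M^{\edge}\cap M^{\twostratum}$ one invokes Lemma \ref{localprod}(2): points with $(\eta_{p_i},\eta_{E'})\in[-10\De,10\De]\times[\frac{1}{10}\De,10\De]$ are $2$-stratum points at which $(\eta_{p_i},\eta_{E'})$ is a $2$-adapted coordinate, so the interface surface $\{\eta_{E'}=t\}\cap M^{\edge}$ is exactly the boundary $S^1$-bundle $\partial D^2\to B^{\edge}$, and this circle fibration coincides with the restriction of the $S^1$-fibration of $M^{\twostratum}$ (whose base coordinates there are $(\eta_{p_i},\eta_{E'})$); this is item (7). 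For item (6), as one moves along $B^{\edge}$ until $|\eta_{p_i}|$ reaches its cutoff, Lemma \ref{lem-smalleretai} shows that either the base extends into another edge ball or the point has become slim or a $0$-stratum point, so by Lemma \ref{lem-1stratumslimoredge} the base of each component of $M^{\edge}$ terminates only at points whose fibers lie in $\partial M^{\slim}$ or $\partial M^{\zeroball}$. For item (1), $M^{\zeroball}$ and $M^{\slim}$ have smooth boundary by construction, and corners occur only in $M^{\edge}$ and $M^{\twostratum}$, along the transverse intersections of their at most two natural boundary faces (the ``boundary-of-base'' face and the ``boundary-of-fiber'' face), so the generic choice of all cutoff levels makes each corner point locally $\R^{3-k}\times[0,\infty)^k$ with $k\le 2$. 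I expect this last stage to be the main obstacle: choosing all the cutoff levels simultaneously so that every pair of boundary faces is transverse, and then verifying globally — not merely locally — that each submersion restricts to an honest fiber bundle over its compact base and that the induced fibrations on the interfaces match on the nose everywhere. The local pictures are all furnished by the earlier lemmas; organizing them into a single coherent decomposition with the asserted corner behavior is the bookkeeping-heavy heart of the proof.
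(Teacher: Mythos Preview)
Your overall plan---peel off the pieces in the order $0$-stratum, slim, edge, $2$-stratum, using the nested projections $\pi_j\circ\e$---is exactly the paper's strategy, and you correctly identify the heart of the compatibility argument: since the fibration maps are successive projections of the single map $\e$, a fiber of $\pi_j\circ\e$ is automatically a union of fibers of $\pi_{j'}\circ\e$ for $j'<j$. But you then fail to apply this principle to $M^{\zeroball}$. You set $M^{\zeroball}_i=\eta_{p_i}^{-1}([0,\rho_i])$, whereas the paper defines it via the map $\e$ itself:
\[
M^{\zeroball}_i = B(p_i,.35R_i)\;\cup\;\e^{-1}\{x\in H: x_i''\ge .9R_i,\ x_i'/x_i''\le .4\}.
\]
The point is that the $H_i$-component (for $i\in I_{\zeroball}$) lies in $Q_4\subset Q_3\subset Q_2$, so a condition on $(\e(p))_i$ is constant along every fiber of $\pi_3\circ\e$ and of $\pi_2\circ\e$; thus $\partial M^{\zeroball}$ is literally a union of fibers of the slim and edge fibrations, and intersecting with $M_1=M\setminus\Int(M^{\zeroball})$ does not clip any fibers. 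With your definition, $\partial M^{\zeroball}=\{\eta_{p_i}=\rho_i\}$ is only \emph{approximately} such a level set (recall $\e$ differs from $\e^0$ by the adjustments $\Psi_1,\Psi_2,\Psi_3$), so the fiber of $\pi_3\circ\e$ through a point of $\partial M^{\zeroball}$ will cross that surface rather than lie in it. Consequently your $M^{\slim}=(\pi_3\circ\e)^{-1}(B^{\slim})\cap M_1$ is no longer a fiber bundle over $B^{\slim}$---the $S^2$ or $T^2$ fibers get sliced---and item (6) fails for the same reason.

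Your appeal to generic transversality does not repair this: transversality of two hypersurfaces says nothing about one being a union of fibers of a submersion defining the other. What is needed is precisely that every boundary hypersurface be cut out by the same map $\e$ (more exactly, by its $Q_j$-components), and the paper achieves this by defining \emph{all four} regions through conditions on $\e(p)$. Once that is done, no Sard argument is required: the transversality and corner structure follow directly from the submersion statements in Proposition~\ref{prop-adjustment}, and the compatibility in (6) and (7) is automatic from the inclusions $Q_4\subset Q_3\subset Q_2\subset Q_1$. The verification that $M^{\zeroball}_i$ so defined is still diffeomorphic to an $\eta_{p_i}$-sublevel set (hence obeys the list in (2)) is then done separately via the isotopy Lemma~\ref{lem-isotopic1}, interpolating between the $\e$-based cutoff and $\eta_{p_i}$.
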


To prove the proposition, we show that the submersions
identified in Proposition \ref{prop-adjustment}
become fibrations, when 
restricted to appropriate subsets. 
Using this, we remove fibered regions around successive
strata in the following order: $0$-stratum, slim stratum,
the edge region and the $2$-stratum.   The compatibility
of the fibrations is automatic from the compatibility of the
various projection maps $\pi_j$, for $j\in \{1,2,3,4\}$.

\subsection{The definition of  $M^{\zeroball}$}
\label{subsecdefzeroball}

For each $i\in I_{\zeroball}$, put 
\begin{equation} \label{defzeroball}
M^{\zeroball}_i = 
B \left( p_i, .35 R_i \right) \cup \e^{-1} 
\left\{ x\in H \: : \: x_i'' \ge .9 R_i \,,\;
\frac{x_i'}{x_i''}\leq \frac{4}{10} \right\}\,.
\end{equation}

\begin{lemma} \label{zeroballworks}
Under the constraints $\varsigma_{\zeroball} < 
\overline{\varsigma}_{\zeroball}$ and $c_{\adjust} < \overline{c}_{\adjust}$,
$\{M^{\zeroball}_i\}_{i\in I_{\zeroball}}$
is a disjoint collection
and each $M^{\zeroball}_i$ is a compact manifold with boundary,
which is diffeomorphic
to one of the possibilities in Proposition 
\ref{prop-decompositionprop}(2).
\end{lemma}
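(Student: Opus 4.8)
The plan is to analyze each $M^{\zeroball}_i$ separately, showing first that it is diffeomorphic to $N_{p_i}$ (or to a suitable compact model), and second that the collection is disjoint. For the first part, recall from Lemma \ref{lemgoodannulus} and Lemma \ref{lem-existsetap0stratum} that $(\frac{1}{r^0_{p_i}}M, p_i)$ is $C^K$-close to the nonnegatively curved manifold $N_{p_i}$, that $d_{p_i}$ has no critical points in $A(p_i, \frac{1}{10}, 10)$, and that the smoothed radial function $\eta_{p_i}$ satisfies $\|\eta_{p_i} - d_{p_i}\|_\infty < \varsigma_{\zeroball}$, is critical-point-free on $\eta_{p_i}^{-1}[\frac{2}{10}, 2]$, and has sublevel sets diffeomorphic to $N_{p_i}$ (when $N_{p_i}$ is compact) or to the closed disk bundle of the soul (when $N_{p_i}$ is noncompact — which by Lemma \ref{0-ballseparation}(\ref{item-atmostoneend}) has at most one end). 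First I would identify the set $\e^{-1}\{x : x_i'' \ge .9 R_i, \: \frac{x_i'}{x_i''} \le \frac{4}{10}\}$ with a sublevel set of $\eta_{p_i}$: by Lemma \ref{lem-zerosstratumestimates} the only nonzero component of $\pi_4 \circ \e^0$ in the region $\{\eta_{p_i} \in [\frac{3}{10}, \frac{8}{10}]\}$ is $\e^0_i = (R_i \eta_{p_i}, R_i)$, so there $x_i'' = R_i$ and $x_i'/x_i'' = \eta_{p_i}$; combined with the $C^0$- and $C^1$-smallness of $\e - \e^0$ from Proposition \ref{prop-adjustment}(1) (with $c_{\adjust}$ small) and the Lipschitz bound on $\r$, the condition $x_i'' \ge .9 R_i$, $\frac{x_i'}{x_i''} \le \frac{4}{10}$ carves out, up to a small perturbation, the set $\eta_{p_i}^{-1}[0, \rho]$ for some $\rho$ near $\frac{4}{10}$, intersected appropriately with $B(p_i, .35 R_i)$. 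Since $\eta_{p_i}$ has no critical points for $\eta_{p_i} \in [\frac{2}{10}, 2]$ and $B(p_i, .35 R_i) \subset \frac{1}{r^0_{p_i}}M$ contains $\eta_{p_i}^{-1}[0, \frac{2}{10}]$ once $\varsigma_{\zeroball}$ is small, the union in \eqref{defzeroball} is exactly a sublevel set $\eta_{p_i}^{-1}[0,\rho']$ with $\rho' \in [\frac{2}{10}, 2]$, hence a compact manifold with smooth boundary. By Lemma \ref{lem-existsetap0stratum}(4) it is diffeomorphic to $N_{p_i}$ or to the disk bundle of the soul of $N_{p_i}$; by Lemma \ref{topology}, the soul theorem, Hamilton's classification, and the one-end restriction, the possibilities are precisely those listed in Proposition \ref{prop-decompositionprop}(2): $S^1 \times S^2$, $S^1 \times_{\Z_2} S^2 = \R P^3 \# \R P^3$, $T^3/\Gamma$, $S^3/\Gamma$, $S^1 \times D^2$, the twisted $I$-bundle over $\R P^2$, or the twisted $I$-bundle over the Klein bottle.

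For disjointness, I would use the separation built into Lemma \ref{0-ballseparation}(\ref{item-disjoint}): the balls $\{B(p_i, r^0_{p_i})\}$ are disjoint. The set $M^{\zeroball}_i$ is contained in $B(p_i, r^0_{p_i})$ — indeed $B(p_i, .35 R_i) = B(p_i, .35 r^0_{p_i})$ is, and the $\e$-preimage piece lies in $\eta_{p_i}^{-1}[0, \text{(something} < 1)]$, hence within $d_{p_i}^{-1}[0, 1 - \varsigma_{\zeroball}] \subset B(p_i, r^0_{p_i})$ in the rescaled metric. Since the containing balls are disjoint, so are the $M^{\zeroball}_i$. One subtlety: I must confirm the $\e$-preimage piece is genuinely contained in the annular region where Lemma \ref{lem-zerosstratumestimates} applies (i.e.\ where $\eta_{p_i} \le \frac{8}{10}$), and that the ``cap'' $B(p_i, .35 R_i)$ glues smoothly onto it; this follows because on the overlap $\{\eta_{p_i} \in [\frac{3}{10}, \frac{8}{10}]\}$ the $\e$-description reduces to an $\eta_{p_i}$-sublevel-set description, so both pieces are sublevel sets of the same smooth function with no critical value in the relevant range.

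\textbf{Main obstacle.} The principal technical point is tracking the perturbation $\e - \e^0$ carefully enough to guarantee that $\e^{-1}\{x_i'' \ge .9 R_i, \frac{x_i'}{x_i''} \le \frac{4}{10}\}$ is still exactly a regular sublevel set of a function with the topology from Lemma \ref{lem-existsetap0stratum}(4), and not something with spurious boundary components or critical behavior introduced near $x_i'' = .9 R_i$ or $\frac{x_i'}{x_i''} = \frac{4}{10}$. This requires knowing that in the relevant region $\e$ has no other nonzero $H_j$-components interacting (Lemma \ref{lem-zerosstratumestimates} for $\e^0$, and smallness of subsequent adjustments $\Psi_1, \Psi_2, \Psi_3$ there — which holds because $\psi_1, \psi_2, \psi_3$ vanish on the deep $0$-stratum by construction of their supports), and that $D\e$ restricted to the level sets $\{x_i'/x_i'' = \text{const}\}$ is transverse — which follows from the non-criticality of $\eta_{p_i}$ on $[\frac{2}{10},2]$ together with the $C^1$-estimate $\|D\e - D\e^0\| < c_{\adjust}$. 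Choosing $\varsigma_{\zeroball}$ and $c_{\adjust}$ small in terms of the (uniformly controlled, by the compactness in Lemma \ref{lemgoodannulus}) geometry of the model manifolds $N_{p_i}$ closes the argument.
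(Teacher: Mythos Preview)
Your approach is essentially the same as the paper's: contain each $M^{\zeroball}_i$ in $B(p_i, r^0_{p_i})$ to get disjointness from Lemma~\ref{0-ballseparation}, and identify the topology via Lemma~\ref{lem-existsetap0stratum}(4). The one point that needs tightening is your assertion that ``the union in \eqref{defzeroball} is exactly a sublevel set $\eta_{p_i}^{-1}[0,\rho']$.'' This is not literally true: on the annulus the defining condition is $\frac{x_i'}{x_i''}\le \frac{4}{10}$ with $x=\e(p)$, which equals $\eta_{p_i}(p)$ only for $\e^0$, not for the adjusted $\e$. After the perturbation the set is a sublevel set of a function $f^0$ that is merely $C^1$-close to $\eta_{p_i}$, not equal to it; so you cannot read off the topology directly from Lemma~\ref{lem-existsetap0stratum}(4).

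The paper closes this gap by an explicit interpolation: it extends $p\mapsto \frac{(\e(p))_i'}{(\e(p))_i''}$ to a global smooth $f^0$ (with $f^0\le .39$ on $B(p_i,.35R_i)$ and $f^0\ge .41$ outside $B(p_i,.5R_i)$), sets $f^1=\eta_{p_i}$, and applies the isotopy lemma (Lemma~\ref{lem-isotopic1}) to the homotopy $f^t=(1-t)f^0+t f^1$, using the absence of critical points of $d_{p_i}$ in $A(p_i,\frac{1}{100}r^0_{p_i},r^0_{p_i})$ to guarantee transversality at the level $.4$ for all $t$. Your ``Main obstacle'' paragraph has exactly the right ingredients (transversality plus the $C^1$-bound $\|D\e-D\e^0\|<c_{\adjust}$), so the fix is just to replace ``is exactly a sublevel set of $\eta_{p_i}$'' by ``is the sublevel set of a function $C^1$-close to $\eta_{p_i}$, hence diffeomorphic to $\eta_{p_i}^{-1}[0,.4]$ by Lemma~\ref{lem-isotopic1}.''
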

\begin{proof}
Note that
\begin{equation}
(\e^0)^{-1} 
\left\{ x\in H \: : \: x_i'' \ge .9 R_i \,,\;
\frac{x_i'}{x_i''}\leq \frac{4}{10} \right\} \: = \:
\{ p \in M \: : \: \zeta_i(p) \ge .9 \, ,\; \eta_i(p) \le .4\}.
\end{equation}
In particular, if $\varsigma_{\zeroball}$ is sufficiently small then
this set contains $A(p_i, .31 R_i, .39 R_i)$ and is contained in
$A(p_i, .29R_i, .41 R_i)$. Then 
if $c_{\adjust}$ is sufficiently small,
$\e^{-1} 
\left\{ x\in H\mid x_i'' \ge .9 R_i \,,\;
\frac{x_i'}{x_i''}\leq \frac{4}{10} \right\}$ contains
$A(p_i, .32R_i, .38R_i)$ and is contained in
$A(p_i, .28R_i, .42R_i)$. 

In particular, $B(p_i, .38R_i) \subset M_i^{\zeroball} \subset
B(p_i, .42R_i)$. It now follows from Lemma \ref{0-ballseparation}
that $\{M^{\zeroball}_i\}_{i\in I_{\zeroball}}$ are disjoint.

To characterize the topology of $M_i^{\zeroball}$, if
$c_{\adjust}$ is sufficiently small then we can find a smooth
function $f^0 : M \rightarrow \R$ such that \\
1. If $p \in A(p_i, .3R_i, .5R_i)$ and $x = \e(p)$ then
$f^0(p) = \frac{x_i'}{x_i''}$. \\
2. If $p \in B(p_i, .35R_i)$ then $f^0(p) \le .39$. \\
3. If $p \notin B(p_i, .5R_i)$ then $f^0(p) \ge .41$.

Put $f^1 = \eta_i$ and define $F : M \times [0,1] \rightarrow \R$ by
$F(p,t) = (1-t) f^0(p) + t f^1(p)$. Put $f^t(p) = F(p,t)$ and 
$X = (-\infty, .4]$.
If $c_{\adjust}$ and $\varsigma_{\zeroball}$ are sufficiently 
small then Lemma \ref{lemgoodannulus} implies that
for each $t \in [0,1]$, $f^t$ is transverse to $\partial X =
\{.4\}$. By Lemma \ref{lem-isotopic1}, 
$M_i^{\zeroball} = (f^0)^{-1}(X)$ is
diffeomorphic to $(f^1)^{-1}(X)$. By Lemma \ref{lem-existsetap0stratum},
the latter
is diffeomorphic to one of the possibilities in Proposition 
\ref{prop-decompositionprop}(2). This proves the lemma.
\end{proof}

We let $M^{\zeroball}=\bigcup_{i\in I_{\zeroball}}\;M^{\zeroball}_i$, and put
$M_1=M\setminus\Int(M^{\zeroball})$.  Thus $M^{\zeroball}$ and $M_1$ are smooth compact
manifolds with boundary.

\subsection{The definition of $M^{\slim}$}
\label{subsecdefslim}

We first truncate $W_3$. Put
\begin{equation}
W_3'=W_3\cap \bigcup_{i\in I_{\slim}}
\left\{ x\in Q_3\mid x_i''>.9R_i\,,\;\left|\frac{x_i'}{x_i''}\right|
<4\cdot 10^5\De \right\}
\end{equation}
and define
$U_3'=(\pi_3\circ\e)^{-1}(W_3')$.

\begin{lemma} \label{slimworks}
Under the constraints $\varsigma_{\slim} < 
\overline{\varsigma}_{\slim}(\Delta)$
and $c_{\adjust} < \overline{c}_{\adjust}$, we have
\begin{enumerate}
\item
$\bigcup_{i\in I_{\slim}}\;\{|\eta_i|\leq 3.5\cdot 10^5\De\}  
\subset U_3'\subset U_3$, 
where $U_3$ is as in Proposition \ref{prop-adjustment}.
\item
The restriction of $\pi_3\circ\e$ to $U_3'$ 
gives a proper submersion to $W_3'$.
In particular, it is a fibration. 
\item The fibers of $\pi_3\circ\e \: : \: U_3' \rightarrow W_3'$
are diffeomorphic to $S^2$ or $T^2$.
\item $M_1$ intersects $U_3'$ in a 
submanifold with boundary which is a union of fibers
of $\pi_3\circ\e:U_3'\ra W_3'$.
\end{enumerate}
\end{lemma}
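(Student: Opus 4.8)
The plan is to establish the four assertions in order, relying on Proposition~\ref{prop-adjustment}, the estimate~(\ref{est23}), the structural information of Lemma~\ref{lem-slimstratumestimates}, the model topology of Lemma~\ref{lem-slimtopology}, and the geometry of the $0$-stratum balls from Lemma~\ref{zeroballworks}. Recall throughout that $\e^0_i=(R_i\eta_i\zeta_i,R_i\zeta_i)$ for $i\in I_{\slim}$ and that $\|\e-\e^0\|<c_{\adjust}\,\r$ with $\|D\e-D\e^0\|<c_{\adjust}$ by~(\ref{est23}). For~(1): if $\pi_3(\e(p))\in W_3'$ there is an $i\in I_{\slim}$ with $\e(p)_i''>.9R_i$; since the $H_i$-component of $\e^0$ vanishes off $\supp\zeta_i$, this forces $\zeta_i(p)>0$, hence $p\in\supp\zeta_i\subset B(p_i,10^6\De\,\r_{p_i})$, and as $\r$ is $\Lambda$-Lipschitz with $\Lambda\De$ small, $\r_p$ lies within a fixed factor of $R_i=\r_{p_i}$. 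Because $\e^0(p)_i'/\e^0(p)_i''=\eta_i(p)$ whenever $\zeta_i(p)>0$, the ratio $\e(p)_i'/\e(p)_i''$ differs from $\eta_i(p)$ by $O(c_{\adjust})$; combined with $|\e(p)_i'/\e(p)_i''|<4\cdot10^5\De$ this gives $|\eta_i(p)|<5\cdot10^5\De$, i.e. $p\in U_3$, so $U_3'\subset U_3$. Conversely, if $|\eta_i(p)|\le 3.5\cdot10^5\De$ then $\zeta_i(p)=1$ and $\e^0_i(p)=(R_i\eta_i(p),R_i)$, so $\e(p)_i''>.9R_i$ and $|\e(p)_i'/\e(p)_i''|<4\cdot10^5\De$; moreover $p\in U_3$, so $\pi_3(\e(p))\in W_3$ by Proposition~\ref{prop-adjustment}(2), and combining with the truncation inequalities, $\pi_3(\e(p))\in W_3'$, i.e. $p\in U_3'$.

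For~(2): by definition $U_3'=(\pi_3\circ\e)^{-1}(W_3')$, and by~(1) together with Proposition~\ref{prop-adjustment}(2) the map $\pi_3\circ\e:U_3'\ra W_3'$ is a submersion; it is proper because $M$ is compact, so for compact $K\subset W_3'$ the set $(\pi_3\circ\e)^{-1}(K)$ is closed in $M$, hence compact, and it is contained in $U_3'$. A proper submersion is a locally trivial fibre bundle over each connected component of its base that it meets, which gives~(2). For~(3), fix $i\in I_{\slim}$; on $\{|\eta_i|\le 3\cdot10^5\De\}$, which lies in $U_3'$ by~(1), Lemma~\ref{lem-slimstratumestimates}(3) and~(\ref{est23}) show that $\pi_3\circ\e$ is $C^1$-close to $g_i\circ\eta_i$, where $g_i:\R\ra Q_3$, $g_i(t)=(R_it,\widehat{\e}^0_i(t))$, is a smooth embedding on the relevant interval since its first coordinate has derivative $R_i$. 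In particular $p\mapsto\frac1{R_i}\e(p)_i'$ is $C^1$-close to $\eta_i$ there, and since $\eta_i$ is a submersion on this region (as in the proof of Lemma~\ref{lem-slimtopology}), so is $\frac1{R_i}\e(\cdot)_i'$; both are proper over a small interval about $0$. Interpolating linearly and invoking Ehresmann's theorem as in the proof of Lemma~\ref{zeroballworks} (via Lemma~\ref{lem-isotopic1}), the fibre $\{\frac1{R_i}\e(\cdot)_i'=0\}$ near $p_i$ is diffeomorphic to $\eta_i^{-1}(0)$, which is $S^2$ or $T^2$ by Lemma~\ref{lem-slimtopology}. Since this fibre is a fibre of the bundle from~(2), and every component of $W_3'$ meeting the image of $\pi_3\circ\e$ contains $\e(p_i)$ for some $i$ (the connected set $\{|\eta_i|<5\cdot10^5\De\}$ contains $p_i$ and maps into a single component), every fibre is diffeomorphic to $S^2$ or $T^2$.

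For~(4), the balls $M^{\zeroball}_i$, $i\in I_{\zeroball}$, are pairwise disjoint by Lemma~\ref{zeroballworks}, so $M_1\cap U_3'=U_3'\setminus\bigcup_{i\in I_{\zeroball}}\Int M^{\zeroball}_i$. Since $H_i\subset H_{\zeroball}\subset Q_3$ for $i\in I_{\zeroball}$, the function $h_i(p):=\e(p)_i'/\e(p)_i''$, defined where $\e(p)_i''>0$, depends only on $\pi_3(\e(p))$. Using Lemma~\ref{zeroballworks} and its proof one checks that, within $U_3'$, the open ball $B(p_i,.35R_i)$ lies in $\Int M^{\zeroball}_i$, that $\partial M^{\zeroball}_i\cap U_3'\subset\{h_i=\frac4{10},\ \e(\cdot)_i''\ge.9R_i\}$, and that on this set $\zeta_i=1$, so $h_i$ agrees with $\eta_i$ up to $O(c_{\adjust})$; since $\eta_i$ has no critical points on $\{\eta_i\approx\frac4{10}\}$ by Lemma~\ref{lem-existsetap0stratum}(4) (and Lemma~\ref{lemgoodannulus}), the value $\frac4{10}$ is regular for $h_i$. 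Hence $M^{\zeroball}_i\cap U_3'$ is a codimension-zero submanifold whose boundary is $h_i^{-1}(\frac4{10})\cap U_3'=(\pi_3\circ\e)^{-1}(\{w\in Q_3:w_i'=\frac4{10}w_i''\})\cap U_3'$, a union of fibres of $\pi_3\circ\e|_{U_3'}$; by disjointness, $M_1\cap U_3'$ is a submanifold with boundary which is a union of such fibres. The step I expect to be the main obstacle is~(3): upgrading the abstract fibre bundle of~(2) to a description with an identified fibre type requires both the $C^1$-approximation of $\pi_3\circ\e$ by $g_i\circ\eta_i$ and enough bookkeeping of the components of $W_3'$ for the computed model fibre $\eta_i^{-1}(0)$ to be representative; a secondary subtlety, in~(4), is confirming that the ball term $B(p_i,.35R_i)$ in the definition of $M^{\zeroball}_i$ contributes no spurious boundary inside $U_3'$.
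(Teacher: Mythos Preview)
Your proposal follows essentially the same route as the paper for parts~(1), (2), and~(4); in fact your properness argument in~(2) is slightly cleaner than the paper's (you use compactness of $M$ directly, while the paper re-verifies that $\overline{U}_3$ is compact via the slim $1$-stratum structure), and your treatment of~(4) is correct though more elaborate than necessary---the paper simply observes that since $H_j\subset Q_3$ for $j\in I_{\zeroball}$, the defining conditions $x_j''\ge .9R_j$, $x_j'=.4x_j''$ for $\partial M^{\zeroball}_j$ are constant along each fiber of $\pi_3\circ\e$, so $\partial M^{\zeroball}_j\cap U_3'$ is a union of fibers (indeed, being a connected $2$-manifold, a single fiber).

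There is a genuine gap in your argument for~(3), precisely at the sentence ``Since this fibre is a fibre of the bundle from~(2)''. You have computed the level set $\{\e(\cdot)_i'=0\}$ restricted to $\{|\eta_i|\le 3\cdot 10^5\De\}$ and shown it is $S^2$ or $T^2$; but this level set is the $(\pi_3\circ\e)$-preimage of $\{y\in W_3':y_i'=0\}$, which \emph{a priori} could be several points of the $1$-manifold $W_3'$, so your set could be a disjoint union of several bundle fibers rather than one. The paper closes this gap in two steps. First it localizes: given any fiber $(\pi_3\circ\e)^{-1}(x)$, one picks $i$ with $x_i''>.9R_i$ and observes that every $p'$ in that fiber has the same $H_i$-component $\e(p')_i=x_i$, which (via $c_{\adjust}$ small) forces $|\eta_i(p')|<5\cdot 10^5\De$; thus the entire fiber sits in a single region $\{|\eta_i|<5\cdot 10^5\De\}$. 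Second, having shown (as you do) that $(\pi_{H_i'}\circ\pi_3\circ\e)^{-1}(X)$ is connected for $X=\pi_{H_i'}(x)$, it deduces that $\pi_{H_i'}^{-1}(X)\cap W_3'=\{x\}$, so the $H_i'$-level set \emph{is} the bundle fiber. You should insert both steps; your own bookkeeping about components of $W_3'$ containing some $\pi_3(\e(p_i))$ is correct but does not substitute for them.
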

\begin{proof}
For a given $i \in I_{\slim}$, suppose that $p \in M$ satisfies
$|\eta_i(p)| \leq 3.5\cdot 10^5\De$. Putting
$y = (\pi_3 \circ \e^0)(p) \in Q_3$, we have $y_i'' = R_i$ and
$\left|\frac{y_i'}{y_i''}\right|
\leq 3.5 \cdot 10^5\De$. Hence if $c_{\adjust}$ is small enough then
since $\De >> 1$,
we are ensured that, putting $x = (\pi_3 \circ \e)(p) \in Q_3$, we have
$x_i''>.9R_i$ and $\left|\frac{x_i'}{x_i''}\right|
<4\cdot 10^5\De$. As $p \in U_3$, Proposition \ref{prop-adjustment}
implies that $x \in W_3$. Hence
$\bigcup_{i\in I_{\slim}}\;\{|\eta_i|\leq 3.5\cdot 10^5\De\}  
\subset U_3'$.

Now suppose that $p \in U_3'$. 
Putting $x = (\pi_3 \circ \e)(p)$, for some $i \in I_{\slim}$ we have
$x_i''>.9R_i$ and $\left|\frac{x_i'}{x_i''}\right|
<4\cdot 10^5\De$. If $c_{\adjust}$ is small enough then we are
ensured that, putting $y = (\pi_3 \circ \e^0)(p)$, we have
$y_i'' \ge .8 R_i$ and
$\left|\frac{y_i'}{y_i''}\right|
\leq 4.5 \cdot 10^5\De$. Hence $|\eta_i(p)| \le 4.5 \cdot 10^5 \Delta$.
This shows that $U_3' \subset U_3$, proving part (1) of the lemma.

By Proposition \ref{prop-adjustment}, $\pi_3 \circ \e$ is a submersion
from $U_3$ to $W_3$. 
Hence it restricts to a surjective submersion on $U_3'$.

Suppose that $K$ is a compact subset of $W_3'$. Then
$(\pi_3 \circ \e)^{-1}(K)$ is a closed subset of
$M$ which is contained in $\overline{U}_3 = \bigcup_{i \in I_{\slim}}
\{|\eta_i| \le 5 \cdot 10^5 \Delta\}$. 
As $\{p_i\}_{i \in I_{\slim}}$ are in the slim $1$-stratum,
it follows from the definition of adapted coordinates that
$\{|\eta_i| \le 5\cdot 10^5 \Delta\}$ 
is a compact subset of $M$; cf. the proof of
Lemma \ref{lem-slimtopology}. Thus the restriction of $\pi_3 \circ \e$ 
to $U_3'$ is a proper submersion. This proves part (2) of the lemma.

To prove part (3) of the lemma,
given $x \in W_3'$, suppose that 
$p \in U_3'$ satisfies $(\pi_3 \circ \e)(p) = x$.
Choose $i \in I_{\slim}$ so that
$|\eta_i(p)| \le 4.5 \cdot 10^5 \Delta$. If $c_{\adjust}$ is sufficiently 
small then by looking at
the components in $H_i$, one sees that for any $p' \in U_3'$ satisfying
$(\pi_3 \circ \e)(p') = x$, we have
$p^\prime \in \{|\eta_i| < 5\cdot 10^5 \Delta\}$. Thus to determine
the topology of the fiber, we can just consider the restriction of
$\pi_3 \circ \e$ to $\{|\eta_i| < 5\cdot 10^5 \Delta\}$.

Let $\pi_{H_i'} : Q_3 \rightarrow H_i'$ be orthogonal
projection and put $X = \pi_{H_i'}(x) \in H_i'$.
As the restriction of 
$\pi_{H_i'} \circ \pi_3 \circ \e^0$ to $\{|\eta_i| < 5 \cdot 10^5 \Delta\}$ 
equals $\eta_i$, it follows that $\pi_{H_i'} \circ \pi_3 \circ \e^0$
is transverse there to $X$. 
By Lemma \ref{lem-slimtopology},
$\{|\eta_i| < 5 \cdot 10^5 \Delta\} \cap
(\pi_{H_i'} \circ \pi_3 \circ \e^0)^{-1}(X)$ 
is diffeomorphic to $S^2$ or $T^2$.

Consider the restriction of $(\pi_{H_i'} \circ \pi_3 \circ \e)$ to
$\{|\eta_i| < 5 \cdot 10^5 \Delta\}$.  
Proposition \ref{prop-adjustment} and Lemma \ref{lem-isotopic2}
imply that if $c_{\adjust}$ is sufficiently small then the fiber
$\{|\eta_i| < 5 \cdot 10^5 \Delta\} \cap
(\pi_{H_i'} \circ \pi_3 \circ \e)^{-1}(X)$ is
diffeomorphic to $S^2$ or $T^2$. 
In particular, it is connected. 
Now $(\pi_{H_i'} \circ \pi_3 \circ \e)^{-1}(X)$ is the preimage,
under $\pi_3 \circ \e \: : \: U_3' \rightarrow W_3'$, of the
preimage of $X$ under $\pi_{H_i'} \: : \: W_3' \rightarrow
H_i'$. From connectedness of the fiber, the preimage of
$X$ under $\pi_{H_i'} \: : \: W_3' \rightarrow
H_i'$ must just be $x$.
Hence $(\pi_3 \circ \e)^{-1}(x)$ is diffeomorphic to $S^2$ or $T^2$.
This proves part (3) of the lemma.

To prove part (4) of the lemma, given $j \in I_{\zeroball}$, 
suppose that $p \in \partial M^{\zeroball}_j$. If
$x = \e(p)$ then $x_j'' \ge .9 R_j$ and $x_j' = .4 x_j''$.
Suppose that $p \in U_3'$. If $q \in U_3'$ is a point in the same fiber of
$\pi_3 \circ \e : U_3' \rightarrow W_3'$ as $p$, put $y = \e(q) \in H$. As 
$\pi_3(x) = \pi_3(y)$, we have
$y_j'' \ge .9 R_j$ and $y_j' = .4 y_j''$. Thus $q \in \partial
M^{\zeroball}_j$. Hence
$\partial M^{\zeroball}_j$ is a union of fibers of
$\pi_3 \circ \e : U_3' \rightarrow W_3'$. 
In fact, since $\partial M^{\zeroball}_j$ is a 
connected $2$-manifold, it is a single fiber of $\pi_3 \circ \e$.
This proves part (4) of the lemma.
\end{proof}

Let $W_3''\subset W_3'$ be a compact $1$-dimensional manifold
with boundary such that $(\pi_3\circ\e)^{-1}(W_3'')$ contains
$\bigcup_{i\in I_{\slim}}\;\{|\eta_i|\leq 3.5\cdot 10^5\De\}$,
and put
$M^{\slim}=M_1\cap (\pi_3\circ\e)^{-1}(W_3'')$.  We endow
$M^{\slim}$ with the fibration induced by $\pi_3\circ\e$.

Put $M_2=M_1\setminus \Int(M^{\slim})$.

\subsection{The definition of $M^{\edge}$}
\label{subsecdefedge}

We first truncate $W_2$. Put
\begin{equation}
W_2'=W_2\cap \bigcup_{i\in I_{\edge}} \{x\in Q_2\mid
x_i'' \ge .9R_i\,,\;\left|\frac{x_i'}{x_i''}\right|<4\De\}
\end{equation}
and
\begin{equation}
U_2'=(\pi_2\circ\e)^{-1}(W_2')\cap
\left( \{ \eta_{E'} \le .35\Delta \} \cup 
\e^{-1} \{x\in H\mid x_{\r}>0\,,\;
\frac{x_{E'}}{x_{\r}}\leq 4\De\}\right)\,.
\end{equation}

\begin{lemma} \label{edgeworks}
Under the constraints $\La < \overline{\La}(\Delta)$, $\varsigma_{\edge} < 
\overline{\varsigma}_{\edge}(\Delta)$
and $c_{\adjust} < \overline{c}_{\adjust}$, we have
\begin{enumerate}
\item
$\bigcup_{i\in I_{\edge}}\;
\{|\eta_i|\leq 3.5\De\,,\;|\eta_{E'}|\leq 3.5\De\}  
\subset U_2'\subset 
U_2$, where $U_2$ is as in Proposition \ref{prop-adjustment}.
\item
The restriction of $\pi_2 \circ \e$ to $U_2'$ 
gives a proper submersion to $W_2'$.
In particular, it is a fibration.
\item The fibers of $\pi_2\circ \e \: : \: U_2' \rightarrow  W_2'$
are diffeomorphic to $D^2$. 
\item $M_2$ intersects $U_2'$ in a submanifold with corners 
which is a union of fibers
of $\pi_2\circ\e:U_2'\ra W_2'$.
\end{enumerate}
\end{lemma}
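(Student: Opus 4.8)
The plan is to mirror the proof of Lemma \ref{slimworks}, with the slim-stratum data replaced by the edge data of Section \ref{sec-edgepoints} and with the local topology of the edge region (Lemma \ref{lem-edgetopology}) playing the role of Lemma \ref{lem-slimtopology}. For part (1) I would establish both inclusions by comparing the $H$-coordinates of $\e^0$ and of $\e$ on the relevant sets, using that $\e$ is $C^1$-close to $\e^0$ by \eqref{est12} and that $\De\gg 1$. If $p\in\{|\eta_i|\le 3.5\De,\ \eta_{E'}\le 3.5\De\}$ for some $i\in I_{\edge}$, then $\zeta_i(p)=1$ by the definition \eqref{eqn-edgezetap} of $\zeta_i$, so the $H_i$-part of $(\pi_2\circ\e^0)(p)$ is $(R_i\eta_i(p),R_i)$ with $|x_i'|/x_i''\le 3.5\De$; since $c_{\adjust}$ is small, the weaker inequalities $x_i''>.9R_i$, $|x_i'|/x_i''<4\De$ persist for $x=(\pi_2\circ\e)(p)$, and Proposition \ref{prop-adjustment} gives $x\in W_2$, hence $x\in W_2'$. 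The $E'$-clause in the definition of $U_2'$ is checked using Lemma \ref{lem-rhoerhoe'} to control $\eta_{E'}$ by $d_{E'}/\r$ together with $\zeta_{E'}\le 1$, again absorbing the $\e^0\leadsto\e$ discrepancy into $c_{\adjust}$. The reverse inclusion $U_2'\subset U_2$ is the same estimate run backwards.

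For part (2), $\pi_2\circ\e$ is a submersion on $U_2\supset U_2'$ by Proposition \ref{prop-adjustment}, so its restriction to $U_2'=(\pi_2\circ\e)^{-1}(W_2')\cap(\dots)$ is a submersion onto $W_2'$, surjective by part (1). For properness, if $K\subset W_2'$ is compact then $(\pi_2\circ\e)^{-1}(K)$ is closed in the compact manifold $M$, and the coordinate comparison of part (1) shows it is contained in $\bigcup_{i\in I_{\edge}}\{|\eta_i|\le 5\De,\ \eta_{E'}\le 5\De\}\subset \overline{U}_2$, so it does not escape $U_2'$; hence the restricted map is proper. Ehresmann's fibration theorem then upgrades it to a fibration.

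For part (3), fix $x\in W_2'$ and $i\in I_{\edge}$ with $x_i''\ge .9R_i$. As in Lemma \ref{slimworks}(3), if $c_{\adjust}$ is small then every $p$ in the fiber over $x$ lies in $\{|\eta_i|<5\De,\ \eta_{E'}<5\De\}$, so the fiber may be analyzed using the restriction of $\e$ to that chart; there $\zeta_i\equiv 1$, so $\pi_{H_i'}\circ\pi_2\circ\e^0=R_i\eta_i$. Intersecting with the $E'$-clause of $U_2'$, the $\e^0$-model of the fiber is a level set of $\eta_i$ inside an appropriate sublevel set of $\eta_{E'}$, which is a closed $2$-disk by Lemma \ref{lem-edgetopology}; passing from $\e^0$ to $\e$ via the isotopy lemma \ref{lem-isotopic1}, using \eqref{est12}, shows the true fiber of $\pi_2\circ\e$ is again $D^2$, and its connectedness forces the preimage of $\pi_{H_i'}(x)$ under $\pi_{H_i'}:W_2'\to H_i'$ to be the single point $x$, as in Lemma \ref{slimworks}(3). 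Part (4) then goes exactly as in Lemma \ref{slimworks}(4): each boundary face of $M^{\zeroball}_j$ ($j\in I_{\zeroball}$) and of $M^{\slim}$ is cut out by conditions on $\pi_2$- or $W_3$-data that are constant along the fibers of $\pi_2\circ\e$, so $M_2\cap U_2'$ is a union of such fibers, the corners being inherited from the interfaces already present in $M_2=M_1\setminus\Int(M^{\slim})$.

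The step I expect to be the main obstacle is the bookkeeping in part (3): one must track which summands of $H$ are moved by $\Psi_1,\Psi_2,\Psi_3$ in order to legitimately reduce the fiber computation to the single chart governed by $\eta_i$ and $\eta_{E'}$, and then carefully match the sublevel-set structure of $\eta_{E'}$ entering the definition of $U_2'$ with the half-plane picture underlying Lemma \ref{lem-edgetopology}, so that the model fiber is genuinely a $D^2$ rather than some larger piece of the edge region.
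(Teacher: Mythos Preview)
Your approach is essentially the same as the paper's: the paper's own proof of this lemma reads in full, ``The proof is similar to that of Lemmas \ref{zeroballworks} and \ref{slimworks}. We omit the details.'' Your outline carries out precisely that template, with Lemma \ref{lem-edgetopology} standing in for Lemma \ref{lem-slimtopology} to identify the fiber, and the observation that $Q_4\subset Q_3\subset Q_2$ makes the $M^{\zeroball}$- and $M^{\slim}$-boundary conditions constant along $\pi_2\circ\e$-fibers for part (4). One small citation slip: the $C^1$-closeness of $\e$ to $\e^0$ should be quoted from Proposition \ref{prop-adjustment}(1) (equivalently \eqref{est23} with $c_{\adjust}=c_{\slim}$), not \eqref{est12}, which only compares $\e^2$ to $\e^0$.
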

\begin{proof}
The proof is similar to that of Lemmas \ref{zeroballworks} and 
\ref{slimworks}. We omit the details.
\end{proof}

\begin{lemma}
Under the constraint $c_{\adjust} < \overline{c}_{\adjust}$,
$M_2\cap U_2'$ is compact. 
\end{lemma}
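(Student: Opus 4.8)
The plan is to deduce compactness of $M_2\cap U_2'$ from compactness of $M$, by showing that $M_2\cap U_2'$ is a closed subset of $M$. First I would note that $M_2=M_1\setminus\Int(M^{\zeroball})$ is closed, and that the set $V:=\{\eta_{E'}\le .35\De\}\cup \e^{-1}\{x\in H\mid x_\r>0,\ x_{E'}/x_\r\le 4\De\}$ occurring in the definition of $U_2'$ is closed as well (recall $x_\r=\r>0$ everywhere, so the condition $x_\r>0$ is vacuous). Thus the only potential obstruction to closedness is the remaining factor $(\pi_2\circ\e)^{-1}(W_2')$, which need not be closed because the truncation cutting $W_2'$ out of $W_2$ involves the open inequalities $|x_i'/x_i''|<4\De$. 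By Lemma \ref{edgeworks}(2) the map $\pi_2\circ\e\colon U_2'\to W_2'$ is a proper submersion, and by Lemma \ref{edgeworks}(4) the subset $M_2\cap U_2'$ is a union of fibers of this map; hence it suffices to prove that its image $(\pi_2\circ\e)(M_2\cap U_2')$ has compact closure in $W_2'$ — in other words, that $M_2\cap U_2'$ does not escape across the truncation boundary of $W_2'$.

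To prove this I would argue by contradiction: suppose $p_k\in M_2\cap U_2'$ with $p_k\to p_\infty$, so that $p_\infty\in M_2\cap V$ (both closed) but $x_\infty:=(\pi_2\circ\e)(p_\infty)\in\overline{W_2'}\setminus W_2'$. Since $I_{\edge}$ is finite I can pass to a subsequence with a fixed index $i\in I_{\edge}$ realizing membership in $W_2'$, i.e. $(x_k)_i''\ge .9R_i$ and $|(x_k)_i'/(x_k)_i''|<4\De$; taking limits gives $(x_\infty)_i''\ge .9R_i$ and $|(x_\infty)_i'/(x_\infty)_i''|\le 4\De$, and since $x_\infty\notin W_2'$ the latter must be an equality. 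Using Proposition \ref{prop-adjustment}(1) (so $\|\e-\e^0\|<c_{\adjust}\r$, with $R_i$ comparable to $\r(p_\infty)$ once $\La$ is small) together with the explicit formula $\e^0_i=(R_i\eta_i\zeta_i,R_i\zeta_i)$, I would convert these into geometric estimates at $p_\infty$: $\zeta_i(p_\infty)$ is bounded below by a definite positive constant (so $p_\infty\in\supp\zeta_i\subset B(p_i,10\De\,\r_{p_i})$, where $\eta_{p_i}$ is defined), and $|\eta_{p_i}(p_\infty)|=4\De+O(c_{\adjust})<5\De$. Unwinding the definitions \eqref{eqn-edgezetap}, \eqref{additional1}, \eqref{additional2} of the cutoffs (the lower bound on $\zeta_i(p_\infty)$ forces $\zeta_{\edge}(p_\infty)$ to be bounded below, hence $\zeta_{E'}(p_\infty)=\zeta_{\edge}(p_\infty)$ in the relevant range), the hypothesis $p_\infty\in V$ then yields $\eta_{E'}(p_\infty)<5\De$. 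Thus $p_\infty$ satisfies the hypotheses of Lemma \ref{lem-smalleretai} with $q=p_\infty$.

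Lemma \ref{lem-smalleretai} gives a dichotomy. In the first alternative there is $j\in I_{\edge}$ with $p_\infty\in B(p_j,10\De\,\r_{p_j})$ and $|\eta_{p_j}(p_\infty)|<2\De$; together with $\eta_{E'}(p_\infty)<5\De$ this makes $\zeta_j(p_\infty)=1$, and then $\e^0_j=(R_j\eta_j\zeta_j,R_j\zeta_j)$ and Proposition \ref{prop-adjustment}(1) force $(x_\infty)_j''>.9R_j$ and $|(x_\infty)_j'/(x_\infty)_j''|<2\De+O(c_{\adjust})<4\De$; checking that the remaining inequalities cutting $W_2$ out of $\Psi_3^{Q_2}(W_2^0)$ hold with room to spare, one concludes $x_\infty\in W_2'$, contradicting $x_\infty\notin W_2'$. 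In the second alternative $p_i$ lies in the slim $1$-stratum; then $p_i$ is covered by a slim ball (Lemma \ref{lem-slimballselection}), and $p_\infty\in B(p_i,10\De\,\r_{p_i})$ lies deep in $\bigcup_{j\in I_{\slim}}\{|\eta_j|\le 3.5\cdot 10^5\De\}$, with $(\pi_3\circ\e)(p_\infty)$ well inside $W_3''$; verifying in addition that $p_\infty\notin\partial M^{\zeroball}$, one gets $p_\infty\in\Int(M^{\slim})$, contradicting $p_\infty\in M_2=M\setminus(\Int M^{\zeroball}\cup\Int M^{\slim})$. Either branch yields a contradiction, which proves the claim, and hence $M_2\cap U_2'$ is compact.

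The step I expect to be the main obstacle is the slim alternative: one must show that the limit point $p_\infty$ really lies in the \emph{interior} of $M^{\slim}$ and not on an interface such as $\partial M^{\zeroball}\cap\partial M^{\slim}$ (or on $\partial M_1$), which requires keeping track of how the $0$-ball boundaries, the slim fibration, and the edge coordinates fit together near a slim point $p_i$ — essentially the same kind of bookkeeping already used in Lemmas \ref{zeroballworks}, \ref{slimworks} and \ref{edgeworks}. A secondary point that must be pinned down is that the core manifolds $W_2^0$ (produced by Lemma \ref{approxlemma} from the cloudy $1$-manifold over the compact set $A_2$) and $W_3''$ are closed as subsets, so that no escape of $x_\infty$ from $\overline{W_2}$ occurs along $\overline{W_2^0}\setminus W_2^0$ rather than across the truncation boundary; this should follow from the construction in Appendix \ref{sec-cloudy} together with compactness of $A_2$.
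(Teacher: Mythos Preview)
Your argument is essentially the paper's: take a limit point $q$ of a sequence in $M_2\cap U_2'$, observe $q\in M_2$ and $q\in V$ by closedness, so $\pi_2(\e(q))\in\overline{W_2'}\setminus W_2'$, forcing $|y_i'/y_i''|=4\De$ for some $i\in I_{\edge}$; then invoke Lemma~\ref{lem-smalleretai} and derive a contradiction in each branch. The paper dispatches the slim branch in one sentence --- if $p_i$ is slim then ``the preceding truncation step would force $B(p_i,1000\Delta R_i)\cap M_2=\emptyset$'' --- whereas you unpack this and correctly flag the boundary issue with $\partial M^{\zeroball}$; your concern is legitimate but resolvable within the existing parameter constraints (since $\Upsilon_0\gg\De$), and the paper simply does not spell it out. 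Your secondary worry about escape through $\overline{W_2^0}\setminus W_2^0$ rather than the truncation boundary is likewise glossed over in the paper, which just asserts the equality $|y_i'/y_i''|=4\De$ directly.
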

\begin{proof}
Suppose that $M_2\cap U_2'$ is not compact. As $M$ is compact,
there is a sequence 
$\{q^k\}_{k=1}^\infty \subset M_2\cap U_2'$ with a limit $q \in M$, for which
$q \notin M_2\cap U_2'$. Put $y=\e(q)$.

Since $M_2$ is closed we have $q\in M_2$ and so $q \notin U_2'$.
Since
$y_{\r}>0$ (assuming $c_{\adjust}$ is sufficiently small) we also
have 
$q \in  \{ \eta_{E'} \le .35 \Delta \} \cup 
\e^{-1} \{x\in H\mid x_{\r}>0\,,\;
\frac{x_{E'}}{x_{\r}}\leq 4\De\}$.

We know that $\pi_2(y) \in \overline{W_2'}$. 
As $q \notin U_2'$, it must be that 
$\pi_2(y) \notin W_2'$. 
Then for some $i\in I_{\edge}$, we have
$y_i'' \ge .9 R_i$ and $\left|\frac{y_i'}{y_i''}\right|=4\De$.
Now $p_i$ cannot be a slim $1$-stratum point, as otherwise
the preceding truncation step would force
$B(p_i, 1000\Delta R_i) \cap M_2 = \emptyset$, 
which contradicts the facts that $q \in M_2$
and $d(p_i, q) < 10 \Delta R_i$.

Lemma \ref{lem-smalleretai} now implies that there is a 
$j\in I_{\edge}$ such that $|\eta_j(q)|<2\De$.   If $c_{\adjust}$ is
sufficiently small then we are ensured that $y_j'' \ge .9 R_j$ and 
$\left|\frac{y_j'}{y_j''}\right|<3\De$.  Thus $\pi_2(y) \in W_2'$ and so
$q \in U_2'$, which is a contradiction.
\end{proof}

We put $M^{\edge}=U_2'\cap M_2$ and $W_2''=(\pi_2\circ\e)(M^{\edge})$.
We endow $M^{\edge}$ with the fibration
induced by $\pi_2\circ\e$.

Put $M_3=M_2\setminus \Int(M^{\edge})$.

\subsection{The definition of $M^{\twostratum}$}
\label{subsecdef2}

We first truncate $W_1$. Put
\begin{equation}
W_1'=W_1\cap \bigcup_{i\in I_{\twostratum}}
\left\{ x\in H\mid x_i''>.9\,,\;\left|\frac{x_i'}{x_i''}\right|
<4 \right\}
\end{equation}
and define
$U_1'=\e^{-1}(W_1')$.

\begin{lemma}
Under the constraints
$\varsigma_{\twostratum} < \overline{\varsigma}_{\twostratum}$ and
$c_{\adjust} < \overline{c}_{\adjust}$, we have
\begin{enumerate}
\item
$\bigcup_{i\in I_{\twostratum}}\;\{|\eta_i|\leq 3.5\}  
\subset U_1'\subset 
U_1$, where $U_1$ is as in Proposition \ref{prop-adjustment}.
\item
The restriction of $\e$ to $U_1'$ 
gives a proper submersion to $W_1'$.
In particular, it is a fibration. 
\item The fibers of $M^{\twostratum}$ are circles.
\item $M_3$ is contained in $U_1'$, and is a 
submanifold with corners which is a union of fibers
of $\e\restr_{U_1'}:U_1'\ra W_1'$.
\end{enumerate}
\end{lemma}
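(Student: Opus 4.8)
\emph{The plan} is to follow the proofs of Lemmas \ref{zeroballworks}, \ref{slimworks} and \ref{edgeworks}, now with $Q_1=H$ and $Q_1^\perp=\{0\}$, so that $\pi_1\circ\e=\e$. For part \emph{(1)}, if $|\eta_i(p)|\le 3.5$ then $\zeta_i(p)=1$, so the $H_i$-component of $\e^0(p)$ is $(R_i\eta_i(p),R_i)$, giving $x_i''=R_i$ and $|x_i'|/x_i''\le 3.5$; since $R_i=\r_{p_i}$ is comparable to $\r(p)$, Proposition \ref{prop-adjustment}(1) shows that for $c_{\adjust}$ small the point $x=\e(p)$ still has $x_i''>.9\,R_i$ and $|x_i'|/x_i''<4$, and since $p\in U_1$ (as $|\eta_i(p)|<5$) Proposition \ref{prop-adjustment}(2) gives $\e(p)\in W_1$, hence $\e(p)\in W_1'$ and $p\in U_1'=\e^{-1}(W_1')$. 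Conversely, if $p\in U_1'$ then $\e(p)\in W_1'$, so $x_i''>.9\,R_i$ and $|x_i'|/x_i''<4$ for some $i\in I_{\twostratum}$, which for $c_{\adjust}$ small forces $\zeta_i(p)>0$ and $|\eta_i(p)|<5$, i.e. $p\in U_1$. For part \emph{(2)}, Proposition \ref{prop-adjustment}(2) yields a surjective submersion $\e\colon U_1\ra W_1$; restricting to $U_1'=\e^{-1}(W_1')$ and using $W_1'\subset W_1$ gives a surjective submersion onto $W_1'$, and if $K\subset W_1'$ is compact then $\e^{-1}(K)\subset U_1'$ is closed in the compact manifold $M$, hence compact, so $\e|_{U_1'}$ is proper; a proper submersion is a locally trivial fibration.

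For part \emph{(3)}, given $x\in W_1'$ choose $p\in U_1'$ with $\e(p)=x$ and, by part (1) with $c_{\adjust}$ small, $i\in I_{\twostratum}$ with $|\eta_i(p)|<5$; inspecting $H_i$-components shows (for $c_{\adjust}$ small) that the whole $\e$-fiber through $p$, and more generally the $\pi_{H_i'}\circ\e$-fiber through $p$, stays inside the region $\eta_i^{-1}(B(0,100))$ where by Lemma \ref{2fibration} the map $\pi_{H_i'}\circ\e^0$ (which equals $R_i\eta_i$ there, since $\zeta_i\equiv 1$) is a circle fibration. Since $\pi_{H_i'}\circ\e$ is $C^1$-close to $R_i\eta_i$ there by Proposition \ref{prop-adjustment}(1), Lemma \ref{lem-isotopic2} shows the fibers of $\pi_{H_i'}\circ\e$ over $B(0,5)\subset H_i'$ are again circles, in particular connected; exactly as in the proof of Lemma \ref{slimworks}(3), connectedness forces $\pi_{H_i'}^{-1}(\pi_{H_i'}(x))\cap W_1'$ to be the single point $x$, so the $\e$-fiber over $x$ coincides with the (circle) $\pi_{H_i'}\circ\e$-fiber over $\pi_{H_i'}(x)$. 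Hence the induced fibration on the region $M^{\twostratum}\subset U_1'$ has circle fibers.

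For part \emph{(4)}, by Lemma \ref{lem3} there are no $3$-stratum points, so every $p\in M_3$ is a $0$-, $1$- or $2$-stratum point. A $0$-stratum point lies in some $B(p_i,\tfrac1{10}r^0_{p_i})$ (Lemma \ref{0-ballseparation}) and hence in $\Int M^{\zeroball}$ by Lemma \ref{zeroballworks}; a slim $1$-stratum point lies in some $B(p_i,\De\,\r_{p_i})$ (Lemma \ref{lem-slimballselection}) and hence in $\Int(M^{\zeroball}\cup M^{\slim})$ via Lemma \ref{slimworks}(1); a non-slim $1$-stratum point lies in some $B(p_i,3\De\,\r_{p_i})$ with $i\in I_{\edge}$ and with $\eta_{E'}$ also controlled (Lemmas \ref{hitsall}, \ref{lem-1stratumslimoredge}), hence in $\Int(M^{\zeroball}\cup M^{\slim}\cup M^{\edge})$ via Lemma \ref{edgeworks}(1). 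Thus every $p\in M_3$ is a $2$-stratum point, lies in $B(p_i,\r_{p_i})$ for some $i\in I_{\twostratum}$ by Lemma \ref{mult2}(1), and since $\eta_i$ is a $(1+\varsigma_{\twostratum})$-Lipschitz adapted coordinate vanishing at $p_i$ we get $|\eta_i(p)|<3.5$, so $p\in U_1'$ by part (1). Finally $\partial M_3$ is assembled from the parts of $\partial M^{\zeroball}$, $\partial M^{\slim}$, $\partial M^{\edge}$ facing $M_3$, each of which is a union of fibers of some $\pi_j\circ\e$, $j\in\{2,3\}$; since $\pi_j\colon H\ra Q_j$ is linear, every fiber of $\e$ lies in a fiber of $\pi_j\circ\e$, so these walls are unions of $\e|_{U_1'}$-fibers, whence $M_3$ is a union of $\e|_{U_1'}$-fibers and, the walls meeting transversally along lower-dimensional fiber-saturated strata, a smooth manifold with corners. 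The genuinely delicate point here is verifying that the three truncation steps of Sections \ref{subsecdefzeroball}--\ref{subsecdefedge}, with their chosen constants, really strip away honest \emph{neighborhoods} of all $0$- and $1$-stratum points — this is where Lemmas \ref{mult2}, \ref{lem-slimballselection}, \ref{lem-edgeballcover}, \ref{lem-1stratumslimoredge}, \ref{hitsall} and \ref{0-ballseparation} must be combined carefully with the slack in the constants — so that nothing but $2$-stratum territory is left in $M_3$; the corner-compatibility of the fibrations over $W_1'$, $W_2'$, $W_3'$ is then automatic from the compatibility of the projections $\pi_1,\pi_2,\pi_3$.
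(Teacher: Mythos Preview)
Your proof is correct and follows exactly the approach the paper intends: the paper's own proof reads in its entirety ``The proof is similar to that of Lemma \ref{slimworks}. We omit the details,'' and you have supplied those details faithfully, tracking the argument of Lemma \ref{slimworks} (and Lemmas \ref{zeroballworks}, \ref{edgeworks}) with the appropriate substitutions $Q_1=H$, $\pi_1=\Id$. Your treatment of part (4) --- reducing $M_3$ to $2$-stratum points via Lemmas \ref{0-ballseparation}, \ref{lem-slimballselection}, \ref{lem-1stratumslimoredge}, \ref{hitsall}, \ref{mult2}, and then observing that the boundary walls are $\e$-fiber-saturated because $\e$-fibers refine $\pi_j\circ\e$-fibers --- is more explicit than anything the paper writes out, and the caveat you flag about needing the truncations to remove genuine \emph{neighborhoods} (so that the residual set lands in the \emph{interior} of the removed regions) is exactly the right point of care.
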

\begin{proof}
The proof is similar to that of Lemma \ref{slimworks}. We omit the details.
\end{proof}

We put $M^{\twostratum}=M_3$, and endow it with the fibration
$\e\restr_{M^{\twostratum}}:M^{\twostratum}\ra \e(M^{\twostratum})$.

\subsection{The proof of Proposition \ref{prop-decompositionprop}}

Proposition \ref{prop-decompositionprop} now follows from combining
the results in this section.

\section{Proof of Theorem \ref{thmmain2} for closed manifolds} 
\label{sec-proof}

Recall that we are trying to get a contradiction to
Standing Assumption \ref{assumptions}. As before, we let $M$ denote
$M^\al$ for large $\al$. Then $M$ satisfies the conclusion of
Proposition \ref{prop-decompositionprop}. To get a contradiction,
we will show that $M$ is a graph manifold.

We recall the definition of a graph manifold from Definition \ref{graphdef}. 
It is obvious that boundary components of graph manifolds are tori.
It is also obvious that if we glue two graph manifolds along boundary
components then the result is a graph manifold, provided that it
is orientable.  In addition, the connected sum of two graph manifolds
is a graph manifold.  For more information about graph manifolds,
we refer to \cite[Chapter 2.4]{Matveev}

\subsection{$M$ is a graph manifold}
Each connected component of $M^{\zeroball}$ has
boundary either $\emptyset$, $S^2$ or $T^2$. 
If there is a connected component of $M^{\zeroball}$ with empty boundary
then $M$ is
diffeomorphic to
$S^1 \times S^2$, $S^1 \times_{\Z_2} S^2 = \R P^3 \# \R P^3$,
$T^3/\Gamma$ (where $\Gamma$ is a finite subgroup of
$\Isom^+(T^3)$ which acts freely on $T^3$) or $S^3/\Gamma$
(where $\Gamma$ is a finite subgroup of $\Isom^+(S^3)$ which
acts freely on $S^3$). In any case $M$ is a graph manifold.
So we can assume that each
connected component of $M^{\zeroball}$ has nonempty boundary.

Each connected component of $M^{\slim}$ fibers over $S^1$ or $I$.
If it fibers over $S^1$ then $M$ is diffeomorphic
to $S^1 \times S^2$ or the total space of a $T^2$-bundle over $S^1$.
In either case, $M$ is a graph manifold.
Hence we can assume that each connected component of $M^{\slim}$
is diffeomorphic to $I \times S^2$ or $I \times T^2$.

\begin{lemma} \label{Euler1}
Let $M^{\zeroball}_i$ be a connected component of
$M^{\zeroball}$. If $M^{\zeroball}_i \cap M^{\slim} \neq \emptyset$
then $\partial M^{\zeroball}_i$ is a boundary component of a
connected component of $M^{\slim}$. If 
$M^{\zeroball}_i \cap M^{\slim} = \emptyset$
then we can write
$\partial M^{\zeroball}_i = A_i \cup B_i$
where 
\begin{enumerate}
\item $A_i = M^{\zeroball}_i \cap M^{\edge}$ is a disjoint union of
$2$-disks, 
\item $B_i = M^{\zeroball}_i \cap M^{\twostratum}$ is the total space
of a circle bundle and 
\item 
$A_i \cap B_i = \partial A_i \cap \partial B_i$
is a union of circle fibers.
\end{enumerate}

Furthermore, if $\partial M^{\zeroball}_i$ is a $2$-torus then
$A_i = \emptyset$, while if $\partial M^{\zeroball}_i$ is a $2$-sphere
then $A_i$ consists of exactly two $2$-disks.
\end{lemma}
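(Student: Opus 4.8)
The plan is to analyze $\partial M^{\zeroball}_i$ using the decomposition $M = M^{\zeroball}\cup M^{\slim}\cup M^{\edge}\cup M^{\twostratum}$ and the compatibility properties (6), (7) of Proposition \ref{prop-decompositionprop}. Since $M^{\zeroball}_i$ is a connected component with nonempty boundary, $\partial M^{\zeroball}_i$ is a closed surface — either $S^2$ or $T^2$ by Proposition \ref{prop-decompositionprop}(2) — and it is covered by its intersections with $M^{\slim}$, $M^{\edge}$, and $M^{\twostratum}$, these being the only other pieces. The first step is to observe that $\partial M^{\zeroball}_i$ meets $M^{\slim}$ in a union of $S^2$- or $T^2$-fibers (by Lemma \ref{slimworks}(4), $M_1\cap U_3'$ is a union of fibers of $\pi_3\circ\e$, and $\partial M^{\zeroball}_i$ sits inside such a union); since each such fiber is already a closed surface equal to a connected component of $\partial M^{\zeroball}_i$, if $M^{\zeroball}_i\cap M^{\slim}\neq\emptyset$ then $\partial M^{\zeroball}_i$ is exactly one fiber of the slim fibration, i.e. a boundary component of a component of $M^{\slim}$ (here using that $M^{\slim}$ consists of $I\times S^2$'s and $I\times T^2$'s with boundary fibers). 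This handles the first assertion.

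Next, assume $M^{\zeroball}_i\cap M^{\slim}=\emptyset$. Then $\partial M^{\zeroball}_i$ is covered by $A_i := M^{\zeroball}_i\cap M^{\edge}$ and $B_i := M^{\zeroball}_i\cap M^{\twostratum}$. For $A_i$: the $D^2$-fibration of $M^{\edge}$ has the property (Proposition \ref{prop-decompositionprop}(6)) that fibers over boundary points of the edge base lie in $\partial M^{\zeroball}$ or $\partial M^{\slim}$; since $M^{\zeroball}_i$ does not meet $M^{\slim}$, the portion of $\partial M^{\edge}$ lying in $\partial M^{\zeroball}_i$ is a union of $D^2$-fibers, and since it is closed in the compact surface $\partial M^{\zeroball}_i$ and is a $2$-manifold with boundary whose interior is open, I would argue $A_i$ is a disjoint union of closed $2$-disks. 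For $B_i$: $M^{\twostratum}$ carries an $S^1$-fibration compatible with its corners (Proposition \ref{prop-decompositionprop}(5)), and $B_i$ is the union of those circle fibers lying in $\partial M^{\zeroball}_i$, so $B_i$ is a compact subsurface which is the total space of a circle bundle over a $1$-manifold with boundary. For the interface: along $\partial A_i = A_i\cap B_i$, Proposition \ref{prop-decompositionprop}(7) says the part of $\partial M^{\edge}$ carrying an induced $S^1$-fibration lies in $M^{\twostratum}$ and the induced $S^1$-structures agree; the boundary of a $D^2$-fiber is a circle, and these circles form precisely the induced $S^1$-fibration on that interface, so $A_i\cap B_i = \partial A_i = \partial B_i$ is a union of circle fibers. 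This gives (1), (2), (3).

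Finally, for the Euler-characteristic / counting statement: $\chi(\partial M^{\zeroball}_i) = \chi(A_i) + \chi(B_i) - \chi(A_i\cap B_i)$. Each component of $A_i$ is a disk ($\chi = 1$), $B_i$ is a circle bundle over a compact $1$-manifold with boundary so $\chi(B_i) = 0$, and $A_i\cap B_i$ is a union of circles so $\chi(A_i\cap B_i) = 0$. Hence $\chi(\partial M^{\zeroball}_i)$ equals the number of disk components of $A_i$. If $\partial M^{\zeroball}_i = T^2$ then $\chi = 0$, forcing $A_i = \emptyset$; if $\partial M^{\zeroball}_i = S^2$ then $\chi = 2$, so $A_i$ consists of exactly two disks. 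The main obstacle I anticipate is not the counting but carefully justifying that $A_i$ is literally a union of \emph{closed} $2$-disks and that $B_i$ is a genuine circle-bundle total space — this requires unwinding how the fibered regions $M^{\edge}$ and $M^{\twostratum}$ meet $\partial M^{\zeroball}_i$ transversally and using the explicit product-with-corners structure from Lemmas \ref{edgeworks} and the definition of $M^{\twostratum}$, together with the fact (Lemma \ref{lem-edgetopology}) that edge fibers are disks; the compatibility clauses (6) and (7) are exactly what make the pieces fit without further isotopy, so the argument is essentially bookkeeping once those are invoked correctly.
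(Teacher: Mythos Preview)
Your proposal is correct and follows essentially the same approach as the paper: the paper's proof simply states that Proposition \ref{prop-decompositionprop} implies all but the last sentence, and that the last sentence follows from an Euler characteristic argument. You have spelled out both of these in detail, including the explicit computation $\chi(\partial M^{\zeroball}_i) = \chi(A_i) + \chi(B_i) - \chi(A_i\cap B_i) = \#\{\text{disk components of }A_i\}$, which is exactly the argument the paper has in mind.
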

\begin{proof}
Proposition \ref{prop-decompositionprop} implies all but the last
sentence of the lemma. The statement about $A_i$
follows from an Euler characteristic argument.
\end{proof}

\begin{lemma} \label{Euler2}
Let $M^{\slim}_i$ be a connected component of
$M^{\slim}$. Let $Y_i$ be one of the connected components of
$\partial M^{\slim}_i$. If $Y_i \cap M^{\zeroball} \neq \emptyset$
then $Y_i = \partial M^{\zeroball}_i$ for some connected component
$M^{\zeroball}_i$ of $M^{\zeroball}$. 

If $Y_i \cap M^{\zeroball} = \emptyset$
then we can write
$Y_i = A_i \cup B_i$
where 
\begin{enumerate}
\item $A_i = Y_i \cap M^{\edge}$ is a disjoint union of
$2$-disks,
\item $B_i = Y_i \cap M^{\twostratum}$ is the total space
of a circle bundle and 
\item 
$A_i \cap B_i = \partial A_i \cap \partial B_i$ 
is a union of circle fibers.
\end{enumerate}

Furthermore, if $Y_i$ is a $2$-torus then
$A_i = \emptyset$, while if $Y_i$ is a $2$-sphere
then $A_i$ consists of exactly two $2$-disks.
\end{lemma}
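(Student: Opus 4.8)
The plan is to mirror the proof of Lemma \ref{Euler1}, applying it to each boundary torus $Y_i$ of a slim component $M^{\slim}_i$ instead of the full boundary $\D M^{\zeroball}_i$. By construction (see Sections \ref{subsecdefslim} and \ref{subsecdefedge}), the interface surfaces between $M^{\slim}$, $M^{\zeroball}$, $M^{\edge}$ and $M^{\twostratum}$ are unions of fibers of the respective fibrations. So the first step is to observe that a fixed connected component $Y_i$ of $\D M^{\slim}_i$ is a closed surface — either $S^2$ or $T^2$, since the fiber of $M^{\slim}_i\to B^{\slim}$ is $S^2$ or $T^2$ — and its intersections with the neighboring pieces partition it into subsurfaces with boundary.

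First I would dispose of the case $Y_i\cap M^{\zeroball}\neq\emptyset$. By Proposition \ref{prop-decompositionprop}(6), any fiber of $M^{\edge}\to B^{\edge}$ lying over a boundary point of $B^{\edge}$ is contained in $\D M^{\zeroball}$ or $\D M^{\slim}$; combined with the fact that $M^{\zeroball}$ and $M^{\slim}$ are smooth domains with boundary (Proposition \ref{prop-decompositionprop}(1)) whose boundary components are full fibers or full connected surfaces, and that distinct $M^{\zeroball}_i$ are disjoint with connected boundary, a connected surface $Y_i$ meeting $M^{\zeroball}$ must coincide with a single boundary component $\D M^{\zeroball}_i$. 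This is the analogue of the first assertion of Lemma \ref{Euler1} and uses only the decomposition properties already established.

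Next, assume $Y_i\cap M^{\zeroball}=\emptyset$. Then $Y_i$ is covered by $M^{\edge}\cup M^{\twostratum}$. Set $A_i=Y_i\cap M^{\edge}$ and $B_i=Y_i\cap M^{\twostratum}$. Proposition \ref{prop-decompositionprop}(4) shows each component of $M^{\edge}$ has a $D^2$-fibration, and Proposition \ref{prop-decompositionprop}(6) forces the boundary fibers of $M^{\edge}$ into $\D M^{\slim}$; hence $A_i$ is a disjoint union of $2$-disks (the $D^2$-fibers sitting in $Y_i$). Proposition \ref{prop-decompositionprop}(5) and (7) give that $B_i$ inherits an $S^1$-fibration from $M^{\twostratum}$, so $B_i$ is the total space of a circle bundle over a surface with boundary, and that along $A_i\cap B_i=\D A_i\cap\D B_i$ the two fibration structures agree, so this intersection is a union of circle fibers. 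These are exactly conclusions (1)--(3).

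The last step — and I expect it to be the only one requiring a genuine computation rather than bookkeeping — is the Euler characteristic argument. Since $Y_i$ is closed, $\chi(Y_i)=\chi(A_i)+\chi(B_i)-\chi(A_i\cap B_i)$. Now $\chi(A_i)$ equals the number of disk components of $A_i$, $\chi(B_i)=0$ because $B_i$ is a circle bundle over a (possibly disconnected) compact surface with nonempty boundary, and $\chi(A_i\cap B_i)=0$ because it is a disjoint union of circles. Hence $\chi(Y_i)=\#\{\text{disks in }A_i\}$. If $Y_i\cong T^2$ then $\chi(Y_i)=0$, forcing $A_i=\emptyset$; if $Y_i\cong S^2$ then $\chi(Y_i)=2$, forcing $A_i$ to consist of exactly two $2$-disks. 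That finishes the lemma. The main subtlety to check carefully is that $Y_i\cap M^{\edge}$ really is a union of whole $D^2$-fibers and $Y_i\cap M^{\twostratum}$ a union of whole $S^1$-fibers — i.e. that $Y_i$ is ``fiber-saturated'' for both neighboring fibrations — which follows from the interface compatibility statements in Proposition \ref{prop-decompositionprop}(6),(7), exactly as in Lemma \ref{Euler1}.
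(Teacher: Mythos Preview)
Your proposal is correct and follows exactly the approach the paper intends: the paper's proof simply reads ``The proof is similar to that of Lemma \ref{Euler1}. We omit the details,'' and Lemma \ref{Euler1} in turn cites Proposition \ref{prop-decompositionprop} for the structural claims and an Euler characteristic argument for the disk count --- precisely what you have written out. One tiny quibble: when $A_i=\emptyset$ the base of the circle bundle $B_i=Y_i$ is closed rather than having nonempty boundary, but $\chi(B_i)=0$ still holds since $\chi(S^1)=0$, so your computation goes through unchanged.
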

\begin{proof}
The proof is similar to that of Lemma \ref{Euler1}.
We omit the details.
\end{proof}

Hereafter we can assume that
there is a disjoint union
$M^{\zeroball} = M^{\zeroball}_{S^2} \cup M^{\zeroball}_{T^2}$,
based on what the boundaries of the connected components are.  Similarly, each
fiber of $M^{\slim}$ is $S^2$ or $T^2$, so 
there is a disjoint union
$M^{\slim} = M^{\slim}_{S^2} \cup M^{\slim}_{T^2}$.

It follows from Lemmas \ref{Euler1} and \ref{Euler2} that
each connected component of $M^{\zeroball}_{T^2} \cup
M^{\slim}_{T^2}$ is diffeomorphic to \\
1. A connected component of $M^{\zeroball}_{T^2}$, \\
2. The gluing of two connected components of $M^{\zeroball}_{T^2}$
along a $2$-torus, or \\
3. $I \times T^2$.

In case 1, the connected component is diffeomorphic to
$S^1 \times D^2$ or the total space of a twisted interval bundle over
a Klein bottle.
In any case,
we can say that $M^{\zeroball}_{T^2} \cup M^{\slim}_{T^2}$ is a graph
manifold. Put $X_1 = M - \Int(M^{\zeroball}_{T^2} \cup M^{\slim}_{T^2})$.
To show that $M$ is a graph manifold, 
it suffices to show that $X_1$ is a graph manifold.
Note that $X_1 = M^{\zeroball}_{S^2} \cup M^{\slim}_{S^2} \cup
M^{\edge} \cup M^{\twostratum}$.

Suppose that $M^{\zeroball}_i$ is a connected component of
$M^{\zeroball}_{S^2}$. From Proposition \ref{prop-decompositionprop},
$M^{\zeroball}_i$ is diffeomorphic to $D^3$ or $\R P^3 \# D^3$.
If $M^{\zeroball}_i$ is diffeomorphic to $\R P^3 \# D^3$,
let $Z_i$ be the result of replacing $M^{\zeroball}_i$ in $X_1$ by
$D^3$. Then $X_1$ is diffeomorphic to $\R P^3 \# Z_i$. 
As $\R P^3$ is a graph manifold, if $Z_i$ is a graph manifold
then $X_1$ is a graph manifold. Hence without loss of generality,
we can assume that each connected component of $M^{\zeroball}_{S^2}$
is diffeomorphic to a $3$-disk.

From Lemmas \ref{Euler1} and \ref{Euler2}, each connected
component of $M^{\zeroball}_{S^2} \cup M^{\slim}_{S^2}$ is
diffeomorphic to \\
1. $D^3$, \\
2. $I \times S^2$ or \\
3. $S^3$, the result of attaching two connected components of
$M^{\zeroball}_{S^2}$ by a connected component $I \times S^2$ of
$M^{\slim}_{S^2}$.

In case 3, $X_1$ is diffeomorphic to a graph manifold. In case 2,
if $Z$ is a 
connected component of $M^{\zeroball}_{S^2} \cup M^{\slim}_{S^2}$
which is diffeomorphic to $I \times S^2$ then we can do surgery
along $\{\frac12\} \times S^2 \subset X_1$ to replace
$I \times S^2 \subset X_1$ by a union of two $3$-disks.
Let $X_2$ be the result of performing the surgery.  Then
$X_1$ is recovered from $X_2$ by either taking a connected sum
of two connected components of $X_2$ or by taking a connected
sum of $X_2$ with $S^1 \times S^2$. In either case, if $X_2$ is a
graph manifold then $X_1$ is a graph manifold. 
Hence without loss of generality,
we can assume that each connected component of 
$(M^{\zeroball}_{S^2} \cup M^{\slim}_{S^2}) \subset X_1$ is
diffeomorphic to $D^3$.

Some connected components of $M^{\edge}$ may fiber over $S^1$.
If $Z$ is such
a connected component then it is diffeomorphic to $S^1 \times D^2$.
If $X_1 - \Int(Z)$ is a graph manifold
then $X_1$ is a graph manifold.  Hence without loss of generality,
we can assume that each connected component of 
$M^{\edge}$ is diffeomorphic to $I \times D^2$.

Let $G$ be a graph (i.e. $1$-dimensional CW-complex)
whose vertices correspond to connected components of 
$M^{\zeroball}_{S^2} \cup
M^{\slim}_{S^2}$, and whose edges correspond to connected components
of $M^{\edge}$ joining such ``vertex''
components.  
From Lemmas \ref{Euler1} and \ref{Euler2}, each vertex of $G$ has
degree two. Again from Lemmas \ref{Euler1} and \ref{Euler2},
we can label the connected components
of $M^{\zeroball}_{S^2} \cup
M^{\slim}_{S^2} \cup M^{\edge}$ by connected components of $G$.
It follows that each connected component of $M^{\zeroball}_{S^2} \cup
M^{\slim}_{S^2} \cup M^{\edge}$ is diffeomorphic to $S^1 \times D^2$.

We have now shown that $X_1$ is the result of gluing a disjoint 
collection of $S^1 \times D^2$'s to $M^{\twostratum}$,
with each gluing being
performed between the boundary of a $S^1 \times D^2$ factor and
a toral boundary component of $M^{\twostratum}$. As
$M^{\twostratum}$ is the total space of a circle bundle, it is
a graph manifold. Thus $X_1$ is a graph manifold. 
Hence we have
shown:

\begin{proposition}
Under the constraints imposed in the earlier sections,
$M$ is a
graph manifold.
\end{proposition}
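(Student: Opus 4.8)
The plan is to combine the decomposition of $M$ furnished by Proposition \ref{prop-decompositionprop} with elementary $3$-manifold surgery arguments, repeatedly reducing to the case where all the ``plug'' pieces are standard. The starting observation is that a graph manifold structure is preserved under the operations we will use: gluing two graph manifolds along torus boundary components (when the result is orientable), taking connected sums, and performing surgery on a separating $S^2$. So it suffices to show that after a finite sequence of such operations $M$ is transformed into a manifold we already recognize as a graph manifold -- for instance, a circle bundle over a surface, or a piece listed in Proposition \ref{prop-decompositionprop}(2).

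The argument proceeds stratum by stratum, in the order dictated by the decomposition $M = M^{\zeroball} \cup M^{\slim} \cup M^{\edge} \cup M^{\twostratum}$. First I would dispose of the ``closed'' cases: if some component of $M^{\zeroball}$ is closed, or some component of $M^{\slim}$ fibers over $S^1$, then $M$ is one of the explicit manifolds in Proposition \ref{prop-decompositionprop}(2) or a surface bundle over $S^1$, hence a graph manifold; so we may assume every $M^{\zeroball}$-component has boundary $S^2$ or $T^2$ and every $M^{\slim}$-component is $I \times S^2$ or $I \times T^2$. Next, using an Euler characteristic count on the boundary torus or sphere (Lemmas \ref{Euler1} and \ref{Euler2}, which package the combinatorics of how the $M^{\edge}$-disks and $M^{\twostratum}$-circle-bundle pieces meet each $\partial M^{\zeroball}_i$ and $\partial M^{\slim}_i$), I would split off the ``toral part'' $M^{\zeroball}_{T^2} \cup M^{\slim}_{T^2}$: its components are solid tori, twisted $I$-bundles over the Klein bottle, doubles of such along a torus, or $I \times T^2$, all graph manifolds, glued to the rest along tori. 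That reduces the problem to $X_1 = M^{\zeroball}_{S^2} \cup M^{\slim}_{S^2} \cup M^{\edge} \cup M^{\twostratum}$.

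For $X_1$, I would successively simplify the $S^2$-type pieces: replace any $\R P^3 \# D^3$ component of $M^{\zeroball}_{S^2}$ by $D^3$ at the cost of a connected-sum factor of $\R P^3$ (a graph manifold); surger any $I \times S^2$ component of $M^{\slim}_{S^2}$ along the central $2$-sphere, which changes $X_1$ by a connected sum with $S^1 \times S^2$ or splits a connected sum, either way harmless; and delete any $S^1 \times D^2$ component of $M^{\edge}$ that fibers over $S^1$, since removing a solid torus glued along a torus preserves being a graph manifold. After these reductions, every component of $M^{\zeroball}_{S^2} \cup M^{\slim}_{S^2}$ is a $3$-ball and every component of $M^{\edge}$ is $I \times D^2$. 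Now the key combinatorial step: form the graph $G$ with a vertex for each $3$-ball component of $M^{\zeroball}_{S^2} \cup M^{\slim}_{S^2}$ and an edge for each $I \times D^2$ component of $M^{\edge}$; by Lemmas \ref{Euler1} and \ref{Euler2} every vertex has degree two, so each connected component of $M^{\zeroball}_{S^2} \cup M^{\slim}_{S^2} \cup M^{\edge}$ is a cyclic chain of balls joined by $2$-disk cylinders, hence diffeomorphic to $S^1 \times D^2$. Thus $X_1$ is obtained from the circle bundle $M^{\twostratum}$ by gluing solid tori along some of its torus boundary components, and is therefore a graph manifold. This contradicts Standing Assumption \ref{assumptions}(3) and completes the proof of Theorem \ref{thmmain}.

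\textbf{Main obstacle.} The conceptually nontrivial content is entirely in Lemmas \ref{Euler1} and \ref{Euler2}: one must show that the only way the $2$-disk fibers of $M^{\edge}$ and the circle-bundle piece $B_i \subset M^{\twostratum}$ can tile a boundary $2$-sphere is with exactly two disks, and a $2$-torus with no disks. This is the Euler characteristic argument -- the disks contribute $+1$ each, the circle-bundle-over-surface part contributes $\chi$ of its base, and compatibility along the common circle fibers forces $\chi(S^2) = 2$ or $\chi(T^2) = 0$ to be realized in only one way. Everything downstream is routine surgery bookkeeping. I expect the rest of the write-up to be short; the delicate parts (the local models, the gluing, Proposition \ref{prop-decompositionprop}) are already done.
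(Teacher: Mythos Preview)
Your proposal is correct and follows essentially the same route as the paper's own argument in Section~\ref{sec-proof}: dispose of closed $M^{\zeroball}$-components and $M^{\slim}$-components fibering over $S^1$, split off the toral part $M^{\zeroball}_{T^2}\cup M^{\slim}_{T^2}$ using Lemmas~\ref{Euler1} and~\ref{Euler2}, simplify the $S^2$-type pieces via connected-sum and surgery moves, remove $M^{\edge}$-components fibering over $S^1$, and then use the degree-two graph $G$ to recognize each component of $M^{\zeroball}_{S^2}\cup M^{\slim}_{S^2}\cup M^{\edge}$ as a solid torus glued to the circle bundle $M^{\twostratum}$. Your identification of the Euler characteristic count as the only nontrivial combinatorial input is also exactly right.
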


\subsection{Satisfying the constraints}

We now verify that it is possible to simultaneously satisfy all the 
constraints that appeared in the construction.

We indicate a partial ordering of the parameters
which is respected by all the 
constraints appearing in
the paper.
This means 
that every constraint on a given parameter 
is an upper (or lower) bound given as a function
of other parameters which are strictly smaller in the 
partial order.  
Consequently, all constraints can be satisfied
simultaneously, since we may choose values for parameters
starting with those parameters which are minimal with respect
to the partial order, and proceeding upward. 

\begin{align} \label{parameterlist}
& \{\mathcal{M},\beta_3\}
 \prec \{c_{\slim},\Om_i,\Om_i'\}\prec\Ga_3
\prec \{\Si_3,\Xi_3\}
\prec c_{\edge}\prec\Ga_2\prec \{\Si_2 , \Xi_2\}
\prec c_{\twostratum}\prec \\
& \Ga_1\prec \{\Si_1, 
\Xi_1\}
\prec\varsigma_{\twostratum}\prec
 \be_2\prec\De \prec
\{\varsigma_{\edge},\varsigma_{E'},\varsigma_{\slim} \}
\prec \varsigma_{\zeroball} \prec \{\be_{E'}, \si_{E'}\} \prec \notag \\
&\si_E \prec \{\sigma, \Lambda\}
\prec\bar w\prec w'
\prec\be_E\prec\be_1
\prec \{\Upsilon_0, \de_0 \}\prec
\Upsilon_0'. \notag
\end{align}
This proves Theorem \ref{thmmain}.

\section{Manifolds with boundary} \label{sec-proof2}

In this section we consider manifolds with boundary.
Since our principal application is to the
geometrization conjecture,
we will only deal with manifolds whose boundary components
have a nearly cuspidal collar.  We recall that a {\em
hyperbolic cusp} is a complete manifold with boundary
diffeomorphic to $T^2\times[0,\infty)$, which is isometric
to the quotient of a horoball by an isometric  $\Z^2$-action.
More explicitly, a cusp is isometric to a quotient of the
upper half space $\R^2\times [0,\infty)\subset \R^3$, with the
metric $dz^2 + e^{-z}(dx^2+dy^2)$, by a rank-$2$ group of 
horizontal translations. (For application to the 
geometrization conjecture,
we take the cusp to have constant sectional curvature $- \: \frac14$).

\begin{theorem}
\label{thmmain2}
Let $K \ge 10$ be a fixed integer.
Fix a function $A:(0,\infty) \ra (0,\infty)$.
Then there 
is some $w_0 \in (0,c_3)$
such that the following holds.
 
Suppose that $(M, g)$ is a compact connected
orientable Riemannian $3$-manifold with boundary.   
Assume in addition that
\begin{enumerate}
\item The diameters of the connected components of $\partial M$
are bounded above by $w_0$.

\item For each component $X$ of $\D M$, there is a  hyperbolic
cusp ${\mathcal H}_X$ with boundary $\partial {\mathcal H}_X$, 
along with a 
$C^{K+1}$-embedding
 of pairs
$e : (N_{100}(\partial {\mathcal H}_X), \partial {\mathcal H}_X) 
\rightarrow (M, X)$
which is $w_0$-close to an isometry.  
\item
For every
$p \in M$ with $d(p, \partial M) \ge 10$, we have
$\vol(B(p, R_p)) \le w_0 R_p^3$.
\item For every $p \in M$, $w'\in [w_0,c_3)$, $k \in[0,K]$,
and $r \leq R_p$ such that $\vol(B(p,r))\geq w'r^3$, the inequality
\begin{equation}
\label{eqn-derivboundsmain2}
| \nabla^k\Rm| \le A(w^\prime) \:  r^{-(k+2)}
\end{equation}
holds in the ball $B(p, r)$.
\end{enumerate}
Then $M$ is a graph manifold.
\end{theorem}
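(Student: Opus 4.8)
\textbf{Plan for the proof of Theorem \ref{thmmain2}.}
The strategy is to reduce the bounded-boundary case to the already-established closed case (Theorem \ref{thmmain}) by cutting off the cuspidal collars and doubling, or rather by capping them with graph-manifold pieces and verifying the hypotheses of the closed theorem on the resulting closed manifold. First I would use hypothesis (2): for each boundary component $X$ of $M$, the embedding $e$ identifies a neighborhood $N_{100}(\partial \mathcal{H}_X)$ of the boundary horospherical torus with a neighborhood of $X$ in $M$, $w_0$-close to an isometry. I would cut $M$ along a torus $T_X$ sitting at a controlled distance (say, where the collar coordinate is about $50$) inside the image of $e$, discard the thin piece between $X$ and $T_X$, and glue in the corresponding truncated hyperbolic cusp $\overline{\mathcal{H}_X}$, capped off at its thick end. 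Since a truncated solid cusp $T^2 \times [0,\infty)$ compactified appropriately is a solid torus or twisted $I$-bundle over the Klein bottle — in any event a graph manifold — and since gluing graph manifolds along tori yields graph manifolds, it suffices to show the "core" closed manifold $\widehat{M}$ obtained by capping each $T_X$ with a solid piece is a graph manifold.

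The subtlety is that one cannot simply quote Theorem \ref{thmmain} for $\widehat{M}$, because the capped-off cusp region does not satisfy the local collapsing hypothesis (1) of Theorem \ref{thmmain} — a hyperbolic cusp has curvature scale $R_p$ comparable to $1$ but is \emph{not} volume-collapsed near its thick end. Instead I would run the \emph{proof} of Theorem \ref{thmmain} — that is, the whole construction of Sections \ref{sec-scalefunction}--\ref{sec-proof} — on $M$ directly, treating the collar region separately. Concretely: hypothesis (3) gives the local collapsing $\vol(B(p,R_p)) \le w_0 R_p^3$ only for points with $d(p,\partial M) \ge 10$, so the scale function $\r$ of Section \ref{sec-scalefunction}, the stratification of Section \ref{sec-stratification}, and all the local structure and gluing of Sections \ref{sec-loc2stratum}--\ref{sec-extracting} go through verbatim on the region $M_{\ge 10} = \{p : d(p,\partial M) \ge 10\}$ (more precisely on a slightly smaller region, using the Lipschitz control on $\r$ to ensure all the balls $B(p,\const\,\r_p)$ used in the construction stay away from $\partial M$). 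One checks, using the Bishop-Gromov argument of Lemma \ref{contra}, that $\r_p$ is tiny (bounded by a function of $w_0$) for such $p$, so all the compactness arguments apply. The collar region $\{p : d(p,\partial M) \le 12\}$, by hypothesis (2), is $w_0$-close in $C^{K+1}$ to a fixed piece of a hyperbolic cusp, hence for $w_0$ small it carries a manifestly graph-manifold structure: it fibers over $T^2 \times I$ (or collapses to it, but here it does not collapse — it is genuinely close to the cusp), and near its inner edge it is a standard $T^2 \times I$ which can be matched to the $M^{\slim}_{T^2}$ or $M^{\zeroball}_{T^2}$ pieces of the decomposition of $M_{\ge 10}$.

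The main obstacle, then, is the \emph{interface}: showing that the fibered decomposition of $M_{\ge 10}$ produced by Proposition \ref{prop-decompositionprop} is compatible, along the hypersurface $\{d(p,\partial M) \approx 11\}$, with the cuspidal $T^2$-product structure on the collar. Here I would argue as follows. A point $p$ with $d(p,\partial M) \approx 11$ lies, after rescaling by $\r_p$, in a neighborhood that is Gromov-Hausdorff close to $\R \times (\text{large piece of } T^2\text{-cusp scaled up})$, which at the $\r_p$-scale looks like $\R \times (\text{long thin cylinder})$ or $\R^2$ depending on the relative sizes — in any case such $p$ lies in the slim $1$-stratum (with $T^2$ fiber) or the $2$-stratum. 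One then checks that the fiber $T^2$ (resp. the circle fiber $\times$ an interval) of the $M^{\slim}_{T^2}$ piece near the collar is isotopic to the horospherical $T^2$ (resp. a standard circle in it), so that capping with the cusp — itself a solid torus or twisted $I$-bundle over the Klein bottle — yields a graph manifold, exactly as in the treatment of $M^{\zeroball}_{T^2} \cup M^{\slim}_{T^2}$ in Section \ref{sec-proof}. After this identification, the remaining manifold is closed and is assembled from the same fibered pieces analyzed in Section \ref{sec-proof}, together with finitely many solid tori / twisted $I$-bundles over Klein bottles glued along tori, so it is a graph manifold by the same combinatorial argument. The only genuinely new work beyond the closed case is thus bookkeeping: re-deriving the scale function and stratification on the manifold-with-boundary by restricting to $M_{\ge 10}$, verifying the collar region is not collapsed and carries the trivial cuspidal fibration, and checking the single interface compatibility — all of which are minor modifications of arguments already in place.
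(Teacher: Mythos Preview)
Your second approach---running the closed-case construction on $\{d(\cdot,\partial M)\ge 10\}$ and treating each cuspidal collar as a $T^2\times I$ graph piece to be matched at the interface---is essentially what the paper does. The paper's execution is slightly cleaner on the interface step: rather than arguing separately that the slim $T^2$-fibers near the collar are isotopic to the horospherical tori, it introduces a new summand $H_\partial=\bigoplus_{i\in I_\partial}\R^2$ in the target Euclidean space, defines $\eta_i$ from the cusp distance function and a corresponding cutoff $\zeta_i$, and then treats each collar region $M_i^\partial$ exactly in parallel with the $0$-stratum balls $M_i^{\zeroball}$ (specifically, as an element of $M^{\zeroball}_{T^2}$ ``without a core''). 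The key lemma (your interface observation) becomes the statement that points in $\eta_i^{-1}(5,95)$ admit a $(1,\beta_1)$-splitting for which $\tfrac{1}{\r_p}\eta_i$ is an adapted coordinate of quality $\varsigma_{\slim}$; this slots the collar directly into the existing $\e^0$ machinery, so compatibility with the slim/edge/$2$-stratum fibrations is automatic for the same reason it was automatic for $0$-stratum annuli. Your first approach (capping and closing up) is, as you yourself note, not viable, and the side remark that a compactified cusp is a solid torus or twisted $I$-bundle is not correct---the collar is just $T^2\times I$.
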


In order to prove Theorem \ref{thmmain2}, we make the following
assumption.

\begin{assumption}
\label{assumptions2}
Let $K \ge 10$ be a fixed integer and let
$A': (0, \infty) \times (0, \infty) \ra (0,\infty)$ be a function.

We assume that $\{(M^\al,g^\al)\}_{\al = 1}^\infty$ 
is a sequence of connected closed Riemannian $3$-manifolds such that
\begin{enumerate}
\item The diameters of the connected components of $\partial M^\al$
are bounded above by $\frac{1}{\al}$.

\item For each component $X^\al$ of $\D M^\al$, 
there is a hyperbolic cusp ${\mathcal H}_{X^\al}$ 
along with a 
$C^{K+1}$-embedding
of pairs 
$e : (N_{100} (\partial {\mathcal H}_{X^\al}), 
\partial {\mathcal H}_{X^\al}) \rightarrow (M^\al, X^\al)$
which is $\frac{1}{\al}$-close to an isometry.  

\item For all $p \in M^\al$ with $d(p, \partial M^\al) \ge 10$,
the ratio $\frac{R_p}{r_p(1/\alpha)}$
of the curvature scale at $p$ to the $\frac{1}{\al}$-volume scale at
$p$ is bounded below by $\al$.
\item 
For all $p \in M^\al$ and $w^\prime \in [\frac{1}{\al}, c_3)$, 
let $r_p(w^\prime)$ denote the $w^\prime$-volume scale at $p$.
Then for each integer $k \in [0, K]$ and each $C \in (0, {\al})$, we have
$|\nabla^k\Rm| \le A'(C, w^\prime) \:  r_p(w^\prime)^{-(k+2)}$
on $B(p, C r_p(w^\prime))$.
\item Each $M^\al$ fails to be a graph manifold.
\end{enumerate}
\end{assumption}

As in Lemma \ref{contra}, to prove Theorem \ref{thmmain2} it
suffices to get a contradiction from Standing Assumption \ref{assumptions2}.
As before, we let $M$ denote the manifold $M^\al$ for large $\al$.
The argument to get a contradiction from 
Standing Assumption \ref{assumptions2} is a slight 
modification of the argument in the 
closed case, the main difference being the appearance of a new
family of points -- those lying in a collared region near
the boundary.

We will use the same set of the parameters as in the case of closed
manifolds, with an additional parameter $r_\partial$.
It will be placed at the end of the partial ordering in 
(\ref{parameterlist}),
after $\Upsilon_0^\prime$. 

Let $\{\partial_i M\}_{i\in I_{\partial}}$ be the collection
of boundary components of $M$, and
let $e_i : (N_{100} (\partial {\mathcal H}_i), 
\partial {\mathcal H}_i) 
\rightarrow (M, \partial_i M)$ be the embedding from 
Standing Assumption \ref{assumptions2}. Note that the
restriction of $e_i$ to $\partial {\mathcal H}_i$ is a diffeomorphism.
Put $b_i = d_{\partial {\mathcal H}_i} \in C^\infty
({\mathcal H}_i)$.
Let $\eta_i$ be a slight smoothing of $b_i \circ e_i^{-1}$
on $(b_i \circ e_i^{-1})^{-1}(1,99)$, as in
Lemma \ref{lem-generalsmoothing}.

\begin{lemma} \label{r0}
We may assume that
for all $p \in \eta_i^{-1}(5,95)$,
\begin{enumerate}
\item The curvature scale satisfies $R_p \in (1,3)$. 
\item $\r_p < r_{\partial}$.
\item There is a $(1,\beta_1)$-splitting of
$\left( \frac{1}{\r_p} M, p \right)$ for which
$\frac{1}{\r_p} \eta_i$ is an adapted coordinate of quality
$\zeta_{\slim}$.
\end{enumerate}
\end{lemma}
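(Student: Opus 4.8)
\textbf{Proof plan for Lemma \ref{r0}.}

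The plan is to prove all three statements by contradiction, exploiting the near-isometric embedding $e_i$ of the cuspidal collar $N_{100}(\partial\mathcal{H}_i)$ together with the known geometry of a hyperbolic cusp. First I would record the explicit local geometry of the model cusp: in the coordinates $dz^2 + e^{-z}(dx^2+dy^2)$ (rescaled so the curvature is $-\tfrac14$, or simply $-1$ after a harmless normalization), the region where the Busemann-type function $b_i$ lies in $(1,99)$ has sectional curvatures pinned near a fixed negative constant, and the injectivity radius in the transverse ($T^2$) directions is uniformly small while the radial direction is undistorted. Because $e_i$ is $\tfrac1\al$-close to an isometry in the $C^{K+1}$-topology (with $K\ge 10$), for large $\al$ the pulled-back metric on $\eta_i^{-1}(5,95)\subset M$ is $C^{K}$-close to the model cusp metric on the corresponding region. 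From this, part (1) is immediate: the curvature scale $R_p$ at a point where $b_i\circ e_i^{-1}\in(5,95)$ is, in the model, the radius at which the infimal sectional curvature on a ball equals $-1/r^2$; since the curvature is uniformly close to $-c$ for a fixed $c$ of order $1$ throughout a fixed-size neighborhood, $R_p$ lies in a fixed interval, which after the normalization we may take to be $(1,3)$ (shrinking the tolerance $w_0\to 0$, i.e. letting $\al\to\infty$, as needed). I would phrase this as: if not, there is a sequence $\al_j\to\infty$ and points $p_j\in\eta_i^{-1}(5,95)\subset M^{\al_j}$ with $R_{p_j}\notin(1,3)$, but the pointed manifolds $(M^{\al_j},p_j)$ converge in $C^K$ to the model cusp, on which the curvature scale at any point with $b_i\in[5,95]$ is a fixed number in $(1,3)$, a contradiction.

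For part (2), I would use part (1) together with the relation between $\r_p$ and the volume scales. By Corollary \ref{fake}, $\r_p \le 2 r_p(w')$, so it suffices to bound $r_p(w')$. In the model cusp (hence, for large $\al$, in $M$ near these points), balls collapse in the $T^2$-directions: the volume of $B(p,r)$ for $r$ of order $1$ is of order $\mathcal{O}(e^{-b_i(p)})$ or at most a fixed small number determined by the cusp modulus, which is $\le w_0 r^3$ once $w_0$ is small. Hence $r_p(w')$, the scale at which the volume ratio first reaches $w'\ge w_0$, is forced to be small — smaller than any prescribed threshold once $\al$ is large — and in particular $\r_p < r_\partial$. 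Again the clean way is a contradiction/convergence argument: a sequence violating $\r_p<r_\partial$ would, after rescaling by $\r_{p_j}$, have to converge to a nonnegatively-curved limit (as in Lemma \ref{lem-rpconsequences}), yet the limit of the unrescaled pointed manifolds is the hyperbolic cusp, which is not almost-nonnegatively curved at the relevant scale — contradiction. I expect part (2) to be the main obstacle, mostly bookkeeping: one must be careful that the volume-collapse estimate in the cusp is uniform over $b_i\in[5,95]$ and survives the $C^{K}$-perturbation coming from $e_i$, and that the resulting smallness of $\r_p$ is genuinely uniform in $p$ (not just pointwise), so that a single constraint $r_\partial > \overline{r}_\partial$ suffices.

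For part (3), the hyperbolic cusp contains, at each point $p$ with $b_i(p)\in(5,95)$, a line: namely the radial geodesic in the $z$-direction, extended in both directions within the collar region $b_i\in(1,99)$ (which after rescaling by $\r_p\to 0$ becomes an arbitrarily long geodesic, hence a line in the limit). Equivalently, the rescaled model cusp $(\tfrac1{\r_p}\mathcal{H}_i, p)$ Gromov-Hausdorff converges, as $\r_p\to 0$, to $\R\times(\text{collapsed }T^2)=\R$, i.e. it splits off an $\R$-factor, with the $\R$-coordinate given precisely by the (rescaled, recentered) distance function $b_i$. Transporting through $e_i$, for large $\al$ the rescaled pointed manifold $(\tfrac1{\r_p}M,p)$ admits a $(1,\beta_1)$-splitting whose $\R$-component agrees up to small error with $\tfrac1{\r_p}b_i\circ e_i^{-1}$, hence with $\tfrac1{\r_p}\eta_i$ since $\eta_i$ is a $C^1$-small smoothing of $b_i\circ e_i^{-1}$ by Lemma \ref{lem-generalsmoothing}. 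That $\tfrac1{\r_p}\eta_i$ is then an adapted coordinate of quality $\zeta_{\slim}$ for this splitting follows from Lemma \ref{lem-uniquenessofadaptedcoords} (using its second alternative in part (4), since the distance-like function $b_i$ and its gradient are controlled by triangle comparison in the cusp), once $\beta_1$ and the perturbation parameter $1/\al$ are taken small enough in terms of $\zeta_{\slim}$. I would package the whole lemma as a single contradiction argument: assume some sequence $M^{\al_j}$ with points violating one of (1)--(3); pass to the $C^K$-limit (the model cusp) on the unrescaled scale and the Gromov-Hausdorff limit ($\R$) on the $\r_p$-rescaled scale; read off (1), (2), (3) from the limits; contradiction. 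The only constraints this imposes are $\beta_1 < \overline{\beta}_1$ and $r_\partial > \overline{r}_\partial$, consistent with the parameter ordering since $r_\partial$ can be placed last.
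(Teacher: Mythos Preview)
Your proposal is correct and follows essentially the same approach as the paper: transfer the problem to the model hyperbolic cusp via the near-isometric embedding $e_i$, then read off (1) from the constant curvature $-\tfrac14$ (so $R_p\approx 2$), (2) from the smallness of the $T^2$-fibers (hence small volume scale), and (3) from the radial splitting that appears after rescaling by $\r_p\to 0$. The paper compresses all of this into the phrase ``the lemma follows from elementary estimates,'' so your write-up is simply a fleshed-out version of the same argument; the contradiction/convergence packaging you propose is more machinery than strictly needed, since the estimates on the explicit metric $dz^2+e^{-z}g_{T^2}$ are direct, but it is certainly valid.
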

\begin{proof}
In view of the quality of the embedding $e_i$, it suffices to
check the claim on the constant-curvature space $b_i^{-1}(4, 96)$.
The diameter of $\partial {\mathcal H}_i$ can be assumed to be
arbitrarily small by taking $\al$ to be large enough.
The Riemannian metric on ${\mathcal H}_i$ has the form
$dz^2 + e^{-z} g_{T^2}$ for a flat metric $g_{T^2}$ on $T^2$, 
with $z \in [0, 100)$. 
The lemma follows from elementary estimates.
\end{proof}

We now select $2$-stratum balls, edge balls, slim
$1$-stratum balls and $0$-balls as in the closed case,
except with the restriction that the center points $p_i$ all
satisfy $d(p_i, \partial M) \ge 10$.

Given $i \in I_{\partial}$, let $B_i$ be the connected component of
$M - \eta_i^{-1}(90)$ containing $\partial_i M$.
 
\begin{lemma}
If $r_{\partial} < \overline{r}_{\partial}(\Upsilon_0^\prime)$ then
$M$ is diffeomorphic to $I \times T^2$ or
$\{B_i\}_{i \in I_{\partial}} \cup \{B(p_i, r^0_{p_i}) 
\}_{i \in I_{\zeroball}}$ is a disjoint collection of open sets.
\end{lemma}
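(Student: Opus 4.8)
The plan is to argue by contradiction: suppose that $\{B_i\}_{i \in I_{\partial}} \cup \{B(p_i, r^0_{p_i})\}_{i \in I_{\zeroball}}$ is \emph{not} a disjoint collection, and show that then $M$ must be diffeomorphic to $I \times T^2$. Two cases arise. First, two of the cuspidal regions $B_i$, $B_j$ with $i \neq j$ might intersect. Since $B_i$ is contained in a small metric neighborhood of $\partial_i M$ (its diameter is controlled, being close to a truncated cusp whose cross-sections are small and whose length in the $z$-direction is bounded), and since $M$ is connected, an intersection $B_i \cap B_j \neq \emptyset$ forces the whole manifold to be a union of two such near-cuspidal pieces glued along a region near $\eta_i^{-1}(90)$; by Lemma \ref{r0}, the level sets of $\eta_i$ near this region are incompressible $T^2$-fibers, and one concludes that $M$ is diffeomorphic to $I \times T^2$ (or a $T^2$-bundle over $S^1$, which in the orientable case with two boundary tori gives $I \times T^2$). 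The second case is that some $B_i$ meets some $B(p_{j}, r^0_{p_j})$ with $j \in I_{\zeroball}$; here we derive a contradiction.

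First I would make precise the geometry of $B_i$. Using the quality of the embedding $e_i$ and the explicit cusp metric $dz^2 + e^{-z} g_{T^2}$, the set $B_i = M - \eta_i^{-1}(90)$ is, up to a diffeomorphism close to the identity, the truncated cusp $\{z \le 90\}$, which deformation retracts onto $\partial_i M$; in particular $B_i$ lies in the $\const$-neighborhood of $\partial M$ and every point of $B_i$ is within bounded distance of a point with $d(\cdot, \partial M)$ small. By Lemma \ref{r0}, every $p \in \eta_i^{-1}(5,95)$ has $R_p \in (1,3)$ and hence $\r_p$ comparable to $1$, so $\eta_i^{-1}(90)$ lies at bounded distance from $\partial M$ as well.

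Next I would rule out the second case. Recall that the $0$-ball centers $p_j$, $j \in I_{\zeroball}$, were selected with $d(p_j, \partial M) \ge 10$, and $r^0_{p_j} \le \Upsilon_0' \r_{p_j}$. If $B_i \cap B(p_j, r^0_{p_j}) \neq \emptyset$ then, since $B_i$ lies within a bounded distance of $\partial M$, we would get $d(p_j, \partial M) \le \const + r^0_{p_j}$. This is not yet a contradiction, so one pushes further: a point $q$ in the intersection lies both in the near-cuspidal region, where by Lemma \ref{r0}(3) there is a $(1,\beta_1)$-splitting with $\eta_i$ as an adapted coordinate (so $q$ is a $1$- or $2$-stratum point at scale $\r_q$), and in $B(p_j, r^0_{p_j})$; but actually the key point is that the selection of $0$-balls should be arranged (exactly as in Lemma \ref{0-ballseparation}, which we may invoke) so that the balls $B(p_j, r^0_{p_j})$ avoid the collar. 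Concretely, the collar region near $\partial M$ consists entirely of $1$-stratum and $2$-stratum points (by Lemma \ref{r0}(3) together with the fact that $\eta_i$-level sets are $T^2$'s of controlled diameter, which gives a genuine $1$-splitting), so no $0$-stratum point lies in $\eta_i^{-1}(5,95)$; combined with the disjointness statement of Lemma \ref{0-ballseparation}(\ref{item-disjoint}) and the fact that $B(p_j, r^0_{p_j})$ is close to the Tits cone of a nonnegatively curved $3$-manifold and hence cannot be absorbed into the collar, one gets $B(p_j, r^0_{p_j}) \cap B_i = \emptyset$.

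The main obstacle I expect is the first case: showing that an intersection of two cuspidal pieces forces $M \cong I \times T^2$ rather than something more complicated. The argument there is that if $B_i$ and $B_j$ overlap, then $M = B_i \cup B_j$ (since the complement $M - (B_i \cup B_j)$ would otherwise contain points far from $\partial M$, which is impossible if $B_i, B_j$ already exhaust a neighborhood of both boundary tori and overlap in the middle), and $M$ is obtained by gluing two copies of $T^2 \times I$ along a region where both $\eta_i$ and $\eta_j$ are submersions onto intervals with incompressible $T^2$-fibers; applying the Morse-theoretic isotopy lemma and the classification of $T^2$-bundles (using orientability, $\partial M$ nonempty with exactly two components, each a torus), one concludes $M \cong T^2 \times I$. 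One must also handle the degenerate sub-case $I_{\partial} = \{i\}$ a singleton where $B_i$ self-intersects in a way that makes $M - \eta_i^{-1}(90)$ connected and equal to all of $M$, which again is handled by the same incompressibility argument and the classification of $T^2$-bundles over $S^1$, landing on $T^2 \times I$ in the orientable two-torus-boundary case.
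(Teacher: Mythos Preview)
Your overall strategy matches the paper's: argue by contradiction and split into cases according to which two sets in the collection meet. But there is a genuine gap in your handling of the case $B_i \cap B(p_j, r^0_{p_j}) \neq \emptyset$, and you also omit a (trivial) third case.

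\textbf{The missing quantitative step.} You correctly observe that every point of $\eta_i^{-1}(5,95)$ admits a $(1,\beta_1)$-splitting (Lemma~\ref{r0}(3)), so no $0$-stratum center $p_j$ can lie there. What you do not establish is why the \emph{ball} $B(p_j, r^0_{p_j})$ cannot protrude into $B_i$ while its center sits outside the collar. Your appeals to Lemma~\ref{0-ballseparation}(1) and to the Tits cone picture do not address this: nothing so far prevents $r^0_{p_j}$ from being large (of order $\Upsilon_0' \r_{p_j}$, with $\r_{p_j}$ a priori uncontrolled), so the ball could reach deep into $B_i$ from far away. The paper closes this gap with the new parameter $r_\partial$ from Lemma~\ref{r0}(2): on $\eta_i^{-1}(5,95)$ one has $\r < r_\partial$. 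If $B(p_j, r^0_{p_j})$ meets $B_i$, then (using $d(p_j,\partial M)\ge 10$) either $p_j$ already lies in the collar, or the ball must cross $\eta_i^{-1}(90)$; in the latter case the Lipschitz bound on $\r$ transfers the smallness of $\r$ at the crossing point to $p_j$, giving $r^0_{p_j} \le \Upsilon_0' \r_{p_j} < \frac{1}{100}$ once $r_\partial < \overline{r}_\partial(\Upsilon_0')$. The triangle inequality then forces $p_j \in \eta_i^{-1}(5,95)$, contradicting the fact that $p_j$ is a $0$-stratum point. This use of $r_\partial$ is the key idea you are missing.

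\textbf{Minor omissions.} You only treat two of the three types of pairs; you should also note that $B(p_i, r^0_{p_i}) \cap B(p_j, r^0_{p_j}) = \emptyset$ for distinct $i,j \in I_{\zeroball}$, which is immediate from Lemma~\ref{0-ballseparation}(1). Also, your worry about the ``self-intersection'' case with a singleton $I_\partial$ is unnecessary: $B_i$ is by definition a connected component, and the $T^2$-bundle-over-$S^1$ scenario you mention is closed and so cannot arise here. Your treatment of the $B_i \cap B_j$ case is otherwise in line with the paper's.
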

\begin{proof}
We can assume that $M$ is not diffeomorphic to $I \times T^2$.

Suppose first that $B_i \cap B_j \neq \emptyset$ for some
$i,j \in I_{\partial}$ with $i \neq j$. Then
$\eta_i^{-1}(5,90)$ must intersect $\eta_j^{-1}(5,90)$.
It follows easily that $M = N_{10}(B_i) \cup N_{10}(B_j)$ is
diffeomorphic to $I \times T^2$, which is a contradiction.
Thus $B_i \cap B_j = \emptyset$.

Next, suppose that $B_i \cap B(p_j, r^0_{p_j}) \neq \emptyset$
for some $i \in I_{\partial}$ and $j \in I_{\zeroball}$.
If 
$r_\partial<\overline{r}_\partial(\Upsilon_0')$
then by  Lemma \ref{r0} we will have
$\Upsilon_0' \r_{p_j} < \frac{1}{100}$. 
Hence $r^0_{p_j} < \frac{1}{100}$ and the triangle inequality implies that
$p_j \in \eta_i^{-1}(5, 95)$. However, from Lemma \ref{r0}(3),
this contradicts the fact that $p_j$ is a $0$-stratum point.

Finally, if $i,j \in I_{\zeroball}$ and $i \neq j$ then
$B(p_i, r^0_{p_i}) \cap B(p_j, r^0_{p_j}) = \emptyset$ from
Lemma \ref{0-ballseparation}(1).
\end{proof}

Hereafter we assume that $M$ is not diffeomorphic to $I \times T^2$,
which is already a graph manifold.

For each $i \in I_{\partial}$, let $H_i$ be a copy of
$\R^2$. Put $H_{\partial} = \bigoplus_{i \in I_{\partial}} H_i$.
We also put
\begin{itemize}
\item $Q_1=H \bigoplus H_{\partial}$,
\item $Q_2=H_{\zeroball}\bigoplus H_{\slim}
\bigoplus H_{\edge} \bigoplus H_{\partial}$,
\item $Q_3=H_{\zeroball}\bigoplus H_{\slim} \bigoplus H_{\partial}$,
\item $Q_4=H_{\zeroball} \bigoplus H_{\partial}$.
\end{itemize}

For $i \in I_{\partial}$, let $\zeta_i  \in C^\infty(M)$ be the
extension by zero of $\Phi_{20,30,80,90} \circ \eta_i$ to $M$. Define
$\e^0_i : M \rightarrow H_i$ by
$\e^0_i(p) = \left( \eta_i(p) \zeta_i(p), \zeta_i(p) \right)$.
We now go through Sections \ref{sec-mapping} and \ref{sec-adjusting}, 
treating $H_{\partial}$ in parallel to $H_{\zeroball}$.
Next, in analogy to (\ref{defzeroball}), for each $i \in I_{\partial}$ we put
\begin{equation}
M^{\partial}_i = 
N_{35}(\partial_i M) \cup \e^{-1} 
\left\{ x\in H \: : \: x_i'' \ge .9 \,,\;
\frac{x_i'}{x_i''}\leq 40 \right\}\,.
\end{equation}
Then $M^{\partial}_i$ is diffeomorphic to $I \times T^2$.
We now go through the argument of Section \ref{sec-proof},
treating each $M^{\partial}_i$ as if it were an element of
$M^{\zeroball}_{T^2}$ without a core. As in 
Section \ref{sec-proof}, we conclude that $M$ is a graph manifold.
This proves Theorem \ref{thmmain2}.

\section{Application to the geometrization conjecture} \label{geom}

We now use the terminology of 
\cite{Kleiner-Lott} and \cite{Perelman2}.
Let $(M, g(\cdot))$ be a Ricci flow with surgery whose initial
$3$-manifold is compact. We normalize the metric by putting
$\widehat{g}(t) = \frac{g(t)}{t}$.  
Let $(M_t, \widehat{g}(t))$ be the time-$t$ manifold.
(If $t$ is a surgery time then we take $M_t$ to be the post-surgery
manifold.) 
We recall that the $w$-thin part $M^-(w,t)$ of $M_t$ is defined to be the set
of points $p \in M_t$ so that either $R_p = \infty$ or
$\vol(B(p, R_p)) < w R_p^3$. 
The $w$-thick part $M^+(w,t)$ of $M_t$ is $M_t - M^-(w,t)$.

The following theorem is proved in \cite[Section 7.3]{Perelman2}; see also
\cite[Proposition 90.1]{Kleiner-Lott}.

\begin{theorem} \cite{Perelman2} \label{hyperbolic}
There is a finite collection
$\{(H_i, x_i)\}_{i=1}^k$ of pointed complete finite-volume
Riemannian $3$-manifolds with constant sectional curvature
$- \: \frac14$ and, for large $t$, a decreasing function
$\beta(t)$ tending to zero and a family of maps
\begin{equation}
f_t \: : \: \bigsqcup_{i=1}^k H_i \supset \bigsqcup_{i=1}^k
B \left( x_i, \frac{1}{\beta(t)} \right) \rightarrow M_t
\end{equation}
such that 
\begin{enumerate}
\item $f_t$ is $\beta(t)$-close to being an isometry. 
\item The image of $f_t$ contains $M^+(\beta(t), t)$. 
\item The image under $f_t$ of a cuspidal torus of
$\{H_i\}_{i=1}^k$ is incompressible in $M_t$.
\end{enumerate}
\end{theorem}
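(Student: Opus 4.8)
The plan is to follow the scheme of \cite[Section 7.3]{Perelman2} (see also \cite[Sections 89--93]{Kleiner-Lott}), which I outline here. There are three ingredients: a compactness argument producing the hyperbolic limit pieces $\{(H_i,x_i)\}$ together with the maps $f_t$ of parts (1) and (2); a volume estimate showing the $H_i$ have finite total volume; and an incompressibility argument for the cuspidal tori, which is part (3).

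First I would establish uniform bounded geometry on the $w$-thick part after normalizing the metric by $\widehat{g}(t)=g(t)/t$. By definition a point $p\in M^+(w,t)$ satisfies $\vol(B(p,R_p))\ge wR_p^3$; combining this lower volume bound with the $\kappa$-noncollapsing property of the Ricci flow with surgery and with the canonical-neighborhood and derivative estimates, one obtains two-sided curvature bounds and a uniform lower injectivity-radius bound on $\widehat{g}(t)$-balls of bounded radius centered at thick points, with constants independent of $t$ for $t$ large. In the rescaled time variable $\tau=\log t$ the normalized flow reads $\partial_\tau\widehat{g}=-2\Ric(\widehat{g})-\widehat{g}$, whose static solutions satisfy $\Ric(\widehat{g})=-\tfrac12\widehat{g}$, hence (Einstein in dimension three implies constant curvature) are hyperbolic of sectional curvature $-\tfrac14$. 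Applying Hamilton's compactness theorem along a sequence $\tau_j\to\infty$ with a maximal separated family of thick basepoints, and extracting the limit at times where the normalized volume (see below) is nearly stationary so that the limit flow is static, the limit pieces are forced to be complete hyperbolic $3$-manifolds. Checking that distinct basepoints either share a common limit chart or drift apart shows only finitely many $H_i$ occur, and assembling the local limit charts gives maps $f_t$ satisfying (1) and (2).

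Second, for finiteness of $\sum_i\vol(H_i)$ I would use the monotonicity of the normalized volume $\overline{V}(t)=\vol(M_t,\widehat{g}(t))=t^{-3/2}\vol(M_t,g(t))$. Surgery only decreases volume, and between surgeries $\frac{d}{dt}\vol(M_t,g(t))=-\int_{M_t}R_{g(t)}\,\dvol\le\frac{3}{2t}\vol(M_t,g(t))$, since the maximum principle applied to $\partial_t R=\Delta R+2|\Ric|^2\ge\Delta R+\tfrac23 R^2$ forces $R_{\min}(t)\ge-\tfrac{3}{2t}$ for $t$ large. Hence $\frac{d}{dt}\overline{V}(t)\le 0$, so $\overline{V}(t)$ decreases to some $\overline{V}_\infty\ge 0$, and pushing balls forward under the almost-isometric $f_t$ gives $\sum_i\vol(H_i)\le\overline{V}_\infty<\infty$. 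In particular each $H_i$ has finitely many ends, all cuspidal with flat-torus cross-section.

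The main obstacle is part (3), incompressibility of the cuspidal tori, and here I would argue by contradiction following Hamilton. If the image under $f_t$ of a small cuspidal torus were compressible in $M_t$ along a sequence $t_j\to\infty$, one takes compressing disks, replaces them by minimal (or harmonic) disks $\Sigma_t$, and uses an evolution inequality of the form $\frac{d}{dt}\Area(\Sigma_t)\le-2\pi\chi(\Sigma_t)-\tfrac12 R_{\min}(t)\,\Area(\Sigma_t)\le-2\pi+\tfrac{3}{4t}\Area(\Sigma_t)$; solving this differential inequality shows $\Area(\Sigma_t)$ must reach zero in finite time, contradicting a definite lower bound on the area of an essential disk whose boundary lies on a small cuspidal torus in the nearly-hyperbolic region $f_t$-image. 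The lower area bound is where the hyperbolic geometry of the $H_i$ enters, and this minimal-surface step is the substantive part of the argument; the compactness and volume steps are comparatively soft.
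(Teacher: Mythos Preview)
The paper does not give its own proof of this theorem: it is stated as a citation, with the sentence ``The following theorem is proved in \cite[Section 7.3]{Perelman2}; see also \cite[Proposition 90.1]{Kleiner-Lott}'' preceding it, and no proof environment follows. Your sketch is an accurate outline of the argument in those references --- compactness on the thick part via normalized-volume monotonicity and Hamilton's compactness theorem to extract the hyperbolic pieces, finite total volume from the same monotonicity, and incompressibility via the Hamilton minimal-disk area estimate --- so there is nothing to compare against within this paper itself.
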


Given a sequence $t^\alpha \rightarrow \infty$, let
$Y^\alpha$ be the truncation of $\bigsqcup_{i=1}^k H_i$ obtained
by removing horoballs at distance approximately 
$\frac{1}{2 \beta(t^\alpha)}$
from the basepoints $x_i$. Put 
$M^\alpha = M_{t^\alpha} -
f_{t^\al}(Y_{t^\alpha})$.

\begin{theorem} \cite{Perelman2} \label{graph}
For large $\alpha$, $M^\alpha$ is a graph manifold.
\end{theorem}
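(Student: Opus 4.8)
The plan is to reduce Theorem \ref{graph} to Theorem \ref{thmmain2} by verifying that the truncated manifold $M^\alpha = M_{t^\alpha} - f_{t^\alpha}(Y_{t^\alpha})$ satisfies the hypotheses of Theorem \ref{thmmain2} for large $\alpha$, with $w_0$ as produced by that theorem and with $K = 10$ and a suitable $A$. Since the conclusion of Theorem \ref{thmmain2} is exactly that $M^\alpha$ is a graph manifold, this suffices. So the work is entirely in checking hypotheses (1)--(4) of Theorem \ref{thmmain2}.

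First I would describe the boundary: by construction $\partial M^\alpha$ consists of the tori obtained by truncating the cusps of $\bigsqcup_i H_i$ at distance $\approx \frac{1}{2\beta(t^\alpha)}$ from the basepoints. Since $f_{t^\alpha}$ is $\beta(t^\alpha)$-close to an isometry and the flat cross-sectional tori of a hyperbolic cusp of curvature $-\frac14$ have diameter decaying exponentially in the distance along the cusp, the diameters of the components of $\partial M^\alpha$ are bounded by $w_0$ once $\alpha$ is large; this gives hypothesis (1). For hypothesis (2), the hyperbolic cusp $\mathcal{H}_{X^\alpha}$ needed is simply the corresponding end of $H_i$ (with its standard metric of curvature $-\frac14$), and the required $C^{K+1}$-embedding $e$ of $(N_{100}(\partial\mathcal{H}_{X^\alpha}), \partial\mathcal{H}_{X^\alpha})$ into $(M^\alpha, X^\alpha)$ which is $w_0$-close to an isometry comes from restricting $f_{t^\alpha}$ to a collar neighborhood of the truncating torus inside $H_i$; one uses that $f_{t^\alpha}$ is $\beta(t^\alpha)$-close to an isometry on $B(x_i, \beta(t^\alpha)^{-1})$, which for large $\alpha$ contains the needed collar, and one upgrades Gromov--Hausdorff closeness to $C^{K+1}$-closeness using standard elliptic (harmonic coordinate) estimates together with the curvature bounds from the Ricci flow with surgery (the $\widehat g(t)$ metrics have uniformly bounded geometry on the relevant regions for large $t$, by the structure theory in \cite{Kleiner-Lott,Perelman2}).

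For hypotheses (3) and (4), the key point is that $M^+(\beta(t^\alpha), t^\alpha) \subset \operatorname{im}(f_{t^\alpha})$ (Theorem \ref{hyperbolic}(2)), and that the part of $\operatorname{im}(f_{t^\alpha})$ surviving in $M^\alpha$ is a bounded-distance collar whose points lie within distance $O(1)$ of $\partial M^\alpha$. Concretely, any $p \in M^\alpha$ with $d(p, \partial M^\alpha) \ge 10$ lies in $M^-(\beta(t^\alpha), t^\alpha)$, since a point more than distance $10$ from the truncating tori and in the image of $f_{t^\alpha}$ would lie in a region isometric (up to $\beta(t^\alpha)$) to a piece of a hyperbolic cusp where $\vol(B(p,R_p)) < w_0 R_p^3$ already holds; hence $\vol(B(p,R_p)) \le w_0 R_p^3$, giving hypothesis (3). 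Hypothesis (4) is the derivative-of-curvature bound \eqref{eqn-derivboundsmain2}: this is precisely the smoothing estimate furnished by the Ricci flow with surgery. After rescaling by $1/t$, the curvature derivative bounds $|\nabla^k \Rm|(\widehat g(t)) \le C_k$ on suitable regions follow from Shi-type local derivative estimates applied away from the surgery regions (and the surgery regions are controlled and, for the thin part at large time, do not interfere); combined with the volume lower bound $\vol(B(p,r)) \ge w' r^3$ and Bishop--Gromov as in the proof of Lemma \ref{contra}, one extracts a function $A(w')$ so that \eqref{eqn-derivboundsmain2} holds. This choice of $A$ then determines $w_0$ via Theorem \ref{thmmain2}.

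The main obstacle is hypothesis (2): producing the $C^{K+1}$-embedding of the cuspidal collar which is $w_0$-close to an isometry, rather than merely Gromov--Hausdorff close. This requires knowing that on the relevant collar regions the rescaled metrics $\widehat g(t^\alpha)$ have enough regularity (bounded curvature and its derivatives, and a definite injectivity radius lower bound transverse to the collapsing direction) to invoke $C^{K+1}$-compactness and thereby promote the approximate isometry of Theorem \ref{hyperbolic}(1) to the needed smooth statement; all of this is available from the analysis of long-time behavior of Ricci flow with surgery in \cite{Kleiner-Lott} (see in particular the discussion surrounding \cite[Proposition 90.1]{Kleiner-Lott}), so the proof is a matter of assembling these inputs and checking the elementary cusp geometry. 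Once hypotheses (1)--(4) are verified, Theorem \ref{thmmain2} applies directly and yields that $M^\alpha$ is a graph manifold, completing the proof.
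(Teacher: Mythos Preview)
Your overall strategy---verify hypotheses (1)--(4) of Theorem \ref{thmmain2} for $M^\alpha$---is exactly the paper's approach, and your treatment of (1)--(3) is essentially correct (if a bit roundabout for (3)). However, your verification of hypothesis (4) has a genuine gap, and your identification of (2) as ``the main obstacle'' is misplaced: in the paper (1) and (2) are dispatched in a single sentence, while (4) is where the real content lies.

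The issue with (4) is this. The Ricci flow smoothing estimate you invoke (stated as \cite[Lemma 92.13]{Kleiner-Lott}) does not give $|\nabla^k\Rm| \le A(w') r^{-(k+2)}$ for \emph{all} $r\le R_p$ with $\vol(B(p,r))\ge w'r^3$; it gives this only under the additional restriction $r\le \overline{r}(w')$ for some fixed small $\overline{r}(w')$. So you must show that for $p\in M^\alpha$ the conditions $r\le R_p$ and $\vol(B(p,r))\ge w'r^3$ force $r\le\overline{r}(w')$. The paper does this by contradiction: if $\overline{r}(w')<r\le R_p$ then Bishop--Gromov gives $\vol(B(p,\overline{r}(w')))\ge w''\,\overline{r}(w')^3$ for some $w''=w''(w')$, and since $\Rm\ge -\overline{r}(w')^{-2}$ on this ball, the Hamilton--Ivey-type pinching estimate \cite[Lemma 88.1]{Kleiner-Lott} (equivalently \cite[Lemma 7.2]{Perelman2}) forces the sectional curvatures there to be close to $-\tfrac14$ for large $\alpha$. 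Hence $R_p\le 5$, and then $\vol(B(p,R_p))\ge w'(\overline{r}(w')/5)^3 R_p^3$, putting $p$ in the thick part and therefore in $f_{t^\alpha}(Y_{t^\alpha})$---contradicting $p\in M^\alpha$. Your appeal to ``Shi-type estimates\ldots combined with Bishop--Gromov as in the proof of Lemma \ref{contra}'' does not supply this argument; Lemma \ref{contra} goes in a different direction and the pinching input is essential here.
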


\begin{proof} 
We check that the hypotheses of Theorem \ref{thmmain2} are satisfied
for large $\al$.
Conditions (1) and (2) of Theorem \ref{thmmain2} follow from
the almost-isometric embedding of $\bigsqcup_{i=1}^k
\left( B(x_i, \frac{1}{\beta(t^\alpha)}) - 
B(x_i, \frac{1}{2 \beta(t^\alpha)}) \right) \subset \bigsqcup_{i=1}^k H_i$ in 
$M^\alpha$. 

Next,
Theorem \ref{hyperbolic} says that for any $\bar{w} > 0$,
for large $\alpha$ the
$\bar{w}$-thick part of $M_{t^\alpha}$ has already
been removed in forming $M^\alpha$.
Thus Condition (3) of
Theorem \ref{thmmain2} holds.

From Ricci flow arguments,
for each $w^\prime \in (0, c_3)$ there are
$\overline{r}(w') > 0$ and $K_k(w^\prime) < \infty$
so that for large $\al$ the following holds:
for every $p \in M^\al$, $w^\prime \in (0, c_3)$, $k \in [0,K]$ and
$r \le \min(R_p, \overline{r}(w'))$, the inequality
$| \nabla^k\Rm| \le K_k(w^\prime) \:  r^{-(k+2)}$
holds in the ball $B(p, r)$
\cite[Lemma 92.13]{Kleiner-Lott}. Hence to verify Condition (4) of
Theorem \ref{thmmain2}, at least for large $\al$, we
must show that if $p \in M^\al$ then the conditions
$r \le R_p$ and $\vol(B(p, r)) \ge w' r^3$ imply that
$r \le \overline{r}(w')$. 

Suppose not, i.e. we have $\overline{r}(w') < r \le R_p$. 
Then $\Rm \Big|_{B(p, r)} \: \ge
\: - \frac{1}{r^2}$. Using the fact that
$\vol(B(p, r)) \ge w' r^3$, the Bishop-Gromov inequality gives an
inequality of the form
$\vol(B(p, \overline{r}(w'))) \ge w'' \overline{r}^3(w')$
for some $w'' = w''(w') > 0$. 

We also have
$\Rm \Big|_{B(p, \overline{r}(w'))} \: \ge
\: - \frac{1}{\overline{r}^2(w')}$. Then from
\cite[Lemma 7.2]{Perelman} or \cite[Lemma 88.1]{Kleiner-Lott},
for large $\al$ we can assume that the sectional curvatures on 
$B(p, \overline{r}(w'))$ are arbitrarily close to $- \: \frac14$.
In particular, $R_p \le 5$. Then
\begin{equation}
\vol(B(p,R_p)) \ge \vol(B(p,r)) \ge w' r^3 = w' \left( \frac{r}{R_p}
\right)^3 R_p^3 \ge  w' \left( \frac{\overline{r}(w')}{5}
\right)^3 R_p^3.
\end{equation}
If $\al$ is sufficiently large then we conclude that $p \in 
f_{t^\al}(Y_{t^\alpha})$,
which is a contradiction.

We now take $A(w^\prime)$ to be a number so that
Condition (4) of Theorem \ref{thmmain2} holds for all $M^\al$.
From the preceding discussion, there is a finite such number.
Then for large $\al$, all of the hypotheses of Theorem 
\ref{thmmain2} hold.  The theorem follows.
\end{proof}

Theorems \ref{hyperbolic} and \ref{graph}, along with the
description of how $M_t$ changes under surgery
\cite[Section 3]{Perelman2},\cite[Lemma 73.4]{Kleiner-Lott}, imply 
Thurston's geometrization conjecture.

\section{Local collapsing without derivative bounds}
\label{sec-shioyayamaguchi}

In this section, we explain how one can remove the
bounds on derivatives of curvature
from the hypotheses of
Theorem \ref{thmmain}, to obtain:

\begin{theorem}
\label{thm-generalcollapsing}
There exists a $w_0\in (0,c_3)$ such that if $M$ is a closed,
orientable, Riemannian  $3$-manifold satisfying
\begin{equation}
\vol(B(p,R_p))<w_0R_p^3
\end{equation}
for every $p\in M$, then $M$ is a graph manifold.
\end{theorem}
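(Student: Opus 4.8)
The plan is to run the entire proof of Theorem \ref{thmmain} again, verbatim, replacing each appeal to $C^K$-precompactness by an appeal to Gromov--Hausdorff precompactness for spaces with a lower curvature bound, supplemented where necessary by Perelman's Stability Theorem \cite{Perelman,Kapovitch} and by the classification of the low-dimensional nonnegatively curved Alexandrov spaces that arise.

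First I would set up the contradiction argument exactly as in Lemma \ref{contra}. If Theorem \ref{thm-generalcollapsing} is false, then for each $\al\in\Z^+$ there is a closed orientable Riemannian $3$-manifold $(M^\al,g^\al)$ which is not a graph manifold and which satisfies $\vol(B(p,R_p))<\frac1\al R_p^3$ for every $p\in M^\al$; equivalently $\{(M^\al,g^\al)\}$ satisfies condition (1) of Standing Assumption \ref{assumptions} with $w_0=\frac1\al$, but not necessarily condition (2). The goal is to derive a contradiction.

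The key observation is that essentially the whole construction of Sections \ref{sec-scalefunction}--\ref{sec-extracting} makes no use of condition (2). Indeed, the construction of the scale function $\r$ and its $\Lambda$-Lipschitz bound (Corollary \ref{fake}), the volume lower bound $\vol(B(p,\r_p))\ge\widehat w\,\r_p^3$ (Lemma \ref{lem-rpconsequences}(1)), Lemma \ref{lemsicloseto2d}, and the entire apparatus of strainers, approximate splittings, adapted coordinates, compatibility and alignment (the material on splittings, strainers and adapted coordinates), the stratification (Section \ref{sec-stratification}), the selection of ball covers, the embedding $\e^0$ and its adjustments $\e^1,\e^2,\e^3$ (Sections \ref{sec-mapping}--\ref{sec-adjusting}), the theory of cloudy manifolds (Appendix \ref{sec-cloudy}), the extraction of the decomposition $M=M^{\zeroball}\cup M^{\slim}\cup M^{\edge}\cup M^{\twostratum}$ (Section \ref{sec-extracting}, Propositions \ref{prop-adjustment} and \ref{prop-decompositionprop}), and the final graph-manifold recognition (Section \ref{sec-proof}) are all statements about metric spaces with a \emph{lower} curvature bound and about distance functions on them (via Grove--Shiohama critical point theory); none of them used an upper curvature bound or bounds on $\nabla^k\Rm$. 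Accordingly, in place of Lemma \ref{lem-rpconsequences}(3) and Lemma \ref{smoothconv} one uses their Gromov--Hausdorff analogues: by Remark \ref{addedconv} and Lemmas \ref{alexlimit} and \ref{limitsplits}, the rescaled pointed manifold $(\frac{1}{\r_p}M,p)$ is $\eps$-close in the pointed Gromov--Hausdorff topology to a pointed nonnegatively curved Alexandrov space (which, by Lemma \ref{lemsicloseto2d}, is itself $\si$-close to one of dimension $\le 2$), and in Lemma \ref{lemgoodannulus} one obtains a scale $r^0_p$ for which $(\frac1{r^0_p}M,p)$ is Gromov--Hausdorff close to the Tits cone of a nonnegatively curved Alexandrov space $N_p$ with at most one end, rather than to the Tits cone of a nonnegatively curved $3$-manifold. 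All of the maps $\eta_p$, $\zeta_p$, $\e$ and the decomposition of Proposition \ref{prop-decompositionprop} are then produced by the verbatim arguments; what is not yet justified in this setting is the identification of the diffeomorphism types of the $M^{\zeroball}$-components, the slim fibers, and the edge fibers.

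The only genuinely new step is to reestablish these three local-topology facts: (i) the slim $1$-stratum fiber is $S^2$ or $T^2$ (the analogue of Lemma \ref{lem-slimtopology}); (ii) the edge fiber is $D^2$ (the analogue of Lemma \ref{lem-edgetopology}); and (iii) $B(p_i,r^0_{p_i})$ is diffeomorphic to one of the prescribed manifolds and $N_{p_i}$ has at most one end (the analogues of Lemma \ref{lem-existsetap0stratum}(4) and Lemma \ref{0-ballseparation}). In each case one argues by contradiction, extracts a pointed Gromov--Hausdorff limit which is a nonnegatively curved Alexandrov space --- in cases (i) and (ii) a complete nonnegatively curved Alexandrov surface, in case (iii) a nonnegatively curved $3$-dimensional Alexandrov space with at most one end --- applies the Stability Theorem \cite{Perelman,Kapovitch} to transfer the topology of a large ball in the limit to the approximating Riemannian balls (exactly the role played by $C^K$-convergence in Lemmas \ref{lem-isotopic1} and \ref{lem-isotopic2} in the smooth proof), and then invokes the classification of the relevant Alexandrov spaces. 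For $2$-dimensional limits this classification is elementary --- topologically a complete nonnegatively curved Alexandrov surface is $S^2$, $\R P^2$, $\R^2$, a twisted line bundle over $S^1$, $S^1\times\R$, or $T^2$, and the half-plane-like ones that occur near edge points give a disk --- so cases (i) and (ii) go through as in Lemmas \ref{surfacelemma}, \ref{lem-slimtopology} and \ref{lem-edgetopology}. For case (iii) one needs the topology of nonnegatively curved $3$-dimensional Alexandrov spaces: for noncompact ones this is the theorem of Shioya--Yamaguchi \cite{Shioya-Yamaguchi}, and for compact ones it follows from Simon \cite{Simon} (or from the Ricci-flow proof of the elliptization conjecture, using the finite-time extinction results of Perelman and of Colding--Minicozzi); in every case the possibilities for $N_{p_i}$ coincide with those listed in Lemma \ref{topology}. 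With these substitutions the recognition argument of Section \ref{sec-proof} applies word for word and shows that $M=M^\al$ is a graph manifold for large $\al$, contradicting the choice of the $M^\al$.

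The main obstacle is thus external to the construction: it is the correct invocation of the Stability Theorem --- which requires first passing to a subsequence along which the relevant rescaled balls converge to a nonnegatively curved Alexandrov space \emph{of the same dimension} (a routine compactness step needed to rule out a drop in dimension), and which must be applied in the locally complete/incomplete setting of Subsection \ref{subsec-alexandrov}, the relevant balls and annuli being precompact with complete closures --- together with the quotation of the cited classification results for $3$-dimensional nonnegatively curved Alexandrov spaces. Everything else is bookkeeping: one verifies, section by section, that condition (2) was used only through the three topology lemmas above, and reroutes those through Stability plus classification. This is carried out in Section \ref{sec-shioyayamaguchi}.
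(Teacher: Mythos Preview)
Your proposal is correct and matches the paper's own argument in Section \ref{sec-shioyayamaguchi} almost exactly: rerun the construction using only the lower curvature bound, and repair the handful of local-topology identifications by passing to Alexandrov limits, invoking the Stability Theorem, and quoting the Shioya--Yamaguchi and Simon classifications for $3$-dimensional nonnegatively curved Alexandrov spaces. The only noteworthy difference is that for the edge fiber (your case (ii), Lemma \ref{lem-edgetopology}) the paper does not use the Stability Theorem but instead invokes the Grove--Petersen linear local contractibility theorem \cite{Grove-Petersen} to promote the Gromov--Hausdorff approximation $\phi$ to a nearby continuous map and build an explicit homotopy equivalence between the fiber $F=\eta_p^{-1}(0)$ and the ball $B(\star_Z,4\Delta)$ in the $2$-dimensional Alexandrov limit $Z$, concluding that $F$ is a contractible compact surface with boundary and hence a $2$-disk; your route via Stability would also succeed, so this is a minor technical variation within the same overall strategy rather than a genuinely different approach.
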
 

The bounds on the derivatives of curvature
are only used to obtain pointed 
$C^{K}$-limits
of sequences 
at the (modified) volume scale.  This occurs in Lemmas \ref{lem-edgetopology}
and \ref{lemgoodannulus}.  We explain how to adapt the statements
and proofs.

\subsection*{Modifications in Lemma \ref{lem-edgetopology}}  The statement
of the Lemma
does not require modification. 
In the proof, the map
$\phi$ will be a Gromov-Hausdorff approximation rather than a 
$C^{K+1}$-map
close to an isometry, and $Z$
will be a complete $2$-dimensional nonnegatively curved Alexandrov space.
As critical point theory for functions works the same way for
Alexandrov spaces as for Riemannian manifolds, 
and $2$-dimensional Alexandrov spaces
are topological manifolds, the statement and proof of 
Lemma \ref{surfacelemma} remain valid for $2$-dimensional Alexandrov
spaces.  The main difference in the proof 
of Lemma \ref{lem-edgetopology} is the method for
verifying the fiber topology.  For this, we use:

\begin{theorem}[Linear local contractibility \cite{Grove-Petersen}]
\label{thm-Grove-Petersen}
For every $w\in (0,\infty)$ and every positive
integer $n$, there exist  $r_0\in (0,\infty)$ and
$C\in (1,\infty)$  with the following property.
If $B(p,1)$ is a unit ball with compact closure in a 
Riemannian $n$-manifold, 
$\Rm \Big|_{B(p,1)} \ge -1$ 
and $\vol(B(p,1))\geq w$ then
the inclusion $B(p,r)\ra B(p,Cr)$ is null-homotopic
for every $r\in (0,r_0)$.
\end{theorem}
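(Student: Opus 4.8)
The plan is to argue by contradiction and rescaling, reducing the assertion to a fixed model limit in which the contraction is produced by hand. First I would negate the conclusion: if the theorem fails for some $w$ and $n$, then taking $r_0=C^{-1}=i^{-1}$ we obtain, for each $i$, a pointed Riemannian $n$-manifold $(M_i,p_i)$ with $\overline{B(p_i,1)}$ compact, $\Rm\big|_{B(p_i,1)}\ge -1$ and $\vol(B(p_i,1))\ge w$, together with $r_i<i^{-1}$ for which $B(p_i,r_i)\hookrightarrow B(p_i,ir_i)$ is not null-homotopic. Rescaling the metric by $r_i^{-2}$ and writing $\tilde M_i=r_i^{-1}M_i$, one has $\Rm\big|_{B(p_i,r_i^{-1})}\ge -r_i^2$, so the curvature lower bound goes to zero on balls exhausting $\tilde M_i$; by the Bishop-Gromov inequality (using $r_i\le 1$ and the volume lower bound at unit scale) one gets $\vol_{\tilde M_i}(B(p_i,1))\ge c(n,w)>0$; and $B(p_i,1)\hookrightarrow B(p_i,i)$ is still not null-homotopic. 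By Lemma \ref{alexlimit} a subsequence converges in the pointed Gromov-Hausdorff topology to a pointed complete nonnegatively curved Alexandrov space $(X,x_\infty)$, which is non-collapsed, i.e. of Hausdorff dimension $n$, because of the uniform volume bound at unit scale.

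The second step is to observe that the limit $(X,x_\infty)$ has its own linear local contractibility near $x_\infty$: there are $\rho(X)>0$ and $\lambda(X)\ge 1$ such that $B(x,s)\hookrightarrow B(x,\lambda s)$ is null-homotopic in $X$ for every $x\in\overline{B(x_\infty,1)}$ and every $s\le\rho(X)$. This follows from the local conical structure of finite-dimensional Alexandrov spaces (see \cite[Chapter 10]{Burago-Burago-Ivanov}): near each point $X$ is topologically a cone over its space of directions, and since the tangent cone is scale-invariant the contractibility is linear in the scale, while a compactness argument over the compact set $\overline{B(x_\infty,1)}$ makes the constants uniform. Covering $\overline{B(x_\infty,1)}$ by finitely many such small balls and contracting them one at a time, one concludes that the inclusion $\overline{B(x_\infty,1)}\hookrightarrow B(x_\infty,R(X))$ is null-homotopic for some $R(X)<\infty$.

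The third step — and this is the main obstacle — is to transfer this null-homotopy from $X$ back to the approximators $\tilde M_i$, so that for large $i$ the inclusion $B(p_i,1)\hookrightarrow B(p_i,R(X)+1)\subset B(p_i,i)$ becomes null-homotopic, contradicting the construction of $\tilde M_i$ once $i>R(X)+1$. The difficulty is that Gromov-Hausdorff approximations are not continuous maps, so a homotopy in $X$ does not transport verbatim. I see two routes. One is to invoke Perelman's Stability Theorem \cite{Perelman,Kapovitch} in its version for metric balls in non-collapsed Alexandrov spaces, which for large $i$ provides a homeomorphism $B(p_i,2)\to B(x_\infty,2)$ close to the given approximation and compatible with the larger balls; then the null-homotopy transfers directly. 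The other, in the spirit of the original argument of \cite{Grove-Petersen} and avoiding stability, is to build the contraction in $\tilde M_i$ directly by induction over the skeleta of a fine triangulation of $B(p_i,1)$: contract vertices along minimal geodesics to $p_i$, then extend the resulting partial homotopy over higher skeleta, using at each stage that the volume lower bound (via Bishop-Gromov) forces small geodesic loops to bound small disks inside slightly larger balls — the statement one is proving, but applied at a coarser scale where it is already available by induction on the number of steps, with critical point theory \cite{Grove-Shiohama} controlling the relevant distance functions. Either way, the bookkeeping required to keep every intermediate homotopy inside a controlled ball, with no injectivity radius bound available, is the technical heart; the volume hypothesis enters precisely to tame the non-uniqueness of minimal geodesics and the possible abundance of short loops.
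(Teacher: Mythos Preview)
The paper does not prove this theorem: it is quoted from \cite{Grove-Petersen} as a black box, with no argument given. So there is no ``paper's own proof'' to compare your proposal against.

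That said, your proposal has a genuine gap that you yourself flag. Your step 5 --- transferring the null-homotopy from the Alexandrov limit $X$ back to the approximators $\tilde M_i$ --- is, as you say, ``the technical heart,'' and you do not carry it out. Of the two routes you sketch: invoking Perelman's Stability Theorem would work, but it postdates Grove--Petersen and is vastly heavier than what is being proved; the second route, building the contraction in $\tilde M_i$ by induction over skeleta using volume comparison to control fillings, \emph{is} essentially the original Grove--Petersen argument --- and once one does that, the entire contradiction/rescaling/limit apparatus is superfluous. The actual proof in \cite{Grove-Petersen} is direct: one uses Bishop--Gromov to show that under the volume lower bound, small metric balls have controlled ``filling radius'' for cycles of each dimension, and then contracts skeleton by skeleton, never passing to a limit. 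Your limit argument does not shortcut this; it only relocates the same difficulty to the transfer step.
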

This uniform contractibility may be used to promote a  Gromov-Hausdorff
approximation $f_0$ to a nearby continuous map $f$: one first restricts
$f_0$ to the $0$-skeleton of a fine triangulation, and then
extends it inductively to higher skeleta simplex by simplex, 
using the controlled contractibility radius.  

\begin{lemma}
With notation from the proof of Lemma
\ref{lem-edgetopology}, the fiber $F=\eta_p^{-1}(\{0\})$ is homotopy
equivalent to
$B(\star_Z,4\De)\subset Z$. 
\end{lemma}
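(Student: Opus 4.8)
The plan is to reduce the claim to showing that the total space of the edge fibration is homotopy equivalent to a disk, and then to obtain that homotopy equivalence by promoting the Gromov--Hausdorff approximation $\phi$ and its quasi-inverse $\widehat\phi$ to honest continuous maps (via Theorem~\ref{thm-Grove-Petersen}) and checking that the resulting maps are mutually homotopy-inverse, using the principle that into a uniformly locally contractible target two sufficiently close maps are homotopic.

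First I would reduce to the total space. Exactly as in the proof of Lemma~\ref{lem-edgetopology}, $\eta_p$ is a proper submersion from $T:=(\eta_p,\eta_{E'})^{-1}\big((-4\De,4\De)\times(-\infty,4\De]\big)$ onto $(-4\De,4\De)$: the properness follows because $T$ lies in a compact subset of the domain, and the submersion property follows from the nonvanishing of $D\eta_p$ together with the adapted-coordinate inequality, and neither argument uses curvature-derivative bounds. By Ehresmann's theorem over an interval, $T$ is diffeomorphic to $F\times(-4\De,4\De)$, so $F$ is a deformation retract of $T$; it therefore suffices to produce a homotopy equivalence $T\simeq\overline{B(\star_Z,4\De)}$. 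Here $\overline{B(\star_Z,4\De)}$ is a closed $2$-disk: since the original edge splitting $Q_p$ makes $\tfrac{1}{\r_p}M$ Gromov--Hausdorff close to $\R\times[0,C]$ with $C\ge 200\De$, and $\phi$ is Gromov--Hausdorff close to $\tfrac{1}{\r_p}M$, the space $Z$ is close (on the relevant bounded region) to an interval, and the Alexandrov version of Lemma~\ref{surfacelemma}(2) (valid as noted in the text, and applied after rescaling so that radius $4\De$ falls in the range $[1,9]$) shows $\overline{B(\star_Z,4\De)}$ is a disk. Since a disk is contractible, producing such a homotopy equivalence is the whole content.

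Next I would set up the needed tameness. After the harmless rescaling making $\Rm\ge -1$, the ball in $\tfrac{1}{\r_p}M$ carrying $T$ has a definite volume lower bound by Lemma~\ref{lem-rpconsequences}(1) (whose proof uses only Bishop--Gromov), so Theorem~\ref{thm-Grove-Petersen} applies there with a fixed linear contractibility modulus; this modulus passes to the Gromov--Hausdorff limit, so $Z$, $\R\times Z$, and the metric balls $\overline{B(\star_Z,r)}$ for $r\le 4\De$ are uniformly locally contractible with a common modulus. Also $\overline T$, being a compact smooth manifold with corners, is an absolute neighborhood retract and a neighborhood retract in $\tfrac{1}{\r_p}M$, and $\{0\}\times\overline{B(\star_Z,4\De)}$ is a neighborhood retract in $\R\times Z$; fix retractions $\rho_T$ and $\rho_B$. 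Using the inductive-over-skeleta procedure recalled before the lemma, I would promote $\phi$, on a neighborhood of $\overline T$, to a continuous map $\widetilde\phi$ uniformly close to $\phi$, and promote $\widehat\phi$, on a neighborhood of $\{0\}\times\overline{B(\star_Z,4\De)}$, to a continuous map $\widetilde{\widehat\phi}$ close to $\widehat\phi$. Composing with $\pi_Z:\R\times Z\ra Z$ and with $\rho_B,\rho_T$ gives $g=\rho_B\circ\pi_Z\circ\widetilde\phi|_T\colon T\ra\overline{B(\star_Z,4\De)}$ and $h=\rho_T\circ\widetilde{\widehat\phi}|_{\{0\}\times\overline{B(\star_Z,4\De)}}\colon\overline{B(\star_Z,4\De)}\ra T$; for $g$ one uses that on $T$ one has $\eta_p\approx 0$ and $\eta_{E'}\approx d_{\R\times\{\star_Z\}}\circ\phi\le 4\De$ up to small error, so $\widetilde\phi(T)$ stays in a small neighborhood of $\{0\}\times\overline{B(\star_Z,4\De)}$. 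Then $g\circ h$ is uniformly close to $\mathrm{id}_{\overline{B(\star_Z,4\De)}}$ and $h\circ g$ is uniformly close to $\mathrm{id}_T$, and since (for $\al$ large) the Gromov--Hausdorff errors are small relative to the contractibility moduli established above, the same skeleton-by-skeleton argument gives homotopies $g\circ h\simeq\mathrm{id}$ and $h\circ g\simeq\mathrm{id}$. Combining with the first paragraph yields $F\simeq T\simeq\overline{B(\star_Z,4\De)}\simeq B(\star_Z,4\De)$.

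The hard part will be the uniform local contractibility bookkeeping in the third paragraph: one must verify that all the auxiliary spaces ($T$ and its fibers, the metric balls in $Z$, and $\R\times Z$) are uniformly locally contractible with a single modulus, and that the Gromov--Hausdorff error at scale $\r_p$ is small compared with it, so that the ``close maps are homotopic'' principle can be invoked quantitatively. Everything else is the routine promotion-and-retraction machinery. A cosmetic technical point, easily handled by shrinking domains slightly, is that $T$ is only relatively compact (open in the $\eta_p$-direction), so one works with $\overline T$ or a slightly truncated total space when invoking the ANR properties.
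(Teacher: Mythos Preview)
Your overall strategy---promoting the Gromov--Hausdorff approximations $\phi,\widehat\phi$ to continuous maps via Theorem~\ref{thm-Grove-Petersen}, and then showing the compositions are homotopic to identity---is exactly the paper's approach. However, your detour through the total space $T$ introduces a genuine slip: the composition $h\circ g$ is \emph{not} uniformly close to $\mathrm{id}_T$. Your map $g=\rho_B\circ\pi_Z\circ\widetilde\phi|_T$ discards the $\R$-coordinate via $\pi_Z$, while $h$ reinserts it as $0$; so for $p\in T$ with $\eta_p(p)$ near $\pm 4\De$, the point $h(g(p))$ sits roughly at $\R$-coordinate $0$, a distance $\sim 4\De$ from $p$. (Relatedly, your claim ``on $T$ one has $\eta_p\approx 0$'' is false: on $T$, $\eta_p$ ranges over all of $(-4\De,4\De)$.) The fix is easy---either keep the $\R$-component and map $T$ to $(-4\De,4\De)\times\overline{B(\star_Z,4\De)}$, or observe that $h\circ g$ is close to the retraction $T\to F\hookrightarrow T$ along the fibration, which is itself homotopic to the identity---but as written the argument has a hole.

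The paper sidesteps this by working directly with $F$ rather than $T$. To land in $B(\star_Z,4\De)$ it uses the absence of critical points of $d_Y$ near $S(\star_Z,4\De)$ (the Alexandrov version of Lemma~\ref{surfacelemma}) to push the promoted map $\pi_Z\circ\phi|_F$ radially inward; to land in $F$ it uses the fibration structures given by $\eta_p$ and $(\eta_p,\eta_{E'})$ to push the promoted $\widehat\phi$ onto the fiber. These concrete geometric retractions replace your abstract ANR retractions $\rho_T,\rho_B$ and keep the two sides of the argument symmetric, so both compositions really are $C^0$-close to the respective identities and the ``close maps are homotopic'' principle applies directly.
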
 
\begin{proof}
Let $\widehat{\phi}$ be a quasi-inverse to the Gromov-Hausdorff approximation
$\phi$. 

To produce a map $F\ra B(\star_Z,4\De)$
we take  $\pi_Z\circ\phi\restr_F$, promote it to 
a continuous map as above, and then  use the absence of critical
points of $d_Y$ near $S(\star_Z,4\De)$ to homotope this
to a map  taking values in $B(\star_Z,4\De)$.
 
To get the map $B(\star_Z,4\De)\ra F$,
we apply the above procedure to promote $\widehat{\phi} \restr_F$
to a nearby continuous map $B(\star_Z,4\De)\ra \frac{1}{\r_p}M$.  Then using the
fibration structures defined by $\eta_p$  and
$(\eta_p,\eta_{E'})$, we may perturb this to a map
taking values in $F$.  

The compositions of these maps are close to the identity maps; using
a relative version of the
approximation procedure one shows that these are homotopic to identity maps.
\end{proof}

Thus we conclude that the fiber is a contractible compact
$2$-manifold with boundary, so it is a $2$-disk.

\subsection*{Modifications in Lemma \ref{lemgoodannulus}} 
In the statement of the lemma, $N_p$ is a $3$-dimensional
nonnegatively curved Alexandrov space instead of a nonnegatively
curved Riemannian manifold, and ``diffeomorphism'' is replaced by
``homeomorphism''.  

In the proof, the pointed
$C^{K}$-convergence
is replaced by pointed Gromov-Hausdorff
convergence to a $3$-dimensional nonnegatively curved Alexandrov space
$N$; otherwise, we retain the notation from the proof.   
We need:

\begin{theorem}[The Stability Theorem \cite{Perelman,Kapovitch}]
Suppose $\{(M_k,\star_k)\}$ is a sequence of Riemannian 
$n$-manifolds, such that the sectional curvature is bounded
below by a  ($k$-independent) function of the distance to the 
basepoint $\star_k$.  Let $X$ be an $n$-dimensional 
Alexandrov space with curvature bounded below, and assume that
$\phi_k:(X,\star_\infty) \ra(M_k,\star_k)$
is a $\de_k$-pointed Gromov Hausdorff approximation,
where $\de_k\ra 0$.  Then for every 
$R\in (0,\infty)$, $\eps\in (0,\infty)$, and every sufficiently
large $k$, there is a pointed map 
$\psi_k:(B(\star_\infty,R+\eps),\star_\infty)\ra (M_k,\star_k)$
which is a homeomorphism onto an open subset containing $B(\star_k,R)$,
where
$
d_{C^0} \left( \psi_k,\phi_k\restr_{B(\star_\infty,R+\eps)} \right)<\eps$.
\end{theorem}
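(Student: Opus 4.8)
The Stability Theorem quoted here is not accessible by the elementary techniques of this paper; it is a deep result of Perelman, with a detailed later exposition by Kapovitch, and the plan is to describe the shape of the argument rather than to reproduce it. The proof proceeds by induction on the dimension $n$, and the inductive statement must be strengthened to a \emph{parametrized} (relative) form — stability for families of Alexandrov spaces — so that the gluing at the end can be carried out. The two structural inputs from the theory of Alexandrov spaces are: (i) the existence and density of $(m,\de)$-strained points, together with strainer maps to $\R^m$ built from distance functions which are almost submersions; and (ii) the local conical structure, namely that a small metric ball $B(p,\rho)\subset X$ is homeomorphic to the open cone over the space of directions $\Si_p X$, which is an $(n-1)$-dimensional Alexandrov space of curvature $\geq 1$. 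One also uses Perelman's critical point theory (the fibration theorem) and his gradient-flow (``gradient exponential'') constructions, which work verbatim for Alexandrov spaces.

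First I would reduce the theorem to a local statement: it suffices to produce, for each $p\in X$, a radius $\rho_p>0$ and, for large $k$, a homeomorphism $\psi_k^p$ from $B(p,\rho_p)$ onto an open subset of $M_k$ that is $C^0$-close to $\phi_k$, in such a way that on overlaps the maps $\psi_k^p,\psi_k^q$ differ by a map controlledly close to the identity, and then patch. At a point $p$ that is $(n,\de)$-strained for small $\de$, the strainer map $f\colon B(p,\rho)\to\R^n$ is a bi-Lipschitz open map, and the composition $f\circ\widehat\phi_k$ (with $\widehat\phi_k$ a quasi-inverse of $\phi_k$) is, for large $k$, an admissible map on $M_k$ with no critical points; the fibration theorem makes it a fiber bundle, and a dimension count forces the fibers to be points, so it is a local homeomorphism, yielding $\psi_k^p$ close to $\phi_k$. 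For a general $p$, write $B(p,\rho)$ as a cone over $\Si_p X$; relate $\Si_p X$ to the spaces of directions of $M_k$ at points near $\phi_k(p)$ by a blow-up and compactness argument (so that suitably rescaled pieces of $M_k$ near $\phi_k(p)$ Gromov--Hausdorff converge to the Euclidean cone on $\Si_p X$), apply the inductive hypothesis in dimension $n-1$ to get stability of the links, and cone this off, using the gradient-flow technique to straighten the conical structure of the $M_k$'s near $\phi_k(p)$.

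The gluing step uses a controlled local contractibility estimate — which here follows from the conical structure, or more cheaply from the Grove--Petersen result quoted just above — to assemble the $\{\psi_k^p\}$ into a single pointed homeomorphism $\psi_k$: order a finite subcover, work up its nerve, and at each stage modify the partial homeomorphism on a new piece by a homotopy whose size is bounded by the contractibility radius, preserving the $C^0$-estimate. I expect the main obstacle to be precisely this bookkeeping: running the double induction (on $n$, and on the scale/complexity of the cover) while (a) propagating the quantitative $C^0$- and contractibility bounds uniformly in $k$, and (b) coping with the fact that the spaces of directions $\Si_q M_k$ need not converge to $\Si_p X$ — which is exactly why the parametrized version of the statement, together with Perelman's stability of the \emph{local} geometry, is needed rather than a naive inductive descent. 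Once all of this is in place, one obtains the pointed map $\psi_k\colon B(\star_\infty,R+\eps)\to M_k$ which is a homeomorphism onto an open set containing $B(\star_k,R)$ with $d_{C^0}\bigl(\psi_k,\phi_k\restr_{B(\star_\infty,R+\eps)}\bigr)<\eps$, as asserted.
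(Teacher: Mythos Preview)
The paper does not prove this theorem; it is quoted as a black-box result from \cite{Perelman,Kapovitch} and used in Section~\ref{sec-shioyayamaguchi} to adapt the arguments of Lemmas~\ref{lem-edgetopology} and~\ref{lemgoodannulus} to the setting without derivative-of-curvature bounds. So there is no ``paper's own proof'' to compare against.

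That said, you clearly recognized this, and your outline is a fair high-level summary of the Perelman--Kapovitch argument: the induction on dimension, the strengthening to a parametrized/relative statement, the handling of strained (regular) points via strainer maps and the fibration theorem, the conical structure at singular points with inductive descent to the space of directions, and the gluing via controlled local contractibility. One correction of emphasis: in Perelman's actual proof the key technical device is not quite ``gradient flow'' in the smooth sense but his theory of \emph{admissible maps} and the associated Morse-type lemma (regular fibers, and the handle-attachment picture at critical points), together with the notion of \emph{MCS}-spaces; the ``gradient exponential'' you mention is a tool but not the organizing principle. Also, the inductive step does not literally compare $\Sigma_q M_k$ to $\Sigma_p X$ pointwise; rather one works with the level sets of admissible maps and invokes stability for those (lower-dimensional) Alexandrov spaces. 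But as a sketch of the architecture your proposal is on target, and your identification of the main difficulty --- the parametrized statement and the uniform bookkeeping through the double induction --- is accurate.
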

Using critical point theory as before,
we get that the limiting Alexandrov space $N$ is homeomorphic
to the balls $B(p_\infty,R'')$ for $R''\in(\frac12 R',2R')$,
and there are no critical points for $d_{p_\infty}$ or $d_{p_j}$
in the respective annuli $A(p_\infty,\frac{R''}{10^3},10R'')\subset N$ and
$A(p_j, \frac{R''}{10^3},10R'')\subset \frac{1}{\r_{p_j}}M_j$.
The Stability Theorem produces a homeomorphism $\psi$
from the closed ball
$\ol{B(p_\infty,R'')}\subset M$  to a subset close to the ball 
$B(p_j,R'')\subset\frac{1}{\r_{p_j}}M_j$;
in particular, restricting $\psi$ to the sphere $S(p_\infty,R'')$
we obtain a Gromov-Hausdorff approximation from the surface
$S(p_\infty,R'')$ to the surface $S(p_j,R'')$.  Appealing to 
uniform contractibility (Theorem \ref{thm-Grove-Petersen}),
and using homotopies guaranteed by the absence of critical
points we get that $\psi\restr_{S(p_\infty,R'')}$ is close
to a homotopy equivalence.  As in the proof of Theorem
\ref{lem-existsetap0stratum}, we conclude that 
$\psi(\ol{B(p_\infty,R'')})$ is isotopic to $\ol{B(p_j,R'')}$.

Finally, we appeal to the classification of complete, 
noncompact, orientable, nonnegatively curved Alexandrov
spaces $N$, when $N$ is a noncompact topological $3$-manifold, from 
\cite{Shioya-Yamaguchi1} to conclude that the
list of possible topological types is the same as in the 
smooth case.

\begin{theorem}[Shioya-Yamaguchi \cite{Shioya-Yamaguchi1}]
If $X$ is a noncompact, orientable,
$3$-dimensional nonnegatively curved Alexandrov space
which is a topological manifold,
then $X$ is homeomorphic to one of the following:
$\R^3$, $S^1\times \R^2$, $S^2\times \R$, $T^2\times\R$,
or a twisted line bundle over $\R P^2$ or the Klein bottle.

\end{theorem}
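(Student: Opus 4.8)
The plan is to re-run the entire proof of Theorem \ref{thmmain}, retaining only the local collapsing hypothesis (i.e. condition (1) of Standing Assumption \ref{assumptions}), and to isolate precisely where the derivative bounds were used. They enter the argument only through the $C^K$-precompactness of Lemma \ref{smoothprecompactness}, which was invoked to produce $C^K$-smooth nonnegatively curved local models in Lemmas \ref{lem-rpconsequences}(3) and \ref{smoothconv}; and those smooth models are used \emph{substantively} in exactly two places: to pin down the topology of the edge fibers in Lemma \ref{lem-edgetopology}, and to pin down the topology of the $0$-stratum balls in Lemma \ref{lemgoodannulus}. Everywhere else the construction -- the scale function $\mathfrak{r}$, the stratification, the adapted coordinates, the selection of ball covers, the map $\e^0$ and its adjustments, and the extraction of the decomposition in Proposition \ref{prop-decompositionprop} -- uses only a lower curvature bound, Gromov-Hausdorff closeness and critical point theory, all of which are available for Alexandrov spaces. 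So the first step is to set up the contradiction hypothesis as in Standing Assumption \ref{assumptions} with condition (2) deleted, and to observe (as already noted in Remark \ref{addedconv}) that all $C^K$-smooth models $N_p$ are now replaced by pointed nonnegatively curved Alexandrov spaces of the appropriate dimension, every occurrence of ``$C^{K+1}$-close to an isometry'' is replaced by ``Gromov-Hausdorff approximation'', and ``diffeomorphic'' is replaced by ``homeomorphic'' in the statements of Lemmas \ref{lem-edgetopology}, \ref{lemgoodannulus} and \ref{lem-existsetap0stratum}.

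\textbf{Edge fibers.} In the modified Lemma \ref{lem-edgetopology}, the map $\phi$ is merely a Gromov-Hausdorff approximation onto a neighborhood in $\R\times Z$, where $Z$ is a complete $2$-dimensional nonnegatively curved Alexandrov space, hence a topological surface; the Alexandrov version of Lemma \ref{surfacelemma} (whose proof uses only critical point theory and the topological classification of nonnegatively curved surfaces) still shows that $\overline{B(\star_Z,4\De)}$ is a closed $2$-disk. Using the linear local contractibility theorem of Grove--Petersen (Theorem \ref{thm-Grove-Petersen}) to promote $\phi$ and a quasi-inverse $\widehat\phi$ to genuine continuous maps on fine triangulations, and using the fibration structures defined by $\eta_p$ and $(\eta_p,\eta_{E'})$ to perturb these into the fiber $F=\eta_p^{-1}(0)$ and into $B(\star_Z,4\De)$ respectively, one checks that these maps are mutually inverse homotopy equivalences. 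Hence $F$ is a contractible compact $2$-manifold with boundary, i.e. a $2$-disk, and the fibration statement follows from the submersion argument as before.

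\textbf{$0$-stratum balls.} In the modified Lemma \ref{lemgoodannulus}, the pointed $C^K$-limit is replaced by a Gromov-Hausdorff limit $N$, a $3$-dimensional nonnegatively curved Alexandrov space. Critical point theory gives scales $r^0_p\in[\Upsilon_0\mathfrak{r}_p,\Upsilon_0'\mathfrak{r}_p]$ with no critical points of the distance function in the relevant annuli of $N$ and of $\tfrac{1}{\mathfrak{r}_p}M$; the Stability Theorem of Perelman--Kapovitch then supplies homeomorphisms from balls in $N$ onto balls in $\tfrac{1}{\mathfrak{r}_p}M$, and an argument as in Lemma \ref{lem-existsetap0stratum} (again promoting the boundary-sphere Gromov-Hausdorff approximation to a homotopy equivalence via contractibility, and using the absence of critical points) identifies $B(p,r^0_p)$ with $N$ up to homeomorphism. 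It then remains to classify the $3$-dimensional nonnegatively curved Alexandrov spaces that occur as such limits and are topological manifolds: in the noncompact case this is the Shioya--Yamaguchi classification, which yields exactly the list of Lemma \ref{topology}; in the compact case one invokes Simon's work, or equivalently the Ricci flow proof of the elliptization conjecture, to conclude that the list is again the smooth one. With Lemmas \ref{lem-edgetopology} and \ref{lemgoodannulus} re-established in the Alexandrov setting, Sections \ref{sec-mapping}--\ref{sec-proof} go through unchanged and produce the decomposition of Proposition \ref{prop-decompositionprop}, hence show $M$ is a graph manifold, contradicting the hypothesis; thus a $w_0$ as required in Theorem \ref{thm-generalcollapsing} exists.

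\textbf{Main obstacle.} The hard part will be the bookkeeping in the topological identifications: promoting Gromov-Hausdorff approximations to honest continuous maps with controlled contractibility radii, then homotoping and perturbing them so that they are simultaneously compatible with all the fibration structures in play (so that restrictions to fibers are well-defined and the compositions are provably homotopic to identity maps), and confirming that the resulting homeomorphism types are stable under passage to the limit. Once these two critical lemmas are secured, the remaining work is the routine verification that no other step of the proof of Theorem \ref{thmmain} secretly used the curvature-derivative bounds.
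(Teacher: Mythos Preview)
Your proposal does not prove the stated theorem. The statement in question is the Shioya--Yamaguchi classification of noncompact orientable $3$-dimensional nonnegatively curved Alexandrov spaces that are topological manifolds. In the paper this is a \emph{cited} result from \cite{Shioya-Yamaguchi1}; it is not proved here but is invoked as a black box in Section \ref{sec-shioyayamaguchi}. What you have written is instead an outline of the proof of Theorem \ref{thm-generalcollapsing}, and in the course of that outline you explicitly \emph{use} the Shioya--Yamaguchi theorem as an ingredient (``in the noncompact case this is the Shioya--Yamaguchi classification, which yields exactly the list of Lemma \ref{topology}''). So your argument is circular with respect to the stated target: it assumes the very result it purports to establish.

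To actually prove the Shioya--Yamaguchi theorem one needs a completely different argument, internal to Alexandrov geometry: one develops an analogue of the soul construction for nonnegatively curved Alexandrov spaces, showing that such a space deformation retracts onto a compact totally convex ``soul'' of lower dimension, and then analyzes the normal structure around the soul to obtain the bundle description. This requires the structure theory of Alexandrov spaces (strainers, the gradient flow of concave functions, Sharafutdinov-type retractions in the singular setting) and the classification of low-dimensional nonnegatively curved Alexandrov spaces; none of this appears in your proposal. Your outline is a reasonable sketch of Section \ref{sec-shioyayamaguchi}, but it is addressed to the wrong theorem.
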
 

When $N$ is compact, we may
apply the main theorem of \cite{Simon} to see that the 
topological classification is the same as in the smooth case.
Alternatively, using the splitting theorem,
one may reduce to the case when $N$ has finite fundamental
group and use the elliptization conjecture (now a theorem
via  Ricci flow due to finite extinction time results).

\begin{remark}
Theorem \ref{thm-generalcollapsing} implies the 
collapsing result stated in the appendix of 
\cite{Shioya-Yamaguchi}.   Note that 
Theorem \ref{thm-generalcollapsing} is
strictly stronger, since the curvature scale need not
be small compared to the diameter.   However, we remark that
the argument of \cite{Shioya-Yamaguchi} also gives the 
stronger result, if one uses \cite{Simon} or the elliptization
conjecture as above.
\end{remark}

\appendix

\section{Choosing ball covers} \label{secballcovers}

Let $M$ be a 
complete Riemannian manifold and let $V$ be a bounded subset of $M$.
Given $p \in V$ and $r > 0$, we write $B(p, r)$ for the 
metric ball in $M$ around $p$ of radius $r$.
Let ${\mathcal R} \: : \: V \rightarrow \R$ be a (not necessarily 
continuous)
function with range in some compact positive interval.
For $p \in V$, we denote ${\mathcal R}(p)$ by ${\mathcal R}_p$.
Put $S_1 = V$, $\rho_1 \: = \: \sup_{p \in V} {\mathcal R}_p$ and
$\rho_\infty \: = \: \inf_{p \in V} {\mathcal R}_p$.
Choose a point $p_1 \in V$ so that ${\mathcal R}_{p_1} \ge \frac12 \rho_1$.
Inductively, for $i \ge 1$, let $S_{i+1}$ be the subset of $V$ consisting of
points $p$ such that $B(p, {\mathcal R}_p)$ is disjoint from
$B(p_1, {\mathcal R}_{p_1}) \cup \ldots \cup B(p_i, {\mathcal R}_{p_i})$.
If $S_{i+1} = \emptyset$ then stop. If $S_{i+1} \neq \emptyset$,
put $\rho_{i+1} \: = \: \sup_{p \in S_{i+1}} {\mathcal R}_p$ and
choose a point $p_{i+1} \in S_{i+1}$ so that 
${\mathcal R}_{p_{i+1}} \ge \frac12 \rho_{i+1}$.
This process must terminate after a finite number of steps, as 
the $\rho_1$-neighborhood of $V$ 
cannot contain an infinite number of
disjoint balls with radius at least $\frac12 \rho_\infty$.

\begin{lemma} \label{ballcovers}
$\{B(p_i, {\mathcal R}_{p_i})\}$ 
is a finite disjoint collection of balls such that
$V \subset \bigcup_i B \left( p_i, 3{\mathcal R}_{p_i} \right)$. 
Furthermore, given $q \in V$, there is some $N$ so
that $q \in B \left( p_N, 3{\mathcal R}_{p_N} \right)$ and
${\mathcal R}_q \le 2 {\mathcal R}_{p_N}$.
\end{lemma}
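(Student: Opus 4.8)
The plan is to verify the three assertions of Lemma~\ref{ballcovers} directly from the construction preceding the statement, using only the stopping rule and the choice of the points $p_i$.

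First I would check disjointness and finiteness. Finiteness was essentially observed already in the paragraph before the lemma: the balls $\{B(p_i,{\mathcal R}_{p_i})\}$ all lie in the $\rho_1$-neighborhood of $V$, which is bounded, and each has radius at least $\tfrac12\rho_\infty>0$; a bounded set cannot contain infinitely many pairwise disjoint balls of radius bounded below, so the process terminates. Disjointness is immediate by induction: $p_{i+1}\in S_{i+1}$ means by definition that $B(p_{i+1},{\mathcal R}_{p_{i+1}})$ is disjoint from $B(p_1,{\mathcal R}_{p_1})\cup\dots\cup B(p_i,{\mathcal R}_{p_i})$, so each new ball is disjoint from all previously chosen ones.

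Next I would prove the covering statement, which is the only part with any content (the ``$3$'' factor). Suppose the process terminates after $m$ steps, and let $q\in V$. If $B(q,{\mathcal R}_q)$ meets $B(p_i,{\mathcal R}_{p_i})$ for some $i\le m$, fix the least such $i=N$; otherwise $q$ would lie in $S_{m+1}$, contradicting $S_{m+1}=\emptyset$, so such an $N$ exists. By minimality of $N$, the ball $B(q,{\mathcal R}_q)$ is disjoint from $B(p_1,{\mathcal R}_{p_1})\cup\dots\cup B(p_{N-1},{\mathcal R}_{p_{N-1}})$, hence $q\in S_N$, and therefore ${\mathcal R}_q\le \sup_{p\in S_N}{\mathcal R}_p=\rho_N\le 2{\mathcal R}_{p_N}$, the last inequality being the defining property of $p_N$. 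This gives the claimed bound ${\mathcal R}_q\le 2{\mathcal R}_{p_N}$. Finally, since $B(q,{\mathcal R}_q)\cap B(p_N,{\mathcal R}_{p_N})\neq\emptyset$, pick $z$ in the intersection; then
\begin{equation}
d(q,p_N)\le d(q,z)+d(z,p_N)< {\mathcal R}_q+{\mathcal R}_{p_N}\le 2{\mathcal R}_{p_N}+{\mathcal R}_{p_N}=3{\mathcal R}_{p_N},
\end{equation}
so $q\in B(p_N,3{\mathcal R}_{p_N})$. As $q\in V$ was arbitrary, $V\subset\bigcup_i B(p_i,3{\mathcal R}_{p_i})$, and the ``furthermore'' clause is exactly what was just shown for this $N$.

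I do not expect any real obstacle here; the only subtlety is getting the constant right, namely combining ${\mathcal R}_q\le 2{\mathcal R}_{p_N}$ with the triangle inequality to land inside $3{\mathcal R}_{p_N}$ rather than a larger multiple, and making sure the index $N$ is chosen \emph{minimally} so that $q\in S_N$ and hence the radius comparison via $\rho_N$ applies. Everything else is bookkeeping with the definitions of $S_i$, $\rho_i$, and the selection of $p_i$.
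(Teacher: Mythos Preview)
Your proof is correct and essentially identical to the paper's: both pick the minimal index $N$ with $B(q,{\mathcal R}_q)\cap B(p_N,{\mathcal R}_{p_N})\neq\emptyset$, use $q\in S_N$ to get ${\mathcal R}_q\le\rho_N\le 2{\mathcal R}_{p_N}$, and then apply the triangle inequality for the $3{\mathcal R}_{p_N}$ bound. You even supply a bit more detail on finiteness and disjointness than the paper does.
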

\begin{proof}
Given $q \in V$, we know that
$B(q, {\mathcal R}_q)$ intersects 
$\bigcup_i B(p_i, {\mathcal R}_{p_i})$.
Let $N$ be the smallest
number $i$ such that $B(q, {\mathcal R}_q)$ intersects 
$B(p_i, {\mathcal R}_{p_i})$.
Then $q \in S_{N}$ and so $\rho_N \ge {\mathcal R}_q$. Thus 
${\mathcal R}_{p_N} \ge 
\frac12 \rho_N \ge \frac12 {\mathcal R}_q$. As 
$B(q, {\mathcal R}_q)$ intersects 
$B(p_N, {\mathcal R}_{p_N})$,
we have $d(q, p_N) <
{\mathcal R}_q + {\mathcal R}_{p_N} \le 3 {\mathcal R}_{p_N}$.
\end{proof}

\section{Cloudy submanifolds}
\label{sec-cloudy}

In this section we define the notion of a cloudy $k$-manifold.
This is a subset of a Euclidean space with the property that near
each point, it looks coarsely close to an affine subspace of the
Euclidean space. The result of this appendix is that any
cloudy $k$-manifold can be well interpolated by a smooth $k$-dimensional
submanifold of the Euclidean space.

If $H$ is a Euclidean space, let $\Gr(k, H)$ denote the Grassmannian
of codimension-$k$ subspaces of $H$.  It is metrized by saying that
for $P_1, P_2 \in \Gr(k, H)$, if $\pi_1, \pi_2 \in \End(H)$ are
orthogonal projection onto $P_1$ and $P_2$, respectively, then
$d(P_1, P_2)$ is the operator norm of 
$\pi_1 - \pi_2$. If $H^\prime$ is
another Euclidean space then there is an isometric embedding
$\Gr(k, H) \rightarrow \Gr(k, H \oplus H^\prime)$. If $X$ is a
$k$-dimensional submanifold of $H$ then the 
normal
 map of $X$
is the map $X\rightarrow \Gr(k, H)$ which assigns to
$p\in X$ the normal space of $X$ at $p$.

\begin{definition}
\label{def-cloudykmanifold}
Suppose $C,\,\de\in(0,\infty)$,  $k\in\N$, 
and $H$ is a Euclidean space.
  A {\em $(C,\de)$ cloudy $k$-manifold in $H$} is a triple $(\widetilde{S},S,r)$, where
$S\subset \widetilde{S}\subset H$ is a pair of subsets,
and 
$r \: : \: \widetilde{S} \ra(0,\infty)$ is a (possibly discontinuous) function such that: 
\begin{enumerate}
\item For all $x,y \in \widetilde{S}$,
$|r(y) - r(x)| \le C (|x-y| +  r(x))$.
\item For all $x\in S$, the rescaled pointed subset 
$(\frac{1}{r(x)} \widetilde{S},x)$
is $\de$-close in the pointed Hausdorff distance to
$(\frac{1}{r(x)} A_x,x)$, where $A_x$ is a $k$-dimensional 
affine subspace of $H$.  Here, as usual, 
$\frac{1}{r(x)} \widetilde{S}$ means the subset $\widetilde{S}$
equipped with the distance function of $H$ rescaled by 
$\frac{1}{r(x)}$.
\end{enumerate}

We will sometimes say informally that a pair $(\widetilde{S},S)$ is a {\em cloudy $k$-manifold}
if it can be completed to a triple 
$(\widetilde{S},S,r)$ which is a $(C,\de)$ cloudy $k$-manifold for
some $(C,\de)$. We will write $A^0_x \subset H$ for the $k$-dimensional
linear subspace parallel to $A_x$
and we will write $\pi_{A^0_x}$ for orthogonal projection onto
$A^0_x$.
Let $P_{A_x} \: : \: H \rightarrow H$ be the nearest point projection to
$A_x$, given by 
$P_{A_x}(y) = x + \pi_{A^0_x}(y-x)$
\end{definition}

\begin{lemma} \label{approxlemma}
For all $k,K \in  \Z^+$, $\eps\in(0,\infty)$ and 
$C < \infty$,
there  is a $\de = \de(k,K,\eps,C) > 0$ with the following
property.   Suppose
 $(\widetilde{S},S,r)$ is a $(C,\de)$ cloudy $k$-manifold
in a Euclidean space $H$, and for every $x\in S$ we denote
by $A_x$ an affine subspace as in Definition 
\ref{def-cloudykmanifold}.
Then  
 there is a $k$-dimensional smooth submanifold
$W\subset H$ such that
\begin{enumerate}
\item 
For all $x\in S$, 
the pointed Hausdorff distance from 
$(\frac{1}{r(x)} \widetilde{S},x)$ to $(\frac{1}{r(x)} W,x)$
is at most $\eps$.
\item $W \subset N_{\eps r}( \widetilde{S})$.  
\item For all $x \in S$,  the restriction 
of the normal map of $W$ to
$B(x, r(x)) \cap W$ has image contained in
an $\epsilon$-ball of $A_x^\perp$ in $\Gr(k, H)$.
\item If $I$ is a multi-index with $|I| \le K$ then the $I^{th}$ covariant
derivative of the second fundamental form of $W$ at $w$ is bounded in norm by
$\epsilon \, r(x)^{-(|I|+1)}$.
\item $W \cap N_{r}(S)$ is properly embedded in $N_{r}(S)$.
\item 
The nearest point map
$P \: : \: N_{r}(S) \rightarrow W$ is a well-defined smooth submersion.
\item
If $I$ is a multi-index with $1 \le |I| \le K$ then for all $x \in S$, 
the restriction of 
$P - P_{A_x}$
to $B(x, r(x))$ has $I^{th}$ derivative bounded in norm by
$\epsilon \, r(x)^{-(|I|-1)}$.
\end{enumerate}
\end{lemma}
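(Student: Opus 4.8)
\textbf{Proof strategy for Lemma \ref{approxlemma}.}
The plan is to construct $W$ by a single transversality argument carried out in the universal bundle over the Grassmannian, combined with a partition-of-unity smoothing of the ``local plane field'' data. First I would fix a maximal $r$-separated net $\{x_i\}_{i\in I}$ in $S$ (with respect to the rescaled-at-$x_i$ metric, using condition (1) of Definition \ref{def-cloudykmanifold} to control how $r$ varies) so that the balls $B(x_i, \tfrac12 r(x_i))$ are disjoint and $\{B(x_i, 2r(x_i))\}$ covers a neighborhood of $S$; by condition (1) the multiplicity of the cover $\{B(x_i, 10 r(x_i))\}$ is bounded by a constant depending only on $C$ and $k$, and on any such ball the values $r(x_j)$ for overlapping indices are comparable. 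Associated to each $x_i$ we have the affine $k$-plane $A_{x_i}$, and its direction $A^0_{x_i}\in\Gr(k,H)$. Using a partition of unity $\{\chi_i\}$ subordinate to this cover, with $|\nabla^{|I|}\chi_i|\le c\,r(x_i)^{-|I|}$, I would form a smooth map $f\colon N_{cr}(S)\to H$ as a convex combination of the nearest-point projections onto the $A_{x_i}$, i.e.\ $f(y)=\sum_i \chi_i(y)\,\pi_{A_{x_i}}(y)$. Because the cloudy condition forces the $A_{x_i}$ for nearby $i$ to be $O(\delta)$-close as affine planes (any two of them must both $\delta r$-approximate $\widetilde S$ near the overlap, hence approximate each other), $f$ is, after rescaling at any $x\in S$, $O(\delta)$-$C^1$-close to a single orthogonal projection onto a fixed $k$-plane, with uniform higher-derivative bounds. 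Thus $f$ is a submersion onto its image near $S$, and $W_0 := f(N_{cr}(S))$ is a $C^\infty$ $k$-dimensional immersed submanifold satisfying (1)--(4) with $\eps$ replaced by a function of $\delta$ that tends to $0$ as $\delta\to 0$.

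The remaining issue is embeddedness: a priori $W_0$ is only immersed, since two far-apart ``sheets'' of the cloud could be close in $H$. Here I would invoke the separation already built into the applications — but within this appendix, the clean way is to restrict to $N_r(S)$ and observe that condition (2) of Definition \ref{def-cloudykmanifold} is only assumed at scales $\le r(x)$, so self-intersections of $W_0$ at distance $\ll r$ are excluded by the local planarity, and self-intersections at distance $\gtrsim r$ are simply cut off by passing to $W := W_0\cap N_{r/2}(S)$. More precisely I would use the standard fact that an immersed submanifold which is locally, at scale $r(x)$ around each of its points, a $\delta r(x)$-Lipschitz graph over a $k$-plane, is embedded in $N_{cr}(S)$ once $\delta$ is small: any two points of $W$ within distance $r(x)$ lie on the same graph patch. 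This gives properness (5). For (6), the tubular-neighborhood / nearest-point projection $P$ is well-defined and smooth on $N_{r}(S)$ provided the normal injectivity radius of $W$ is $\ge c\,r$, which follows from the curvature bound (4) (giving a lower bound on the focal radius) together with the embeddedness just established (giving a lower bound on the distance between distinct sheets), both at scale $r$. For (7), I would differentiate the identity defining $P$ (namely $P(y)-y \perp T_{P(y)}W$) and use that near $x\in S$ the tangent spaces of $W$ are $O(\delta)$-close to $A^0_x$, so that $DP$ is $O(\delta)$-close to $\pi_{A^0_x}$, with the higher-order estimates following from (4) and the implicit function theorem with uniform bounds.

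I would organize the quantifiers as follows: given $k,K,\eps,C$, choose $\delta$ small enough (depending on all of these, via the finitely many comparison constants coming from the net, the multiplicity bound, and the derivative bounds on the standard cutoff $\phi$) that every ``$O(\delta)$'' above is $<\eps$. All the estimates are scale-invariant in the sense that rescaling $H$ by $\lambda$ rescales $r$ by $\lambda$ and leaves the cloudy-manifold condition with the same $(C,\delta)$, so it suffices to prove everything at the normalized scale $r(x)=1$ and then track the powers of $r(x)$ in the derivative bounds, which is exactly the weighting appearing in (4) and (7).

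\textbf{Main obstacle.} The technical heart is establishing the uniform geometric bounds on $W$ — embeddedness, the lower bound on the normal injectivity radius, and the well-definedness of $P$ on the full neighborhood $N_r(S)$ — purely from the scale-$r$ planarity hypothesis, since the cloudy condition says nothing about the global shape of $\widetilde S$; one must be careful that distinct sheets of the cloud that happen to pass near each other at scale $r$ do not spoil properness of $W\cap N_r(S)$, and the cutoff to $N_{r/2}(S)$ together with the multiplicity bound is what makes this work. The partition-of-unity construction of $f$ and the derivative estimates (3),(4),(7) are then routine once the comparability of the $A_{x_i}$ and the bounded overlap are in hand.
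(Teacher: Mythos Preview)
Your net selection, multiplicity bound, and comparability-of-planes reasoning are all correct and match the paper. The gap is in how you extract $W$. You define $f(y)=\sum_i\chi_i(y)\,\pi_{A_{x_i}}(y)$ and set $W_0=f(N_{cr}(S))$, asserting this image is a $k$-dimensional immersed submanifold. But $C^1$-closeness to a rank-$k$ projection does \emph{not} force $Df$ to have rank exactly $k$; generically $Df$ will have full rank on an open set, and the image of a full-dimensional neighborhood will then be full-dimensional, not $k$-dimensional. Concretely: with two net points $x_1,x_2$ on a line in $\R^2$, parallel approximating lines $A_{x_1}=\{y=0\}$, $A_{x_2}=\{y=\delta\}$, and radial cutoffs $\chi_i(p)=\phi(|p-x_i|)$, one computes $f(x,y)=\bigl(x,\tfrac{\chi_2}{\chi_1+\chi_2}\,\delta\bigr)$; since the normalized $\chi_2$ depends nontrivially on $y$ away from the perpendicular bisector, the image over a fixed $x$ is a short interval, so $f(N_{cr}(S))$ is a $2$-dimensional strip, not a curve. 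Your opening sentence actually names the fix --- ``transversality in the universal bundle over the Grassmannian'' --- but your construction does not implement it.

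The paper carries out exactly that universal-bundle idea. From the same net and cutoffs it first averages the normal \emph{projections} $p_{\hat x}$ to get a self-adjoint operator $O_v$, reads off a smooth codimension-$k$ plane field $\nu(v)$ from its top eigenspace, then forms $\eta(v)=\sum\phi_{\hat x}(v)\,Q_v(v-\hat x)\big/\sum\phi_{\hat x}(v)\in\nu(v)$ with $Q_v$ the projection onto $\nu(v)$. The map $\pi(v)=(\nu(v),\eta(v))$ lands in the total space $E(k,H)$ of the tautological bundle, and $W$ is the \emph{preimage} of the zero section. Transversality is immediate because in the model (all $A_{\hat x}$ equal) $\eta$ is just orthogonal projection onto $A^\perp$, and this persists for small $\delta$. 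Defining $W$ as a transverse preimage rather than an image is what makes it automatically a smooth embedded $k$-submanifold, and then your (correct) convergence/rescaling argument handles conclusions (1)--(7). So: keep your setup, but replace ``take the image of the averaged affine projection'' by ``take the zero set of the averaged normal offset, organized as a section of the universal bundle''.
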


\begin{figure}[h]
\begin{center}
 
\input{B2.pspdftex}
 
\caption{}
\end{center}
\end{figure}
\begin{proof}
With the notation of Lemma \ref{ballcovers}, put 
$V = S$ and ${\mathcal R} = r$. Let
$T$ be the finite collection of points $\{p_i\}$ from 
the conclusion of Lemma \ref{ballcovers}.
Then $\{ B(\widehat{x}, r_{\widehat{x}}) \}_{\widehat{x} \in T}$ is a disjoint collection of balls
such that for any $x \in S$, there is some
$\widehat{x} \in T$ with $x \in B(\widehat{x}, 3 r(\widehat{x}))$ and
$r(x) \le 2 r(\widehat{x})$. Hence
$B(x, r(x)) \subset B(\widehat{x}, r(x) + 3 r(\widehat{x})) \subset B(\widehat{x},5 r(\widehat{x}))$.
This shows that 
$\bigcup_{x \in S} B(x, r(x)) \subset \bigcup_{\widehat{x} \in T} B(\widehat{x},5 r(\widehat{x}))$.

For each $\widehat{x} \in T$, let $A_{\widehat{x}} \subset H$ be the
$k$-dimensional affine subspace from 
Definition \ref{def-cloudykmanifold}, so that
$\left( \frac{1}{r(\widehat{x})} \widetilde{S}, \widehat{x} \right)$ is 
$\delta$-close in the pointed Hausdorff topology to
$(\frac{1}{r(\widehat{x})} A_{\widehat{x}},\widehat{x})$.
Here $\delta$ is a parameter which will eventually be made
small enough so the proof works. Let
$A^0_{\widehat{x}} \subset H$ be the $k$-dimensional 
linear subspace which is parallel to
$A_{\widehat{x}}$.  Let 
$p_{\widehat{x}} \: : \: H \rightarrow H$ be orthogonal projection onto the
orthogonal complement of $A^0_{\widehat{x}}$.

In view of the assumptions of the lemma, a 
packing argument shows that for
any $l < \infty$,
for sufficiently small $\delta$ 
there is a number $m = m(k,C, l)$ so that for all 
$\widehat{x} \in T$, there
are at most $m$ elements of $T$ in 
$B(\widehat{x}, l r(\widehat{x}))$. Fix a nonnegative
function $\phi \in C^\infty(\R)$ which is identically 
one on $[0,1]$ and
vanishes on $[2, \infty)$. For $\widehat{x} \in T$, 
define $\phi_{\widehat{x}} \: : \:
H \rightarrow \R$ by 
$\phi_{\widehat{x}}(v) \: = \: \phi \left( \frac{|v-\widehat{x}|}{10 
r(\widehat{x})} \right)$.
Let $E(k,H)$ be the set of pairs consisting of a codimension-$k$
plane in $H$ and a point in that plane.
That is, $E(k,H)$ is the total space of the universal bundle over
$\Gr(k,H)$. 
Given 
$v \in \bigcup_{\widehat{x}\in {T}}\;B(\widehat{x}, 5r(\widehat{x}))$,
put $O_v \: = \: \frac{\sum_{\widehat{x} \in T} \phi_{\widehat{x}}(v) \:
p_{\widehat{x}}}{\sum_{\widehat{x} \in T} 
\phi_{\widehat{x}}(v)}$.
Note that for small $\delta$,
there is a uniform upper bound on the number of nonzero terms in the
summation, in terms of $k$ and $C$; hence the rank of $O_v$ is also
bounded in terms of $k$ and $C$.

If $\delta$ is sufficiently
small then since the projection operators $p_{\widehat{x}}$ that occur
with a nonzero coefficient in the summation are uniformly
norm-close to each other,
the self-adjoint operator $O_v$ will have
$k$ eigenvalues near $0$, with the rest of the spectrum being near $1$.
Let $\nu(v)$ be the orthogonal complement of the span of the eigenvectors
corresponding to the $k$ eigenvalues of $O_v$ near $0$.
Let $Q_v$ be orthogonal projection onto $\nu(v)$.

Recall that $\bigcup_{x \in S} B(x, r(x)) \subset \bigcup_{\widehat{x} \in T} 
B(\widehat{x}, 5 r(\widehat{x}))$.
Define $\eta \: : \:  \bigcup_{\widehat{x} \in T} 
B(\widehat{x}, 5 r(\widehat{x}))  \rightarrow H$ by
\begin{equation}
\eta(v) \: = \: \frac{
\sum_{\widehat{x} \in T} \phi_{\widehat{x}}(v) \: Q_v(v-\widehat{x})}{
\sum_{\widehat{x} \in T} \phi_{\widehat{x}}(v)}.
\end{equation}
Define $\pi:\bigcup_{x\in S}\;B(x,r(x))\ra E(k,H)$ by
$\pi(v) \: = \: (\nu(v), \eta(v))$. 

If $\de$ is sufficiently small then $\pi$ is uniformly transverse to the zero-section of
$E(k, H)$.
Hence the inverse 
image under $\pi$ of the zero section  will be a $k$-dimensional 
submanifold $W$. The map $P$ is defined as in the statement of the lemma.

The conclusions
of the lemma follow from a convergence argument.  For example, for conclusion
(3), suppose that there is a sequence $\delta_j \rightarrow 0$ and
a collection of counterexamples to conclusion (3). Let $x_j \in S_j$ be the
relevant point.  In view of the multiplicity bounds, we can assume without loss of
generality that the dimension of the Euclidean space is uniformly bounded
above. Hence after passing to a subsequence, we can pass to the case when
$\dim(H_j)$ is constant in $j$. Then $\lim_{j \rightarrow \infty} (\frac{1}{r(x_j)}S_j,x_j)$
exists in the pointed Hausdorff topology and is a $k$-dimensional
plane $(S_\infty, x_\infty)$. The map $\nu_\infty$ is a constant map and
$\eta$ is an orthogonal projection. Then $W_\infty$ is a flat
$k$-dimensional manifold, which gives a contradiction.  The verifications
of the other conclusions of the lemma are similar.
\end{proof}

\section{An isotopy lemma}

\begin{lemma}
\label{lem-isotopic1}
Suppose that $F:Y\times [0,1]\ra N$ is a smooth map between manifolds, with
slices $\{f^t:Y\ra N\}_{t\in [0,1]}$, and let 
$X\subset N$ be a submanifold with boundary $\D X$.
If 
\begin{itemize}
\item $f^t$ is transverse to both $X$ and $\D X$ for every $t\in [0,1]$ and
\item 
 $F^{-1}(X)$ is compact
\end{itemize}
then $(f^0)^{-1}(X)$ is isotopic in 
$Y$ to $(f^1)^{-1}(X)$.

\end{lemma}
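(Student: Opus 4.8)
The plan is to exhibit $Z:=F^{-1}(X)\subset Y\times[0,1]$ as a compact manifold with corners fibering over the interval, and to convert a fiberwise lift of $\partial/\partial t$ into an ambient isotopy of $Y$. First I would upgrade the slicewise transversality to transversality of $F$ itself: since the restriction of $DF_{(y,t)}$ to $T_yY\times\{0\}$ is $Df^t_y$, the hypothesis that each $f^t$ is transverse to $X$ (resp. to $\partial X$) gives at once $\operatorname{im}DF_{(y,t)}+T_{F(y,t)}X=T_{F(y,t)}N$ (resp. with $\partial X$). Hence $Z$ is a smooth submanifold with corners of $Y\times[0,1]$, whose boundary faces are $F^{-1}(\partial X)$ and $Z\cap(Y\times\{0,1\})$; the transversality of $f^0$ and $f^1$ to $X$ and $\partial X$ is precisely what makes $Z$ meet $Y\times\{0,1\}$ neatly. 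By hypothesis $Z$ is compact.

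Next I would show that $p:=\operatorname{pr}_{[0,1]}|_Z:Z\ra[0,1]$ is a submersion, and that its restriction to each boundary face is still a submersion. Given $(y,t)\in Z$, write $\partial F/\partial t$ at $(y,t)$ as $Df^t_y(v)+u$ with $u\in T_{F(y,t)}X$, using transversality of $f^t$; then $(-v,1)\in T_{(y,t)}Z$ has $t$-component $1$, so $p$ is submersive there, and the identical computation with $\partial X$ in place of $X$ handles the lateral face $F^{-1}(\partial X)$. Now choose in $TZ$ a sub-bundle $\mathcal H$ complementary to $\ker(Dp|_Z)$ and tangent to $F^{-1}(\partial X)$ along that face (a dimension count shows this is possible, and it is then arranged by a partition of unity on $Z$), and let $\xi$ be the unique section of $\mathcal H$ with $Dp(\xi)=\partial/\partial t$. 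Thus $\xi$ is a smooth vector field on $Z$, tangent to $Z$ and to the lateral face, with $t$-component identically $1$.

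Finally I would extend $\xi$ to an ambient flow. Fix a complete Riemannian metric on $Y$. The section $\xi-\partial/\partial t$ along $Z$ is vertical (zero $t$-component), so using a tubular-type neighborhood of the compact set $Z$ and a cutoff I can extend it to a compactly supported vertical vector field $V$ on $Y\times[0,1]$ with $V|_Z=\xi-\partial/\partial t$; set $\Xi:=\partial/\partial t+V$. Then $\Xi|_Z=\xi$, so $\Xi$ is tangent to $Z$ (and to its lateral face); its $t$-component is $1$; and since $V$ is bounded and the time interval is compact, the escape lemma on the complete manifold $Y$ gives that the flow $\phi_s$ of $\Xi$ exists for $s\in[0,1]$. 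Writing $\phi_s(y,0)=(h_s(y),s)$, the $h_s:Y\ra Y$ form a smooth isotopy with $h_0=\operatorname{id}$; tangency of $\Xi$ to $Z$ forces $\phi_s$ to carry $Z\cap\{t=0\}$ onto $Z\cap\{t=s\}$, i.e. $h_s\big((f^0)^{-1}(X)\big)=(f^s)^{-1}(X)$, and $s=1$ yields the desired isotopy in $Y$.

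The main obstacle I expect is the corner bookkeeping: verifying that $Z$ is a genuine neat manifold with corners (the one place where slicewise, rather than merely total, transversality of $f^0,f^1$ is used) and that the lifting field $\xi$ can be taken tangent to the lateral face $F^{-1}(\partial X)$, so that the flow of $\Xi$ preserves $Z$ together with its boundary; nothing deeper is needed, as the flow only ever increases $t$ over the range $[0,1]$. The completeness of the flow is a minor point, handled by the compactly supported cutoff and a complete background metric, and is automatic in the applications, where $Y$ is itself compact.
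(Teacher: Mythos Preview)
Your proof is correct and shares the paper's core step: showing that the projection $F^{-1}(X)\to[0,1]$ is a submersion by using slicewise transversality to lift $\partial/\partial t$ into $TZ$ (the paper phrases this by contradiction, you do it directly, but it is the same computation). The paper then simply invokes that a proper submersion over $[0,1]$ has a product structure and says the boundary case is similar; you instead make this explicit by building the horizontal lift tangent to the lateral face $F^{-1}(\partial X)$, extending to a compactly supported ambient field on $Y\times[0,1]$, and flowing. This buys you an \emph{ambient} isotopy of $Y$ rather than just a diffeomorphism (or submanifold isotopy) between the two preimages, which is slightly more than the paper proves or needs for its applications, and it also makes the corner bookkeeping visible where the paper leaves it implicit.
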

\begin{proof}
Suppose first that $\D X = \emptyset$.
Now $F(y,t) = f^t(y)$. For $v \in T_yY$ and $c \in \R$, we can write 
$DF(v + c \frac{\partial}{\partial t}) =
Df^t(v) + c \frac{\partial f^t}{\partial t}(y)$.
We know that if $F(y,t) = x \in X$ then 
$T_{(y,t)} (F^{-1}(X)) = (DF_{y,t})^{-1}(T_xX)$. 

By assumption, for each $t \in [0,1]$, if $f_t(y) = x \in X$ then we have
\begin{equation} \label{trans}
\Image (Df_t)_y + T_xX = T_xN.
\end{equation}

We want to show that projection onto the $[0,1]$-factor gives a submersion 
$F^{-1}(X)
\ra [0,1]$. Suppose not. Then for some $(y,t) \in F^{-1}(X)$, we have
$T_{(y,t)} F^{-1}(X) \subset T_y Y$. That is, putting $F(y,t)=x$, whenever
$v \in T_yY$ and $c \in \R$ satisfy 
$Df^t(v) + 
c \frac{\partial f^t}{\partial t}(y) \in T_xX$ then we must have $c = 0$.
However, for any $c \in \R$, equation (\ref{trans}) implies that we can solve
$Df_t(-v) + w = c \frac{\partial f^t}{\partial t}(y)$ for some 
$v \in T_yY$ and
$w \in T_xX$. This is a contradiction.

Thus we have a submersion from the compact set $F^{-1}(X)$ to $[0,1]$.
This submersion must have a product structure, from which the lemma follows.
 
The case when $\D X \neq \emptyset$ is similar.
\end{proof}

\begin{lemma}
\label{lem-isotopic2}
Suppose that $Y$ is a smooth manifold, $(X,\D X)\subset \R^k$ is a
smooth submanifold,  $f:Y\ra \R^k$ is transverse to both 
$X$ and $\D X$, and $\widehat{X}= f^{-1}(X)$ is compact.  Then for 
any compact
subset $Y^\prime \subset Y$ whose interior contains $\widehat{X}$, there is an
$\eps>0$ such that if $f':Y\ra \R^k$ and
$\|f'-f\|_{C^1(Y^\prime)}< \eps$
then $f'^{-1}(X)$ is isotopic to $f^{-1}(X)$.
\end{lemma}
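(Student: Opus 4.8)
The plan is to reduce Lemma \ref{lem-isotopic2} to Lemma \ref{lem-isotopic1} by building an explicit straight-line homotopy between $f$ and $f'$ and checking that, on a suitable compact region, all the intermediate maps remain transverse to $X$ and $\partial X$, with the preimage of $X$ staying compact throughout. First I would fix the compact set $Y' \subset Y$ with $\widehat X \subset \operatorname{Int}(Y')$. Since $f$ is transverse to $X$ and to $\partial X$ and $\widehat X = f^{-1}(X)$ is compact, transversality is an open condition that holds with a definite margin on the compact set $\widehat X$: there is a $\delta > 0$ and a compact neighborhood $N$ of $\widehat X$ with $N \subset \operatorname{Int}(Y')$ such that for every $y \in N$ with $d(f(y), X) < \delta$, the map $f$ is ``quantitatively transverse'' at $y$ (i.e.\ $Df_y$ together with $T_{f(y)}X$ spans $\R^k$ with a lower bound on the relevant minor, and similarly along $\partial X$). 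Shrinking $N$ if necessary, we may also assume $f(Y' \setminus \operatorname{Int}(N))$ is bounded away from $X$ by some $\delta' > 0$, using compactness of $Y'$ and the fact that $\widehat X \subset \operatorname{Int}(N)$.

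Next I would set $F(y,t) = (1-t) f(y) + t f'(y)$, a smooth map $Y \times [0,1] \to \R^k$ with slices $f^t$. Choose $\eps < \min(\delta, \delta')/2$ so small (depending on the transversality margins above, which involve only first derivatives of $f$) that whenever $\|f' - f\|_{C^1(Y')} < \eps$, each $f^t$ restricted to $N$ is still transverse to $X$ and to $\partial X$ at all points where $d(f^t(y), X) < \delta/2$, and moreover $f^t(Y' \setminus \operatorname{Int}(N))$ stays at distance $> \delta'/2$ from $X$ for all $t$. This is a routine $C^1$-perturbation estimate: transversality with a lower bound on the pertinent determinants persists under small $C^1$ perturbations, and the support of the homotopy where anything changes is confined near $\widehat X$.

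Then I would apply Lemma \ref{lem-isotopic1} with the manifold $Y$ replaced by $\operatorname{Int}(N)$ (an honest manifold without boundary), the homotopy $F|_{\operatorname{Int}(N) \times [0,1]}$, and the submanifold-with-boundary $X \subset \R^k$. The two hypotheses of Lemma \ref{lem-isotopic1} are exactly what the previous paragraph arranged: each slice $f^t|_{\operatorname{Int}(N)}$ is transverse to both $X$ and $\partial X$ (the only points of $\operatorname{Int}(N)$ mapping into $X$ have $d(f^t(y),X) = 0 < \delta/2$), and $(F|_{\operatorname{Int}(N)\times[0,1]})^{-1}(X)$ is compact because it is closed in $N \times [0,1]$ and disjoint from $(\partial N) \times [0,1]$ — indeed, by the choice of $\eps$, $f^t(\partial N)$ is at distance $> \delta'/2$ from $X$ for all $t$, so the preimage cannot escape to the boundary of $N$. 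Lemma \ref{lem-isotopic1} then yields that $(f^0)^{-1}(X) \cap \operatorname{Int}(N) = f^{-1}(X)$ is isotopic in $\operatorname{Int}(N)$, hence in $Y$, to $(f^1)^{-1}(X) \cap \operatorname{Int}(N)$. Finally, since $f'^{-1}(X) \subset Y'$ — the only region where $f'$ can hit $X$ is inside $N$, because $f'$ is $\eps$-close to $f$ and $f(Y \setminus Y')$ need not be controlled, so here I would additionally require at the outset that $\widehat X$ and the chosen $\eps$ force $f'^{-1}(X) \subset \operatorname{Int}(N)$, which follows from $\|f'-f\|_{C^1(Y')} < \eps < \delta'/2$ together with the fact that $f^{-1}(X) \cap (Y' \setminus \operatorname{Int}(N)) = \emptyset$ and $f$ itself is bounded away from $X$ there — we conclude $(f^1)^{-1}(X) \cap \operatorname{Int}(N) = f'^{-1}(X)$, completing the proof.

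The main obstacle is the bookkeeping in the second paragraph: making precise the ``quantitative transversality'' that persists under $C^1$-perturbation, and in particular handling the corner structure of $X$ (transversality to $X$ and to $\partial X$ simultaneously) and ensuring the preimage cannot leak out of $N$ along the homotopy. None of this is deep — it is the standard openness of transversality for compactly-supported perturbations — but it requires choosing the neighborhood $N$ and the constant $\eps$ in the right order so that all the estimates are consistent.
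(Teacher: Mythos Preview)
Your approach is correct and is exactly what the paper does: the paper's proof is the single sentence ``This follows from Lemma~\ref{lem-isotopic1},'' and your straight-line homotopy $F(y,t)=(1-t)f(y)+tf'(y)$ together with the openness-of-transversality argument is precisely the intended reduction. You have simply supplied the routine details that the paper leaves implicit.

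One small remark: your closing step asserting $f'^{-1}(X)\subset \operatorname{Int}(N)$ only follows on $Y'$, since the hypothesis $\|f'-f\|_{C^1(Y')}<\eps$ says nothing about $f'$ outside $Y'$. As literally stated, the lemma should be read with the implicit understanding (true in every application in the paper) that $f'$ agrees with $f$, or is otherwise controlled, outside $Y'$; with that understanding your argument is complete.
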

\begin{proof}
This follows from Lemma \ref{lem-isotopic1}.
\end{proof}

\bibliography{collapse2}
\bibliographystyle{alpha}
\end{document}